\documentclass[a4paper,12pt,reqno]{amsart}

\usepackage[nobysame,initials]{amsrefs}

\usepackage[margin=2.5cm]{geometry}
\usepackage{graphicx}
\usepackage{amsfonts}
\usepackage{verbatim}
\usepackage{enumitem}
\usepackage{bm}

\setcounter{tocdepth}{0}

\usepackage[]{nomentbl}
\makenomenclature

\usepackage[colorlinks=true]{hyperref}

\addtolength{\hoffset}{-0.5cm}
\addtolength{\textwidth}{1cm}

\usepackage{hyperref}

\setcounter{tocdepth}{10}

\title{The exact minimum number of triangles in graphs of given order and size}

\author{Hong Liu}
\author{Oleg Pikhurko}
\address{Mathematics Institute and DIMAP, University of Warwick, Coventry CV4 7AL, UK}
\email{\tt{$\lbrace$h.liu.9,~o.pikhurko$\rbrace$@warwick.ac.uk}}

\author{Katherine Staden}
\address{Mathematical Institute, University of Oxford, Andrew Wiles Building, Radcliffe Observatory Quarter, Woodstock Road, Oxford, OX2 6GG, UK}
\email{\tt{staden@maths.ox.ac.uk}}

\def\COMMENT#1{}
\let\COMMENT=\footnote

\begin{document}

\newtheorem{theorem}{Theorem}[section]
\newtheorem{lemma}[theorem]{Lemma}
\newtheorem{proposition}[theorem]{Proposition}
\newtheorem{corollary}[theorem]{Corollary}
\newtheorem{cor}[theorem]{Corollary}
\newtheorem{conjecture}[theorem]{Conjecture}
\newtheorem{claim}[theorem]{Claim}
\newtheorem{definition}[theorem]{Definition}
\newtheorem{assumption}[theorem]{Assumption}
\newtheorem{fact}[theorem]{Fact}

\numberwithin{equation}{section}

\def\eps{{\varepsilon}}
\newcommand{\cP}{\mathcal{P}}
\newcommand{\cT}{\mathcal{T}}
\newcommand{\cL}{\mathcal{L}}
\newcommand{\ex}{\mathbb{E}}
\newcommand{\eul}{e}
\newcommand{\pr}{\mathbb{P}}
\newcommand{\cH}{\mathcal{H}}
\newcommand{\ep}{\varepsilon}
\newcommand{\al}{\alpha}
\newcommand{\De}{\Delta}
\newcommand{\de}{\delta}
\newcommand{\error}{k}

\newcommand{\Ppartition}{P1}
\newcommand{\Pcomplete}{P2}
\newcommand{\Pmissing}{P5}
\newcommand{\Pbadedges}{P3}
\newcommand{\PZk}{P4}

\newenvironment{remark}[1][Remark.]{\begin{trivlist}
\item[\hskip \labelsep {\bfseries #1}]}{\end{trivlist}}
\newcommand{\hide}[1]{\ \textsf{[[ #1 ]]}\ }
\newcommand{\Hide}[1]{}
\newcommand{\C}[1]{{\protect\mathcal{#1}}}
\newcommand{\B}[1]{{\bf #1}}
\newcommand{\I}[1]{{\mathbb #1}}
\renewcommand{\O}[1]{\overline{#1}}
\newcommand{\M}[1]{\mathrm{#1}}
\newcommand{\G}[1]{\mathfrak{#1}}
\newcommand{\V}[1]{\mathbold{#1}}
\renewcommand{\mid}{:} 

\maketitle


\noindent\textbf{Overview.} Extremal graph theory is a very active area of mathematics, whose development was greatly fuelled by some fundamental questions that, while easy to state, are notoriously difficult. One famous example of a such question is the Erd\H os-Rademacher problem whose key special case is to determine $g(n,e)$, the minimum number of triangles that a graph with $n$ vertices and $e$ edges can have. It goes back to Rademacher (1941, unpublished) who solved the first non-trivial case $e=t_2(n)+1$, where $t_r(n)$ is the size of the complete balanced $r$-partite graph of order~$n$. This problem was revived by Erd\H{o}s~\cite{Erdos55} and has attracted much attention since then. Mathematicians who directly worked on it include Bollob\'as, Fisher, Khadziivanov, Lov\'asz, Moon, Moser, Nikiforov, Nordhaus, Razborov, Reiher, Simonovits, Stewart, and others. Also, this problem motivated many further important developments (stability theorems, the supersaturation problem, Sidorenko's conjecture, subgraph profiles in graphons, etc.). 

One of the difficulties is that $g(\lambda):=\lim_{n\to\infty} g(n,(\lambda+o(1)){n\choose 2})$ happens to be a countable union of concave functions. So novel approaches were needed here. Fisher~\cite{Fisher89} determined $g(\lambda)$ for $\lambda\in [\frac 12,\frac23]$ by using spectral methods. The whole range of $\lambda$ was resolved by Razborov~\cite{Razborov07,Razborov08}, who introduced his powerful flag algebra method for this purpose. Later, Nikiforov~\cite{Nikiforov11} and Reiher~\cite{Reiher16} re-proved Razborov's result by using Lagrange multipliers for some analytic relaxation of the problem.  

Unfortunately, none of the above methods seem to give the exact value of $g(n,e)$, 
except for rather special pairs $(n,e)$. Essentially all other previously known exact results for $n\to\infty$ are superseded by a difficult  paper of Lov\'asz-Simonovits~\cite{LovaszSimonovits83} who completely solved the problem when $t_r(n)\le e\le t_r(n)+o_r(n^2)$, i.e.,\ $e$ is slightly above some $t_r(n)$. 

Our paper determines the exact value of $g(n,e)$ and describes all extremal graphs when the edge density is bounded away from~1, proving
a conjecture of Lov\'asz-Simonovits~\cite{LovaszSimonovits76} from 1975 in this range. An additional difficulty for the exact result is that extremal graphs come from three somewhat similar but different families. Also, the stability approach via local transformations does not seem to yield the result so we had to use some `global' estimates as well.
We hope that the method of this paper, besides solving almost all cases of the triangle minimisation problem, will be useful in other situations where one has to convert asymptotic calculations into exact results.\\[10pt] 

\noindent{\textbf{Abstract.}
What is the minimum number of triangles in a graph of given order and size? Motivated by earlier results of Mantel and Tur\'an, Rademacher solved the first non-trivial case of this problem in 1941. The problem was revived by Erd\H{o}s in 1955; it is now known as the Erd\H{o}s-Rademacher problem. After attracting much attention, it was solved asymptotically in a major breakthrough by Razborov in 2008.
In this paper, we provide an exact solution for all large graphs whose edge density is bounded away from~$1$, which in this range confirms a conjecture  of Lov\'asz and Simonovits from 1975. Furthermore, we
give a description of the extremal graphs.
\\[10pt]

\noindent\textbf{2010 Mathematics Subject Classification:} 05C35

\tableofcontents

\section{Introduction}

The celebrated theorem of Tur\'an~\cite{Turan41} (with the case $r=3$ proved earlier by Mantel~\cite{Mantel07}) states that, among all $K_r$-free graphs with $n\ge r$ vertices, the \emph{Tur\'an graph} $T_{r-1}(n)$, the complete balanced $(r-1)$-partite graph, is the unique graph maximising the number of edges. Here, the \emph{$r$-clique} $K_r$ is the complete graph with $r$ vertices (and ${r\choose 2}$ edges). 

Let $t_r(n):=e(T_r(n))$ denote the number of edges in $T_r(n)$ and let an \emph{$(n,e)$-graph} mean a graph with $n$ vertices and $e$ edges. Thus the above result implies that every $(n,t_2(n)+1)$-graph $H$ contains at least one triangle. Rademacher in 1941 (unpublished, see~\cite{Erdos55}) showed that $H$ must have at least $\lfloor n/2\rfloor$ triangles. This naturally leads to the following general question that first appears in print in a paper of Erd\H{o}s~\cite{Erdos55} and is now called the \emph{Erd\H os-Rademacher problem}: determine
 $$
 g_r(n,e):=\min\{K_r(H)\mid \mbox{$(n,e)$-graph $H$}\},\quad n,e\in\I N,\ e\le {n\choose 2},
 $$
 where $K_r(H)$ denotes the number of $K_r$-subgraphs in a graph $H$ and $\I N:=\{1,2,\dots\}$ consists of natural numbers.
 
Before discussing the history of this problem in some detail, let us present the general upper bound $h^*(n,e)$ on $g_3(n,e)$ which, as far as the authors know, may actually equal $g_3(n,e)$ for all pairs $(n,e)$. In fact, one of the main results of this paper (stated in a stronger form in Theorem~\ref{main}) is that $g_3(n,e)=h^*(n,e)$ if $n$ is large and $e/{n\choose 2}$ is bounded away from~$1$. In order to define $h^*$, we need to introduce some auxiliary parameters. 

\begin{definition}[Parameters $k$, $m^*$ and $h^*$, vector $\bm{a}^*$, and graph $H^*$]\label{astardef}
Let $n,e\in \I N$ satisfy $e \leq \binom{n}{2}$. Define
\begin{equation}\label{eq:k}
k = k(n,e) := \min \lbrace s \in \mathbb{N} : e \le t_{s}(n) \rbrace,
\end{equation}
 that is, $k$ is the unique positive integer with $t_{k-1}(n)< e\le t_k(n)$.

	Next, let $\bm{a}^*=\bm{a}^*(n,e)$ be the unique integer vector $(a^*_1,\ldots,a^*_k)$ such that
	\begin{itemize}
		\item $a^*_k:= \min \lbrace a \in \mathbb{N} : a(n-a)+t_{k-1}(n-a) \geq e \rbrace$;
		\item $a^*_1+\ldots+a^*_{k-1}=n-a^*_k$ and $a^*_1\ge\ldots\ge a^*_{k-1}\ge a^*_1-1$.
	\end{itemize}
	
	Further, define
	\begin{eqnarray}
	m^* \ =\ m^*(n,e)&:=& \sum_{1\le i<j\le k}a_i^*a_j^*-e,\label{eq:mstar}\\
	h^*(n,e) &:=& \sum_{1\le h< i< j\le k}a_h^*a_i^*a_j^* - m^*\sum_{i=1}^{k-2}a_i^*.\nonumber
	\end{eqnarray}
	
Also, let the graph $H^*=H^*(n,e)$ be obtained from $K^k_{a_1^*,\dots,a_k^*}$, the complete $k$-partite graph with part sizes $a_1^*,\dots,a_k^*$, by removing $m^*$ edges between the last two parts (say, for definiteness, all incident to a vertex in the last part).
\end{definition}

Let us rephrase the above definitions and also argue that $H^*$ is well-defined. We look for an upper bound on $g_3(n,e)$, where we take a complete partite graph, say with parts $A_1^*,\dots,A_k^*$, and remove a star incident to a vertex of $A_k^*$. First, we choose the smallest $k$ for which such an $(n,e)$-graph exists and then the smallest possible size $a^*_k$ of $A_k^*$. Then we let the first $k-1$ parts form the Tur\'an graph $T_{k-1}(n-a^*_k)$, that is, their sizes are  $a_1^*,\dots,a_{k-1}^*$. Since $T_{k-1}(n-a^*_k)$ has at least as many edges as any other $(k-1)$-partite graph of order $n-a_k^*$, it holds that $m^*:=e(K^k_{a_1^*,\dots,a_k^*})-e$ is non-negative. Furthermore, we  have that 
 \begin{equation}\label{m*}
 0\le m^*\le a_{k-1}^*-a_k^*
 \end{equation}
 because, if the upper bound fails, then
 $$
 e(K^k_{a_1^*,\dots,a_{k-2}^*,a_{k-1}^*+1,a_k^*-1})=e(K^k_{a_1^*,\dots,a_k^*})- (a_{k-1}^*-a_k^*+1)\ge e,
 $$
 contradicting the minimality of $a_k^*$ (or the minimality of $k$ if $a_k^*=1$). In particular, we have $m^*\le a^*_{k-1}$ so $H^*$ is well-defined. Thus $H^*$ is an $(n,e)$-graph and
  $$
   h^*(n,e):=K_3(H^*)\ge g_3(n,e)
   $$ 
   is indeed an upper bound on $g_3(n,e)$. 

For example, if  $e\le t_2(n)$ then $H^*(n,e)$ is bipartite  and $h^*(n,e)=0$ (here $k=2$).  Also, $H^*(n,t_r(n))=T_r(n)$. If $1\le \ell< \lceil n/r\rceil$, then $H^*(n,t_r(n)+\ell)$ is obtained from the Tur\'an graph $T_r(n)$ by adding the $\ell$-star $K_{1,\ell}$ into a largest part (here $k=r+1$ and $a_k^*=1$), etc.

Let us return to the history of the triangle-minimisation problem. The problem was revived by Erd\H{o}s~\cite{Erdos55} in 1955 who in particular conjectured that for $1\le \ell < \lfloor n/2 \rfloor$ it holds that $g_3(n,t_2(n)+\ell)=\ell\lfloor n/2\rfloor$. This is exactly the $h^*$-bound; also, note that if $n$ is even and $\ell=n/2$ then $h^*(n,t_2(n)+\ell)$ is strictly smaller than $\ell n/2$
(here $k=3$ and $a_3^*=2$).
So Erd\H{o}s's conjecture cannot be extended here.
In the same paper, Erd\H{o}s~\cite{Erdos55} proved the conjecture when $\ell\le 3$; the same result also appears in Nikiforov~\cite{Nikiforov76}.
Erd\H{o}s in~\cite{Erdos62} was able to prove his conjecture when $\ell < \gamma n$ for some positive constant $\gamma$.
The conjecture was eventually proved in totality for large $n$ by Lov\'asz and Simonovits~\cite{LovaszSimonovits76} in 1975, with the proof of the conjecture also announced by Nikiforov and Khadzhiivanov~\cite{NikiforovKhadzhiivanov81}. 


Moon and Moser~\cite[Page~285]{MoonMoser62} and, independently, Nordhaus and Stewart~\cite[Equation~(5)]{NordhausStewart63} proved that
 \begin{equation}\label{"Goodman"}
 g_3(n,e)\ge \frac{e(4e-n^2)}{3n},
 \end{equation}
 with equality achieved if and only if $e=t_k(n)$ with $k$ dividing $n$. The bound in~\eqref{"Goodman"} can be derived by using the triangle counting method from an earlier paper by Goodman~\cite{Goodman59} and is often referred to as the \emph{Goodman bound}.

In order to state some of the following results, it will be convenient to define the asymptotic version of the problem. Namely, given $\lambda \in [0,1]$ take any integer-valued function $0\le e(n)\le {n\choose 2}$ with $e(n)/{n\choose 2}\to \lambda$ as $n\to\infty$ and define
$$
g_r(\lambda) := \lim_{n \rightarrow \infty}\frac{g_r(n,e(n))}{\binom{n}{r}}.
$$
 It is easy to see from basic principles that the limit exists and does not depend on the choice of the function~$e(n)$.

The upper bound on the function $g_3(\lambda)$ given by the graphs $H^*$  from Definition~\ref{astardef} is as follows. Let $n\to\infty$ and $e=\lambda n^2/2+o(n^2)$. It always holds that, for example, $m^*\le n$ and $a_1^*-a_{k-1}^*\le 1$. So these have negligible effect in the limit and one can consider only complete partite graphs with all parts equal, except at most one part of smaller size. Therefore, for $\lambda\in [0,1)$, let us define 
 \begin{equation}\label{eq:k(lambda)}
 k(\lambda):=\min\{k\in\I N\mid \lambda\le 1-1/{k}\}.
 \end{equation}
 Thus if $\lambda\in (0,1)$, then $k(\lambda)$ is the unique integer $k\geq 2$ satisfying $1-\frac{1}{k-1}<\lambda\le 1-\frac1{k}$, while $k(0)=1$. Let $k=k(\lambda)$ and let $c=c(\lambda)$ be the unique root with $c\ge 1/k$ of the quadratic equation
 \begin{equation}\label{eq:NewC}
 {k-1\choose 2} c^2+(1-c') c'=\lambda/2,
 \end{equation}
 where $c':=1-(k-1)c$. The above equation is the limit version of the desired equality $e(K^k_{c n,\dots,c n,c'n})=\lambda{n\choose 2} +o(n^2)$. Explicitly,
\begin{equation}\label{eq:c2'}
 c(\lambda) = \frac{1}{k}\left(\,1 + \sqrt{1-\frac{k}{k-1}\cdot \lambda}\,\right),\quad \lambda\in (0,1),\quad\text{while }c(0)=1.
 \end{equation}
 Thus 
 \begin{equation}\label{eq:UpperAsymptG3}
 g_3(\lambda)\le h^*(\lambda):=3!\,\left({k-1\choose 3} c^3+{k-1\choose 2}c^2 c'\right),\quad \lambda\in [0,1).
 \end{equation}
 (For $\lambda=1$, we just let $h^*(1):=1$.)

The upper bound in~\eqref{eq:UpperAsymptG3} coincides with the lower bound on $g_3(\lambda)$ given by~\eqref{"Goodman"} when $\lambda=1-1/k$ for all integers $k\ge 1$. Thus
 \begin{equation}\label{eq:GoodmanSharp}
 g_3(1-1/k)=\frac{(k-1)(k-2)}{k^2},\quad k\in\I N.
 \end{equation}

Some of early results on $g_3(\lambda)$ concentrated on finding good convex lower bounds. McKay (unpublished, see~\cite[Page~35]{NordhausStewart63}) showed that $g_3(\lambda)\ge \lambda-\frac 12$. Nordhaus and Stewart~\cite{NordhausStewart63}  conjectured that $g_3(\lambda)\ge \frac43(\lambda-\frac12)$ and presented some partial results in this direction. This conjecture was proved by Bollob\'as~\cite{Bollobas76} who in fact established the best possible convex lower bound on $g_3$, namely, the piecewise linear function which concides with $g_3$ at all values in~\eqref{eq:GoodmanSharp}. 

However, the upper bound $h^*(\lambda)$ is a strictly concave function between any two consecutive values in~\eqref{eq:GoodmanSharp} for $\lambda\ge1/2$. This is one of the reasons why the triangle-minimisation problem is so difficult.

After Bollob\'as~\cite{Bollobas76}, the first improvement ``visible in the limit'' was achieved by Fisher~\cite{Fisher89} who showed that $g_3(\lambda)=h^*(\lambda)$ for all $1/2\le \lambda\le 2/3$. (There was a hole in Fisher's proof, which can be fixed using the results of Goldwurm and Santini~\cite{GoldwurmSantini00}, see~\cite[Remark~3.3]{Csikvari13}.)
Then  Razborov used his newly developed theory of \emph{flag algebras} first to give a different proof of Fisher's result in~\cite{Razborov07} and then to determine the whole function $g_3(\lambda)$ in~\cite{Razborov08} (see Figure~\ref{plot} for a plot of the function).

\begin{theorem}[\cite{Razborov08}]\label{razthm}
For all $\lambda \in [0,1]$, we have that $g_3(\lambda) = h^*(\lambda)$. \end{theorem}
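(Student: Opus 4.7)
The plan is to translate the problem into the graphon limit setting and attack the lower bound $g_3(\lambda)\ge h^*(\lambda)$; the upper bound is already furnished asymptotically by the constructions $H^*(n,e)$ of Definition~\ref{astardef}, whose limits are blowups of $K_{c,\dots,c,c'}$ with $k=k(\lambda)$, $c=c(\lambda)$ and $c'=1-(k-1)c$. In the limit, the task becomes: minimise the triangle density $t(K_3,W):=\int W(x,y)W(y,z)W(x,z)\,dx\,dy\,dz$ over graphons $W\colon[0,1]^2\to[0,1]$ with edge density $\int W = \lambda$.

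I would handle each interval $\lambda\in[1-1/(k-1),\,1-1/k]$ separately. At the endpoints $\lambda=1-1/k$ the Goodman inequality~\eqref{"Goodman"} is already tight by~\eqref{eq:GoodmanSharp}, so the task is to establish the lower bound in the interior. The first essential obstacle is that on the interior of each such interval $h^*(\lambda)$ is \emph{strictly concave}, while any finite linear combination of fixed subgraph-density inequalities (each convex in $\lambda$) can only produce a convex lower bound; hence purely linear arguments are provably insufficient, and one is forced to introduce quadratic, Cauchy--Schwarz-type certificates. This is exactly the setting of Razborov's flag algebra framework.

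Concretely, I would set up the algebra of rooted flags on a type $\sigma$ consisting of two (or three) labelled vertices, and seek an identity of positivstellensatz form
\begin{equation*}
h^*(\lambda) - t(K_3,W)\ \ge\ \sum_{i} c_i(\lambda)\,\bigl\|F_i\bigr\|_{\sigma_i}^2,
\end{equation*}
where each $\|F_i\|_{\sigma_i}^2$ is a Cauchy--Schwarz square of rooted flags $F_i$ and the coefficients $c_i(\lambda)\ge 0$ are to be chosen. The flags $F_i$ must be supported on rooted configurations whose density vanishes on the conjectured extremiser $W^*(\lambda)$---for instance, rooted triples straddling the multipartite structure in a way $W^*(\lambda)$ forbids---so that the inequality is tight at $W^*(\lambda)$. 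After restricting to a finite list of flags on at most three or four unlabelled vertices, locating the coefficients $c_i(\lambda)$ reduces to a $\lambda$-parametrised family of semidefinite programs; once a candidate is guessed, its verification amounts to a polynomial identity in a finite list of subgraph densities, unwindable by expanding $\|F_i\|_{\sigma_i}^2$ via the flag algebra product.

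The hard part will be producing a single SOS certificate that is valid uniformly across the whole interval and which glues correctly at its endpoints, where the extremiser bifurcates between two partition structures. Choosing the flags $F_i$ delicately enough that every Cauchy--Schwarz square vanishes simultaneously at $W^*(\lambda)$ for every interior $\lambda$ \emph{and} at both endpoints is precisely what makes the SDP solvable symbolically rather than only numerically. As a by-product, once such a certificate is in hand, a stability statement should come for free: any near-extremiser must make each $\|F_i\|_{\sigma_i}^2$ nearly zero, forcing its structure to converge to $W^*(\lambda)$.
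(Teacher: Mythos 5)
This theorem is not proved in the paper at all: it is imported verbatim from Razborov's 2008 work (the citation \cite{Razborov08} in the theorem header), so there is no in-paper argument to compare your proposal against. Your sketch does correctly identify the framework Razborov used — the graphon/limit formulation, the interval-by-interval treatment with endpoints $\lambda=1-1/k$ already settled by Goodman's bound~(\ref{"Goodman"}) via~(\ref{eq:GoodmanSharp}), and the observation that strict concavity of $h^*$ on the interior of each interval rules out bounds obtained from convex combinations of fixed density inequalities. As a description of why the problem is hard and where the proof must live, it is accurate.

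The gap is that the entire mathematical content of the theorem sits inside the step you defer ("once a candidate is guessed, its verification amounts to a polynomial identity"), and the candidate you describe is not of the form that actually works. Razborov's proof is \emph{not} a uniform positivstellensatz certificate $h^*(\lambda)-t(K_3,W)\ge\sum_i c_i(\lambda)\,\|F_i\|_{\sigma_i}^2$; no finite sum-of-squares certificate of this kind is known for the problem, and the Hatami--Norine results cited in Section~\ref{ConcludingRemarks} show that true density inequalities need not admit finite SOS proofs, so one cannot simply assert that the $\lambda$-parametrised SDP you set up is feasible, let alone symbolically solvable and gluing correctly at the bifurcation points. What Razborov actually does is fix an extremal limit object, use the differential (``vertex-removal'') method of flag algebras to extract first-order variational constraints that hold only at that extremal point, and combine these with Cauchy--Schwarz steps and an induction on $k(\lambda)$; the analytic core is a bespoke multivariable inequality rather than an SDP output. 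Your claimed by-product stability statement also does not come for free: Theorem~\ref{PikhurkoRazborov17approx} required a separate and substantial argument by Pikhurko and Razborov. As written, the proposal is a research plan that stalls exactly at the step you yourself label ``the hard part.''
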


Nikiforov~\cite{Nikiforov11} presented a new proof of Razborov's result and also determined $g_4(\lambda)$ for all $\lambda \in [0,1]$.
More recently, Reiher~\cite{Reiher16} determined $g_r(\lambda)$ for all $\lambda \in [0,1]$ and $r \geq 5$ (also reproving the case $r\in \lbrace 3,4\rbrace$).

\begin{center}
\begin{figure}
\includegraphics[scale=1]{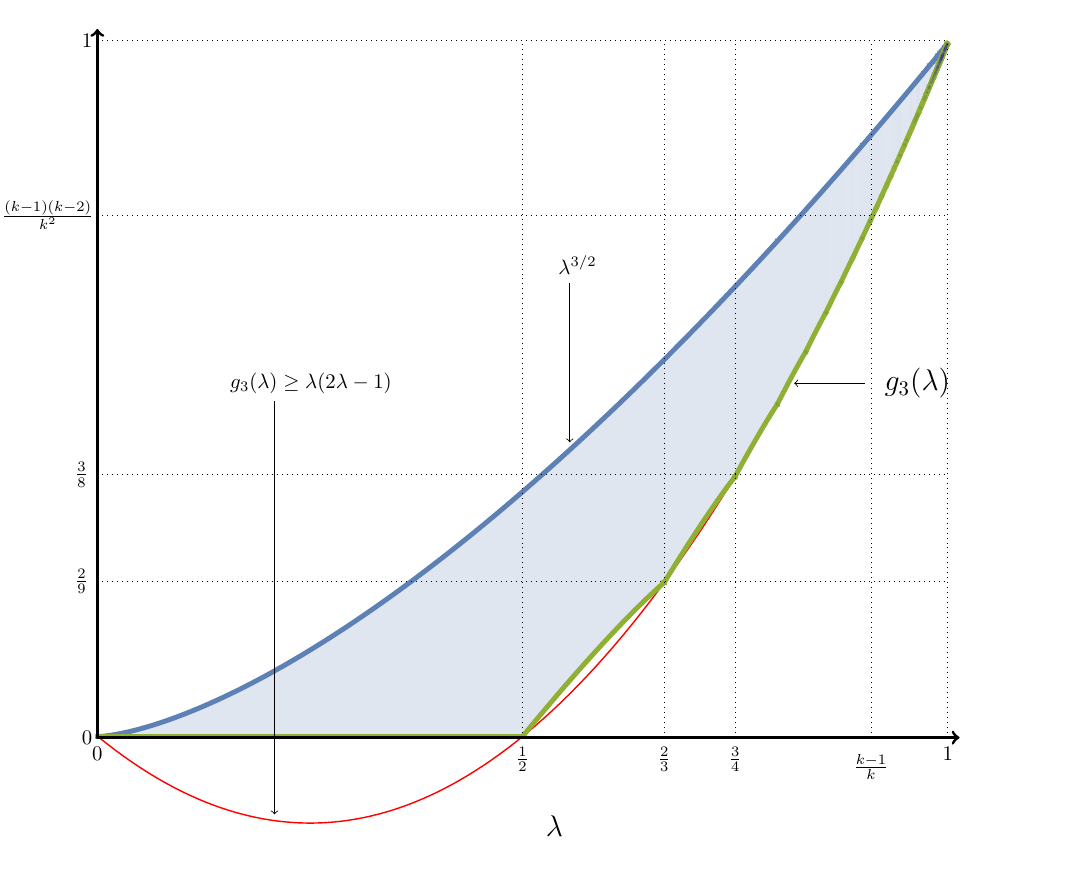}
\caption{The green function is $g_3(\lambda)$, as determined by Theorem~\ref{razthm}. The red curve is Goodman's bound~(\ref{"Goodman"}). The blue curve $\lambda^{3/2}$ is asymptotically the maximum triangle density in a graph of edge density $\lambda$. This follows easily from the Kruskal-Katona theorem~\cite{Katona64,Kruskal63}.
}
\label{plot}
\end{figure}
\end{center}

Another property that makes this problem difficult is that in general there are many asymptotically extremal $(n,e)$-graphs, as the following family demonstrates.

\begin{definition}[Family $\mathcal{H}^*(n,e)$]
\label{df:CH*}
Given $n,e\in\I N$ with $e\le {n\choose 2}$, let $k=k(n,e)$, $\bm{a}^*=(a^*_1,\ldots,a^*_k)$ and $m^*$ be as in Definition~\ref{astardef}.
The family $\mathcal{H}^*(n,e):=\bigcup_{i=0}^2 \mathcal{H}_i^*(n,e)$ is defined as the union of the following three families. Let $T:=K[A_1^*,\dots,A_k^*]$ be the complete partite graph with part sizes $a_1^*\ge \dots\ge a_k^*$ respectively.
\begin{description}
 \item[$\mathcal{H}^*_1(n,e)$] If $m^*=0$, then take all graphs obtained from $T$ by replacing, for some $i\in [k-1]$, $T[A_i^*\cup A_k^*]$ with an arbitrary triangle-free graph with $a_i^*a_k^*$ edges. If $m^*> 0$, take all graphs obtained from $T$ by replacing $T[A_{k-1}^*\cup A_k^*]$ with an arbitrary triangle-free graph with $a_{k-1}^*a_k^*-m^*$ edges.
 
\item[$\mathcal{H}_0^*(n,e)$] Take the family $\mathcal{H}^*_1(n,e)$ and, if $a_k^*=1$, add all graphs obtained from $K_{a_1^*,\dots,a_{k-2}^*,a_{k-1}^*+1}$ by adding a triangle-free graph with $a_{k-1}^*-m^*$ edges such that each added edge lies inside some part of size $a_{k-1}^*+1$.

\item[$\mathcal{H}_2^*(n,e)$] Take those graphs in $\mathcal{H}_1^*(n,e)$ which are $k$-partite, along with the following family. Take disjoint sets $A_1,\dots,A_k$ of sizes $a_1^*,\dots,a_k^*$ respectively and let $m:=m^*$. If $m^*=0$ and $a_1^*\geq a_k^*+2$, then we also allow $(|A_1|,\dots,|A_k|)=(a_2^*,\dots,a_{k-1}^*,a_1^*-1,a_k^*+1)$ and let $m:=a_1^*-a_k^*-1$. Take all graphs obtained from $K[A_1,\ldots,A_k]$ by removing $m$ edges, each connecting $B_i$ to $A_{i}$ for some $i\in I$, where $I:=\{i\in [k-1]: |A_i|=|A_{k-1}|\}$ and $(B_i)_{i\in I}$ are some disjoint subsets of~$A_k$.
\end{description}
\end{definition}

One can check by the definition that every graph in $\mathcal{H^*}(n,e)$ 
has $e$ edges and $h^*(n,e)$ triangles. Also, the graph $H^*(n,e)$ belongs to $\mathcal{H}^*_i(n,e)$ for each $i\in\{0,1,2\}$.  Proposition~\ref{pr:compute} and Conjecture~\ref{conj}, to be stated shortly, will motivate the above  definitions.
Note that every graph in $\mathcal{H}_0^*(n,e)\setminus \mathcal{H}_1^*(n,e)$ has at most $a^*_{k-1}-m^* \leq \frac{n-1}{k-1}$ more edges than the Tur\'an graph $T_{k-1}(n)$. In other words,
\begin{equation}\label{eq:H0=H1}
\mathcal{H}_0^*(n,e)=\mathcal{H}_1^*(n,e),\quad \mbox{for\ \ $t_{k-1}(n)+\frac{n-1}{k-1}<e\le t_k(n)$.}
\end{equation}

In general, $\mathcal{H^*}(n,e)$ contains many non-isomorphic graphs. Nonetheless, a `stability' result was established by Pikhurko and Razborov~\cite{PikhurkoRazborov17} who showed that every almost extremal $(n,e)$-graph is within edit distance $o(n^2)$ from $\mathcal{H}_1^*(n,e)$ (or, equivalently, from $\mathcal{H}^*(n,e)$).

\begin{theorem}[\cite{PikhurkoRazborov17}]
\label{PikhurkoRazborov17approx}
For every $\eps > 0$, there are $\delta,n_0 > 0$ such that, for every $(n,e)$-graph $G$ with $n \geq n_0$ vertices and at most $g_3(n,e)+\delta\binom{n}{3}$ triangles, there exists $H \in \mathcal{H}_1^*(n,e)$ such that $|E(G)\bigtriangleup E(H)| \leq \eps\binom{n}{2}$.
\end{theorem}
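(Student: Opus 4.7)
The plan is to argue by contradiction using a graphon limit, to characterise all extremal graphons of edge density $\lambda$, and then to convert graphon closeness to a finite edit-distance statement by a regularity-based cleaning.

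Suppose the conclusion fails for some $\eps>0$. Then there is a sequence of $(n,e_n)$-graphs $(G_n)$ with $n\to\infty$, $e_n/\binom{n}{2}\to\lambda$ for some $\lambda\in[0,1]$, triangle density tending to $h^*(\lambda)$ by Theorem~\ref{razthm}, yet edit distance at least $\eps\binom{n}{2}$ from every graph in $\mathcal{H}_1^*(n,e_n)$. By compactness of the graphon space in cut-distance, pass to a subsequence converging to a graphon $W\colon[0,1]^2\to[0,1]$ of edge density $\lambda$ and triangle density $h^*(\lambda)$.

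The core step is to characterise every such extremal $W$. I expect that, up to a measure-preserving rearrangement, $W$ is the step graphon with $k=k(\lambda)$ parts of sizes $c,\ldots,c,c'$ as in~\eqref{eq:c2'}, value $0$ inside each part, value $1$ between any two distinct parts, \emph{except} between one distinguished pair where we put a symmetric measurable $U$ taking values in $[0,1]$ that is triangle-free (in the graphon sense) and whose integral matches the residual edge density. To establish this I would adapt the Lagrangian/perturbation arguments underlying Razborov's proof of Theorem~\ref{razthm}: the first-order stationarity condition for minimising triangle density subject to the edge constraint forces $W(x,y)\in\{0,1\}$ a.e.\ on blocks that lie in at least two triangles, and forces the support of $W$ to be multipartite. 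On the exceptional block, the same condition reduces to triangle-freeness of~$U$. The choice of $k$ and of the balanced part sizes then follows by comparing triangle counts of competing step graphons via the explicit formula~\eqref{eq:UpperAsymptG3}, using that any imbalance strictly increases the triangle count.

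Given the graphon characterisation, I return to the finite world. Apply Szemer\'edi's regularity lemma to $G_n$ and align the partition with the step structure of $W$; by cut-distance convergence and the counting lemma, for $n$ large the reduced graph of $G_n$ has, after relabelling, all pair densities within $o(1)$ of $0$, $1$, or the distinguished soft value, and cluster sizes within $o(n)$ of $\bm{a}^*(n,e_n)$. Completing the near-$1$ pairs, emptying the near-$0$ pairs, and rebalancing clusters to match $\bm{a}^*(n,e_n)$ exactly costs at most $\tfrac12\eps\binom{n}{2}$ edge modifications. Any surviving triangle inside the soft pair would, together with the completeness of the other pairs, force $\Omega(n^3)$ further triangles by regularity, contradicting the $\delta\binom{n}{3}$ slack once $\delta$ is small; hence the soft pair is already within $o(n^2)$ edge modifications of a triangle-free bipartite graph with the correct number of edges. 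A final small adjustment to hit the exact edge count $e_n$ places the resulting graph in $\mathcal{H}_1^*(n,e_n)$, contradicting the edit-distance assumption.

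The main obstacle is the characterisation of extremal graphons: the family $\mathcal{H}_1^*$ is genuinely large (any triangle-free bipartite graphon is admissible on the soft block), so one must rule out all other minimisers globally rather than just infinitesimally. This is particularly delicate near the breakpoints $\lambda=1-1/k$ where $k(\lambda)$ jumps and where two different template structures have nearly equal triangle counts. A secondary subtlety is that cut-distance convergence does not automatically give edit-distance closeness to a specific member of $\mathcal{H}_1^*(n,e_n)$ when $W$ takes values in $(0,1)$ on the soft block; this is resolved by exploiting triangle-freeness of $U$, which restricts $U$ enough to track it at the edge level.
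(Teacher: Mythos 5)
First, a point of reference: the paper does not prove this statement. Theorem~\ref{PikhurkoRazborov17approx} is imported verbatim from Pikhurko and Razborov~\cite{PikhurkoRazborov17}, so there is no internal proof to compare against; your proposal has to be judged against the argument in that reference.

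Your overall architecture (compactness in cut distance, characterisation of the extremal graphons, then a regularity/removal-lemma cleanup back to a finite edit-distance statement) matches the shape of the cited proof, and your back-end is essentially sound: the flexibility of the family $\mathcal{H}_1^*(n,e)$ — an \emph{arbitrary} triangle-free graph is permitted on the soft block — together with a triangle-removal argument is exactly what rescues the passage from cut-distance closeness to edit-distance closeness to \emph{some} member of the family. The genuine gap is at the centre: the characterisation of the extremal graphons is asserted, not proved. The "first-order stationarity" sketch — that minimality forces $W\in\{0,1\}$ a.e.\ on pairs lying in two triangles and forces multipartite support — is not a proof and is not even in principle sufficient on its own: stationarity only identifies critical points of a non-convex problem, and the paper itself notes that $h^*(\lambda)$ is strictly concave between consecutive values $1-1/k$, which is one symptom of why local perturbation cannot certify global minimality here. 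Razborov's flag-algebra proof of Theorem~\ref{razthm} outputs the value $g_3(\lambda)$ but not the structure of the minimisers; extracting that structure is precisely the content of~\cite{PikhurkoRazborov17} and requires a separate, substantial argument, including a treatment of the breakpoints $\lambda=1-1/k$ where additional extremal configurations appear (cf.\ the case $m^*=0$ in Definition~\ref{df:CH*}, where the triangle-free block may sit between \emph{any} part and the smallest one). Since you explicitly defer exactly this step as "the main obstacle," the proposal as written is a plausible strategy outline rather than a proof.
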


While Theorems~\ref{razthm} and~\ref{PikhurkoRazborov17approx} deal only with the asymptotic values, they can also be used to derive some exact results. Namely, if $n=(k-1)a+b$ where $k,a,b\in\I N$ with $a\ge b$ and $e={k-1\choose 2}a^2+(k-1)ab=e(K^k_{a,\dots,a,b})$, then
 \begin{equation}\label{eq:SomeExact}
  g_3(n,e)=K_3(K^k_{a,\dots,a,b})={k-1\choose 3}a^3+{k-1\choose 2} a^2b.
  \end{equation}
  Indeed, if some $(n,e)$-graph $H$ violates the lower bound, then the uniform blow-ups of $H$ violate Theorem~\ref{razthm}; furthermore, every extremal $(n,e)$-graph contains the complete $(k-1)$-partite graph $K^{k-1}_{a,\dots,a,a+b}$ as a spanning subgraph, as otherwise its blow-ups violate Theorem~\ref{PikhurkoRazborov17approx}. 
  
  The above blow-up trick also shows that $g_3(n,e)\ge (n^3/6)\, g_3(2e/n^2)$ for every $(n,e)$. Although, for $e>t_2(n)$, one can show that this bound is tight only when the pair $(n,e)$ is as in~\eqref{eq:SomeExact}, it gives a rather good approximation to $g_3(n,e)$. Namely, calculations based on the explicit formula for $g_3(\lambda)=h^*(\lambda)$ (see e.g.~\cite[Theorem~1.3]{Nikiforov11}) give that
  \begin{equation}\label{asympbound}
 0\ \leq\ 
g_3(n,e) - \frac{n^3}{6}\, g_3\!\left(\frac{2e}{n^2}\right)\  \leq\  \frac{n^3}{n^2-2e},\quad n,e\in\I N,\ e\le {n\choose 2}.
\end{equation}

In a long and difficult paper, Lov\'asz and Simonovits~\cite{LovaszSimonovits83} established the exact result for a large range of parameters. In order to state their main result, we have to define some graph families (which will also appear in our results and proofs).

\begin{definition}[Families $\mathcal{H}_0$, $\mathcal{H}_1$, $\mathcal{H}_2$ and $\mathcal{H}$]\label{df:H}
Given positive integers $e,n$ with $e \leq \binom{n}{2}$, let $k=k(n,e)$ be as in~\eqref{eq:k} and define the following families.
\begin{description}
\item[$\mathcal{H}_0(n,e)$] the family of $(n,e)$-graphs $H$ obtained from adding a triangle-free graph $J$ to  a complete $(k-1)$-partite graph on $n$ vertices.
\item[$\mathcal{H}_1(n,e)$] the family of $(n,e)$-graphs $H$ with a partition $V(H)=A_1\cup \ldots\cup A_{k-2}\cup B$ such that $|A_1| \geq \ldots \geq |A_{k-2}|$; $H[A_1 \cup \ldots \cup A_{k-2}]$ is the complete partite graph $K[A_1,\ldots,A_{k-2}]$; $H[B,V(H)\setminus{B}]$ is complete; and $H[B]$ is a triangle-free graph.

\item[$\mathcal{H}_2(n,e)$] the family of $k$-partite $(n,e)$-graphs $H$ with a partition $A_1,\ldots,A_k$ of $V(H)$ such that $|A_1| \geq \ldots \geq |A_k|$; $H[A_1\cup\dots \cup A_{k-1}]=K[A_1,\dots,A_{k-1}]$,
and for every vertex $x \in A_k$ there is at most one $j \in [k-1]$ such that $x$ is not complete to $A_j$.
\end{description}
 Also, let $\mathcal{H}(n,e) := \mathcal{H}_1(n,e) \cup \mathcal{H}_2(n,e)$ and define
  \begin{equation}\label{eq:h}
  h(n,e) := \min\lbrace K_3(H):H \in \mathcal{H}(n,e)\rbrace.
  \end{equation} 
\end{definition}

Note that $\mathcal{H}_1(n,e)\subseteq\mathcal{H}_0(n,e)$; this inclusion is in general strict as the added edges in the definition of $\mathcal{H}_0(n,e)$ can lie inside different parts.

The main result proved by Lov\'asz and Simonovits~\cite{LovaszSimonovits83} (first announced in their 1975 paper~\cite{LovaszSimonovits76}) is the following.

\begin{theorem}[\cite{LovaszSimonovits76,LovaszSimonovits83}]\label{LS83}
For all integers $k \geq 3$ and $r \geq 3$, there exist $\alpha = \alpha(r,k) > 0$ and $n_0=n_0(r,k)>0$ such that, for all positive integers $(n,e)$ with $n \geq n_0$ and $t_{k-1}(n) < e \leq t_{k-1}(n) + \alpha n^2$, we have that
$$
g_r(n,e)=h_r(n,e) := \min\lbrace K_r(H) : H \in \mathcal{H}(n,e)\rbrace.
$$
If $r=3$, then every extremal graph lies in $\mathcal{H}_0(n,e) \cup \mathcal{H}_2(n,e)$, and there is at least one extremal graph in $\mathcal{H}_1(n,e)$.
If $r \geq 4$, then every extremal graph lies in $\mathcal{H}_1(n,e) \cup \mathcal{H}_2(n,e)$.
\end{theorem}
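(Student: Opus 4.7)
The upper bound $g_r(n,e)\le h_r(n,e)$ is immediate since $\mathcal{H}(n,e)$ consists of $(n,e)$-graphs. The plan for the matching lower bound and the structural characterisation is to combine a stability result, Zykov-type local modifications, and a final computation inside the structured family.

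\emph{Stability.} Let $G$ be an extremal $(n,e)$-graph. Combining $K_r(G)\le h_r(n,e)\le h^*(n,e)$ with~\eqref{asympbound} (together with the analogous asymptotic bound for $g_r$ when $r\ge 4$) gives $K_r(G)\le g_r(2e/n^2)\binom{n}{r}+O(n^{r-1})$. For $r=3$ one now applies Theorem~\ref{PikhurkoRazborov17approx} directly; for $r\ge 4$ I would invoke (or adapt) the analogous stability statement. In either case, since $e-t_{k-1}(n)\le \alpha n^2$, the graph $G$ is within edit distance $o(n^2)$ of the Tur\'an graph $T_{k-1}(n)$. Fix a max-cut partition $V(G)=V_1\cup\ldots\cup V_{k-1}$ with near-equal parts, and call an edge inside some $V_i$ \emph{bad} and a non-edge across two distinct $V_i,V_j$ \emph{missing}. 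Both counts are $o(n^2)$ and their difference equals $e-t_{k-1}(n)\le \alpha n^2$.

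\emph{Cleaning.} Apply iteratively a short list of moves that weakly decrease $K_r(G)$. The core move is Zykov symmetrisation: if $u,v$ lie in the same part $V_i$ and $u$ is contained in more $K_r$-copies than $v$, replace $N(u)$ by a copy of $N(v)$, strictly decreasing $K_r(G)$. Two further moves relocate a vertex to a different part when this decreases the number of $K_r$'s through it, and exchange a bad edge for a missing edge when this is $K_r$-favourable. Combined with the observation that bad edges inside a single part must form a triangle-free graph (else a local switch strictly decreases $K_r$), these moves terminate with $G\in\mathcal{H}_0(n,e)\cup \mathcal{H}_2(n,e)$: either the graph is a complete multipartite skeleton decorated by a triangle-free set of bad edges within parts (as in $\mathcal{H}_0$, and in particular $\mathcal{H}_1$), or $G$ is $k$-partite with missing edges forming the restricted $\mathcal{H}_2$-pattern.

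\emph{Comparison and main obstacle.} A direct computation of $K_r(H)$ for $H$ in the reduced family, parametrised by the $O(\alpha n^2)$ excess edges beyond $T_{k-1}(n)$, identifies the extremal configurations: each bad edge creates roughly $\binom{k-2}{r-2}(n/(k-1))^{r-2}$ new $K_r$-copies, and a convexity argument in the part sizes shows that for $r\ge 4$ the count is strictly minimised by concentrating the bad edges in a single part, giving the unique $\mathcal{H}_1$ characterisation. For $r=3$ the corresponding first-order contributions coincide between distributions over different parts and between bad/missing trade-offs, so both $\mathcal{H}_0$ and $\mathcal{H}_2$ can host extremal graphs, with an $\mathcal{H}_1$ representative always present (obtained by placing all excess edges inside one part as a triangle-free graph). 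The hardest step is precisely the cleaning: for $r=3$ many local moves are $K_r$-neutral, so one must carefully choose and order them to guarantee termination strictly inside $\mathcal{H}(n,e)$ without cycling. This is exactly what forces the range restriction $e-t_{k-1}(n)\le\alpha n^2$ with small $\alpha=\alpha(r,k)$ and accounts for the length of the technical core of~\cite{LovaszSimonovits83}.
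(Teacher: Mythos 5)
First, a point of orientation: Theorem~\ref{LS83} is not proved in this paper at all — it is quoted from Lov\'asz and Simonovits and used as a black box — so there is no internal proof to compare against. Judged on its own merits, your outline has the right general shape (approximate structure, local clean-up, then an exact comparison inside the structured family; the last step is essentially Proposition~\ref{pr:compute}), but it contains genuine gaps at each of the first two stages. The stability input you invoke for $r\ge 4$ does not exist: the paper states explicitly in Section~\ref{ConcludingRemarks} that no analogue of Theorem~\ref{PikhurkoRazborov17approx} is known for $r\ge 4$, and in any case that theorem postdates the result being proved by decades. In the regime $t_{k-1}(n)\le e\le t_{k-1}(n)+\alpha n^2$ the correct substitute is an elementary averaging/counting argument (Lov\'asz--Simonovits's own ``Theorem 2'', reproduced in this paper as Lemma~\ref{almostcompleteagain}), which your proposal does not carry out.

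More seriously, Zykov symmetrisation as you describe it — replacing $N(u)$ by a copy of $N(v)$ — changes the number of edges by $d(v)-d(u)$ and so leaves the class of $(n,e)$-graphs. The fixed edge count is precisely the constraint that makes the Erd\H{o}s--Rademacher problem hard; every local move must be paired with a compensating edge swap, and one must then check that the compensation does not create more $K_r$'s than the move destroys (this is what the paper's own Transformations~1--6 spend Section~\ref{sec:trans} doing in its harder regime). Finally, you acknowledge yourself that the termination and ordering of the cleaning moves — in particular ruling out cycling among $K_r$-neutral moves and showing the terminal graph lands exactly in $\mathcal{H}_0\cup\mathcal{H}_2$ with an $\mathcal{H}_1$ representative — ``accounts for the length of the technical core''. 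That step is the theorem; as written, the proposal is a plan for a proof rather than a proof.
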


Although the proof of Theorem~\ref{LS83} does not use the Regularity Lemma, the constant $\alpha(r,k)$ given by it is nonetheless so small that Lov\'asz and Simonovits~\cite[Page 465]{LovaszSimonovits83} write that they ``did not even dare to estimate'' $\alpha(3,3)$. In the same papers~\cite{LovaszSimonovits76,LovaszSimonovits83}, the following bold conjecture was stated.

\begin{conjecture}[\cite{LovaszSimonovits76,LovaszSimonovits83}]
\label{cj:LS}
For all integers $r \geq 3$, there exists $n_0 = n_0(r)>0$ such that $g_r(n,e) = h_r(n,e)$ for all positive integers $n \geq n_0$ and $e \leq \binom{n}{2}$.
\end{conjecture}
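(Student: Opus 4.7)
The plan is to prove Conjecture~\ref{cj:LS} by partitioning the range of $e$ into three regimes and inducting on $r$ from the base case $r=3$. For $t_{k-1}(n)\le e\le t_{k-1}(n)+\alpha(r,k)\,n^2$ with $k=k(n,e)$, Theorem~\ref{LS83} is already the desired result, so the work concentrates on edge densities bounded away from the Tur\'an values.

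For the intermediate regime, where $\lambda:=e/\binom{n}{2}$ is bounded away from both $1-1/(k-1)$ and~$1$, I would follow the strategy suggested by this paper. The first step is to prove a quantitative stability theorem generalising Theorem~\ref{PikhurkoRazborov17approx} to all $r\ge 3$: every $(n,e)$-graph $G$ with $K_r(G)\le g_r(n,e)+\delta n^r$ lies within edit distance $\varepsilon n^2$ of some member of $\mathcal{H}_1^*(n,e)$. For $r=3$ this is the Pikhurko--Razborov result; for $r\ge 4$ I would obtain it by upgrading Reiher's flag-algebra proof with an analysis of the uniqueness of the extremal solution, in the style of similar stability results for Tur\'an-type problems. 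Once stability is in hand, one performs local surgery on an extremal $G$ close to $H\in \mathcal{H}_1^*(n,e)$: moving a vertex between parts, or reassigning a small number of missing edges, perturbs the $r$-clique count in a way whose sign is controlled by the part sizes, and minimality forces $G\in \mathcal{H}(n,e)$. The vertex-by-vertex optimisation can be streamlined using the identity $r\cdot K_r(G)=\sum_{v\in V(G)}K_{r-1}(G[N(v)])$, reducing the $K_r$-count inside neighbourhoods to the inductive hypothesis at $r-1$.

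For the ultra-dense regime $\bar e:=\binom{n}{2}-e=o(n^2)$, stability with error $\varepsilon n^2$ becomes uninformative, so I would switch to a complementary analysis. A Zykov-style symmetrisation on the complement graph $\overline{G}$ (equalising two vertices of the same non-neighbourhood type whenever that decreases $K_r$) shows that any extremal $G$ has complement that is almost multipartite. Because $\bar e$ is small, the conjectured extremum $H^*(n,e)$ has one almost-clique together with one or two very small parts, so a direct combinatorial comparison---deleting or adding a single edge and comparing the induced change in $K_r$ with the exact formula for $h^*(n,e)$---pins down $G$ inside $\mathcal{H}(n,e)$. The inductive step from $r-1$ to $r$ is cleaner here because each non-edge of $G$ sits in a predictable number of $(r-2)$-cliques of the dominant almost-complete subgraph.

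The hardest part, I expect, is the transition between the intermediate and the ultra-dense regimes, where $\bar e$ is of order $\Theta(n^{2-o(1)})$ or $O(n)$. In this range the soft stability approach fails because the admissible error $\varepsilon n^2$ swamps the true edge deficit~$\bar e$, while the symmetrisation approach runs out of room to manoeuvre. Bridging the gap seems to require a sharp Kruskal--Katona-type lower bound on $K_r(G)$ that tracks not just the cardinality of the non-edge set but its precise structural placement, matching $h^*(n,e)$ term by term and uniformly in~$r$. Producing such a sharp count---together with a corresponding exact stability statement extending Theorem~\ref{PikhurkoRazborov17approx} into the regime $\lambda\to 1$---is, in my estimation, the central difficulty of the conjecture and the step most likely to demand genuinely new techniques beyond those assembled here.
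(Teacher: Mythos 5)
The statement you were asked about is Conjecture~\ref{cj:LS}, which this paper does \emph{not} prove: the paper establishes only the case $r=3$ with $e\le\binom{n}{2}-\eps n^2$ (Theorems~\ref{main} and~\ref{strong}), and the conjecture remains open in general. Your text is accordingly a research programme rather than a proof, and it contains genuine gaps at exactly the points the authors identify as the outstanding obstacles. First, your intermediate-regime plan rests on a stability theorem for $r$-cliques with $r\ge4$ analogous to Theorem~\ref{PikhurkoRazborov17approx}; the paper states explicitly in Section~\ref{ConcludingRemarks} that no such result is known, and ``upgrading Reiher's flag-algebra proof with an analysis of uniqueness'' is a hope, not an argument. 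Second, even granting stability, the sentence ``local surgery \ldots and minimality forces $G\in\mathcal{H}(n,e)$'' compresses what is, for $r=3$ alone, the entire technical content of Sections~\ref{beginning}--\ref{bound}: the worst-counterexample criteria \ref{worst-C1}--\ref{worst-C3}, the six transformations, the separate treatment of $m\ge Cn$ and $m<Cn$, and the boundary case. Nothing in your proposal indicates how this machinery extends to $r\ge4$, where the induction $r\cdot K_r(G)=\sum_v K_{r-1}(G[N(v)])$ does not obviously interact with the exact extremal structure (note also that for $r\ge4$ the extremal family differs: Theorem~\ref{LS83} says every extremal graph lies in $\mathcal{H}_1$, so the $\mathcal{H}_2$-analysis would have to change).

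Third, and most seriously, your assessment of where the difficulty lies is inverted. You treat the ultra-dense regime $\binom{n}{2}-e=o(n^2)$ as tractable by Zykov-type symmetrisation of the complement and reserve the ``transition'' range as the hard part. But Lemma~\ref{lm:AlmostComplete} shows that establishing $g_3(n,e)=h^*(n,e)$ for all $e\ge\binom{n}{2}-Cn$ (for every constant $C$) is already \emph{equivalent} to the full conjecture for $r=3$: any counterexample at any density can be blown up into one in the almost-complete range. So the claim that a ``direct combinatorial comparison'' pins down the extremal graphs when $\bar e=o(n^2)$ cannot be substantiated by soft arguments --- the only known exact result there is Erd\H{o}s's observation for $e\ge\binom{n}{2}-\lfloor n/2\rfloor$, and the paper notes that going beyond $\bar e=O(n)$ is as difficult as the whole problem. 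In short: the base case $r=3$ at densities near $1$ is open, the stability input for $r\ge4$ is open, and the exact-structure surgery is unproven for $r\ge4$; each of these is a missing ingredient rather than a routine step.
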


Of course, the triangle-minimisation problem for such a restricted class as any of $\mathcal{H}_i(n,e)$ is much easier than the unrestricted function $g_3(n,e)$. In fact, we can solve it exactly.

\begin{proposition}\label{pr:compute} For $i\in \{0,1,2\}$ and all $n,e\in \I N$ with $e\le {n\choose 2}$, we have that
	$\min\lbrace K_3(H):H \in \mathcal{H}_i(n,e)\rbrace=h^*(n,e)$ and $\mathcal{H}_i^*(n,e)$ is the set of graphs in $\mathcal{H}_i(n,e)$ that attain this bound.
	
	In particular, we have that $h(n,e)=h^*(n,e)$.
\end{proposition}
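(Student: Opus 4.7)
My plan is to handle each family $\mathcal{H}_i$ for $i\in\{0,1,2\}$ separately, via a common three-step strategy: (i) verify the inclusion $\mathcal{H}_i^*(n,e)\subseteq \mathcal{H}_i(n,e)$ directly from the definitions and compute $K_3(H)=h^*(n,e)$ for each $H\in\mathcal{H}_i^*(n,e)$, giving the upper bound $\min\{K_3(H):H\in\mathcal{H}_i(n,e)\}\le h^*(n,e)$; (ii) for an arbitrary $H\in\mathcal{H}_i(n,e)$, write $K_3(H)$ as an explicit polynomial in the partition sizes and the ``deviation from multipartite'' parameters; (iii) minimise this polynomial by exchange arguments, thereby identifying both the value $h^*(n,e)$ and the full family of minimisers $\mathcal{H}_i^*(n,e)$.

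For $\mathcal{H}_1(n,e)$ I would fix the partition $V=A_1\cup\ldots\cup A_{k-2}\cup B$ and observe that, because $H[B,V\setminus B]$ is complete and $J:=H[B]$ is triangle-free, every edge of $J$ participates in exactly $n-|B|$ triangles (the third vertex is forced to lie in $V\setminus B$). This yields
\[
K_3(H)\;=\;\sum_{i<j<\ell\le k-2}a_ia_ja_\ell\;+\;|B|\sum_{i<j\le k-2}a_ia_j\;+\;(n-|B|)\,|E(J)|,
\]
with $a_i:=|A_i|$ and $|E(J)|$ forced by the edge-count condition, so $K_3(H)$ depends only on $(a_1,\ldots,a_{k-2},|B|)$. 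Single-vertex exchanges between parts, together with Mantel's bound $|E(J)|\le\lfloor|B|^2/4\rfloor$ on the feasible range, should pin the optimum at $(a_1^*,\ldots,a_{k-2}^*,a_{k-1}^*+a_k^*)$, and an arbitrary triangle-free $J$ with the prescribed number of edges then yields exactly $\mathcal{H}_1^*(n,e)$. The case of $\mathcal{H}_2(n,e)$ is parallel: parameterise by part sizes $a_1\ge\ldots\ge a_k$ together with, for each $v\in A_k$, its unique incomplete partner index $p(v)\in[k-1]$ and missing-edge count $M(v):=a_{p(v)}-|N(v)\cap A_{p(v)}|$, giving
\[
K_3(H)\;=\;\sum_{i<j<\ell\le k}a_ia_ja_\ell\;-\;\sum_{v\in A_k}M(v)\bigl(n-a_k-a_{p(v)}\bigr);
\]
one maximises the subtractive sum by concentrating all missing edges on a single $A_k$-vertex aimed at a smallest-sized partner part, then optimises over $(a_1,\ldots,a_k)$ to recover $\bm{a}^*$ from Definition~\ref{astardef}. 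For $\mathcal{H}_0(n,e)$, each edge of the triangle-free modifier $J$ inside a part $V_p$ contributes $n-|V_p|$ triangles, so for any fixed partition the optimum is obtained by greedily filling the largest parts first, subject to Mantel's bound $\lfloor|V_p|^2/4\rfloor$; when $J$ fits into one largest part we recover a configuration in $\mathcal{H}_1$, while otherwise $J$ must spread across several parts of equal largest size, which a short calculation shows happens only when $a_k^*=1$ and produces precisely the extra graphs of $\mathcal{H}_0^*\setminus\mathcal{H}_1^*$.

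The main obstacle is step~(iii): the discrete minimisation of the triangle-count polynomial over integer partitions of $n$ with a prescribed edge count, which must identify $\bm{a}^*(n,e)$ as the unique minimising integer vector and carefully match the equality cases --- especially $m^*=0$ (allowing the triangle-free subgraph to straddle any pair of parts that includes $A_k^*$) and $a_k^*=1$ (allowing further merging of parts) --- to the auxiliary components in the definition of each $\mathcal{H}_i^*(n,e)$. The final identity $h(n,e)=h^*(n,e)$ then follows immediately from the $i\in\{1,2\}$ cases via the definition of $h(n,e)$.
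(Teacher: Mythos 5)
Your counting identities are all correct: in $\mathcal{H}_1$ every $J$-edge lies in exactly $n-|B|$ triangles and no triangle uses two $J$-edges, so $K_3(H)$ is the polynomial you write; similarly for $\mathcal{H}_0$ (each $J$-edge inside $V_p$ contributes $n-|V_p|$) and for $\mathcal{H}_2$ (each missing edge at $v$ kills exactly $n-a_k-a_{p(v)}$ triangles, and no triangle contains two missing edges). Your route — reduce each family to an explicit polynomial in the part sizes and optimise directly — is genuinely different from the paper's, which instead takes an arbitrary graph in $\mathcal{H}_0^{\mathrm{min}}(n,e)$, transforms it through six explicit steps into a canonical graph $H'$, proves $H'\cong H^*(n,e)$ (Lemma~\ref{lem-Hlpab}), and then obtains the $\mathcal{H}_2$ case from the $\mathcal{H}_1$ case via the auxiliary family $\mathcal{H}_2'$ and Corollary~\ref{H2new}, rather than re-optimising.

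However, the step you defer to the end is essentially the whole content of the proposition, and your description of its outcome contains a substantive error: the minimising part-size vector is \emph{not} unique, in exactly the ways the families $\mathcal{H}_i^*$ are built to record. For $\mathcal{H}_1$ with $m^*=0$, the set $B$ may be $A_i^*\cup A_k^*$ for \emph{any} $i\in[k-1]$, which gives two distinct optimal values of $|B|$ whenever the $a_i^*$, $i\in[k-1]$, are not all equal; for $\mathcal{H}_2$ with $m^*=0$ and $a_1^*\ge a_k^*+2$, the vector $(a_2^*,\ldots,a_{k-1}^*,a_1^*-1,a_k^*+1)$ is also optimal (cf.\ the computation \eqref{eq:m-m*}). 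An exchange argument that ``pins the optimum at $(a_1^*,\ldots,a_{k-2}^*,a_{k-1}^*+a_k^*)$'' or ``recovers $\bm{a}^*$'' as a unique minimiser would therefore certify too small an extremal family and the proposition would be proved false as stated. A second point you must handle explicitly is feasibility along the exchange path: moving a vertex between parts changes the forced value of $e(J)$ (resp.\ of $m$), and one needs $0\le e(J)\le t_2(|B|)$ throughout, resp.\ $0\le m\le a_{k-1}a_k$ together with the one-incomplete-partner constraint for $\mathcal{H}_2$; this is why the paper needs Lemma~\ref{lm:a*k} and inequalities such as~\eqref{m*} and~\eqref{eq:m'C}, and why its Claims~\ref{cl:iv}--\ref{cl:AlmostEqual} compare only configurations that are verified to exist. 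Neither issue is fatal to your plan, but together they are where all of the work lies, and the second half of your argument cannot be waved through as ``a short calculation.''
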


An interesting consequence of Proposition~\ref{pr:compute} that has not been observed before is that, for $r=3$, if Conjecture~\ref{cj:LS} is true, then its conclusion is in fact true for \textbf{all} $n\ge 1$, see Lemma~\ref{lm:AlmostComplete}.

Apart from some cases when $e$ is very close to $\binom{n}{2}$, to the best of the authors' knowledge, all established cases of the conjecture are confined to the direct consequences of Theorem~\ref{razthm} via the blow-up trick and to Theorem~\ref{LS83} (the latter superseding, as $n\to\infty$, all remaining exact results that we mentioned). The main contribution of this paper is to prove the conjecture when $r=3$ and $e/{n\choose 2}$ is bounded away from $1$, and to characterise the extremal graphs in this range.

\begin{theorem}\label{main}
For all $\eps>0$, there exists $n_0>0$ such that for all positive integers $n \geq n_0$ and $e \leq \binom{n}{2}-\eps n^2$, we have that 
$g_3(n,e) = h(n,e)$. Furthermore, the family of extremal $(n,e)$-graphs is precisely $\mathcal{H}_0^*(n,e) \cup \mathcal{H}_2^*(n,e)$.
\end{theorem}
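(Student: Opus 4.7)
The plan is to combine the stability theorem of Pikhurko–Razborov (Theorem~\ref{PikhurkoRazborov17approx}) with a cleaning/symmetrisation procedure that transforms any triangle-minimising $(n,e)$-graph into a member of $\mathcal{H}_0^*(n,e) \cup \mathcal{H}_2^*(n,e)$ without ever increasing $K_3$. The hypothesis $e \le \binom{n}{2} - \varepsilon n^2$ bounds $k = k(n,e)$ by a constant $k_0 = k_0(\varepsilon)$ and keeps us uniformly away from the almost-complete regime (handled separately by Erd\H{o}s's observation). When $t_{k-1}(n) \le e \le t_{k-1}(n) + \alpha n^2$ for a small $\alpha = \alpha(\varepsilon, k_0)$, the conclusion already follows from Theorem~\ref{LS83} combined with Proposition~\ref{pr:compute}, so I may additionally assume $e - t_{k-1}(n) > \alpha n^2$.

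Let $G$ be an extremal $(n,e)$-graph. Combining~\eqref{asympbound} with Theorem~\ref{razthm} and the upper bound $g_3(n,e) \le h^*(n,e)$ gives $K_3(G) = h^*(n,e) + O(n^2)$, so Theorem~\ref{PikhurkoRazborov17approx} with $\delta \ll \alpha$ provides $H \in \mathcal{H}_1^*(n,e)$ with $|E(G) \triangle E(H)| \le \delta n^2$. Transferring the partition of $H$ yields $V(G) = V_1 \cup \cdots \cup V_k$ with $|V_i| = a_i^*$ such that only $\delta n^2$ cross-pairs are non-edges and only $\delta n^2$ in-part pairs are edges; by Markov at most $\sqrt{\delta}\, n$ vertices are atypical, i.e.\ incident to more than $\sqrt{\delta}\, n$ wrong pairs.

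The core step is a Zykov-style symmetrisation: for each atypical $v$, reassign it to the part maximising its non-neighbour count in $G$ and rebalance in-part edges so that $|E(G)|$ is preserved. A direct calculation, exploiting that the non-atypical vertices in each part are near-regular, shows that such a move does not increase $K_3(G)$. Iterating until all vertices are typical (extremality forcing equality throughout) yields an extremal graph $G'$ whose cross-edges are almost complete — with exactly $m^*$ missing edges, or $a_1^* - a_k^* - 1$ missing edges in the swap case of $\mathcal{H}_2^*$ — and whose in-part edges form a triangle-free graph $J$ supported on at most two parts. A direct triangle count in the spirit of Proposition~\ref{pr:compute} then pins down both the distribution of $J$ and the missing-edge pattern: the minimum of $K_3(G')$ subject to the edge budget is achieved exactly on the configurations in $\mathcal{H}_0^*(n,e) \cup \mathcal{H}_2^*(n,e)$, whence $G \cong G'$ lies in this family.

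The main obstacle will be making the cleaning strictly compatible with extremality. Individual vertex-moves may decrease or increase $K_3(G)$ depending on the parts involved and the precise pattern of wrong in-part edges and missing cross-edges, so one must engineer a sequence of moves that is monotone in $K_3$ and that terminates at the correct structure. The delicate case is when $m^*$ is small and $a_k^* = 1$: the boundary between placing excess edges across the last two parts (giving $\mathcal{H}_1^* \subseteq \mathcal{H}_0^*$) and inside a single part of size $a_{k-1}^* + 1$ (giving $\mathcal{H}_0^* \setminus \mathcal{H}_1^*$) is razor-thin, and separating truly extremal configurations from near-misses requires an integer-valued second-order comparison, combined with a Kruskal–Katona-type bound applied to $J$ to rule out denser-than-bipartite triangle-free placements. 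I expect this integer-level analysis — rather than the stability or symmetrisation steps — to be the main technical hurdle.
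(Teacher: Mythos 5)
There is a genuine gap, and it sits exactly where the paper's real work lies. Your plan hinges on a Zykov-style symmetrisation in which every vertex-move is shown "by a direct calculation" not to increase $K_3$. The paper explicitly identifies this as unachievable with the information available at that stage: after Theorem~\ref{PikhurkoRazborov17approx} one does not have fine enough control on $G$ to make the cleaning monotone in $K_3$, which is why the proof of Theorem~\ref{strong} instead performs six transformations that are allowed to \emph{increase} the triangle count by $o(m^2/n)$ (Lemma~\ref{maintrans}), uses the resulting $k$-partite graph $G'$ together with Lemma~\ref{CompletekPartite} to extract much sharper structural information about $G$ (part sizes to within $o(m/n)$, concentration of the missing edges in one pair of parts), and only then re-runs the transformations to show they strictly decrease $K_3$. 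A single move of an atypical vertex can genuinely create triangles — this is why the paper needs the quantity $\Delta$, the reservoir $R_k'$, the sets $Q_i$, and the step-by-step accounting in Lemmas~\ref{subZiedges}--\ref{G5}. Moreover, even if your moves were weakly monotone, "extremality forces equality throughout, whence $G\cong G'$" is a non sequitur: you would only learn that the cleaned graph $G'$ lies in the extremal family, not that $G$ does. Recovering $G\in\mathcal{H}(n,e)$ from $G'\in\mathcal{H}(n,e)$ is precisely the difficulty of the $m<Cn$ case, and the paper resolves it by choosing $G$ to satisfy the extra minimality criteria \ref{worst-C2} and \ref{worst-C3} and by the extremal-family analysis of Lemmas~\ref{lem-H1}, \ref{lem-H2} and \ref{moreH2pain}; your proposal has no analogue of this mechanism.

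Two further steps are asserted but do not hold as stated. First, "transferring the partition of $H$" does not give a partition of $V(G)$ with few in-part edges: a graph $H\in\mathcal{H}_1^*(n,e)$ carries an \emph{arbitrary} triangle-free graph on $A_{k-1}^*\cup A_k^*$ (e.g.\ a blow-up of $C_5$), which places up to $\Theta(n^2)$ edges inside those two parts. Showing that $G$ is in fact close to the single complete $k$-partite graph $K^k_{\lfloor cn\rfloor,\ldots}$ is Lemma~\ref{lem-Almost-k-Partite}, a substantial argument that uses the function $f$ of \eqref{eq-f}, the worst-counterexample property, and an induction on $k$ (applying the theorem to $G[\,\overline{A_i}\,]$). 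Second, your proposal gives no treatment of the boundary regime $t_k(n)-e=o(n^2)$, where all parts have nearly equal size and the $o(n^2)$ error from stability is useless; there the paper needs the averaging argument of Lemma~\ref{almostcompleteagain} to get deviations of order $\sqrt{r}$ and $O(r)$ wrong pairs, with $r=t_k(n)-e$. You correctly anticipate that the $a_k^*=1$ integer analysis is delicate, but that part is comparatively routine (Proposition~\ref{pr:compute}); the hurdles above are the ones that would stop your argument.
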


By Theorem~\ref{LS83} and Proposition~\ref{pr:compute}, it is enough to prove Theorem~\ref{main} when $e\ge t_{k-1}(n)+\Omega(n^2)$, where $k=k(n,e)$. This is done in the next theorem. (Note that $\mathcal{H}_0$ is irrelevant in this range by~\eqref{eq:H0=H1}.)

\begin{theorem}\label{strong}
	For all $\eps,\alpha > 0$ and every integer $3 \leq k \leq 1/\eps$, there exists $n_0>0$ such that the following holds.
	For all integers $n,e$ with $n \geq n_0$ and $t_{k-1}(n) + \alpha n^2 \leq e < t_k(n)$, we have $g_3(n,e)=h(n,e)$ and every extremal graph lies in $\mathcal{H}(n,e) =\mathcal{H}_1(n,e) \cup \mathcal{H}_2(n,e)$.
\end{theorem}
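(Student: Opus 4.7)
The plan is to combine the Pikhurko--Razborov stability theorem (Theorem~\ref{PikhurkoRazborov17approx}) with a careful local-modification argument that forces an arbitrary extremal $(n,e)$-graph into the family $\mathcal{H}(n,e)$. Fix any $(n,e)$-graph $G$ with $K_3(G)$ minimum. The upper bound $g_3(n,e)\le h(n,e)$ is immediate since $H^*(n,e)\in\mathcal{H}_1(n,e)$ and $K_3(H^*)=h^*(n,e)=h(n,e)$ by Proposition~\ref{pr:compute}.

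First, choose $\eta>0$ small in terms of $\alpha$ and $\eps$, and apply Theorem~\ref{PikhurkoRazborov17approx} to obtain $H_0\in\mathcal{H}_1^*(n,e)$ within edit distance $\eta n^2$ from $G$. This yields a partition $V(G)=A_1\cup\cdots\cup A_k$ agreeing with that of $H_0$ on all but $O(\sqrt{\eta}\,n)$ vertices; the hypotheses $t_{k-1}(n)+\alpha n^2\le e<t_k(n)$ and $k\le 1/\eps$ together ensure every $|A_i|=\Theta(n)$. Call $v\in A_i$ \emph{typical} if it has at most $\eta^{1/3}n$ non-neighbours in $V(G)\setminus A_i$, and \emph{atypical} otherwise; a standard averaging argument bounds the number of atypical vertices by $O(\eta^{1/3}n)$. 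A vertex-cleaning step reassigns each atypical vertex to the part where its non-degree is smallest, while rebalancing a few compensating edges across parts, producing (without increasing $K_3$ or changing $e$) a graph in which every vertex is typical.

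The heart of the argument is to prove that, once every vertex is typical, all within-part edges must lie inside a single part, say $A_k$, and induce a triangle-free graph there. This should follow from a careful edge-redistribution estimate: given a within-$A_i$ edge and a within-$A_j$ edge with $i\neq j$, one compares the triangle count of $G$ against that of the graph obtained by moving one of these edges into a fixed part and performing a compensating cross-edge swap. The strict concavity of $h^*$ between consecutive Goodman-tight densities $1-1/k$ from~\eqref{eq:GoodmanSharp}, combined with the $\Theta(n)$ lower bound on cross-degrees of typical vertices, should force this exchange to strictly decrease $K_3$ unless all internal edges are already confined to one part. Once this is established, $G$ sits inside $\mathcal{H}_1(n,e)\cup\mathcal{H}_2(n,e)$, and a final case analysis using Proposition~\ref{pr:compute} and the minimality of $K_3(G)$ pins down the part sizes to be precisely $(a_1^*,\dots,a_k^*)$ (or its $\mathcal{H}_2^*$-variant), forcing $G\in\mathcal{H}_0^*(n,e)\cup\mathcal{H}_2^*(n,e)$.

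I expect the consolidation step---showing that all within-part edges collapse into a single part---to be the main obstacle. The family $\mathcal{H}^*(n,e)$ of asymptotically extremal configurations is large and parameterised by a triangle-free graph inside one of several possible parts, so stability is far from enough on its own: one needs $O(1)$-precise triangle comparisons that remain monotone throughout the cleaning, and these must be robust against the $O(\eta^{1/3}n)$ errors incurred earlier. The assumption that the density is bounded away from $1$, equivalently $k\le 1/\eps$, is crucial: it keeps every part linearly large, providing the room to perform the local exchanges and ensuring that the concavity gain dominates the accumulated error terms from the preceding cleaning steps.
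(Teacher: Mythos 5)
Your outline captures the right general philosophy (stability plus local cleaning), but as written it has several gaps that are not merely technical, and two of them are where essentially all of the work in the actual argument lives.

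First, you assert that the consolidation step needs ``$O(1)$-precise triangle comparisons,'' but you provide no mechanism for obtaining them. Strict concavity of $h^*$ and the fact that parts have size $\Theta(n)$ only give comparisons with $o(n^2)$ or at best $O(n)$ slack, which is far too weak to rule out a single misplaced edge. The paper gets the needed precision by setting up a \emph{worst counterexample}: it assumes the theorem fails, takes the minimal $k$, then a pair $(n,e)$ minimising $g_3(n,e)-h(n,e)$ over all $e'$ with $k(n,e')=k$, and only then an extremal $G\notin\mathcal{H}(n,e)$ optimised further by the max-cut criteria~\ref{worst-C2}--\ref{worst-C3}. The minimality over $e$ is what yields, via Lemma~\ref{lem-slope-c} and Corollary~\ref{cr:dh}, the key inequalities $P_3(\overline{f},G)\ge(k-2)cn-k$ for non-edges and $P_3(f,G)\le(k-2)cn+k$ for edges; every local exchange in the proof is evaluated against these. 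Without this setup your exchange argument cannot close. Relatedly, your claim that the cleaning step can be done ``without increasing $K_3$'' is exactly what fails: the paper's transformations may increase the triangle count by up to $o(m^2/n)$, and recovering from this requires a second pass (Sections~\ref{sec:superlinear}--\ref{sec:linear}) in which the cleaned graph is used to extract finer structure of $G$, after which the transformations are re-examined and shown to strictly decrease $K_3$ --- except in the regime $m<Cn$, where only equality holds and one must instead argue that $G$ itself already lies in $\mathcal{H}(n,e)$ via the criteria~\ref{worst-C2}--\ref{worst-C3} and the analysis of Section~\ref{charext}.

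Second, your proof covers only the intermediate range. When $t_k(n)-e=r=o(n^2)$ (the boundary case of Section~\ref{bound}), Theorem~\ref{PikhurkoRazborov17approx} gives an edit-distance bound of $o(n^2)$, which is useless because the structure must be pinned down to error $O(r)$ in edges and $O(\sqrt{r})$ in part sizes, with $r$ possibly as small as $1$; the paper replaces stability there by a Lov\'asz--Simonovits-style averaging argument (Lemma~\ref{almostcompleteagain}). Finally, your target structure --- all within-part edges confined to a single part inducing a triangle-free graph --- describes only $\mathcal{H}_1(n,e)$; the genuinely extremal $k$-partite graphs in $\mathcal{H}_2(n,e)$ with missing edges distributed among several parts are not of this form, so an argument that forces this structure on every extremal graph would prove something false.
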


We believe that the following strengthening of the case $r=3$ of Conjecture~\ref{cj:LS} holds where, additionally, the exact structure of all extremal graphs is described.

\begin{conjecture}\label{conj}
For all positive integers $n$ and $e \leq \binom{n}{2}$, an $(n,e)$-graph $G$ satisfies $K_3(G) = g_3(n,e)$ if and only if $G \in \mathcal{H}_0^*(n,e) \cup \mathcal{H}_2^*(n,e)$.
\end{conjecture}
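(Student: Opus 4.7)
My approach is to combine Theorem~\ref{main}, which already settles the conjecture everywhere except in the dense regime, with a direct analysis of that leftover range (together with a finite check for small $n$). Fix a small $\varepsilon>0$ and let $n_0=n_0(\varepsilon)$ be as in Theorem~\ref{main}; for $n\geq n_0$ and $e\leq\binom{n}{2}-\varepsilon n^2$ the characterization of extremal $(n,e)$-graphs follows at once. The range $n<n_0$ is finite and can in principle be settled by computer enumeration, so the remaining task is to handle $n\geq n_0$ and $e>\binom{n}{2}-\varepsilon n^2$.

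For this dense range I would pass to the complement $\bar G$, set $\bar e:=\binom{n}{2}-e<\varepsilon n^2$, and exploit the triple-counting identity
\[
K_3(G)=\binom{n}{3}-(n-2)\bar e+\sum_{v}\binom{\bar d(v)}{2}-K_3(\bar G),
\]
which reduces the problem to minimising
\[
F(\bar G):=\sum_{v}\binom{\bar d(v)}{2}-K_3(\bar G)
\]
over all graphs on $n$ vertices with $\bar e$ edges. Since $\bar e$ is small, Kruskal--Katona gives $K_3(\bar G)=O(\bar e^{3/2})$, so the wedge term dominates and an extremal $\bar G$ must have an almost-regular degree sequence. I would then run a local edge-swap argument---swap two edges of $\bar G$ so as to flatten its degree sequence, while carefully tracking the effect on $K_3(\bar G)$---to pin $\bar G$ down to a short list of rigid structures. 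For $\bar e\leq\lfloor n/2\rfloor$ this recovers Erd\H os's classical observation that the minimiser $\bar G$ must be a matching; for larger $\bar e$ the forced overlaps are organised into patterns that, upon re-complementing, land exactly in $\mathcal{H}_0^*(n,e)\cup\mathcal{H}_2^*(n,e)$. Proposition~\ref{pr:compute} then supplies the sufficiency direction, since every graph in $\mathcal{H}_0^*(n,e)\cup\mathcal{H}_2^*(n,e)$ is already known to achieve the value $h^*(n,e)$, which equals $g_3(n,e)$ throughout the relevant range.

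I expect the exact---not merely asymptotic---classification of minimisers of $F(\bar G)$ across the transition $\lfloor n/2\rfloor<\bar e<\varepsilon n^2$ to be the main obstacle. Here the extremal structure undergoes several transitions as edges of $\bar G$ are forced to share vertices, and each candidate near-regular sparse graph has to be excluded exactly rather than up to $o(n^2)$ stability. Coordinating the boundary $e\approx\binom{n}{2}-\varepsilon n^2$ between Theorem~\ref{main} and the sparse-complement analysis, and carrying out the finite check for $n<n_0$, are additional pieces of bookkeeping that are routine but non-trivial.
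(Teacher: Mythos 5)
The statement you set out to prove is Conjecture~\ref{conj}, which the paper explicitly leaves open: Theorem~\ref{main} establishes it only for $n$ large and $e \leq \binom{n}{2}-\varepsilon n^2$, and the paper offers no proof of the remaining range. So the question is whether your plan closes that gap, and it does not.

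The decisive problem is your treatment of the dense range $e > \binom{n}{2}-\varepsilon n^2$. You present this as a tractable leftover to be dispatched by a local-swap analysis of the sparse complement, but the paper's Lemma~\ref{lm:AlmostComplete} shows the opposite: if $g_3(n,e)=h^*(n,e)$ held for all large $n$ and all $e \geq \binom{n}{2}-Cn$ for every constant $C$, then it would hold for \emph{all} pairs $(n,e)$, because a blow-up construction transplants any hypothetical counterexample into the almost-complete regime. Hence the range you defer to ``a direct analysis'' contains, up to this reduction, the entire difficulty of the Erd\H{o}s--Rademacher problem. Your identity $K_3(G)=\binom{n}{3}-(n-2)\bar e+\sum_v\binom{\bar d(v)}{2}-K_3(\bar G)$ is correct, but the heuristic that the wedge term dominates and forces $\bar G$ to be near-regular does not pin down the structure: for $\bar e=\Theta(n^2)$ (still permitted when $\bar e<\varepsilon n^2$) the conjectured minimiser has complement a disjoint union of $k$ cliques of linear size plus a small star, for which $K_3(\bar G)=\Theta(\bar e^{3/2})$ is of the same order as the wedge count, so the two terms genuinely compete and the ``forced overlaps'' you hope to organise are exactly the unresolved trade-off. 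Erd\H{o}s's observation covers only $\bar e\leq\lfloor n/2\rfloor$, i.e.\ $\binom{n}{2}-e=O(n)$, which falls far short of $\varepsilon n^2$. Proposition~\ref{pr:compute} does supply the sufficiency direction, and Lemma~\ref{lm:AlmostComplete} would remove the $n\geq n_0$ hypothesis for the \emph{value} $g_3=h^*$ but not for the classification of extremal graphs, so even the finite check is not as routine as you suggest. As written, this is a research programme restating the open problem, not a proof.
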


\subsection{Organisation of the paper}

We collect some frequently used notation in Section~\ref{notation} (and there is a symbolic glossary at the end of the paper).
Theorem~\ref{main} is formally derived from Theorem~\ref{strong} in Section~\ref{derivation}.
Since the proof of Theorem~\ref{strong} is very involved and long, we provide a sketch in Section~\ref{sketch}, and also try to provide all details in calculations.
In Section~\ref{prelims}, we investigate the function $h(n,e)$ and provide some preliminary tools that will be used later on, in particular we prove Proposition~\ref{pr:compute}. The proof of Theorem~\ref{strong} begins in Section~\ref{beginning}.
Sections~\ref{int1}--\ref{int3} continue the proof in the `intermediate' case, which, roughly speaking, is when $e$ is bounded away from any Tur\'an density. The remaining `boundary' case is dealt with in Section~\ref{bound}. Some concluding remarks can be found in Section~\ref{ConcludingRemarks}.

\section{Notation}\label{notation}

Given a set $X$ and $k \in \mathbb{N}$, let $\binom{X}{k}$ denote the set of $k$-subsets of $X$. Also, $[k]:=\{1,\dots,k\}$. We may abbreviate $\{a,b\}$ to $ab$.
 We write $x = y \pm \eps$ if $y-\eps \leq x \leq y+\eps$.

We use standard graph theoretic notation.
Given a graph $G$ and $A \subseteq V(G)$, we write $\overline{A} := V(G)\setminus A$ for the \emph{complement of $A$ in $G$} and $\overline{G}$ for the graph with vertex set $V(G)$ and edge set $\binom{V(G)}{2}\setminus E(G)$, which we call the \emph{complement of~$G$}.
Further, we write $G[A]$ for the graph induced by $G$ on $A$.
Given disjoint $A,B \subseteq V(G)$, we write $G[A,B]$ for the graph with vertex set $A \cup B$ and edge set $\lbrace ab \in E(G):a \in A, b \in B\rbrace$.
For $x \in V(G)$ and $A \subseteq V(G)$, we set $N_G(x,A) := \lbrace y \in A: xy \in E(G)\rbrace$ and $d_G(x,A) := |N_G(x,A)|$. 
Additionally, we write $N_G(x) := N_G(x,V(G))$ and $d_G(x) := |N_G(x)|$.
Given pairwise-disjoint vertex sets $A_1,\ldots,A_\ell$, we write $K[A_1,\ldots,A_\ell]$ for the complete partite graph with parts $A_1,\ldots,A_\ell$.
When $a_1,\ldots,a_\ell$ are integers, we write $K^\ell_{a_1,\ldots,a_\ell}$ (or $K_{a_1,\ldots,a_\ell}$) for the complete $\ell$-partite graph with parts of sizes $a_1,\ldots,a_\ell$.

A partition of $V(H)$ witnessing that $H\in \mathcal{H}_i(n,e)$ in Definition~\ref{df:H} will be called $\mathcal{H}_i$-\emph{canonical} (or just \emph{canonical}).

Given $x \in V(G)$, we write $K_3(x,G)$ for the number of triangles in $G$ which contain $x$. That is,
$$
K_3(x,G) := e(G[N_G(x)]).
$$
Given $A_1,A_2 \subseteq V(G)\setminus \lbrace x \rbrace$ we write $K_3(x,G;A_1,A_2)$ for the number of triples $\lbrace x,a_1,a_2\rbrace$ that span a triangle in $G$, where $a_i \in A_i$ for $i \in [2]$.
(Note we do not double count when both $a_1,a_2$ lie in $A_1 \cap A_2$.)
If $A_1=A_2=A$ we let $K_3(x,G;A) := K_3(x,G;A,A)$.
Similarly, given $\lbrace x,y \rbrace \in \binom{V(G)}{2}$, let $P_3(xy,G)$ be the number of $3$-vertex paths with endpoints $x$ and $y$; i.e.
$$
P_3(xy,G) := |N_G(x) \cap N_G(y)|.
$$
Let $P_3(xy,G;A) := |N_G(x,A) \cap N_G(y,A)|$.
Given a graph $G$ with vertex partition $A_1,\ldots,A_k$, a \emph{cross-edge} is any edge which lies between parts.
Given two graphs $G,H$ on the same vertex set $V$ and $U \subseteq V$, we say that $G$ and $H$ \emph{only differ at $U$} if $E(G)\bigtriangleup E(H) \subseteq \binom{U}{2}$.

 Given a family $\mathcal{G}(n,e)$ of $(n,e)$-graphs, we write $\mathcal{G}^{\mathrm{min}}(n,e)\subseteq \mathcal{G}(n,e)$ for the subfamily consisting of all graphs with the minimum number of triangles. 

Since we are interested in the case $r=3$, we will say that a pair $(n,e)$ is \emph{valid} if $n,e \in \mathbb{N}$ are such that $\lfloor \frac{n^2}{4}\rfloor < e \leq \binom{n}{2}$ (i.e.~there exist graphs with $n$ vertices and $e$ edges, and every such graph contains at least one triangle).

Given $\ell \in \mathbb{N}$ and $\alpha_1,\ldots,\alpha_\ell \in \mathbb{R}$, for convenience we write 
$$
e(K^\ell_{\alpha_1,\ldots,\alpha_\ell}) := \sum_{ij \in \binom{[\ell]}{2}}\alpha_i\alpha_j\quad\text{ and }\quad K_3(K^\ell_{\alpha_1,\ldots,\alpha_\ell}) := \sum_{hij \in \binom{[\ell]}{3}}\alpha_h\alpha_i\alpha_j
$$
in analogy with the number of edges and triangles in the complete $\ell$-partite graph $K^\ell_{n_1,\ldots,n_\ell}$ which is defined when the $n_i$'s are positive integers.

The \emph{edit distance} between two graphs $G$ and $H$ on the same vertex set is $|E(G) \bigtriangleup E(H)|$, and these graphs are said to be \emph{$d$-close} if $|E(G) \bigtriangleup E(H)| \leq d$.

 Write $x = a \pm \eps$ if $a-\eps \leq x \leq a+\eps$.
 We will ignore floors and ceilings where they do not affect our argument.

\section{Sketch of the proof of Theorem~\ref{strong}}\label{sketch}

The asymptotic results of Fisher~\cite{Fisher89}, Razborov~\cite{Razborov08}, Nikiforov~\cite{Nikiforov11}, Pikhurko-Razborov~\cite{PikhurkoRazborov17} and Reiher~\cite{Reiher16} all use spectral or analytic methods. 
Such techniques do not seem to be helpful for the exact problem, and indeed our proof of Theorem~\ref{strong} uses purely combinatorial methods.
At its heart, our proof uses the well-known stability method: Theorem~\ref{PikhurkoRazborov17approx} implies that any extremal graph $G$ is structurally close to \emph{some} $H$ in $\mathcal{H}^*(n,e)$ and hence some graph in $\mathcal{H}_1(n,e)$.
Then the goal would be to analyse $G$ and show that it cannot contain any imperfections and must in fact lie in $\mathcal{H}_1(n,e)$. 
The stability approach stems from work of Erd\H{o}s~\cite{Erdos67a}
and Simonovits~\cite{Simonovits68} and has been used to solve many major problems in extremal combinatorics.

However, a major obstacle here is the fact that there is a large family of conjectured extremal graphs.
Given any $H \in \mathcal{H}_1(n,e)$ with canonical partition $A_1,\ldots,A_{k-2},B$ as in the definition, one can obtain a different $H' \in \mathcal{H}_1(n,e)$ such that $K_3(H')=K_3(H)$ simply by replacing $H[B]$ with another triangle-free graph containing the same number of edges.
In general, there are many choices for this triangle-free graph.

An additional difficulty is that $\mathcal{H}_1(n,e)$ does not in fact contain every extremal graph, as in Theorem~\ref{LS83}.
So our goal as stated above must be modified.

\medskip
Let us present a brief outline of the proof of Theorem~\ref{strong}.
Suppose that Theorem~\ref{strong} is false. Let $k$ be the minimum integer for which there is an arbitrarily large integer $n$ and some $e$ with $t_{k-1}(n) < e \leq t_k(n)$ such that $\mathcal{H}(n,e)$ does not contain every extremal graph.
Choose a fixed large $n$ and then $e$ as above such that $g_3(n,e)-h(n,e)\leq 0$ is minimal, and let $G \notin \mathcal{H}(n,e)$ be an $(n,e)$-graph with $K_3(G)=g_3(n,e)$.
We call such a $G$ a \emph{worst counterexample}.
One consequence of the choice of $G$ is, for example, that no edge can lie in too many triangles, and the endpoints of every non-edge have many common neighbours.

\medskip
\noindent
\textbf{I: The intermediate case} $t_k(n)-e=\Omega(n^2)$.

\medskip
\noindent\emph{1. Approximate structure (Section ~\ref{int1})}

\medskip
\noindent
Theorem~\ref{PikhurkoRazborov17approx} implies that $G$ is close in edit distance to some graph $H \in \mathcal{H}^*(n,e)$.
Note that $H \in \mathcal{H}_1(n,e')$ for some $e'$ which is close to $e$.
The first step is to show that actually $G$ is close to the specific graph $H^*(n,e)$ (namely, the edit distance is $o(n^2)$; see Lemma~\ref{lem-Almost-k-Partite}).
The $i$th part of $H^*(n,e)$ has size $a_i^*$, which is roughly $cn$ for all $i \in [k-1]$ (Lemma~\ref{lem-slope-c}).
Since $e$ is bounded away from $t_k(n)$, it is not hard to see that $n-(k-1)cn < cn - \Omega(n)$.
So $G$ is close to a complete partite graph with one small part and the other parts equally-sized.
In fact we can show (Lemma~\ref{approx}) that every max-cut partition $A_1,\ldots,A_k$ of $G$ is such that $|\,|A_i|-cn\,|=o(n)$ for $i \in [k-1]$ (and $|\,|A_k|-(n-(k-1)cn)\,| = o(n)$) and $m+h = o(n^2)$ where 
$$
m := \sum_{ij \in \binom{[k]}{2}}e(\overline{G}[A_i,A_j])\quad\text{and}\quad h := \sum_{i \in [k]}e(G[A_i]).
$$
Following~\cite{LovaszSimonovits83}, we say that any pair of vertices in different parts which does not span an edge is a \emph{missing edge}, and any edge inside a part is \emph{bad}.
As usual, we now identify some vertices which are atypical in the sense that they are incident to many missing edges.
Let $Z$ be the set of vertices incident with $\Omega(n)$ missing edges. Thus
\begin{equation}\label{proofZeq}
|Z| = O(m/n) = o(n).
\end{equation}
It turns out that every bad edge is incident to a vertex in $Z$.
Thus, if $Z=\emptyset$, then $G$ is $k$-partite and it is not hard to show (see Corollary~\ref{H2new}(i)) that every extremal $k$-partite $(n,e)$-graph lies in $\mathcal{H}_2(n,e)$, a contradiction.

\medskip
\noindent\emph{2. Transformations (Section ~\ref{sec:trans})}

\medskip
\noindent
Now we would like to make a series of local changes to $G$ to obtain a new $n$-vertex $e$-edge graph $G'$ such that $K_3(G')-K_3(G)=0$ but the structure of $G'$ is much simpler.
Here, `simpler' means `no bad edges', so $G'$ would be $k$-partite, and we would obtain our desired contradiction.
From the property of $Z$ above, these local changes would then only have to be made at $Z$.
Unfortunately this is too ambitious as we do not have fine enough control on the structure of the graph. Therefore we reduce our expectations and aim to find $G'$ such that $K_3(G')-K_3(G)$ is \emph{small} (Lemma~\ref{maintrans}).
That is, we simplify the structure (and thus it is easier to count triangles) at the expense of a few additional triangles.
To be more precise, \emph{small} means $o(m^2/n)$.
Although the transformations themselves are easy to describe, this is the longest and most technical part of the proof.

\begin{itemize}
	\item Transformation 1 (Figure~\ref{partition1}, Lemmas~\ref{subZiedges} and~\ref{Ziedges}): Removing bad edges in the large parts $A_1,\ldots,A_{k-1}$.
	\item Transformation 2 (Figure~\ref{partition2}, Lemmas~\ref{subYiedges} and~\ref{Yiedges}): Reassigning those vertices in $Z \cap A_k$ incident to many missing edges to a large part.
	\item Transformations 3--6 (Figures~\ref{partition3}--\ref{partition6}, Lemmas~\ref{subXiedges}--\ref{G5} and the proof of Lemma~\ref{maintrans}): Dealing with those vertices in $Z \cap A_k$ incident to few missing edges. 
\end{itemize}

\medskip
\noindent\emph{3. Finishing the proof in this case (Section~\ref{int3})}

\medskip
\noindent
\emph{i. Suppose that $m > Cn$ for some large constant $C$ (Section~\ref{sec:superlinear}).}
Write $A_1'',\ldots,A_k''$ for the parts of $G'$.
Keeping track of the transformation $G \rightarrow G'$ allows us to use $G'$ to obtain additional structural information about $G$.
To do this, we apply Lemma~\ref{CompletekPartite}, which measures the difference in the numbers of triangles between a $k$-partite $(n,e)$-graph (such as $G'$) and an `ideal' $k$-partite graph (which is essentially $H^*(n,e)$). 
Because the same is true in $G$ in the intermediate case, the difference in size between the smallest part of $G'$ and the other parts is $\Omega(n)$.
In Lemma~\ref{superlinear1}, this fact and $K_3(G')-K_3(H^*(n,e)) \leq K_3(G')-K_3(G) = o(m^2/n)$ imply via Lemma~\ref{CompletekPartite} that $e(\overline{G'}[A_i'',A_k'']) = \Omega(m)$ for exactly one $i \in [k-1]$, and the other $A_j''$ satisfy $|\,|A_j''|-cn\,| = o(m/n)$ and $|Z \cap A_j''|= o(m/n)$ (which is much stronger than~(\ref{proofZeq})).

Since we had fine control on the transformation $G \rightarrow G'$, similar statements hold in $G$ (Lemma~\ref{lem-size}): $e(\overline{G}[A_i,A_k]) = \Omega(m)$ for exactly one $i \in [k-1]$, and the other $A_j$ satisfy $|\,|A_j|-cn\,|=o(m/n)$ and $|Z \cap A_j| = o(m/n)$. 
This new information about $G$ is substantial enough to show that most of the local changes we did earlier actually \emph{decrease} the number of triangles.
This applies~e.g.~to Transformation~1, and we conclude that $Z \cap A_j = \emptyset$ for all $j \in [k-1]\setminus \lbrace i \rbrace$. So $A_j$ contains no bad edges (Lemma~\ref{empty}).
This analysis requires tight `step-by-step' control on the effect of the transformations, which is what makes the proofs more technical than they would otherwise have to be.
Then a final global change (see Figure~\ref{globaltrans}) brings us to a graph $H \in \mathcal{H}_1(n,e)$ which, if $Z \neq \emptyset$, satisfies $K_3(H)-K_3(G) < 0$, a contradiction.

\medskip
\noindent
\emph{ii. Suppose that $m < Cn$ (Section~\ref{sec:linear}).} This case is different as the errors stemming from $G'$ are too large to allow us to glean any extra information.
Instead, we show directly that most of the transformations we did earlier \emph{do not increase} the number of triangles.
This is possible since we now know that~e.g.~$Z$ has constant size (see~(\ref{proofZeq})).

This case has a different flavour because we may enter the situation where,~e.g.~after performing Transformation~1 to obtain $G_1$, we have $K_3(G_1)=K_3(G)$ and $G_1 \in \mathcal{H}(n,e)$.
Then we have to argue that in fact this must imply $G \in \mathcal{H}(n,e)$, a  contradiction.
This is the only part of the proof where we are not able to obtain a contradiction by strictly decreasing the number of triangles, and must actually analyse the extremal family $\mathcal{H}(n,e)$ (Section~\ref{charext}).

\medskip
\noindent
\textbf{II: The boundary case} $t_k(n)-e=r$ where $r=o(n^2)$ (\emph{Section~\ref{bound}}).

\medskip
\noindent
The proof in this case turns out to be much shorter than the intermediate case.
We now have that $cn = n/k+O(\sqrt{r})$.
A different argument is required to determine the approximate structure of $G$ as we need better bounds in terms of $r$: we use an averaging argument (Lemma~\ref{almostcompleteagain}) which is very similar to Theorem~2 in~\cite{LovaszSimonovits83}.
Thus we obtain a rather strong structure property (Lemma~\ref{approx2}): every max-cut partition $A_1,\ldots,A_k$ of $G$ is such that $|\,|A_i|-n/k|=O(\sqrt{r})$ for \emph{all} $i \in [k]$, and $\sum_{ij \in \binom{[k]}{2}}e(\overline{G}[A_i,A_j])+\sum_{i \in [k]}e(G[A_i])=O(r)$. 

Again, we let $Z$ be the set of vertices with $\Omega(n)$ missing edges, and show that $|Z|=o(n)$ and every bad edge is incident to a vertex in $Z$.
In the intermediate case, the most troublesome vertices were those in $Z \cap A_k$ dealt with in Transformations~3--6.
Now, $A_k$ is not substantially smaller than the other parts, so this is no longer the case and some difficulties from the intermediate case disappear.

We show that, for every $i \in [k]$, the set $A_i \setminus Z$ is `significantly smaller' than $cn$. This then implies that $G[A_1\setminus Z,\ldots,A_k\setminus Z]$ is complete partite (Lemma~\ref{final}). Finally we show that $Z=\emptyset$, completing the proof as before.
For these final steps, we again build up a repository of structural information by performing (much simpler) transformations which strictly decrease the number of triangles unless a desired property holds.

\section{Extremal families and preliminary tools}\label{prelims}


One of the main results of this section is to prove Proposition~\ref{pr:compute} that for all $i=0,1,2$, we have $\mathcal{H}^{\mathrm{min}}_i(n,e) = \mathcal{H}^*_i(n,e)$, and $h(n,e)=h^*(n,e)$ for all valid pairs $(n,e)$. In order to do this, we present some auxiliary definitions and results first.

\subsection{Extremal $k(n,e)$-partite graphs}\label{H2'sec}
The main conclusion of this section will be Corollary~\ref{H2new} which states that all extremal $k(n,e)$-partite $(n,e)$-graphs lie in $\mathcal{H}_2(n,e)$ and at least one such graph is in $\mathcal{H}_1(n,e)$. 

In order to prove it, we need to define a somewhat related family $\mathcal{H}_2'(n,e)$.
Given a valid pair $(n,e)$, let $k := k(n,e)$.
Define $\mathcal{H}_2'(n,e)$ to be the family of $k$-partite $(n,e)$-graphs $H$ with parts $A_1,\ldots,A_k$ of sizes $|A_1|\ge\dots\ge |A_k|$ such that
\begin{enumerate}
\item for all $i \in [k]$ and $x \in A_i$, there is at most one $j \in [k]\setminus \lbrace i \rbrace$ such that $d_{\overline{H}}(x,A_j)>0$;
	\item if $|A_i|+|A_j| > |A_{k-1}|+|A_{k}|$, then 	
	$H[A_i,A_j]$ is complete.
\end{enumerate}
We say that $A_1,\ldots,A_k$ is an \emph{$\mathcal{H}_2'$-canonical partition}.
The above definition is motivated by the following easy lemma.

\begin{lemma}\label{H2}
	Let $(n,e)$ be valid and let $k=k(n,e)$.
	Let $\mathcal{G}(n,e)$ be the set of $k$-partite $(n,e)$-graphs.
	Then $\mathcal{G}^{\mathrm{min}}(n,e) \subseteq \mathcal{H}_2'(n,e)$.
\end{lemma}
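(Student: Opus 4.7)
\emph{Plan.} Let $H \in \mathcal{G}^{\mathrm{min}}(n,e)$ and fix any $k$-partition $V(H)=A_1\cup\cdots\cup A_k$ with $|A_1|\ge\cdots\ge|A_k|$. The plan is to verify that this partition satisfies both defining properties of $\mathcal{H}_2'(n,e)$ using local edge-swap arguments. The central tool is an \emph{edge-swap identity}: for four distinct vertices $x,y,u,v$ with $xy\notin E(H)$ a missing cross-edge (so $x\in A_p$, $y\in A_q$ for some $p\ne q$) and $uv\in E(H)$ a cross-edge, the graph $H':=(H\setminus\{uv\})\cup\{xy\}$ is again a $k$-partite $(n,e)$-graph with the same partition. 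Since $\{x,y\}\cap\{u,v\}=\emptyset$ and $H$ is $k$-partite, no triangle of $H$ or $H'$ contains both edges, and a short direct calculation gives
\[
K_3(H')-K_3(H)\;=\;|N_H(x)\cap N_H(y)|\;-\;|N_H(u)\cap N_H(v)|.
\]
Extremality forces this to be non-negative for every legal swap.

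For property (2), suppose some pair $(i,j)$ with $|A_i|+|A_j|>|A_{k-1}|+|A_k|$ has a missing edge $xy$ in $H[A_i,A_j]$. Since $N_H(x)\cap N_H(y)\subseteq V\setminus(A_i\cup A_j)$, we have $|N_H(x)\cap N_H(y)|\le n-|A_i|-|A_j|$. The swap identity then forces $|N_H(u)\cap N_H(v)|\le n-|A_i|-|A_j|$ for every cross-edge $uv$ of $H$ disjoint from $\{x,y\}$; summing against the identity $3K_3(H)=\sum_{uv\in E(H)}|N_H(u)\cap N_H(v)|$ yields
\[
K_3(H)\;\le\;\tfrac{1}{3}\,e\bigl(n-|A_i|-|A_j|\bigr)+O(n).
\]
I couple this with the complementary lower bound $K_3(H)\ge\tfrac{1}{3}\,e\bigl(n-|A_{k-1}|-|A_k|\bigr)-O(n)$, obtained by comparing $H$ to the ``canonical concentrated minimizer'' among $k$-partite $(n,e)$-graphs with the given part sizes: the graph with all $M:=e(K[A_1,\ldots,A_k])-e$ missing edges placed as a near-matching inside $H[A_{k-1},A_k]$, each destroying exactly $n-|A_{k-1}|-|A_k|$ triangles in $K[A_1,\ldots,A_k]$. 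Since $|A_i|+|A_j|>|A_{k-1}|+|A_k|$, the two bounds on $K_3(H)$ are incompatible, a contradiction.

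For property (1), suppose $x\in A_i$ has non-neighbors $y\in A_j$ and $z\in A_\ell$ with $j\ne\ell$ (and $j,\ell\ne i$). Applying the swap identity with both non-edges $xy$ and $xz$ against every cross-edge $uv$ of $H$ disjoint from $\{x,y,z\}$ yields the sharper bound
\[
|N_H(u)\cap N_H(v)|\;\le\;\min\bigl\{|N_H(x)\cap N_H(y)|,\;|N_H(x)\cap N_H(z)|\bigr\}\;\le\;n-|A_i|-\max(|A_j|,|A_\ell|).
\]
Generically, $\max(|A_j|,|A_\ell|)>|A_{k-1}|$ (since $j\ne\ell$ and the parts are size-ordered), so repeating the argument of (2) with this sharper bound yields a contradiction. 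The residual corner cases---where both $j,\ell$ index parts of size exactly $|A_{k-1}|$, or where $x$ has no neighbors in $A_j$ or $A_\ell$---are handled either by reassigning $x$ to a different part (producing a valid $k$-partition of the same graph $H$ in which $x$'s deficiency is reduced) or by a specialized double-swap that adjusts two edges incident to $x$ simultaneously.

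\textbf{The main obstacle} is the non-strictness of the edge-swap inequality: strict contradictions arise only by coupling the upper bound with a matching lower bound from a canonical minimizer. Proving that the ``concentrated near-matching'' configuration truly minimizes $K_3$ over all $k$-partite $(n,e)$-graphs with the prescribed part sizes is a small extremal subproblem in its own right, and property (1) requires additional case analysis when several parts tie in size $|A_{k-1}|$.
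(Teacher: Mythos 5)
Your edge-swap identity and the resulting pointwise bound $P_3(uv,H)\le P_3(xy,H)\le n-|A_i|-|A_j|$ are correct, but the argument for property (2) collapses at the ``complementary lower bound'' $K_3(H)\ge\tfrac{1}{3}e\,(n-|A_{k-1}|-|A_k|)-O(n)$. Comparing $H$ with the concentrated near-matching configuration $G^*$ can only yield $K_3(H)\le K_3(G^*)$ (that is what extremality of $H$ gives) — an upper bound, i.e.\ the wrong direction — and the inequality you want is simply false for a general $k$-partite graph with the given part sizes: an edge between two \emph{large} parts $A_p,A_q$ satisfies $P_3(uv,H)\le n-|A_p|-|A_q|$, which can be far below $n-|A_{k-1}|-|A_k|$, so the average of $P_3$ over edges need not come close to $n-|A_{k-1}|-|A_k|$. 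Making the lower bound true for the minimiser would require first knowing that its part sizes are near-optimal, which is circular at this point in the paper. A secondary but genuine problem is the error term: the up to $2n$ edges meeting $\{x,y\}$ each contribute up to $|A_i|+|A_j|$ excess, so your upper bound carries an $O(n^2)$ error, not $O(n)$, which is of the same order as the gap $\tfrac{e}{3}$ you are exploiting. The treatment of property (1) likewise leans on unresolved ``residual corner cases''.

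The paper's proof avoids any lower bound on $K_3(H)$ and any error terms by making a single global comparison. It first notes $m:=\sum_{ij}e(\overline{G}[A_i,A_j])\le a_{k-1}a_k$ (else $e<e(K_{a_1,\dots,a_{k-2},a_{k-1}+a_k})\le t_{k-1}(n)$, contradicting $k=k(n,e)$), then sets $G^*:=K[A_1,\ldots,A_k]\setminus E^*$ with $E^*$ any $m$ edges of $[A_{k-1},A_k]$, and compares the numbers of triangles of $K[A_1,\ldots,A_k]$ destroyed in $G$ and in $G^*$. Each missing edge of $G$ between $A_i$ and $A_j$ destroys at most $\sum_{h\ne i,j}a_h$ such triangles (with equality exactly when no triangle of the complete multipartite graph contains two missing edges), while each edge of $E^*$ destroys exactly $a_1+\cdots+a_{k-2}$; the difference telescopes to $\sum_{ij}e(\overline{G}[A_i,A_j])\bigl((a_{k-1}+a_k)-(a_i+a_j)\bigr)\le 0$. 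Since $K_3(G^*)\ge K_3(G)$ by minimality, equality holds throughout, and the two sharpness conditions are precisely properties (1) and (2) of $\mathcal{H}_2'(n,e)$. If you want to keep your local swaps, you would still need to replace the false lower bound by a direct comparison of this kind.
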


\begin{proof}
	Let $G \in \mathcal{G}^{\mathrm{min}}(n,e)$.
	Let $A_1,\ldots,A_k$ be the parts of $G$, where $a_i := |A_i|$ for all $i \in [k]$ and $a_1 \geq \ldots \geq a_k$.
	Let $m:=\sum_{ij \in \binom{[k]}{2}}e(\overline{G}[A_i,A_j])$.
	
	We have that $m\le  a_{k-1}a_k$, for otherwise 
	$$
	e < e(K_{a_1,\dots,a_{k-2},a_{k-1}+a_k})\le t_{k-1}(n)
	$$ and so $k(n,e) \leq k-1$, a contradiction. Consider $G^* := K[A_1,\ldots,A_k]\setminus E^*$, where $E^*$ consists of some $m$ edges of $K[A_{k-1},A_k]$.
	Clearly, $G^*\in \mathcal{G}(n,e)$. Thus, by the minimality of $G\in \mathcal{G}(n,e)$, we have $K_3(G^*)\ge K_3(G)$ .
	On the other hand, since each pair of $E^*$ is in exactly $a_1+\ldots + a_{k-2}$ triangles of $K[A_1,\ldots,A_k]$ and no such triangle is counted more than once, we have
	\begin{align}
	K_3(G^*)-K_3(G) &=(K_3(K[A_1,\ldots,A_k])-K_3(G))-(K_3(K[A_1,\ldots,A_k])-K_3(G^*))\nonumber\\
	&\leq \sum_{ij \in \binom{[k]}{2}}e(\overline{G}[A_i,A_j]) \left( \sum_{h \in [k]\setminus \lbrace i,j\rbrace}a_h \right) - |E^*|(a_1+\ldots + a_{k-2}) \nonumber\\
	&= \sum_{ij \in \binom{[k]}{2}}e(\overline{G}[A_i,A_j]) \left( \sum_{h \in [k]\setminus \lbrace i,j\rbrace}a_h - (a_1+\ldots + a_{k-2}) \right)\nonumber\\
	&= \sum_{ij \in \binom{[k]}{2}}e(\overline{G}[A_i,A_j]) \left((a_{k-1}+a_k)-(a_i+a_j)\right)\ \le\ 0,\label{eq:G*vsG}
	\end{align}
    so we have equality throughout. The sharpness of the first (resp.\ second) inequality in~\eqref{eq:G*vsG} implies the first (resp.\ second) property from the 
    definition of $\mathcal{H}_2'(n,e)$. 
	Thus $G \in \mathcal{H}_2'(n,e)$, as required.
\end{proof}

We also need the following result concerning extremal graphs in $\mathcal{H}_2'(n,e)$.

\begin{lemma}\label{H2missing}
	Let $(n,e)$ be valid with $k=k(n,e)$. Let $H \in (\mathcal{H}_2')^{\mathrm{min}}(n,e)$ with an $\mathcal{H}_2'$-canonical partition $A_1,\ldots,A_k$ having part sizes $a_1\ge \dots\ge a_k$ respectively. Let $m:=\sum_{ij \in \binom{[k]}{2}}e(\overline{H}[A_i,A_j])$. Then the following statements hold.
	\begin{itemize}
	\item[(i)] There exists $G \in \mathcal{H}_1(n,e)\cap \mathcal{H}_2'(n,e)\cap \mathcal{H}_2(n,e)$ with $K_3(G) = K_3(H)$. 
		\item[(ii)] If $a_{k-2}=a_{k-1}$, then $m \leq a_{k-1}-a_k+1$.
	\end{itemize}
\end{lemma}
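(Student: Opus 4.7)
For part (i), I would take $G := G^*$, where $G^* = K[A_1,\ldots,A_k] \setminus E^*$ and $E^*$ is any choice of $m$ edges between $A_{k-1}$ and $A_k$ (this is possible since $m \leq a_{k-1}a_k$ by the argument in the proof of Lemma~\ref{H2}). The equality $K_3(G^*) = K_3(H)$ follows because condition~(2) of $\mathcal{H}_2'$ applied to $H$ forces every missing cross-edge of $H$ to lie between some $(A_i,A_j)$ with $|A_i|+|A_j|=|A_{k-1}|+|A_k|$; thus every summand in the bound~\eqref{eq:G*vsG} (applied with $G=H$) vanishes, so $K_3(G^*)\le K_3(H)$, and the reverse inequality holds since $G^*$ itself clearly satisfies conditions (1) and (2) of $\mathcal{H}_2'(n,e)$ and $H$ is minimum there. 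Membership of $G^*$ in the three families is then routine: $G^* \in \mathcal{H}_2(n,e)$ by construction (it is $k$-partite with $G^*[A_1\cup\ldots\cup A_{k-1}]=K[A_1,\ldots,A_{k-1}]$ and each $x \in A_k$ is non-adjacent only to vertices of $A_{k-1}$); and $G^* \in \mathcal{H}_1(n,e)$ with canonical partition $(A_1,\ldots,A_{k-2},B)$ where $B := A_{k-1}\cup A_k$ and $G^*[B] \subseteq K[A_{k-1},A_k]$ is bipartite, hence triangle-free.

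For part (ii), I would argue by contradiction: assume $a_{k-2}=a_{k-1}$ and $m \geq a_{k-1}-a_k+2$. Lemma~\ref{H2} gives $\mathcal{G}^{\mathrm{min}}(n,e)\subseteq \mathcal{H}_2'(n,e)$, whence the two minima coincide and $H \in \mathcal{G}^{\mathrm{min}}(n,e)$. Thus it suffices to exhibit any $k$-partite $(n,e)$-graph $G'$ with $K_3(G') < K_3(H)$. The case $a_k = 1$ is immediate from conditions~(1) and~(2) of $\mathcal{H}_2'$: jointly they imply that all missing edges of $H$ are incident to the unique vertex $v \in A_k$ and lie in a single pair $\{A_j,A_k\}$ with $|A_j| = a_{k-1}$, whence $m \leq a_{k-1} = a_{k-1}-a_k+1$ (the degenerate sub-case $a_{k-1}=1$ forces $m=0$ immediately by comparing $e$ to $t_{k-1}(n)$).

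When $a_k \geq 2$, I would take $G^*$ from part (i) (so $K_3(G^*)=K_3(H)$) and construct $G'$ by moving a single vertex from $A_k$ to $A_{k-2}$. Concretely, let $G' := K[A'_1,\ldots,A'_k]\setminus E'$ with part sizes $(a_1,\ldots,a_{k-3},a_{k-2}+1,a_{k-1},a_k-1)$ and $|E'| = m' := m - (a_{k-1}-a_k+1) \geq 1$. The edges of $E'$ are placed first between $A'_{k-1}$ and $A'_k$; if $m' > a_{k-1}(a_k-1)$, the leftover edges are placed between $A'_{k-2}$ and $A'_k$. An elementary computation using $a_{k-2}=a_{k-1}$ verifies that $e(G')=e$ and $K_3(K[A])-K_3(K[A']) = (a_{k-1}-a_k+1)(n-a_{k-1}-a_k)$. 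In the sub-case $m' \leq a_{k-1}(a_k-1)$, a first-order count of lost triangles yields $K_3(G^*)-K_3(G') = m - (a_{k-1}-a_k+1) \geq 1$. In the sub-case $m' > a_{k-1}(a_k-1)$, a second-order inclusion--exclusion, accounting for triangles $xvz$ with $x \in A'_{k-2}$, $v \in A'_k$, $z \in A'_{k-1}$ and $xv,vz \in E'$ (there are exactly $a_{k-1}$ such triangles for every missing edge in $E' \cap K[A'_{k-2},A'_k]$), gives after cancellation $K_3(G^*)-K_3(G') = a_{k-1}(a_{k-1}a_k - m) > 0$, using $m < a_{k-1}a_k$ (since otherwise $H$ would be $(k-1)$-partite, contradicting $e>t_{k-1}(n)$). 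Either way $K_3(G') < K_3(H)$, the desired contradiction.

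The main obstacle is the inclusion--exclusion accounting in the second sub-case: the doubly-counted triangles $xvz$ with both $xv,vz \in E'$ must be identified precisely, and the clean identity $a_{k-1}(a_{k-1}a_k-m)$ depends on the fact that by placing \emph{all} of $K[A'_{k-1},A'_k]$ into $E'$, every $v \in A'_k$ has exactly $a_{k-1}$ missing neighbours in $A'_{k-1}$, producing the exact overcount per leftover edge.
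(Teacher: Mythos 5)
Your proof is correct and follows essentially the same route as the paper: part (i) is identical, and for part (ii) both arguments shift one vertex from $A_k$ to $A_{k-2}$ and show the triangle count drops by exactly $s=m-(a_{k-1}-a_k+1)\ge 1$. The one divergence is your overflow sub-case $m'>a_{k-1}(a_k-1)$: the paper observes that this case is vacuous (it would force $e\le t_{k-1}(n)$, contradicting $k(n,e)=k$), whereas you construct a graph there anyway and carry out the inclusion--exclusion; your identity $K_3(G^*)-K_3(G')=a_{k-1}(a_{k-1}a_k-m)>0$ checks out, so both treatments are valid.
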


\begin{proof}
	If $m > a_{k-1}a_k$ then $e < t_{k-1}(n)$, a contradiction.
	Thus there exists $G := K[A_1,\ldots,A_k]\setminus E^*$ where $E^* \subseteq K[A_{k-1},A_k]$ and $|E^*|=m$.
	Clearly, $G \in \mathcal{H}_1(n,e)\cap \mathcal{H}_2'(n,e)\cap \mathcal{H}_2(n,e)$. Also, the calculation as in~\eqref{eq:G*vsG} shows that $K_3(G)\le K_3(H)$. This is equality by the minimality of $H$, proving the first part of the lemma.
	
	Now, let us show~(ii).
  Let $a:=a_{k-2}=a_{k-1}$.
	Suppose on the contrary that $s:= m - a+a_k-1$ is at least~$1$.
	Then $(a+1)(a_k-1)-(a a_k - m) = s \geq 1$. If $s> a(a_k-1)$, then 
	$$
	 e=e(K_{a_1,\dots,a_k})-m=e(K_{a_1,\dots,a_{k-3},a+1,a,a_k-1})-s< e(K_{a_1,\ldots,a_{k-3},a+1,a+a_k-1})\le t_{k-1}(n),
	 $$
	 a contradiction to the definition of $k$.
	Thus there is an $(n,e)$-graph $J$ obtained from the complete $k$-partite graph $K_{a_1,\ldots,a_{k-3},a+1,a,a_k-1}$ by removing $s$ edges between the last two classes (that have sizes $a$ and $a_k-1$).
	Note that $J \in \mathcal{H}_2'(n,e)$.
	But then we have
\begin{align*}
	K_3(H)-K_3(J) 
	&\ge \left(a^2a_k-(s+a-a_k+1) a\right) - \left(a(a+1)(a_k-1) - s(a+1)\right)
	=s>0.
	\end{align*}
	This contradiction completes the proof of the second part.
	\end{proof}

\begin{lemma}\label{lm:H2'min}
	Let $(n,e)$ be valid with $k=k(n,e)$. Then $(\mathcal{H}_2')^{\mathrm{min}}(n,e) = \mathcal{H}_2^{\mathrm{min}}(n,e)$. Moreover, for all graphs in this family, an $\mathcal{H}_2'$-canonical partition is an $\mathcal{H}_2$-canonical partition up to relabelling parts, and vice versa.
\end{lemma}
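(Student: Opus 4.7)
The plan is to reduce to the family $\mathcal{G}(n,e)$ of all $k$-partite $(n,e)$-graphs and exploit Lemmas~\ref{H2} and~\ref{H2missing}. Since $\mathcal{H}_2(n,e) \cup \mathcal{H}_2'(n,e) \subseteq \mathcal{G}(n,e)$, Lemma~\ref{H2} gives $\mathcal{G}^{\mathrm{min}}(n,e) \subseteq \mathcal{H}_2'(n,e)$, so $\min \mathcal{G}(n,e) = \min \mathcal{H}_2'(n,e)$. Applying Lemma~\ref{H2missing}(i) to any $H \in (\mathcal{H}_2')^{\mathrm{min}}(n,e)$ produces some $G \in \mathcal{H}_2(n,e) \cap \mathcal{H}_2'(n,e)$ with $K_3(G)=K_3(H)$, so $\min \mathcal{H}_2 \le \min \mathcal{H}_2'$; combined with $\mathcal{H}_2 \subseteq \mathcal{G}$, the two minima coincide. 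The inclusion $\mathcal{H}_2^{\mathrm{min}}(n,e) \subseteq (\mathcal{H}_2')^{\mathrm{min}}(n,e)$ is then immediate from Lemma~\ref{H2}: any $H \in \mathcal{H}_2^{\mathrm{min}}$ lies in $\mathcal{G}^{\mathrm{min}} \subseteq \mathcal{H}_2'$, and $K_3(H)$ equals the common minimum.

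For the reverse inclusion, let $H \in (\mathcal{H}_2')^{\mathrm{min}}(n,e)$ have $\mathcal{H}_2'$-canonical partition $A_1, \ldots, A_k$ with sizes $a_1 \ge \cdots \ge a_k$. Condition~(2) of the definition of $\mathcal{H}_2'$ forces every missing edge to lie between a pair $(A_i, A_j)$ with $a_i + a_j \le a_{k-1} + a_k$. A direct check against the sorted part sizes shows that if $a_{k-2} > a_k$, then every admissible pair has the form $(A_i, A_k)$ with $a_i = a_{k-1}$, so all missing edges are incident to $A_k$ and the same partition is already $\mathcal{H}_2$-canonical. If instead $a_{k-2} = a_{k-1} = a_k$, then Lemma~\ref{H2missing}(ii) bounds the total number of missing edges by $m \le a_{k-1} - a_k + 1 = 1$; a single missing edge (if present) lies between two parts of the set $I := \{i : a_i = a_k\}$, and permuting the labels of these equal-sized parts turns it into a missing edge incident to $A_k$. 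In either case $H$ admits an $\mathcal{H}_2$-canonical partition, and since the minima agree, $H \in \mathcal{H}_2^{\mathrm{min}}$.

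The ``moreover'' statement is almost contained in the above: each $\mathcal{H}_2'$-canonical partition of such an $H$ becomes $\mathcal{H}_2$-canonical after relabelling equal-sized parts. For the converse direction, given any $\mathcal{H}_2$-canonical partition $A_1, \ldots, A_k$ of $H$, condition~(1) of $\mathcal{H}_2'$ is immediate from the $\mathcal{H}_2$-definition, condition~(2) for pairs within $[k-1]$ is automatic, and condition~(2) for pairs $(A_i, A_k)$ with $a_i > a_{k-1}$ follows from a swap argument: if some $x \in A_k$ had $s$ missing edges to $A_i$, then restoring these and making $x$ miss $s$ vertices of $A_{k-1}$ instead would yield another graph in $\mathcal{H}_2(n,e)$ with $K_3$ changed by $s(a_{k-1}-a_i) < 0$, contradicting $H \in \mathcal{H}_2^{\mathrm{min}}$. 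The main technical point throughout is the case $a_{k-2}=a_{k-1}=a_k$, in which Lemma~\ref{H2missing}(ii) is indispensable to force at most one missing edge; without this bound the multigraph of missing edges among the equal-sized parts could fail to be a star, and no relabelling would produce an $\mathcal{H}_2$-canonical partition.
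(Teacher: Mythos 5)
Your argument has the same skeleton as the paper's: reduce to the family $\mathcal{G}(n,e)$ of all $k$-partite $(n,e)$-graphs, use Lemma~\ref{H2} to identify $\min\mathcal{G}$ with $\min\mathcal{H}_2'$ and Lemma~\ref{H2missing}(i) to identify both with $\min\mathcal{H}_2$, and then, for $H\in(\mathcal{H}_2')^{\mathrm{min}}$, split on whether $a_{k-2}>a_k$ or $a_{k-2}=a_{k-1}=a_k$, invoking Lemma~\ref{H2missing}(ii) in the latter case to force at most one missing edge. All of this is correct and matches the paper's proof.

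The one step that does not go through as written is the swap argument in your last paragraph, verifying condition~(2) of $\mathcal{H}_2'$ for a given $\mathcal{H}_2$-canonical partition. If $x\in A_k$ has $s$ missing edges to $A_i$ with $a_i>a_{k-1}$, you propose to ``make $x$ miss $s$ vertices of $A_{k-1}$ instead''; since $x$ is complete to $A_{k-1}$, this requires $s\le a_{k-1}$, which you have not established (a priori $s$ can be as large as $a_i>a_{k-1}$). You also cannot move the edges one at a time, because the intermediate graphs have $x$ non-complete to two parts and hence leave $\mathcal{H}_2(n,e)$, which destroys the minimality comparison. The repair is to redistribute globally rather than locally: replace \emph{all} missing edges of $H$ by $m$ missing edges placed inside $[A_{k-1},A_k]$ (possible since $m\le a_{k-1}a_k$, else $e<t_{k-1}(n)$); the resulting graph is still in $\mathcal{H}_2(n,e)$, and the computation in~\eqref{eq:G*vsG} shows it has strictly fewer triangles whenever some missing edge of $H$ lies in a pair with $a_i+a_j>a_{k-1}+a_k$. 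Equivalently --- and this is what the paper's one-sentence assertion for this direction implicitly relies on --- once you know $\mathcal{H}_2^{\mathrm{min}}=\mathcal{G}^{\mathrm{min}}$, the proof of Lemma~\ref{H2} applies verbatim to the given (already size-sorted) $\mathcal{H}_2$-canonical partition of $H$ and yields both conditions of $\mathcal{H}_2'$ directly.
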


\begin{proof}
Throughout this proof, we omit $(n,e)$ for brevity. 

We first show that $(\mathcal{H}_2')^{\mathrm{min}} \subseteq \mathcal{H}_2^{\mathrm{min}}$. Take any  $H \in (\mathcal{H}_2')^{\mathrm{min}}$ with
an $\mathcal{H}_2'$-canonical partition $A_1,\dots,A_k$.
We claim that $H\in \mathcal{H}_2$, and some ordering of $\lbrace A_1,\ldots,A_k\rbrace$ is an $\mathcal{H}_2$-canonical partition. Assume that $|A_{k-2}|=|A_{k-1}|=|A_k|$ for otherwise $e(\overline{H}[A_i,A_j])>0$ only if $k \in \lbrace i,j\rbrace$ in which case $H \in \mathcal{H}_2$, as desired. Lemma~\ref{H2missing}(ii) gives that 
	$$
		\sum_{ij \in \binom{[k]}{2}}e(\overline{H}[A_i,A_j]) \leq |A_{k-1}|-|A_k|+1 = 1.
		$$
 Thus $H$ has at most one missing edge, which (if exists) is incident to some part $A_i$ with $|A_i|=|A_k|$. In any case, $H\in\mathcal{H}_2$ with the same canonical partition, up to relabelling, as claimed. If $H$ is not in $\mathcal{H}_2^{\mathrm{min}}$, then  any $H'\in\mathcal{H}_2^{\mathrm{min}}$ has fewer triangles than $H$. However, 
by Lemma~\ref{H2} there is $G\in \mathcal{H}_2'$ with $K_3(G)\le K_3(H')<K_3(H)$, contradicting the extremality of~$H$. In particular, writing $h_2:=K_3(F)$ and $h_2':=K_3(F')$, where $F\in \mathcal{H}_2^{\mathrm{min}}$ and $F'\in (\mathcal{H}_2')^{\mathrm{min}}$, we see that $h_2=h_2'$.

We now show the other direction, i.e.~$(\mathcal{H}_2')^{\mathrm{min}} \supseteq \mathcal{H}_2^{\mathrm{min}}$. Let $\mathcal{G}(n,e)$ be the set of $k$-partite $(n,e)$-graphs. By definition $\mathcal{H}_2\subseteq \mathcal{G}$. As $\mathcal{G}^{\min}\subseteq \mathcal{H}_2'$ due to Lemma~\ref{H2} and $h_2=h_2'$, we have that $\mathcal{H}_2^{\min}\subseteq \mathcal{G}^{\min}\subseteq (\mathcal{H}_2')^{\min}$ as desired.
Furthermore, if $A_1,\ldots,A_k$ is an $\mathcal{H}_2$-canonical partition of $G \in \mathcal{H}_2^{\mathrm{min}}$, some ordering of it is an $\mathcal{H}_2'$-canonical partition.
\end{proof}

For ease of reference, let us summarise some facts that we will need later.

\begin{corollary}\label{H2new}
	Let $(n,e)$ be valid with $k=k(n,e)$. Then the following statements hold.
	\begin{itemize}
		\item[(i)] Every extremal $k$-partite $(n,e)$-graph lies in $\mathcal{H}_2(n,e)$.
		\item[(ii)] At least one extremal $k$-partite $(n,e)$-graph lies in $\mathcal{H}_1(n,e)$.
		\item[(iii)] Let $H \in \mathcal{H}_2^{\min}(n,e)\setminus\mathcal{H}_1(n,e)$ with an $\mathcal{H}_2$-canonical partition $A_1^*,\ldots,A_k^*$. Then
		$$
		\sum_{ij \in \binom{[k]}{2}}e(\overline{H}[A_i^*,A_j^*]) \leq |A_{k-1}^*|-|A_k^*|+1\le n.
		$$
	\end{itemize}
\end{corollary}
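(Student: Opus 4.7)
My plan is to derive the corollary by combining Lemmas~\ref{H2}, \ref{H2missing}, and~\ref{lm:H2'min}, with one short structural argument needed for part~(iii).

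For part~(i), writing $\mathcal{G}(n,e)$ for the family of $k$-partite $(n,e)$-graphs, Lemma~\ref{H2} gives $\mathcal{G}^{\min}(n,e) \subseteq \mathcal{H}_2'(n,e)$, and Lemma~\ref{lm:H2'min} then identifies $(\mathcal{H}_2')^{\min}(n,e)$ with $\mathcal{H}_2^{\min}(n,e) \subseteq \mathcal{H}_2(n,e)$; chaining these yields (i). For part~(ii), I would pick any $H \in \mathcal{H}_2^{\min}(n,e) = (\mathcal{H}_2')^{\min}(n,e)$ and invoke Lemma~\ref{H2missing}(i) to produce a graph $G \in \mathcal{H}_1(n,e) \cap \mathcal{H}_2(n,e)$ with $K_3(G) = K_3(H)$; such a $G$ is then extremal $k$-partite and lies in $\mathcal{H}_1(n,e)$.

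For part~(iii), let $H \in \mathcal{H}_2^{\min}(n,e) \setminus \mathcal{H}_1(n,e)$ with canonical partition $A_1^*, \ldots, A_k^*$; by Lemma~\ref{lm:H2'min} this is also an $\mathcal{H}_2'$-canonical partition. The crux is to show $|A_{k-2}^*| = |A_{k-1}^*|$. Assuming instead $|A_{k-2}^*| > |A_{k-1}^*|$, a short case check (splitting according to whether $\{i,j\} \subseteq [k-2]$ or one of $i,j$ lies in $\{k-1,k\}$) gives $|A_i^*| + |A_j^*| > |A_{k-1}^*| + |A_k^*|$ for every pair $\{i,j\} \in \binom{[k]}{2}$ other than $\{k-1,k\}$, so condition~(2) in the definition of $\mathcal{H}_2'(n,e)$ forces $H[A_i^*,A_j^*]$ to be complete for all such pairs. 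Consequently every missing edge of $H$ lies between $A_{k-1}^*$ and $A_k^*$, and taking $B := A_{k-1}^* \cup A_k^*$ with the remaining classes $A_1^*, \ldots, A_{k-2}^*$ witnesses $H \in \mathcal{H}_1(n,e)$, a contradiction. With $|A_{k-2}^*| = |A_{k-1}^*|$ established, Lemma~\ref{H2missing}(ii) applies directly to give
$$
\sum_{ij \in \binom{[k]}{2}} e(\overline{H}[A_i^*,A_j^*]) \leq |A_{k-1}^*| - |A_k^*| + 1 \leq n,
$$
where the final inequality uses the trivial bounds $|A_{k-1}^*| \leq n$ and $|A_k^*| \geq 1$.

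The only mildly nontrivial step is the reduction to $|A_{k-2}^*| = |A_{k-1}^*|$ in~(iii); after that, the proof is simply bookkeeping via the earlier lemmas, so I do not anticipate any serious obstacle.
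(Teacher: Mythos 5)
Your proposal is correct and follows essentially the same route as the paper: parts (i) and (ii) are obtained by chaining Lemmas~\ref{H2}, \ref{lm:H2'min} and \ref{H2missing}(i) exactly as in the text, and part (iii) reduces to the same key claim $|A_{k-2}^*|=|A_{k-1}^*|$ before invoking Lemma~\ref{H2missing}(ii). The only (minor) divergence is in justifying that claim: you deduce from condition (2) of the $\mathcal{H}_2'$-canonical partition (available via Lemma~\ref{lm:H2'min}) that all missing edges lie in $[A_{k-1}^*,A_k^*]$, whereas the paper argues this by noting that otherwise moving the missing edges into $[A_{k-1}^*,A_k^*]$ would strictly decrease the triangle count, contradicting minimality; both arguments are valid and lead to the same contradiction $H\in\mathcal{H}_1(n,e)$.
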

 \begin{proof} Part~(i) (resp.~(ii)) is a direct consequence of Lemma~\ref{H2} when combined with Lemma~\ref{lm:H2'min} (resp.\ with Lemma~\ref{H2missing}(i)).
To see~(iii), let $H$ and $A_1^*,\ldots,A_k^*$ be as stated. We claim that $|A_{k-2}^*|=|A_{k-1}^*|$.
Indeed, if $|A_{k-2}^*|\ge |A_{k-1}^*|+1$, then all the missing edges in $H$ should lie in $[A_{k-1}^*,A_k^*]$ as otherwise moving all missing edges to $[A_{k-1}^*,A_k^*]$ would result in a graph still in $\mathcal{H}_2(n,e)$ having strictly fewer triangles than $H$, contradicting to the choice of $H$. But then if all missing edges lie in $[A_{k-1}^*,A_k^*]$ we have $H\in \mathcal{H}_1(n,e)$, a contradiction.
 This together with Lemma~\ref{H2missing}(ii) and Lemma~\ref{lm:H2'min} implies~(iii).
\end{proof}

For future reference, let us state here the following auxiliary lemma, which implies that if the condition on $a$ that defines $a_k^*$ in Definition~\ref{astardef} fails for some $a\le n/k$, then it fails for all smaller values of~$a\in\I N$. 

\begin{lemma}\label{lm:a*k} 
 For any integers $a\ge 1$, $k\ge 2$ and $n\ge ak$, we have
 $$
 a(n-a)+t_{k-1}(n-a)>(a-1)(n-a+1)+t_{k-1}(n-a+1).
 $$
 \end{lemma}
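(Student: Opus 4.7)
The plan is to interpret both sides as edge counts in complete $k$-partite graphs and to track a single-vertex move. The left-hand side $a(n-a)+t_{k-1}(n-a)$ counts the edges of a complete $k$-partite graph $G_1$ with one distinguished part $U$ of size $a$ and whose remaining $k-1$ parts realise $T_{k-1}(n-a)$. Analogously, $(a-1)(n-a+1)+t_{k-1}(n-a+1)$ counts the edges of a complete $k$-partite graph $G_2$ whose distinguished part has size $a-1$ and whose other $k-1$ parts realise $T_{k-1}(n-a+1)$. Since $T_{k-1}(n-a+1)$ is obtained from $T_{k-1}(n-a)$ by adjoining a new vertex to any smallest part, which has size $q:=\lfloor (n-a)/(k-1)\rfloor$, one can realise the transition $G_1\to G_2$ by removing a single vertex $v$ from $U$ and placing it into a size-$q$ part of the Turán piece.

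Under this move, the only edges that change are those incident to $v$: in $G_1$ the vertex $v$ has degree $n-a$ (it sees everything outside $U$), while in $G_2$ it has degree $n-1-q$ (it sees everything outside its new part of size $q+1$). All other pairs are either still joined or still non-joined, so
$$
e(G_2)-e(G_1) \;=\; (n-1-q)-(n-a) \;=\; a-1-q.
$$
Thus the claimed strict inequality $e(G_1)>e(G_2)$ is equivalent to $q\geq a$, i.e.\ $\lfloor (n-a)/(k-1)\rfloor \geq a$. This in turn is the same as $n-a\geq a(k-1)$, which is exactly the standing hypothesis $n\geq ak$.

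There is no real obstacle here; the whole argument rests on the standard fact that a new vertex added to $T_{k-1}(m)$ goes into a smallest part of size $\lfloor m/(k-1)\rfloor$, together with the observation that the difference of the two sides is accounted for entirely by the altered degree of the single moved vertex. (One could equivalently phrase the proof by computing $t_{k-1}(n-a+1)-t_{k-1}(n-a)=n-a-q$ and substituting, arriving at the same reduction to $\lfloor (n-a)/(k-1)\rfloor \geq a$.)
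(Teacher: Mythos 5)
Your proof is correct and follows essentially the same route as the paper's: both interpret the two sides as edge counts of complete $k$-partite graphs, realise the transition by moving one vertex from the size-$a$ part into a smallest Tur\'an part of size $\lfloor (n-a)/(k-1)\rfloor$, and reduce the strict inequality to $\lfloor (n-a)/(k-1)\rfloor \ge a$, which is exactly $n\ge ak$. No issues.
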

 \begin{proof} Let $a_1\ge \dots\ge a_{k-1}$ be the part sizes of $T_{k-1}(n-a)$. If we increase its order by 1, then the part sizes of the new Tur\'an graph, up to a re-ordering, can be obtained by increasing $a_{k-1}$ by $1$. Thus we need to estimate the following difference:
 \begin{equation}\label{eq:a*k}
 e(K_{a_1,\dots,a_{k-1},a})-e(K_{a_1,\dots,a_{k-2},a_{k-1}+1,a-1})=a_{k-1}a-(a_{k-1}+1)(a-1) = a_{k-1}-a+1,
 \end{equation}
 which is positive, since $a_{k-1}\ge \lfloor(n-a)/(k-1)\rfloor $ is at least $a$ by our assumption $a\le \lfloor n/k\rfloor$.
 \end{proof}

\subsection{Proof of Proposition~\ref{pr:compute}.} First, we describe a transformation that converts an arbitrary $\mathcal{H}_0(n,e)$-extremal graph $G$ into another extremal graph $H'$ of a rather simple structure. Then, we argue in Lemma~\ref{lem-Hlpab} that $H'$ is in fact isomorphic to the special graph $H^*(n,e)$ from Definition~\ref{astardef}. Since $H^*(n,e)\in \mathcal{H}_1(n,e)\subseteq \mathcal{H}_0(n,e)$, this determines the minimum number of triangles for graphs in these two families. From here, it is relatively easy to  derive all remaining claims of Proposition~\ref{pr:compute}.

Let $(n,e)$ be valid and set $k=k(n,e)$. Take an arbitrary graph $G \in \mathcal{H}^{\mathrm{min}}_0(n,e)$ with a vertex partition $B_1,\ldots,B_{k-1}$ such that $G$ consists of the union of $K[B_1,\ldots,B_{k-1}]$ and an edge-disjoint triangle-free graph $J$.
We say that a part $B_j$, $j\in [k-1]$, is \emph{partially full (in $G$)} if $0 < e(G[B_j]) < t_2(b_j)$, where $b_j:=|B_j|$. Since we can move edges in both directions between such parts (keeping the parts triangle-free and thus staying within the family $\mathcal{H}_0(n,e)$), we have by the minimality of $G$ that 
\begin{equation}\label{eq:T2Parts}
 b_i=b_j, \quad \text{for all $i,j\in [k-1]$ such that } B_i \text{ and } B_j \text{ are partially full}.
 \end{equation} 
 
 We construct another graph $H'=H'(G)$ in $\mathcal{H}^{\mathrm{min}}_0(n,e)$ using the following steps.
 \begin{description}
 \item[Step 1] For each partially full part $B_j$, replace $G[B_j]$ by a balanced bipartite graph of the same size (which is possible by Mantel's theorem).
 \item[Step 2] Move edges between partially full parts (keeping them balanced bipatite), until at most one  remains. By~\eqref{eq:T2Parts}, the current graph (denote it by $G_1$) is still in $\mathcal{H}_0^{\mathrm{min}}(n,e)$.
 \item[Step 3] If there is a  part $B_i$ which is partially full in $G_1$, then let $B:=B_i$; otherwise let $B:=B_i$ for some $i\in [k-1]$ with $e(G_1[B_i])=t_2(b_i)$ (such $i$ exists since $e(G_1)=e>t_{k-1}(n)$).
 \item[Step 4] As $V(G)\setminus B$ induces a complete partite graph in $G_1$, let $A_1,\ldots, A_{t-2}$ be its parts of sizes $a_1\ge \dots\ge a_{t-2}$ respectively. Thus each part $B_i$ of $G$ is equal to either $B$, or some $A_j$, or the union of some two parts $A_j\cup A_\ell$. 
 \item[Step 5] Choose integers $a_{t-1} \geq a_t \geq 1$ such that $a_{t-1}+a_t=|B|$ and $(a_{t-1}+1)(a_t-1)<e(G_1[B]) \leq a_{t-1}a_t$, which is possible since $G_1[B]$ is bipartite.
 	Let $A_{t-1},A_t$ be a partition of $B$ with $|A_i|=a_i$ for $i \in \lbrace t-1,t\rbrace$. If $e(G_1[B])=t_2(|B|)$, then we additionally require that the parts $A_{t-1}$ and $A_t$ are given by the bipartition of $G_1[B]\cong T_2(|B|)$.
 \item[Step 6] Let $H'$ be obtained from $K[A_1,\ldots,A_t]$ by removing a star centred at $A_t$ with $m'$ leaves all of which lie in $A_{t-1}$, where $m' := \sum_{ij \in \binom{[t]}{2}}a_ia_j-e=a_{t-1}a_t-e(G_1[B])$. This is possible because, like in~(\ref{m*}), we have
 \begin{equation}\label{eq:m'6}
 0 \leq m' \leq a_{t-1}-a_t.
 \end{equation}
 \end{description}

\begin{lemma}\label{lem-Hlpab}
	For every valid $(n,e)$ and $G\in \mathcal{H}^{\mathrm{min}}_0(n,e)$, the graph $H'$ produced by Steps 1--6 above is isomorphic to  $H^*(n,e)$.\end{lemma}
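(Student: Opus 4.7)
The plan is to verify that each of Steps~1--6 preserves both $e(\cdot)$ and $K_3(\cdot)$, so that $H'\in\mathcal{H}^{\mathrm{min}}_0(n,e)$, and then identify the structural parameters $(t,a_1,\ldots,a_t,m')$ of $H'$ with those defining $H^*(n,e)$ in Definition~\ref{astardef}.

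For the first part, I would use that every triangle in a graph $H\in\mathcal{H}_0(n,e')$ with defining partition $B_1,\ldots,B_{k-1}$ and triangle-free added graph $J$ either lies entirely in $K[B_1,\ldots,B_{k-1}]$ or contains exactly one edge of $J$; an edge of $J$ inside $B_j$ is in exactly $n-b_j$ triangles, since the common neighbourhood of its endpoints is $V(H)\setminus B_j$. Thus
\begin{equation*}
K_3(H)=K_3(K[B_1,\ldots,B_{k-1}])+\sum_{j\in[k-1]}e(H[B_j])\,(n-b_j).
\end{equation*}
Step~1 swaps $H[B_j]$ for another triangle-free graph of the same size, and Step~2 moves edges between partially full parts which by~\eqref{eq:T2Parts} have equal sizes; both leave the weighted sum unchanged. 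Steps~3--5 only relabel vertices. In Step~6 the bipartite graph on $B$ is replaced by the complete bipartite graph on $(A_{t-1},A_t)$ minus an $m'$-star; both the edge count $a_{t-1}a_t-m'=e(G_1[B])$ and the weighting factor $n-|B|$ stay the same. Therefore $K_3(H')=K_3(G)$ and $H'\in\mathcal{H}^{\mathrm{min}}_0(n,e)$.

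For the second part, I first argue that $t=k$. After Step~2 the complete $(k-1)$-partite structure on $B_1,\ldots,B_{k-1}$ is intact, so $V(G_1)\setminus B$ induces the complete partite graph on the $k-2$ other classes; none of these is empty since otherwise $H'$ would be $(k-1)$-partite with $e>t_{k-1}(n)$ edges, impossible. Hence $t-2=k-2$. Next, from
\begin{equation*}
e=\sum_{ij\in\binom{[k]}{2}}a_ia_j-m'\ \le\ a_k(n-a_k)+t_{k-1}(n-a_k),
\end{equation*}
combined with the minimality in the definition of $a_k^*$, we obtain $a_k\ge a_k^*$. For the Turán property of $(a_1,\ldots,a_{k-1})$ and the reverse inequality $a_k\le a_k^*$, I would exploit the minimality of $K_3(H')$ in $\mathcal{H}_0(n,e)$ together with the explicit formula
\begin{equation*}
K_3(H')=\sum_{hij\in\binom{[k]}{3}}a_ha_ia_j-m'(a_1+\cdots+a_{k-2}),\qquad m'=\sum_{ij\in\binom{[k]}{2}}a_ia_j-e.
\end{equation*}
A standard swap argument shows that if two of $a_1,\ldots,a_{k-1}$ differ by more than one, then evening them out (leaving $a_k$ fixed and possibly adjusting $m'$) yields a graph of the same form in $\mathcal{H}_1(n,e)\subseteq\mathcal{H}_0(n,e)$ with strictly fewer triangles, contradicting minimality. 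Similarly, decreasing $a_k$ by one and rebalancing the first $k-1$ parts is feasible precisely when $a_k>a_k^*$ (by Lemma~\ref{lm:a*k} this keeps the required $m'$ non-negative), and a direct computation shows such a move strictly reduces $K_3$. Hence $(a_1,\ldots,a_k)=(a_1^*,\ldots,a_k^*)$, and then $m'=\sum_{ij\in\binom{[k]}{2}}a_i^*a_j^*-e=m^*$ is forced by the edge count, yielding $H'\cong H^*(n,e)$.

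The main obstacle is the monotonicity claim in the last paragraph: showing that swapping a vertex between two unbalanced first parts, or decreasing $a_k$ and rebalancing, strictly decreases $K_3$ unless the tuple $(a_1,\ldots,a_k,m')$ already equals $(a_1^*,\ldots,a_k^*,m^*)$. This requires careful bookkeeping using the formula above and the fact (via Lemma~\ref{lm:a*k}) that decreasing $a_k$ below $a_k^*$ would make the corresponding $m'$ negative, ensuring the competitor graphs used in the contradiction are well defined.
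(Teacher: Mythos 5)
Your first part --- that Steps~1--6 preserve the edge and triangle counts, so that $H'\in\mathcal{H}_0^{\mathrm{min}}(n,e)$ --- is correct and is exactly the paper's opening observation. The genuine gap is the sentence ``Hence $t-2=k-2$''. After Step~2 the classes $B_i\neq B$ need not be independent sets in $G_1$: a class with $e(G_1[B_i])=t_2(b_i)>0$ is untouched by Steps~1--2 (only \emph{partially} full classes are consolidated there), and in Step~4 such a class splits into \emph{two} parts of the complete partite graph induced on $V(G_1)\setminus B$. So a priori the construction only gives $k\le t\le 2(k-1)$, which is why the paper records merely the bound $t\le 2(k-1)$ inside the proof of Claim~\ref{cl:iv}. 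Ruling out $t>k$ is the heart of the paper's argument: it first establishes, via three separate minimality arguments (Claims~\ref{cl:iv}, \ref{cl:Last2} and~\ref{cl:AlmostEqual}), that the parts $C_1,\dots,C_{t-1}$ are balanced to within one, and only then deduces $e>t_{t-1}(n)$ by an edge count; combined with the trivial $e\le t_t(n)$ this forces $t=k(n,e)=k$. Your proposal contains no substitute for this step, and everything after ``Hence $t-2=k-2$'' presupposes it.

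The remainder of your plan (deducing $a_k\ge a_k^*$ from $f(a_k)\ge e$, balancing by vertex swaps, and excluding $a_k>a_k^*$ via Lemma~\ref{lm:a*k}) is in the right spirit, but the swap argument is not as routine as ``standard'' suggests. With the missing star between parts $k-1$ and $k$, moving one vertex from $A_i$ to $A_j$ with $i,j\le k-2$ and enlarging the star to keep $e$ fixed changes the triangle count by
\begin{equation*}
(a_i-a_j-1)\bigl(a_{k-1}+a_k-a_i-a_j\bigr),
\end{equation*}
whereas a swap into the part carrying the star produces an extra additive term $+m''$; these are genuinely different computations, and the latter is exactly what Claim~\ref{cl:AlmostEqual} carries out (balancing the largest part against $C_{t-1}$ specifically). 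Note also that the paper excludes $a_k>a_k^*$ not by a triangle count but by showing directly that $f(a_k-1)=f(a_k)-(a_{k-1}-a_k+1)\le e+m'-(m'+1)<e$ using $0\le m'\le a_{k-1}-a_k$, which avoids having to construct and compare a competitor graph.
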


\begin{proof}  We will use the notation defined in Steps~1--6 (such as the sets $B_i$ and $A_i$, etc). As  $H'$ is obtained from $G_1\in \mathcal{H}^{\mathrm{min}}_0(n,e)$ by replacing a bipartite graph on $B$  with another bipartite graph of the same size (while $B$ is complete to the rest in both graphs), we have that $K_3(H')=K_3(G_1)$ and thus $H'\in \mathcal{H}_0^{\mathrm{min}}(n,e)$.

	\begin{claim}\label{cl:iv}
		If $m'=0$, then $e(H'[A_h \cup A_i \cup A_j])> t_2(|A_h|+|A_i|+|A_j|)$ for all $hij \in \binom{[t]}{3}$. If $m'>0$, then the stated inequality holds for every triple $\{h,t-1,t\}$ with $h\in [t-2]$.	
			\end{claim}
	
	\begin{proof}[Proof of Claim.]
		Let $W := A_{h} \cup A_{i}\cup A_j$.
		Suppose on the contrary that $e(H'[W]) \leq t_2(|W|)$.
Then one can obtain a new graph $G_2$ from $H'$ by replacing $H'[W]$ with a bipartite graph of the same size.  Note that $H'$ is complete between $W$ and $\overline{W}$. (Indeed, this is trivially true if $m'=0$ as then $H'=K[A_1,\dots,A_t]$; otherwise the only non-complete pair is $[A_{t-1},A_t]$ but both of these sets lie inside~$W$.) 

As $H'$ is $t$-partite, the graph $G_2$ is $(t-1)$-partite (with at most one non-complete pair of parts).
By Steps~4--5, we have $t\le 2(k-1)$. So we can represent $G_2$ as the union of complete $(k-1)$-partite and triangle-free graphs, that is, $G_2 \in \mathcal{H}_0(n,e)$.
We have that $K_3(G_2[W])=0$ and $W$ is complete to $\overline{W}$ in both $H'$ and $G_2$. Thus the fact that $H' \in \mathcal{H}_0^{\mathrm{min}}(n,e)$ implies that $K_3(H'[W])=0$.
		However, if $\lbrace t-1,t\rbrace \not\subseteq \lbrace h,i,j\rbrace$, then $H'[W]$ is complete tripartite so clearly contains at least one triangle. Otherwise, if, say, $\lbrace i,j\rbrace = \lbrace t-1,t\rbrace$, then $H'$ spans at least one edge between $A_{t-1},A_t$ (since there are $m' \leq a_{t-1}-a_t < a_{t-1}a_t$ missing edges by~\eqref{eq:m'6}). Each such edge lies in $|A_h|>0$ triangles in~$H'[W]$. 
		So in either case we obtain a contradiction.\end{proof}

	\begin{claim}\label{cl:Last2} If $m'> 0$ then $a_{t-2} \geq a_{t-1}$.\end{claim}
	\begin{proof}[Proof of Claim.]	Suppose the claim is false.  
		Now, make a new graph $G_3$ from $H'$ by replacing $[A_{t-2},A_t]$-edges with $[A_{t-1},A_t]$-edges until this is no longer possible.
		Let $W := A_{t-2} \cup A_{t-1}\cup A_t$.
		If $A_{t-2}\cup A_{t}$ is an independent set in $G_3$ (i.e.\ if $m'\ge a_{t-2}a_t$), then $e(H'[W])=e(G_3[W])\leq t_2(|W|)$, contradicting Claim~\ref{cl:iv} for the triple $\lbrace t-2,t-1,t\rbrace$.
		Thus $G_3[W]$ is obtained from $K[A_{t-2},A_{t-1},A_t]$ by removing $m'$ edges from $K[A_{t-2},A_t]$.
		So $G_3 \in \mathcal{H}_0(n,e)$, and
		$$
		K_3(G_3)-K_3(H') = m'((n-a_{t-1}-a_t)-(n-a_{t-2}-a_t))=m'(a_{t-2}-a_{t-1}) \leq -1,
		$$
		a contradiction proving the claim.
	\end{proof}
		
	If $m'>0$, let $C_i:=A_i$ for $i\in [t]$. If $m'=0$, then let $C_1,\dots,C_t$ be a relabelling of $A_1,\dots,A_t$ so that the sizes of the sets do not increase. 
	Regardless of the value of $m'$, the following statements hold. First, $c_1\ge\dots\ge c_t$, where $c_i:=|C_i|$ for $i\in [t]$. (Indeed, if $m'>0$, this follows from  Steps~4--5 and Claim~\ref{cl:Last2}.)  Also, we have
	\begin{equation}\label{eq:m'C}
	0\le m'\le c_{t-1}-c_t.
	\end{equation}
	(Indeed, if $m'>0$, this is the same as~\eqref{eq:m'6}; otherwise this is a trivial consequence of $m'=0$ and $c_{t-1}\ge c_t$.)  Also, Claim~\ref{cl:iv} applies to any triple $C_i,C_{t-1},C_t$.
	
 The rest of the proof is written so that it works for both $m'=0$ and $m'>0$.
		
\begin{claim}\label{cl:AlmostEqual} We have that $c_1\le c_{t-1}+1$.\end{claim}
\begin{proof}[Proof of Claim.] Suppose that this is false.
	Let  $W := C_1 \cup C_{t-1} \cup C_t$.
	Note that
	$$
	e(K_{c_1-1,c_{t-1}+1,c_t}) - e(H'[W]) = m'-c_{t-1}+c_1-1 =: m''.
	$$
	Now, $m'' \geq m'+1$.
	We claim that additionally $m'' < (c_{t-1}+1)c_t$.
	Suppose that this is not true. Then $e(H'[W]) \leq (c_1-1)(c_{t-1}+c_t+1) \leq t_2(|W|)$,
	contradicting Claim~\ref{cl:iv}.
	Take a partition $C_1',C_{t-1}',C_t'$ of $W$ of sizes $c_1-1,c_{t-1}+1,c_t$ respectively and let a graph $H_W$ be obtained from $K[C_1',C_{t-1}',C_t']$ by removing $m''$ edges between $C_{t-1}'$ and $C_t'$. Then $e(H_W)=e(H'[W])$.
	Obtain $H''$ from $H'$ by replacing $H'[W]$ with $H_W$.
	Note that $H'' \in \mathcal{H}_0(n,e)$.
	By~\eqref{eq:m'C}, we have that
	\begin{align*}
		K_3(H')-K_3(H'')&=K_3(H'[W])-K_3(H_W)\\
		&=\big(c_1c_{t-1}c_t - m'c_1\big) -\big((c_1-1)(c_{t-1}+1)c_t-(m'-c_{t-1}+c_1-1)(c_1-1)\big)\\
		&\geq (c_1-c_t)(c_1-c_{t-1}-2)+1 \geq 1,
	\end{align*}
	a contradiction proving $c_{t-1}+1\ge c_1$.\end{proof}

It follows that $C_1,\dots,C_{t-1}$  induce a Tur\'an graph in $H'$. (Indeed, the sizes of these independent set are almost equal by Claim~\ref{cl:AlmostEqual}; furthermore, if $m'>0$, then all missing edges in $H'$ are between $C_{t-1}=A_{t-1}$ and $C_t=A_t$ while otherwise there are no missing edges at all.)

	Now, we can argue that $t=k$.
	By the definition of $k$, we have to show that $t_{t-1}(n)<e\le t_{t}(n)$.
	Clearly, $H'$ is $t$-partite so $e\leq t_{t}(n)$. So it remains to show $t_{t-1}(n)<e$. Let $T := H'[C_1 \cup \ldots \cup C_{t-1}]\cong T_{t-1}(n-c_t)$. We can obtain both $H'$ and $T_{t-1}(n)$ from $T$ by adding $c_t$ vertices one by one.
	First let us make $H'$ from $T$.
	The number of additional edges is $e-e(T) = c_t(n-c_t) - m'$. 
	If we instead add vertices one by one to $T$ to make $T_{t-1}(n)$, each vertex must miss an entire part of the current graph, so its degree is at most $n-c_{t-1}-1$.
	Thus $t_{t-1}(n)-e(T) \leq c_{t}(n-c_{t-1}-1)$.
	By~\eqref{eq:m'C}, we have
	$$
	e-t_{t-1}(n) \geq c_t(c_{t-1}+1-c_t)-m' \geq (c_t-1)(c_{t-1}-c_t)+c_t>0.
	$$
	Thus $t=k$, as stated.

	Now we can show that $H'$ has part sizes given by the vector $\bm{a}^*=\bm{a}^*(n,e)$ from Definition~\ref{astardef}, finishing the proof of the lemma. By Claim~\ref{cl:AlmostEqual}, we have that
	$\sum_{ij \in \binom{[k-1]}{2}}c_ic_j = t_{k-1}(n-c_k)$.
	Note that $m' = c_{k-1}c_k-e(H'[C_{k-1}\cup C_k])$.
	Thus we have by~\eqref{eq:m'C} that $e-t_{k-1}(n-c_k) = c_k(n-c_k)-m' \leq c_k(n-c_k)$.

	So it remains only to show that $c_{k}$ is the \emph{smallest} natural 
	number $a$ with $f(a):=a(n-a)+t_{k-1}(n-a) \geq e$. Note that $c_k\le n/k$ as it is the smallest among $c_1+\dots+c_k=n$.
	Thus, by Lemma~\ref{lm:a*k}, it is enough to check that
	$c_k-1$ violates this condition. The calculation in~\eqref{eq:a*k}, the estimates that we stated in the previous paragraph and~\eqref{eq:m'C} give that 
	$$f(c_k-1)=f(c_k)-(c_{k-1}-c_k+1)\le e+m'-(m'+1)<e,$$
	as desired. This finishes the proof of the lemma.\end{proof}

\begin{proof}[Proof of Proposition~\ref{pr:compute}.] Let $n,e\in\I N$ with $e\le {n\choose 2}$ and let $k := k(n,e)$.  
	Corollary~\ref{H2new} and Lemma~\ref{lem-Hlpab} show that, for each $i\in \{0,1,2\}$, the minimum number of triangles over the graphs in $\mathcal{H}_i(n,e)\ni H^*(n,e)$ is $K_3(H^*(n,e))=h^*(n,e)$. Thus it remains to describe the extremal graphs. Assume that $k\ge 3$ as otherwise $h(n,e)=h^*(n,e)=0$ and trivially $\mathcal{H}^{\mathrm{min}}_i(n,e)=\mathcal{H}^*_i(n,e)$ for $i=0,1,2$.

	
	First we will prove that $\mathcal{H}^{\mathrm{min}}_i(n,e)=\mathcal{H}^*_i(n,e)$ for $i=0,1$.
	Let $G \in \mathcal{H}_0^{\mathrm{min}}(n,e)$ be arbitrary. 
	Let $G$ have vertex partition $B_1,\ldots,B_{k-1}$ such that $G$ consists of the union of $K[B_1,\ldots,B_{k-1}]$ and an edge-disjoint triangle-free graph $J$.
	Write $b_i := |B_i|$ for all $i \in [k-1]$.
	Apply Steps~1--6 to $G$ to obtain a $t$-partite graph $H'$ with parts $A_1,\ldots,A_t$.
	By Lemma~\ref{lem-Hlpab},  $H'$ is isomorphic to $H^* := H^*(n,e)$.
	Thus $t=k$ and, by relabelling parts, we can assume that $|A_i|=a_i^*$ for all $i \in [k]$ and that all missing edges, if any exist, are in $H'[A_{k-1},A_k]$.

	We will also need the following claim.
	
	\begin{claim}\label{cl:vi} If a part $B_i$ is not partially full in $G$ (that is, if $e(G[B_i])$ is $0$ or $t_2(b_i)$), then
		$G[B_i]= H'[B_i]$ (that is, no adjacency inside $B_i$ is modified).
	\end{claim}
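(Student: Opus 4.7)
The plan is to trace through Steps 1--6 of the construction $G \mapsto H'$ and verify that the induced subgraph on any non-partially-full part $B_i$ is preserved at every step. The first observation is immediate: Steps 1 and 2 only modify $G[B_j]$ for partially full $B_j$, so if $B_i$ is not partially full then $G_1[B_i] = G[B_i]$. The entire claim thus reduces to analyzing the fate of $B_i$ in Steps 3--6, which splits naturally into two cases depending on whether $e(G[B_i]) = 0$ or $e(G[B_i]) = t_2(b_i)$.

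In the case $e(G[B_i]) = 0$, the set $B_i$ is independent in $G_1$. If $b_i \leq 1$ the conclusion is trivial, so assume $b_i \geq 2$; then $t_2(b_i) > 0$, so by the rule of Step~3 the part $B_i$ cannot be chosen as $B$. Hence $B_i \subseteq V(G)\setminus B$, and since $G_1[B_i]$ has no edges, the complete-partite structure of $G_1[V(G)\setminus B]$ from Step~4 forces $B_i$ to coincide with a single part $A_j$ (for some $j \leq t-2$). In $H'$ this part $A_j$ is an independent set, so $H'[B_i] = G[B_i] = \emptyset$.

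In the case $e(G[B_i]) = t_2(b_i) > 0$, the graph $G[B_i]$ is a connected complete bipartite graph with a unique bipartition $\{X, Y\}$ of sizes $\lceil b_i/2 \rceil, \lfloor b_i/2 \rfloor$. If $B = B_i$, then the additional requirement in Step~5 pins down $\{A_{t-1}, A_t\} = \{X, Y\}$, and one computes $m' = a_{t-1}a_t - e(G_1[B]) = t_2(b_i) - t_2(b_i) = 0$, so Step~6 produces $H'[B_i] = K[X, Y] = G[B_i]$. If instead $B \neq B_i$, then $B_i \subseteq V(G)\setminus B$, and the complete-partite structure on $V(G)\setminus B$ forces the parts $A_j$ restricted to $B_i$ to be exactly $\{X, Y\}$: a coarser partition would create an edge inside a part, while a strictly finer partition would create a non-edge between two parts, contradicting completeness. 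Since the only missing edges of $H'$ lie between $A_{t-1}$ and $A_t$, both contained in $B$, none affect $B_i$, so $H'[B_i] = K[X, Y] = G[B_i]$.

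The main obstacle is precisely the identification of the parts $A_1, \ldots, A_{t-2}$ inside a full (non-chosen) $B_i$ in Subcase 2b: one must simultaneously rule out both coarsening and refinement of the canonical bipartition of $G[B_i]$ using the complete-partite property of $G_1[V(G)\setminus B]$. Everything else is bookkeeping using the definitions in Steps 1--6, together with the key numerical fact that $m' = 0$ whenever $B$ is already a balanced complete bipartite graph.
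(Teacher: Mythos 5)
Your proof is correct and follows essentially the same route as the paper's (much terser) argument: trace Steps 1--6, note that an edgeless $B_i$ must coincide with a single part $A_j$, and that a full $B_i$ has a unique bipartition which is preserved whether or not $B_i$ is the chosen part $B$ (using $m'=0$ and the extra requirement of Step 5 in the former case). You have simply spelled out the details the paper compresses into ``by construction''.
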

	\begin{proof}[Proof of Claim.] 
		If $e(G[B_i])=0$, then  $B_i=A_j$ for some $j \in [k-2]$ and
		so $e(H'[B_i])=0=e(G[B_i])$, giving the required.
		If  $e(G[B_i])=t_2(b_i)$ then, by construction, $G_1[B_i]=G[B_i]$ are maximum bipartite graphs and so $H'[B_i]=G[B_i]$, as required.
	\end{proof}

Since $t=k$, exactly one part $B_p$ of $G$ is subdivided as $A_q \cup A_r$ in
Steps~4--5 (that is, $B_p=A_q \cup A_r$), while the remaining parts of $G$ correspond to the remaining parts of $H'$.
In particular, $b_p = a_{q}^*+a_r^*$, where, say, $1 \leq q < r \leq k$.

	Let us show that $e(G[B_p])>0$. Indeed, if this is not true, then, by~(\ref{m*}), $H'[B_p]$ contains $a_q^*a_r^*-m^* \geq a_q^*a_r^*-(a_{k-1}^*-a_k^*)>0$ edges, and so is different from the edgeless graph $G[B_p]$. Then Claim~\ref{cl:vi} implies that $B_p$ is partially full, a contradiction.
	
	\medskip
	\noindent
	\textbf{Case 1.} \emph{There exists $h \in [k-1]\setminus \lbrace p \rbrace$ such that $e(G[B_h])>0$.
		In other words, $G \in \mathcal{H}_0^{\mathrm{min}}(n,e)\setminus \mathcal{H}_1(n,e)$.}
	
	\medskip
	\noindent
	We claim that $b_h=b_p$.
	This follows from~\eqref{eq:T2Parts} if $B_h$ and $B_p$ are both partially full.
	Note that $B_h$ is an independent set in $H'$ and so $G[B_h] \neq H'[B_h]$, and Claim~\ref{cl:vi} implies that $B_h$ is partially full.
	So it suffices to show that $B_p$ is partially full.
	If not, then $e(G[B_p])=t_2(b_p)$ (as $e(G[B_p])=0$ is already excluded).
	Since $G[B_i,B_j]=H'[B_i,B_j]$ for all $ij \in \binom{[k-1]}{2}$ and $e(H'[B_h])=0<e(G[B_h])$, there is some $\ell \in [k-1]\setminus \lbrace h\rbrace$ such that $e(H'[B_\ell])>e(G[B_\ell])$.
	Since $H'[B_p]$ is bipartite and $e(G[B_p])=t_2(b_p)\ge e(H'[B_p])$, we have that  $\ell \neq p$ .
	But then $B_\ell=A_j$ for some $j \in [k]$, and so $B_\ell$ is an independent set in $H'$, a contradiction.
	This proves that $b_h=b_p$.
	
   Since $B_p$ is the only part that was subdivided, there is $s \in [k-1]$ such that $A_s=B_h$ and thus $a_s^*=b_h=b_p=a_q^*+a_r^*$.
	Since $a_1^* \geq \ldots \geq a_{k-1}^* \geq \max\lbrace a_1^*-1,a_k^*\rbrace$, we have $a_s^*-a_q^*=1$ and $a_r^*=1$. So $a_k^*=1$ and $a_q^*=a_{k-1}^*$.
	Since $h$ was arbitrary, we conclude that for all $i \in [k-1]$ such that $e(G[B_i])>0$, we have $b_i=a^*_{k-1}+1$.
	So $G \in \mathcal{H}_0^*(n,e)$, as required.
	
	\medskip
	\noindent
	\textbf{Case 2.} \emph{For all $h \in [k-1]\setminus \lbrace p \rbrace$ we have $e(G[B_h])=0$.
		In other words, $G \in \mathcal{H}_1^{\mathrm{min}}(n,e)$.}
	
	\medskip
	\noindent
	Suppose first that $m^*=0$.
	Then $H' = K[A_1,\ldots,A_k]$, and $G$ can be obtained from it by replacing $H'[A_q \cup A_r]$ with $G[B_p]$.
	Moreover, $G[B_p]$ is a triangle-free graph on $a_q^*+a_r^*$ vertices with $a_q^*a_r^*$ edges.
	If $a_r^*=a_k^*$, then $G \in \mathcal{H}^*_1(n,e)$; otherwise $|a_q^*-a_r^*| \leq 1$, so $G[B_p] \cong T_2(a_q^*+a_r^*)$ and thus $G \cong H' \in \mathcal{H}^*_1(n,e)$, getting the required in either case.
	
	Suppose instead that $m^*>0$. Since $G[A_i,A_j]$ is complete for all $\lbrace i,j\rbrace \neq \lbrace q,r\rbrace$, and $H'[A_i,A_j]$ is complete if and only if $\lbrace i,j\rbrace \neq \lbrace k-1,k\rbrace$, we have $\lbrace q,r\rbrace = \lbrace k-1,k\rbrace$.
	Thus $G$ can be obtained from $K[A_1,\ldots,A_k]$ by replacing $K[A_{k-1}\cup A_k]$ with a triangle-free graph with $a_{k-1}^*a_k^*-m^*$ edges.
	This gives that $G \in \mathcal{H}^*_1(n,e)$, as required.
	
	Note that if $G\in \mathcal{H}_1^{\mathrm{min}}(n,e)$ then the above argument always concludes that $G\in \mathcal{H}_1^*(n,e)$, apart from Case~1 (that does not apply here). Thus we have proved that $\mathcal{H}_i^{\mathrm{min}}(n,e) = \mathcal{H}_i^*(n,e)$ for $i =0,1$.

	\medskip
	Now let $G \in \mathcal{H}_2^{\mathrm{min}}(n,e)$ be arbitrary.
	If $G \in \mathcal{H}_1(n,e)$ then, as we have just established, $G \in \mathcal{H}_1^*(n,e)$ (and also $G$ is $k$-partite).
	So $G \in \mathcal{H}_2^*(n,e)$, and thus we may assume that $G \in \mathcal{H}_2^{\mathrm{min}}(n,e)\setminus \mathcal{H}_1(n,e)$.
	
	Let $G$ have $\mathcal{H}_2$-canonical partition $A_1,\ldots,A_k$ with part sizes $a_1 \geq \ldots \geq a_k$ respectively.  
	By Lemma~\ref{lm:H2'min}, we have that $G \in (\mathcal{H}_2')^{\mathrm{min}}(n,e)$, and $A_1,\ldots,A_k$ is an $\mathcal{H}_2'$-canonical partition.
	Since $G \notin \mathcal{H}_1(n,e)$, Corollary~\ref{H2new}(iii) gives that
	\begin{equation}\label{H2m}
	m := \sum_{1\le i<j\le k}e(\overline{G}[A_i,A_j]) \leq a_{k-1}-a_k+1.
	\end{equation}
	Since $\mathcal{H}_2^{\mathrm{min}}(n,e)=(\mathcal{H}_2')^{\mathrm{min}}(n,e)$ by Lemma~\ref{lm:H2'min}, we see that if, for $i$ in $I := \lbrace j \in [k-1]:a_j=a_{k-1} \rbrace$, we let $B_i$ consist of those $x\in A_k$ that have at least one non-neighbour in $A_i$, then these subsets of $A_k$ are disjoint and every missing edge in $G$ intersects one of them. So to prove that $G \in \mathcal{H}_2^*(n,e)$, it suffices to show that
	\begin{itemize}
		\item[(i)] $(a_1,\ldots,a_k)=(a_1^*,\ldots,a_k^*)$; or
		\item[(ii)] $m^*=0$, $a_1^*\ge a_k^*+2$ and $(a_1,\ldots,a_k)=(a_2^*,\ldots,a_{k-1}^*,a_1^*-1,a_k^*+1)$.
	\end{itemize}
	
	By~(\ref{H2m}), we can obtain a graph $G'$ from $G$ by moving all $m$ missing edges between parts $A_{k-1}$ and $A_k$.
	Then $G' \in \mathcal{H}_1^{\mathrm{min}}(n,e)$, which equals $\mathcal{H}^*_1(n,e)$ as we have already shown.
	So $G'$ has a partition $A_1^*,\ldots,A_k^*$ where $|A_i^*|=a_i^*$ and there is some $i \in [k-1]$ such that $G'$ can be obtained from $K[A_1^*,\ldots,A_k^*]$ by replacing $K[A_i^*\cup A_k^*]$ with a triangle-free graph with $a_i^*a_k^*-m^*$ edges. Thus there is a bijection $\sigma : [k-1]\setminus \lbrace i \rbrace \rightarrow [k-2]$ such that
	\begin{equation}\label{sigmaeq}
	A_{\sigma(j)}=A_j^*,\quad\text{for all } j \in [k-1]\setminus \lbrace i \rbrace,
	\end{equation} 
	while $A_{k-1} \cup A_k = A_i^* \cup A_k^*$ and $a_{k-1}+a_k=a_i^*+a_k^*$.  Thus, by the monotonicity of the involved sequences, if we remove the $i$-th and $k$-th
entries from $\bm{a}^*$ then we obtain $(a_1,\dots,a_{k-2})$.

By the minimality of $a_k^*$, we have $a_k^*\le a_{k}$.
	Suppose that $a:=a_k-a_k^*\ge 1$ as otherwise $(a_{k-1},a_k)=(a_i^*,a_k^*)$ and the desired property~(i) follows from~\eqref{sigmaeq}. 
	Since $a_k^*+a= a_k\le a_{k-1}=a_{i}^*-a$, we have
	\begin{equation}\label{eq:m-m*}
	m-m^*=a_{k-1}a_k-a_i^*a_k^*= a_{k-1}a_k-(a_{k-1}+a)(a_k-a)
	\ge a_{k-1}-a_k+a^2.
	\end{equation}
	By~\eqref{H2m} and~\eqref{eq:m-m*}, we have $a=1$, $m^*=0$, $a_k=a_k^*+1$ and $a_{k-1}=a_i^*-1$. Also,
	$a_1^*-1\ge a_i^*-1=a_{k-1}\ge a_k=a_k^*+1$. 
 Recall that $0\le a_1^*-a_i^*\le 1$ by  the definition of~$\bm{a}^*$.
If $a_i^*=a_1^*-1$ then, for all $j \in [k-1]\setminus \lbrace i \rbrace$, by~(\ref{sigmaeq}), we have $a_j=a_{\sigma^{-1}(j)}^* \geq a_1^*-1=a_i^*=a_{k-1}+1$. But then the set $I$ of indices of parts which are not complete to $A_k$ consists only of $k-1$, so $G \in \mathcal{H}_1(n,e)$, a contradiction.
Thus $a_i^*=a_1^*$.
This gives all the statements from~(ii) by~\eqref{sigmaeq}, finishing the proof of the proposition.
\end{proof}

\subsection{Approximating the increment of the function $h^*(n,\cdot)$}

Let a pair $(n,e)$ be valid and let $k=k(2e/n^2)$, where the single-variable function $k$ is defined in~\eqref{eq:k(lambda)}. Also, define $c(n,e):=c(2e/n^2)$ to be the larger root of~\eqref{eq:NewC} for $\lambda=2e/n^2$; this root can be explicitly written as 
\begin{equation}\label{eq:c2}
 c(n,e) := c(2e/n^2) = \frac{1}{k}\left(\,1 + \sqrt{1-\frac{k}{k-1}\cdot \frac{2e}{n^2}}\,\right).
\end{equation} 
Let $c := c(n,e)$.
By definition,
\begin{equation}\label{eq:c}
\binom{k-1}{2}c^2 + (k-1)c(1-(k-1)c) = (k-1)c - \binom{k}{2}c^2 = \frac{e}{n^2}
\end{equation}
and so
\begin{align}
\nonumber e(K^k_{cn,\ldots,cn,n-(k-1)cn}) &= e\quad\text{ and }\\
\nonumber K_3(K^k_{cn,\ldots,cn,n-(k-1)cn}) &= \binom{k-1}{3}c^3n^3 + \binom{k-1}{2}c^2(1-(k-1)c)n^3\\
\label{K3c} &=\binom{k-1}{2}c^2n^3-2{k\choose 3}c^3n^3.
\end{align}
In this section, we show that the increment of the function $h^*(n,\cdot)$ at $e$ is very closely approximated by $(k-2)cn$.

First, we need the following standard estimate of the Tur\'an number.

\begin{lemma}\label{lm:Turan41fact}
Let $s,n$ be integers such that $2 \leq s \leq n$. Then
\begin{equation}\label{eq:Turan41fact}
\left(1-\frac{1}{s}\right)\frac{n^2}{2}-\frac{s}{8} \leq t_s(n) \leq \left(1-\frac{1}{s}\right)\frac{n^2}{2}.
\end{equation}
\end{lemma}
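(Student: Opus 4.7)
The plan is to derive an exact closed-form expression for $t_s(n)$ and then observe that both inequalities fall out of an elementary estimate. Write $n = qs + r$ with $0 \le r < s$, so that $T_s(n)$ is the complete $s$-partite graph whose part sizes are $r$ copies of $q+1$ and $s-r$ copies of $q$.

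The number of non-edges of $T_s(n)$ equals the sum of $\binom{|A_i|}{2}$ over its parts, which is
$$
r\binom{q+1}{2} + (s-r)\binom{q}{2} \;=\; \frac{q(sq - s + 2r)}{2}.
$$
Using $sq = n - r$, a short simplification rewrites this as $\frac{(n-r)(n+r-s)}{2s}$. Subtracting from $\binom{n}{2}$ and simplifying gives the identity
$$
t_s(n) \;=\; \left(1-\frac{1}{s}\right)\frac{n^2}{2} \;-\; \frac{r(s-r)}{2s},
$$
which I will write out explicitly.

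Once this identity is in hand, the two bounds are immediate: the upper bound holds because $r(s-r) \ge 0$, and the lower bound follows from AM--GM, which gives $r(s-r) \le (s/2)^2 = s^2/4$, so the error term is at most $s/8$. I do not expect any genuine obstacle here; the only thing to be careful about is the algebraic simplification of the non-edge count, which is a straightforward expansion.
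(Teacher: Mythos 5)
Your proposal is correct and follows essentially the same route as the paper: both reduce to the exact identity $t_s(n)=\left(1-\tfrac{1}{s}\right)\tfrac{n^2}{2}-\tfrac{r(s-r)}{2s}$ with $n=qs+r$, and then bound the error term by $0\le r(s-r)\le s^2/4$. Counting non-edges instead of edges is only a cosmetic difference.
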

\begin{proof} Divide $n$ by $s$ with remainder: $n=s\ell +r$ with $r\in\{0,\dots,s-1\}$. Then the Tur\'an graph $T_s(n)$
  has $r$ parts of size $\ell+1$ and $s-r$ parts of size $\ell$. Routine calculations show that
   $$
   t_s(n)={r\choose 2}(\ell+1)^2+{s-r\choose 2}\, \ell^2+r(s-r)(\ell+1)\ell = \left(1-\frac{1}{s}\right)\frac{(s\ell+r)^2}2 + \frac{r^2-sr}{2s}.
   $$
   For real $r\in [0,s-1]$, the quadratic function $r^2-rs$ has its minimum at $r=s/2$ and its maximum at $r=0$, giving the required bounds on~$t_s(n)$.
\end{proof}

Because of the gap in~\eqref{eq:Turan41fact}, the values of $k(2e/n^2)$ and $k(n,e)$ may be different when $e$ is  slightly above a Tur\'an number. The following lemma implies that this never occurs inside the proof of Theorem~\ref{strong}, where $t_{k-1}(n)+\Omega(n^2)<e\le t_k(n)$);~(\ref{eq:c2}) then holds with $k(2e/n^2)$ replaced by $k(n,e)$.

\begin{lemma}\label{lm:ks} Let a pair $(n,e)$ be valid. Then
   \begin{itemize}
  \item[(i)] $k(2e/n^2)\le k(n,e)$;
  \item[(ii)]
 if $t_{k-1}(n)+(k-1)/8\leq  e\le t_{k}(n)$, then $k(2e/n^2)=k=k(n,e)$.
  \end{itemize}
 \end{lemma}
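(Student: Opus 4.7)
Both parts reduce directly to the two-sided Tur\'an estimate of Lemma~\ref{lm:Turan41fact}, combined with the definition~\eqref{eq:k(lambda)} of the step function $k(\cdot)$.

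For~(i), set $k := k(n,e)$; by definition $e \le t_k(n)$, and the upper bound of Lemma~\ref{lm:Turan41fact} gives $t_k(n) \le (1-1/k)n^2/2$. Hence $2e/n^2 \le 1-1/k$, which by~\eqref{eq:k(lambda)} is precisely the assertion $k(2e/n^2) \le k$.

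For~(ii), the hypothesis forces $t_{k-1}(n) < e \le t_k(n)$, so $k(n,e)=k$ by~\eqref{eq:k}, and~(i) already yields $k(2e/n^2) \le k$. For the reverse inequality it suffices to show $2e/n^2 > 1-1/(k-1)$, which by~\eqref{eq:k(lambda)} is equivalent to $k(2e/n^2) \ge k$. Combining the lower bound of Lemma~\ref{lm:Turan41fact}, namely $t_{k-1}(n) \ge (1-1/(k-1))n^2/2-(k-1)/8$, with the slack $(k-1)/8$ built into the hypothesis gives
\[ e \;\ge\; t_{k-1}(n) + (k-1)/8 \;\ge\; (1-1/(k-1))n^2/2, \]
so $2e/n^2 \ge 1-1/(k-1)$.

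The main (and only) subtlety is upgrading this to a strict inequality. Since $e\in\mathbb{N}$, strictness is automatic whenever $(1-1/(k-1))n^2/2\notin\mathbb{N}$; otherwise equality throughout the chain would force $(k-1)/8\in\mathbb{N}$ together with the lower bound of Lemma~\ref{lm:Turan41fact} being tight, which, inspecting its proof, happens only when $k-1$ is even and $n\equiv (k-1)/2\pmod{k-1}$. A short divisibility check rules out the simultaneous occurrence of all these conditions, supplying the required strict inequality and hence $k(2e/n^2)\ge k$. This edge-case analysis is the only spot that is not immediate from Lemma~\ref{lm:Turan41fact}, and I expect it to be the trickiest part of the argument.
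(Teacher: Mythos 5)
Your argument for (i) and the reduction of (ii) to the single inequality $2e/n^2>1-1/(k-1)$ is exactly the paper's (very terse) route: the paper merely remarks that both functions are non-decreasing in $e$, records where each one jumps, and declares that everything follows from Lemma~\ref{lm:Turan41fact}. You are also right that the only delicate point is the strictness of that inequality, which the paper does not address at all.

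However, your final step is a genuine gap: the asserted ``short divisibility check'' does not rule out the edge case, and in fact cannot, because the configuration you are trying to exclude actually occurs. Take $k=9$, $n=12$, so $t_8(12)=62=\bigl(1-\tfrac18\bigr)\tfrac{144}{2}-\tfrac{8}{8}$ (the lower bound of Lemma~\ref{lm:Turan41fact} is tight since $12\equiv 4\pmod 8$), and set $e=t_8(12)+\tfrac{k-1}{8}=63=t_9(12)$. The pair $(12,63)$ is valid, satisfies the hypothesis of (ii) with $k(n,e)=9$, yet $2e/n^2=7/8=1-\tfrac{1}{k-1}$, so $k(2e/n^2)=8\neq 9$. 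In general the simultaneous conditions you list ($(k-1)/8\in\mathbb{N}$, tightness of the Tur\'an lower bound, integrality of $(1-\tfrac{1}{k-1})n^2/2$) co-occur whenever $8\mid k-1$ and $n\equiv\tfrac{k-1}{2}\pmod{k-1}$. So the statement of (ii) is itself false at this exact boundary point -- a slip in the paper that its own proof also glosses over. The fix is to note that strict inequality is all that is ever needed and is available in every application: the lemma is invoked only under $e\ge t_{k-1}(n)+k$ (e.g.\ in Lemma~\ref{lem-slope-c}) or $e\ge t_{k-1}(n)+\alpha n^2$, and since $k>(k-1)/8$ this yields $e>\bigl(1-\tfrac{1}{k-1}\bigr)\tfrac{n^2}{2}$ strictly, hence $k(2e/n^2)\ge k$. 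Your write-up should either strengthen the hypothesis accordingly or make the strict inequality explicit rather than appealing to a non-existent divisibility obstruction.
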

  \begin{proof}  Clearly, each of the functions $k(n,e)$ and $k(2e/n^2)$ is non-decreasing in $e$.
  Let $s\in \I N$. Recall that $k(\lambda)$ jumps from $s$ to $s+1$ when $\lambda$ becomes larger than $(s-1)/s$ while $k(n,e)$ jumps from $s$ to $s+1$  when $e$ becomes larger than $t_s(n)$. Now, both of the stated claims follow from Lemma~\ref{lm:Turan41fact}.\end{proof}

\begin{lemma}\label{lm:(k-1)c<1} For every $\lambda\in [0,1)$, we have $(k(\lambda)-1)c(\lambda)<1$.\end{lemma}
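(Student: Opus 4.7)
The plan is to handle the trivial case $\lambda=0$ separately and then, for $\lambda \in (0,1)$, to unfold the explicit formula for $c(\lambda)$ in~\eqref{eq:c2'} and reduce the claim to a direct consequence of the definition of $k(\lambda)$ in~\eqref{eq:k(lambda)}.

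If $\lambda = 0$, then $k(\lambda)=1$ so $(k(\lambda)-1)c(\lambda) = 0 < 1$. Suppose now $\lambda \in (0,1)$ and set $k:=k(\lambda) \geq 2$. Using~\eqref{eq:c2'}, the inequality $(k-1)c(\lambda) < 1$ becomes
\begin{equation*}
\frac{k-1}{k}\left(1 + \sqrt{1 - \frac{k}{k-1}\lambda}\right) < 1,
\end{equation*}
which rearranges to $\sqrt{1 - \frac{k\lambda}{k-1}} < \frac{1}{k-1}$. Since both sides are non-negative (note that $\lambda \le 1-1/k$ gives $\frac{k\lambda}{k-1} \le 1$), we may square, yielding equivalently
\begin{equation*}
\lambda > 1 - \frac{1}{k-1}.
\end{equation*}

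The final step is to verify this last inequality from the definition of $k(\lambda)$. By~\eqref{eq:k(lambda)}, the integer $k=k(\lambda)$ is the smallest natural number with $\lambda \le 1 - 1/k$; hence the value $k-1$ (which is a positive integer since $k \ge 2$) fails this condition, meaning $\lambda > 1 - \frac{1}{k-1}$ (interpreted as $\lambda > 0$ when $k=2$). This completes the proof. There is no real obstacle beyond careful algebra; the only subtlety is making sure the squaring step is justified and that the $k=2$ boundary case is read correctly from the definition of~$k(\lambda)$.
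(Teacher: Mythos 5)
Your proof is correct: the algebra unfolding \eqref{eq:c2'} is right, the squaring step is justified since both sides are non-negative, and the final inequality $\lambda > 1-\frac{1}{k-1}$ is exactly what the minimality in \eqref{eq:k(lambda)} provides (including the $k=2$ reading). This is essentially the paper's argument — both rest on the explicit formula for $c$ and the strict lower bound $\lambda>1-\frac{1}{k-1}$; the paper merely phrases the computation as monotonicity of $c(\cdot)$ plus a limit at the left endpoint of the interval, whereas you carry out the equivalent algebra directly.
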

 \begin{proof} Assume that $s:=k(\lambda)\ge 2$, as otherwise there is nothing to prove. The
 formula in~\eqref{eq:c2'} shows that $c(x)$ is a strictly decreasing continuous function for $x\in(\frac{s-2}{s-1},\frac{s-1}s]$ and the limit of $c(x)$ as $x$ tends to $\frac{s-2}{s-1}$ from above is $1/(s-1)$. Thus $c(x)<1/(s-1)$ in this half-open interval, as required.
 \end{proof} 

\begin{lemma}\label{lm:integerfact}
	For all valid $(n,e)$, if $c=c(n,e)$ is such that $cn \in \mathbb{N}$, then $k(n,e)=k(2e/n^2)=:k$, and $\bm{a}^*=\bm{a}^*(n,e)$ is equal to $(cn,\ldots,cn,n-(k-1)cn)$. 
	\end{lemma}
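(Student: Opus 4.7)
The plan is to verify both conclusions by directly exhibiting the complete partite graph $G_0 := K^k_{cn,\ldots,cn,\,n-(k-1)cn}$ (where $k := k(2e/n^2)$) and showing that it realises the optimal partition $\bm{a}^*(n,e)$.

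First I would check that $G_0$ is a well-defined graph on $n$ vertices with exactly $e$ edges. Since $cn\in\mathbb{N}$ by hypothesis and Lemma~\ref{lm:(k-1)c<1} gives $(k-1)c<1$, the last part size $a_0 := n-(k-1)cn$ is a positive integer. Identity~\eqref{eq:c} then yields $e(G_0) = e$.

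Next I would prove $k(n,e)=k$. One direction is Lemma~\ref{lm:ks}(i). The other direction, $k(n,e)\le k$, is immediate from the existence of $G_0$: since $G_0$ is a $k$-partite $(n,e)$-graph, $e \leq t_k(n)$, so $k(n,e)\le k$ by the definition~\eqref{eq:k}.

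Finally, I would identify $\bm{a}^*(n,e)$ by showing $a_k^* = a_0$. By Definition~\ref{astardef}, $a_k^*$ is the least positive integer $a$ with $f(a) \geq e$, where $f(a) := a(n-a)+t_{k-1}(n-a)$. The bound $c\ge 1/k$ gives $a_0 \leq n/k$, so Lemma~\ref{lm:a*k} asserts that $f$ is strictly increasing on $\{1,\ldots,\lfloor n/k\rfloor\}$. Hence it suffices to verify $f(a_0) = e$. Because $cn$ is a positive integer, $T_{k-1}((k-1)cn)$ is the balanced complete $(k-1)$-partite graph with all part sizes equal to $cn$, so $t_{k-1}((k-1)cn) = \binom{k-1}{2}(cn)^2$. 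Combining this with~\eqref{eq:c} shows $f(a_0) = (k-1)cn\cdot a_0 + \binom{k-1}{2}(cn)^2 = e$. Thus $a_k^* = a_0$. Then $a_1^*+\cdots+a_{k-1}^* = (k-1)cn$ is divisible by $k-1$, and the part sizes differ pairwise by at most one, which forces each to equal $cn$, giving $\bm{a}^* = (cn,\ldots,cn,a_0)$.

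There is no serious obstacle here: the proof is essentially a verification once one notices that the graph $G_0$ is the right candidate. The only minor subtlety is exploiting the integrality of $cn$ to replace the Tur\'an number $t_{k-1}((k-1)cn)$ by its exact ``balanced'' value $\binom{k-1}{2}(cn)^2$, which is precisely what makes $f(a_0)$ evaluate to $e$ on the nose and drives the rest of the argument.
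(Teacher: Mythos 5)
Your proof is correct and follows essentially the same route as the paper: both establish $k(n,e)=k$ via the existence of the $(n,e)$-graph $K^k_{cn,\ldots,cn,n-(k-1)cn}$ combined with Lemma~\ref{lm:ks}(i), and both pin down $a_k^*$ by checking that $f(a)=a(n-a)+t_{k-1}(n-a)$ equals $e$ at $a=n-(k-1)cn$ and invoking the monotonicity from Lemma~\ref{lm:a*k} below $n/k$. Your write-up is merely a bit more explicit about why $f(n-(k-1)cn)=e$ (using integrality of $cn$ to evaluate the Tur\'an number exactly), which the paper leaves implicit.
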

 \begin{proof}
 Let $k:=k(2e/n^2)$ and $a:=n-(k-1)cn$. Since $c\ge 1/k$ by definition, we have that $a\le cn$.
 Also, $c<1/(k-1)$ by Lemma~\ref{lm:(k-1)c<1}. Thus $a$ is positive. From $e(K^k_{cn,\dots,cn,a})=e$ we conclude that $k(n,e)\le k$. This must be equality by the first part of Lemma~\ref{lm:ks}.

  	Recall by Definition~\ref{astardef} that $a_k^*$ is the minimum $s\in\I N$ with $s(n-s)+t_{k-1}(n-s)\ge e$, which is satisfied (with equality) for $s=a$.  Thus $a^*_k\le a$.  Now, Lemma~\ref{lm:a*k} implies by the induction on $a-s$ that for every $s=a-1,a-2,\dots,1$ we have $s(n-s)+t_{k-1}(n-s)<e$. Thus indeed $a^*_k=a$. This clearly implies that $a_i^*=cn$ for each $i\in [k-1]$.
 \end{proof}

The following simple lemma describes the change in $H^*(n,e)$ when we increase $e$ by~$1$. Informally speaking, either (i) one missing edge is added, (ii) the smallest part increases by $1$, or (iii) the number of parts increases by $1$. 

\begin{lemma}\label{lm:H^+} Let $e,n\in\I N$ with $e<{n\choose 2}$. Let $k=k(n,e)$, $\bm{a}^*=\bm{a}^*(n,e)$, $m^*=m^*(n,e)$, $k^+=k(n,e+1)$ and $\bm{a}^+=\bm{a}^*(n,e+1)$ be as in Definition~\ref{astardef}. Then the following statements hold.
 \begin{itemize}
 \item[(i)] If $m^*>0$, then $k^+=k$ and $\bm{a}^+=\bm{a}^*$.
 \item[(ii)] If $m^*=0$ and $a^*_1\ge a_k^*+2$, then $k^+=k$, $a_k^+=a_k^*+1$ and $(a_1^+,\dots,a_{k-1}^+)$ is obtained from $(a_1^*-1,a_2^*,\dots,a_{k-1}^*)$ by ordering it non-increasingly.
 \item[(iii)] If $m^*=0$ and $a^*_1\le a_k^*+1$, then $k^+=k^*+1$.
 \end{itemize}
 \end{lemma}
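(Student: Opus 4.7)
The plan is to analyze each case by studying how the function $f(a) := a(n-a) + t_{k-1}(n-a)$ changes between consecutive integer values of~$a$, noting that by Definition~\ref{astardef} the coordinate $a_k^*$ is the least $a \in \mathbb{N}$ with $f(a) \ge e$, and that
\[
f(a_k^*) \;=\; e(K^k_{a_1^*, \ldots, a_k^*}) \;=\; e + m^*.
\]
The inequality $a_{k-1}^* \geq a_k^*$, which is implicit via~\eqref{m*}, will also occasionally be used.

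For part~(i), since $m^* > 0$ we have $f(a_k^*) \geq e + 1$ directly, while the minimality of $a_k^*$ gives $f(a_k^* - 1) < e$. Thus $a_k^+ = a_k^*$, and the Turán partition on $n - a_k^+ = n - a_k^*$ vertices gives $\bm a^+ = \bm a^*$. Finally $e + 1 \leq e + m^* = e(K^k_{a_1^*,\ldots,a_k^*}) \leq t_k(n)$ forces $k^+ \leq k$, and $k^+ \geq k$ is immediate from $e+1>e>t_{k-1}(n)$, so $k^+ = k$.

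For part~(ii), the key step is the increment computation
\[
f(a_k^* + 1) - f(a_k^*) \;=\; (n - 2a_k^* - 1) - \bigl((n - a_k^*) - a_1^*\bigr) \;=\; a_1^* - a_k^* - 1,
\]
where the bracketed term is the number of edges lost when $T_{k-1}(n - a_k^*)$ shrinks to $T_{k-1}(n - a_k^* - 1)$ by deleting a vertex from a largest part (of size~$a_1^*$). The hypothesis makes this increment at least~$1$; combined with $f(a_k^*) = e$ (since $m^* = 0$), this gives $a_k^+ = a_k^* + 1$. The new Turán partition on $n - a_k^* - 1$ vertices is then precisely $(a_1^* - 1, a_2^*, \ldots, a_{k-1}^*)$ re-sorted, since the almost-equality $a_1^* - 1 \leq a_{k-1}^* \leq a_1^*$ of the old entries survives the decrement of the first coordinate. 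Finally, $k^+ = k$ because $a_1^* \geq a_k^* + 2$ prevents $H^*(n, e)$ from being $T_k(n)$, so $e < t_k(n)$ and hence $e + 1 \leq t_k(n)$.

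For part~(iii), the hypothesis together with $a_{k-1}^* \geq a_k^*$ forces every part size into $\{a_k^*, a_k^* + 1\}$, making $H^*(n,e) \cong T_k(n)$ and $e = t_k(n)$; therefore $e + 1 > t_k(n)$ gives $k^+ \geq k + 1$. The reverse inequality reduces to $t_{k+1}(n) \geq t_k(n) + 1$, which holds because $e < \binom{n}{2}$ forces $n > k$, so $T_k(n)$ has a part of size at least~$2$, and splitting such a part produces a $(k+1)$-partite graph with strictly more edges. The only genuinely computational step is the increment identity used in case~(ii); the rest of the argument is bookkeeping around Turán partitions and the monotonicity of~$f$.
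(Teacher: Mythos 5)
Your proof is correct and follows essentially the same route as the paper's: the heart of both arguments is the increment identity for $f(a)=a(n-a)+t_{k-1}(n-a)$ (which the paper packages as Lemma~\ref{lm:a*k} and phrases via explicit witness graphs obtained by adding a missing edge or moving a vertex from the first part to the last, but the computation is identical to your $f(a_k^*+1)-f(a_k^*)=a_1^*-a_k^*-1$). Part~(iii) is handled the same way in both, via $e=t_k(n)$ and $t_{k+1}(n)\ge t_k(n)+1$.
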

 \begin{proof}
  Let us consider Cases (i) and (ii) together. We can  increase the size of $H^*(n,e)$ without increasing the number of parts: namely, let $H^{\mathrm{(i)}}$ and $H^{\mathrm{(ii)}}$ be obtained from $H^*$ by respectively adding a missing edge or moving a vertex from the first part to the last. Since $k(n,\cdot)$ is a non-decreasing function, we have that $k^+=k$ in both cases. Furthermore, $a_k^*\le n/k$ by~\eqref{m*}. This and the equality $k^+=k$ imply by Lemma~\ref{lm:a*k} that $a_k^+\ge a_k^*$ if $m^*>0$ and $a_k^+\ge a_k^*+1$ if $m^*=0$, with the matching upper bounds on $a_k^+$ witnessed by (the part sizes of) $H^{\mathrm{(i)}}$ and $H^{\mathrm{(ii)}}$, giving the required.
 
The third case is also easy: $k^+>k$ since $H^*(n,e)$ is the Tur\'an graph $T_k(n)$ while $k^+\le k+1$ since $k<n$ and $t_{k+1}(n)\ge t_k(n)+1$.\end{proof}

\begin{lemma}\label{lem-slope-c}
For all valid $(n,e)$, if $e \in [t_{k-1}(n)+k,t_k(n)-1]$, then	with $c=c(n,e)$ we have
\begin{align*}
|(h^*(n,e+1)-h^*(n,e))-(k-2)cn|&\le k\quad\text{and}\\
|(h^*(n,e)-h^*(n,e-1))-(k-2)cn|&\le k.
\end{align*}
Moreover, $|a_i^*-cn| \leq 2$ for all $i \in [k-1]$ where $\bm{a}^*=\bm{a}^*(n,e)$ is defined in Definition~\ref{astardef}.
\end{lemma}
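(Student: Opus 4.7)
I would first establish the ``moreover'' clause, and then combine it with Lemma~\ref{lm:H^+} to compute the discrete derivatives explicitly.

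For the moreover clause, observe that by Definition~\ref{astardef} the entries $(a_1^*,\dots,a_{k-1}^*)$ are the part sizes of $T_{k-1}(n-a_k^*)$, so each $a_i^*$ differs from $(n-a_k^*)/(k-1)$ by less than~$1$; and $(n-a_k^*)/(k-1)-cn=-(a_k^*-c'n)/(k-1)$ where $c':=1-(k-1)c$. Hence $|a_i^*-cn|\le 2$ will follow from $|a_k^*-c'n|<k-1$. To establish the latter I would introduce the continuous proxy $\psi_c(a):=a(n-a)+(k-2)(n-a)^2/(2(k-1))$ for the integer-valued $\psi(a):=a(n-a)+t_{k-1}(n-a)$, of which $a_k^*$ is the least integer root of $\psi(a)\ge e$. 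By Lemma~\ref{lm:Turan41fact} one has $|\psi-\psi_c|\le(k-1)/8$; by~\eqref{eq:NewC} one has exactly $\psi_c(c'n)=e$; and $\psi_c$ is concave with maximum at $n/k\ge c'n$. Using the slope $\psi_c'(c'n)=n(kc-1)$, monotonicity of $\psi_c$ on $[0,n/k]$ pins $a_k^*$ to within $k-1$ of $c'n$ away from the boundary $e\approx t_k(n)$, where this slope degenerates; in that boundary regime one instead uses the a priori bound $a_k^*\le\lfloor n/k\rfloor$, witnessed by $T_k(n)$, together with $c'n\approx n/k$.

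For the increments, the hypothesis $t_{k-1}(n)+k\le e\le t_k(n)-1$ forces $k(n,e\pm 1)=k$ by Lemma~\ref{lm:ks}, so case~(iii) of Lemma~\ref{lm:H^+} does not arise. In case~(i), $m^*>0$ gives $\bm a^+=\bm a^*$ and $m^+=m^*-1$, so the formula in Definition~\ref{astardef} immediately yields $h^*(n,e+1)-h^*(n,e)=\sum_{i=1}^{k-2}a_i^*=n-a_{k-1}^*-a_k^*$. In case~(ii), where $m^*=0$ and $a_1^*\ge a_k^*+2$, Lemma~\ref{lm:H^+} gives $m^+=a_1^*-a_k^*-2$ and that $\bm a^+$ is the non-increasing rearrangement of $(a_1^*-1,a_2^*,\dots,a_{k-1}^*,a_k^*+1)$. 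Multilinearity of $K_3(K_{\bm b})$ in the entries of $\bm b$ yields
\[
K_3(K_{\bm a^+})-K_3(K_{\bm a^*})=(a_1^*-1-a_k^*)(a_2^*+\dots+a_{k-1}^*),
\]
and since the two smallest entries of $\bm a^+$ are $a_k^*+1$ and $a_1^*-1$ (using $a_{k-1}^*\ge a_1^*-1$ from Definition~\ref{astardef}), after cancellation $h^*(n,e+1)-h^*(n,e)=n-a_1^*-a_k^*$.

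In both cases the increment equals $n-b-a_k^*$ for some $b\in\{a_{k-1}^*,a_1^*\}$, so using $(k-1)cn+c'n=n$,
\[
(h^*(n,e+1)-h^*(n,e))-(k-2)cn=(cn-b)+(c'n-a_k^*).
\]
Writing $b=(n-a_k^*)/(k-1)+\delta$ with $|\delta|<1$ collapses this to $(c'n-a_k^*)(k-2)/(k-1)-\delta$, of absolute value less than $(k-2)+1=k-1\le k$ once $|a_k^*-c'n|<k-1$ is known. The backward derivative is handled identically by applying Lemma~\ref{lm:H^+} to the pair $(n,e-1)\to(n,e)$; the hypothesis $e-1\ge t_{k-1}(n)+k-1>t_{k-1}(n)$ again excludes case~(iii). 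The main obstacle is the uniform bound $|a_k^*-c'n|<k-1$: when $e$ is bounded away from $t_k(n)$ it follows from a linearisation of $\psi_c$, but in the near-Tur\'an regime the slope $n(kc-1)$ becomes small and one has to argue separately via the Tur\'an witness.
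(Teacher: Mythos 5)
Your proposal is correct but reaches the ``moreover'' clause by a genuinely different route from the paper. The paper works with $L(n,f)=n-a_{k-1}^*(n,f)-a_k^*(n,f)$, shows via Lemma~\ref{lm:H^+} that $L(n,\cdot)$ decreases by $0$ or $1$ at each step, and sandwiches $L(n,e)$ between $(k-2)\lfloor cn\rfloor$ and $(k-2)\lceil cn\rceil$ by exhibiting explicit $e^-\le e\le e^+$ (edge counts of complete multipartite graphs with integer part size $\lceil cn\rceil$ resp.\ $\lfloor cn\rfloor$, or Tur\'an endpoints) at which $\bm{a}^*$ is computed exactly via Lemma~\ref{lm:integerfact}; the ``moreover'' then falls out of $\lfloor cn\rfloor\le a_i^*\le\lceil cn\rceil$ for $i\le k-2$. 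You instead attack the minimisation defining $a_k^*$ directly through the quadratic proxy $\psi_c$, reducing everything to $|a_k^*-c'n|<k-1$. Your reduction, and your exact computation of both increments from Lemma~\ref{lm:H^+} (including the rearrangement identity in case~(ii), which recovers the paper's~\eqref{hLdiff}), are correct. This route is viable and arguably more self-contained --- it avoids Claim~\ref{cl-contralL} and Lemma~\ref{lm:integerfact} entirely --- at the price of carrying the Tur\'an-number error $(k-1)/8$ through the estimates.

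Two points in the sketched step $|a_k^*-c'n|<k-1$ need care. For the upper bound on $a_k^*$, concavity gives $\psi_c(a)-\psi_c(c'n)\ge(a-c'n)\,\psi_c'(a)$ with the derivative taken at the \emph{right} endpoint, not at $c'n$; since $a:=a_k^*-1\le\lfloor n/k\rfloor-1$ one has $\psi_c'(a)=(n-ka)/(k-1)\ge k/(k-1)$, and $(k-2)\cdot k/(k-1)\ge(k-1)/8$ closes this direction with no case split at all. Your two-regime version (generic slope versus the Tur\'an witness $a_k^*\le\lfloor n/k\rfloor$) also closes, since the thresholds overlap, but it is more work than necessary. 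For the lower bound, $\psi_c'(c'n)=n(kc-1)\ge\sqrt{2}$ follows from $e\le t_k(n)-1$ via Lemma~\ref{lm:Turan41fact}, which is precisely where that hypothesis enters. Finally, the backward increment is expressed in terms of $\bm{a}^*(n,e-1)$, which differs from $\bm{a}^*(n,e)$ by at most $1$ in the relevant coordinates; tracking that extra $1$ is exactly why the bound degrades from $k-1$ to $k$, consistent with the statement and with the paper.
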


\begin{proof}
For valid $(n,f)$ with $k(n,f)$ equal to $k=k(n,e)$, let
$$
L(n,f) := \sum_{i \in [k-2]}a_i^*(n,f) = n-a^*_{k-1}(n,f) - a^*_k(n,f),
$$
 where $\bm{a}^*(n,f) = (a_1^*(n,f),\ldots,a_k^*(n,f))$ is
 as in Definition~\ref{astardef}.

 Note that if  $f+1\le t_k(n)$ (that is, $k(n,f+1)=k(n,f)=k$), then
  \begin{equation}\label{eq:Lnf+1}
  L(n,f+1)-L(n,f) \in \lbrace -1,0\rbrace.
  \end{equation}
  Indeed, consider how the vector $\bm{a}^*$ changes when we increase $f$ by $1$. Suppose that $m^*(n,f)=0$ as otherwise the vector stays the same by Lemma~\ref{lm:H^+}(i). 
  Note that $a_1^*(n,f)\ge a_k^*(n,f)+2$ since $f<t_k(n)$ so Lemma~\ref{lm:H^+}(ii) applies.
  Here the $k$-th entry of $\bm{a}^*$ increases by 1 while one of the other entries decreases by 1. In any case, $a_{k-1}^*+a_k^*$ stays the same or increases exactly by 1, giving~\eqref{eq:Lnf+1}.

\begin{claim}\label{cl-contralL}
 There exist integers $e^-,e^+$ such that
	\begin{itemize}
		\item[(i)] $e^- \leq e \leq e^+$ and $k(n,e^-)=k(n,e^+)=k$;
		\item[(ii)] $L(n,e^-) \leq (k-2)\lceil cn \rceil$ and $L(n,e^+) \geq (k-2)\lfloor cn \rfloor$.
	\end{itemize}
\end{claim}

\begin{proof}[Proof of Claim.]
	Given some $e^-$ and $e^+$ satisfying (i) we will write $\bm{a}^*(n,e^-) = (a^-_1,\ldots,a^-_k)$ and similarly $\bm{a}^*(n,e^+) = (a^+_1,\ldots,a^+_k)$.
Let us consider $e^-$.
	Suppose first that $\lceil cn \rceil \geq n/(k-1)$.
	Then we let $e^-:=t_{k-1}(n)+1$.
	Now $k(n,e^-)=k$ by definition, and $a^-_k=1$, so $a^-_{k-1}=\lfloor (n-1)/(k-1)\rfloor$. Thus
	$$
	L(n,e^-) = n - \left(\left\lfloor \frac{n-1}{k-1}\right\rfloor + 1\right) \leq \frac{k-2}{k-1} \cdot n \leq (k-2)\lceil cn \rceil,
	$$
	as desired. 
	
	So suppose that $a:=\lceil cn \rceil < n/(k-1)$. Let $e^-$ satisfy $c(n,e^-)=a/n$, that is, $e^-$ is the size of the complete $k$-partite graph $K^k_{a,\dots,a,n-(k-1)a}$. Clearly, $e^-\le t_k(n)$. Since $a<n/(k-1)$, we have that $e^-> t_{k-1}(n)$. Thus $k(n,e^-)=k$. The explicit formula in~\eqref{eq:c2} shows that  $c(n,x)$ is a decreasing function of $x$, even when $k(n,x)$ jumps. Since $c(n,e^-)= a/n$ is at least $c=c(n,e)$, it holds that $e^-\le e$.
 For this $e^-$ we have that $a^-_i=\lceil cn \rceil$ for all $i \in [k-1]$, so Lemma~\ref{lm:integerfact} implies that $L(n,e^-)=(k-2)\lceil cn \rceil$, as required.
	
	It remains to obtain $e^+$.
	Suppose first that $b:=\lfloor cn \rfloor < n/k$.
	Let $e^+ := t_k(n)$.
	Then $k(n,e^+)=k$, $a^+_k=\lfloor n/k \rfloor$ and $a^+_{k-1} = \lfloor (n-a^+_k)/(k-1)\rfloor$.
	Since $b < n/k \leq cn$ by definition, we have that $\lfloor n/k \rfloor = b$.
	Thus
	$$
	L(n,e^+)=n - \left\lfloor \frac{n}{k} \right\rfloor - \left\lfloor \frac{n-\lfloor n/k\rfloor}{k-1}\right\rfloor \geq (n-b)\left(1-\frac{1}{k-1}\right) > (k-2)b,
	$$
	as required.
	
	So suppose that $b \geq n/k$. By our assumption $e\ge t_{k-1}(n)+k$ and Lemma~\ref{lm:ks}, we have that $k(n,e)=k(2e/n^2)$. By  Lemma~\ref{lm:(k-1)c<1}, we have that $(k-1)b\le (k-1)cn<n$. Thus, similarly as above, if we define $e^+=e(K^k_{b,\dots,b,n-(k-1)b})$, then  $k(n,e^+)=k$, $c(n,e^+)=b/n$ is at most $c=c(n,e)$ and thus $e^+\ge e$.
	In this case, $a_i^+=\lfloor cn \rfloor$ for all $i \in [k-1]$, so Lemma~\ref{lm:integerfact} implies that $L(n,e^+)=(k-2)\lfloor cn \rfloor$, as required.\end{proof}

\medskip
\noindent
By~\eqref{eq:Lnf+1}, $L(n,\cdot)$ is a non-increasing function in the range between $t_{k-1}(n)+k$ and $t_k(n)$.
Together with the second part of Claim~\ref{cl-contralL}, this then implies that
\begin{equation}\label{Ldiff}
(k-2)\lfloor cn \rfloor \leq L(n,e^+) \leq L(n,e) \leq L(n,e^-) \leq (k-2)\lceil cn \rceil.
\end{equation}
From this we have that $\lfloor cn \rfloor \leq a_i^* \leq \lceil cn \rceil$ for all $i \in [k-2]$. Since $a_{k-1}^* \geq a_{k-2}^*-1$, the second part of the lemma is proved.

Now, we claim that
\begin{equation}\label{hLdiff}
L(n,e)-1 \leq L(n,e+1) \leq h^*(n,e+1)-h^*(n,e) \leq L(n,e).
\end{equation}
If this holds, then
\begin{eqnarray*}
&&|h^*(n,e+1)-h^*(n,e)-(k-2)cn|\\
&\leq& |h^*(n,e+1)-h^*(n,e)-L(n,e)| + |L(n,e)-(k-2)cn|\\
&\stackrel{(\ref{Ldiff}),(\ref{hLdiff})}{\leq}& 1 + (k-2)\max \lbrace cn-\lfloor cn \rfloor,\lceil cn\rceil-cn \rbrace \leq k-1,
\end{eqnarray*}
proving the first inequality. Similarly, noting that $k(n,e-1)=k(n,e)=k$ by Lemma~\ref{lm:ks} and the fact that $e \geq t_{k-1}(n)+k$, we have that
\begin{eqnarray*}
&& |h^*(n,e)-h^*(n,e-1)-(k-2)cn|\\
&\leq& |h^*(n,e)-h^*(n,e-1)-L(n,e-1)| + |L(n,e-1)-L(n,e)| + |L(n,e)-(k-2)cn|\\
&\le& 1 + 1 + (k-2) = k,
\end{eqnarray*}
where the last inequality follows from~(\ref{eq:Lnf+1}),~(\ref{Ldiff}) and~(\ref{hLdiff}), proving the second.

So it suffices to prove~(\ref{hLdiff}). The first inequality follows from~\eqref{eq:Lnf+1}. If $m^*>0$, then by Lemma~\ref{lm:H^+}(i) the difference $h^*(n,e+1)-h^*(n,e)$ is the number of triangles created by adding one missing edge to $H^*(n,e)$, which is exactly $L(n,e)$. If $m^*=0$, then we are in the second case of Lemma~\ref{lm:H^+}, where we add one more edge into the union of two parts of sizes $a_1^*$ and $a_k^*$, keeping this graph bipartite. Clearly, this new edge creates $n-a_1^*-a_k^*$ triangles. This is $L(n,e)$ if $a_1^*=a_{k-1}^*$ and $L(n,e+1)$ otherwise (i.e.\ if $a_1^*=a_{k-1}^*+1$).
\end{proof}

Lemma~\ref{lem-slope-c} will imply that if there is a counterexample to Theorem~\ref{strong}, then in an appropriately defined `worst counterexample' no edge lies in more than $(k-2)cn+k$ triangles and no non-edge lies in less than $(k-2)cn-k$ copies of $P_3$.
This fact will be extremely useful in our proof of Theorem~\ref{strong}.

\begin{cor}\label{cr:dh} 
	Let $n \in \mathbb{N}$ and $e\in [t_{k-1}(n)+k, t_k(n)-1]$ and let $p>0$ and $c=c(n,e)$.
	Suppose that $g_3(n,e)-h^*(n,e) \leq g_3(n,e^*)-h^*(n,e^*)$ for all $e^*$ with $k(n,e^*)=k$.
	Let $G,G'$ be $(n,e)$-graphs such that $K_3(G)=g_3(n,e) \geq K_3(G')-p$.
Then, for every $\overline{f} \in E(\overline{G})$, $\overline{f'} \in E(\overline{G'})$, $f \in E(G)$ and $f'\in E(G')$, we have that
\begin{itemize}
\item[(i)] $P_3(\overline{f},G) \geq (k-2)cn-k$, and  $P_3(\overline{f'},G') \geq (k-2)cn-k-p$;
\item[(ii)] $P_3(f,G) \leq (k-2)cn+k$, and $P_3(f',G') \leq (k-2)cn+k+p$.
\end{itemize}
\end{cor}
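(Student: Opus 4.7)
The plan is to use a simple ``add an edge / delete an edge'' argument, combined with the hypothesised near-minimality of $G$ in the extremal difference $g_3(n,e)-h^*(n,e)$ and the slope estimate from Lemma~\ref{lem-slope-c}. Specifically, adding any non-edge $\overline{f}$ to $G$ produces an $(n,e+1)$-graph with exactly $K_3(G)+P_3(\overline{f},G)$ triangles, which forces
$$P_3(\overline{f},G)\ge g_3(n,e+1)-g_3(n,e),$$
and symmetrically deleting any edge $f$ of $G$ yields an $(n,e-1)$-graph and hence
$$P_3(f,G)\le g_3(n,e)-g_3(n,e-1).$$
These two identities are essentially the whole combinatorial content of the argument; the rest is estimation.

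Next I would convert these $g_3$-differences into $h^*$-differences. The range assumption $e\in[t_{k-1}(n)+k,t_k(n)-1]$ is chosen precisely so that $k(n,e-1)=k(n,e)=k(n,e+1)=k$ (the lower bound ensures $e-1>t_{k-1}(n)$ via Lemma~\ref{lm:ks}, and the upper bound ensures $e+1\le t_k(n)$), and hence the hypothesis $g_3(n,e)-h^*(n,e)\le g_3(n,e^*)-h^*(n,e^*)$ may be legitimately applied at $e^*=e\pm 1$. Rearranging gives
$$g_3(n,e+1)-g_3(n,e)\ge h^*(n,e+1)-h^*(n,e)\quad\text{and}\quad g_3(n,e)-g_3(n,e-1)\le h^*(n,e)-h^*(n,e-1).$$
Plugging in the two-sided estimates on $h^*(n,e\pm1)-h^*(n,e)$ supplied by Lemma~\ref{lem-slope-c} (which bracket each increment within $k$ of $(k-2)cn$) immediately delivers parts~(i) and~(ii) for~$G$.

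The statements for $G'$ follow from the same one-edge modification, except with an additional additive loss of $p$. Concretely, adding $\overline{f'}$ to $G'$ or deleting $f'$ from $G'$ gives
$$P_3(\overline{f'},G')\ge g_3(n,e+1)-K_3(G')\quad\text{and}\quad P_3(f',G')\le K_3(G')-g_3(n,e-1),$$
and the assumption $K_3(G')\le K_3(G)+p=g_3(n,e)+p$ propagates the extra $+p$ through the bounds obtained for $G$. I do not anticipate any genuine obstacle: the argument is a one-step edge-switching comparison, and the only subtlety is book-keeping the range of $e$ so that both Lemma~\ref{lem-slope-c} and the minimality hypothesis can be invoked simultaneously at $e-1$ and $e+1$.
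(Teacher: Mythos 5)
Your proposal is correct and follows essentially the same route as the paper: add or delete a single edge, compare to $g_3(n,e\pm1)$, use the minimality hypothesis (valid at $e^*=e\pm1$ since the range assumption keeps $k(n,e\pm1)=k$) to pass from $g_3$-increments to $h^*$-increments, and then apply Lemma~\ref{lem-slope-c}, with the extra $+p$ absorbed for $G'$ via $K_3(G')\le K_3(G)+p$. No gaps.
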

\begin{proof}
Let $\overline{f} \in E(\overline{G})$. 
Then $k(n,e+1)=k$ and 
by the assumption on $G$, for any $(n,e+1)$-graph $G''$, we have that
$$K_3(G)-h^*(n,e)\le g_3(n,e+1)-h^*(n,e+1)\le K_3(G'')-h^*(n,e+1).$$
Thus, by Lemma~\ref{lem-slope-c},
$$
P_3(\overline{f},G) = K_3(G \cup \lbrace \overline{f}\rbrace) - K_3(G) \geq h^*(n,e+1)-h^*(n,e) \geq (k-2)cn-k,
$$
where $G \cup \lbrace \overline{f}\rbrace$ denotes the graph $G$ with the pair $\overline{f}$ added as an edge.
Similarly, for $\overline{f'} \in E(\overline{G'})$, we have
$$
P_3(\overline{f'},G') = K_3(G' \cup \lbrace \overline{f'}\rbrace) - K_3(G')\ge  K_3(G' \cup \lbrace \overline{f'}\rbrace) - K_3(G)-p\geq  (k-2)cn-k-p.
$$

The second part can be proved similarly via the inequality $|h^*(n,e)-h^*(n,e-1)-(k-2)cn| \leq k$ from Lemma~\ref{lem-slope-c}.
\end{proof}

\subsection{Comparing $k$-partite graphs}
The next lemma will be used to compare the number of triangles in two $k$-partite $(n,e)$-graphs $G$ and $F$, in terms of their part sizes and the number of edges missing between parts.
It will later be applied with $\ell := \lfloor cn \rfloor$ and $F$ a graph in $\mathcal{H}_1(n,e)$; and $G$ a graph obtained by switching a small number of adjacencies in a hypothetical counterexample to Theorem~\ref{strong}. Informally speaking, the lemma can be used to derive a quantitative conclusion of the form  that, if the part sizes of $G$ deviate from the almost optimal vector $(\ell,\dots,\ell,n-(k-1)\ell)$, then $K_3(G)$ is larger than $K_3(F)$.

\begin{lemma}\label{CompletekPartite}
Let $n \geq k \geq 3$ and $d>0$ be integers.
Suppose that $G$ and $F$ are $n$-vertex $k$-partite graphs with $e(G) = e(F)$ such that the following hold.
\begin{itemize}
\item[(i)] $G$ has parts $A_1,\ldots,A_k$.
\item[(ii)] $G[A_i,A_j]$ is complete whenever $ij \in \binom{[k-1]}{2}$.
\item[(iii)] $F$ has parts $B_1,\ldots,B_k$ with $\ell_i:=|B_i|$ for $i \in [k]$  
satisfying  $\ell_1 = \ldots = \ell_{k-1} =: \ell > \ell_k>0$.
\item[(iv)] $F[B_i,B_j]$ is complete for all $ij \in \binom{[k]}{2}\setminus\lbrace\lbrace k-1,k\rbrace\rbrace$; also, $e(\overline{F}[B_{k-1},B_k]) \leq d$.
\item[(v)] For all $i \in [k]$ we have that $|d_i| \leq \frac{\ell-\ell_k}{12k^3}$, where $d_i := s_i-\ell_i$ and $s_i:=|A_i|$. 
Moreover, $d_k \geq 0$.
\end{itemize}
Let $m_i := |A_i|\,|A_k|-e(G[A_i,A_k])$ for all $i \in [k-1]$ and $m := m_1+\ldots + m_{k-1}$.
Then
$$
K_3(G)-K_3(F) \geq \sum_{t \in [k-1]}\frac{m_t}{m} \cdot \frac{\ell-\ell_k}{4} \left( (d_t+d_k)^2 +\sum_{\substack{i \in [k-1]\\i \neq t}}d_i^2 \right) - \frac{12d^2}{\ell-\ell_k}.
$$
\end{lemma}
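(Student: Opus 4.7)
My strategy is direct computation followed by algebraic rearrangement. Set $L := \ell-\ell_k$, $A := \sum_{i\in[k-1]}d_i^2$, $B := d_k^2$, $\bar d := m^{-1}\sum_{t\in[k-1]} m_t d_t$, and $u := \bar d + d_k$. Let $G^* := K[A_1,\ldots,A_k]$ and $F^* := K[B_1,\ldots,B_k]$, and set $d' := e(\overline{F}[B_{k-1},B_k]) \le d$. Each missing edge in $[A_t,A_k]$ destroys $n-s_t-s_k = (k-2)\ell - d_t - d_k$ triangles of $G^*$ (using $\sum_i d_i = 0$ and $s_i=\ell_i+d_i$), so $K_3(G) = K_3(G^*) - \sum_t m_t((k-2)\ell - d_t - d_k)$, while $K_3(F) = K_3(F^*) - d'(k-2)\ell$. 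The constraint $e(G)=e(F)$ supplies the identity $m - d' = d_k L - (A+B)/2$. Expanding $K_3(G^*)-K_3(F^*)$ as a polynomial in the $d_i$ isolates a first-order term $(k-2)\ell d_k L$ which cancels against $(k-2)\ell(d'-m)$, and the residue collapses to the clean exact formula $K_3(G) - K_3(F) = \tfrac{L(A-B)}{2} + \tfrac{1}{3}\sum_i d_i^3 + m(\bar d + d_k)$. On the other hand, using $\sum_t m_t/m = 1$ together with $(d_t+d_k)^2 = d_t^2 + 2d_t d_k + B$, the claimed lower bound simplifies to $\tfrac{L(A+B)}{4} + \tfrac{L d_k \bar d}{2} - \tfrac{12 d^2}{L}$.

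\textbf{Reformulation.} Substituting $m = d' + L d_k - (A+B)/2$ into the exact formula and using the identity $\tfrac{L(A+B)}{4} + \tfrac{L d_k \bar d}{2} = \tfrac{L(A-\bar d^2)}{4} + \tfrac{L u^2}{4}$ (which follows from $\bar d = u-d_k$ and $B=d_k^2$), the difference between the left- and right-hand sides of the lemma reduces to
\[
\frac{L(A-\bar d^2)}{4} + \frac{L u^2}{4} + u d' - \frac{u(A+B)}{2} + \tfrac{1}{3}\sum_i d_i^3 + \frac{12 d^2}{L}.
\]
Jensen's inequality applied to the probability distribution $(m_t/m)_t$ gives $\bar d^2 \le \sum_t (m_t/m) d_t^2 \le A$, so the first term is non-negative. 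Applying AM-GM in the form $u d' \ge -d'^2/L - L u^2/4 \ge -d^2/L - L u^2/4$ absorbs the $\tfrac{L u^2}{4}$ contribution while generating $-d^2/L$; combined with $+\tfrac{12 d^2}{L}$ this leaves $\tfrac{L(A-\bar d^2)}{4} + \tfrac{11 d^2}{L} - \tfrac{u(A+B)}{2} + \tfrac{1}{3}\sum_i d_i^3$ to estimate.

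\textbf{Finishing and main difficulty.} The hypothesis $|d_i|\le L/(12 k^3)$ yields $|u|\le L/(6 k^3)$, whence $|u(A+B)/2| + |\sum_i d_i^3/3| \le L(A+B)/(9 k^3)$. When $\bar d^2 \le A/2$ we have $\tfrac{L(A-\bar d^2)}{4}\ge LA/8$, and the Cauchy--Schwarz bound $A \ge d_k^2/(k-1) = B/(k-1)$ (from $\sum_{t\in[k-1]}d_t = -d_k$) gives $LA/8 \ge L(A+B)/(9 k^3)$ for $k\ge 2$, closing the inequality with room to spare. The genuine technicality is the regime $\bar d^2 > A/2$: equality $\bar d^2 = A$ forces the $d_t$ variables to be concentrated (constant on the support of $(m_t)$ and zero off it), which imposes a rigid structure and produces partial cancellation between $u(A+B)/2$ and $\sum_i d_i^3/3$---both vanish at the exact extremal configuration $d_{t_0} = -d_k$, $d_t = 0$ for $t\neq t_0$, which is precisely where the entire inequality is tight. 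I expect a short direct estimate, splitting on the support size of $(m_t)$, or alternatively replacing the AM-GM step by a tighter completion of squares that retains $u$, will close this case. The main obstacle throughout is tracking errors carefully enough that the leading non-negative term $\tfrac{L(A-\bar d^2)}{4}$ together with the slack $\tfrac{11 d^2}{L}$ jointly dominate the small deviations of order $L(A+B)/k^3$ produced by the cubic and linear-in-$u$ cross terms.
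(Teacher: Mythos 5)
Your computational skeleton is essentially sound, and most of it checks out: the edge-count identity $m-d'=Ld_k-(A+B)/2$, the collapse of $K_3(G^*)-K_3(F^*)$ plus the correction terms to $\tfrac{L(A-B)}{2}+\tfrac13\sum_i d_i^3+mu$, the simplification of the lemma's right-hand side to $\tfrac{L(A+B)}{4}+\tfrac{Ld_k\bar d}{2}-\tfrac{12d^2}{L}$, the Jensen and AM--GM steps, and the disposal of the case $\bar d^2\le A/2$ are all correct. One small imprecision: your triangle count for $G$ should read $K_3(G)\ge K_3(G^*)-\sum_t m_t((k-2)\ell-d_t-d_k)$ rather than equality, since two missing edges sharing an endpoint in $A_k$ whose other endpoints lie in distinct parts can destroy a common triangle of $G^*$; this is harmless because the inequality points the right way (the paper makes the same move).

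The genuine gap is the regime $\bar d^2>A/2$, which you explicitly leave open, and it is not a routine technicality. After your reductions you must show $\tfrac{L(A-\bar d^2)}{4}+\tfrac{11d^2}{L}-\tfrac{u(A+B)}{2}+\tfrac13\sum_id_i^3\ge0$, and your own error bound $\tfrac{L(A+B)}{9k^3}$ on the last two terms demonstrably cannot be absorbed there: take $d=1$, $m_{t_0}=m$, $d_t=0$ for $t\ne t_0$ and $d_{t_0}=-d_k$ with $d_k$ of order $L/k^3$, so that $A-\bar d^2=0$, $\tfrac{11d^2}{L}$ is negligible, yet $\tfrac{L(A+B)}{9k^3}$ is of order $L^3/k^9$. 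The inequality survives only because $\tfrac13\sum_id_i^3$ and $\tfrac{u(A+B)}{2}$ cancel almost exactly near such configurations (both vanish at the extremal point), and you supply no argument extracting this cancellation; "a short direct estimate" is asserted, not given. For comparison, the paper's proof never meets this degenerate regime: it proves, for each $t$ separately, that the relevant quantity is at least $\tfrac{L}{3}\bigl((d_t+d_k)^2+\sum_{i\ne t}d_i^2\bigr)$, by exhibiting the quadratic part as $\tfrac{L}{2}\bigl((d_t+d_k)^2+\sum_{i\in[k-1]\setminus\{t\}}d_i^2\bigr)$ and bounding the cubic error by $\tfrac{L}{6}\sum_{i\in[k-1]\setminus\{t\}}d_i^2$, i.e.\ by a third of the quadratic gain \emph{for that $t$}; only then does it average over $t$ with weights $m_t/m$ and handle the $d_0$ cross terms. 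Your approach averages first, which replaces the per-$t$ quadratic form by $A-\bar d^2$ and thereby loses exactly the positivity needed to dominate the cubic terms when the weights concentrate. To complete your argument you would need either to restore the per-$t$ structure or to prove the cancellation identity for $\tfrac13\sum_id_i^3-\tfrac{u(A+B)}{2}$ directly; as written, the proof is incomplete.
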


\begin{proof}
Define $d_0:=e(\overline{F}[B_{k-1},B_k]) \leq d$.
Let $H$ be the complete $k$-partite graph with parts $B_1,\ldots,B_k$.
As $\sum_{ij \in \binom{[k]}{2}}s_is_j -m=e(G)=e(F)=  \sum_{ij \in \binom{[k]}{2}}\ell_i\ell_j-d_0$, we have
$$
m' := m - (e(H)-e(F)) = m - d_0 = \sum_{ij \in \binom{[k]}{2}}s_is_j - \sum_{ij \in \binom{[k]}{2}}\ell_i\ell_j.
$$

\begin{claim}
For all $t\in[k-1]$, we have
\begin{equation}\label{eq-L13}
	 \sum_{ijh \in \binom{[k]}{3}}s_is_js_h - m'\sum_{\substack{i \in [k-1]\\i \neq t}}s_i - \sum_{ijh \in \binom{[k]}{3}}\ell_i\ell_j\ell_h\ge \frac{\ell-\ell_k}{3} \left( (d_t+d_k)^2 + \sum_{\substack{i \in [k-1]\\i \neq t}}d_i^2 \right).
\end{equation}
\end{claim}

\begin{proof}
For notational convenience, we prove this for $t=k-1$, and observe that the proof uses only properties~(i)--(iii) and~(v) which are all symmetric in $t \in [k-1]$.

We have that the left-hand side of (\ref{eq-L13}) (with $t=k-1$) is equal to
\begin{align}
\nonumber &\sum_{ijh \in \binom{[k]}{3}}d_id_jd_h + \sum_{ij \in \binom{[k]}{2}}\ell_i\ell_j \sum_{\substack{h \in [k]\\h \neq i,j}}d_h + \sum_{ij \in \binom{[k]}{2}}d_id_j \sum_{\substack{h \in [k]\\h \neq i,j}}\ell_h\\
\label{poly}&\hspace{3cm}- \left( \sum_{i \in [k]}\ell_i  \sum_{\substack{j \in [k]\\j \neq i}}d_j + \sum_{ ij \in \binom{[k]}{2}}d_id_j \right) \sum_{h \in [k-2]}(\ell_h+d_h).
\end{align}
This is a cubic polynomial in $d_1,\ldots,d_k$.
For each $0 \leq t \leq 3$ and $1 \leq i_1 \leq \ldots \leq i_t \leq k$, let $C_{i_1\ldots i_t}$ denote the coefficient of $d_{i_1}\ldots d_{i_t}$.
By a slight abuse of notation, we assume a pair $ij \in \binom{[k]}{2}$ satisfies $i < j$ (and similarly for triples).
Note that $C_{\emptyset}=0$.
Now, for all $i \in [k]$,
$$
C_i = \sum_{hj \in \binom{[k]\setminus\lbrace i \rbrace}{2}}\ell_h\ell_j - \sum_{\substack{j \in [k]\\j \neq i}}\ell_j\sum_{h \in [k-2]}\ell_h.
$$
So $C_1 = \ldots = C_{k-1}$ since $\ell_1=\ldots=\ell_{k-1}$.
Also
\begin{align*}
C_k &= \binom{k-1}{2}\ell^2 - (n-\ell_k)(k-2)\ell = \binom{k-2}{2}\ell^2 + (k-2)\ell\ell_k - (n-\ell)(k-2)\ell = C_1.
\end{align*}
But
 \begin{equation}\label{eq:SumDi=0}
  \sum_{i \in [k]}d_i = 0
  \end{equation} and hence
\begin{equation}\label{l13Ci}
\sum_{i \in [k]}C_id_i = 0,
\end{equation}
that is, the linear part of~(\ref{poly}) is zero.

Next, we simplify the quadratic part.
Suppose that $ij \in \binom{[k-2]}{2}$. Then
 \begin{equation}\label{eq:Cij}
C_{ij} = \sum_{\substack{h \in [k]\\h \neq i,j}}\ell_h - \sum_{\substack{h \in [k]\\h \neq i}}\ell_h - \sum_{\substack{h \in [k]\\h \neq j}}\ell_h - \sum_{h \in [k-2]}\ell_h = \ell+\ell_k-2n.
\end{equation}
Suppose that $i \in [k-2]$. Then
$$
C_{ii} = -\sum_{\substack{h \in [k]\\h \neq i}} \ell_h = \ell-n.
$$
Suppose that $i \in [k-2]$ and $j \in \lbrace k-1,k \rbrace$. Then
\begin{equation}\label{l13Cij}
C_{ij} = \sum_{\substack{h \in [k]\\h \neq i,j}}\ell_h - \sum_{\substack{h \in [k]\\h \neq j}}\ell_h - \sum_{h \in [k-2]}\ell_h = \ell_k-n.
\end{equation}
This implies that
$$
\sum_{\substack{i \in [k-2]\\j \in \lbrace k-1,k \rbrace}}C_{ij}d_id_j = \sum_{i \in [k-2]}(\ell_k-n)(d_{k-1}+d_k)d_i \stackrel{\eqref{eq:SumDi=0}}{=} -(\ell_k-n)\left( \sum_{i \in [k-2]}d_i^2 + 2\sum_{ij \in \binom{[k-2]}{2}}d_id_j \right).
$$
Note that if $i,j \in \lbrace k-1,k \rbrace$, then $C_{ij} = 0$.
So
\begin{equation}\label{l13Cii}
\sum_{i \in [k]}C_{ii}d_i^2 = \sum_{i \in [k-2]}(\ell-n)d_i^2.
\end{equation}
Thus the quadratic terms in~(\ref{poly}) give
\begin{eqnarray}
\nonumber\sum_{\substack{1 \leq i \leq j \leq k}} C_{ij}d_id_j &=& \sum_{i \in [k]}C_{ii}d_i^2 + \sum_{ij \in \binom{[k-2]}{2}}C_{ij}d_{ij} + \sum_{\substack{i \in [k-2]\\j \in \lbrace k-1,k\rbrace}}C_{ij}d_id_j\\
\nonumber &=& \sum_{i \in [k-2]}(\ell-n)d_i^2 + \sum_{ij \in \binom{[k-2]}{2}}d_id_j(\ell+\ell_k-2n)\\
\nonumber &&\quad\quad-(\ell_k-n)\left( \sum_{i \in [k-2]}d_i^2 + 2\sum_{ij \in \binom{[k-2]}{2}}d_id_j \right)\\
\label{quadratic}&=& (\ell-\ell_k)\left(\sum_{ij \in \binom{[k-2]}{2}}d_id_j +\sum_{i \in [k-2]}d_i^2\right).
\end{eqnarray}

Now let us consider the cubic terms in~(\ref{poly}).
We have
\begin{align*}
&\sum_{\substack{ijh \in [k]^3\\ i \leq j \leq h}}C_{ijh}d_id_jd_h = \sum_{ijh\in \binom{[k]}{3}}d_id_jd_h - \sum_{ij \in \binom{[k]}{2}}d_id_j \cdot \sum_{h \in [k-2]}d_h\\
 &= d_{k-1}d_k\sum_{i \in [k-2]}d_i + (d_{k-1}+d_k)\sum_{ij \in \binom{[k-2]}{2}}d_id_j + \sum_{ijh \in \binom{[k-2]}{3}}d_id_jd_h- \sum_{ij \in \binom{[k]}{2}}d_id_j  \sum_{h \in [k-2]}d_h\\
 &= d_{k-1}d_k\sum_{i \in [k-2]}d_i - \sum_{h \in [k-2]}d_h\cdot \sum_{ij \in \binom{[k-2]}{2}}d_id_j + \sum_{ijh \in \binom{[k-2]}{3}}d_id_jd_h- \sum_{ij \in \binom{[k]}{2}}d_id_j  \sum_{h \in [k-2]}d_h.
\end{align*}
Note that, adding the first and the last terms, we get
\begin{align*}
d_{k-1}d_k\sum_{i \in [k-2]}d_i - \sum_{ij \in \binom{[k]}{2}}d_id_j \cdot \sum_{h \in [k-2]}d_h \stackrel{\eqref{eq:SumDi=0}}{=} \left(\sum_{i \in [k-2]}d_i\right)\left(\sum_{i \in [k-2]}d_i^2 + \sum_{jh \in \binom{[k-2]}{2}}d_jd_h\right),
\end{align*}
which gives some cancellations when combined with the second term. Also, for every $\lbrace i,j,h\rbrace\in{[k-2]\choose 3}$,
$$
|d_id_jd_h|\le \max_{s \in [k-2]}|d_s|\cdot \frac{1}{2}(d_j^2+d_h^2)<\max_{s \in [k-2]}|d_s|\cdot \sum_{t\in[k-2]}d_t^2.
$$
These, together with $\max_i|d_i|\le\frac{\ell-\ell_k}{12k^3}$, imply that
\begin{align}\label{eq-cubic}
\left|\sum_{\substack{ijh \in [k]^3\\ i \leq j \leq h}}C_{ijh}d_id_jd_h \right|&\le \left|\left(\sum_{h \in [k-2]}d_h\right)\sum_{i \in [k-2]}d_i^2\right| + \left|\sum_{ijh \in \binom{[k-2]}{3}}d_id_jd_h\right|\le \frac{\ell-\ell_k}{6}\cdot \sum_{i \in [k-2]}d_i^2.
\end{align}

Thus, combining~(\ref{l13Ci}),~(\ref{quadratic}),~(\ref{eq-cubic}), we have that (\ref{poly}) is equal to
\begin{eqnarray*}
&\phantom{=}& \sum_{i \in [k]}C_id_i + \sum_{1 \leq i \leq j \leq k}C_{ij}d_id_j + \sum_{ijh \in [k]^3}C_{ijh}d_id_jd_h\\
&\geq& \frac{\ell-\ell_k}{2}\left((d_{k-1}+d_k)^2+\sum_{i \in [k-2]}d_i^2\right) -\frac{\ell-\ell_k}{6}\cdot \sum_{i \in [k-2]}d_i^2\\
&\geq& \frac{\ell-\ell_k}{3}\left((d_{k-1}+d_k)^2+\sum_{i \in [k-2]}d_i^2\right).
\end{eqnarray*}
This completes the proof of the claim.
\end{proof}

Now,
\begin{eqnarray*}
&& K_3(G)-K_3(F) = K_3(G)-K_3(H) + d_0(\ell_1+\ldots+\ell_{k-2})\\
&\geq& \sum_{ijh \in \binom{[k]}{3}}s_is_js_h - \sum_{h \in [k-1]}m_h\sum_{\substack{i \in [k-1]\\i \neq h}}s_i - \sum_{ijh \in \binom{[k]}{3}}\ell_i\ell_j\ell_h + (k-2)d_0\ell\\
&=& \sum_{t \in [k-1]}\frac{m_{t}}{m}\left( \sum_{ijh \in \binom{[k]}{3}}s_is_js_h - m'\sum_{\substack{i \in [k-1]\\i \neq t}}s_i - \sum_{ijh \in \binom{[k]}{3}}\ell_i\ell_j\ell_h  - d_0\sum_{\substack{i \in [k-1]\\i \neq t}}s_i + (k-2)d_0\ell\right)\\
&=& \sum_{t \in [k-1]}\frac{m_{t}}{m}\left( \sum_{ijh \in \binom{[k]}{3}}s_is_js_h - m'\sum_{\substack{i \in [k-1]\\i \neq t}}s_i - \sum_{ijh \in \binom{[k]}{3}}\ell_i\ell_j\ell_h\right)  - \sum_{t \in [k-1]} \frac{d_0m_t}{m} \sum_{\substack{i \in [k-1]\\i \neq t}}d_i\\
&\stackrel{(\ref{eq-L13})}{\geq}& \sum_{t \in [k-1]}\frac{m_t}{m} \cdot \frac{\ell-\ell_k}{3} \left( (d_t+d_k)^2 + \sum_{\substack{i \in [k-1]\\i \neq t}}d_i^2 \right) + \sum_{t \in [k-1]} \frac{d_0m_t}{m} (d_t+d_k).
\end{eqnarray*} 
Let $\mathcal{I} \subseteq [k-1]$ be such that $t\in \mathcal{I}$ if and only if
$$
\frac{\ell-\ell_k}{3}(d_t+d_k)^2 + d_0(d_t+d_k) > \frac{\ell-\ell_k}{4}(d_t+d_k)^2.
$$
If $s \in [k-1]\setminus \mathcal{I}$, then $|d_s+d_k| \le 12d_0/(\ell-\ell_k)$. Thus
\begin{align*}
&\sum_{t \in [k-1]}\frac{m_t}{m} \left(\frac{\ell-\ell_k}{3} (d_t+d_k)^2 + d_0(d_t+d_k)\right)\\
&\geq \sum_{t \in \mathcal{I}}\frac{m_t}{m} \cdot \frac{\ell-\ell_k}{4}(d_t+d_k)^2 + \sum_{s \in [k-1]\setminus\mathcal{I}}\frac{m_t}{m} \cdot \frac{\ell-\ell_k}{3}(d_t+d_k)^2 - \frac{12d_0^2}{\ell-\ell_k}\\
&\geq \sum_{t \in [k-1]}\frac{m_t}{m} \cdot \frac{\ell-\ell_k}{4}(d_t+d_k)^2  - \frac{12d^2}{\ell-\ell_k}.
\end{align*}
Thus
\begin{align*}
K_3(G)-K_3(F) &\geq \sum_{t \in [k-1]}\frac{m_t}{m} \cdot \frac{\ell-\ell_k}{4} \left( (d_t+d_k)^2 +\sum_{\substack{i \in [k-1]\\i \neq t}}d_i^2 \right) - \frac{12d^2}{\ell-\ell_k},
\end{align*}
as required.
\end{proof}

\subsection{Partitions}\label{partitions}
The structure of the graphs $G$ we will be working with is somewhat complicated and for much of the proof we make a sequence of local changes to $G$ to obtain a collection of new graphs.
Therefore it is useful to define some types of partition to record all the relevant structural information about these graphs.

Let $k,n,e \in \mathbb{N}$ and $\beta>0$ and let $c = c(n,e)$.
We say that an $(n,e)$-graph $H$ \emph{has a $(V_1,\ldots,V_k;\beta)$-partition} if both of the following hold:
\begin{enumerate}
\item[{\bf\Ppartition($H$):}] $V_1 \cup\ldots \cup V_k$ is a partition of $V(H)$ and
$$
\bigg||V_i|-cn\bigg|,\bigg||V_k|-(1-(k-1)c)n\bigg| \leq \beta n
$$
for all $i \in [k-1]$;
\item[{\bf\Pcomplete($H$):}] $H[V_i,V_j]$ is complete for all $ij \in \binom{[k-1]}{2}$.
\end{enumerate}

\medskip
Let $\delta > 0$. We say that $H$ \emph{has a $(V_1,\ldots,V_k;U,\beta,\delta)$-partition} if, in addition to \Ppartition($H$) and \Pcomplete($H$), $U$ is a subset of $V(H)$ such that the following properties hold:

\medskip
\begin{itemize}
\item[{\bf\Pbadedges($H$):}] $|U| \leq \delta n$ and every edge in $\bigcup_{i \in [k]}E(H[V_i])$ is incident with a vertex of $U$; also $\Delta(H[V_i]) \leq \delta n$ for all $i \in [k]$;
\item[{\bf\PZk($H$):}] $U \cap V_k$ has a partition $U_k^1 \cup \ldots \cup U_k^{k-1}$ such that for all $ij \in \binom{[k-1]}{2}$ we have that $G[U_k^i,V_j]$ is complete.
\end{itemize}

If $\gamma_1,\gamma_2>0$ and in addition to \Ppartition($H$)--\PZk($H$), the following property holds, then we say that $H$ \emph{has a $(V_1,\ldots,V_k;U,\beta,\gamma_1,\gamma_2,\delta)$-partition.}
\begin{itemize}
\item[{\bf\Pmissing($H$):}] If $y \in V_i \setminus U$ then $d^m_H(y) := e_{\overline{H}}(y,\overline{V_i}) < \gamma_2 n$ and if $y \in V_i \cap U$ then $d^m_H(y) \geq \gamma_1 n$, for all $i \in [k]$.
\end{itemize}

If \Ppartition($H$), \Pbadedges($H$) and \Pmissing($H$) hold then we say that $H$ \emph{has a weak $(V_1,\ldots,V_k;U,\beta,\gamma_1,\gamma_2,\delta)$-partition}.
Observe that if $\beta^+ \geq \beta$; $\gamma_1^- \leq \gamma_1$; $\gamma_2^+ \geq \gamma_2$ and $\delta^+ \geq \delta$, then a $(V_1,\ldots,V_k;U,\beta,\gamma_1,\gamma_2,\delta)$-partition is also a $(V_1,\ldots,V_k;U,\beta^+,\gamma_1^-,\gamma_2^+,\delta^+)$-partition.
We call $d^m_H(y)$ the \emph{missing degree} of a vertex $y \in V(H)$ with respect to the partition $V_1,\ldots,V_k$.
Let $\underline{m} = (m_1,\ldots,m_{k-1})$ where, for all $i \in [k-1]$ we have $m_i := e(\overline{H}[V_i,V_k])$. We say that $\underline{m}$ is the \emph{missing vector} of $H$ with respect to $(V_1,\ldots,V_k)$.
Observe that, by \Pcomplete($H$),
$$
m_i = \sum_{v \in V_i}d^m_H(v).
$$
An edge is \emph{bad} if both of its endpoints lie in the same $V_i$.
Let $h := \sum_{i \in [k]}e(H[V_i])$ be the total number of bad edges.

\section{Initial steps in the proof of Theorem~\ref{strong}}\label{beginning}

We start by deriving Theorem~\ref{main} from Theorems~\ref{LS83},~\ref{strong} and Proposition~\ref{pr:compute}.
The rest of the paper will concentrate on proving Theorem~\ref{strong}.

\subsection{The proof of Theorem~\ref{main} given Theorem~\ref{strong}}\label{derivation}
Let $\eps>0$.
Assume $\eps < 1/2$.
Theorem~\ref{LS83} gives $\alpha(3,k)>0$ and $n_0(3,k)$ for each integer $3 \leq k \leq 1/\eps$.
Let $\alpha>0$ be the minimum of the above constants $\alpha(3,k)$.
Apply Theorem~\ref{strong} with parameters $\eps,\alpha$ to obtain $n_0(\alpha,k)$ for each integer $3 \leq k \leq 1/\eps$. Let $n_0$ be the maximum of $n_0(3,k)$ and $n_0(\alpha,k)$ over such $k$.

Now let $n \geq n_0$ and $e \leq \binom{n}{2}-\eps n^2$ be positive integers.
Let $k=k(n,e)$, so $t_{k-1}(n) < e \leq t_k(n)$.
If $k \leq 2$ then $g_3(n,e)=0$ and we are done as then
$$
\mathcal{H}_2^*(n,e)\subseteq \mathcal{H}_0^*(n,e)=\{\mbox{$K_3$-free $(n,e)$-graphs}\}.
$$
 So we may assume that $k \geq 3$.
Further, Lemma~\ref{lm:Turan41fact} implies that $k \leq 1/(2\eps)+1 \leq 1/\eps$.
Suppose first that $t_{k-1}(n) < e \leq t_{k-1}(n)+\alpha n^2$.
Then, since $\alpha \leq \alpha(3,k)$, Theorem~\ref{LS83} applied with $r := 3$ implies that $g_3(n,e)=h(n,e)$
 and every extremal graph lies in $\mathcal{H}_0(n,e) \cup \mathcal{H}_2(n,e)$.
Proposition~\ref{pr:compute} then implies that the extremal value is $h^*(n,e)=h(n,e)$ and the family of extremal graphs is precisely $\mathcal{H}_0^*(n,e)\cup\mathcal{H}_2^*(n,e)$.

Suppose instead that $t_{k-1}(n) + \alpha n^2 \leq e \leq t_k(n)$.
Then Theorem~\ref{strong} implies that every extremal graph lies in $\mathcal{H}(n,e)$.
Proposition~\ref{pr:compute} then implies that the family of extremal graph is precisely $\mathcal{H}_1^*(n,e) \cup \mathcal{H}_2^*(n,e)$ (and note that $\mathcal{H}_0^*(n,e)=\mathcal{H}_1^*(n,e)$ for this $e$ by~\eqref{eq:H0=H1}).
So certainly $g_3(n,e)=h(n,e)$.
\hfill$\square$

\subsection{Beginning the proof of Theorem~\ref{strong}}\label{beginningproof}
Let $\eps> 0$.
Suppose that Theorem~\ref{strong} does not hold for this $\eps$.
Then take the minimal integer $k \leq 1/\eps$ such that the conclusion is not true at this $k$ for some $\alpha$, and then choose such an $\alpha$.
By decreasing $\alpha$, we can assume that $\alpha \ll \eps$, and that $\alpha \leq (\alpha_{\ref{LS83}})^5$, where $\alpha_{\ref{LS83}}$ is the minimum constant $\alpha(3,k)$ obtained by applying Theorem~\ref{LS83} with parameters $k$ and $r=3$, for all $3 \leq k \leq 1/\eps$.

By the minimality of $k$, we have that, for all $\ell \in [k-1]$ and all $\alpha'>0$, there exists $n_0(\ell,\alpha')>0$ such that every extremal $(n,e)$-graph with $n \geq n_0(\ell,\alpha')$ and $t_{\ell-1}(n)+\alpha'n^2 \leq e \leq t_\ell(n)$ lies in $\mathcal{H}(n,e)$.

Note that $k\geq 3$ as when $k(n,e)=2$, the family $\mathcal{H}(n,e)$ is the family of $n$-vertex $e$-edge triangle-free graphs, and $g_3(n,e)=0$. (So we can set $n_0(2,\alpha)=1$ for every $\alpha>0$.)

Choose $n_0 = n_0(k) \in \mathbb{N}$ and additional constants such that the dependencies between them are as follows:
\begin{align}\label{hierarchy}
\nonumber &0 < \frac{1}{n_0} \ll \rho_4 \ll \ldots \ll \rho_0 \ll \eta \ll \delta \ll \beta \ll \xi \ll \gamma \ll \alpha \leq (\alpha_{\ref{LS83}})^5\\
&\hspace{1cm} \ll \delta' \ll \xi' \ll \eps \leq \frac{1}{k}.
\end{align}
In particular, we assume that Theorem~\ref{PikhurkoRazborov17approx} holds for $n_0$ with $\rho_4$ playing the role of $\eps$ and that
\begin{equation}\label{n0}
n_0 \geq \max\left\lbrace 2 \cdot n_0(k-1,\alpha/3),~~n_{\ref{PikhurkoRazborov17approx}}(\rho_4),~~2\cdot n_{\ref{LS83}}(k)\right\rbrace,
\end{equation}
where $n_{\ref{PikhurkoRazborov17approx}}(\rho_4)$ is the output of Theorem~\ref{PikhurkoRazborov17approx} applied with parameter $\rho_4$; and $n_{\ref{LS83}}(k)$ is (along with $\alpha_{\ref{LS83}}$) the output of Theorem~\ref{LS83} applied with $k-1$ and $r=3$.
For the reader's convenience, the glossary at the end of the paper gives an informal overview of the roles of the constants in~(\ref{hierarchy}).
We may ignore floors and ceilings where they do not affect our argument.

Now, suppose that Theorem~\ref{strong} fails for this $n_0$, $k$ and $\alpha$. Pick the smallest $n\ge n_0$ such that there is $e$ with
 \begin{equation}\label{cfirst}
t_{k-1}(n)+\alpha n^2 \leq e \leq t_k(n).
\end{equation}
 for which at least one extremal $(n,e)$-graph is not in $\mathcal{H}(n,e)$. If there is more than one choice for $e$ then choose one with
$g_3(n,e)-h(n,e)$ being smallest possible. By Theorem~\ref{LS83}, the inequality
\begin{equation}\label{worst}
g_3(n,e)-h(n,e) \leq g_3(n,e')-h(n,e')
\end{equation}
holds in fact for every $e'$ with $k(n,e')=k$. (Indeed, if $t_{k-1}(n)\le e'< t_{k-1}(n)+\alpha n^2$ then~\eqref{worst} holds as its right-hand side is zero.)

Next, if there is more than one choice of the graph $G$, choose it according to the following criteria in the given order:
\begin{enumerate}[label = (C\arabic{enumi})]
	\item\label{worst-C1} $G \notin \mathcal{H}(n,e)$ has the minimum number of triangles: $K_3(G)=g_3(n,e)$;
	
	\item\label{worst-C2} $G$ has a maximum max-cut $k$-partition: If $A_1^G,\ldots,A^G_k$ is a max-cut partition of $V(G)$, then for every $(n,e)$-graph $J \notin \mathcal{H}(n,e)$ with $K_3(J)=g_3(n,e)$ and every (equivalently, some) max-cut partition $A^J_1,\ldots,A^J_k$ of $V(J)$, we have that
	\begin{equation*}
	\sum_{ij \in \binom{[k]}{2}}e\left(G[A^G_i,A^G_j]\right) \geq \sum_{ij \in \binom{[k]}{2}}e\left(J[A^J_i,A^J_j]\right).
	\end{equation*}
	
	\item\label{worst-C3} There exists a max-cut $k$-partition $A_1^G,\ldots,A^G_k$ of $V(G)$ such that for every $(n,e)$-graph $J$ satisfying~\ref{worst-C1} and~\ref{worst-C2} and every max-cut partition $A^J_1,\ldots,A^J_k$ of $V(J)$, we have
	$$\min_{i\in[k]}\left|A_i^G\right|\le \min_{i\in[k]}\left|A_i^J\right|.$$
\end{enumerate}

We say that such a graph $G$ is a \emph{worst counterexample}.
From now on, $G,n,e$ and all the constants in~(\ref{hierarchy}) are fixed.
Define $c = c(n,e)$.
Corollary~\ref{cr:dh}, Proposition~\ref{pr:compute} and~(\ref{worst}) imply that 
	\begin{equation}\label{2path}
	P_3(wx,G) \geq (k-2)cn-k \quad\text{ and }\quad P_3(yz,G) \leq (k-2)cn+k
	\end{equation}
for all $wx \in E(\overline{G})$ and $yz \in E(G)$.
Since $n$ and $e$ satisfy~(\ref{cfirst}), we have by~\eqref{eq:c2} and Lemma~\ref{lm:Turan41fact} that
\begin{equation}\label{eq:solc2}
\frac{1}{k} \leq c 
\leq \frac{1}{k} + \frac{\sqrt{1-2\alpha k(k-1)}}{k(k-1)} + O(1/n) < \frac{1}{k-1}-\alpha.
\end{equation}
(Here we used $\sqrt{1-x} < 1-x/2$ for $x \in (0,1]$.)
Thus
\begin{equation}\label{ineq:c3}
0 \leq kc - 1 < c-(k-1)\alpha.
\end{equation}
Further, using Theorem~\ref{razthm} and the fact that $e \leq \binom{n}{2}-\eps n^2$, we have
\begin{equation}\label{nikicons}
\left|K_3(K^k_{cn,\ldots,cn,n-(k-1)cn}) - K_3(G)\right| \stackrel{(\ref{eq:UpperAsymptG3}),(\ref{K3c})}{=} \left| \frac{n^3}{6}g_3\left(\frac{2e}{n^2}\right) - g_3(n,e) \right| \stackrel{(\ref{asympbound})}{\leq} \frac{n}{2\eps}.
\end{equation}

Before splitting into cases depending on the size of the difference $t_k(n)-e$, we prove the following useful statement about some structural properties of $G$.

\begin{lemma}\label{baddegclaim}
	Let $0 < 1/n \ll \rho \ll 1/k$, and let $p,d > 0$ be such that
	\begin{equation}\label{blah}
	p^2 \leq d \leq \rho n^2\quad\text{and}\quad 2\rho^{1/6} \leq 1-(k-1)c.
	\end{equation}
	Suppose that there is a partition $V_1,\ldots,V_k$ of $V(G)$ for which \Ppartition($G$) holds with parameter $p/n$ and
	\begin{equation}\label{ed}
	|\,E(G)\bigtriangleup E(K[V_1,\ldots,V_k])\,| \leq d.
	\end{equation}
	Let $A_1,\ldots,A_k$ be a max-cut partition of $G$ where $|A_k| \leq |A_i|$ for all $i \in [k-1]$.
	Then
	\begin{itemize}
		\item[(i)] \Ppartition($G$) holds with respect to $A_1,\ldots,A_k$ with parameter $2k^2\sqrt{d}/n$;
		\item[(ii)] we have
		\begin{equation}\label{hm}
		m := \sum_{ij \in \binom{[k]}{2}}e(\overline{G}[A_i,A_j]) \leq 2k^2\sqrt{d}(kc-1)n+d \leq 3k^2\sqrt{\rho}n^2.
		\end{equation}
	\end{itemize}
	Moreover, for all $i \in [k]$:
	\begin{itemize}
		\item[(iii)] if $xy \in E(G[A_i])$, then $d_{\overline{G}}(x,\overline{A_i})+d_{\overline{G}}(y,\overline{A_i}) \geq (1-(k-1)c)n - 3k^2\sqrt{\rho}n \geq \rho^{1/6}n$;
		\item[(iv)] $\Delta(G[A_i]) \leq \rho^{1/5}n$;
		\item[(v)] $e(G[A_i]) \leq \rho^{1/30}m$.
	\end{itemize}
\end{lemma}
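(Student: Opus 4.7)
My plan is to establish parts (i)--(v) in order, exploiting the max-cut property of $A_1,\dots,A_k$, the edit-distance bound~\eqref{ed}, and the 2-path bounds~\eqref{2path} which are available from Corollary~\ref{cr:dh} because $G$ is a worst counterexample. Throughout I will write $d^m(v) := d_{\overline{G}}(v,\overline{A_i})$ for $v\in A_i$.

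For (i) and (ii), the key observation is that since the $V$-partition has at most $d$ bad edges and $A_1,\dots,A_k$ is a max-cut partition, we also have $h := \sum_i e(G[A_i]) \le d$. I would take the bijection matching the $A_i$'s to the $V_i$'s that minimises their total symmetric difference (WLOG the identity). The total bad-plus-missing degree in the $V$-partition sums to at most $2d$, so only $O(\sqrt{d})$ vertices are atypical in $V$; for every other vertex the max-cut condition would force its $A$-part and $V$-part to agree (otherwise one could improve the cut). This yields $\sum_i|A_i\triangle V_i|=O(k\sqrt{d})$, which combined with $||V_i|-cn|\le p$ gives (i). For (ii), write $m = e(K[A_1,\dots,A_k]) - e(G) + h$; moving a single vertex from a part of size $\approx cn$ to a part of size $\approx(1-(k-1)c)n$ alters $e(K[\cdot])$ by precisely $(kc-1)n-1$, so with at most $O(k^2\sqrt{d})$ shifts we get $|e(K[A])-e(K[V])| = O(k^2\sqrt{d}(kc-1)n)$; combined with $|e(K[V])-e(G)|\le d$ and $h \le d$ this recovers the claimed bound on $m$.

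Part (iii) follows directly from~\eqref{2path}: for $xy\in E(G[A_i])$, counting common neighbours of $x,y$ in $\overline{A_i}$ gives
$$P_3(xy,G) \ge |\overline{A_i}| - d^m(x) - d^m(y),$$
and rearranging with $P_3(xy,G)\le(k-2)cn+k$ and the part-size bound from (i) proves (iii) (the case $i=k$ is stronger because $|A_k|$ is smallest). For (iv), let $v\in A_i$ with $d_G(v,A_i)=s$. If $d^m(v)\le(1-(k-1)c)n/2$, then by (iii) every $u\in N_G(v)\cap A_i$ satisfies $d^m(u)\ge\rho^{1/6}n/2$, whence $s\cdot\rho^{1/6}n/2 \le \sum_{u\in A_i}d^m(u) \le m$ forces $s = O(k^2\rho^{1/3}n) < \rho^{1/5}n$ for $\rho$ sufficiently small. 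Finally, (v) follows from (iii) and (iv) by the double count
$$\rho^{1/6}n\cdot e(G[A_i]) \le \sum_{xy\in E(G[A_i])}(d^m(x)+d^m(y)) = \sum_{v\in A_i}d_G(v,A_i)\,d^m(v) \le \Delta(G[A_i])\cdot m \le \rho^{1/5}n\cdot m,$$
giving $e(G[A_i])\le\rho^{1/30}m$.

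The main technical obstacle is the remaining case of (iv), when $d^m(v)>(1-(k-1)c)n/2$: the straightforward bounds from (iii) combined with the max-cut condition $(k-1)s \le n - |A_i| - d^m(v)$ yield only $s = O(cn)$, which is far too weak. To handle this regime I expect to need either the worst-counterexample conditions~\ref{worst-C2}--\ref{worst-C3} (maximality of cross-edges and then minimality of the smallest part size) to rule out such a $v$ via a better repartition, or an explicit local edge-swap at $v$ that strictly decreases $K_3(G)$, both of which would contradict the choice of $G$.
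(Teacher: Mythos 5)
Parts (i), (ii), (iii) and (v) of your proposal are essentially the paper's argument: (i) via matching the $A_i$'s to the $V_i$'s and bounding the symmetric differences by $O(k\sqrt d)$ using $h\le d$; (ii) via comparing $e(K[A_1,\dots,A_k])$ to the ideal part sizes (the paper does this by a direct algebraic expansion in $d_i:=p_i-|A_i|$ rather than by moving vertices one at a time, but both give the stated bound); (iii) directly from~\eqref{2path}; and (v) by exactly the double count you write, combining (iii) and (iv). Your treatment of the regime $d_{\overline G}(v,\overline{A_i})\le(1-(k-1)c)n/2$ in (iv) is also sound and in fact subsumes the paper's ``small missing degree'' case (the paper splits at the much lower threshold $k\rho^{1/5}n$ and, in the small regime, finds by averaging a neighbour $y\in N_G(x,A_i)$ with $d_{\overline G}(y,\overline{A_i})\le 3k^2\rho^{3/10}n$, contradicting (iii)).

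The genuine gap is the remaining case of (iv), which you explicitly leave as an expectation rather than a proof, and it is the only place in the lemma where the extremality of $G$ is used beyond~\eqref{2path}. The paper closes it with the second of the two options you mention, and only criterion~\ref{worst-C1} is needed (not \ref{worst-C2} or \ref{worst-C3}): if $x\in A_i$ has $d_G(x,A_i)\ge\rho^{1/5}n$ and $d_{\overline G}(x,\overline{A_i})\ge k\rho^{1/5}n$, then by averaging there is $\ell\ne i$ with $d_{\overline G}(x,A_\ell)\ge\rho^{1/5}n$, so one can delete $\rho^{1/5}n$ edges from $x$ into $A_i$ and add $\rho^{1/5}n$ edges from $x$ into $A_\ell$. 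Writing $x_j:=d_G(x,A_j)$, the change in $K_3(x,G)$ is at most $\rho^{1/5}n(x_i-x_\ell)-\rho^{2/5}n^2+3k^2\sqrt\rho\,n^2$, where the error term comes from part (ii). The two points your sketch is missing are precisely what make this work: the max-cut property of $A_1,\dots,A_k$ forces $x_\ell\ge x_i$ (otherwise moving $x$ to $A_\ell$ improves the cut), so the linear term is non-positive; and the quadratic gain $-\rho^{2/5}n^2$ dominates the $O(k^2\sqrt\rho\,n^2)$ error because $\rho^{2/5}\gg\sqrt\rho$. This yields $K_3(G')-K_3(G)<-\rho^{2/5}n^2/2$, contradicting~\ref{worst-C1}. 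Without identifying that the swap must target a part where $x$ has many \emph{non}-neighbours and that max-cut controls the sign of the first-order term, the ``explicit local edge-swap'' you propose is not yet an argument, so as written the proof of (iv) — and hence of (v), which depends on it — is incomplete.
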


\begin{proof}
	By~(\ref{ed}), there is a partition $V_1,\ldots,V_k$ of $V(G)$ such that, defining $n_i:=|V_i|$ for $i \in [k]$, we have 
	\begin{equation}\label{nicn}
	|n_i-cn| \leq p\quad\text{for all}~i \in [k-1]\quad\text{and}\quad |n_k-(n-(k-1)cn)| \leq p;
	\end{equation}
	and
	$$
	\sum_{i \in [k]}e(G[V_i])+\sum_{ij \in \binom{[k]}{2}}e(\overline{G}[V_i,V_j]) \leq d.
	$$
	The max-cut property implies that
	\begin{equation*}
	\sum_{ij \in \binom{[k]}{2}}e(G[A_i,A_j]) \geq \sum_{ij \in \binom{[k]}{2}}e(G[V_i,V_j]) \geq e-d
	\end{equation*}
	and so
	\begin{equation}\label{heq}
	h := \sum_{i\in[k]}e(G[A_i])=e-\sum_{ij\in{[k]\choose 2}}e(G[A_i,A_j]) \leq d.
	\end{equation}
	For $i \in [k]$, choose $j = j(i) \in [k]$ such that $|A_i \cap V_j|$ is maximal.
	Suppose that there exists $h \in [k]\setminus \lbrace j \rbrace$ such that $|A_i \cap V_h| > \sqrt{2d}$.
	Then
	$$
	e(G[A_i]) \geq |A_i \cap V_j|\,|A_i \cap V_h|-|\,E(G)\bigtriangleup E(K[V_1,\dots,V_k])\,|> (\sqrt{2d})^2-d = d,
	$$
	a contradiction to~\eqref{heq}.
	Thus for each $i \in [k]$ there exists at most one $h \in [k]$ such that $|A_i \cap V_h| > \sqrt{2d}$.
	Suppose that there is some $j \in [k]$ for which no $i \in [k]$ satisfies $j(i)=j$.
	Then, using~(\ref{blah}), we get
	$$
	2k\sqrt{2d} + p \leq 3k\sqrt{2d} \leq 3k\sqrt{2\rho}\,n
	< n-(k-1)cn,
	$$
	and so
	$$
	n_j=\sum_{i \in [k]}|A_i \cap V_j| < k\sqrt{2d} < \frac{n-(k-1)cn-p}{2}.
	$$
	Recall from~(\ref{ineq:c3}) that $c \geq 1-(k-1)c$, so this is
	a contradiction to~(\ref{nicn}).
	Thus, the function $j:[k]\to [k]$ is a bijection and, for each $i \in [k]$,
	$$
	|A_i| \geq |V_{j(i)}| - \sum_{i' \in [k]\setminus \lbrace i \rbrace}|A_{i'} \cap V_{j(i)}| \geq n_{j(i)}-k\sqrt{2d},
	$$
	and similarly $|A_i| \leq n_{j(i)}+k\sqrt{2d}$.
	Suppose first that $j(k)=k$.
	Then
	$$
	\bigg| |A_k|-(n-(k-1)cn) \bigg| \leq |n_k-(n-(k-1)cn)| + k\sqrt{2d} \leq p+k\sqrt{2d} \leq 2k\sqrt{d}
	$$
	and similarly $|\,|A_i|-cn\,| \leq 2k\sqrt{d}$ for all $i \in [k-1]$.
	Suppose instead that $j(k) \neq k$.
	Then $|\,|A_k|-cn\,| \leq k\sqrt{2d}$, and since $A_k$ is the smallest part we have that $n = \sum_{i \in [k]}|A_i| \geq k(cn-k\sqrt{2d})$.
	Thus $cn-k^2\sqrt{2d} \leq n-(k-1)cn \leq cn$, where the last inequality follows from~(\ref{ineq:c3}).
	So
	\begin{align*}
	\bigg|\, |A_k|-(n-(k-1)cn)\, \bigg| &\leq \big|\,|A_k|-n_{j(k)}\big| + |n_{j(k)}-cn| + |cn-(n-(k-1)cn)|\\
	&\leq k\sqrt{2d} + p + k^2\sqrt{2d} \leq 2k^2\sqrt{d},
	\end{align*}
	and similarly $|\,|A_i|-cn\,| \leq 2k^2\sqrt{d}$ for all $i \in [k-1]$.
	Hence \Ppartition($G$) holds with parameter $2k^2\sqrt{d}/n$, proving (i).
	So it also holds with parameter $2k^2\sqrt{\rho} \geq 2k^2\sqrt{d}/n$.
	
	We now prove (ii).
	Write $p_i := cn$ for $i \in [k-1]$ and $p_k := n-(k-1)cn$; and $d_i := p_i-|A_i|$ for all $i \in [k]$. Then $\sum_{i \in [k]}d_i=0$, and we have
	\begin{eqnarray}
	\nonumber m &\stackrel{(\ref{heq})}{=}& \sum_{ij \in \binom{[k]}{2}}|A_i|\,|A_j|-e+h = \frac{1}{2}\left(n^2-\sum_{i \in [k]}p_i^2 + 2\sum_{i\in [k]}p_id_i - \sum_{i \in [k]}d_i^2\right)-e+h\\
	\nonumber &\stackrel{(\ref{heq})}{\leq}& \frac{1}{2}\left(n^2-(k-1)c^2n^2-(n-(k-1)cn)^2\right) + cn\sum_{i \in [k-1]}d_i + (n-(k-1)cn)d_k - e + d\\
	\label{meqagain} &\stackrel{(\ref{eq:c})}{=}& -d_k(kc-1)n +d \stackrel{(i)}{\leq} 2k^2\sqrt{d}(kc-1)n+d \stackrel{(\ref{blah})}{\leq} 3k^2\sqrt{\rho}n^2,
	\end{eqnarray}
	as required.
	
	Next we prove (iii). For any $i \in [k]$, and $xy \in E(G[A_i])$,
	$$
	(k-2)cn+k \stackrel{(\ref{2path})}{\geq} P_3(xy,G) \geq n-|A_i| - (d_{\overline{G}}(x,\overline{A_i})+d_{\overline{G}}(y,\overline{A_i}))
	$$
	and so
	\begin{eqnarray*}
		d_{\overline{G}}(x,\overline{A_i})+d_{\overline{G}}(y,\overline{A_i}) &\stackrel{(i),(\ref{ineq:c3})}{\geq}& n-(k-2)cn-k-cn - 2k^2\sqrt{\rho} n\\
		&\geq&  (1-(k-1)c)n - 3k^2\sqrt{\rho}n \stackrel{(\ref{blah})}{\geq} \rho^{1/6}n,
	\end{eqnarray*}
	as required.
	
	For (iv), suppose on the contrary that there exist $i \in [k]$ and $x \in A_i$ with $d_G(x,A_i) > \rho^{1/5} n$.
	Suppose first that $d_{\overline{G}}(x,\overline{A_i}) \geq k\rho^{1/5} n$.
	By averaging, there is some $\ell \in [k]\setminus \lbrace i \rbrace$ such that $d_{\overline{G}}(x,A_\ell) \geq \rho^{1/5} n$.
	For each $j \in [k]$, let $X_j := N_G(x,A_j)$ and $x_j := |X_j|$.
	By the max-cut property, for any $j \neq i$, we have $x_j \geq x_i \geq \rho^{1/5} n$.
	Let $L$ be the number of triangles containing $x$ and no other vertices from $A_i \cup A_\ell$.
	Part (ii) implies that
	$$
	K_3(x,G) \geq L + x_\ell x_i + (x_i + x_\ell)(n-x_i-x_\ell) - 3k^2\sqrt{\rho} n^2.
	$$
	Obtain a new graph $G'$ by choosing $A_i' \subseteq X_i$ and $A_\ell' \subseteq A_\ell\setminus X_\ell$ with $|A_i'|=|A_\ell'|=\rho^{1/5} n$ and letting $E(G') := (E(G) \cup \lbrace xy:y \in A_\ell'\rbrace) \setminus \lbrace xz:z \in A_i'\rbrace$.
	Now
	$$
	K_3(x,G') \leq L + (x_\ell+\rho^{1/5} n)(x_i-\rho^{1/5} n) + (x_i+x_\ell)(n-x_i-x_\ell).
	$$
	Thus
	$$
	K_3(G')-K_3(G) \leq \rho^{1/5} n(x_i-x_\ell)-\rho^{2/5} n^2 + 3k^2\sqrt{\rho} n^2 < -\rho^{2/5} n^2/2,
	$$
	a contradiction.
	Thus $d_{\overline{G}}(x,\overline{A_i}) < k\rho^{1/5} n$.
	But (ii) also implies that
	$$
	\sum_{y \in X_i}d_{\overline{G}}(y,\overline{A_i}) \leq e(\overline{G}[\,A_i,\overline{A_i}\,]) \leq 3k^2\sqrt{\rho} n^2,
	$$
	so there exists $y \in X_i$ with $d_{\overline{G}}(y,\overline{A_i}) \leq 3k^2\sqrt{\rho} n^2/x_i \leq 3k^2\rho^{3/10}n$.
	But then
	$$
	d_{\overline{G}}(x,\overline{A_i})+d_{\overline{G}}(y,\overline{A_i}) \leq (k\rho^{1/5}+3k^2\rho^{3/10})n < \rho^{1/6}n,$$
	contradicting (iii).  
	
	Finally, we prove (v). Using the previous parts, we have for all $i \in [k]$ that
	\begin{eqnarray*}
		\rho^{1/5} nm &\geq& \rho^{1/5} n \cdot e(\overline{G}[A_i,\overline{A_i}]) \ \stackrel{(iv)}{\geq}\ \sum_{\substack{xy \in E(\overline{G}[\,A_i,\overline{A_i}\,])\\ x \in A_i}}d_G(x,A_i)\\
		&=& \sum_{uv \in E(G[A_i])}(d_{\overline{G}}(u,\overline{A_i})+d_{\overline{G}}(v,\overline{A_i}))\
		\stackrel{(iii)}{\geq}\ e(G[A_i])\rho^{1/6}n,
	\end{eqnarray*}
	giving the required.
\end{proof}


\section{The intermediate case: approximate structure}\label{int1}

We will assume in this section and the succeeding two sections that
\begin{equation}\label{eq:alpha}
t_{k-1}(n) +\alpha n^2 < e < t_k(n) - \alpha n^2
\end{equation}
and say that we are in the \emph{intermediate case}.
(The remaining \emph{boundary} case is treated in Section~\ref{bound}.)
Equations~(\ref{eq:c2'}) and~(\ref{eq:alpha}) imply that
\begin{equation}\label{eq:solc}
c \geq \frac{1}{k}+\sqrt{\frac{2\alpha}{k(k-1)}} > \frac{1 + \sqrt{2\alpha}}{k}.
\end{equation}
Thus we can improve one inequality in~(\ref{ineq:c3}):
\begin{equation}\label{ineq:c}
\quad \sqrt{2\alpha}< kc - 1 \le  c-(k-1)\alpha.
\end{equation}

The aim of this section is to prove the forthcoming lemma about the approximate structure of $G$ in the intermediate case.
One consequence of the statement is that, when $A_1,\ldots,A_k$ is a max-cut partition of $G$, then actually $G$ is close to the complete partite graph $K[A_1,\ldots,A_k]$.
Note that this is not true for an arbitrary extremal graph $H$, so here we crucially use the fact that $G$ is a worst counterexample,~i.e. it satisfies~\ref{worst-C1}--\ref{worst-C3}.

\begin{lemma}[Approximate structure]\label{approx}
Suppose that~(\ref{eq:alpha}) holds.
Let $A_1,\ldots,A_k$ be a max-cut partition of $V(G)$ such that $|A_k| \leq |A_i|$ for all $i \in [k-1]$.
Then there exists $Z \subseteq V(G)$ such that $G$ has an $(A_1,\ldots,A_k;Z,\beta,\xi,\xi,\delta)$-partition with missing vector $\underline{m} =: (m_1,\ldots,m_{k-1})$ such that $m\le \eta n^2$
and  $h \leq \delta m$, where $m := m_1 + \ldots + m_{k-1}$ and $h$ is defined in~\eqref{heq}.
\end{lemma}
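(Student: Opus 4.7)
The plan is to first locate $G$ near a canonical extremal graph via stability, then use Lemma~\ref{baddegclaim} to control the max-cut partition quantitatively, and finally leverage the worst-counterexample criteria~\ref{worst-C2}--\ref{worst-C3} to upgrade the approximate structure to the exact properties \Pcomplete{} and \PZk. First I would apply Theorem~\ref{PikhurkoRazborov17approx} with parameter $\rho_4$ to $G$, which satisfies the hypothesis trivially since $K_3(G)=g_3(n,e)$, obtaining $H\in\mathcal H_1^*(n,e)$ with $|E(G)\bigtriangleup E(H)|\le \rho_4\binom{n}{2}$. By Proposition~\ref{pr:compute} and Lemma~\ref{lem-slope-c}, $H$ has parts $V_1,\dots,V_k$ of sizes within $2$ of the ideal $(cn,\dots,cn,(1-(k-1)c)n)$. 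Transferring this partition to $V(G)$ gives \Ppartition($G$) with parameter $O(1/n)$ and the edit-distance bound $|E(G)\bigtriangleup E(K[V_1,\dots,V_k])|\le \rho_4 n^2$.

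Next I apply Lemma~\ref{baddegclaim} with $\rho:=\rho_4$, $p:=3$, $d:=\rho_4 n^2$; its hypothesis $2\rho_4^{1/6}\le 1-(k-1)c$ holds because the intermediate condition $e<t_k(n)-\alpha n^2$ and~\eqref{eq:c2'} give $1-(k-1)c>\alpha(k-1)$, which dominates $\rho_4^{1/6}$ by the hierarchy~\eqref{hierarchy}. The lemma's output, applied to the max-cut partition $A_1,\dots,A_k$, reads: \Ppartition{} with parameter $2k^2\sqrt{\rho_4}\ll\beta$; total cross-missing-edge count $m_{\mathrm{tot}}:=\sum_{ij\in\binom{[k]}{2}}e(\overline G[A_i,A_j])\le 3k^2\sqrt{\rho_4}n^2\ll\eta n^2$; $\Delta(G[A_i])\le\rho_4^{1/5}n\le\delta n$; $\sum_i e(G[A_i])\le k\rho_4^{1/30}m_{\mathrm{tot}}\le\delta m_{\mathrm{tot}}$; and for every bad edge $xy\in E(G[A_i])$ the sum $d_G^m(x)+d_G^m(y)\ge\alpha n$. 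Defining $Z:=\{v\in V(G):d_G^m(v)\ge\xi n\}$, a double count yields $|Z|\le 2m_{\mathrm{tot}}/(\xi n)\le\delta n$, and since $\alpha n\gg 2\xi n$ each bad edge must contain a vertex of $Z$. This gives \Pbadedges{} and \Pmissing{} at once; once \Pcomplete{} is established the vector $\underline m=(m_1,\dots,m_{k-1})$ satisfies $m:=m_1+\cdots+m_{k-1}=m_{\mathrm{tot}}\le\eta n^2$ and $h\le\delta m$.

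The delicate steps are \Pcomplete{} and \PZk. For \Pcomplete, suppose toward a contradiction that some missing edge $uv\in E(\overline G)$ has $u\in A_i$, $v\in A_j$ with $i,j\in[k-1]$. By~\eqref{2path}, $P_3(uv,G)\ge (k-2)cn-k$. I would then locate a swap partner $xy\in E(G)$ --- either a bad edge (for which $K_3(xy,G)\le(k-2)cn+k$ by~\eqref{2path}), or a cross-edge between some $A_\ell$ and $A_k$ incident to a vertex of $Z$ --- so that $G':=G-xy+uv$ has $K_3(G')\le K_3(G)$ and strictly more cross-edges than $G$ while remaining outside $\mathcal H(n,e)$, contradicting~\ref{worst-C2}. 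For \PZk, I argue analogously that every $z\in Z\cap A_k$ has all its non-neighbours in $\overline{A_k}$ concentrated in a single part $A_{i(z)}$; otherwise a swap-and-reassignment of $z$ that uses the tie-breaker~\ref{worst-C3} (which controls $\min_i |A_i|$) yields a contradiction. The partition $U_k^i:=\{z\in Z\cap A_k:i(z)=i\}$ is then the one required by \PZk.

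The main obstacle I anticipate is engineering these swap arguments: the slack $\pm k$ in~\eqref{2path} is essentially tight, so one must carefully count triangles containing both the added edge $uv$ and the removed edge $xy$, verify that the modified graph $G'$ still lies outside $\mathcal H(n,e)$, and confirm that one of~\ref{worst-C2} or~\ref{worst-C3} is strictly improved. Everything else reduces to book-keeping with the hierarchy~\eqref{hierarchy}.
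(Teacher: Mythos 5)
There is a genuine gap at the very first step. Theorem~\ref{PikhurkoRazborov17approx} only gives you a graph $H\in\mathcal H_1^*(n,e)$ with $|E(G)\bigtriangleup E(H)|\le\rho_4 n^2$, and a graph in $\mathcal H_1^*(n,e)$ is \emph{not} close to the complete $k$-partite graph $K[V_1,\dots,V_k]$: by Definition~\ref{df:CH*}, $H$ is obtained from $K[A_1^*,\dots,A_k^*]$ by replacing the bipartite graph between two parts with an \emph{arbitrary} triangle-free graph of the right size, and that triangle-free graph can sit at edit distance $\Omega(\sqrt{\alpha}\,n^2)$ from $K[A_i^*,A_k^*]$ (e.g.\ for $k=3$ it may be a near-balanced complete bipartite graph on $A_2^*\cup A_3^*$ rather than the unbalanced one with parts $a_2^*,a_3^*$). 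So your asserted bound $|E(G)\bigtriangleup E(K[V_1,\dots,V_k])|\le\rho_4 n^2$ is false in general, and the subsequent application of Lemma~\ref{baddegclaim} with $d=\rho_4 n^2$ collapses. Upgrading ``close to \emph{some} $H\in\mathcal H_1^*(n,e)$'' to ``close to the specific graph $K^k_{\lfloor cn\rfloor,\dots,\lfloor cn\rfloor,n-(k-1)\lfloor cn\rfloor}$'' is precisely the content of Lemma~\ref{lem-Almost-k-Partite}, which occupies most of Section~\ref{int1}: it needs the vertex-wise triangle-count function $f$ of~\eqref{eq-f} and Lemma~\ref{lem-f}, the claim that every large subset of the exceptional part $B$ spans many edges (Claim~\ref{cl-Xlarge}), and, for $k\ge 4$, a reduction to the case $k-1$ via the minimality of $k$ in the choice of counterexample. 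None of this can be skipped, and none of it is replaced by anything in your outline.

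A secondary point: once \Ppartition($G$) and the bound $\Delta(G[A_i])\le\rho_0^{1/5}n$ are in place, \Pcomplete($G$) and \PZk($G$) follow \emph{directly} from the upper bound in~\eqref{2path}: for $x\in A_i$, $y\in A_j$ with $ij\in\binom{[k-1]}{2}$ one computes $P_3(xy,G)\le n-|A_i|-|A_j|+\Delta(G[A_i])+\Delta(G[A_j])<(k-2)cn-\sqrt{\alpha}n$, so $xy$ must be an edge; similarly for a vertex of $Z\cap A_k$ with many non-neighbours in $A_i$ and any $y\in A_j$, $j\ne i$. No swap argument and no appeal to~\ref{worst-C2} or~\ref{worst-C3} is needed here, which is fortunate, because the swap you propose is hard to control: as you note, the slack $\pm k$ in~\eqref{2path} is tight, so the sign of $P_3(uv,G')-P_3(xy,G)$ is not determined, and you would additionally have to verify $G'\notin\mathcal H(n,e)$. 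The criteria~\ref{worst-C2}--\ref{worst-C3} are only needed much later in the proof (Section~\ref{sec:linear}), not for Lemma~\ref{approx}.
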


To prove the lemma, we will use Theorem~\ref{PikhurkoRazborov17approx} together with a somewhat involved series of deductions.
Define a function $f: V(G) \rightarrow \mathbb{R}$ by setting
\begin{equation}\label{eq-f}
	f(x):=(d_G(x)-(k-2)cn)(k-2)cn+{k-2\choose 2}c^2n^2-K_3(x,G),\quad x \in V(G).
\end{equation}
 The intuition behind this formula is that it becomes the zero function if we apply it to $H:=K^k_{cn,\dots,cn,(1-(k-1)c)n}$ with $c=c(n,e)$:
  \begin{equation}\label{eq-fmotivation}
  (d_H(x)-(k-2)cn)(k-2)cn+{k-2\choose 2}c^2n^2-K_3(x,H)=0\quad\mbox{for all $x\in V(H)$}.
  \end{equation}  
 It turns out that $f(x)$ is small in absolute value for every $x \in V(G)$.

\begin{lemma}\label{lem-f}
	$|f(x)| \leq 6n/\sqrt{\alpha}$ for all $x \in V(G)$.
\end{lemma}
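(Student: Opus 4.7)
The function $f$ is engineered so that $f(x) \equiv 0$ on the ``ideal'' complete partite graph $K^k_{cn,\ldots,cn,(1-(k-1)c)n}$, as recorded by~(\ref{eq-fmotivation}); the lemma asserts that this vanishing is stable under passage to the worst counterexample $G$. My plan is to use the \emph{neighbourhood swap trick} which exploits the extremality of $G$. For any $x \in V(G)$ and any $S \subseteq V(G)\setminus\{x\}$, let $G^*_{S,x}$ be the graph obtained from $G$ by deleting every edge at $x$ and inserting $\{xy : y \in S\}$. Then $G^*_{S,x}$ is an $(n,\,e - d_G(x) + |S|)$-graph with $K_3(G^*_{S,x}) = K_3(G-x) + e(G[S])$, and extremality of $G$ combined with the worst-counterexample property~(\ref{worst}) yields
\begin{equation*}
e(G[S]) - K_3(x,G) \;\geq\; h^*\bigl(n,\,e - d_G(x) + |S|\bigr) - h^*(n,e).
\end{equation*}

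For the lower bound $f(x) \geq -6n/\sqrt\alpha$, I would take $|S| = d_G(x)$: the inequality degenerates into the statement that $N_G(x)$ minimises $e(G[\,\cdot\,])$ over all $d_G(x)$-subsets. I will construct a concrete test set $S^*$ via an edit-close graph $H \in \mathcal{H}_1^*(n,e)$ furnished by Theorem~\ref{PikhurkoRazborov17approx}: pick the canonical part $B_i$ with $n - a_i^*$ closest to $d_G(x)$, and let $S^*$ be a $d_G(x)$-sized adjustment of $V(G)\setminus B_i$. Then $H[S^*]$ is essentially a complete $(k-1)$-partite graph whose edge count matches $d_G(x)(k-2)cn - \tfrac{(k-1)(k-2)}{2}c^2n^2$ up to $O(n)$ by the size estimate $|a_j^* - cn| \leq 2$ from Lemma~\ref{lem-slope-c}. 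For the matching upper bound $f(x) \leq 6n/\sqrt\alpha$, the same swap inequality, now applied with $|S| \neq d_G(x)$ and rearranged as $K_3(x,G) \geq e(G[S]) - [h^*(n,e - d_G(x) + |S|) - h^*(n,e)]$, provides the complementary control, with $S$ chosen analogously from the structure of $H$.

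The main obstacle is isolating the second-order correction of $h^*$. Differentiating the defining equation~(\ref{eq:c}) gives $c'(n,e) = 1/[n^2(k-1)(1-kc)]$, so a Taylor expansion around $e$ yields
\begin{equation*}
h^*(n,e+t) - h^*(n,e) \;=\; t(k-2)cn - \tfrac{(k-2)t^2}{2n(k-1)(kc-1)} + O(|t|k),
\end{equation*}
and the curvature correction has magnitude $O(n/\sqrt\alpha)$ for $|t|=O(n)$ because $kc - 1 \geq \sqrt{2\alpha}$ by~(\ref{ineq:c}). It is precisely this $1/\sqrt\alpha$ factor that appears in the target bound. Telescoping the per-step bounds from Lemma~\ref{lem-slope-c} along the segment from $e$ to $e - d_G(x) + |S|$ is valid because the intermediate-case hypothesis~(\ref{eq:alpha}) keeps $k(n,e^*)$ constant throughout; careful bookkeeping of this second-order term, together with the edit-distance slack from Theorem~\ref{PikhurkoRazborov17approx}, is where most of the technical work lies.
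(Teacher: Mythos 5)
There are two genuine gaps here, one of direction and one of precision. First, the direction: your swap inequality $e(G[S])-K_3(x,G)\geq h^*(n,e-d_G(x)+|S|)-h^*(n,e)$ is intrinsically one-sided. Every choice of $S$ (whatever its size) rearranges to an \emph{upper} bound $K_3(x,G)\leq e(G[S])-\bigl[h^*(n,e-d_G(x)+|S|)-h^*(n,e)\bigr]$, because the modified graph always has at least as many triangles relative to $h^*$ as the extremal $G$ does. Your claimed rearrangement to $K_3(x,G)\geq\cdots$ for the case $|S|\neq d_G(x)$ reverses the inequality and is false; consequently your argument never produces the lower bound on $K_3(x,G)$ that the upper bound $f(x)\leq 6n/\sqrt{\alpha}$ requires. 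Second, the precision: even for the half you can in principle reach ($f(x)\geq -6n/\sqrt{\alpha}$, i.e.\ $K_3(x,G)\leq e(G[S^*])$ with a good test set $S^*$), transferring the edge count of $H[S^*]$ to $G[S^*]$ via Theorem~\ref{PikhurkoRazborov17approx} costs up to $|E(G)\bigtriangleup E(H)|\leq\rho_4 n^2$, which for large $n$ dwarfs the target error $6n/\sqrt{\alpha}$. Lemma~\ref{lem-f} is proved \emph{before} Lemma~\ref{lem-Almost-k-Partite} and is an input to the approximate-structure analysis, so no structural control on $G$ finer than $o(n^2)$ is available at this stage; a quadratic-in-$n$ slack cannot yield a linear-in-$n$ conclusion.

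The paper's proof circumvents both problems. It compares two vertices at a time: deleting $y$ and cloning $x$ preserves the vertex count and changes the triangle count by an amount that is simultaneously bounded below by extremality together with Lemma~\ref{lem-slope-c} (this is where the $1/\sqrt{\alpha}$ enters, via the gradient of $c(n,\cdot)$) and bounded above by $K_3(x,G)-K_3(y,G)+(n-2)$. This two-sided sandwich gives $|f(x)-f(y)|\leq 3n/\sqrt{\alpha}$ for all pairs, i.e.\ relative control with no appeal to stability. The absolute scale is then fixed not by a test set but by the exact identity $\sum_{v}f(v)=3\bigl(K_3(K^k_{cn,\ldots,cn,n-(k-1)cn})-K_3(G)\bigr)$ from~(\ref{eq-sum-f}), whose right-hand side is $O(n/\eps)$ by~(\ref{nikicons}) --- a bound inherited from the sharp asymptotic estimate~(\ref{asympbound}), which is the genuinely linear-precision input your proposal lacks. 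If you want to salvage your approach you would need both a device that yields lower bounds on $K_3(x,G)$ and a replacement for the stability theorem at $O(n)$ accuracy; the cloning argument supplies both at once.
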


\begin{proof}
    We first give a bound on the gradient of the function $c(n,\cdot)$ that was defined in~\eqref{eq:c2}. We will write $c:=c(n,e)$ as usual. Note that $k(2e/n^2)=k(n,e)$ by Lemma~\ref{lm:ks}.
	Setting $s:=1/\sqrt{\al}$, we have
	\begin{equation}\label{esn}
	e(K^k_{cn,\ldots,cn,cn-s,(1-(k-1)c)n+s})-e=s(kc-1)n-s^2\stackrel{(\ref{ineq:c})}{\ge} \sqrt{2\al}sn-1/\al>\sqrt{\al}sn=n.
	\end{equation}
    Let $p := e(K^k_{cn-\frac{s}{k-1},\ldots,cn-\frac{s}{k-1},(1-(k-1)c)n+s})$ and $c' := c(n,e+n)$.
Then
\begin{align*}
p > e(K^k_{cn,\ldots,cn,cn-s,(1-(k-1)c)n+s}) \stackrel{(\ref{esn})}{\geq} e+n = e(K^k_{c'n,\ldots,c'n,(1-(k-1)c')n}).
\end{align*}
This, together with the fact that $c(n,\cdot)$ is a non-increasing function, implies that $c \geq c'\ge c-\frac{s}{(k-1)n}$, so
    \begin{equation}\label{eq-c-change}
    (k-2)c'n \geq (k-2)\left(cn-\frac{s}{k-1}\right) \geq (k-2)cn-\frac{1}{\sqrt{\alpha}}.
    \end{equation}

Next,~\eqref{eq-fmotivation} (or a direct calculation using~\eqref{eq:NewC},~\eqref{eq:UpperAsymptG3} and~\eqref{eq-f}) shows that
\begin{equation}\label{eq-sum-f}
	\sum_{v\in V(G)}f(v)=3\left(K_3(K^k_{cn,\ldots,cn,n-(k-1)cn}) - K_3(G)\right).
\end{equation}     
Now let $x,y \in V(G)$ be two arbitrary distinct vertices. Let $G'$ be the graph obtained from $G$ by deleting $y$ and cloning $x$.
	(By cloning, we mean adding a new vertex $x'$ whose neighbourhood is identical to $N_G(x)\setminus \lbrace y \rbrace$; so, in particular, $xx' \notin E(G')$.) Then, letting $e' := e(G')-e(G)$, we have that
	\begin{equation*}
e' = \begin{cases} d(x)-d(y), &\quad\mbox{if } xy \notin E(G), \\ 
d(x)-d(y)-1, & \quad\mbox{otherwise. } \end{cases}
\end{equation*}
 Clearly, $|e'|\le n$ and so $k(n,e+e')=k(n,e)$. 
 
 Suppose first that $e' \geq 0$.
 Using Lemma~\ref{lem-slope-c},~(\ref{eq:alpha}) and the facts that $G$ is a worst counterexample and that $c(n,\cdot)$ is a non-increasing function, we have 
	\begin{eqnarray*}
    K_3(G')-K_3(G)&\stackrel{(\ref{worst})}{\ge}& h(n,e+e')-h(n,e) = \sum_{i=1}^{e'}(h(n,e+i)-h(n,e+i-1))\\
     &\ge& \sum_{i=1}^{e'}\left((k-2)\cdot c(n,e+i-1)\cdot n-k\right) \ge e'(k-2)c' n-kn\\
     &\stackrel{(\ref{eq-c-change})}{\ge}& e'(k-2)cn-\frac{2n}{\sqrt{\al}}.
    \end{eqnarray*}
On the other hand, $K_3(G')-K_3(G) \leq K_3(x,G)-K_3(y,G)+(n-2)$. Thus 
	\begin{eqnarray*}
    K_3(x,G)-K_3(y,G)\ge (k-2)cn (d(x)-d(y)-1)-\frac{2n}{\sqrt{\al}} \geq (k-2)cn(d(x)-d(y))-\frac{3n}{\sqrt{\alpha}}.
	\end{eqnarray*}
This implies that 
$$f(x)-f(y)=(d(x)-d(y))(k-2)cn-(K_3(x,G)-K_3(y,G))\le \frac{3n}{\sqrt{\al}}.$$
Using an analogous argument assuming $e' < 0$ and the fact that $x,y$ were arbitrary, we derive that for any $x,y\in V(G)$,
\begin{equation}\label{eq-f-symm}
	|f(x)-f(y)|\le \frac{3n}{\sqrt{\al}}.
\end{equation}

Suppose now for some $x\in V(G)$, we have $|f(x)| \geq 6n/\sqrt{\alpha}$. Then
\begin{eqnarray*}
\frac{3n^2}{\sqrt{\alpha}} &\stackrel{(\ref{eq-f-symm})}{\leq}& \left|\sum_{v \in V(G)}f(v)\right| \stackrel{(\ref{eq-sum-f})}{=} 3\left|K_3(K^k_{cn,\ldots,cn,n-(k-1)cn}) - K_3(G)\right| \stackrel{(\ref{nikicons})}{\leq} \frac{3n}{2\eps},
\end{eqnarray*}
so $1/n_0 \geq 1/n \geq 2\eps/\sqrt{\alpha} \geq \sqrt{\eps}$,
a contradiction to~(\ref{hierarchy}).
\end{proof}

\begin{cor}\label{cr:DeltaG}
	$$\Delta(G)\le (k-1)cn+\frac{42}{\sqrt{\alpha}} \quad  \mbox{ and}\quad \delta(G)\ge (k-2)cn-k.$$
\end{cor}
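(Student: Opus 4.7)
My plan is to deduce both bounds from material already in hand: the lower bound comes directly from~\eqref{2path} (with Lemma~\ref{lm:(k-1)c<1} handling the degenerate case of a universal vertex), while the upper bound comes from playing Lemma~\ref{lem-f} against~\eqref{2path}.

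For the \emph{lower bound}, I fix $y\in V(G)$. If $y$ has a non-neighbour $w$, then~\eqref{2path} gives
\[
d_G(y)\ge|N_G(y)\cap N_G(w)|=P_3(yw,G)\ge(k-2)cn-k.
\]
Otherwise $d_G(y)=n-1$, and since $(k-2)c<(k-1)c<1$ by Lemma~\ref{lm:(k-1)c<1}, this still exceeds $(k-2)cn-k$. Taking the minimum over $y$ yields $\delta(G)\ge(k-2)cn-k$.

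For the \emph{upper bound}, I pick $x\in V(G)$ with $d:=d_G(x)=\Delta(G)$ and obtain two competing estimates for $K_3(x,G)$. First, rearranging the definition~\eqref{eq-f} of $f$ and using the identity $(k-2)^2-\binom{k-2}{2}=\binom{k-1}{2}$ gives the exact formula
\[
K_3(x,G)=d(k-2)cn-\binom{k-1}{2}c^2n^2-f(x).
\]
Second, $\sum_{y\in N_G(x)}P_3(xy,G)=2K_3(x,G)$ together with $P_3(xy,G)\le(k-2)cn+k$ from~\eqref{2path} yields $2K_3(x,G)\le d((k-2)cn+k)$. Substituting the first into the second, cancelling common terms and absorbing $f(x)$ via $|f(x)|\le 6n/\sqrt{\alpha}$ from Lemma~\ref{lem-f}, I obtain the linear inequality
\[
d\bigl((k-2)cn-k\bigr)\le(k-1)(k-2)c^2n^2+12n/\sqrt{\alpha},
\]
which polynomial division rewrites as
\[
d\le(k-1)cn+\frac{k(k-1)cn+12n/\sqrt{\alpha}}{(k-2)cn-k}.
\]

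The only real obstacle I anticipate is the final arithmetic: verifying that the fractional correction is at most $42/\sqrt{\alpha}$. Since $c\ge 1/k$ by~\eqref{eq:solc2} and $n\ge n_0$ is huge relative to $k$, the denominator is essentially $(k-2)cn$, so the two pieces of the numerator contribute about $\frac{k(k-1)}{k-2}\le k+3$ and $\frac{12k}{(k-2)\sqrt{\alpha}}\le 36/\sqrt{\alpha}$ respectively (the latter maximised at $k=3$). The additive $k+3$ is absorbed into a spare $6/\sqrt{\alpha}$ using $k\le 1/\eps$ and $\alpha\ll\eps$ from the hierarchy~\eqref{hierarchy}, producing the stated bound $(k-1)cn+42/\sqrt{\alpha}$.
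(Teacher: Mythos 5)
Your proof is correct and uses essentially the same argument as the paper: both combine the identity defining $f(x)$ (controlled by Lemma~\ref{lem-f}) with the bound $P_3(xy,G)\le(k-2)cn+k$ from~\eqref{2path} summed over $y\in N_G(x)$, arriving at the same linear inequality in $d_G(x)$, and the minimum-degree bound is the same one-line consequence of~\eqref{2path}. The only differences are cosmetic — the paper folds the two estimates into a single chain and absorbs the $dk/2$ term early, whereas you keep the exact identity $\sum_{y\in N_G(x)}P_3(xy,G)=2K_3(x,G)$ and do the division at the end — and your final arithmetic does land within the stated $42/\sqrt{\alpha}$.
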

\begin{proof} 
Let $x \in V(G)$ be arbitrary.
By Lemma~\ref{lem-f},
\begin{eqnarray*}
	&&(d_G(x)-(k-2)cn)(k-2)cn+{k-2\choose 2}c^2n^2=K_3(x,G)+f(x)\\
	&\le& \frac{1}{2}\sum_{y \in N_G(x)}P_3(xy,G) +\frac{6n}{\sqrt{\alpha}} \stackrel{(\ref{2path})}{\le} \frac12 d_G(x) ((k-2)cn+k)+\frac{6n}{\sqrt{\alpha}} \le \frac12 d_G(x) (k-2)cn+\frac{7n}{\sqrt{\alpha}}.
\end{eqnarray*}
Solving for $d_G(x)$, we have, using $c\ge 1/k$, that
$$
d_G(x)\le (k-1)cn+\frac{14}{\sqrt{\alpha}(k-2)c}\le (k-1)cn+\frac{14k}{\sqrt{\alpha}(k-2)}\le (k-1)cn+\frac{42}{\sqrt{\alpha}}.
$$
The claim about minimum degree 
trivially follows from~(\ref{2path}).
\end{proof}



\subsection{$G$ is almost complete $k$-partite}

Theorem~\ref{PikhurkoRazborov17approx} implies that our worst counterexample $G$ is close in edit distance to \emph{some} graph in $\mathcal{H}^*(n,e)$.
In this subsection, we prove that in fact $G$ is close in edit distance to the specific graph $H^*(n,e)$ in $\mathcal{H}^*(n,e)$.
Recall from Definition~\ref{astardef} and~(\ref{m*}) that the edit distance between $H^*(n,e)$ and $K_{a_1^*,\ldots,a_k^*}$ is at most $n$. But Lemma~\ref{lem-slope-c} implies that additionally $|a_i^*-cn| \leq 2$ for all $i \in [k-1]$, so we will in fact show that the edit distance between $G$ and the complete $k$-partite graph with $k-1$ parts of size $\lfloor cn \rfloor$ is $o(n^2)$.

\begin{lemma}\label{lem-Almost-k-Partite} 
$|E(G)\bigtriangleup E(K^k_{\lfloor cn \rfloor,\ldots,\lfloor cn\rfloor,n-(k-1)\lfloor cn\rfloor})| \leq \rho_0 n^2$.
\end{lemma}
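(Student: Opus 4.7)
The plan is to apply the stability theorem (Theorem~\ref{PikhurkoRazborov17approx}) with $\eps := \rho_4$ to the worst counterexample $G$. Since $K_3(G) = g_3(n, e)$ trivially satisfies $K_3(G) \leq g_3(n, e) + \delta \binom{n}{3}$, we obtain some $H \in \mathcal{H}_1^*(n, e)$ with $|E(G) \bigtriangleup E(H)| \leq \rho_4 \binom{n}{2}$. By Definition~\ref{df:CH*}, $H$ coincides with the complete $k$-partite graph $T := K[A_1^*, \ldots, A_k^*]$ (where $|A_j^*| = a_j^*$) except on vertex pairs inside $B := A_i^* \cup A_k^*$ for some $i \in [k-1]$, and $H[B]$ is a triangle-free graph with $a_i^* a_k^* - m^*$ edges. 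By Lemma~\ref{lem-slope-c}, $|a_j^* - cn| \leq 2$ for each $j \in [k-1]$, and $m^* \leq n$ by~\eqref{m*}.

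By the triangle inequality,
\[
|E(G) \bigtriangleup E(K^k_{\lfloor cn \rfloor, \ldots, \lfloor cn \rfloor, n-(k-1)\lfloor cn \rfloor})| \leq \rho_4 \binom{n}{2} + |E(H) \bigtriangleup E(T)| + O(kn),
\]
where the final $O(kn)$ accounts for the edit distance $|E(T) \bigtriangleup E(K^k_{\lfloor cn \rfloor, \ldots})|$ incurred by shifting $O(k)$ vertices between parts (each move costs at most $n$ edge changes). Writing $x := e(H[A_i^*]) + e(H[A_k^*])$ for the number of bad edges of $H[B]$ with respect to the natural bipartition $(A_i^*, A_k^*)$, a direct computation gives $|E(H) \bigtriangleup E(T)| = 2x + m^* \leq 2x + n$, so the task reduces to showing $x \leq \rho_0 n^2 / 3$.

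The key and hardest step is this bound on $x$. Observe that every edge of $H[B]$ lies in exactly $|A^c| = \sum_{j \leq k-2} a_j^* \approx (k-2)cn$ triangles of $H$, because $A^c = V(H) \setminus B$ is complete to $B$ in $H$. Consequently, replacing $H[B]$ by any other triangle-free graph on the same vertex set with the same number of edges leaves $K_3(H) = h^*(n, e)$ unchanged, and in particular $H^*(n, e) \in \mathcal{H}_1^*(n, e)$ itself has $H^*[B] = K[A_i^*, A_k^*]$ minus an $m^*$-star. Using the sharp bounds $P_3(xy, G) \leq (k-2)cn + k$ from Corollary~\ref{cr:dh}, $\Delta(G) \leq (k-1)cn + 42/\sqrt{\alpha}$ from Corollary~\ref{cr:DeltaG}, and $|f(x)| \leq 6n/\sqrt{\alpha}$ from Lemma~\ref{lem-f}, one argues that a large value of $x$ would permit a local swapping modification of $G$ at the vertices of $B$ producing an $(n, e)$-graph with no more triangles than $G$ but lying in the forbidden family $\mathcal{H}(n, e)$, contradicting the selection of $G$ as a worst counterexample.

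The main obstacle is this last step: extracting from the abstract closeness supplied by Theorem~\ref{PikhurkoRazborov17approx} (which gives only some $H \in \mathcal{H}_1^*(n, e)$ whose $H[B]$ could be an arbitrary triangle-free graph) the much more rigid structural statement that $H[B]$ must in fact be close to $K[A_i^*, A_k^*]$. The resolution requires a careful local analysis showing that large $x$ is incompatible with the simultaneous saturation of the tight triangle-counting bounds above by the extremal $G$, yielding the desired $o(n^2)$ edit distance bound after summation.
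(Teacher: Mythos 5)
Your proposal correctly identifies the starting point (Theorem~\ref{PikhurkoRazborov17approx} applied with $\rho_4$) and correctly isolates the crux: the stability theorem only gives closeness to \emph{some} $H\in\mathcal{H}_1^*(n,e)$ whose triangle-free part $H[B]$ could be an arbitrary triangle-free graph, and one must upgrade this to closeness to the rigid complete $k$-partite structure. However, at exactly that point the proposal stops proving and starts asserting: the sentence ``one argues that a large value of $x$ would permit a local swapping modification of $G$ \ldots producing an $(n,e)$-graph \ldots lying in the forbidden family'' is the entire content of the hard step, and no such argument is supplied. This is a genuine gap, not a stylistic omission, for two reasons. First, the saturation facts you cite ($P_3(yz,G)\le (k-2)cn+k$, the bound on $f$, the bound on $\Delta(G)$) are statements about $G$, while the quantity $x$ you want to bound lives in $H$; since $G$ and $H$ differ on up to $\rho_4 n^2$ pairs, per-edge triangle counts do not transfer, and a genuinely new structural analysis of $G$ itself on the set $B$ is required (in the paper this occupies Claims~\ref{cl-AT}--\ref{cl-A-nice2}, including the nontrivial averaging argument of Claim~\ref{cl-Xlarge} that every subset of $B$ of size at least $(c-\rho_1)n$ spans $\ge\rho_1 n^2$ edges).

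Second, and more importantly, the mechanism that actually closes the paper's proof is absent from your sketch. For $k\ge 4$ the contradiction does not come from a local swap at all: after establishing that $G\setminus G[B]\cong K[A_1,\dots,A_{k-2},B]$, the paper passes to the induced subgraph $G_i=G[\,\overline{A_i}\,]$, checks via Claim~\ref{cl-Gi} that $(n_i,e_i)$ falls in the range $t_{k-2}(n_i)+\alpha n_i^2/3\le e_i\le t_{k-1}(n_i)-\alpha n_i^2/3$, and invokes the \emph{minimality of $k$} in the choice of the counterexample (Section~\ref{beginningproof}) to conclude $G_i\in\mathcal{H}(n_i,e_i)$ and hence $G\in\mathcal{H}(n,e)$, contradicting \ref{worst-C1}. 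For $k=3$ the contradiction is an explicit computation: $K_3(G[B])>0$ forces $e(G[B])>t_2(|B|)$, which combined with $e(G[B])=e-|A||B|$ yields $(3c-1)^2\le 8\rho_2$ and contradicts \eqref{eq:solc}. Neither the induction on $k$ nor the $k=3$ computation appears in your proposal, and without the former I do not see how your ``local swapping'' plan could terminate for $k\ge 4$: a swap inside $B$ that preserves triangle-freeness and edge count preserves $K_3$, so it cannot by itself produce the strict decrease or the membership in $\mathcal{H}(n,e)$ that your contradiction requires. You would need to supply either the inductive step or a genuinely different closing argument.
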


\begin{proof}  
Suppose that the statement is not true.
We will first derive some structural properties of $G$ under this assumption.

Let $\mathcal{H}_1(n)$ be the set of $n$-vertex graphs $H$ with vertex partition $A \cup B$ such that $H[A]$ is complete $(k-2)$-partite; $H[A,B]$ is complete, and $H[B]$ is triangle-free.
Pick $H\in\cH_1(n)$ with the minimal edit distance to $G$.
Theorem~\ref{PikhurkoRazborov17approx} and~(\ref{n0}) imply that
\begin{equation}\label{editdistance}
|E(H)\bigtriangleup E(G)|\leq  \rho_4 n^2.
\end{equation}
(Note that $H$ need not have $e$ edges, although we do have $|e-e(H)| \leq \rho_4 n^2$.)
By definition, $H$ comes with a \emph{canonical partition} $A_1,\ldots,A_{k-2},B$ such that each $A_i$ is an independent set and $H[B]$ is triangle free, and $H[A_1,\ldots,A_{k-2},B]$ is complete $(k-1)$-partite.
Now, $G$ is $\rho_4n^2$-close to some graph $H' \in \mathcal{H}_1^*(n,e)$ in which for $i \in [k-2]$ the $i$th part has size $a_i^* = cn\pm 2$ (by Lemma~\ref{lem-slope-c}).
Thus $H$ is $2\rho_4 n^2$-close to $H'$ and consequently 
\begin{equation}\label{Aisize}
\bigg||A_i|-cn\bigg| < \rho_3 n\quad\text{for all}\quad i \in [k-2].
\end{equation}
Let $A := \bigcup_{i \in [k-2]}A_i$.

\begin{claim}\label{cl-AT}
The following hold in $G$:
\begin{itemize}
\item[(i)] for every $x\in A$, $d_G(x,B)>(c+\rho_0)n$ or $d_G(x,A)< ((k-2)c-\rho_0)n$;

\item[(ii)] for any $y\in V(G)$ and $ij \in \binom{[k-2]}{2}$ such that $\min\lbrace d_{\overline{G}}(y,A_i),d_{\overline{G}}(y,A_j)\rbrace\ge\rho_3n$, we have $\min\lbrace d_{G}(y,A_i),d_{G}(y,A_j) \rbrace \leq \rho_3n$;

\item[(iii)] for every $y\in B$, $d_G(y,A)> (k-3)cn+\rho_0n$ or $d_G(y,B)< cn-\rho_0n$.
\end{itemize}
\end{claim}

\begin{proof}[Proof of Claim.]
To prove (i), suppose that there is a vertex $x \in A$ with $d_G(x,B)\le cn+\rho_0n$ and $d_G(x,A)\ge ((k-2)c-\rho_0)n$.
Without loss of generality, we may suppose that $x \in A_1$.
Now modify $H$ to obtain $H'\in \cH_1(n)$ by replacing the neighbourhood of $x$ with $A\setminus\{x\}$.
Then $H'$ has a canonical partition $A_1\setminus \lbrace x \rbrace,A_2,\ldots,A_{k-2},B\cup\lbrace x \rbrace$.
We have that
\begin{eqnarray*}
d_{G\setminus H}(x) + d_{H\setminus G}(x) &\geq& d_G(x,A)-|A\setminus A_1|+|B|-d_G(x,B)\\
&\stackrel{(\ref{Aisize})}{\ge}& ((k-2)c-\rho_0)n-(k-3)(c+\rho_3)n+(1-(k-2)(c+\rho_3))n-(c+\rho_0)n\\
&\geq& (1-(k-2)c-3\rho_0)n,
\end{eqnarray*}
while
\begin{align*}
d_{G\setminus H'}(x) + d_{H'\setminus G}(x) &=  d_G(x,B)+|A|-d_G(x,A)\\
&\leq (c+\rho_0)n+(k-2)(c+\rho_3)n-((k-2)c-\rho_0)n\ \le\  cn+3\rho_0n.
\end{align*} 
Thus
\begin{eqnarray}
\label{bigtriangle}|E(H')\bigtriangleup E(G)| - |E(H)\bigtriangleup E(G)| &=& d_{G\setminus H'}(x) + d_{H'\setminus G}(x) - d_{G\setminus H}(x) - d_{H\setminus G}(x)\\
\nonumber &\leq& (kc-1-c)n + 6\rho_0 n \stackrel{(\ref{ineq:c})}{\leq} -((k-1)\alpha - 6\rho_0) n < -\alpha n,
\end{eqnarray}
contradicting the choice of $H$.

To prove (ii), suppose that there exists $y\in V(G)$ and $ij \in \binom{[k-2]}{2}$ such that $ d_{\overline{G}}(y,A_i),d_{\overline{G}}(y,A_j)\ge\rho_3n$ and $d_{G}(y,A_j)\ge d_{G}(y,A_i)>\rho_3n$.
Then we can obtain a new graph $G'$ by replacing $\rho_3 n$ neighbours of $y$ in $A_i$ with $\rho_3 n$ new neighbours in $A_j$.
There are at most $\rho_4 n^2$ edges missing between $A_i$ and $A_j$ in $G$, so
\begin{align*}
K_3(G)-K_3(G')&=K_3(y,G)-K_3(y,G')\\
&\geq (d_G(y,A_i)d_G(y,A_j)-\rho_4 n^2) - (d_G(y,A_i)-\rho_3 n)(d_G(y,A_j)+\rho_3 n)\\
&\ge \rho_3^2n^2-\rho_4n^2\ge\rho_4n^2.
\end{align*}
This contradicts the fact that $G$ is a worst counterexample (namely,~\ref{worst-C1}).

For (iii), suppose there is some $y\in B$ with $d_G(y,A)\le (k-3)cn+\rho_0n$ and $d_G(y,B)\ge cn-\rho_0n$.
Suppose without loss of generality that $d_{G}(y,A_1)=\min_{j \in [k-2]} \{d_{G}(y,A_j)\}$. We claim that
\begin{equation}\label{claim1}
d_{G}(y,A_1)\le 2\rho_0n.
\end{equation}
Indeed, when $k=3$, we have $A_1=A$ and so $d_{G}(y,A_1)=d_G(y,A)\le \rho_0n$.
Suppose now that $k\ge 4$. If $d_{G}(y,A_1)\ge 2\rho_0n$, then
\begin{eqnarray*}
d_{\overline{G}}(y,A\setminus A_1) &=& |A\setminus A_1|-d_G(y,A)+d_{G}(y,A_1)\\
 &\stackrel{(\ref{Aisize})}{\geq}& (k-3)(c-\rho_3)n - (k-3)cn-\rho_0n+2\rho_0n
\ \geq\ \frac{\rho_0n}{2}.
\end{eqnarray*}
Thus there is some $j \in [k-2]\setminus \lbrace 1 \rbrace$ for which $d_{\overline{G}}(y,A_j) \geq \rho_0 n/(2k) \geq \rho_3 n$.
On the other hand, as $d_{G}(y,A_1)=\min_{j \in [k-2]} \{d_{G}(y,A_j)\}$, we have that $$d_{\overline{G}}(y,A_1)=|A_1|-d_G(y,A_1)\ge |A_1|-d_G(y,A)/(k-2)\ge\rho_3n.$$ 
Then (ii) implies that $d_{G}(y,A_1)\le\rho_3n< 2\rho_0n$, a contradiction.
Thus~(\ref{claim1}) holds.

Obtain $H'$ from $H$ by replacing $N_H(y)$ with $V(H)\setminus A_1$. Then $H' \in \mathcal{H}_1(n)$ has a canonical partition $A_1\cup \lbrace y \rbrace, A_2,\ldots,A_{k-2},B\setminus \lbrace y \rbrace$.
We have
$d_{G\setminus H}(y)+d_{H\setminus G}(y) \geq d_{\overline{G}}(y,A)$, while
\begin{eqnarray*}
	d_{G\setminus H'}(y) + d_{H'\setminus G}(y) &\leq& d_{G}(y,A_1)+d_{\overline{G}}(y,A\setminus A_1)+d_{\overline{G}}(y,B)\\
	&\leq& 2d_{G}(y,A_1)+d_{\overline{G}}(y,A)-|A_1|+|B|-d_G(y,B)\\
	&\le &4\rho_0n+d_{\overline{G}}(y,A)-(c-\rho_3)n+(1-(k-2)(c-\rho_3))n-(c-\rho_0)n\\
	&\le &d_{\overline{G}}(y,A)+(1-kc)n+6\rho_0n\stackrel{(\ref{ineq:c})}{\le} d_{\overline{G}}(y,A)-(\sqrt{2\alpha}-6\rho_0)n.
\end{eqnarray*}
Again, this implies that $|E(H')\bigtriangleup E(G)| < |E(H)\bigtriangleup E(G)|$, contradicting the choice of $H$.
This completes the proof of the claim.
\end{proof}

\medskip
\noindent
The next claim shows that every large enough subset of $B$ must contain many edges.

\begin{claim}\label{cl-Xlarge}
For all $X \subseteq B$ with $|X| \geq (c-\rho_1)n$, we have $E(G[X]) \geq \rho_1 n^2$.
\end{claim}

\begin{proof}[Proof of Claim.] 
Suppose that some $X$ violates the claim. By taking a subset, we
can assume that $|X|=(c-\rho_1)n$. Now~(\ref{eq:solc}) implies that $c\ge 1/k$, and so $|X|\ge n/(2k)$.
Let $\tilde{d}(X,\overline{X}) := \frac{1}{|X|}\sum_{x \in X}d_G(x,\overline{X})$ denote the average degree of vertices in $X$ into $\overline{X}$ in $G$.
Then the average
degree of vertices in $X$ in $G$ is 
\begin{align*}
\frac{1}{|X|}\sum_{x \in X}d_G(x) &=\tilde{d}(X,\overline{X}) +\frac{2e(G[X])}{|X|}\le \tilde{d}(X,\overline{X}) +4k\rho_1n.
\end{align*} 
Let $Y := B\setminus X$.
By Corollary~\ref{cr:DeltaG}, the average degree of vertices in $Y$ is certainly at most
\begin{equation}\label{DeltaG}
\Delta(G)\le (k-1)cn+42/\sqrt{\alpha} \leq (k-1)cn + \rho_3n.
\end{equation}
The average degree of vertices in $A$ in $G[A]$ is 
$$\frac{1}{|A|}\sum_{a \in A}d_G(a,A) \stackrel{(\ref{editdistance})}{\leq} \frac{1}{|A|}\left(\sum_{a \in A}d_H(a,A)+2\rho_4 n^2\right) \stackrel{(\ref{Aisize})}{\leq} (k-3)(c+\rho_3)n+\rho_3n\le (k-3)cn+k\rho_3n.$$ 
Thus the average degree of vertices of $A$ in $G$ is
$$\frac{1}{|A|}\sum_{a \in A}d_G(a) \leq |B|+(k-3)cn+k\rho_3n\stackrel{(\ref{Aisize})}{\le}(1-(k-2)(c-\rho_3))n+(k-3)cn+k\rho_3n\le (1-c+2k\rho_3)n.$$
Hence, by taking the weighted average of these average degrees to obtain the average degree of $G$, we have
\begin{eqnarray*}
&&2\left((k-1)c-\binom{k}{2}c^2\right) \stackrel{(\ref{eq:c})}{=} \frac{2e}{n^2}\\
& \le& \frac{1}{n^2}\left(\,\left(\,\tilde{d}(X,\overline{X})+4k\rho_1 n\right)|X|  +  ((k-1)cn+\rho_3n)|Y|+(1-c+2k\rho_3)n|A|\,\right)\\
& \stackrel{(\ref{Aisize})}{\le}& \left(\frac{\tilde{d}(X,\overline{X})}{n}+4k\rho_1\right)c  + ((k-1)c+\rho_3)(1-(k-1)c+2\rho_1)+(1-c+2k\rho_3)(k-2)(c+\rho_3)\\
	 &\le&2\left((k-1)c-\binom{k}{2}c^2\right) + c\left(\frac{\tilde{d}(X,\overline{X})}{n}-(1-c)\right) +6k\rho_1.
\end{eqnarray*}
Thus
$$
\tilde{d}(X,\overline{X}) \geq \left((1-c)-\frac{6k\rho_1}{c}\right)n \geq |\overline{X}|-\sqrt{\rho_1}n.
$$
In particular, the number of missing edges in $G$ between $X$ and $Y$ is $e(\overline{G}[X,Y])\leq (c-\rho_1)\sqrt{\rho_1}n^2 \leq \sqrt{\rho_1}n^2$.
This further implies that
\begin{eqnarray*}
e(G[Y]) &\leq& |Y|\cdot \Delta(G) - e(G[A,Y]) - e(G[X,Y])\\
&\stackrel{(\ref{editdistance})}{\leq}& |Y|\Delta(G)-(|A||Y|-\rho_4 n^2)-(|X||Y|-\sqrt{\rho_1}n^2)\\
&\stackrel{(\ref{Aisize}),(\ref{DeltaG})}{\le}& |Y|((k-1)cn+42/\sqrt{\alpha}-(k-2)(c-\rho_3)n-(c-\rho_1)n)+\rho_4 n^2 + \sqrt{\rho_1}n^2\\
&\leq& 2\sqrt{\rho_1}n^2.
\end{eqnarray*}

Let $H' \in \mathcal{H}_1(n)$ be the $n$-vertex complete $k$-partite graph with partition $A_1,\ldots,A_{k-2},X,Y$.
Then
\begin{eqnarray*}
|E(G)\bigtriangleup E(H')| &\leq& |E(G)\bigtriangleup E(H)| + e(G[Y])+e(G[X])+e(\overline{G}[X,Y])\\
&\stackrel{(\ref{editdistance})}{\leq}& (\rho_4 + 2\sqrt{\rho_1} + \rho_1+\sqrt{\rho_1})n^2 < 4\sqrt{\rho_1} n^2.
\end{eqnarray*}
But there is a $1$-to-$1$ mapping of parts of $H'$ to parts of $K^k_{\lfloor cn\rfloor,\ldots,\lfloor cn \rfloor,n-(k-1)\lfloor cn \rfloor}$ such that two corresponding parts have size within $2\rho_1$ of one another. Therefore
$$
\left|E(H')\bigtriangleup E(K^k_{\lfloor cn\rfloor,\ldots,\lfloor cn \rfloor,n-(k-1)\lfloor cn \rfloor})\right| \leq \frac{\rho_0 n^2}{2}.
$$
Then $|E(G)\bigtriangleup E(K^k_{\lfloor cn\rfloor,\ldots,\lfloor cn \rfloor,n-(k-1)\lfloor cn \rfloor})| < \rho_0 n^2$,
a contradiction to our initial assumption on~$G$.
\end{proof}



\medskip
\noindent
We are now able to show that vertices in every $A_i$ have small degree in their own part, and further that for distinct $i,j$, the bipartite graph $G[A_i,A_j]$ is complete.

\begin{claim}\label{cl-A-nice}
	For all $i\in[k-2]$ we have $\Delta(G[A_i])< \rho_2 n$.
	Moreover, $G[A] \supseteq  K[A_1,\ldots,A_{k-2}]$.
\end{claim}

\begin{proof}[Proof of Claim.] 
Suppose on the contrary that for some $i \in [k-2]$ there is an $x\in A_i$ with $d_G(x,A_i) \geq \rho_2 n$. Let $Z := N_G(x,A_i)$ and 
$X:=N_G(x,B)$.
We claim that
\begin{equation}\label{xAAi}
d_{\overline{G}}(x,A\setminus A_i) < 6k\rho_3 n.
\end{equation} 
This is vacuously true if $k=3$. 
So suppose that $k\ge 4$.
We will first show that for any $j \in [k-2]\setminus \lbrace i \rbrace$, we have
\begin{equation}\label{almostmaxcut}
d_G(x,A_j) \geq d_G(x,A_i)-\rho_3 n.
\end{equation}
Indeed, let $H' \in \mathcal{H}_1(n)$ have canonical partition obtained from $A_1,\ldots,A_{k-2},B$ by moving $x$ from $A_i$ to $A_j$.
We have that
\begin{eqnarray*}
0 &\leq& |E(G) \bigtriangleup E(H')| - |E(G) \bigtriangleup E(H)|\\
&\leq & d_G(x,A_j)+|A_i|-d_G(x,A_i) - (d_G(x,A_i)+|A_j|-d_G(x,A_j)) \\
&\stackrel{(\ref{Aisize})}{\leq}& 2(d_G(x,A_j)-d_G(x,A_i)) + 2\rho_3,
\end{eqnarray*}
giving~(\ref{almostmaxcut}).
So $d_{G}(x,A_j)\ge |Z|-\rho_3 n \ge (\rho_2-\rho_3)n\ge \rho_3n$. If $d_{\overline{G}}(x,A\setminus A_i)\ge 6k\rho_3n$, then there exists some $j\in[k-2]\setminus \lbrace i \rbrace$ such that $d_{\overline{G}}(x,A_j)\ge 6\rho_3n$.
Then~(\ref{almostmaxcut}) implies that 
$$
|A_i|-1-d_{\overline{G}}(x,A_i)=d_G(x,A_i)\le  d_{G}(x,A_j)+\rho_3 n =|A_j|-d_{\overline{G}}(x,A_j)+\rho_3n$$
and so
\begin{eqnarray*}
d_{\overline{G}}(x,A_i) &\ge& d_{\overline{G}}(x,A_j)+|A_i|-1-|A_j|-2\rho_3 n\\
&\stackrel{(\ref{Aisize})}{\ge}& 6\rho_3n+(c-\rho_3)n-1-(c+\rho_3)n-2\rho_3n
> \rho_3n.
\end{eqnarray*}
Then Claim~\ref{cl-AT}(ii) implies $d_{G}(x,A_i)<\rho_3n<\rho_2n$, a contradiction.
Thus~(\ref{xAAi}) holds.

We have
$$
\sum_{z \in Z}(d_{\overline{G}}(z,X)+d_{\overline{G}}(z,A\setminus A_i)) = e(\overline{G}[Z,X]) + e(\overline{G}[Z,A\setminus A_i]) \leq |E(G)\bigtriangleup E(H)| \stackrel{(\ref{editdistance})}{\leq} \rho_4 n^2.
$$
Thus, by averaging, there is some $z \in Z$ such that
$$
d_{\overline{G}}(z,X)+d_{\overline{G}}(z,A\setminus A_i) \leq \rho_4 n/\rho_2 \leq \rho_3 n.
$$
Then
\begin{eqnarray*}
(k-2)cn+k&\stackrel{(\ref{2path})}{\ge}& P_3(xz,G)\\
&\ge& |X|+|A\setminus A_i|-(d_{\overline{G}}(z,X)+d_{\overline{G}}(z,A\setminus A_i)) - d_{\overline{G}}(x,A\setminus A_i)\\
&\stackrel{(\ref{Aisize}),(\ref{xAAi})}{\geq}& |X| + (k-3)(c-\rho_3)n -\rho_3n - 6k\rho_3 n \geq |X|+(k-3)cn - 7k\rho_3 n.
\end{eqnarray*}
Consequently, 
\begin{equation}\label{eq-X-up}
|X|\le cn+8k\rho_3n. 
\end{equation}
We now bound $d_G(x)$ and $K_3(x,G)$ as follows. We have
\begin{equation}\label{eq-dx}
	d_G(x)\le |X|+|Z|+|A\setminus A_i|\stackrel{(\ref{Aisize})}{\le} |X|+|Z|+(k-3)cn+k\rho_3n.
\end{equation}
We wish to bound $K_3(x,G)$ from below. Let $Y := N_G(x,A\setminus A_i)$. We will need the following lower bound on $|Y|$:
\begin{equation}\label{eq-d-outofAi}
|Y|=|A\setminus A_i|-d_{\overline{G}}(x,A\setminus A_i)\stackrel{(\ref{Aisize}),(\ref{xAAi})}{\ge} (k-3)cn-7k\rho_3n \geq |A\setminus A_i|-8k\rho_3 n.
\end{equation}
Note also that
\begin{eqnarray*}
K_3(x,G;A\setminus A_i) &=& e(G[Y])\ \geq\ e(G[A\setminus A_i])-(|A\setminus A_i|-|Y|)n\\
&\stackrel{(\ref{editdistance}),(\ref{eq-d-outofAi})}{\geq}& e(H[A\setminus A_i])-\rho_4 n^2 - 8k\rho_3 n^2\\
&\geq& \left(\binom{k-3}{2}(c-\rho_3)^2 - \rho_4 - 8k\rho_3\right)n^2 \ \geq\ \binom{k-3}{2}c^2n^2 - \frac{\sqrt{\rho_3}n^2}{2}.
\end{eqnarray*}
Thus
\begin{eqnarray*}
	K_3(x,G)&\stackrel{(\ref{editdistance})}{\geq}& |X||Y| + |Y||Z| + |Z||X|-\rho_4n^2 + e(G[X]) + K_3(x,G;A\setminus A_i)\nonumber\\
	&\stackrel{(\ref{Aisize}),(\ref{eq-d-outofAi})}{\ge}& |X||Z|+(|X|+|Z|)(k-3)cn + e(G[X]) + \binom{k-3}{2}c^2n^2 - \sqrt{\rho_3}n^2.
\end{eqnarray*}

This together with Lemma~\ref{lem-f} implies that,
\begin{eqnarray}\label{eq-f-ineq}
	-\frac{6n}{\sqrt{\alpha}}&\le& f(x)= (d_G(x)-(k-2)cn)(k-2)cn+{k-2\choose 2}c^2n^2-K_3(x,G)\nonumber\\
	&\stackrel{(\ref{Aisize}),\eqref{eq-dx}}{\le}& (|X|+|Z|-cn+k\rho_3n)(k-2)cn+{k-2\choose 2}c^2n^2\nonumber\\
	&&-\left(|X||Z|+(|X|+|Z|)(k-3)cn+e(G[X]) +{k-3\choose 2}c^2n^2-\sqrt{\rho_3}n^2\right)\nonumber\\
	&\le&(|Z|-cn)(cn-|X|)-e(G[X])+\rho_2n^2.
\end{eqnarray}
Then, by considering two cases where the coefficient $cn-|X|$ of $|Z|$ is negative or non-negative and recalling that $\rho_2n \leq |Z| \leq |A_i|$, we have
\begin{eqnarray*}
e(G[X]) &\stackrel{(\ref{eq-X-up})}{\leq}& \frac{6n}{\sqrt{\alpha}} + \rho_2 n^2 +  \max\left\lbrace (\rho_2n-cn)(-8k\rho_3 n), (|A_i|-cn)cn \right\rbrace\\
 &\stackrel{(\ref{Aisize})}{\leq}& 2\rho_2 n^2 + 8k\rho_3 cn^2\
\leq\ 3\rho_2n^2.
\end{eqnarray*}
Thus, by Claim~\ref{cl-Xlarge}, we have $d_G(x,B) = |X|< (c-\rho_1)n$. Claim~\ref{cl-AT}(i) now implies that
$$
((k-2)c-\rho_0)n > d_G(x,A) = |Z| + |Y| \stackrel{(\ref{eq-d-outofAi})}{\geq} |Z| + (k-3)cn-7k\rho_3n,
$$ 
implying that $|Z|\le cn-\rho_0n/2$.
We look again at~(\ref{eq-f-ineq}) to see that
$$
e(G[X]) \leq \frac{6n}{\sqrt{\alpha}} + \rho_2 n^2 -\frac{\rho_0\rho_1n^2}{2}<0,
$$
a contradiction.
This proves the first part of the claim.

For the second part,
let $x\in A_i$ and $y\in A_j$ with $ij \in \binom{[k-2]}{2}$. Then, using the first part,
\begin{eqnarray*}
P_3(xy,G) &\leq& (n-|A_i|-|A_j|)+\Delta(G[A_i])+\Delta(G[A_j]) \stackrel{(\ref{Aisize})}{\leq} (1-2c+2\rho_3)n + 2\rho_2 n\\
&\stackrel{(\ref{ineq:c})}{<}& (k-2)cn - (\sqrt{2\alpha}-2\rho_3-2\rho_2)n < (k-2)cn - \sqrt{\alpha}n.
\end{eqnarray*}
Then~(\ref{2path}) implies that $xy \in E(G)$.
Since $ij$ was arbitrary, we have shown that $K[A_1,\ldots,A_{k-2}] \subseteq G[A]$, as required.
\end{proof}

\medskip
\noindent
We now prove some useful properties of vertices in $B$.

\begin{claim}\label{cl-B}
For every $y \in B$, the following holds:
\begin{itemize}
\item[(i)] If $d_G(y,B) \leq cn +\rho_2 n$, then $A \subseteq N_G(y)$.
\item[(ii)] If $d_G(y,B) > (c-\rho_1/2)n$, then there exists $i \in [k-2]$ such that $d_{\overline{G}}(y,A\setminus A_i) < k\rho_3 n$.
\end{itemize}
\end{claim}

\begin{proof}[Proof of Claim.]
Let $y \in B$ be arbitrary, and let $Y := N_G(y,B)$.
We will first prove (ii). Note that (ii) is vacuously true when $k=3$, so assume $k\ge 4$. Suppose that $d_G(y,B) > (c-\rho_1/2)n$. Claim~\ref{cl-AT}(iii) implies that
\begin{equation}\label{dGA}
d_G(y,A) > (k-3)cn+\rho_0n.
\end{equation}
Let $i \in [k-2]$ be such that $d_{\overline{G}}(y,A_i) = \max_{j \in [k-2]}d_{\overline{G}}(y,A_j)$.

Let us show that this $i$ satisfies~(ii). Suppose on the contrary that $d_{\overline{G}}(y,A\setminus A_i) \geq k\rho_3 n$.
Then there exists $j \in [k-2]\setminus \{i\}$ such that $\rho_3 n \leq d_{\overline{G}}(y,A_j) \leq d_{\overline{G}}(y,A_i)$.
Claim~\ref{cl-AT}(ii) and~(\ref{Aisize}) imply that $d_G(y,A_i \cup A_j) \leq \rho_3 n + (c+\rho_3)n = (c+2\rho_3)n$.
But then
$$
d_G(y,A) \leq d_G(y,A_i \cup A_j) + |A\setminus(A_i \cup A_j)| \stackrel{(\ref{Aisize})}{\leq} (c+2\rho_3)n + (k-4)(c+\rho_3)n \leq (k-3)cn + \rho_2n,
$$
contradicting~(\ref{dGA}).
Thus $d_{\overline{G}}(y,A\setminus A_i) < k\rho_3 n$.
This completes the proof of (ii).

For (i),
suppose now that $|Y| \leq cn + \rho_2 n$.
First consider the case when additionally $|Y| \leq (c-\rho_1/2) n$.
Let $x \in A$ be arbitrary, and let $i \in [k-2]$ be such that $x \in A_i$.
Then Claim~\ref{cl-A-nice} implies that
$$
P_3(xy,G) \leq \Delta(G[A_i]) + |Y| + |A\setminus A_i| \stackrel{(\ref{Aisize})}{\leq} \rho_2 n + (c-\rho_1/2)n + (k-3)(c+\rho_3)n \leq (k-2)cn - \rho_1 n/3. 
$$
Then~(\ref{2path}) implies that $xy \in E(G)$.
Since $x$ was arbitrary, we have proved that $A \subseteq N_G(y)$.
So (i) holds in this case.

Consider the other case when $(c-\rho_1/2)n < |Y| \leq (c+\rho_2)n$.
Part (ii) implies that there exists $i \in [k-2]$ such that $d_{\overline{G}}(y,A\setminus A_i) < k\rho_3 n$.

Let $Z := N_G(y,A\setminus A_i)$.
Then
\begin{eqnarray}
 \nonumber |Z| &=& |A\setminus A_i| - d_{\overline{G}}(y,A\setminus A_i)\geq (k-3)(c-\rho_3)n - k\rho_3n\\
 &\geq& (k-3)cn - 2k\rho_3 n.\label{Zeq}
\end{eqnarray}
Let also $X := N_G(y,A_i)$.
Note that $d_G(y) \leq |X|+|Y|+|A\setminus A_i| \leq |X|+|Y| + (k-3)(c+\rho_3)n$ by~(\ref{Aisize}).
Then~Lemma~\ref{lem-f} implies that
\begin{align}
\nonumber K_3(y,G) &\leq (d_G(y)-(k-2)cn)(k-2)cn+\binom{k-2}{2}c^2n^2 + \frac{6n}{\sqrt{\alpha}}\\
\label{K3yG}&\leq (|X|+|Y|-cn)(k-2)cn + \binom{k-2}{2}c^2n^2 + \rho_2 n^2.
\end{align}
Recall that every pair among $X,Y,Z$ spans a complete bipartite graph in $H$.
Moreover, (ii) implies that
$$
e(G[Z]) \geq e(G[A\setminus A_i])-d_{\overline{G}}(y,A\setminus A_i)n \geq e(G[A\setminus A_i]) - k\rho_3 n^2.
$$
Thus we can use Claim~\ref{cl-A-nice} to lower bound $K_3(y,G)$:
\begin{eqnarray*}
 K_3(y,G) &\geq& e(G[X,Y]) + e(G[Y,Z]) + e(G[Z,X]) + e(G[Z]) + e(G[Y])\\
&\stackrel{(\ref{editdistance})}{\geq}& |X||Y| + |Y||Z| + |Z||X| - \rho_4n^2 +\sum_{hj \in \binom{[k-2]\setminus \lbrace i \rbrace}{2}}|A_h||A_j| - k\rho_3n^2 + e(G[Y])\\
&\stackrel{(\ref{Aisize}),(\ref{Zeq})}{\geq}& |X||Y| + (k-3)cn(|X|+|Y|) + \binom{k-3}{2}c^2n^2 + e(G[Y]) - \sqrt{\rho_3} n^2.
\end{eqnarray*}
This together with~(\ref{K3yG}) implies that
$$
e(G[Y]) \leq (cn-|X|)(|Y|-cn)+2\rho_2 n^2.
$$
As before, considering the two cases when $cn-|X|$ is positive and non-positive and recalling that $(c-\rho_1/2)n < |Y| \leq (c+\rho_2)n$, we have
\begin{eqnarray*}
e(G[Y]) &\leq& \max\big\lbrace cn\cdot\rho_2 n,\, (|A_i|-cn)\cdot\rho_1 n/2  \big\rbrace + 2\rho_2 n^2\\
 &\stackrel{(\ref{Aisize})}{\leq}& \max \left\lbrace c\rho_2 n^2,\, \rho_1\rho_3 n^2/2 \right\rbrace + 2\rho_2 n^2\ <\ \rho_1 n^2.
\end{eqnarray*}
This is a contradiction to Claim~\ref{cl-Xlarge}.
\end{proof}

\begin{claim}\label{cl-yAi}
For every $i \in [k-2]$ and $y \in B$ with $d_{\overline{G}}(y,A\setminus A_i) \leq \rho_2 n/2$, we have that $A_i \subseteq N_G(y)$.
\end{claim}

\begin{proof}[Proof of Claim.]
Choose $i \in [k-2]$ and $y \in B$ with $d_{\overline{G}}(y,A\setminus A_i) \leq \rho_2 n/2$.
Let $X := N_G(y,A_i)$ and $Y := N_G(B,y)$.
Suppose that there exists $x' \in A_i$ such that $x'y \notin E(G)$.
Then Claim~\ref{cl-B}(i) implies that $|Y| > (c+\rho_2)n$.
Claim~\ref{cl-AT}(iii) implies that $d_G(y,A) > (k-3)cn + \rho_0 n$.
Therefore
$$
|X| \geq d_G(y,A)-|A\setminus A_i| \stackrel{(\ref{Aisize})}{>} (k-3)cn + \rho_0n - (k-3)(c+\rho_3)n \geq \rho_0 n/2.
$$
Furthermore,
$$
\sum_{x \in X}\big(d_{\overline{G}}(x,Y) + d_{\overline{G}}(x,A\setminus A_i)\big) = e(\overline{G}[X,Y]) + e(\overline{G}[X,A\setminus A_i]) \stackrel{(\ref{editdistance})}{\leq} \rho_4 n^2,
$$
so there exists $x \in X$ with
$$
d_{\overline{G}}(x,Y)+d_{\overline{G}}(x,A\setminus A_i) \leq \frac{\rho_4n^2}{|X|} \leq \frac{2\rho_4 n}{\rho_0} < \rho_3 n.
$$
Since $d_{\overline{G}}(y,A\setminus A_i) \leq \rho_2 n/2$, we have that
\begin{eqnarray*}
P_3(xy,G) &\geq& (|A\setminus A_i| + |Y|) - d_{\overline{G}}(x,Y)-d_{\overline{G}}(x,A\setminus A_i) - d_{\overline{G}}(y,A\setminus A_i)\\
&\stackrel{(\ref{Aisize})}{\geq}& (k-3)(c-\rho_3)n + (c+\rho_2)n - \rho_3 n - \rho_2 n/2\ \geq\ (k-2)cn + \rho_2 n/3,
\end{eqnarray*}
a contradiction to~(\ref{2path}).
\end{proof}

\medskip
\noindent
We are now able to show that $G$ consists of the complete $(k-1)$-partite graph with parts $A_1,\ldots,A_{k-2},B$, together with some additional edges in $B$.

\begin{claim}\label{cl-A-nice2}
$G\setminus G[B] \cong K[A_1,\ldots,A_{k-2},B]$.
\end{claim}

\begin{proof}[Proof of Claim.]
We will first show that $G[A,B]$ is a complete bipartite graph.
Let $y \in B$ be arbitrary.
It suffices to show that $A \subseteq N_G(y)$.
By Claim~\ref{cl-yAi}, we may assume that $k \geq 4$.
Let $Y := N_G(y,B)$.
By Claim~\ref{cl-B}(i), we may assume that that $|Y| \geq (c+\rho_2)n$, and Claim~\ref{cl-AT}(iii) implies that $d_G(y,A) \geq (k-3)cn + \rho_0n$.
Claim~\ref{cl-B}(ii) implies that there exists $i \in [k-2]$ such that $d_{\overline{G}}(y,A\setminus A_i) < k\rho_3n < \rho_2 n/2$.
Then, by Claim~\ref{cl-yAi}, we have that $A_i \subseteq N_G(y)$.
Thus, for all $j \in [k-2]$, we have $d_{\overline{G}}(y,A\setminus A_j) \leq d_{\overline{G}}(y,A) = d_{\overline{G}}(y,A\setminus A_i) < \rho_2 n/2$.
But Claim~\ref{cl-yAi} now implies that $A_j \subseteq N_G(y)$ for all $j \in [k-2]$.
Thus $A \subseteq N_G(y)$, proving the first part of the claim.

To complete the proof, it suffices by the second assertion of Claim~\ref{cl-A-nice} to show that $e(G[A_i])=0$ for all $i \in [k-2]$.
So let $i \in [k-2]$ and let $x,z \in A_i$ be distinct.
Claim~\ref{cl-A-nice} implies that $A_j \subseteq N_G(x)\cap N_G(z)$ for all $j \in [k-2]$, and since $G[A,B]$ is complete we also have $B \subseteq N_G(x)\cap N_G(z)$.
Thus
$$
P_3(xz,G) \geq n-|A_i| \stackrel{(\ref{Aisize})}{\geq} n - (c+\rho_3)n \stackrel{(\ref{ineq:c})}{\geq} (k-2)cn + ((k-1)\alpha - \rho_3) n.
$$
So~(\ref{2path}) implies that $xz \notin E(G)$.
This completes the proof of the claim.
\end{proof}

\medskip
\noindent
The rigid structural information provided by the last claim allows us to finish the proof by deriving a contradiction to our assumption that $G$ is far in edit distance from $K^k_{\lfloor cn\rfloor,\ldots,\lfloor cn\rfloor,n-(k-1)\lfloor cn\rfloor}$.

\medskip
\noindent
Suppose first that $k=3$.
Claim~\ref{cl-A-nice2} implies that $G[A,B]$ is complete bipartite and $G[A]$ contains no edges.
Thus $G[B]$ exactly minimises the number of triangles given its size,~i.e. $K_3(G[B]) = g_3(n,e(G[B]))$ (otherwise we could replace $G[B]$ in $G$ to obtain an $(n,e)$-graph with fewer triangles).
Now, $K_3(G[B])>0$, otherwise $G \in \mathcal{H}_1(n,e)$, a contradiction.
Therefore
\begin{equation}\label{eq-eB}
	e(G[B]) > t_2(|B|) \stackrel{(\ref{Aisize})}{\geq} \left\lfloor \frac{(1-(c+\rho_3))^2n^2}{4} \right\rfloor \geq \frac{(1-c)^2n^2}{4} - \rho_2 n^2.
\end{equation}
Recalling the definition of $c$ (i.e.~(\ref{eq:c})) in the case $k=3$ and the fact that $c<1/2$ (i.e.~(\ref{eq:solc2})), we have
$$
e(G[B])=e-|A|\,|B| \leq e - (c-\rho_3)(1-(c+\rho_3))n^2 \leq e - c(1-c)n^2 + \rho_2 n^2 \stackrel{(\ref{eq:c})}{=} c(1-2c)n^2 + \rho_2n^2.
$$
This together with~\eqref{eq-eB} implies that $(3c-1)^2 \leq 8\rho_2$ and so
$$
c < \frac{1}{3} + \rho_0 < \frac{1+\sqrt{2\alpha}}{3},
$$
contradicting~(\ref{eq:solc}).

Therefore we may suppose that $k \geq 4$.
Now, by Claim~\ref{cl-A-nice2}, for each $i \in [k-2]$, we have that $A_i$ is an independent set in $G$ and $G[\,A_i,\overline{A_i}\,]$ is a complete bipartite graph.
Let $n_i := |\,\overline{A_i}\,|$ and $e_i := e(G[\,\overline{A_i}\,]) = e-n_i(n-n_i)$ and $G_i := G[\,\overline{A_i}\,]$.
Then $g_3(n,e) = K_3(G) = K_3(G_i) + (n-n_i)e_i$.
Thus $K_3(G_i)=g_3(n_i,e_i)$.
Recall the definition of the function $k(\cdot,\cdot)$ given in~(\ref{eq:k}).

\begin{claim}\label{cl-Gi}
$t_{k-2}(n_i) + \alpha n_i^2/3 \leq e_i \leq t_{k-1}(n_i) - \alpha n_i^2/3$.
\end{claim}

\begin{proof}[Proof of Claim.]
By~(\ref{Aisize}), $|n_i-(1-c)n| \leq \rho_3 n$. 
We then have
\begin{align*}
\frac{e_i}{n_i^2} - \frac{1}{2}\left(1-\frac{1}{k-2}\right) \ge \frac{(1-kc+c)((kc-1)(k-2)+(1-c))}{2(1-c)^2(k-2)} - \rho_2, 
\end{align*}
 where the first term follows by routine calculations with $n_i$ approximated by $(1-c)n$ while the second term $-\rho_2$ absorbs all
 errors. By~(\ref{ineq:c}), the left-hand side is at least
 $$
 \frac{(k-1)\alpha \cdot (1-c)}{2(1-c)^2(k-2)}-\rho_2>\frac{\alpha}{3}
 $$
 and thus $e_i \geq t_{k-2}(n_i)+\alpha n_i^2/3$.
 The other inequality is similar:
 $$
 \frac{e_i}{n_i^2} - \frac{1}{2}\left(1-\frac{1}{k-1}\right)\le -\frac{(k-2)\cdot(kc-1)^2}{2 (k-1)}+\rho_2\stackrel{(\ref{ineq:c})}{\leq}-\frac{(k-2)\cdot 2\alpha}{k-1} +\rho_2<-\frac{\alpha}2
 $$
 and so $e_i \leq t_{k-1}(n_i) - \alpha n_i^2/3$. 
\end{proof}

\medskip
\noindent
But
$$
n_i = n-|A_i| \stackrel{(\ref{Aisize})}{\geq} (1-c-\rho_3)n \stackrel{(\ref{ineq:c})}{\geq} n/2 \geq n_0/2 \stackrel{(\ref{n0})}{\geq} n_0(k-1,\alpha/3)
$$
and so the minimality of $k$ implies that $G_i \in \mathcal{H}(n_i,e_i)$.
Suppose first that $G_i \in \mathcal{H}_1(n_i,e_i)$.
Since $G$ is an $(n,e)$-graph obtained by adding every edge between the independent set $A_i$ and $V(G_i)$, we have that $G \in \mathcal{H}_1(n,e)$, a contradiction to~\ref{worst-C1}.
Suppose instead that $G_i \in \mathcal{H}_2(n_i,e_i)$.
Then $G_i$ is $(k-1)$-partite and so $G$ is $k$-partite.
Corollary~\ref{H2new}(i) then implies that $G \in \mathcal{H}_2(n,e)$, again contradicting~\ref{worst-C1}.
Thus our original assumption was false, and we have shown that 
$|E(G)\bigtriangleup E(K^k_{\lfloor cn \rfloor,\ldots,\lfloor cn\rfloor,n-(k-1)\lfloor cn\rfloor})| \leq \rho_0 n^2$.
This completes the proof of Lemma~\ref{approx}.
\end{proof}

\subsection{Proof of Lemma~\ref{approx}}
Now we are ready to show that every max-cut partition $A_1,\dots,A_k$ of our worst counterexample $G$ has the required approximate structure.

\begin{proof}[Proof of Lemma~\ref{approx}.]
Choose a max-cut $k$-partition $V(G)=A_1 \cup \ldots \cup A_k$. Assume that $|A_k| \leq |A_i|$ for all $i \in [k-1]$.
Define 
 \begin{eqnarray*}
 Z_i &:=& \lbrace z \in A_i:d_{\overline{G}}(z,\overline{A_i}) \geq \xi n\rbrace\quad \mbox{for $i\in [k]$},\\
 Z &:=& Z_1\cup \dots\cup Z_k.
 \end{eqnarray*}
 We need to show that $G$ has an $(A_1,\ldots,A_k;Z,\beta,\xi,\xi,\delta)$-partition,~i.e.,\ that \Ppartition($G$)--\Pmissing($G$) hold with the appropriate parameters.

Let $p:=k$; $d := \rho_0n^2$ and $\rho := \rho_0$.
Then $p^2 \leq d \leq \rho n^2$ and, using~(\ref{ineq:c}), $2\rho^{1/6} \leq (k-1)\alpha \leq 1 -(k-1)c$.
We can apply Lemma~\ref{baddegclaim} with parameters $d$, $p$ and $\rho$, using the $k$-partition returned by Lemma~\ref{lem-Almost-k-Partite} that has $k-1$ parts of size $\lfloor cn\rfloor$.
Lemma~\ref{baddegclaim} implies that \Ppartition($G$) holds for $(A_1,\dots,A_k)$ with parameter $2k^2\sqrt{d}/n \leq 2k^2\sqrt{\rho_0}$ and hence with parameter $\beta$.

For \Pcomplete($G$), let $ij \in \binom{[k-1]}{2}$ and let $x \in A_i$ and $y \in A_j$.
Then Lemma~\ref{baddegclaim}(iv) implies that
\begin{eqnarray*}
P_3(xy,G) &\leq& n - |A_i|-|A_j| + d_G(x,A_i)+d_G(y,A_j) \stackrel{\Ppartition(G)}{\leq} n - 2(c-\beta)n + 2\rho_0^{1/5} n\\
&\stackrel{(\ref{ineq:c})}{\leq}& (k-2)cn - (\sqrt{2\alpha}-2\beta-2\rho_0^{1/5})n < (k-2)cn - \sqrt{\alpha}n.
\end{eqnarray*}
Thus~(\ref{2path}) implies that $xy \in E(G)$.
So \Pcomplete($G$) holds.
Lemma~\ref{baddegclaim}(ii) implies that
\begin{equation}\label{6.2m}
m = \sum_{ij \in \binom{[k]}{2}}e(\overline{G}[A_i,A_j]) \leq 3k^2\sqrt{\rho_0}n^2 < \eta n^2.
\end{equation}

For \Pbadedges($G$), note that $|Z|\le 2m/(\xi n)\le 2\eta n/\xi\le \delta n$.
Furthermore, Lemma~\ref{baddegclaim}(iii) implies that for every $i \in [k]$ and $e \in E(G[A_i])$, there is at least one endpoint $x$ of $e$ with $$
d_{\overline{G}}(x,\overline{A_i}) \geq \frac{1}{2}\left( n-(k-1)cn - 3k^2\sqrt{\rho_0}n\right) \stackrel{(\ref{ineq:c})}{\geq} \frac{(k-1)\alpha n}{3} > \xi n.
$$
Thus $x \in Z$.
The final part of \Pbadedges($G$) follows from Lemma~\ref{baddegclaim}(iv) and the fact that $\rho_0 \ll \delta$.

We now prove \PZk($G$).
Let $z \in Z \cap A_k$ be arbitrary.
By the definition of $Z$, there is some $i \in [k-1]$ such that $d_{\overline{G}}(z,A_i) \geq \xi n/k$.
Let $j \in [k-1]\setminus \lbrace i \rbrace$ and $y \in A_j$ be arbitrary.
We have
\begin{eqnarray*}
P_3(zy,G) &\leq& d_G(y,A_j) + d_G(z,A_k) + d_G(z,A_i) + (n-|A_i|-|A_j|-|A_k|)\\
&\stackrel{\Ppartition(G),\Pbadedges(G)}{\leq}& 2\delta n +  (c+\beta)n - \xi n/k + ((k-3)c+3\beta)n \ \leq\ (k-2)cn - \xi n/(2k).
\end{eqnarray*}
Thus~(\ref{2path}) implies that $xy \in E(G)$.
This proves \PZk($G$).

The property \Pmissing($G$) holds immediately from the definition of $Z$.
The bound on $m$ claimed in the lemma was established in~(\ref{6.2m}).
Finally, Lemma~\ref{baddegclaim}(v) implies that $h \leq k\rho_0^{1/30}m \leq \delta m$.
\end{proof}

\subsection{Applying Lemma~\ref{approx}}\label{applying}

Let $G$ be a worst counterexample, that is, $G$ satisfies~\ref{worst-C1}--\ref{worst-C3}. Let
$A_1,\ldots, A_k$ be a max-cut partition of $G$ satisfying~\ref{worst-C3}. Assume that
$|A_k|=\min_{i\in[k]}|A_i|$. Until the end of Section~\ref{int3}, we fix the $(A_1,\ldots,A_k;Z,\beta,\xi,\xi,\delta)$-partition of $G$ obtained from applying Lemma~\ref{approx} to $G$ and $A_1,\ldots, A_k$ using the parameters in~(\ref{hierarchy}). Let $\underline{m} = (m_1,\ldots,m_{k-1})$ be the missing vector of this partition and let
\begin{equation}\label{m}
m := m_1 + \ldots + m_{k-1} \leq \eta n^2.
\end{equation}
By permuting $A_1,\ldots,A_{k-1}$ if necessary, we may assume that $m_{k-1}=\max_{i\in[k-1]}m_i$.
(This assumption will not be used until the proof of Lemma~\ref{superlinear1}.)
Further,
\begin{equation}\label{h}
	h := \sum_{i \in [k]}e(G[A_i]) \leq \delta m.
\end{equation}

Define
\begin{equation}\label{t}
t := \frac{m}{(kc-1)n} \stackrel{(\ref{ineq:c})}{\geq}\frac{m}{cn}.\quad\text{Then}\quad t^2 \stackrel{(\ref{ineq:c})}{\leq} \frac{m^2}{2\alpha n^2} \stackrel{(\ref{m})}{\leq} \frac{\eta m}{2\alpha} \stackrel{(\ref{hierarchy})}{\leq} \sqrt{\eta}m.
\end{equation}
Since \Pmissing($G$) holds with both $\gamma_1$ and $\gamma_2$ set to the same value $\xi$, this uniquely determines the set $Z$ as
\begin{equation}\label{Zdef}
Z = \bigcup_{i \in [k]}\left\lbrace z \in A_i: d_{\overline{G}}(z,\overline{A_i}) \geq \xi n \right\rbrace.
\end{equation}
For all $i \in [k]$, let
\begin{equation}\label{Z}
Z_i := A_i \cap Z \quad \text{and} \quad R_i := A_i \setminus Z.
\end{equation}

By \Pbadedges($G$), $R_i$ is an independent set for all $i \in [k]$.
By \Pcomplete($G$) and \Pmissing($G$), for each $i \in [k-1]$, every $z \in Z_i$ has $d_{\overline{G}}(z,A_k) \geq \xi n$.
Notice that, by \PZk($G$), the set $Z_k$ has a partition $Z_k^1 \cup \ldots \cup Z_k^{k-1}$ such that, for all $ij \in \binom{[k-1]}{2}$ we have that $G[Z_k^i,A_j]$ is complete. In particular, each vertex in $Z_k^i$ sends
at least $\xi n$ missing edges to $A_i$. Thus we have for all $i \in [k-1]$
\begin{equation}\label{Zsize}
	|Z_i \cup Z_k^i| \leq \frac{2m_i}{\xi n}\quad\text{and}\quad |Z| \leq \frac{2(m_1+\dots+m_{k-1})}{\xi n} = \frac{2m}{\xi n} \stackrel{(\ref{m})}{\leq} \sqrt{\eta}n.
\end{equation}

For each $i \in [k-1]$, let
\begin{equation}\label{XY}
Y_i := \lbrace y \in Z_k^i : d_{G}(y,A_i) \leq \gamma n \rbrace, \quad Y := \bigcup_{i \in [k-1]}Y_i, \quad X_i := Z_k^i \setminus Y_i, \text{ and } X := \bigcup_{i \in [k-1]}X_i.
\end{equation}
See Figure~\ref{partition} for an illustration.

\begin{center}
\begin{figure}
\includegraphics[scale=0.7]{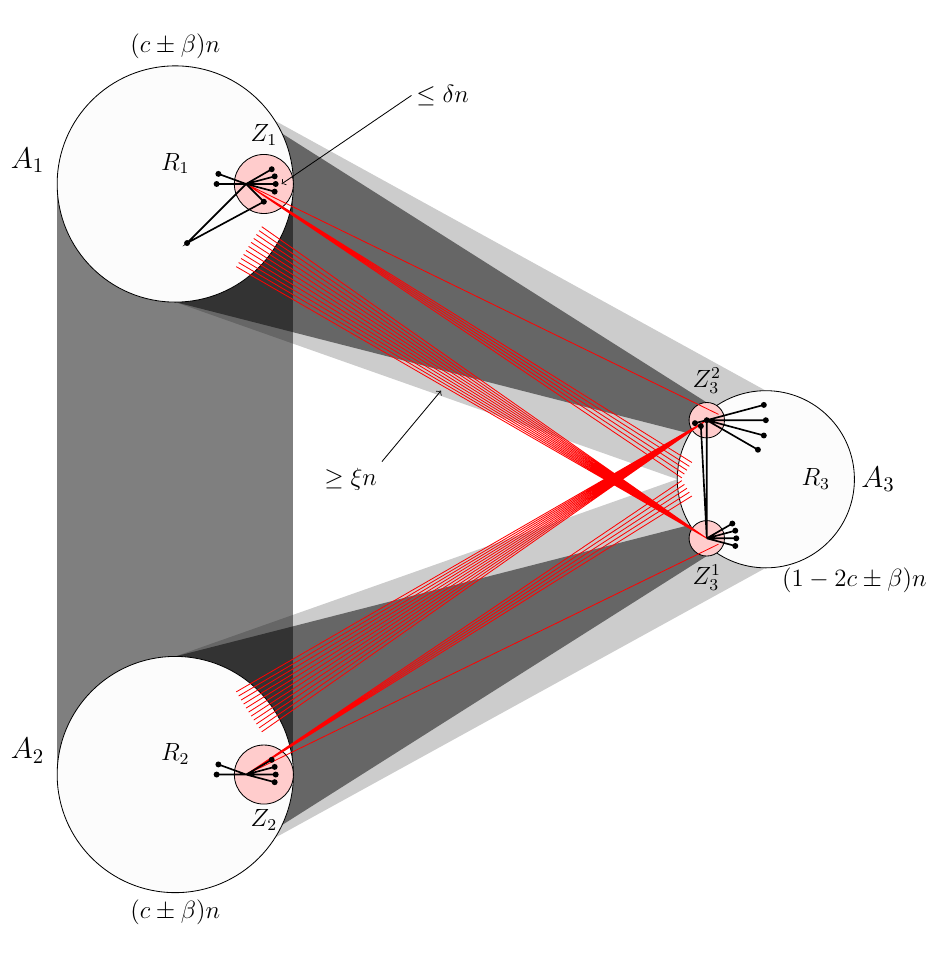}
\caption{An $(A_1,A_2,A_3;Z,\beta,\xi,\xi,\delta)$-partition of $G$ (here $k=3$).
Here and in the other figures, dark grey represents a complete bipartite pair, and light grey represents an `almost complete' bipartite pair, in which each vertex has small missing degree. The red edges are missing edges, and $Z$ is also coloured (light) red.
}
\label{partition}
\end{figure}
\end{center}

In the proof, we will perform various transformations on $G$ which will mainly involve changing adjacencies at vertices in $Y$ and $X$. It turns out that vertices in $X$ are much harder to deal with than those in $Y$, and much of the proof is devoted to these troublesome vertices.

\medskip
We need a simple proposition before we start with the first main ingredient of the proof in Section~\ref{sec:trans}.

\begin{proposition}\label{Gprops}
The following hold in $G$:
\begin{itemize}
\item[(i)] Suppose that $xy \in E(G[A_k])$ and $x \in R_k$. Then $y \in Y$.
\item[(ii)] For all $ij \in \binom{[k-1]}{2}$ we have that $G[Y_i,Y_j]$ is complete.
\end{itemize}
\end{proposition}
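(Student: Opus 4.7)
The plan is to derive both parts by combining the structural properties from Lemma~\ref{approx} (in particular \Pbadedges($G$), \Pcomplete($G$), \PZk($G$) and \Pmissing($G$)) with the degree/codegree bounds for worst counterexamples given in~(\ref{2path}). Throughout I will use that $\delta\ll\beta\ll\xi\ll\gamma\ll\alpha\ll 1/k$ and $c\ge 1/k$.

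For part (i), fix $xy\in E(G[A_k])$ with $x\in R_k=A_k\setminus Z$. By \Pbadedges($G$), every bad edge is incident to $Z$, so $y\in Z\cap A_k=Z_k$. By \PZk($G$), there is a unique $i\in[k-1]$ with $y\in Z_k^i$, and $G[Z_k^i,A_j]$ is complete for every $j\in[k-1]\setminus\{i\}$; in particular, $A_j\subseteq N_G(y)$ for such $j$. Since $x\in R_k$, the definition of $R_k$ in~(\ref{Zdef})--(\ref{Z}) gives $d_{\overline G}(x,\overline{A_k})<\xi n$. Now count common neighbours of $x$ and $y$ restricted to $A_1,\dots,A_{k-1}$: for $j\ne i$ the contribution is $d_G(x,A_j)=|A_j|-d_{\overline G}(x,A_j)$, while for $j=i$ it is at least $d_G(y,A_i)-d_{\overline G}(x,A_i)$. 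Summing and using \Ppartition($G$) yields
\[
P_3(xy,G)\ \ge\ \Big(\sum_{j=1}^{k-1}|A_j|-|A_i|\Big)-d_{\overline G}(x,\overline{A_k})+d_G(y,A_i)\ \ge\ (k-2)cn-k\beta n-\xi n+d_G(y,A_i).
\]
Combining with the upper bound $P_3(xy,G)\le (k-2)cn+k$ from~(\ref{2path}) gives $d_G(y,A_i)\le k+k\beta n+\xi n\le 2\xi n\le \gamma n$, so $y\in Y_i\subseteq Y$, as required.

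For part (ii), let $ij\in\binom{[k-1]}{2}$, $x\in Y_i$, $y\in Y_j$, and suppose for contradiction that $xy\notin E(G)$. Both $x,y\in A_k$, and by \PZk($G$) we have $A_\ell\subseteq N_G(x)\cap N_G(y)$ for every $\ell\in[k-1]\setminus\{i,j\}$. For $\ell=i$ the set $A_i$ lies in $N_G(y)$ (as $y\in Z_k^j$) so the contribution is at most $d_G(x,A_i)\le\gamma n$ by the definition of $Y_i$; symmetrically for $\ell=j$ the contribution is at most $\gamma n$. The contribution from $A_k$ is at most $\Delta(G[A_k])\le\delta n$ by \Pbadedges($G$). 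Hence, using \Ppartition($G$),
\[
P_3(xy,G)\ \le\ (k-3)(c+\beta)n+2\gamma n+\delta n.
\]
Comparing this with the lower bound $P_3(xy,G)\ge (k-2)cn-k$ from~(\ref{2path}) rearranges to $cn\le k\beta n+2\gamma n+\delta n+k$, contradicting $c\ge 1/k$ since $k\beta+2\gamma+\delta\ll 1/k$ by the hierarchy~(\ref{hierarchy}). Therefore $xy\in E(G)$ and $G[Y_i,Y_j]$ is complete.

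I do not foresee any serious obstacle: both statements follow from direct codegree counts once the right subset of $A_1,\dots,A_k$ is used in the lower/upper bound, so the proof is essentially bookkeeping with the parameters.
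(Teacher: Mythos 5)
Your proof is correct and follows essentially the same route as the paper: part (i) is the same codegree count $P_3(xy,G)\ge\sum_{j\ne i}|A_j|+d_G(y,A_i)-\xi n$ combined with the upper bound in~(\ref{2path}), and part (ii) is the paper's upper codegree count compared against the lower bound in~(\ref{2path}), merely phrased as a contradiction rather than directly. No gaps.
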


\begin{proof}
For (i), first note that  $d_{\overline{G}}(x,\overline{A_k}) < \xi n$ by \Pmissing$(G)$, since $x$ is in 
$R_k=A_k\setminus Z$.
Next,
\Pbadedges$(G)$ implies that $y \in Z_k$.  
By \PZk$(G)$ there is $i \in [k-1]$ such that $y \in Z_k^i$.
Using~(\ref{2path}) and that $G[Z_k^i,A_j]$ is complete for every $j\in[k-1]\setminus\{i\}$, we have that
\begin{eqnarray*}
(k-2)cn + \error \geq P_3(xy,G) &\stackrel{\Ppartition(G),\Pmissing(G)}{\geq}& \sum_{j \in [k-1]\setminus\{i\}}|A_j| + d_G(y,A_i) - \xi n\\
&\stackrel{\Ppartition(G)}{\geq}& (k-2)(c-\beta)n + d_G(y,A_i) - \xi n
\end{eqnarray*}
and so $d_G(y,A_i) \leq (k\beta + \xi)n < \gamma n$. Thus $y \in Y$.

To prove (ii), let $y \in Y_i$ and $x \in Y_j$.
Then
\begin{eqnarray*}
P_3(xy,G) &\leq& \sum_{\stackrel{t \in [k-1]}{t \neq i,j}}|A_t| + d_G(y,A_i) + d_G(x,A_j) + \max_{z \in Y}d_G(z,A_k)\\  &\stackrel{\Ppartition,\Pbadedges(G)}{\leq}& (k-3)(c+\beta)n  + 2\gamma n + \delta n
\ \leq\ (k-2)cn - cn/2. 
\end{eqnarray*}
Thus~(\ref{2path}) implies that $xy \in E(G)$.
\end{proof}

\section{The intermediate case: transformations}\label{sec:trans}

The aim of this section is to prove the following lemma, which enables us to find a $k$-partite $(n,e)$-graph $G'$ which inherits many of the useful properties of $G$ but does not contain many more triangles than $G$ (see Figure~\ref{partition6} for an illustration of $G'$).
Let
\begin{equation}\label{Cvalue}
C := \frac{1}{\sqrt{\delta}}.
\end{equation}

\begin{lemma}\label{maintrans}
	Suppose that $m \geq Cn$.
	Then there exists an $(n,e)$-graph $G'$ with $V(G')=V(G)$ which has the following properties.
	\begin{itemize}
		\item[(i)] For all $i \in [k-1]$ there exists $U_i \subseteq X_i$ such that, letting $A_i'' := A_i \cup Y_i \cup U_i$ and $A_k'' := V(G)\setminus \bigcup_{i \in [k-1]}A_i''$, the graph $G'$ is $k$-partite with partition $A_1'',\ldots,A_k''$, and further has an $(A_1'',\ldots,A_k'';3\beta)$-partition.
		\item[(ii)] The missing vector $\underline{m}' := (m_1',\ldots,m_{k-1}')$ of $G'$ with respect to this partition satisfies $\alpha^2 m_i-2\sqrt{\delta}m \leq m_i' \leq 2m_i + 2\sqrt{\delta}m$ for all $i \in [k-1]$.
		\item[(iii)] $K_3(G') \leq K_3(G) + \delta^{1/4}m^2/(2n)$.
	\end{itemize}
\end{lemma}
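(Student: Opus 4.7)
The plan is to construct $G'$ from $G$ via a sequence of local edge-swaps, performed only at vertices of $Z$ (so that the global structure of $G$ outside a small set is preserved). At each step we must simultaneously simplify the structure and control both the change in the number of triangles and the change in the missing vector. The key quantitative input is~\eqref{2path}: every edge of $G$ lies in at most $(k-2)cn+k$ triangles and every non-edge lies in at least $(k-2)cn-k$ cherries. Hence each swap of a ``bad'' or inter-part edge for a missing edge changes $K_3$ by at most an additive $2k$, and the total contribution of $O(m)$ swaps is at most $O(km)$, which is far smaller than the budget $\delta^{1/4}m^2/(2n)$ provided $m\ge Cn=n/\sqrt\delta$.

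Starting from $G_0:=G$, the first transformation processes the vertices $z\in Z_i$ for $i\in[k-1]$: using $\Pbadedges(G)$ we have $d_G(z,A_i)\le\delta n$, and by $\Pmissing(G)$ each such $z$ has at least $\xi n$ missing edges to $\overline{A_i}$, so we may delete every edge inside $A_i$ and reinsert the same number of missing edges (distributed so that no single part of $\overline{A_i}$ is overloaded). Since the bad edges and the inserted edges each carry $(k-2)cn\pm k$ triangles, each individual swap changes $K_3$ by at most $2k+ O(|Z|)$ (the error coming from swaps that share endpoints, bounded using~\eqref{Zsize}). The second transformation moves each $y\in Y_i$ from $A_k$ to $A_i$: by the definition of $Y_i$ in~\eqref{XY} and $\PZk(G)$, such $y$ has at most $\gamma n$ neighbours in $A_i$ but is essentially complete to $\bigcup_{j\ne i,k} A_j$, so after deleting the $\le\gamma n$ edges $yA_i$ and adding missing edges from $y$ to $A_k$, the vertex $y$ behaves as a new vertex of $A_i$ and the resulting graph is $k$-partite on the updated parts once we have also dealt with $X$.

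Transformations~3--6 are the main obstacle, because each $x\in X_i$ has $d_G(x,A_i)\ge\gamma n$; we cannot simply cut and rewire as in the $Y$ case without paying too many triangles. The idea is to partition $X_i$ according to the balance between $d_G(x,A_i)$ and the number of missing edges of $x$ in $\overline{A_i}$: we put $x$ into $U_i\subseteq X_i$ (i.e.\ move it to $A_i$) precisely when the associated swap (delete edges $xA_i$, add missing edges $xA_k$) does not increase $K_3$ by more than the allotted per-vertex budget, exploiting~\eqref{2path} to compare the two. For the remaining vertices of $X$ (which stay in $A_k''$), we instead rewire inside $A_k$ to kill the bad edges between them, using Proposition~\ref{Gprops}(i) which forces the other endpoint of any $R_k$--$A_k$ bad edge to lie in $Y$, hence already absorbed by Transformation~2. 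Accounting carefully via $|X|,|Y|\le 2m/(\xi n)$ (by~\eqref{Zsize}) shows that each transformation contributes at most $\sqrt{\delta}\,m$ to $\sum_i|m_i''-m_i|$, yielding the two-sided bound in~(ii).

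Finally, after all six steps the resulting graph $G'$ is $k$-partite with parts $A_i''=A_i\cup Y_i\cup U_i$ for $i\in[k-1]$ and $A_k''$ the complement; property~(i) follows since the size changes at each part are bounded by $|Y_i|+|X_i|\le 2m_i/(\xi n)\le 2\eta n/\xi\le\beta n$, so $\Ppartition$ still holds with parameter $3\beta$, and $\Pcomplete$ is restored by construction. Summing the triangle changes across the six transformations, each being $O(km)=O(m)$ for an individual swap and $O(k|Z|m/n)=O(\sqrt\eta\,m\cdot m/n)$ in aggregate, we obtain $K_3(G')-K_3(G)\le \delta^{1/4}m^2/(2n)$ by~\eqref{hierarchy}. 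The delicate part throughout is that the triangle-count bookkeeping must be done step-by-step, since later transformations see a partition that has drifted from the original $(A_1,\dots,A_k)$; this is what makes the detailed lemma sequence unavoidable.
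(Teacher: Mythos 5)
The central quantitative claim in your first paragraph is wrong, and it is wrong in exactly the place where the whole difficulty of the lemma lives. Inequality~\eqref{2path} gives an \emph{upper} bound on $P_3(e,G)$ for edges $e$ and a \emph{lower} bound on $P_3(f,G)$ for non-edges $f$. When you delete an edge $e$ and insert a non-edge $f$, the change in the triangle count is $P_3(f,\cdot)-P_3(e,G)$, and to bound the \emph{increase} you need an upper bound on $P_3(f,\cdot)$ and a lower bound on $P_3(e,G)$ --- precisely the two directions that~\eqref{2path} does not supply. What~\eqref{2path} actually yields is that each swap \emph{decreases} $K_3$ by at most $2k+O(1)$, which is useless here. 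For a generic missing edge $f$ the quantity $P_3(f,\cdot)$ can be as large as $n-2$, so without further argument a single swap could create $\Omega(n)$ new triangles, and the $\Theta(m)$ swaps in Transformations 1--3 could then cost on the order of $mn$, vastly exceeding the budget $\delta^{1/4}m^2/(2n)$ (recall $m\le\eta n^2$).

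The paper's actual mechanism is different and cannot be recovered from~\eqref{2path} alone: the new edges are placed at carefully chosen endpoints $x\in R_k'$, vertices of $R_k$ with $d_G(x,Z_k)\le\Delta\le\delta^{1/3}m/n$ that are non-adjacent to all of $R_k$, so that $P_3(xz,G^{j+1})\le a_s+\Delta$; meanwhile the deleted bad edge $yz$ inside $A_s$ satisfies $P_3(yz,G^{j})\ge a_s+|Z_k\setminus Z_k^{s}|+P_3(yz,G^j;R_k)$ because both endpoints are complete to every other large part. The $a_s$ terms cancel and the net cost per deleted bad edge is at most $\Delta$, giving a total of at most $\delta m\cdot\delta^{1/3}m/n$ for Transformation~1; analogous pointwise comparisons (using the reservoir $Q_i$ of well-behaved $[R_i,R_k]$-edges, the symmetrisation of the $X_i$-$A_i'$ degrees, and the $U_i/W_i$ split) are needed for Transformations 2--6. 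A secondary error: your claim that each transformation perturbs each $m_i$ by at most an additive $\sqrt{\delta}m$ is also false. Reassigning a vertex $y\in Y_i$ to $A_i$ changes its missing degree from about $|A_i|-d_G(y,A_i)$ to about $|A_k|-d_G(y,A_k)$, i.e.\ by roughly $(kc-1)n$, which over all of $Y_i$ is a constant fraction of $m_i$; this is exactly why conclusion~(ii) carries the multiplicative constants $\alpha^2$ and $2$ rather than a purely additive error.
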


It is important to note that we do \emph{not} assume $m \geq Cn$ in any of the lemmas which precede the proof of Lemma~\ref{maintrans} in Section~\ref{maintransproof}.
Indeed, we will require some of these lemmas in both cases $m \geq Cn$ and $m < Cn$.

We will obtain a sequence of $(n,e)$-graphs $G =: G_0, G_1,\ldots,G_6 =: G'$ via a series of transformations such that Transformation $i$ is applied to $G_{i-1}$ to obtain $G_i$ and it preserves the number of edges and vertices: $e(G_{i-1})=e(G_{i})$.
For each $i$, $G_i$ has at most as many bad edges as $G_{i-1}$, and $K_3(G_i)$ is not much larger than $K_3(G_{i-1})$.
The final graph $G'$ is required to have a special partition and a missing vector with the property that each entry is within a constant multiplicative factor of the corresponding entry in $G$.
So each $G_i$ must also have these properties.

Transformation $i$ for $i \in \lbrace 1,2,3\rbrace$ consists of a `local' transformation applied to each of a given set of vertices $U$ in turn, producing graphs $G_{i-1} =: G_{i-1}^0, G_{i-1}^1,\ldots,G_{i-1}^{|U|} =: G_i$.
We first derive some fairly precise properties of the graph $G_{i-1}^j$, and then after that we derive the required less precise properties of the graph $G_i$ obtained after the final step.
The reason for this is that a single step (i.e.,~obtaining $G_{i-1}^1$ only) is also needed at a later stage in the proof to derive a contradiction.

For all $i \in [k-1]$, we will let
\begin{equation}\label{aidef}
a_i := \sum_{j \in [k-1]\setminus\lbrace i \rbrace}|A_j| = n - |A_i| - |A_k|.
\end{equation}

\subsection{Vertices with small missing degree}\label{smalldeg}

In the sequence of transformations described, we will often want to `fill in' some missing edges, and thus we must remove some edges from another part of the graph to compensate.
It will be useful if we have a fairly large stockpile of such edges which somehow exhibit average behaviour, and this property is preserved even after removing many of these well-behaved edges.
For this reason we define $Q_1,\ldots,Q_{k-1}$ and $R_k' \subseteq R_k$ below.

\begin{proposition}\label{Qi}
Let $A_i,R_i,m_i$ for $i \in [k]$ and $Z$ be as in Section~\ref{applying}.
	Let $J$ be an $n$-vertex graph with an $(A_1,\ldots,A_k;Z,2\beta,\xi/4,2\xi,\delta)$-partition and missing vector $\underline{m}^* = (m^*_1,\ldots,m^*_{k-1})$ where $m_i^* \leq m_i$ for all $i \in [k-1]$.
	Then, for all $i \in [k-1]$ there exists $Q_i \subseteq J[R_i,R_k]$ such that
	$Q_i$ is a collection of $2\delta n$ edge-disjoint stars, each with a distinct centre in $A_k$ and with $\delta n$ leaves; and the centre of each star has missing degree at most $2\sqrt{\eta}n$.
	(In particular, for all $e\in Q_i$, we have $P_3(e,J)\ge\sum_{j\in[k-1]\setminus\{i\}}|A_j|-2\sqrt{\eta}n$.)
\end{proposition}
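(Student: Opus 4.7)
The plan is to build the stars greedily, and the two things to check are that many vertices of $A_k$ have both small missing degree and large neighbourhood in $R_i$, and that we can afford to keep picking $2\delta n$ such centres. Edge-disjointness will come for free because stars with distinct centres cannot share an edge.

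First I would isolate the set
\[
R_k' := \bigl\{v \in R_k : d^m_J(v) \leq 2\sqrt{\eta}\,n\bigr\}.
\]
Since $\sum_{v \in V(J)} d^m_J(v) = 2m^* \leq 2m \leq 2\eta n^2$ by (\ref{m}) and the hypothesis $m_i^* \leq m_i$, Markov's inequality gives that at most $\sqrt{\eta}\,n$ vertices have missing degree exceeding $2\sqrt{\eta}\,n$, so $|R_k'| \geq |R_k| - \sqrt{\eta}\,n$. Now from \Ppartition($J$) with parameter $2\beta$ and (\ref{ineq:c}) we have $|A_k| \geq (1-(k-1)c)n - 2\beta n \geq \sqrt{\alpha}\,n$, while \Pbadedges($J$) gives $|Z| \leq \delta n$, so $|R_k'| \geq \sqrt{\alpha}\,n - \delta n - \sqrt{\eta}\,n$, which is comfortably larger than $2\delta n$ by the hierarchy (\ref{hierarchy}).

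Next, for a fixed $i \in [k-1]$ and any $v \in R_k'$, I would lower-bound $d_J(v,R_i)$. Using \Ppartition($J$) we have $|A_i| \geq (c - 2\beta)n$, hence $|R_i| \geq (c - 2\beta)n - \delta n$. Then
\[
d_J(v,R_i) \geq |R_i| - d^m_J(v) \geq (c - 2\beta)n - \delta n - 2\sqrt{\eta}\,n \geq \delta n,
\]
so each candidate centre admits at least $\delta n$ potential leaves in $R_i$. Greedily pick any $2\delta n$ distinct vertices $v_1,\ldots,v_{2\delta n} \in R_k'$ and, for each $v_j$, any $\delta n$ neighbours in $R_i$; let $Q_i$ be the union of the resulting stars. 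Distinctness of centres forces edge-disjointness.

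For the final claim, fix an edge $uv \in Q_i$ with centre $v \in R_k'$ and leaf $u \in R_i$. By \Pcomplete($J$), $u$ is adjacent in $J$ to every vertex of $\bigcup_{j \in [k-1]\setminus\{i\}} A_j$, so
\[
P_3(uv,J) \geq \Bigl|\bigcup_{j \in [k-1]\setminus\{i\}} A_j \cap N_J(v)\Bigr| \geq \sum_{j \in [k-1]\setminus\{i\}} |A_j| - d^m_J(v) \geq \sum_{j \in [k-1]\setminus\{i\}} |A_j| - 2\sqrt{\eta}\,n,
\]
as required. The only mild technical point is the comparison $\sqrt{\eta}\,n \ll \sqrt{\alpha}\,n$, which is immediate from (\ref{hierarchy}); there is no real obstacle here, so this is essentially a bookkeeping argument.
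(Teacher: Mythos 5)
Your construction is essentially the paper's: isolate the vertices of $R_k$ with missing degree at most $2\sqrt{\eta}n$ via a Markov/averaging bound on $\sum_i m_i^*$, check that each such vertex has at least $\delta n$ neighbours in $R_i$, and pick the stars greedily, with edge-disjointness coming from the distinct centres. Two small remarks. First, the step $|A_k|\ge(1-(k-1)c)n-2\beta n\ge\sqrt{\alpha}\,n$ is not available: (\ref{ineq:c}) only yields $1-(k-1)c\ge(k-1)\alpha$, so the correct lower bound on $|A_k|$ is of order $\alpha n$; this is still far larger than $2\delta n+\sqrt{\eta}\,n$ by (\ref{hierarchy}), so nothing breaks. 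Second, the paper additionally chooses the $k-1$ sets of centres to be pairwise disjoint across $i\in[k-1]$ (which your count $|R_k'|=\Omega(\alpha n)\gg 2(k-1)\delta n$ also permits); this extra property is not in the literal statement but is what underlies the bound $\Delta(\bigcup_{i}Q_i)\le 2\delta n$ invoked later in the proof of Lemma~\ref{subYiedges}, so it is worth building into the construction.
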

\begin{proof}
	Let $R_k^*\subseteq R_k$ consist of vertices with missing degree at least $2\sqrt{\eta}n$ in $J$. Then 
	$$|R_k^*|\le \frac{\sum_{i\in[k-1]}m_i^*}{2\sqrt{\eta}n}\le \frac{m}{2\sqrt{\eta}n}\stackrel{(\ref{m})}{\le}\frac{\sqrt{\eta}n}{2}.$$
	By \Ppartition,\Pbadedges$(J)$, we have that $|R_i|\ge (c-2\beta) n-|Z| \geq (c-3\beta)n$ for every $i\in[k-1]$ and $|R_k\setminus R_k^*|\ge (1-(k-1)c-4\beta)n \geq 2\delta n\cdot (k-1)$. Thus, each $Q_i$ can be chosen by picking a distinct set of $2\delta n$ vertices in $R_k\setminus R_k^*$ along with $\delta n$ of each one's $R_i$-neighbours (of which there are at least $(c-\beta-2\xi)n$ by \Ppartition,\Pbadedges($J$)).
\end{proof}

Let $R_k' \subseteq R_k$ be such that $|R_k'| = |R_k|-\xi n/2$ and $d_{G}(x',Z_k) \leq d_{G}(x,Z_k)$ for all $x' \in R_k'$ and $x \in R_k\setminus R_k'$.
Let also
\begin{equation}\label{Delta}
\Delta := \max_{x \in R_k'}d_G(x,Z_k) = \max_{x \in R_k'}d_G(x,A_k)
\end{equation}
where the second inequality follows from \Pbadedges($G$).
By \Pbadedges($G$) and~(\ref{h}),
$$
2\delta m \geq 2e(G[A_k]) \geq \sum_{x \in R_k\setminus R_k'}d_G(x,A_k) \geq (|R_k|-|R_k'|)\Delta = \frac{\xi n}{2} \cdot \Delta.
$$
Therefore every $x \in R_k'$ is such that
\begin{equation}\label{Rk'}
d_G(x,A_k) \leq \Delta \leq \frac{4\delta m}{\xi n} \leq \frac{\delta^{1/3}m}{n}.
\end{equation}

\subsection{Transformation $1$: removing bad edges in $A_1,\ldots,A_{k-1}$}

Our first goal is to obtain a graph $G_1$ from $G$ which has the property that $G_1[A_i]$ is independent for all $i \in [k-1]$ and $G_1$ does not contain many more triangles than $G$.
The following lemma concerns the local transformation of removing all bad edges incident to a single $z \in Z\setminus Z_k$ and replacing them with certain missing edges incident to $z$ (see the left-hand image in Figure~\ref{partition1}).

\begin{center}
\begin{figure}
\includegraphics[scale=1]{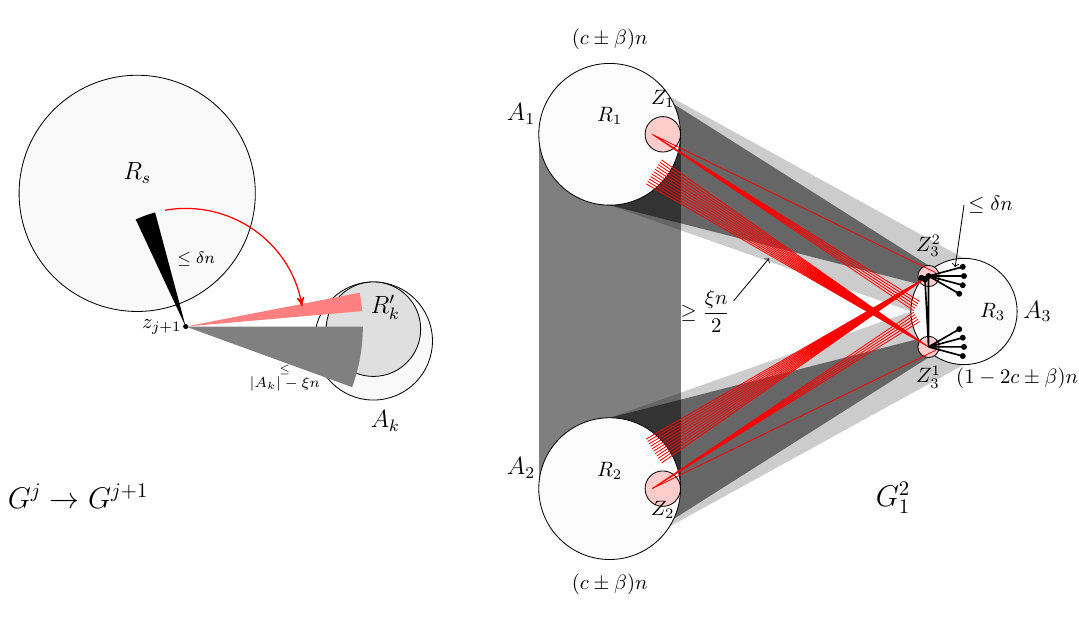}
\caption{Transformation 1: $G \rightarrow G_1^2$ (here $k=3$). Left: A single step $G^j \rightarrow G^{j+1}$ as in Lemma~\ref{subZiedges}, in which the black edges are replaced by the pink edges. Right: The final graph $G_1^2$ obtained in Lemma~\ref{Ziedges}, in which $A_1$ and $A_2$ are now independent sets.}
\label{partition1}
\end{figure}
\end{center}

\begin{lemma}\label{subZiedges}
	Let $p := |Z \setminus Z_k|$ and let $z_1,\ldots,z_p$ be any ordering of $Z\setminus Z_k$.
	For each $r\in [p]$, let $s(r)$ be such that $z_r \in A_{s(r)}$.
	Then there exists a sequence $G=:G^0,G^1,\ldots,G^p =: G_1$ of graphs such that for all $j \in [p]$,
	\begin{enumerate}[label={\emph{J(\arabic*,$j$)}:}]
		\item\label{W1} $G^j$ is an $(n,e)$-graph and has an $(A_1,\ldots,A_k;Z,\beta,\xi/2,\xi,\delta)$-partition.
		\item\label{W2} $E(G^j)\setminus E(G^{j-1}) = \lbrace z_{j}x : x \in R(z_{j}) \rbrace$ for some $R(z_{j}) \subseteq R_k'$, and $E(G^{j-1})\setminus E(G^j)$ is the set of $xz_j \in E(G)$ with $x \in A_{s(j)} \setminus \lbrace z_1,\ldots,z_{j-1} \rbrace$.
		\item\label{W3} $K_3(G^{j})-K_3(G^{j-1}) \leq \sum_{y \in N_{G^{j-1}}(z_{j},A_{s(j)})}\left(\Delta
		- |Z_k\setminus Z_k^{s(j)}| - P_3(yz_{j},G^{j-1};R_k)\right)$. Furthermore, equality holds only if $G^{j-1}[N_{G^j\setminus G^{j-1}}(z_j, R_k), \cup_{i\in[k-1]\setminus\{s(j)\}}A_i]$ is complete.
	\end{enumerate}
\end{lemma}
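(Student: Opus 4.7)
The plan is to process $z_1,\dots,z_p$ one by one, each time performing a single local swap at the current $z_j$. Writing $N^-_j := \{x \in A_{s(j)}\setminus\{z_1,\dots,z_{j-1}\} : xz_j \in E(G)\}$ for the remaining bad $A_{s(j)}$-neighbours of $z_j$ and $N^0_j := N_{G^{j-1}}(z_j)\setminus N^-_j = N_G(z_j)\cap\overline{A_{s(j)}}$ for the unchanged ones, I will choose $N^+_j = R(z_j) \subseteq R_k'\setminus N_G(z_j)$ of size $|N^-_j|$ and define $G^j$ by deleting $\{z_jy : y\in N^-_j\}$ and adding $\{z_jx : x\in N^+_j\}$; then \emph{J(2,$j$)} holds by construction and $e(G^j)=e(G^{j-1})$. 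For existence of $N^+_j$, note $|N^-_j|\le \Delta(G[A_{s(j)}])\le\delta n$ by \Pbadedges$(G)$. Since \Pcomplete$(G)$ forces every missing edge at $z_j$ into $A_k$ and $z_j\in Z$, $d_{\overline{G}}(z_j,A_k)\ge \xi n$; subtracting $|Z_k|\le\sqrt{\eta}n$ from~(\ref{Zsize}) and then $|R_k\setminus R_k'|=\xi n/2$ gives $d_{\overline{G}}(z_j,R_k')\ge \xi n/2-\sqrt{\eta}n\ge\delta n$, as required.

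For \emph{J(1,$j$)}, the vertex partition $(A_1,\dots,A_k)$ is unchanged, so \Ppartition$(G^j)$ is immediate. The modified edges touch $z_j\in A_{s(j)}$ and lie inside $A_{s(j)}$ or between $z_j$ and $A_k$, so \Pcomplete$(G^j)$ is preserved. Bad edges are only deleted, so $Z$ and $\delta$ still witness \Pbadedges$(G^j)$; the partition $Z_k^1\cup\cdots\cup Z_k^{k-1}$ is untouched, giving \PZk$(G^j)$. The only casualty is that $z_j$ picks up at most $\delta n$ new neighbours in $R_k'\subseteq A_k$, so its missing degree drops by at most $\delta n$ but remains at least $\xi n/2$; this is the sole reason the parameter $\gamma_1$ in \Pmissing$(G^j)$ must be loosened from $\xi$ to $\xi/2$.

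For \emph{J(3,$j$)}, since only edges at $z_j$ change,
$$
K_3(G^j)-K_3(G^{j-1}) = e(G[N^0_j\cup N^+_j])-e(G[N^0_j\cup N^-_j]) = \bigl(e(G[N^+_j])-e(G[N^-_j])\bigr)+\bigl(e(G[N^0_j,N^+_j])-e(G[N^0_j,N^-_j])\bigr).
$$
By \Pbadedges$(G)$ every edge inside $A_k$ is incident with $Z_k$, so $R_k$ is independent; since $N^+_j\subseteq R_k'\subseteq R_k$, $e(G[N^+_j])=0$ and the first bracket is $\le 0$, with equality forcing $e(G[N^-_j])=0$. For the second bracket I will split $N^0_j$ using $N^0_j\cap A_i = A_i$ for $i\in[k-1]\setminus\{s(j)\}$ (by \Pcomplete$(G)$) and $N^0_j\cap A_k = N_G(z_j)\cap A_k$. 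For $x\in N^+_j\subseteq R_k'$, (\ref{Delta}) yields $d_G(x,A_k)\le\Delta$ and trivially $d_G(x,A_i)\le|A_i|$, so $d_G(x,N^0_j)\le\Delta+\sum_{i\neq s(j),k}|A_i|$. For $y\in N^-_j\subseteq A_{s(j)}$, \Pcomplete$(G)$ gives $d_G(y,A_i)=|A_i|$ for $i\in[k-1]\setminus\{s(j)\}$, and \PZk$(G)$ gives that every vertex of $Z_k\setminus Z_k^{s(j)}$ is a common neighbour of $y$ and $z_j$, contributing $|Z_k\setminus Z_k^{s(j)}|$ to $d_G(y,N^0_j\cap A_k)$; the remaining part of $d_G(y,N^0_j\cap A_k)$ is $d_G(y,N_G(z_j)\cap R_k)=P_3(yz_j,G^{j-1};R_k)$. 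Summing over $N^+_j$ and $N^-_j$, the matching $\sum_{i\neq s(j),k}|A_i|$ terms cancel via $|N^+_j|=|N^-_j|$, leaving precisely the bound claimed in \emph{J(3,$j$)}. Equality throughout forces $d_G(x,A_i)=|A_i|$ for every $x\in N^+_j$ and $i\in[k-1]\setminus\{s(j)\}$, i.e.\ $G^{j-1}[N^+_j,\bigcup_{i\neq s(j)}A_i]$ is complete, which is exactly the stated necessary condition.

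The main obstacle is the bookkeeping in \emph{J(3,$j$)}: one must split $N^0_j$ so that the dominant $|A_i|$-degree contributions cancel between $N^+_j$ and $N^-_j$, leaving only the small residuals $\Delta$, $|Z_k\setminus Z_k^{s(j)}|$, and $P_3(yz_j,G^{j-1};R_k)$. Once this split is set up, every inequality used is a one-line consequence of \Pcomplete, \Pbadedges, or \PZk.
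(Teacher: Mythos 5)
Your proposal is correct and follows essentially the same route as the paper: the same choice of $R(z_j)\subseteq R_k'$ with the same existence argument, the same verification of J(1)--J(2), and the same decomposition of the triangle change into the term inside $A_{s(j)}$, the cancelling $\sum_{i\neq s(j),k}|A_i|$ contributions, the $\Delta$ bound for $x\in R_k'$, and the $|Z_k\setminus Z_k^{s(j)}|+P_3(yz_j;R_k)$ lower bound, with the identical equality analysis. The only (harmless) imprecision is writing $e(G[\cdot])$ and $d_G$ where $G^{j-1}$ is meant; since earlier steps leave $A_k$-internal edges, $z_j$'s external neighbourhood, and all the relevant adjacencies either unchanged or only enlarged on the side where you use a trivial upper bound, every estimate remains valid verbatim in $G^{j-1}$.
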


\begin{remark} The combined properties of Lemma~\ref{subZiedges} state that each $G^j$ is obtained from the previous graph $G^{j-1}$ by replacing all current edges connecting $z_j$ to its part with the same number of new edges between $z_j$ and $R_k'$. Thus
 $d_{G^j}(z_t,A_{s(t)}) = 0$ for all $t \in [j]$; $e(\overline{G^j}[A_i,A_k])=e(\overline{G^{j-1}}[A_i,A_k])$ for all $i \neq s(j)$, and $e(\overline{G^j}[A_{s(j)},A_k]) = e(\overline{G^{j-1}}[A_{s(j)},A_k])-d_{G^{j-1}}(z_j,A_{s(j)})$.\end{remark} 

\begin{proof}[Proof of Lemma~\ref{subZiedges}.]
	Let $G^0 := G$. Suppose we have obtained $G^0,\ldots,G^{j}$ for some $j < p$ such that, for all $r \leq j$, properties J($1,r$)--J($3,r$) hold.
	For $g \in [3]$, let J($g$) denote the conjunction of J($g,1$)$,\ldots,$J($g,j$).
	We obtain $G^{j+1}$ as follows.
	Let $s := s(j+1)$.
	Choose $R(z_{j+1}) \subseteq R_k'\setminus N_{G^j}(z_{j+1})$ such that $|R(z_{j+1})| = d_{G^j}(z_{j+1},A_s)$.
	Let us first see why this is possible.
	One consequence of J($2$) is that the neighbourhood of $z_{j+1}$ in $G^j$ is obtained from its neighbourhood in $G$ by removing its $G$-neighbours among $\lbrace z_1,\ldots, z_j \rbrace \cap A_s$.  
	Thus, as $|R_k'|=|R_k|-\xi n/2$, we have
	\begin{align*}
	d_{\overline{G^{j}}}(z_{j+1},R_k') &\stackrel{J(2)}{=} d_{\overline{G}}(z_{j+1},R_k') \geq d_{\overline{G}}(z_{j+1},A_k)-|Z_k|-\xi n/2 \stackrel{\Pmissing(G)}{\geq} \xi n/2 - \delta n \geq \delta n\\
	&\stackrel{\Pbadedges(G)}{\geq} d_G(z_{j+1},A_s) \stackrel{J(2)}{\geq} d_{G^{j}}(z_{j+1},A_s).
	\end{align*}
	So $R(z_{j+1})$ exists.
	Now define $G^{j+1}$ by setting $V(G^{j+1}) := V(G^{j})$ and
	$$
	E(G^{j+1}) := \left( E(G^{j}) \cup \lbrace z_{j+1}x : x \in R(z_{j+1}) \rbrace \right) \setminus E(G^{j}[z_{j+1},A_s]).
	$$
	Thus $G^{j+1}$ is obtained by replacing all bad edges of $G^j$ which are incident with $z_{j+1}$ by the same number of missing edges of $G^j$ which are incident to $z_{j+1}$.
	The endpoints $x$ of these new edges are chosen in $R_k'$ to ensure that the number of new triangles created is not too large.
	
	We will now show that $G^{j+1}$ satisfies $J(1,j+1),\ldots,J(3,j+1)$, beginning with $J(1,j+1)$.
	By construction, $G^{j+1}$ is an $(n,e)$-graph.
	To show that $G^{j+1}$ has an $(A_1,\ldots,A_k;Z,\beta,\xi/2,\xi,\delta)$-partition, we need to show that \Ppartition($G^{j+1}$)--\Pmissing($G^{j+1}$) hold with the appropriate parameters.
	All properties except \Pmissing($G^{j+1}$) are immediate.
	For \Pmissing, let $i \in [k]$ and let $y \in A_i$ be arbitrary.
	We have that
	\begin{equation}\label{misseqnew}
	d^m_{G^{j+1}}(y) = \begin{cases} d^m_{G^{j}}(y)-1, &\mbox{if } y \in R(z_{j+1}), \\ 
	d^m_{G^{j}}(y)-d_{G^j}(z_{j+1},A_s), &\mbox{if } y=z_{j+1}, \\ 
	d^m_{G^{j}}(y), & \mbox{otherwise. } \end{cases}
	\end{equation}
	Thus if $y \in A_i \setminus Z$, we have $d^m_{G^{j+1}}(y) \leq d^m_{G^{j}}(y) \leq \xi n$ since $G^j$ has an $(A_1,\ldots,A_k;Z,\beta,\xi/2,\xi,\delta)$-partition.
	It remains to consider the case $y=z_{j+1}$ (since missing degree is unchanged for all other vertices in $Z$).
	By the consequence of $J(2)$ stated above,
	\begin{equation}\label{misseq3}
	d^m_{G^{j}}(z_{j+1}) = d^m_G(z_{j+1}) \text{ and } d_{G^j}(z_{j+1},A_s) = d_G(z_{j+1},A_s\setminus \lbrace z_1,\ldots,z_j \rbrace).
	\end{equation}
	Thus, as $G$ has an $(A_1,\ldots,A_k;Z,\beta;\xi,\xi,\delta)$-partition,
	$$
	d^m_{G^{j}}(z_{j+1}) \geq \xi n - d_G(z_{j+1},A_s\setminus \lbrace z_1,\ldots,z_j \rbrace) \stackrel{\Pbadedges(G)}{\geq} (\xi-\delta)n \geq \xi n/2.
	$$
	Thus \Pmissing($G^{j+1}$) holds.
	We have shown that $J(1,j+1)$ holds.
	That $J(2,j+1)$ holds is clear from $J(2)$ and the construction of $G^{j+1}$.
	
	For J(3,$j+1$), observe that a triangle is in $G^{j+1}$ but not $G^j$ if and only if it contains an edge $xz_{j+1}$ where $x \in R(z_{j+1})$; furthermore, no triangle contains two such edges; and a triangle is in $G^j$ but not $G^{j+1}$ if and only if it contains an edge $yz_{j+1}$, where $y \in N_{G^j}(z_{j+1},A_s)$.
	Thus
	\begin{align}
	\label{trans1} K_3(G^{j+1})= K_3(G^{j})&+ \sum_{x \in R(z_{j+1})}P_3(xz_{j+1},G^{j+1}) - \sum_{y \in N_{G^{j}}(z_{j+1},A_s)}P_3(yz_{j+1},G^{j};\overline{A_s})\\
	\nonumber &- K_3(z_{j+1},G^j;A_s).
	\end{align}
	Fix $y \in N_{G^{j}}(z_{j+1},A_s)$.
	By J(1,$j$), \Pcomplete($G^{j}$) holds and, since $y,z_{j+1} \in A_s$, both of these vertices are incident to all of $A_t \cup Z_k^t$ for $t \in [k-1]\setminus \lbrace s \rbrace$.
	Recall the definition of $a_s$ from~(\ref{aidef}).
	So
	$$
	P_3(yz_{j+1},G^{j};\overline{A_s}) = a_{s} + |Z_k \setminus Z^s_k| + P_3(yz_{j+1},G^j;R_k\cup Z_k^s)\ge a_{s} + |Z_k \setminus Z^s_k| + P_3(yz_{j+1},G^j;R_k).
	$$
	Now fix $x \in R(z_{j+1})\subseteq R_k'$.
	Then, by $J(2,j+1)$, we have $d_{G^{j+1}}(z_{j+1},A_s) = 0$, and $d_{G^{j+1}}(x,R_k)=d_G(x,R_k)=0$. So
	\begin{eqnarray}
	P_3(xz_{j+1},G^{j+1}) &=& a_{s} - d_{\overline{G^j}}(x,\cup_{i\in[k-1]\setminus \{s\}}A_i)
	+ P_3(xz_{j+1},G^{j+1};Z_k) \nonumber\\
	&\leq& a_{s} + d_{G^{j+1}}(x,Z_k)\ \stackrel{J(2)}{=}\ a_s + d_G(x,Z_k)\ \stackrel{(\ref{Delta})}{\le}\ a_s+\Delta\label{eq-strong1}.
	\end{eqnarray}
	Therefore,
	\begin{align*}
	K_3(G^{j+1})-K_3(G^{j}) &\stackrel{(\ref{trans1}),\eqref{eq-strong1}}{\leq} \sum_{y \in N_{G^j}(z_{j+1},A_s)}\left(\Delta - |Z_k\setminus Z_k^s| - P_3(yz_{j+1},G^j;R_k)\right),
	\end{align*}
	where equality holds only when equality in~\eqref{eq-strong1} holds for every $x \in R(z_{j+1})$. This happens only if $d_{\overline{G^j}}(x,\cup_{i\in[k-1]\setminus \{s\}}A_i)=0$ for every $x\in R(z_{j+1})$, in other words, $G^{j}[R(z_{j+1}), \cup_{i\in[k-1]\setminus\{s\}}A_i]$ is complete. Recall that $R(z_{j+1})=N_{G^{j+1}\setminus G^{j}}(z_{j+1}, R_k)$. This completes the proof of $J(3,j+1)$.
\end{proof}

We can now derive some properties of $G_1 := G^p$ obtained in Lemma~\ref{subZiedges}, namely that its only bad edges have endpoints in $A_k$, and $G_1$ does not have many more triangles than $G$.
In fact we consider the graph $G_1^\ell$ which is obtained by applying Lemma~\ref{subZiedges} for only vertices $z_j \in Z_1 \cup \ldots \cup Z_\ell$.
See the right-hand side of Figure~\ref{partition1} for an illustration of $G_1^2$ in the case $k=3$.

\begin{lemma}\label{Ziedges}
	Let $\ell \in [k-1]$.
	There exists
	an $(n,e)$-graph $G^\ell_1$ on the same vertex set as $G$ such that
	\begin{itemize}
		\item[(i)] $G^\ell_1$ has an $(A_1,\ldots,A_k;Z,\beta,\xi/2,\xi,\delta)$-partition with missing vector $\underline{m}^{(1,\ell)} := (m^{(1,\ell)}_1,\ldots,m^{(1,\ell)}_{k-1})$ where $m_i/2 \leq m_i^{(1,\ell)} \leq m_i$ for all $i \in [k-1]$.
		\item[(ii)] $E(G^\ell_1[A_i]) = \emptyset$ for all $i \in [\ell]$, and $E(G^\ell_1[A_i])=E(G[A_i])$ otherwise.
		\item[(iii)] $K_3(G^\ell_1) \leq K_3(G) + \delta^{7/8}m^2/n$. 
		\item[(iv)] $N_{G^\ell_1}(z)=N_G(z)$ for all $z \in Z_k$ and $N_{G^\ell_1}(x,A_k)=N_G(x,A_k)$ for all $x \in A_k$.
	\end{itemize}
\end{lemma}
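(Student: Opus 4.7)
The plan is to derive $G_1^\ell$ directly from Lemma~\ref{subZiedges} by choosing an appropriate ordering of $Z \setminus Z_k$ and truncating the sequence at the right point. Specifically, I would order the vertices $z_1,\ldots,z_p$ of $Z \setminus Z_k$ so that $z_1,\ldots,z_q$ enumerate $Z_1 \cup \ldots \cup Z_\ell$ (where $q := |Z_1 \cup \ldots \cup Z_\ell|$), and define $G_1^\ell := G^q$ as produced by Lemma~\ref{subZiedges} applied to this ordering. Property (i) of the partition and the fact that $G_1^\ell$ is an $(n,e)$-graph are then immediate from J($1,q$).

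For (ii), observe that after step $j$ we have $d_{G^j}(z_j,A_{s(j)}) = 0$ by J($2,j$), and no later step re-introduces edges inside $A_{s(j)}$ at $z_j$. After processing all of $Z_i$ for a given $i \le \ell$, every bad edge in $A_i$ has been removed because by \Pbadedges($G$) each such edge is incident to some vertex of $Z_i$, which was processed at the appropriate step. For $i > \ell$ (and $i=k$) no transformation touches edges inside $A_i$, so $G_1^\ell[A_i]=G[A_i]$. Property (iv) follows similarly: at step $j$ only edges $\{z_j,y\}$ with $y \in A_{s(j)}$ (removed) or $y \in R(z_j) \subseteq R_k'$ (added) change, so nothing is altered inside $A_k$, and vertices in $Z_k$ are neither processed nor endpoints of any newly created/destroyed edge.

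For the missing-vector bound in (i), each step at $z_j \in Z_i$ ($i \le \ell$) decreases $m_i$ by exactly $d_{G^{j-1}}(z_j,A_i)$ (missing edges between $A_i$ and $R_k'\subseteq A_k$ are filled in) and leaves all other $m_{i'}$ unchanged. Using J($2$) to rewrite $d_{G^{j-1}}(z_j,A_i)$ as $d_G(z_j,A_i\setminus\{z_1,\ldots,z_{j-1}\})$, summing over $z_j\in Z_i$ gives a total reduction at most $\Delta(G[A_i])\cdot|Z_i| \le \delta n \cdot 2m_i/(\xi n) = 2\delta m_i/\xi$ by \Pbadedges($G$) and~\eqref{Zsize}. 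Since $\delta \ll \xi$, this is at most $m_i/2$, establishing $m_i/2 \le m_i^{(1,\ell)} \le m_i$.

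The only genuinely quantitative step is (iii). Summing the inequality in J($3,j$) for $j=1,\ldots,q$ and dropping the nonpositive terms $-|Z_k\setminus Z_k^{s(j)}|-P_3(yz_j,G^{j-1};R_k)$, we obtain
\[
K_3(G_1^\ell)-K_3(G) \le \Delta \sum_{j=1}^{q} d_{G^{j-1}}(z_j,A_{s(j)}).
\]
The key combinatorial observation is that each bad edge of $G[A_1\cup\ldots\cup A_\ell]$ is counted exactly once in this sum (at the first step where one of its endpoints in $Z$ is processed), so the sum is bounded by the total number of such bad edges, which is at most $h \le \delta m$ by~\eqref{h}. Combined with the bound $\Delta \le \delta^{1/3}m/n$ from~\eqref{Rk'}, this yields
\[
K_3(G_1^\ell)-K_3(G) \le \frac{\delta^{1/3}m}{n}\cdot \delta m = \frac{\delta^{4/3}m^2}{n} \le \frac{\delta^{7/8}m^2}{n},
\]
as required. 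The main obstacle here is simply not losing a factor in the double-counting of bad edges, which is what makes the choice of ordering (and the use of $R_k'$ via~\eqref{Rk'}) essential.
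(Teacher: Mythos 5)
Your proposal is correct and follows essentially the same route as the paper: order $Z\setminus Z_k$ so that $Z_1,\ldots,Z_\ell$ come first, truncate the sequence from Lemma~\ref{subZiedges} after those vertices, read off (i), (ii), (iv) from J(1)--J(2), and bound (iii) by summing J(3), using the fact that $\sum_j d_{G^{j-1}}(z_j,A_{s(j)})=\sum_{i\in[\ell]}e(G[A_i])\le\delta m$ together with the bound on $\Delta$ from~\eqref{Rk'}. The only differences are cosmetic (you use $\Delta\le\delta^{1/3}m/n$ where the paper uses $4\delta m/(\xi n)$, and you bound $e(G[A_i])\le\Delta(G[A_i])|Z_i|$ where the paper bounds it via $|Z_i|\xi n/4\le m_i/4$), and both yield the stated constants.
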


\begin{proof}
	Let $p := |Z\setminus Z_k|$ and let $p' := |Z_1 \cup \ldots \cup Z_\ell|\leq p$.
	Let $z_1,\ldots,z_p$ be an ordering of $Z\setminus Z_k$ such that for $1 \leq i < i' \leq k-1$, every vertex in $Z_i$ appears before any vertex in $Z_{i'}$. 
	Apply Lemma~\ref{subZiedges} to obtain $G^\ell_1 := G^{p'}$ satisfying $J(1,p'),\ldots,J(3,p')$.
	By J(1,$p'$), $G^\ell_1$ has an $(A_1,\ldots,A_k;Z,\beta,\xi/2,\xi,\delta)$-partition. 
	Further, J(2) (defined at the beginning of the proof of Lemma~\ref{subZiedges}) implies that, for $i \in [\ell]$,
	\begin{equation}\label{j2}
	\sum_{\stackrel{j \in [p']}{s(j)=i}}d_{G^{j-1}}(z_j,A_i) = \sum_{\stackrel{j \in [p']}{s(j)=i}} d_G(z_j, A_i\setminus \lbrace z_1,\ldots,z_{j-1}\rbrace) = e(G[A_i]).
	\end{equation}
	If $i \in [k-1]\setminus [\ell]$ then $m^{(1,\ell)}_i = m_i$.
	If $i \in [\ell]$, then
	\begin{eqnarray*}
		m^{(1,\ell)}_i &=& e(\overline{G^{p'}}[A_i,A_k])\ \stackrel{J(2,p')}{=}\ e(\overline{G}[A_i,A_k]) - \sum_{\stackrel{j \in [p']}{s(j)=i}}d_{G^{j-1}}(z_j,A_i) \ \stackrel{(\ref{j2})}{=}\ m_i - e(G[A_i])\\
		&\stackrel{\Pbadedges(G)}{\geq}& m_i - |Z_i|\cdot \delta n\ \geq\ m_i - |Z_i| \cdot \frac{\xi n}{4}\ \stackrel{\Pmissing(G)}{\geq}\ \frac{m_i}{2}
	\end{eqnarray*}
	while clearly $m^{(1,\ell)}_i \leq m_i$, proving (i).
	Part (ii) follows immediately from $J(2)$.
	
	Equation~(\ref{h}) states that $\sum_{i \in [k]}e(G[A_i]) \leq \delta m$.
	Therefore
	\begin{eqnarray*}
		K_3(G_1^\ell) - K_3(G) &=& \sum_{j \in [p']}\left(K_3(G^j)-K_3(G^{j-1})\right) \ \stackrel{J(3)}{\leq}\  \sum_{j \in [p']}d_{G^{j-1}}(z_j,A_{s(j)}) \cdot \Delta\\
		\nonumber &\stackrel{(\ref{j2})}{=}& \sum_{i \in [\ell]}e(G[A_i]) \cdot \Delta \ \stackrel{(\ref{Rk'})}{\leq}\ \delta m \cdot \displaystyle\frac{4\delta m}{\xi n} \leq \displaystyle\frac{\delta^{7/8}m^2}{n}.
	\end{eqnarray*}
	
	Finally, Part (iv) follows from J(2).
\end{proof}

\subsection{Transformation 2: removing $Y_i$-$A_i$ edges}

The next transformation is applied to $G^\ell_1$ to obtain a graph which inherits the properties of $G^\ell_1$ whilst also reassigning $Y_i$ to $A_i$ and removing any edges which are bad relative to this new partition.
The only bad edges which remain are incident to $X$ in $A_k$.
Observe that the $(A_1,\ldots,A_k;Z,\beta,\xi/2,\xi,\delta)$-partition of $G^\ell_1$ is also an $(A_1,\ldots,A_k;Z,2\beta,\xi/4,2\xi,\delta)$-partition.

\begin{center}
\begin{figure}
\includegraphics[scale=0.9]{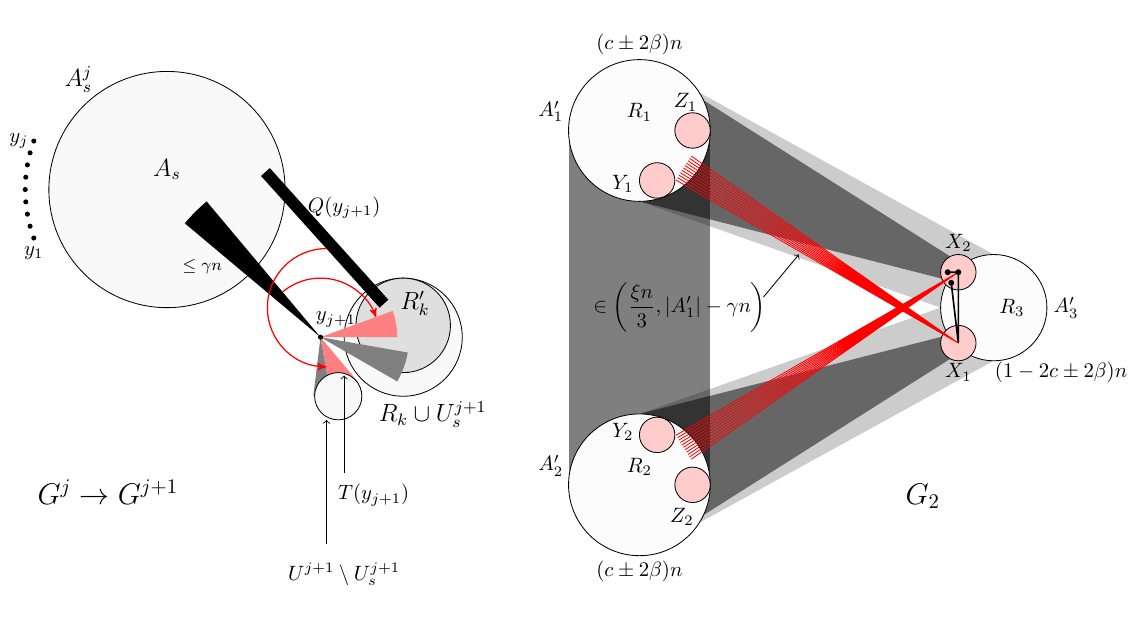}
\caption{Transformation 2: $G_1 \rightarrow G_2$. (here $k=3$). Left: A single step $G^j \rightarrow G^{j+1}$ as in Lemma~\ref{subYiedges}, in which the two sets of black edges are replaced by the corresponding sets of pink edges. Right: The final graph $G_2^2$ obtained in Lemma~\ref{Yiedges}, with the updated partition $A_1',A_2',A_3'$.}
\label{partition2}
\end{figure}
\end{center}

\begin{lemma}\label{subYiedges}
	Let $\ell \in [k-1]$ and let $G^\ell_1$ be any graph satisfying the conclusion of Lemma~\ref{Ziedges} applied with $\ell$.
	Let $q = q(\ell) := |Y_1 \cup \ldots \cup Y_\ell|$ and let $y_1,\ldots,y_{q}$ be an arbitrary ordering of $Y_1 \cup \ldots \cup Y_\ell$.
	For all $j \in [q]$, let $s(j) \in [k-1]$ be such that $y \in Y_{s(j)}$. Let $A_i^0:=A_i$ for $i\in [k]$.
	Let $Q^0_i := Q_{i}$ be obtained by applying Proposition~\ref{Qi} to the graph $J := G^\ell_1$ and the partition $(A_1^0,\dots,A_k^0)$, for all $i \in [k-1]$.
	For all $j \in [q]$, let 
	\begin{equation}\label{Aeq}
	A^j_t := \begin{cases} 
	A^{j-1}_t \cup \lbrace y_j \rbrace, &\mbox{if } t = s(j), \\
	A^{j-1}_t \setminus \lbrace y_j \rbrace, & \mbox{if } t= k,\\
	A^{j-1}_t, & \mbox{otherwise, } \end{cases}
	\end{equation}
	and $U^j := Z_k \cap A^j_k$ and $U^{j,i} := Z_k^i \cap A^j_k$ for every $i \in [k-1]$.
	Then there exists a sequence $G^\ell_1 =: G^0,G^1,\ldots,G^{q} =: G^\ell_2$ of graphs such that for all $j \in [q]$,
	\begin{enumerate}[label={K(\arabic*,$j$):}]
		\item\label{D2} 
		\begin{itemize}
			\item $E(G^j)\setminus E(G^{j-1})$ is a star with centre $y_{j}$, where 
			the set of leaves consists of $T(y_j)$ together with some vertices in $R_k'$, where $T(y_j)$ is the set of non-$G^{j-1}$-neighbours of $y_j$ in $U^{j-1} \setminus U^{j-1,s(j)}$.
			\item $E(G^{j-1})\setminus E(G^j) = \lbrace y_jv \in E(G) : v \in A^{j-1}_{s(j)}\rbrace \cup Q(y_j)$, where $Q(y_j) \subseteq Q^{j-1}_{s(j)}$ and $|Q(y_j)| \leq \delta n$. 
			
			\item If $Z_k=X_{s(j)}\cup Y_{s(j)}$, then $T(y_j)=Q(y_j)=\emptyset$.
			
			\item The total number of cross-edges in $G^j$ is at least that in $G^0$, i.e., 
			$$\sum_{ip\in{[k]\choose 2}}e(G^{j}[A_i^j,A_p^j])\ge \sum_{ip\in{[k]\choose 2}}e(G^{0}[A_i^0,A_p^0]).$$
		\end{itemize}
		Define $Q^j_i := Q^{j-1}_i\setminus Q(y_j)$ for all $i \in [k-1]$. 
		\item\label{D1} $G^j$ is an $(n,e)$-graph and has an $(A_1^j,\ldots,A_k^j;Z,\beta+\frac{j}{n},\frac{\xi}{2}-\frac{j}{n},\xi+2\delta +\frac{j}{n},\delta)$-partition, where $U^{j,1},\ldots,U^{j,k-1}$ is the partition of $U^j := Z \cap A_k^j$ given by \PZk($G^j$).
		\item\label{D3} \begin{align*}K_3(G^{j})-K_3(G^{j-1}) \leq \sum_{y \in N_{G^{j-1}}(y_{j},A_{s(j)}^{j-1})}&\left(\Delta- \frac{\xi}{6\gamma}|U^{j-1}\setminus U^{j-1,s(j)}| -P_3(yy_{j},G^{j-1};R_k)\right).\end{align*}
		Furthermore, equality holds only if $G^{j-1}[N_{G^j\setminus G^{j-1}}(y_j, R_k), \cup_{i\in[k-1]\setminus\{s(j)\}}A_i^{j-1}]$ is complete.
	\end{enumerate}
\end{lemma}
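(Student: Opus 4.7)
The plan is to mimic the inductive template of Lemma~\ref{subZiedges}, at each step $j$ building $G^j$ from $G^{j-1}$ by relabelling $y_j$ from $A_k^{j-1}$ into $A_{s(j)}^{j-1}$ and reshuffling the local edges to preserve the edge count. Writing $s:=s(j)$ and $N:=N_{G^{j-1}}(y_j,A_s^{j-1})$, we have $|N|\le d_G(y_j,A_s)+q\le\gamma n+\sqrt{\eta}n$ since Lemma~\ref{Ziedges}(iv) and previous switches only add new $A_s$-neighbours to $y_j$ via the at most $q$ moved vertices $y_{j'}$ with $s(j')=s$. After the move, edges in $N$ become bad and must be deleted; to preserve \Pcomplete we must also add the edges $\{y_jt:t\in T(y_j)\}$ where $T(y_j):=(U^{j-1}\setminus U^{j-1,s})\setminus N_{G^{j-1}}(y_j)$. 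I would balance the edge count by setting $|Q(y_j)|:=\max\{0,|T(y_j)|-|N|\}$, picking $Q(y_j)\subseteq Q_s^{j-1}$ and $R(y_j)\subseteq R_k'\setminus N_{G^{j-1}}(y_j)$ of the residual size, and defining $E(G^j)$ by adding $\{y_jt:t\in T(y_j)\cup R(y_j)\}$ and deleting $\{y_jv:v\in N\}\cup Q(y_j)$. Feasibility of $Q(y_j)$ uses Proposition~\ref{Qi}: $|Q(y_j)|\le|T(y_j)|\le|Z_k|\le \delta n$ fits inside a single star, and the cumulative $Q$-usage is at most $|Y|\delta n\ll \delta^2 n^2$. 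Feasibility of $R(y_j)$ follows since $|R_k'|\gg|N|+|Q(y_j)|+d_{G^{j-1}}(y_j,R_k')$. In the degenerate case $Z_k=X_s\cup Y_s$, $U^{j-1}\setminus U^{j-1,s}=\emptyset$ forces $T(y_j)=Q(y_j)=\emptyset$ and $|R(y_j)|=|N|$.

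The conditions K(1,j) and K(2,j) are then routine bookkeeping. Edge changes are by construction. Part sizes shift by at most $1$ per step, so \Ppartition holds with slackened parameter $\beta+j/n$. \Pcomplete holds because $y_j\in Z_k^s$ was already complete in $G^{j-1}$ to $A_{i'}^{j-1}$ for $i'\in[k-1]\setminus\{s\}$ and these edges survive the switch. \Pbadedges is restored since $N$ is deleted and $Q(y_j)\subseteq R_s\times R_k$ is a cross set. \PZk is preserved by containment. \Pmissing for $y_j\in Z\cap A_s^j$ is comfortable: $y_j$ has at least $|A_k^j|-d_{G^j}(y_j,A_k^j)\ge (1-(k-1)c)n-3\gamma n-\delta n\gg\xi n/2$ non-neighbours outside $A_s^j$. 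The cross-edge count is preserved exactly at each step by the edge-count identity $|T(y_j)|+|R(y_j)|=|N|+|Q(y_j)|$, giving the last bullet of K(1,j).

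The main obstacle is K(3,j), which refines Lemma~\ref{subZiedges}'s triangle accounting by an extra $\frac{\xi}{6\gamma}|U^{j-1}\setminus U^{j-1,s}|$ saving per removed edge. The key new identity is that for each $v\in N$, the common $G^{j-1}$-neighbours of $v$ and $y_j$ in $Z_k\setminus Z_k^s$ are exactly $(U^{j-1}\setminus U^{j-1,s})\setminus T(y_j)$: every $u\in Z_k^i$ with $i\ne s$ is complete to $A_s\ni v$ and is adjacent to $y_j$ iff $u\notin T(y_j)$. Thus removing $y_jv$ destroys $|U^{j-1}\setminus U^{j-1,s}|-|T(y_j)|$ additional triangles beyond the $a_s$ through $\cup_{i'\ne s,k}A_{i'}$ and the $P_3(vy_j,G^{j-1};R_k)$ through $R_k$ already captured in Lemma~\ref{subZiedges}. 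On the creation side, each $R(y_j)$-edge contributes at most $a_s+\Delta$ by~\eqref{eq-strong1}; each $T(y_j)$-edge $y_jt$ contributes at most $a_s-\xi n/5$, using that $t\in Z_k^i$ has all of its $\ge \xi n/4$ missing degree directed into $A_i$ by \PZk and \Pmissing, while $t$'s $A_k$-degree is at most $\delta n$ by \Pbadedges; and each $Q(y_j)$-edge destroys at least $a_s-2\sqrt{\eta}n$ triangles by Proposition~\ref{Qi}. The $a_s$-contributions cancel via the edge-count identity, and the remaining inequality becomes a trade-off between the $|T(y_j)|\xi n/5$ gain from $T$-edges and the $|N|(|U^{j-1}\setminus U^{j-1,s}|-|T(y_j)|)$ $Z_k$-destruction budget; using $|N|\le\gamma n+\sqrt{\eta}n$ and $|T(y_j)|\le|U^{j-1}\setminus U^{j-1,s}|$ this reduces to the claimed bound with factor $\frac{\xi}{6\gamma}$. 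The equality condition is inherited from equality in~\eqref{eq-strong1} per $R(y_j)$-edge, i.e., completeness of $G^{j-1}[R(y_j),\cup_{i'\ne s}A_{i'}^{j-1}]$.
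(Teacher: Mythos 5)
Your construction and triangle accounting follow the paper's proof closely: move $y_j$ into $A_{s(j)}^{j-1}$, delete its $A_{s(j)}^{j-1}$-neighbourhood, add all of $T(y_j)$ plus some $R_k'$-edges, compensate with $Q$-edges, and exploit the fact that each deleted edge $y_jv$ loses the $|U^{j-1}\setminus U^{j-1,s}|-|T(y_j)|$ common neighbours in $Z_k\setminus Z_k^{s}$ that still lie in $A_k^{j-1}$, converting this into the $\frac{\xi}{6\gamma}$-term via an averaging inequality of the type~\eqref{abp}. Your only structural deviation is the balancing $|Q(y_j)|=\max\{0,|T(y_j)|-|N|\}$ (hence $|R(y_j)|=\max\{0,|N|-|T(y_j)|\}$) in place of the paper's $|R(y_j)|=|N|$, $|Q(y_j)|=|T(y_j)|$. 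This still closes -- when $|T(y_j)|\ge|N|$ everything in sight is at most $|Z|\le\sqrt{\eta}n$ and the inequality is slack, and when $|T(y_j)|<|N|$ the analogue of~\eqref{abp} with $p\le(\gamma+\sqrt{\eta})n$ yields the constant $\xi n/5$ you use -- but the paper's choice permits a single application of~\eqref{abp} with constant $\xi n/3$ and no case split, which is why it can afford the weaker per-$T$-edge saving $\xi n/2-2\delta n$ paired one-to-one against a destroyed $Q$-edge.

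The one genuine gap is in the justification of K(3,$j$): the bound $K_3(G^j)-K_3(G^{j-1})\le\sum_{e\ \mathrm{added}}P_3(e,G^j)-\sum_{e\ \mathrm{removed}}P_3(e,G^{j-1})$ is valid only if no triangle of $G^{j-1}$ contains two removed edges, and with your unconstrained choice of $Q(y_j)$ this can fail: if $uw\in Q(y_j)$ with $u\in N_{G^{j-1}}(y_j,R_{s})$ and $w\in N_{G^{j-1}}(y_j,R_k)$, then the triangle $y_juw$ contains both the removed edge $y_ju$ (since $u\in A_s^{j-1}$) and the removed edge $uw$, so it is subtracted twice, leaving an uncontrolled additive error of up to $|Q(y_j)|$ in the claimed upper bound. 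The paper forestalls this by additionally requiring $V(Q(y_j))\cap R_k\subseteq N_{\overline{G^{j-1}}}(y_j)$ (its condition~\eqref{eq:Q(y)}), which is feasible because $d_{G^{j-1}}(y_j,R_k)=d_G(y_j,R_k)\le\delta n$ while $Q_s^{j-1}$ retains at least $\delta n$ unused star centres in $R_k$. You must impose this (or an explicit correction term): the downstream applications of K(3,$j$) in Lemmas~\ref{empty} and~\ref{Yiempty} analyse the equality case, so there is no slack to absorb even a small surplus.
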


\begin{proof}
	Let $G^0 := G_1^\ell$. Note that $s(r)\le\ell$ for every $r\in[q]$.
	Suppose that we have obtained $G^0,\ldots,G^{j}$ for some $j < q$ such that, for all $r \leq j$, properties $K(1,r)$--$K(3,r)$ hold.
	For $g \in [3]$, let $K(g)$ denote the conjunction of properties $K(g,1),\dots,K(g,j)$.
	Let $s := s(j+1)$. By definition, $U^j\setminus U^{j,s}=(Z_k\setminus Z_k^s)\setminus\{y_1,\ldots,y_j\}$. Recall that 
	$$
	T(y_{j+1}) = N_{\overline{G^j}}(y_{j+1},U^j\setminus U^{j,s}).
	$$
	
	We obtain $G^{j+1}$ as follows.
	Choose a set $R(y_{j+1})$ of $d_{G^{j}}(y_{j+1},A_s^j)$ vertices in $R_k' \setminus N_{G^{j}}(y_{j+1})$.
	Note that $R_i \subseteq A^r_i$ for all $0 \leq r \leq j$ and $i\in [k]$ by~\eqref{Aeq}.
	Choose a set $Q(y_{j+1}) \subseteq Q^j_s$ of size $|T(y_{j+1})|$ with 
	\begin{equation}\label{eq:Q(y)}
	V(Q(y_{j+1}))\cap R_k\subseteq N_{\overline{G^{j}}}(y_{j+1}).
	\end{equation}
	Note that if $Z_k=X_{s}\cup Y_{s}$, then by definition $U^j\setminus U^{j,s}=\emptyset$. Therefore, $T(y_{j+1})=Q(y_{j+1})=\emptyset$.
	Now define $G^{j+1}$ by setting $V(G^{j+1}) := V(G^{j})$ and
	\begin{align*}
	E(G^{j+1}) &:= \left( E(G^{j}) \cup \lbrace y_{j+1}x : x \in R(y_{j+1}) \rbrace \cup \lbrace y_{j+1}z : z \in T(y_{j+1}) \rbrace \right)\setminus \left( E(G^{j}[y_{j+1},A_s^j]) \cup Q(y_{j+1}) \right).
	\end{align*}
	So $G^{j+1}$ is obtained from $G^j$ by replacing every neighbour of $y_{j+1}$ in $A^{j}_s$ with a non-neighbour in $R_k'$; and moving some previously unused edges from $Q_s$ to lie between $y_{j+1}$ and those non-neighbours in $Z_k\setminus Z_k^s$ which lie in $A^j_k$ (see the left-hand side of Figure~\ref{partition2} for an illustration of the transformation $G^j \rightarrow G^{j+1}$).
	
	Let us check that $G^{j+1}$ exists, that is, one can choose the sets $R(y_{j+1})$ and $Q(y_{j+1})$ with the stated properties.
	 Recall that $G$ and $G_1^{\ell}$ agree on $Y$ due to Lemma~\ref{Ziedges}(iv). Thus by Proposition~\ref{Gprops}(ii), $G_1^{\ell}[Y_i,Y_j]$ is complete for all $ij\in{[k-1]\choose 2}$. Consequently $T(y_{r})\cap Y=\emptyset$ for all $1\le r\le j$; in other words, no edge incident to $\{y_{j+1},\dots,y_q\}$ was modified when we passed from $G^0$ to $G^j$. This implies that
	\begin{equation}\label{K2cons}
	N_{G^j}(y_{j+1}) = N_{G^\ell_1}(y_{j+1})\supseteq \bigcup_{i\in[k-1]\setminus \{s\}}(A_i\cup Y_i).
	\end{equation}
	As $A_s^j=A_s\cup\{y_r: r\le j; s(r)=s\}$, together with~\eqref{K2cons}, this implies that $N_{G^j}(y_{j+1},A_s^j)\subseteq N_{G_1^{\ell}}(y_{j+1},A_s)\cup Y$. Since $|Y|\le |Z|\le \delta n$ by~\Pbadedges($G$), we have from
	Lemma~\ref{Ziedges}(iv) that
	\begin{equation}\label{eq-yjplus1}
	d_{G^{j}}(y_{j+1},A^j_s) \leq d_{G^\ell_1}(y_{j+1},A_s)+\delta n = d_G(y_{j+1},A_s)+\delta n \leq (\gamma+\delta)n \leq 2\gamma n.
	\end{equation}
	Thus
	\begin{eqnarray*}
		d_{\overline{G^{j}}}(y_{j+1},R_k') &\stackrel{K(1)}{=}& d_{\overline{G^\ell_1}}(y_{j+1},R_k')\ \geq\ |A_k|-|Z_k|-\xi n/2 - d_{G^\ell_1}(y_{j+1},A_k)\\
		&\stackrel{\Pbadedges(G^\ell_1),\Pmissing(G^\ell_1)}{\geq}& |A_k| - (\xi/2 +2\delta)n \ \stackrel{\Ppartition(G^\ell_1),\eqref{ineq:c}}{\geq}\ 2\gamma n\ \geq\ d_{G^{j}}(y_{j+1},A_s^j).
	\end{eqnarray*}
	So we can choose $R(y_{j+1})$ as required.
	Also, by $K(1)$ and Lemma~\ref{Ziedges}(iv), $N_{G^j}(y_{j+1},R_k)=N_{G}(y_{j+1},R_k)$, which is of size at most $\delta n$ by \Pbadedges($G$). Thus 
	 $$
	 |V(Q_s)\cap N_{\overline{G^{j}}}(y_{j+1},R_k)|
	 \ge |V(Q_s)\cap R_k|-|N_{G^{j}}(y_{j+1},R_k)|\ge 2\delta n-\delta n= \delta n.
	 $$
	Recall that $|Y|\le|Z|\le \sqrt{\eta} n$ by~\eqref{Zsize}, and $Q_s$ consists of $2\delta n$ stars each with $\delta n$ leaves centred at $R_k$. Thus the number of available edges in $Q^j_s$ (i.e.~all edges in $Q_s\setminus \cup_{\ell\in[j]}Q(y_{\ell})$ whose endpoints in $R_k$ are not adjacent to $y_{j+1}$) is at least
	\begin{eqnarray*}
		\delta n (\delta n - |Y|) \ge \delta n\geq |Z|\ge |U^j|
		\geq d_{\overline{G^j}}(y_{j+1},U^j \setminus U^{j,s}) = |T(y_{j+1})|=|Q(y_{j+1})|,
	\end{eqnarray*}
	so we can choose the desired $Q(y_{j+1}) \subseteq Q_s^j$.
	Hence $G^{j+1}$ exists.
	
	Recall that the sets $A^{j+1}_t$, $t\in [k]$, were defined in (\ref{Aeq}).
	It remains to check that $K(1,j+1)$--$K(3,j+1)$ hold. The first three bullet points in Property $K(1,j+1)$ follow immediately from the construction. To see the last bullet point, note that from $G^0$ to $G^{j+1}$, the cross-edges which are no longer present are precisely those in $Q(y_r)$ and $E(G^{r-1}[y_r,A_{s(r)}^{r-1}])$, which are compensated by $\lbrace xy_r : x \in T(y_r)\rbrace$ and $\lbrace xy_r : x \in R(y_r)\rbrace$ respectively for every $1\le r\le j+1$. In fact, $G^{j+1}$ will have more cross-edges than $G^0$ if there are $G[A_k]$-edges incident to $\{y_1,\ldots,y_{j+1}\}$.
	
	To check that $G^{j+1}$ has an
	$$
	(A^{j+1}_1,\ldots,A_k^{j+1};Z,\beta+(j+1)/n,\xi/2-(j+1)/n,\xi+2\delta+(j+1)/n,\delta)
	$$-partition, we need to show that \Ppartition($G^{j+1}$)--\Pmissing($G^{j+1}$) hold with the required parameters.
	For \Ppartition($G^{j+1}$), the part sizes $|A^{j+1}_t|,|A^j_t|$ differ by at most one.
	So for $t \in [k-1]$ we have
	$$
	\bigg| |A^{j+1}_t| - cn \bigg| \leq \bigg| |A^{j+1}_t| - |A^j_t| \bigg| + \bigg| |A^j_t| - cn \bigg| \leq \left(\beta + \frac{j}{n} \right)n + 1 = \left(\beta + \frac{j+1}{n}\right)n, 
	$$
	as required. The case $t=k$ is similar.
	
	By \Pcomplete($G^j$) we have that $G^j[A^j_i,A^j_p]$ is complete for all $ip \in \binom{[k-1]}{2}$.
	Thus, for \Pcomplete($G^{j+1}$), we need only check that $xy_{j+1} \in E(G^{j+1})$ for all $x \in A^{j+1}_i$ with $i \in [k-1]\setminus \lbrace s \rbrace$. Indeed, if $i \in [k-1]\setminus \lbrace s \rbrace$, then $A_i^{j+1}=A_i^j=A_i\cup\{y_r:r\le j; s(r)=i\}$ and, by~\eqref{K2cons} and Lemma~\ref{Ziedges}(iv), $N_{G^j}(y_{j+1})\supseteq A_i^{j+1}$. Finally, note that by construction, $N_{G^{j+1}}(y_{j+1}, A_i^{j+1})=N_{G^{j}}(y_{j+1}, A_i^{j+1})$.

	Note that \Pbadedges($G^{j+1}$) holds by \Pbadedges($G^\ell_1$) and K(1).
	For \PZk($G^{j+1}$), it suffices to show that, for all $ip \in \binom{[k-1]}{2}$, the bipartite graph $G^{j+1}[U^{j+1,i},A^{j+1}_p]$ is complete.
	By \PZk($G^j$) and $K(2,j)$, we have that $G^j[U^{j,i},A^j_p]$ is complete.
	For $i,p \neq s$, this means that $G^j[U^{j+1,i},A^{j+1}_p]$ is complete.
	But $G^j$ and $G^{j+1}$ are identical between these two sets by construction, so we are done in this case.
	Suppose instead that $i=s$.
	Then note that $U^{j+1,s}=U^{j,s}\setminus\{y_{j+1}\}$ and $A_{p}^{j+1}=A_{p}^j$, so we are done as $G^j[U^{j,s},A_{p}^j]$ is complete and $G^{j+1}$ is identical in this part. 
	Suppose finally that $p=s$.
	Then $U^{j+1,i}=U^{j,i}$ and $G^j[U^{j,i},A^{j+1}_s \setminus \lbrace y_{j+1} \rbrace]$ is complete.
	Thus, it suffices to show that $U^{j+1,i} = U^{j,i} \subseteq N_{G^{j+1}}(y_{j+1})$.
	But this is immediate by construction.
	So \PZk($G^{j+1}$) holds with $U^{j+1,i}$ playing the role of $U^i_k$.
	We now turn to \Pmissing($G^{j+1}$). In what follows, $d^m_{G^{r}}$ is the missing degree with respect to the partition $(A_1^{r},\ldots,A_k^{r})$.
	Let $y \in V(G^{j+1})$.
	We have by construction that
	\begin{equation}\label{missingeqYi}
	d^m_{G^{j+1}}(y) = \begin{cases} 
	|A^{j+1}_k|-d_{G^j}(y,A^j_k) - d_{G^j}(y,A^j_s)-|Q(y_{j+1})|, &\mbox{if } y = y_{j+1},\\
	d^m_{G^j}(y) + d_{Q(y_{j+1})}(y) - 1, & \mbox{if } y\in N_{\overline{G^j}}(y_{j+1},A_s^j),
	\\
	d^m_{G^j}(y) + d_{Q(y_{j+1})}(y) + 1, & \mbox{if } y\in N_{\overline{G_j}}(y_{j+1},U^{j+1,s})\setminus R(y_{j+1}),
	\\
	d^m_{G^j}(y) + d_{Q(y_{j+1})}(y), & \mbox{otherwise. } \end{cases}
	\end{equation}
	If $y \in Z \setminus \lbrace y_{j+1}\rbrace$, then $y$ is isolated in $\bigcup_{i \in [k-1]}Q_i$ and hence in $Q(y_{j+1})$.
	So $d^m_{G^{j+1}}(y) \geq d^m_{G^j}(y)$.
	Thus we are done by \Pmissing($G^j$) in this case.
	If $y \notin Z$, then, using $\Delta(\cup_{i\in[k-1]} Q_i)\le 2\delta n$ from Proposition~\ref{Qi} and $Q(y_{1}),\ldots,Q(y_{j+1})$ are edge-disjoint, we have
	$$
	d^m_{G^{j+1}}(y) \stackrel{(\ref{missingeqYi})}{\leq} d^m_{G^\ell_1}(y) + \Delta(\cup_{i\in[k-1]} Q_i) + j+1  \stackrel{\Pmissing(G^\ell_1)}{\leq} (\xi + 2\delta)n +j+1,
	$$
	as required.
	Moreover, by $K(1)$ and \Pbadedges($G_1^{\ell}$), $d_{G^j}(y_{j+1},A^j_k)\le d_{G_1^{\ell}}(y_{j+1},A_k)\le\delta n$. Using~\eqref{eq-yjplus1} and~\eqref{missingeqYi}, we have
	\begin{eqnarray}
	d^m_{G^{j+1}}(y_{j+1}) &=& |A^{j+1}_k| - d_{G^j}(y_{j+1},A^j_k) - d_{G^j}(y_{j+1},A^j_s)-|Q(y_{j+1})| \geq |A_k|-|Y| -2\delta n-2\gamma n\nonumber\\
\nonumber	&\stackrel{\Ppartition(G^\ell_1)}{\geq}& n-(k-1)cn - \beta n - 3\delta n - 2\gamma n\\
\label{eq-ym}	&\stackrel{\eqref{ineq:c}}{\geq}& \alpha n > \xi n/2 - (j+1).
	\end{eqnarray}
	Thus \Pmissing($G^{j+1}$) holds.
	This completes the proof of $K(2,j+1)$.

	Finally, we will show $K(3,j+1)$.
	For every $p\in[k-1]$ and $q\in[j+1]$, let
	$$
	a_p^{q} := \sum_{t \in [k-1]\setminus \lbrace p \rbrace}|A^{q}_t|.
	$$
	Then by~\eqref{Aeq}, $a_s^j=a_s^{j+1}$.
	Observe that a triangle is in $G^{j+1}$ but not $G^j$ if and only if it contains an edge $xy_{j+1}$ where $x \in R(y_{j+1})$ or $x \in (Z_k\setminus Z_k^s)\cap A_k^j$ is a non-neighbour of $y_{j+1}$ in $G^j$ (this is precisely the set $T(y_{j+1})$); and a triangle is in $G^j$ but not $G^{j+1}$ if and only if it contains an edge $uy_{j+1}$, where $u \in N_{G^j}(y_{j+1},A^j_s)$, or an edge $e \in Q(y_{j+1})$.
	Observe that there is no triangle in $G^j$ which contains at least two edges from $E(G^j)\setminus E(G^{j+1})$.
	Indeed, this follows from~\eqref{eq:Q(y)} and the facts that $E(G^j[A^j_s]),E(G^j[R_k]) = \emptyset$ (due to $s\le \ell$, Lemma~\ref{Ziedges}(ii) and $K(1)$).
	Thus
	\begin{align*}
	K_3(G^{j+1})-K_3(G^j) &\leq \sum_{e \in E(G^{j+1})\setminus E(G^j)}P_3(e,G^{j+1}) - \sum_{e \in E(G^j)\setminus E(G^{j+1})}P_3(e,G^j)\\
	&\leq \sum_{x \in R(y_{j+1})}P_3(xy_{j+1},G^{j+1}) - \sum_{y \in N_{G^{j}}(y_{j+1},A_s^j)}P_3(yy_{j+1},G^{j})\\
	&\quad\quad+ \sum_{z \in T(y_{j+1})}P_3(zy_{j+1},G^{j+1}) - \sum_{e \in Q(y_{j+1})} P_3(e,G^j).
	\end{align*}
	We will estimate each summand separately.
	Let $y \in N_{G^{j}}(y_{j+1},A_s^j)$.
	By $K(1,j+1)$ and the definition of $T(y_{j+1})$, we have that
	\begin{align*}
	P_3(yy_{j+1},G^{j}) &\geq a_s^{j} + d_{G^j}(y_{j+1},U^j\setminus U^{j,s}) + P_3(yy_{j+1},G^j;R_k)\\
	&= a_s^{j} + |U^j\setminus U^{j,s}|-|T(y_{j+1})| + P_3(yy_{j+1},G^j;R_k).
	\end{align*}
	Now let $x \in R(y_{j+1})$. 
	Then $d_{G^{j+1}}(y_{j+1},A^j_s) = 0$ and $x \in R_k'$, so
	\begin{eqnarray}\label{eq-st2}
	P_3(xy_{j+1},G^{j+1}) &\leq& a_s^{j}- d_{\overline{G^j}}(x,\cup_{i\in[k-1]\setminus \{s\}}A_i^j) + d_{G^{j+1}}(x,A_k^{j+1})\nonumber\\
	&\leq& a_s^{j} + d_{G}(x,A_k) \leq a_s^j + \Delta,
	\end{eqnarray}
	where we used Lemma~\ref{Ziedges}(iv) to replace $d_{G^\ell_1}(x,A_k)$ by $d_G(x,A_k)$.
	Let $z \in T(y_{j+1})$.
	Let $t \in [k-1]\setminus \lbrace s \rbrace$ be such that $z \in Z_k^t$.
	Then, since $d_{G^{j+1}}(y_{j+1},A^j_s) = 0$ and each of $y_{j+1},z$ has at most $\delta n$ neighbours in $A_k$ and $|A_t^{j+1}| = |A^j_t|\geq |A_t|$,
	\begin{eqnarray*}
		P_3(zy_{j+1},G^{j+1}) &\leq& \sum_{p \in [k-1]\setminus \lbrace s,t\rbrace}|A^{j}_p| + d_{G^{j+1}}(z,A^j_t) + d_{G^{j+1}}(z,A^{j+1}_k)\\
		&\stackrel{\Pbadedges(G^{j+1}),\Pmissing(G^{j+1})}{\leq}& a^{j}_s - \xi n/2 + j+1+ \delta n\ \leq\ a_s^j - \xi n/2+2\delta n.
	\end{eqnarray*}
	Let now $xy \in Q(y_{j+1})$ where $x \in R_s$ and $y \in R_k$. As $Q_s^0\supseteq Q(y_{j+1})$, Proposition~\ref{Qi} implies that $	P_3(xy,G_1^{\ell}) \geq a_s -2\sqrt{\eta}n$. Then by $K(1)$,
	\begin{align*}
	P_3(xy,G^j) \geq P_3(xy,G_1^{\ell})\geq a^j_s -|Y|-2\sqrt{\eta}n \geq a_s^{j} - 2\delta n.
	\end{align*}
Before we upper bound $K_3(G^{j+1})-K_3(G^j)$, we need some preliminary estimates.
Let $a,b,p$ be non-negative integers such that $b \leq a$ and $p \leq 2\gamma n$. We claim that
\begin{equation}\label{abp}
\left(\frac{\xi a}{6\gamma} - b\right)p  \leq \frac{\xi n}{3}(a-b).
\end{equation}
Indeed, if $\frac{\xi a}{6\gamma} - b<0$, then it trivially holds as $a\ge b$. Otherwise $\left(\frac{\xi a}{6\gamma} - b\right)p \le \left(\frac{\xi a}{6\gamma} - b\right)2\gamma n \leq \frac{\xi n}{3}(a-b)$ as desired.

Observe that $|U^j\setminus U^{j,s}|,d_{G^j}(y_{j+1},U^j\setminus U^{j,s}),d_{G^j}(y_{j+1},A^j_s)$ satisfy the conditions on $a,b,p$ respectively.
Indeed, by Lemma~\ref{Ziedges}(iv), $K(2)$ and the definition of $Y$, we have that $$
d_{G^{j}}(y_{j+1},A_s^{j}) \leq d_{G_1^\ell}(y_{j+1},A_s) +|Y|\stackrel{\Pbadedges(G)}{\le} 2\gamma n.
$$ 
Now,
	\begin{eqnarray*}
	K_3(G^{j+1}) - K_3(G^j) &\leq& \sum_{y \in N_{G^j}(y_{j+1},A_s^j)} \left( \Delta - (|U^j\setminus U^{j,s}|-|T(y_{j+1})|) - P_3(yy_{j+1},G^j;R_k) \right)\\
	&&\hspace{2cm}- |T(y_{j+1})|\cdot \xi n/3\\
	&=& d_{G^j}(y_{j+1},A_s^j)\left(\Delta- d_{G^j}(y_{j+1},U^j\setminus U^{j,s})\right) -\sum_{y \in N_{G^j}(y_{j+1},A_s^j)} P_3(yy_{j+1},G^j;R_k)\\
	&&\hspace{2cm}- \left(|U^j\setminus U^{j,s}|-d_{G^j}(y_{j+1},U^j\setminus U^{j,s})\right) \frac{\xi n}{3}\\
	&\stackrel{(\ref{abp})}{\leq}& d_{G^j}(y_{j+1},A_s^j)\left( \Delta - \frac{\xi}{6\gamma}|U^j\setminus U^{j,s}|\right) -\sum_{y \in N_{G^j}(y_{j+1},A_s^j)} P_3(yy_{j+1},G^j;R_k)\\
	&=& \sum_{y \in N_{G^{j}}(y_{j+1},A_s^{j})}\left(\Delta- \frac{\xi}{6\gamma}|U^{j}\setminus U^{j,s}| -P_3(yy_{j+1},G^{j};R_k)\right).
	\end{eqnarray*}
Observe that equality above holds only when equality in~\eqref{eq-st2} holds. This happens only if $d_{\overline{G^j}}(x,\cup_{i\in[k-1]\setminus \{s\}}A_i^j)=0$ for every $x\in R(y_{j+1})$, in other words, $G^{j}[R(y_{j+1}), \cup_{i\in[k-1]\setminus\{s\}}A_i^j]$ is complete. Recall that $R(y_{j+1})=N_{G^{j+1}\setminus G^{j}}(y_{j+1}, R_k)$. So $K(3,j+1)$ holds.
\end{proof}

We can now derive some properties of the graph $G_2 := G_2^{k-1}$ obtained in Lemma~\ref{subYiedges}, namely that its only bad edges have both endpoints in $X$, and $G_2$ does not have many more triangles than $G_1$.
See the right-hand side of Figure~\ref{partition2} for an illustration of $G_2$ in the case $k=3$.
For all $i \in [k-1]$, we will let $A_i':=A_i\cup Y_i$ and
\begin{equation}\label{eq:ai}
a_i' := \sum_{j \in [k-1]\setminus\lbrace i \rbrace}|A_j'| = n - |A_i'|-|A_k'|.
\end{equation}

\begin{lemma}\label{Yiedges}
	There exists an $(n,e)$-graph $G_2$ on the same vertex set as $G_1 := G_1^{k-1}$ such that
	\begin{itemize}
		\item[(i)] $G_2$ has an $(A_1',\ldots,A_k';Z,2\beta,\xi/3,2\xi,\delta)$-partition with missing vector $\underline{m}^{(2)} = (m^{(2)}_1,\ldots,m^{(2)}_{k-1})$ 
		where $A_i' := A_i \cup Y_i$ for $i \in [k-1]$ and $A_k' := A_k \setminus Y = R_k \cup X$; also $\alpha m^{(1)}_i \leq m^{(2)}_i \leq 2m^{(1)}_i$ for all $i \in [k-1]$.
		\item[(ii)] If there are $i \in [k]$ and $xy \in E(G_2[A_i'])$, then $i=k$; furthermore $x,y \in X$ and $xy \in E(G[A_k'])$. 
		\item[(iii)] For every $i \in [k-1]$ and every $z \in X_i$, we have that $d_{G_2}(z,A_i') \geq \gamma n$.
		\item[(iv)] $K_3(G_2) \leq K_3(G_1) + \delta^{1/4}m^2/(3n)$. 
	\end{itemize}
\end{lemma}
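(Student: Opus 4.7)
The plan is to apply Lemma~\ref{subYiedges} with $\ell=k-1$ to $G_1:=G_1^{k-1}$, processing all of $Y:=Y_1\cup\dots\cup Y_{k-1}$ in some fixed order $y_1,\ldots,y_q$ (where $q=|Y|$), and set $G_2:=G^q$. The partition $(A_1',\ldots,A_k')$ with $A_i'=A_i\cup Y_i$ for $i\in[k-1]$ and $A_k'=A_k\setminus Y=R_k\cup X$ is then built in, and properties (i)--(iv) are to be extracted from K($1,q$)--K($3,q$) combined with the structural information $G_1$ carries from Lemma~\ref{Ziedges} and direct bookkeeping.

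For (i), the partition data comes from K($1,q$) and K($2,q$): using $q=|Y|\le\sqrt{\eta}n$ from~\eqref{Zsize}, the parameter shifts $\beta+q/n$, $\xi/2-q/n$, $\xi+2\delta+q/n$ are absorbed into $2\beta$, $\xi/3$, $2\xi$. For the missing vector bounds, I would compute $m_i^{(2)}=e(\overline{G_2}[A_i',A_k'])$ by decomposing over the four pairs $[A_i,R_k]$, $[A_i,X]$, $[Y_i,R_k]$, $[Y_i,X]$, using $|R(y_j)|=d_{G^{j-1}}(y_j,A_{s(j)}^{j-1})$, the identity $|T(y_j)|=|Q(y_j)|$, the completeness property \PZk($G$), and Proposition~\ref{Gprops}(ii). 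Manipulating the result yields $m_i^{(2)}=m_i-|Y_i|\,|A_i|+e(\overline{G}[Y_i,R_k\cup X_i])+\sum_{y\in Y_i}|Q(y)|+O(|Y_i|^2)$. Since this is non-negative, one gets $m_i^{(2)}\le m_i\le 2m_i^{(1)}$ via Lemma~\ref{Ziedges}(i). The lower bound uses that each $y\in Y_i$ contributes at least $|A_i|-\gamma n$ to $m_i$, which forces $|Y_i|\le 2m_i/(cn)$, ultimately bounding the total drop by a factor involving $\alpha$.

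For (ii), Transformation 2 modifies only edges incident to $Y$-vertices, and after processing these lie in $A_1'\cup\dots\cup A_{k-1}'$. Hence $G_2[A_k']=G[A_k']$, and Proposition~\ref{Gprops}(i) forces both endpoints of each edge of $G[A_k']$ into $X$. Inside each $A_i'$ for $i\in[k-1]$, the edges of $G_1[A_i]$ are already absent by Lemma~\ref{Ziedges}(ii), and every edge $y_jv$ with $v\in A_{s(j)}^{j-1}\supseteq A_{s(j)}\cup\{y_r:r<j,\,s(r)=s(j)\}$ is deleted at step $j$, so $G_2[A_i']$ has no edges. Property (iii) follows because the edges between $X$ and $A_i$ are untouched by Transformation 2 (each modification is centred at some $y_j$, and when $z\in X$ arises as a $T(y_r)$-leaf the resulting new edge goes to $y_r$, not to any vertex of $A_i$), so $d_{G_2}(z,A_i')\ge d_G(z,A_i)>\gamma n$ for every $z\in X_i$.

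The main obstacle is (iv). Summing the inequality in K($3,j$) over $j\in[q]$ and discarding the non-positive terms $-(\xi/6\gamma)|U^{j-1}\setminus U^{j-1,s(j)}|$ and $-P_3(\cdot;R_k)$ yields
\[K_3(G_2)-K_3(G_1)\le\sum_{j=1}^q d_{G^{j-1}}(y_j,A_{s(j)}^{j-1})\cdot\Delta.\]
One now uses $d_{G^{j-1}}(y_j,A_{s(j)}^{j-1})\le d_G(y_j,A_{s(j)})+|Y|\le 2\gamma n$ (from the definition of $Y_{s(j)}$ and~\eqref{Zsize}), $|Y|\le 2m/(\xi n)$ from~\eqref{Zsize}, and $\Delta\le\delta^{1/3}m/n$ from~\eqref{Rk'}, producing a total of at most $4\gamma\delta^{1/3}m^2/(\xi n)$. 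The hierarchy~\eqref{hierarchy} (which gives $12\gamma\delta^{1/12}\le\xi$) then yields the promised bound $\delta^{1/4}m^2/(3n)$.
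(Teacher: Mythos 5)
Your proposal is correct and follows essentially the same route as the paper: apply Lemma~\ref{subYiedges} with $\ell=k-1$ to all of $Y$, read (i) off K($1,q$)/K($2,q$) via the same decomposition of $m_i^{(2)}$ (the paper bounds $m_i^{(2)}\le m_i^{(1)}+\delta n|Y_i|$ and divides by $m_i^{(1)}\ge|Y_i|(c-2\gamma)n$, which is a cleaner way to land both the $\alpha$ lower bound and the factor-$2$ upper bound than your ``$m_i^{(2)}\le m_i$'' shortcut, though that inequality is also true), and get (iv) by summing K($3,j$) and using $\Delta\le\delta^{1/3}m/n$, $|Y|\le 2m/(\xi n)$ exactly as in the paper. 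One small inaccuracy: Transformation~2 does modify edges not incident to $Y$ (the removed sets $Q(y_j)\subseteq G[R_{s(j)},R_k]$), but since these are cross-edges for the new partition your conclusions $G_2[A_k']=G[A_k']$ and (iii) are unaffected.
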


\begin{proof}
	Let $q := |Y|$ and apply Lemma~\ref{subYiedges} to obtain $G_2 := G^q=G_2^{k-1}$ satisfying $K(1,q)$--$K(3,q)$.
	Write $\underline{m}^{(1)} = (m^{(1)}_1,\ldots,m^{(1)}_{k-1})$.
	For $g\in[3]$, let $K(g)$ be the conjunction of the properties $K(g,1)$--$K(g,q)$.
	Observe that $A_i' = A^q_i$ for all $i \in [k]$.
	Now $|Y| \leq |Z| \leq \delta n$, and so $q/n \leq \delta$.
	Thus, by $K(1,q)$, $G_2$ has an $(A_1',\ldots,A_k';Z,\beta+\delta,\xi/2-\delta,\xi+3\delta,\delta)$-partition and hence an $(A_1',\ldots,A_k';Z,2\beta,\xi/3,2\xi,\delta)$-partition.
	
	Now, by $K(1)$,
	\begin{align*}
	m^{(2)}_i &= e(\overline{G_2}[A_i',A_k']) = e(\overline{G^q}[A_i\cup Y_i,A_k\setminus Y]) = e(\overline{G^q}[A_i,A_k\setminus Y] + \sum_{y \in Y_i}d_{\overline{G^q}}(y,A_k\setminus Y)\\
	&= e(\overline{G_1}[A_i,A_k\setminus Y]) + \sum_{\stackrel{j \in [q]}{s(j)=i}}|Q(y_j)| + \sum_{y \in Y_i}d_{\overline{G^q}}(y,A_k\setminus Y)\\
	&= m^{(1)}_i - \sum_{y \in Y_i}\left(d_{\overline{G_1}}(y,A_i)-d_{\overline{G^q}}(y,A_k\setminus Y)\right) + \sum_{\stackrel{j \in [q]}{s(j)=i}}|Q(y_j)|.
	\end{align*}
	Note further, using $|Y|\le|Z|\le \delta n$ by~\Pbadedges($G$), that
	\begin{align*}
	&\sum_{y \in Y_i}\left(d_{\overline{G_1}}(y,A_i)-d_{\overline{G^q}}(y,A_k\setminus Y)\right) = \sum_{y \in Y_i}\left(d^m_{G_1}(y)-d^m_{G^q}(y)\right) \leq \sum_{\stackrel{j \in [q]}{s(j)=i}}\left(|A_i| -(d^m_{G^j}(y_j)-|Y|)\right)\\
	&\stackrel{(\ref{eq-ym})}{\leq} \sum_{\stackrel{j \in [q]}{s(j)=i}}(|A_i|-(1-(k-1)c)n+3\gamma n) \stackrel{\Ppartition(G)}{\leq} |Y_i|(kc-1+4\gamma)n \stackrel{(\ref{ineq:c})}{\leq} (c-\alpha)|Y_i|n.
	\end{align*}
	A similar calculation shows that the left-hand side is positive.
	Thus using $K(1)$ for the bound $|Q(y_j)| \leq \delta n$, we have
	$m^{(1)}_i - (c-\alpha)|Y_i|n \leq m^{(2)}_i \leq m^{(1)}_i + \delta n|Y_i|$.
	But the definition of $Y_i$ and Lemma~\ref{Ziedges}(iv) imply that
	$$
	m^{(1)}_i \geq |Y_i| \cdot \min_{y \in Y_i}d_{\overline{G_1}}(y,A_i) = |Y_i| \cdot \min_{y \in Y_i}d_{\overline{G}}(y,A_i) \stackrel{\Ppartition(G)}{\geq} |Y_i| \cdot (c-\beta-\gamma)n \geq |Y_i|\cdot (c-2\gamma)n.
	$$
	Thus, using the fact that $c \leq \frac{1}{k-1} \leq \frac{1}{2}$ from~(\ref{eq:solc2}),
	$$
	\alpha \leq 1-\frac{c-\alpha}{c-2\gamma} \leq \frac{m^{(2)}_i}{m^{(1)}_i} \leq 1+\frac{\delta}{c-2\gamma} \leq 2.
	$$
	This completes the proof of (i).
	
	For (ii), the first part follows from $E(G_1[A_i]) = \emptyset$ due to Lemma~\ref{Ziedges}(ii) and $K(1)$. For the second part,
	suppose $xy \in E(G_2[A'_k])$.
	Now, $Y \cap A'_k = \emptyset$ 
	and $E(G_2[A'_k]) \subseteq E(G_1[A_k])$ so every edge in $E(G_2[A'_k])$ is incident to a vertex of $X$.
	So $x \in X$, say.
	Suppose that $y \notin X$.
	Then $y \in A_k'\setminus X \subseteq R_k$.
	So $xy$ is an edge of $G_1$ and hence of $G$ by Lemma~\ref{Ziedges}(iv).
	This is a contradiction to Proposition~\ref{Gprops}(i).
	This completes the proof of (ii).
	For (iii), note that for any $i\in[k-1]$ and any $z \in X_i$, $G_1$ and $G_2$ are identical in $[z,A_i]$. Thus, by Lemma~\ref{Ziedges}(iv) and the definition of $X$, we have that $d_{G_2}(z,A_i') \geq d_{G_2}(z,A_i)= d_{G}(z,A_i) \ge \gamma n$, as required.
	
	Finally, for (iv),
	\begin{eqnarray*}
	K_3(G_2) - K_3(G_1) &=& \sum_{j \in [q]}\left(K_3(G^j)-K_3(G^{j-1})\right) \ \stackrel{K(3,j)}{\leq}\ \sum_{j \in [q]}d_{G^{j-1}}(y_j,A^{j-1}_{s(j)}) \cdot \Delta\\
	\nonumber & \stackrel{K(1)}{\leq}& \Delta \cdot\sum_{j \in [q]}(d_{G_1}(y_j,A_{s(j)})+|Y|) \ \leq\ \Delta|Z|(\gamma n+|Z|)\\
	& \stackrel{(\ref{Zsize}),(\ref{Rk'})}{\leq}& \frac{\delta^{1/3}m}{n} \cdot \frac{2m}{\xi n} \cdot 2\gamma n\ \leq\ \frac{\delta^{1/4}m^2}{3n},
	\end{eqnarray*}
	as required.
\end{proof}

\subsection{Transformation 3: removing bad $X_i$-$X_i$ edges}\label{sec7.4}

We have obtained a graph $G_2$ from $G$ which has the property that every bad edge has both endpoints in $X$.
In the third transformation, we remove those bad edges whose endpoints both lie in $X_i$ for some $i \in [k-1]$.
The proof is very similar to the proofs of Lemmas~\ref{subZiedges} and~\ref{Ziedges}.

\begin{center}
\begin{figure}
\includegraphics[scale=0.9]{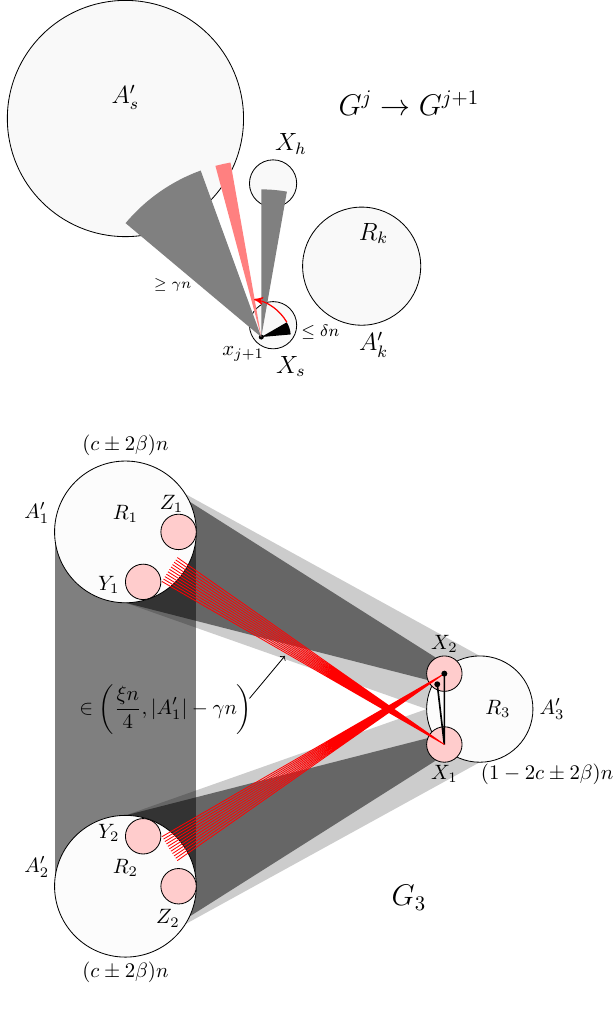}
\caption{Transformation 3: $G_2 \rightarrow G_3$ (here $k=3$). Left: A single step $G^j \rightarrow G^{j+1}$ as in Lemma~\ref{subXiedges}, in which the black edges are replaced by the pink edges. Right: The final graph $G_3$ obtained in Lemma~\ref{Xiedges}, in which $X_1$ and $X_2$ are now independent sets.}
\label{partition3}
\end{figure}
\end{center}

For all $i \in [k-1]$ and $x,y \in X_i$, let
$$
D(x) := d_{G_2}(x,X\setminus X_i) \text{ and } D(x,y) := |N_{G_2}(x,X\setminus X_i) \cap N_{G_2}(y,X\setminus X_i)|
$$
So $D(x)-D(x,y) \geq 0$ with equality if and only if the $G_2$-neighbourhood of $x$ in $X\setminus X_i$ is a subset of $y$'s.

\begin{lemma}\label{subXiedges}
	Let $G_2$ be any graph satisfying the conditions of Lemma~\ref{Yiedges}.
	Let $f := |X|$ and let $x_1,\ldots,x_f$ be any ordering of $X$.
	For each $r \in [f]$, let $s(r)$ be such that $x_r \in X_{s(r)}$.
	Then there exists a sequence $G_2 =: G^0,G^1,\ldots,G^f =: G_3$ of graphs such that for all $j \in [f]$,
	\begin{enumerate}[label={L(\arabic*,$j$):}]
		\item\label{W1'} $G^j$ is an $(n,e)$-graph and has an $(A_1',\ldots,A_k';Z,2\beta,\xi/4,2\xi,\delta)$-partition.
		\item\label{W2'} $E(G^j)\setminus E(G^{j-1}) = \lbrace x_{j}x : x \in R(x_{j}) \rbrace$, where $R(x_{j}) \subseteq R_{s(j)}$, and $E(G^{j-1})\setminus E(G^j)$ is the set of $x_{j'}x_j \in E(G_2)$ with $s(j')=s(j)$ and $j' > j$. 
		Thus $d_{G^j}(x_t,X_{s(t)}) = 0$ for all $t \in [j]$; $e(\overline{G^j}[A_i',A_k'])=e(\overline{G^{j-1}}[A_i',A_k'])$ for all $i \neq s(j)$, and $e(\overline{G^j}[A_{s(j)}',A_k']) = e(\overline{G^{j-1}}[A_{s(j)}',A_k'])-d_{G^{j-1}}(x_j,X_{s(j)})$.
		\item\label{W3'} $K_3(G^{j})-K_3(G^{j-1}) \leq \sum_{y \in N_{G_2}(x_{j},X_{s(j)}\setminus \lbrace x_1,\ldots,x_{j-1} \rbrace)}(D(x_{j})-D(y,x_{j}))$ with equality only if  $K_3(x_j,G^{j-1};X_{s(j)})=0$ and $N_{G^{j-1}}(y,A_{s(j)}')\cap N_{G^{j-1}}(x_j,A_{s(j)}')=\emptyset$ for all $y \in N_{G^{j-1}}(x_j,X_{s(j)})$.
	\end{enumerate}
\end{lemma}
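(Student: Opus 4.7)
My plan is to argue by induction on $j$, setting $G^0 := G_2$ and constructing $G^j$ from $G^{j-1}$ by a local modification at $x_j$. Write $s := s(j)$ and $Y_j := N_{G^{j-1}}(x_j, X_s)$; by $L(2,j-1)$ this set equals $N_{G_2}(x_j, X_s \setminus \lbrace x_1, \ldots, x_{j-1}\rbrace)$, since processing any earlier $x_{j'}$ with $s(j') = s$ will already have removed the edge $x_{j'}x_j$. I would then select $R(x_j) \subseteq R_s \setminus N_{G^{j-1}}(x_j)$ of size $|Y_j|$; this is possible because $|Y_j| \leq |X| \leq \delta n$, whereas $d_{\overline{G^{j-1}}}(x_j, R_s) \geq d^m_{G_2}(x_j) - |Z| \geq \xi n/4 - \delta n$ by \Pmissing($G_2$) applied to $x_j \in Z$, using that only $O(|Z|)$ edges at $x_j$ were modified by earlier transformations. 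Define $G^j$ by deleting $\lbrace x_jy : y \in Y_j\rbrace$ and inserting $\lbrace x_jv : v \in R(x_j)\rbrace$.

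Verification of $L(1,j)$ and $L(2,j)$ is largely bookkeeping. $L(2,j)$ is immediate from the construction. For $L(1,j)$, the partition $(A_1', \ldots, A_k')$ and the set $Z$ are unchanged; \Ppartition is trivially preserved. \Pcomplete is preserved because the removed edges are bad edges inside $A_k'$ while the added edges are cross-edges lying in $[A_k', A_s']$, so no complete bipartite pair between parts indexed in $[k-1]$ is affected. \Pbadedges is preserved since we only delete (not insert) bad edges. \PZk is preserved with the same partition $(Z_k^i)_i$ of $Z \cap A_k'$ because $x_j \in Z_k^s$ and its new neighbors in $R_s \subseteq A_s'$ are consistent with that labelling. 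For \Pmissing, missing degrees at non-endpoints of modified edges are unchanged; $x_j$'s missing degree decreases by $|R(x_j)|$ but stays above $\xi n/4$ by slack from \Pmissing($G_2$); and each $v \in R(x_j)$ has its missing degree decrease by $1$, hence stays within $2\xi n$.

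The main computation is $L(3,j)$. Since the only modified edges are at $x_j$, we have $K_3(G^j) - K_3(G^{j-1}) = K_3(x_j, G^j) - K_3(x_j, G^{j-1})$. Let $H := G^j \cap G^{j-1}$ and $N_c := N_{G^{j-1}}(x_j) \setminus Y_j = N_{G^j}(x_j) \setminus R(x_j)$. Then $K_3(x_j, G^j) - K_3(x_j, G^{j-1}) = \sum_{v \in R(x_j)} d_H(v, N_c) - \sum_{y \in Y_j} d_H(y, N_c) - e(H[Y_j])$, using $e(H[R(x_j)]) = 0$ since $R(x_j) \subseteq R_s$ is independent. I would split common neighbours of $x_j$ with each $v$ or $y$ across the parts $A_1', \ldots, A_k'$: the contributions from $\bigcup_{i \in [k-1] \setminus \lbrace s \rbrace} A_i'$ are exactly $a_s'$ on both sides (using \Pcomplete for $v \in R_s$ and \PZk for $x_j, y \in Z_k^s$) and thus cancel; the $A_s'$-contribution from $v$ is $0$ by independence of $A_s'$, whereas from $y$ it is $|N_{G^{j-1}}(x_j, A_s') \cap N_{G^{j-1}}(y, A_s')|$, non-negative and tightening the inequality (this is exactly one of the stated equality conditions); the $A_k'$-contribution from $v$ is at most $D(x_j)$ plus a small error, via $v$'s low missing degree and the decomposition $N_{G^j}(x_j, A_k') = N_{G_2}(x_j, X \setminus X_s) \cup N_G(x_j, R_k)$, whereas from $y$ it is at least $D(y, x_j)$ via common $(X \setminus X_s)$-neighbours. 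Pairing contributions (using $|R(x_j)| = |Y_j|$) yields the stated bound, with the second equality condition $K_3(x_j, G^{j-1}; X_s)=0$ capturing the over-counted triangles $\lbrace x_j, y_1, y_2\rbrace$ with $y_1, y_2 \in Y_j$ that the term $-e(H[Y_j])$ accounts for.

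The main technical obstacle is the careful bookkeeping in the triangle count: ensuring that the small error terms arising from $x_j$'s bad edges to $R_k$ and from $v$'s missing degree are absorbed into (or dominated by) the slack already tracked by the equality conditions. The only other delicate point is confirming that the sets $N_{G^{j-1}}(x_j, X_s)$ and $N_{G_2}(x_j, X_s \setminus \lbrace x_1, \ldots, x_{j-1}\rbrace)$ coincide, which requires unwinding $L(2)$ for every earlier step with $s(j') = s$ and checking that no such edge was modified by Transformations~$1$ or~$2$, since those only touch edges incident to $Z \setminus Z_k$ and $Y$ respectively.
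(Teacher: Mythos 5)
Your construction, induction, and triangle-count decomposition coincide with the paper's own proof of this lemma, and your identification of the two equality conditions (the term $-e(H[Y_j])=-K_3(x_j,G^{j-1};X_s)$ and the common $A_s'$-neighbourhoods) is exactly right. The one point you must sharpen is the $A_k'$-contribution of $v\in R(x_j)$: the lemma carries no error term, so ``at most $D(x_j)$ plus a small error'' is not sufficient, and in fact both errors you flag vanish identically. Indeed $N_{G^{j-1}}(x_j,R_k)=\emptyset$ because $E(G[X,R_k])=\emptyset$ by Proposition~\ref{Gprops}(i) and none of Transformations 1--3 ever inserts an $X$--$R_k$ edge, while $v\in R_s\subseteq A_s'$ is adjacent to every vertex of $X\setminus X_s$ by \PZk($G^{j-1}$); hence the contribution is exactly $D(x_j)$ and $P_3(vx_j,G^j)=a_s'+D(x_j)$ precisely. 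With that observation your argument closes and is the same as the paper's.
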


\begin{proof}
	Let $G^0 := G_2$. Suppose we have obtained $G^0,\ldots,G^{j}$ for some $j < f$ such that, for all $r \in [j]$,
	$L(1,r)$--$L(3,r)$ hold.
	Note that $G^0$ has an $(A_1',\ldots,A_k';Z,2\beta,\xi/3,2\xi,\delta)$-partition and hence an $(A_1',\ldots,A_k';Z,2\beta,\xi/4,2\xi,\delta)$-partition.
	For $g \in [3]$, let $L(g)$ denote the conjunction $L(g,1),\ldots,L(g,j)$ of properties.
	We obtain $G^{j+1}$ as follows.
	Let $s := s(j+1)$.
	Choose $R(x_{j+1}) \subseteq R_s\setminus N_{G^j}(x_{j+1}) \subseteq A_s'$ such that $|R(x_{j+1})| = d_{G^j}(x_{j+1},X_s)$.
	Let us first see why this is possible.
	One consequence of $L(2)$ is that the neighbourhood of $x_{j+1}$ in $G^j$ can be obtained from its neighbourhood in $G^0=G_2$ by removing its $G_2$-neighbours among $\lbrace x_r : r \leq j \text{ and } s(r) = s \rbrace$.  
	Thus
	\begin{align*}
	d_{\overline{G^{j}}}(x_{j+1},R_s) &\stackrel{L(2)}{=} d_{\overline{G_2}}(x_{j+1},R_s) \geq d_{\overline{G_2}}(x_{j+1},A_s')-|Z\cap A_s'| \stackrel{\Pmissing(G_2)}{\geq} \xi n/3 - \delta n \geq \delta n\\
	&\stackrel{\Pmissing(G_2)}{\geq} |Z|\ge d_{G^{j}}(x_{j+1},X_s).
	\end{align*}
	So $R(x_{j+1})$ exists.
	Now define $G^{j+1}$ by setting $V(G^{j+1}) := V(G^{j})$ and
	$$
	E(G^{j+1}) := \left( E(G^{j}) \cup \lbrace x_{j+1}x : x \in R(x_{j+1}) \rbrace \right) \setminus E(G^{j}[x_{j+1},X_s]).
	$$
	Thus $G^{j+1}$ is obtained by replacing all bad edges of $G^j$ between $x_{j+1}$ and another vertex in $X_s$ by the same number of missing edges of $G^j$ which are between $x_{j+1}$ and $R_s$.
See the left-hand side of Figure~\ref{partition3} for an illustration of the transformation $G^j \rightarrow G^{j+1}$.
	
	We will now show that $G^{j+1}$ satisfies $L(1,j+1),\ldots,L(3,j+1)$, beginning with $L(1,j+1)$.
	By construction, $G^{j+1}$ is an $(n,e)$-graph.
	To show that $G^{j+1}$ has an $(A_1',\ldots,A_k';Z,2\beta,\xi/4,2\xi,\delta)$-partition, we need to show that \Ppartition($G^{j+1}$)--\Pmissing($G^{j+1}$) hold with the appropriate parameters.
	All properties except \Pmissing($G^{j+1}$) are immediate.
	For \Pmissing, let $i \in [k]$ and let $y \in A_i'$ be arbitrary.
	Let $d^m_{G^j},d^m_{G^{j+1}}$ denote the missing degree in $G^j,G^{j+1}$ with respect to the partition $(A_1',\ldots,A_k')$.
	We have that
	\begin{equation}\label{misseq4}
	d^m_{G^{j+1}}(y) = \begin{cases} d^m_{G^{j}}(y)-1, &\mbox{if } y \in R(x_{j+1}), \\ 
	d^m_{G^{j}}(y)-d_{G^j}(x_{j+1},X_s), &\mbox{if } y=x_{j+1}, \\ 
	d^m_{G^{j}}(y), & \mbox{otherwise. } \end{cases}
	\end{equation}
	Thus if $y \in A_i' \setminus Z$, we have $d^m_{G^{j+1}}(y) \leq d^m_{G^{j}}(y) \leq 2\xi n$ since $G^j$ has an $(A_1',\ldots,A_k';Z,2\beta,\xi/4,2\xi,\delta)$-partition.
	It remains to consider the case $y=x_{j+1}$ (since missing degree is unchanged for all other vertices in $Z$).
	By the consequence of $L(2)$ stated above,
	\begin{equation}\label{misseq32}
	d^m_{G^{j}}(x_{j+1}) = d^m_{G_2}(x_{j+1}) \text{ and } d_{G^j}(x_{j+1},X_s) \le |Z|\le\delta n.
	\end{equation}
	Thus
	$$
	d^m_{G^{j+1}}(x_{j+1}) \stackrel{\Pmissing(G_2)}{\geq} \xi n/3  -\delta n \geq \xi n/4.
	$$
	Thus \Pmissing($G^{j+1}$) holds.
	We have shown that $L(1,j+1)$ holds.
	That $L(2,j+1)$ holds is clear from $L(2)$, the construction of $G^{j+1}$ and ~\eqref{misseq4}.
	
	For $L(3,j+1)$, observe that a triangle is in $G^{j+1}$ but not $G^j$ if and only if it contains an edge $xx_{j+1}$ where $x \in R(x_{j+1})$; and a triangle is in $G^j$ but not $G^{j+1}$ if and only if it contains an edge $yx_{j+1}$, where $y \in N_{G^j}(x_{j+1},X_s)$.
	Observe also that there is no triangle in $G^{j+1}$ which contains more than one vertex in $R(x_{j+1})$.
	Thus
	$$
	K_3(G^{j+1})=K_3(G^{j})+ \sum_{x \in R(x_{j+1})}P_3(xx_{j+1},G^{j+1}) - \sum_{y \in N_{G^{j}}(x_{j+1},X_s)}P_3(yx_{j+1},G^{j};\overline{X_s}) - K_3(x_{j+1},G^j;X_s).
	$$
	We will estimate each summand in turn.
	Fix $y \in N_{G^{j}}(x_{j+1},X_s)$.
	By $L(1,j)$, \Pcomplete($G^{j}$) holds and, since $y,x_{j+1} \in X_s$, both of these vertices are incident to all of $A_t'$ for $t \in [k-1]\setminus \lbrace s \rbrace$.
	So
	\begin{align}
	P_3(yx_{j+1},G^{j};\overline{X_s}) &=  a_{s}' + |N_{G^j}(y,X\setminus X_s) \cap N_{G^j}(x_{j+1},X\setminus X_s)| + |N_{G^{j}}(y,A_s')\cap N_{G^{j}}(x_{j+1},A_s')|\nonumber\\
	&= a_s' + D(y,x_{j+1})+|N_{G^{j}}(y,A_s')\cap N_{G^{j}}(x_{j+1},A_s')|,\label{P3xyj+1}
	\end{align}
	where the last equality uses the fact that $G^j$ and $G_2$ are identical at $[X_s,X\setminus X_s]$ for any $s\in[k-1]$ due to $L(2)$.
	Now fix $x \in R(x_{j+1})$.
	Then $d_{G^{j+1}}(x,A_s') = d_{G_2}(x,A_s') = 0$, also $d_{G^{j+1}}(x_{j+1},X_s) = 0$. By \PZk($G^{j+1}$), $x$ is incident to every vertex in $X_t$ for $t \neq s$. Recall that $d_{G^{j+1}}(x_{j+1},R_k)=0$. Indeed, $E(G[X,R_k])=\emptyset$ due to Proposition~\ref{Gprops}(i), and it remains empty during the transformations $G\rightarrow G_1\rightarrow G_2\rightarrow G^{q}$ for any $q\in[f]$. Thus
	$$
	P_3(xx_{j+1},G^{j+1}) = a_{s}' + P_3(xx_{j+1},G^{j+1};A_k') = a_s' + \sum_{t \in [k-1]\setminus \lbrace s \rbrace}d_{G^{j+1}}(x_{j+1},X_t) \stackrel{L(2)}{=} a_s' + D(x_{j+1}).
	$$
	Therefore
	\begin{eqnarray*}
	&&K_3(G^{j+1})-K_3(G^{j})\\
	&=& \sum_{x \in R(x_{j+1})}P_3(xx_{j+1},G^{j+1}) - \sum_{y \in N_{G^j}(x_{j+1},X_s)} P_3(yx_{j+1},G^j;\overline{X_s}) - K_3(x_{j+1},G^j;X_s)\\
	&\leq& \sum_{y \in N_{G^j}(x_{j+1},X_s)}(D(x_{j+1})-D(y,x_{j+1})-|N_{G^{j}}(y,A_s')\cap N_{G^{j}}(x_{j+1},A_s')|) - K_3(x_{j+1},G^j;X_s)\\
	&=& \sum_{y \in N_{G_2}(x_{j+1},X_s\setminus \lbrace x_1,\ldots,x_j \rbrace)}(D(x_{j+1})-D(y,x_{j+1})-|N_{G^{j}}(y,A_s')\cap N_{G^{j}}(x_{j+1},A_s')|) - K_3(x_{j+1},G^j;X_s),
	\end{eqnarray*}
	proving $L(3,j+1)$.
\end{proof}

Again we are now able to derive some properties of $G_3 := G^f$ obtained in Lemma~\ref{subXiedges}, namely that every bad edge lies between $X_i$ and $X_j$ for some distinct $i,j$; and $G_3$ does not have many more triangles than $G_2$.
The right-hand side of Figure~\ref{partition3} shows $G_3$ in the case when $k=3$.

\begin{lemma}\label{Xiedges}
	There exists an $(n,e)$-graph $G_3$ on the same vertex set as $G_2$ such that
	\begin{itemize}
		\item[(i)] $G_3$ has an $(A_1',\ldots,A_k';Z,2\beta,\xi/4,2\xi,\delta)$-partition with missing vector $\underline{m}^3 := (m^{(3)}_1,\ldots,m^{(3)}_{k-1})$ and $m^{(2)}_i/2 \leq m^{(3)}_i \leq m^{(2)}_i$, where $m^{(3)}_i = m^{(2)}_i$ if and only if $E(G_2[X_i])=\emptyset$.
		\item[(ii)] If there is $i \in [k]$ and $xy \in E(G_3[A_i'])$, then 
		$i=k$ and there exists $\ell \ell' \in \binom{[k-1]}{2}$ such that $x \in X_\ell$ and $y \in X_{\ell'}$. Moreover, for all $st \in \binom{[k-1]}{2}$, we have $E(G_3[X_s,X_{t}]) = E(G_2[X_s,X_{t}])$ and $d_{G_3}(x,A_i') \geq \gamma n$ for all $i \in [k-1]$ and $x \in X_i$.
		\item[(iii)] $K_3(G_3) - K_3(G_2) \leq |Z|^2 \cdot \max_{\stackrel{i \in [k-1]}{x,y \in X_i}} (D(x)-D(x,y))$ with equality only if for all $i\in[k-1]$ we have that $G_2[X_i]$ is triangle-free and $N_{G_2}(x,A_i') \cap N_{G_2}(y,A_i')=\emptyset$ for all $xy \in E(G_2[X_i])$. In particular $K_3(G_3) - K_3(G_2) \leq 
		\sqrt{\delta}m^2/n$.
	\end{itemize}
\end{lemma}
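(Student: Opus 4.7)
The plan is to apply Lemma~\ref{subXiedges} with $f:=|X|$ and some ordering $x_1,\dots,x_f$ of $X$, and set $G_3:=G^f$; the three conclusions are then essentially a bookkeeping exercise reading off properties $L(1,f)$--$L(3,f)$ and aggregating them, in exact parallel with how Lemma~\ref{Ziedges} and Lemma~\ref{Yiedges} were derived from Lemmas~\ref{subZiedges} and~\ref{subYiedges} respectively.

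For part~(i), the partition of $G_3$ is given directly by $L(1,f)$. For the missing vector, $L(2)$ tells us that each step adds $|R(x_{j+1})|=d_{G^j}(x_{j+1},X_{s(j+1)})$ new cross-edges between $A'_{s(j+1)}$ and $A'_k$ and removes exactly the same number of bad edges inside $X_{s(j+1)}\subseteq A'_k$. Since an edge $x_jx_{j'}\in E(G_2[X_i])$ is removed exactly once (at the earlier of the two processing steps), telescoping yields $m_i^{(3)}=m_i^{(2)}-e(G_2[X_i])$. Hence $m_i^{(3)}\le m_i^{(2)}$ with equality iff $E(G_2[X_i])=\emptyset$. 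For the lower bound, \Pbadedges($G_2$) gives $\Delta(G_2[X_i])\le \Delta(G_2[A'_k])\le \delta n$, while \eqref{Zsize} gives $|X_i|\le|Z_k^i|\le 2m_i/(\xi n)$; combined,
$$e(G_2[X_i])\le \tfrac12|X_i|\Delta(G_2[X_i])\le \delta m_i/\xi.$$
Chaining with Lemmas~\ref{Ziedges}(i) and~\ref{Yiedges}(i) yields $e(G_2[X_i])\le (2\delta/(\alpha\xi))\,m_i^{(2)}\le m_i^{(2)}/2$ by the hierarchy \eqref{hierarchy}.

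For part~(ii), $E(G_3[X_s,X_t])=E(G_2[X_s,X_t])$ for $st\in\binom{[k-1]}{2}$ is immediate from $L(2)$, which modifies only edges inside a single $X_{s(j)}$ or between $X_{s(j)}$ and $R_{s(j)}$. Lemma~\ref{Yiedges}(ii) says every bad edge of $G_2$ lies in $G_2[X]$, and $L(2,f)$ forces $d_{G_3}(x_j,X_{s(j)})=0$ for all $j$; together this leaves only cross-$X_\ell X_{\ell'}$ bad edges, all of which lie inside $A'_k$. The degree bound $d_{G_3}(x,A'_i)\ge\gamma n$ for $x\in X_i$ follows from Lemma~\ref{Yiedges}(iii), since our transformation only \emph{adds} edges between $X_i$ and $R_i\subseteq A'_i$, and never deletes $(A'_i,A'_k)$-edges incident to $X_i$.

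For part~(iii), summing $L(3,j)$ over $j\in[f]$ gives
$$K_3(G_3)-K_3(G_2)\;\le\;\sum_{j=1}^{f}\sum_{y\in N_{G_2}(x_j,X_{s(j)}\setminus\{x_1,\dots,x_{j-1}\})}\bigl(D(x_j)-D(y,x_j)\bigr).$$
The total number of summands is $\sum_{i\in[k-1]}e(G_2[X_i])\le\binom{|X|}{2}\le|Z|^2$, while each summand is bounded by $\max_{i,\,x,y\in X_i}(D(x)-D(x,y))$. This yields the stated $|Z|^2$-bound. Using $D(x)\le|X|\le|Z|\le 2m/(\xi n)$ by~\eqref{Zsize}, this is at most $|Z|^3\le 8m^3/(\xi n)^3$; combining with $m\le\eta n^2$ from~\eqref{m} and $\eta\ll\sqrt{\delta}\xi^3$ from~\eqref{hierarchy} gives $\le\sqrt{\delta}m^2/n$. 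For the equality characterisation, equality in the displayed bound forces equality in $L(3,j)$ for every $j$ with $d_{G_2}(x_j,X_{s(j)}\setminus\{x_1,\dots,x_{j-1}\})>0$: any triangle $\{x,y,z\}\subseteq X_i$ of $G_2$ would, when its earliest-processed vertex reached, violate $K_3(x_j,G^{j-1};X_{s(j)})=0$; similarly, any $xy\in E(G_2[X_i])$ with a common $A'_i$-neighbour in $G_2$ would, at the earlier of $x,y$, violate the second clause of $L(3)$ (note $N_{G^{j-1}}(\cdot,A'_{s(j)})=N_{G_2}(\cdot,A'_{s(j)})$ because the transformation does not modify $A'_{s(j)}$-adjacencies of any unprocessed vertex in $X_{s(j)}$).

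I do not foresee a substantive obstacle: the structural properties needed are cosmetically identical to those already established in Lemmas~\ref{Ziedges} and~\ref{Yiedges}, and the per-step work has been done in Lemma~\ref{subXiedges}. The only mildly delicate points are checking that the hierarchy gives $e(G_2[X_i])\le m_i^{(2)}/2$ (routine) and translating the equality condition in $L(3,j)$ from $G^{j-1}$ to $G_2$, which is legitimate because $x_j$ has never been processed before step $j$ and the edges between different $X_s$ are untouched.
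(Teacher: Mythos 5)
Your proposal is correct and follows essentially the same route as the paper: set $G_3:=G^f$ from Lemma~\ref{subXiedges} and aggregate $L(1,f)$--$L(3,f)$, with the telescoping identity $m_i^{(3)}=m_i^{(2)}-e(G_2[X_i])$ and the same equality analysis. The only (immaterial) differences are in the numeric bookkeeping — the paper gets $e(G_2[X_i])\le m_i^{(2)}/2$ directly from $m_i^{(2)}\ge |X_i|\xi n/3$ via \Pmissing($G_2$), and bounds $D(x)-D(x,y)\le\delta n$ rather than $\le|Z|$ for the $\sqrt{\delta}m^2/n$ estimate, but your chains are equally valid under the hierarchy~\eqref{hierarchy}.
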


\begin{proof}
	Let $f := |X|$ and apply Lemma~\ref{subXiedges} to $G_2$ to obtain $G_3 := G^f$ satisfying $L(1,f)$--$L(3,f)$. For $g\in[3]$ let $L(g)$ denote the conjunction of properties $L(g,1)$--$L(g,f)$.
	By $L(1,f)$, $G_3$ has an $(A_1',\ldots,A_k';Z,2\beta,\xi/4,2\xi,\delta)$-partition.
	Also, for all $i \in [k-1]$,
	$$
	\sum_{\stackrel{j \in [f]}{s(j)=i}}d_{G^{j-1}}(x_j,X_i) = \sum_{\stackrel{j \in [f]}{s(j)=i}}d_{G_2}(X_i\setminus \lbrace x_1,\ldots,x_{j-1} \rbrace) = e(G_2[X_i]).
	$$
	Thus
	\begin{eqnarray*}
		m^{(3)}_i &=& e(\overline{G^f}[A_i',A_k']) \stackrel{L(2,f)}{=} e(\overline{G_2}[A_i',A_k']) - \sum_{\stackrel{j \in [f]}{s(j)=i}}d_{G^{j-1}}(x_j,X_i) = m^{(2)}_i - e(G_2[X_i])\\
		&\stackrel{\Pbadedges(G_2)}{\geq}& m^{(2)}_i - |X_i|\cdot \delta n \geq m_i^{(2)} - |X_i| \cdot \frac{\xi n}{6} \stackrel{\Pmissing(G_2)}{\geq} \frac{m_i^{(2)}}{2},
	\end{eqnarray*}
	and also $m^{(3)}_i \leq m^{(2)}_i$ with equality holds if and only if $E(G_2[X_i])=\emptyset$.
	This proves (i).

	We now turn to (ii).
	By $L(2)$ and Lemma~\ref{Yiedges}(ii), $E(G_3[A_t']) = E(G_2[A_t']) = \emptyset$ if $t \neq k$.
	Furthermore, $E(G_3[A_k']) \subseteq E(G_2[A_k'])$.
	So if $G_3$ has a bad edge $xy$, both of its endpoints lie in $X$.
	But, for all $r \in [f]$, we have $d_{G^{j}}(x_r,X_{s(r)}) = 0$ for all $j \geq r$.
	So $E(G_3[X_i]) = \emptyset$ for all $i \in [k-1]$. Note that for any $x\in X_i$ with $i\in[k-1]$, after the transformations $G\rightarrow G_1\rightarrow G_2\rightarrow G_3$, we have $N_{G_3}(x,A_i)\supseteq N_G(x,A_i)$. Hence, by the definition of $X$, 
	\begin{equation}\label{eq-xA}
	d_{G_3}(x,A_i')\ge d_{G_3}(x,A_i)\ge d_G(x,A_i)\ge\gamma n.
	\end{equation} 
	This proves (ii).

It remains to establish~(iii). We have that
	\begin{align*}
	K_3(G_3) - K_3(G_2) &= \sum_{j \in [f]}\left(K_3(G^j)-K_3(G^{j-1})\right) \leq \sum_{j \in [f]}\sum_{y \in N_{G_2}(x_{j},X_{s(j)}\setminus \lbrace x_1,\ldots,x_{j-1} \rbrace)}(D(x_{j})-D(y,x_{j}))\\
	&\leq |Z|^2 \cdot \max_{\stackrel{i \in [k-1]}{x,y \in X_i}} (D(x)-D(x,y)) \stackrel{\Pbadedges(G_2)}{\leq} |Z|^2 \cdot \delta n \stackrel{(\ref{Zsize})}{\leq} \frac{4\delta  m^2}{\xi^2 n} \leq \frac{\sqrt{\delta}m^2}{n}.
	\end{align*}
    This together with $L(3)$ implies the inequality in~(iii).
    Further, we have equality only if $K_3(x_j,G^{j-1};X_{s(j)})=0$ for all $j \in [f]$, and $|N_{G^{j-1}}(y,A_s')\cap N_{G^{j-1}}(x_{j},A_s')|$ for all $y \in N_{G^{j-1}}(x_j)$, where $s(j)$ is such that $x_j \in A'_{s(j)}$.
    This occurs if and only if $G_2[X_i]$ is triangle-free for all $i \in [k-1]$, and $N_{G_2}(x,A_i')\cap N_{G_2}(y,A_i')=\emptyset$, as required.
\end{proof}

\subsection{Transformation 4: symmetrising $X_i$-$A_i'$ edges}

Lemma~\ref{Xiedges}(ii) implies that $D(x) = \sum_{t \in [k-1]\setminus \lbrace i \rbrace}d_{G_3}(x,X_t)$ for every $x\in X_i$, $i\in[k-1]$.
Next we obtain an $(n,e)$-graph $G_4$ with the property that, for all $i \in [k-1]$ and all but at most one vertex $x \in X_i$, either $G_4[x,A_i']$ is empty or it is almost complete (see the left-hand side of Figure~\ref{partition4}).

\begin{center}
\begin{figure}
\includegraphics[scale=0.9]{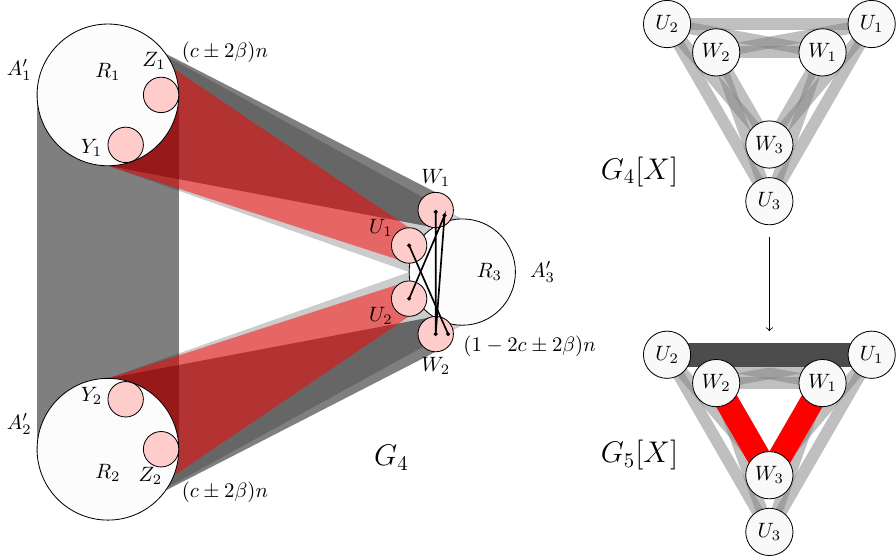}
\caption{Transformations 4 and 5. Dark grey and red represent (almost) complete/empty bipartite pairs respectively. Left: $G_4$ (here $k=3$). The only bad edges lie in $[U_i,U_j \cup W_j]$ for some $ij \in \binom{[k-1]}{2}$. Right: $G_4 \rightarrow G_5$ in the case $k=4$ and $I_1 = \lbrace 12\rbrace$ and $I_2 = \lbrace 13,23\rbrace$.}
\label{partition4}
\end{figure}
\end{center}

\begin{lemma}\label{symmetrise}
	There exists an $(n,e)$-graph $G_4$ on the same vertex set as $G_3$ such that
	\begin{itemize}
		\item[(i)] $G_4$ has an $(A_1',\ldots,A_k';Z,2\beta,\xi/5,3\xi,\delta)$-partition; also $G_3$ and $G_4$ can differ only at the union of $[X_i,A_i']$ for $i\in[k-1]$.
		\item[(ii)] For every $i \in [k-1]$, there exists a partition $X_i = U_i \cup W_i$ (into parts which may be empty) such that
		$d_{G_4}(w,A_i') = |A_i'|-\xi n/5$ for all but at most one $w \in W_i$ which has at least $\xi n/5$ non-neighbours in $A_i'$, and $e(G_4[U_i,A_i']) = 0$. Further, for all $i \in [k-1]$, if $U_i \neq \emptyset$, then $W_i \neq \emptyset$.
		\item[(iii)] If there is $i \in [k]$ and $xy \in E(G_4[A_i'])$, then $i=k$ and there exists $st \in \binom{[k-1]}{2}$ such that $x \in X_s$ and $y \in X_t$, and further, $xy \in E(G_3[A_k'])$.
		\item[(iv)] $K_3(G_4) \leq K_3(G_3)$; and if there exists $i \in [k-1]$ and $x,y \in X_i$ such that $D(x) \neq D(y)$, then $K_3(G_4) \leq K_3(G_3)-\xi n/20$. 
		\item[(v)] Let $\underline{m}^4 = (m^{(4)}_1,\ldots,m^{(4)}_{k-1})$ be the missing vector of $G_4$ with respect to $(A_1',\ldots,A_k')$. Then $m^{(4)}_i = m^{(3)}_i$ and $|U_i|\,|A_i'| \leq m_i^{(4)}$ for all $i \in [k-1]$.
	\end{itemize}
\end{lemma}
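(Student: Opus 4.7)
I will construct $G_4$ from $G_3$ by modifying only the bipartite graphs $G_3[X_i, A_i']$ for each $i \in [k-1]$, leaving all other adjacencies untouched. The driving observation is a clean triangle-count decoupling: $A_i'$ is independent in $G_3$ (Lemma~\ref{Xiedges}(ii)) and remains so under the modifications, so no triangle uses two edges of $[X_i, A_i']$. Combined with the fact that every $x \in X_i$ is complete to $A_j'$ for $j \in [k-1]\setminus\{i\}$ in $G_3$ (inherited from \PZk$(G)$ through the earlier transformations: Transformations~1 and~3 do not touch these edges, and Transformation~2 completes them via the sets $T(y)$), and that every $a \in A_i'$ is complete to the same $A_j'$ by \Pcomplete$(G_3)$, this yields
\begin{equation*}
P_3(xa, G^?) = a_i' + r(x, a),\quad r(x, a) := |N_{G_3}(x, A_k') \cap N_{G_3}(a, A_k')|,
\end{equation*}
for every $xa \in [X_i, A_i']$ and every $G^?$ agreeing with $G_3$ outside $[X_i, A_i']$; in particular, $r(x, a)$ does not depend on $G^?$. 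Proposition~\ref{Gprops}(i) gives $|N_{G_3}(x, A_k')| = D(x)$, hence $D(x) - d_{\overline{G_3}}(a, A_k') \leq r(x, a) \leq D(x)$, and \Pmissing$(G_3)$ implies $r(x, a) \geq D(x) - 2\xi n$ for every $a \in A_i' \setminus Z$.

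\textbf{Construction and routine verification.} For each $i$, I order $X_i$ as $x_1, \ldots, x_t$ with $D(x_1) \leq \ldots \leq D(x_t)$, set $n_i := e(G_3[X_i, A_i'])$ and $w_i := \lceil n_i / (|A_i'| - \xi n/5) \rceil$, and take $W_i := \{x_1, \ldots, x_{w_i}\}$, $U_i := X_i \setminus W_i$. In $G_4$, each of $x_1, \ldots, x_{w_i - 1}$ is joined to exactly $|A_i'| - \xi n/5$ vertices of $A_i'$ (placing the $\xi n/5$ non-neighbours inside $A_i' \cap Z$ whenever possible so that the missing-degree bounds for non-exceptional vertices are preserved), $x_{w_i}$ gets the remaining $n_i - (w_i - 1)(|A_i'| - \xi n/5) \in [0, |A_i'| - \xi n/5]$ edges, and each $u \in U_i$ is isolated from $A_i'$. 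When $n_i = 0$, Lemma~\ref{Xiedges}(ii) forces $X_i = \emptyset$ and the step is vacuous. Properties (i) and (iii) then follow from the locality of the change: \Ppartition, \Pcomplete, \Pbadedges, \PZk carry over, and \Pmissing holds with parameters $\xi/5, 3\xi$ because every $w \in W_i$ has missing degree exactly $\xi n/5$, every $u \in U_i \subseteq Z$ has missing degree $\geq |A_i'| \gg \xi n/5$, and each $a \in A_i'$ accrues at most $|X_i| \leq \delta n$ extra missing edges. Property (ii) is by construction (in particular, $U_i \neq \emptyset$ implies $w_i \geq 1$ and hence $W_i \neq \emptyset$), and (v) follows from $|E(G_4[X_i, A_i'])| = n_i$ (giving $m_i^{(4)} = m_i^{(3)}$) together with the observation that $|U_i| |A_i'|$ equals the number of missing $[U_i, A_i']$-edges, which are all counted in $m_i^{(4)}$.

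\textbf{Main obstacle.} For (iv), the decoupling gives
\begin{equation*}
K_3(G_4) - K_3(G_3) = \sum_{i \in [k-1]} \sum_{x \in X_i} D(x) \bigl(d_{G_4}(x, A_i') - d_{G_3}(x, A_i')\bigr) + \mathrm{err},
\end{equation*}
and the main term is non-positive by the rearrangement inequality, since $D$ is non-decreasing and the new degrees $d_{G_4}(x_j, A_i')$ are non-increasing in $j$. The delicate step is to quantify the strict improvement when $D$ varies on some $X_i$. The error splits as an $O(\xi n \cdot n_i)$ contribution from $a \in A_i' \setminus Z$ (using $|r(x,a) - D(x)| \leq 2\xi n$) and an $O(\delta \sqrt{\eta}\, n^2)$ contribution from $a \in A_i' \cap Z$ (using $|Z \cap A_i'| \leq \delta n$ and $D(x) \leq |X| \leq \sqrt{\eta}\,n$ from~(\ref{Zsize})). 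On the other hand, when $D$ varies, using that $D$ is integer-valued together with Lemma~\ref{Xiedges}(ii) ($d_{G_3}(x, A_i') \geq \gamma n$ for every $x \in X_i$), at least one vertex $x_j$ with $j > w_i$ contributes $|d_{G_4}(x_j,A_i') - d_{G_3}(x_j,A_i')| \geq \gamma n$ to the rearrangement against a strictly larger $D$-value, forcing a main-term decrease of order $\gamma n$. Since $\xi \ll \gamma$ in~(\ref{hierarchy}), this dominates the error and yields the required $\xi n/20$ strict decrease.
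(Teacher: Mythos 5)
Your construction of $G_4$ (a one-shot redistribution of the $[X_i,A_i']$-edges so that the low-$D$ vertices are filled to degree $|A_i'|-\xi n/5$ first) produces the same final graph as the paper's iterative procedure, and your verifications of (i), (ii), (iii) and (v) are sound. The problem is part (iv), where your error analysis does not close.

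The key fact you are missing is that $P_3(xa,G_3)=a_i'+D(x)$ holds \emph{exactly} for every $x\in X_i$ and $a\in A_i'$, not just up to $\pm 2\xi n$. Indeed, $N_{G_3}(x,A_k')\subseteq X\setminus X_i$ by Lemma~\ref{Xiedges}(ii) and Proposition~\ref{Gprops}(i), and by \PZk($G_3$) every vertex of $X_t$ with $t\neq i$ is complete to $A_i'$; hence $N_{G_3}(x,A_k')\subseteq N_{G_3}(a,A_k')$ and $r(x,a)=D(x)$ with zero error. Your substitute bound $r(x,a)\geq D(x)-2\xi n$ (from \Pmissing, which only controls $d_{\overline{G_3}}(a,A_k')$ without saying \emph{where} in $A_k'$ the non-neighbours lie) forces you to carry an error of order $\xi n$ per changed edge. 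Since the number of changed edges in $[X_i,A_i']$ is of order $e(G_3[X_i,A_i'])\geq \gamma n$ whenever $X_i\neq\emptyset$, your total error is $\Omega(\xi\gamma n^2)$. This is fatal twice over: when $D$ is constant on each $X_i$ the main term vanishes (total degree is preserved), so you cannot even conclude $K_3(G_4)\leq K_3(G_3)$; and when $D$ varies, the guaranteed main-term decrease is only of order $\xi n$, not $\gamma n$ as you claim — for instance if $|X_i|=2$, $D(x_1)<D(x_2)$ and both vertices have degree $|A_i'|-\xi n/4$ in $A_i'$, then $U_i=\emptyset$ and only $\xi n/4-\xi n/5=\xi n/20$ edges migrate, giving a decrease of exactly $\xi n/20$ — so no vertex in $U_i$ contributes a $\gamma n$ swing and nothing dominates an $\Omega(\xi\gamma n^2)$ error. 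Once you replace the approximation by the exact identity $P_3(xa,G^{?})=a_i'+D(x)$, the error disappears, the rearrangement argument gives $K_3(G_4)\leq K_3(G_3)$ outright, and the $\xi n/20$ strict gain follows from counting the $\geq\xi n/20$ migrated edges that move from a vertex of strictly larger $D$-value to one of strictly smaller $D$-value, exactly as in the paper.
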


\begin{proof}
	Roughly speaking, we will obtain $G_4$ from $G_3$ by, for each $i \in [k-1]$, moving all $X_i$-$A_i$ edges to be incident to vertices $x \in X_i$ such that $D(x)$ is minimal. 
	Let $G^{1,0} := G_3$.
	For each $i \in [k-1]$, let $f_i := |X_i|$.

	Set $i=1$ and perform the following procedure.
	\begin{enumerate}
		\item If $X_i=\emptyset$, then let $t_i:=0$ and go to Step (6). Otherwise, let $x^i_1,\ldots,x^i_{f_i}$ be an ordering of $X_i$ such that $D(x^i_1) \leq \ldots \leq D(x^i_{f_i})$.
		Suppose we have constructed $G^{i,0},\ldots,G^{i,j}$ for some $j \geq 0$.
		\item Let $i^+ = i^+(j)$ be the largest $t \in [f_i]$ such that $d_{G^{i,j}}(x^i_t,A_i') > 0$.
		Let $i^- = i^-(j)$ be the smallest $s \in [f_i]$ such that $d_{\overline{G^{i,j}}}(x^i_s,A_i') > \xi n/5$.
		\item If $i^+\le i^-$, then  set $t_i := j$ and go to Step (6).
		\item Choose $x \in N_{G^{i,j}}(x^i_{i^+},A_i')$ and $y \in N_{\overline{G^{i,j}}}(x^i_{i^-},A_i')$.
		Let $G^{i,j+1}$ be the graph on vertex set $V(G^{i,j})$ with $$
		E(G^{i,j+1}) := E(G^{i,j}) \cup \lbrace x^i_{i^-},y \rbrace \setminus \lbrace x^i_{i^+},x \rbrace.
		$$
		\item Set $j := j+1$ and go to Step (2).
		\item If $i=k-1$, set $G_4 := G^{k-1,t_i}$ and STOP. Otherwise, set $G^{i+1,0} := G^{i,t_i}$, then set $i := i+1$ and go to Step (1).
	\end{enumerate}
	
	Observe that, by~\eqref{eq-xA} and \Pmissing($G_3$), for each $i \in [k-1]$ such that $X_i \neq \emptyset$ and for each $x\in X_i$,
	we have
	\begin{equation}\label{eq-xdegA}
	\gamma n \leq d_{G_3}(x,A_i') \leq |A_i'|-\xi n/4.
	\end{equation}
	Thus in $G^{i,0}$, we have $i^+(0)=f_i\ge 1=i^-(0)$. We need to show that the iteration terminates.
	Indeed, for each fixed $i \in [k-1]$, we have that $i^+-i^-$ is a non-increasing function of $j$, which is bounded above by $f_i$. Note further that $i^+-i^-$ remains constant for at most $n$ instances of Steps~(2)--(4) since $d_{G^{i,j}}(x^i_{i^+},A_i')$ strictly decreases.
	Thus we reach Step (6) in a finite number $t_i$ of steps for each $i\in[k-1]$.
	Thus we obtain the final graph $G_4$ in some finite number $t_1+\ldots+t_{k-1}$ steps, as required.
	
	Recall that $E(G_3[X,R_k])=E(G[X,R_k])=\emptyset$. Then for all $i \in [k-1]$, $0 \leq j \leq t_i$, $x \in X_i$ and $u \in A_i'$, we have that
	$$
	P_3(xu,G^{i,j}) = P_3(xu,G_3) = a_i' + d_{G_3}(x,X\setminus X_i) = a_i' + D(x).
	$$
	This follows from the fact that the only edges which change lie between $A_\ell'$ and $X_\ell$ for some $\ell \in [k-1]$, and no such edge forms a triangle with $xu$. 
	Together with the fact that Step~(4) happens only when $i^+>i^-$, we have
	\begin{align}\label{eq:ijcount}
	K_3(G^{i,j})-K_3(G^{i,j-1}) = P_3(x^i_{i^-}y,G^{i,j})-P_3(x^i_{i^+}x,G^{i,j-1}) = D(x^i_{i^-}) - D(x^i_{i^+}) \leq 0. 
	\end{align}
	
	We will now prove (i)--(v).
	Clearly \Ppartition($G_4$)--\PZk($G_4$) hold with the same parameters.
	For \Pmissing($G_4$), note that the missing degree of any $v \in V(G_4)\setminus Z$ changes by at most $|X|\le\delta n$, so \Pmissing($G_3$) implies that it is at most $3\xi n$, as required. 
	For $i \in [k-1]$ every $v \in A_i'\cap Z$ has gained at most $|X| \leq \delta n$ neighbours in $A_k'$, so, by \Pmissing($G_3$), the missing degree of $v$ in $G_4$ is at least $(\xi/4-\delta)n \geq \xi n/5$.
	For $v\in X_i\subseteq X= A_k'\cap Z$ for some $i\in[k-1]$, it follows from the construction that $d_{\overline{G_4}}(v,A_i')\ge\xi n/5$. The last assertion follows from the construction. This completes the proof of (i).
	
	We now prove (ii).
	If $X_i \neq \emptyset$, let
	\begin{equation}\label{UW}
	W_i =
	\lbrace x^i_1,\ldots,x^i_{i^+(t_i)} \rbrace \quad \text{and}\quad U_i := X_i \setminus W_i.
	\end{equation}
	Then (ii) holds by construction.
	Property (iii) also holds by construction.
	
	For (iv),
	let $\ell := \xi n/20$. Recall that for every $i\in[k-1]$ with $|X_i|\ge 2$, we have $i^+(0)=f_i\ge 2>1=i^-(0)$. Then~\eqref{eq-xdegA} and $\xi\ll\gamma$ imply that $t_i\ge \xi n/4-\xi n/5=\ell$ and for any $0\le j\le \ell-1$, we have $i^+(j)=f_i$ and $i^-(j)=1$. Then \eqref{eq:ijcount} implies that
	\begin{equation*}
	K_3(G_4) - K_3(G_3) = \sum_{\stackrel{i \in [k-1]}{X_i \neq \emptyset}}\sum_{j \in [t_i]}\left(K_3(G^{i,j})-K_3(G^{i,j-1}) \right)  \leq 0.
	\end{equation*}
	Furthermore, if there is $i \in [k-1]$ and $x,y \in X_i$ such that $D(x) \neq D(y)$. Then $D(x^i_1) \leq D(x^i_{f_i})-1$. Then the observation above shows that in fact
	$$K_3(G_4) - K_3(G_3) \le \sum_{0\le j \le \ell-1}\left(K_3(G^{i,j+1})-K_3(G^{i,j}) \right)\le \ell\cdot(D(x^i_1) - D(x^i_{f_i}))\le-\xi n/20.$$
	Finally, (v) is immediate by construction and the definition of $U_i$.
\end{proof}

\subsection{Transformation 5: replacing $[W_i,W_j]$-edges with $[U_i,U_j]$-edges}

The required partition of $G'$ is obtained by moving $U_i$ to $A_i'$ for each $i \in [k-1]$, and for \Pcomplete($G'$) to hold, we need that $G'[U_i,U_j]$ is complete.
Using the next transformation, we obtain $G_5$ from $G_4$ by replacing $[W_i,W_j]$-edges with $[U_i,U_j]$-edges.
Thus either we have the required property, or $G_5[W_i,W_j]$ is empty.
See the right-hand side of Figure~\ref{partition4} for an illustration.

\begin{lemma}\label{G5}
	There exists an $(n,e)$-graph $G_5$ on the same vertex set as $G_4$ such that
	\begin{itemize}
		\item[(i)] $G_5$ has an $(A_1',\ldots,A_k';Z,2\beta,\delta)$-partition.
		\item[(ii)] Every pair $e \in E(G_4)\bigtriangleup E(G_5)$ has endpoints $x_s \in X_s, x_t \in X_t$ for some $st \in \binom{[k-1]}{2}$.
		\item[(iii)] There is a partition $I_1 \cup I_2$ of $\binom{[k-1]}{2}$ such that for each $ij \in I_1$, we have $e(\overline{G_5}[U_i,U_j]) = 0$; and for each $ij \in I_2$ we have $e(G_5[W_i,W_j]) = 0$.
		\item[(iv)] $K_3(G_5) < K_3(G_4) + k^2\delta n+2|Z|^3$.
	\end{itemize}
\end{lemma}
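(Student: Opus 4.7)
The plan is to construct $G_5$ by modifying only the edges of $G_4$ that lie inside $X=X_1\cup\cdots\cup X_{k-1}$, redistributing, for each pair $ij\in\binom{[k-1]}{2}$, the $e_{ij}:=e(G_4[X_i,X_j])$ edges according to a greedy preference order. Specifically I would place edges first into $[U_i,U_j]$ (up to capacity $|U_i||U_j|$), then into $[U_i,W_j]\cup[W_i,U_j]$, and only the overflow into $[W_i,W_j]$. This immediately yields the partition $I_1\cup I_2$: if $e_{ij}\le|U_i||U_j|$ then $[W_i,W_j]$ stays empty and $ij\in I_2$; otherwise $[U_i,U_j]$ is completely filled and $ij\in I_1$. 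The total edge count is preserved, so $G_5$ is an $(n,e)$-graph on $V(G_4)$.

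The key enabling observation is an \emph{exact} formula for the external common neighborhood. For $x\in X$ set $\phi(x):=d_{G_4}(x,A'_{s(x)})$, where $s(x)\in[k-1]$ is the index with $x\in X_{s(x)}$; by Lemma~\ref{symmetrise}(ii) we have $\phi(u)=0$ for every $u\in U_{s(u)}$ and $\phi(w)=|A'_{s(w)}|-\xi n/5$ for every non-exceptional $w\in W_{s(w)}$. Using \Pcomplete$(G_4)$, \PZk$(G_4)$ together with the fact that $E(G_4[X,R_k])=\emptyset$ (which, as I would verify briefly, is inherited from Proposition~\ref{Gprops}(i) because none of Transformations~1--4 ever introduces an $X$-$R_k$ adjacency), one checks that for all $x\in X_i$, $y\in X_j$ with $ij\in\binom{[k-1]}{2}$,
\[
N^{*}(x,y):=|N_{G_4}(x)\cap N_{G_4}(y)\setminus X|=\bigl(a'_k-|A'_i|-|A'_j|\bigr)+\phi(x)+\phi(y).
\]
Crucially, $N^{*}(x,y)$ depends only on $x,y$ (not on the graph) since all modifications happen inside $X$. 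Because this potential is \emph{separable} in $\phi$, the greedy construction minimizes $\sum_{xy\in E_{ij}}N^{*}(x,y)$ over all $e_{ij}$-edge subsets $E_{ij}\subseteq[X_i,X_j]$.

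For (iv), I would decompose $K_3(H)$ by the number of vertices a triangle has in $X$. Triangles with $0$ or $1$ vertex in $X$ are unchanged by our modifications. Triangles with exactly $2$ vertices in $X$ contribute $T(H):=\sum_{uv\in E(H[X])}N^{*}(u,v)$, and the greedy choice gives $T(G_5)\le T(G_4)$. Triangles entirely inside $X$ contribute at most $\binom{|X|}{3}<|Z|^3/6$ in absolute value on each side, hence at most $|Z|^3/6$ in difference. Therefore
\[
K_3(G_5)-K_3(G_4)=\bigl(T(G_5)-T(G_4)\bigr)+\bigl(K_3(G_5[X])-K_3(G_4[X])\bigr)\le |Z|^3/6<k^2\delta n+2|Z|^3,
\]
giving (iv). Properties (i)--(iii) are essentially bookkeeping: only edges in $[X,X]\subseteq[A'_k,A'_k]$ change, so \Ppartition, \Pcomplete and \PZk transfer verbatim from $G_4$ (the $Z_k$-classes are still the $X_i$'s), and \Pbadedges$(G_5)$ holds because every new bad edge still lies inside $X\subseteq Z$ and $\Delta(G_5[X])\le|X|\le|Z|\le\delta n$; (ii) and (iii) are immediate from the construction.

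The main conceptual step is identifying the clean potential formula for $N^{*}$, which reduces the rearrangement to a simple separable minimization and makes the $T(G_5)\le T(G_4)$ inequality essentially automatic; once this is found, the remaining difficulties are minor. The only delicate point is checking that $E(G_4[X,R_k])=\emptyset$ — a property that must be traced carefully through all four preceding transformations, but in each case the set of altered edges is disjoint from $[X,R_k]$. Notably, the at-most-one exceptional vertex in each $W_i$ requires no separate analysis: it enters only via its $\phi$-value, which the formula absorbs without comment.
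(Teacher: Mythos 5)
Your construction is genuinely different from the paper's. The paper does not redistribute all $[X_i,X_j]$ edges: it only swaps $f_{ij}=\min\{e(G_4[W_i,W_j]),e(\overline{G_4}[U_i,U_j])\}$ edges of $[W_i,W_j]$ for missing $[U_i,U_j]$ pairs, leaves the mixed pairs and the existing $[U_i,U_j]$ edges untouched, and bounds the triangle change by comparing $P_3$-counts of the exchanged pairs directly; the exceptional vertices of the $W_i$'s are treated separately there and are exactly what produces the $k^2\delta n$ term in (iv). Your exact formula for $N^{*}(x,y)$ is correct (it is the computation the paper performs implicitly via \Pcomplete, \PZk{} and $E(G_4[X,R_k])=\emptyset$), the decomposition of $K_3$ by the number of triangle vertices in $X$ is sound, and parts (i)--(iii) do follow from the bookkeeping you describe.

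The gap is precisely the sentence you flag as unnecessary: the claim that the type-ordered greedy ($U$--$U$, then mixed, then $W$--$W$) minimizes $\sum_{xy\in E_{ij}}(\phi(x)+\phi(y))$, so that the exceptional vertices ``require no separate analysis''. A mixed pair $(u_i,w_j)$ with $w_j$ non-exceptional has $\phi$-sum $|A_j'|-\xi n/5=\Theta(n)$, whereas the $W$--$W$ pair formed by the two exceptional vertices of $W_i$ and $W_j$ can have $\phi$-sum as small as $2$. So if, say, $e_{ij}=|U_i||U_j|+1$ and $E(G_4[X_i,X_j])$ consists of all $U$--$U$ pairs plus that one exceptional--exceptional edge, your greedy replaces an edge of potential $O(1)$ by one of potential $\Theta(n)$, and $T(G_5)-T(G_4)=\Theta(n)$; since $cn\gg\delta n$, this blows the budget in (iv). The fix is short: take $E_{ij}$ to be the $e_{ij}$ pairs of smallest $\phi$-sum (the true minimizer of the separable potential), with ties broken in favour of $U$--$U$ pairs and against $W$--$W$ pairs. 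Then $T(G_5)\le T(G_4)$ holds exactly, and (iii) survives because the $U$--$U$ pairs are exactly the pairs of $\phi$-sum zero (every $w\in W_i$ has $\phi(w)>0$ by the construction in the proof of Lemma~\ref{symmetrise}), so the minimizer exhausts $[U_i,U_j]$ before taking any $W$--$W$ pair. With that change your argument is correct and in fact yields a cleaner bound than (iv).
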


\begin{proof}
	Obtain a graph $G_5$ from $G_4$ as follows.
	For all $ij \in \binom{[k-1]}{2}$, let
	$$
	f_{ij} := \min \lbrace e(G_4[W_i,W_j]),e(\overline{G_4}[U_i,U_j]) \rbrace.
	$$
	Let $F^W_{ij} \subseteq E(G_4[W_i,W_j])$ and $F^U_{ij} \subseteq E(\overline{G_4}[U_i,U_j])$ be such that $|F^W_{ij}| = |F^U_{ij}| = f_{ij}$.
	Let $V(G_5) := V(G_4)$ and
	$$
	E(G_5) := E(G_4) \cup \bigcup_{ij \in \binom{[k-1]}{2}}  F^U_{ij} \setminus \bigcup_{ij \in \binom{[k-1]}{2}}  F^W_{ij}.
	$$
	Clearly $G_5$ is an $(n,e)$-graph.
	Parts (i)--(iii) are also clear by construction (to define the partition in (iii), break ties arbitrarily).
	
	It remains to prove part (iv).
	For this, we need to calculate the $P_3$-counts for those adjacencies that were changed by passing from $G_4$ to $G_5$. Recall from Lemma~\ref{Xiedges}(ii) that for any $i\in[k-1]$, if $U_i\neq \emptyset$, then $W_i\neq\emptyset$. Note also that if $U_i=\emptyset$, then the adjacencies involving $X_i$ are the same in $G_4$ and $G_5$. Thus, for fixed $ij\in{[k-1]\choose 2}$, we may assume that $U_i,U_j\neq\emptyset$. Let $w_i\in W_i$ and $w_j\in W_j$ be arbitrary. Suppose that there exists a vertex $w_i' \in W_i$ with $d_{G_4}(w_i',A_i')\geq|A_i'|-\xi n/5$. Then, by \PZk($G_4$), $w_i,w_i'$ are incident to every vertex in $A_\ell'$ with $\ell \in [k-1]\setminus \lbrace i \rbrace$, and $w_j$ is incident to every vertex in $A_\ell'$ with $\ell \in [k-1]\setminus \lbrace j \rbrace$.
	So
	\begin{equation*}
	P_3(w_i'w_j,G_4) \geq a_j'-\xi n/5.
	\end{equation*}
	Also,
	\begin{equation}\label{wiwj}
	P_3(w_iw_j,G_4) \geq P_3(w_iw_j,G_4;\overline{A_k'})\ge a_i'-|A_j'|\stackrel{(\ref{eq:ai})}{=} a_j'-|A_i'|.
	\end{equation}
	Let $u_i \in U_i$ and $u_j \in U_j$.
	Then $d_{G_5}(u_i,A_i'),d_{G_5}(u_j,A_j') = 0$ (since this holds in $G_4$), so
	\begin{equation}\label{7.26}
	P_3(u_iu_j,G_5) \leq a_i' - |A_j'| + d_{G_4}(u_j,A_k') \stackrel{\Ppartition,\Pbadedges(G_4)}{\leq} a_i' - (c-2\beta-\delta)n \leq a_i' - cn/2.
	\end{equation}
	Similarly $P_3(u_iu_j,G_5)\le a_j' - cn/2$. We have shown, for any $w_i \in W_i$, $w_j \in W_j$, $u_i \in U_i$ and $u_j \in U_j$, such that $d_{G_4}(w_\ell,A_\ell') \ge |A_\ell'|-\xi n/5$ for at least one $\ell \in \lbrace i,j\rbrace$, that
	\begin{equation*}
	P_3(u_iu_j,G_5) - P_3(w_iw_j,G_4) \le -cn/2+\xi n/5 < -cn/3.
	\end{equation*}
	If we arbitrarily order $F_{ij}^U$ as $\overline{e}_1,\ldots,\overline{e}_{f_{ij}}$ and  $F_{ij}^W$ as $e_1,\ldots,e_{f_{ij}}$, then we can write
	\begin{align*}
	K_3(G_5) - K_3(G_4) &\le \sum_{ij \in \binom{[k-1]}{2}} \sum_{\ell \in [f_{ij}]}(P_3(\overline{e}_\ell,G_5) - P_3(e_\ell,G_4))+2|Z|^3,
	\end{align*}
	where $2|Z|^3$ bounds from above the error coming from the triangles in $G_4$ using at least two edges from $\bigcup_{ij \in \binom{[k-1]}{2}}F_{ij}^W$.
	Then the only $\ell$ for which the corresponding summand is potentially greater than $-cn/3$ is such that $e_{\ell}=w_iw_j$ where $w_t \in W_t$ for $t \in \lbrace i,j\rbrace$ and $d_{G_4}(w_t,A_t')<|A_t'|-\xi n/5$.
	Given any $u_i \in U_i$ and $u_j \in U_j$, we have in this case
	$$
	P_3(u_iu_j,G_5)-P_3(w_iw_j,G_4) \stackrel{\eqref{wiwj},\eqref{7.26}}{\le} a_i' - |A_j'| + d_{G_4}(u_j,A_k')-(a_i' - |A_j'|)\le  \delta n.
	$$
	But each $W_t$ contains at most one such vertex by Lemma~\ref{symmetrise}(ii), so the number of such summands is at most $\binom{k-1}{2}$.
	Thus we have
	$$
	K_3(G_5)-K_3(G_4) \leq k^2\delta n+2|Z|^3,
	$$
	proving (iv).
\end{proof}

\subsection{Transformation 6 and the proof of Lemma~\ref{maintrans}}\label{maintransproof}

A final transformation  of $G_5$ gives us the required graph $G'$. The transformation does the following.
Let $I_1,I_2$ be defined as in Lemma~\ref{G5}. 
If $ij$ is a pair in $I_1$, it replaces all $[W_i,W_j]$-edges with some missing edges in $[W_i,R_i]$.
If $ij$ is a pair in $I_2$, then it replaces some edges in $[R_i,R_k]$ with all missing edges in $[U_i,U_j]$.
The resulting graph $G'$ (see Figure~\ref{partition6}) has the following properties: (i) an edge remains inside $A_k'$ if and only if it is in $[U_i, W_j\cup U_j]$ for some $ij\in{[k-1]\choose 2}$; (ii) for any $ij\in{[k-1]\choose 2}$, $G'[U_i,U_j]$ is complete while $G'[W_i,W_j]$ is empty. Thus the new partition obtained by moving $U_i$ to $A_i'$ for all $i \in [k-1]$ satisfies \Pcomplete.

\begin{center}
\begin{figure}
\includegraphics[scale=0.9]{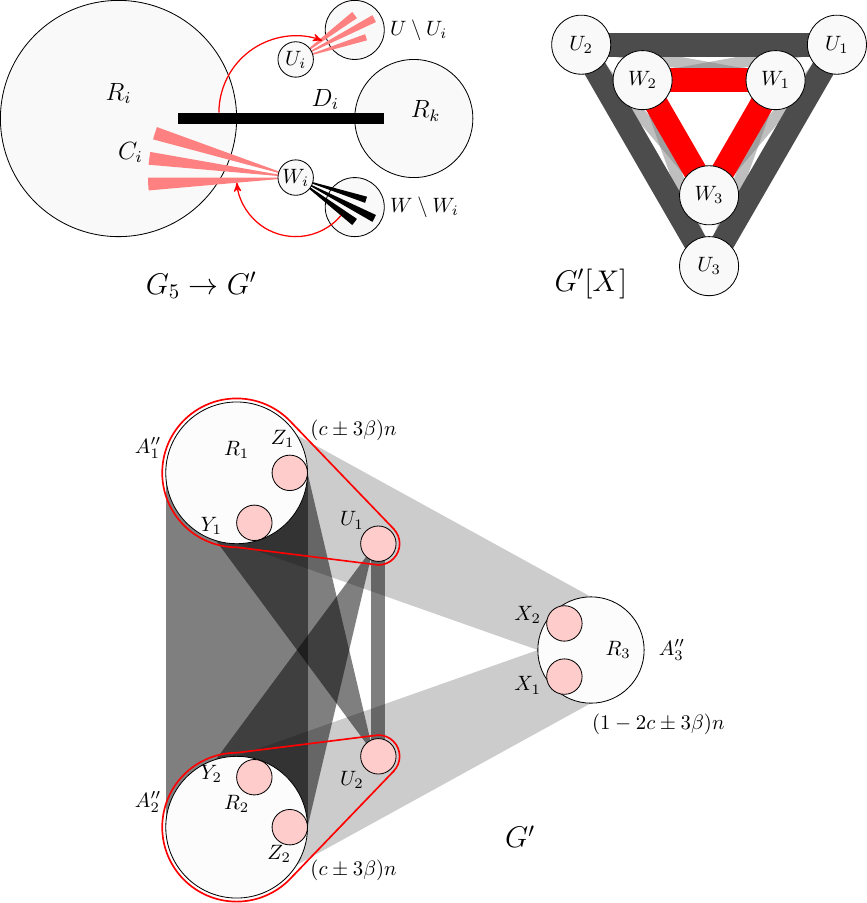}
\caption{Transformation 6. Left: Transformation~6 at $X_i = U_i \cup W_i$. Right: $G'$, in which the redistributed subsets of $X$ are coloured pink. (cf. $G$ in Figure~\ref{partition}).}
\label{partition6}
\end{figure}
\end{center}

\medskip
\noindent
\emph{Proof of Lemma~\ref{maintrans}.}
Apply Lemmas~\ref{subZiedges}--\ref{G5} to obtain $(n,e)$-graphs $G\rightarrow G_1\rightarrow G_2\rightarrow G_3\rightarrow G_4\rightarrow G_5$.
We will obtain $G'$ from $G_5$ as follows.
For each $i \in [k-1]$, choose $C_i \subseteq E(\overline{G_5}[R_i,W_i])$ such that $|C_i| = e(G_5[W_i,\bigcup_{i\ell \in I_1 : \ell > i}W_\ell])$, and $D_i \subseteq E(G_5[R_k,R_i])$ such that $|D_i| = e(\overline{G_5}[U_i,\bigcup_{i\ell \in I_2 : \ell > i}U_\ell])$, each $D_i$ is bipartite, and the collection of sets $V(D_i)\cap R_k$ is pairwise disjoint over $i\in[k-1]$. Let
$$
E(G') := \left( E(G_5) \cup \bigcup_{i \in [k-1]}C_i \cup \bigcup_{ij \in \binom{[k-1]}{2}} E(\overline{G_5}[U_i,U_j]) \right)\setminus \left( \bigcup_{ij \in \binom{[k-1]}{2}}E(G_5[W_i,W_j]) \cup \bigcup_{i \in [k-1]}D_i \right). 
$$
So for each $i \in [k-1]$, we remove all $[W_i,W_j]$-edges with $j > i$ and replace them with missing $[R_i,W_i]$-edges (the set $C_i$); and we add all missing $[U_i,U_j]$-edges with $j > i$ and remove the same number of $[R_k,R_i]$-edges (the set $D_i$) to compensate (see Figure~\ref{partition6}). Write $W=\bigcup_{i\in[k-1]}W_i$ and $U=\bigcup_{i\in[k-1]}U_i$.
Observe that
\begin{align*}
e\left(G_5\left[W_i,\bigcup_{i\ell \in I_1 : \ell > i}W_\ell\right]\right) &\leq e(G_5[W_i,W\setminus W_i]) \stackrel{\Pbadedges(G_5)}{\leq} |W_i|\delta n < |W_i|(\xi/5-\delta)n\\ 
&\stackrel{\Pmissing(G_4)}{\leq} e(\overline{G_4}[W_i,A_i'])-|W_i||Z|
\le e(\overline{G_4}[W_i,R_i])= e(\overline{G_5}[W_i,R_i]),
\end{align*}
where we used Lemma~\ref{G5}(ii) for the last equality.
So $C_i$ exists. 
On the other hand,
\begin{eqnarray*}
	e\left(\overline{G_5}\left[U_i,\bigcup_{i\ell \in I_2:\ell > i}U_\ell\right]\right) &\leq& e(\overline{G_5}[U_i,U\setminus U_i]) \leq |Z|^2 \stackrel{(\ref{Zsize})}{\le} \eta n^2.
\end{eqnarray*}
Note that, for every $v\in R_k$ and $i \in [k-1]$, we have
$$|R_i|\ge d_{G_5}(v,R_i)=d_{G_4}(v,R_i)\stackrel{\Pmissing(G_4)}{\ge} |A_i'|-\frac{\xi n}{5}-|Z| \stackrel{\Ppartition(G_4),\eqref{ineq:c}}{\ge}|R_k|\ge k\sqrt{\eta} n\stackrel{\eqref{Zsize}}{\ge} k|Z|.$$
Thus we can choose $D_i$ to be union of stars with distinct centres at $R_k$ and leaves in $R_i$ such that $V(D_i)\cap R_k$ are pairwise disjoint for all $i\in[k-1]$ as desired.
There is no edge which is both added and removed as $W\cap U=\emptyset$, and
\begin{align}
\label{eW} \sum_{ij \in \binom{[k-1]}{2}}e(G_5[W_i,W_j]) &= \sum_{i \in [k-1]}e(G_5[W_i,\bigcup_{i\ell \in I_1 : \ell > i}W_\ell]) = \sum_{i \in [k-1]}|C_i|,\\
\nonumber \sum_{ij \in \binom{[k-1]}{2}}e(\overline{G_5}[U_i,U_j]) &= \sum_{i \in [k-1]}e(\overline{G_5}[U_i,\bigcup_{i\ell \in I_2 : \ell > i}U_\ell]) = \sum_{i \in [k-1]}|D_i|.
\end{align}
Thus $G'$ is an $(n,e)$-graph.
By construction,
\begin{itemize}
	\item[(1)] every edge in $G'[A_k']$ is in $[U_i, W_j\cup U_j]$ for some $ij\in{[k-1]\choose 2}$; furthermore, $G'[U_1,\ldots,U_{k-1}]$ is complete $(k-1)$-partite.
	\item[(2)] the edge set of $G'[A_i']$ is empty for all $i \in [k-1]$ (this follows from Lemmas~\ref{symmetrise}(iii),~\ref{G5}(ii) and that $G_5$ and $G'$ are identical in $A_i'$ for all $i\in[k-1]$).
	\item[(3)] the edge set of $G'[A_i',U_i]$ is empty for all $i \in [k-1]$ and the edge set of $G'[A_j',U_i]$ is complete for all $j\in[k-1]\setminus \lbrace i \rbrace$ (this follows from Lemmas~\ref{symmetrise}(ii),~\ref{G5}(ii) and that $G_5$ and $G'$ are identical in $[A_i',U_i]$ for all $i\in[k-1]$).
\end{itemize}

With these observations, we can define the required partition of $G'$ and prove (i).
Indeed, let $A_i'' := A_i' \cup U_i$ for all $i \in [k-1]$ and $A_k'' := A_k'\setminus U$.
Properties (1)--(3) imply that $A_i''$ is independent for all $i \in [k]$.

We claim that $G'$ has an $(A_1'',\ldots,A_k'';3\beta)$-partition, i.e.~\Ppartition($G'$) and \Pcomplete($G'$) hold with the appropriate parameters.
For \Ppartition($G'$), clearly $A_1'',\ldots,A_k''$ is a partition of $V(G')$.
Moreover, $\sum_{i \in [k-1]}|U_i|\le|Z| \leq \delta n \leq \beta n$, so \Ppartition($G_5$) implies that \Ppartition($G'$) holds with parameter $3\beta$.

For \Pcomplete($G'$), since $G'[A_i',A_j'] = G_4[A_i',A_j']$ for $ij \in \binom{[k-1]}{2}$, it suffices to check that $G'[U_i,A_j'']$ is complete.
By \PZk($G_4$) we have that $G'[U_i,A_j'] = G_4[U_i,A_j']$ is complete.
But $G'[U_i,U_j]$ is also complete by Property~(1).
This proves \Pcomplete($G'$).
We have shown that $G'$ has an $(A_1'',\ldots,A_k'';3\beta)$-partition.

Our next task is to bound the entries in the missing vector
$\underline{m}' := (m'_1,\ldots,m'_{k-1})$ of $G'$ with respect to $(A_1'',\ldots,A_k'')$.
For each $i \in [k-1]$ we have
\begin{eqnarray}
\nonumber m'_i &=& e(\overline{G'}[A_i'',A_k'']) = e(\overline{G'}[A_i',A'_k\setminus U]) + e(\overline{G'}[U_i,A'_k\setminus U])\\
&=& e(\overline{G'}[A_i',A_k']) + e(\overline{G'}[U_i,A_k'\setminus U]) - e(\overline{G'}[U_i,A_i']),
\label{m''edge}
\end{eqnarray}
where the last equality follows from $e(\overline{G'}[U,A_i'])=e(\overline{G'}[U_i,A_i'])$, a consequence of Property~(3). By Property~(3), $e(\overline{G'}[U_i,A_i'])=|U_i|\,|A_i'|$. Notice also that every transformation from $G$ to $G'$ preserves all adjacencies in $[X,R_k]$ (hence also $[U_i,R_k]$), which is empty in $G$. Together with $A_k'\setminus U=R_k\cup W$, this implies that $$|U_i||R_k|\le e(\overline{G'}[U_i,A_k'\setminus U])\le |U_i||A_k'|.$$
We then derive from~\eqref{m''edge} that
\begin{equation}\label{eq-mi'}
e(\overline{G'}[A_i',A_k'])-|U_i|(|A_i'|-|R_k|)\le m'_i\le e(\overline{G'}[A_i',A_k'])-|U_i|(|A_i'|-|A_k'|).
\end{equation}

Lemma~\ref{G5}(ii) says that $G_5$ has the same number of edges between parts $A_i',A_j'$ as $G_4$ for all $1 \leq i < j \leq k$, and so implies that $e(\overline{G_5}[A_i',A_k']) = m_i^{(4)}$ for all $i \in [k-1]$.
Then
\begin{equation}\label{cidi1}
e(\overline{G'}[A_i',A_k']) = e(\overline{G_5}[A_i',A_k']) - |C_i| + |D_i| = m^{(4)}_i - |C_i| + |D_i|.
\end{equation}
Now, using \Pbadedges($G_5$),
\begin{align}
\label{cidi2} |C_i|+|D_i| &\leq e(G_5[W])+|U_i||Z| \leq e(G_5[A_k']) + |Z|^2 \stackrel{(\ref{Zsize})}{\leq} \frac{2m}{\xi n}(\delta n + \sqrt{\eta}n) \leq 2\sqrt{\delta}m.
\end{align}
Lemma~\ref{symmetrise}(v) implies that $m^{(4)}_i = m^{(3)}_i$ and
$m_i^{(4)} \geq |U_i|\,|A_i'|$
for all $i\in [k-1]$.
Now,
\begin{equation}\label{cidi3}
|A_i'|-|A_k'| = |A_i'|-|R_k| \pm \delta n = |A_i'|-|A_k'|+|Z| \pm \delta n \stackrel{\Pbadedges(G_5),\Ppartition(G_5)}{=} (kc-1)n \pm 5\beta n.
\end{equation}
Thus
\begin{eqnarray*}
m_i' &\stackrel{(\ref{eq-mi'}),(\ref{cidi1})}{\leq}& m_i^{(4)} - |C_i| + |D_i| - |U_i|(|A_i'|-|A_k'|)\\
&\stackrel{(\ref{cidi2}),(\ref{cidi3})}{\leq}& m_i^{(4)} + 2\sqrt{\delta}m - |U_i|(kc-1 \pm 5\beta)n \stackrel{(\ref{ineq:c})}{\leq} m_i^{(4)} + 2\sqrt{\delta}m.
\end{eqnarray*}
In the other direction,
\begin{eqnarray*}
	m'_i &\stackrel{(\ref{eq-mi'}),(\ref{cidi1})}{\geq}& m_i^{(4)} - |C_i|+|D_i|-|U_i|(|A_i'| - |R_k|) \geq m_i^{(4)} - 2\sqrt{\delta}m - \frac{m_i^{(4)}}{|A_i'|}\cdot (kc-1+5\beta)n\\
	&\stackrel{\Ppartition(G_4)}{\geq}& m^{(4)}_i-2\sqrt{\delta}m - \frac{m^{(4)}_i}{(c-2\beta)} \cdot (kc-1+5\beta) = m_i^{(3)} \cdot \frac{1-(k-1)c-7\beta}{c-2\beta} - 2\sqrt{\delta}m\\
	&\stackrel{(\ref{ineq:c})}{\geq}& m_i^{(3)} \cdot \frac{(k-1)\alpha-7\beta}{c-2\beta} - 2\sqrt{\delta}m.
\end{eqnarray*}
Then Lemmas~\ref{Ziedges},~\ref{Yiedges} and~\ref{Xiedges}(i) imply that $\alpha m_i/4\le m_i^{(3)}\le 2m_i$, thus,
$$
\alpha^2 m_i - 2\sqrt{\delta}m \leq \frac{\alpha}{4}\cdot \frac{(k-1)\alpha-7\beta}{c-2\beta}\cdot m_i - 2\sqrt{\delta}m \leq m_i' \leq m_i^{(3)} + 2\sqrt{\delta}m \leq 2m_i + 2\sqrt{\delta}m,
$$
as required.

It remains to bound $K_3(G')-K_3(G)$.
To do so, we will first bound $K_3(G')-K_3(G_5)$.
Let $i \in [k-1]$. Let $x_i \in R_i$ and $w_i \in W_i$ be arbitrary.
Then $d_{G'}(x_i,A_i') = d_{G_5}(x_i,A_i') = 0$ and $d_{G'}(w_i,A_k') \leq |U|$ by Properties~(1) and~(2).
So $P_3(x_iw_i,G') \leq a_i' + |U| \leq a_i' + \delta n$ and hence
\begin{equation}\label{eq-Ci}
\max_{e \in C_i}P_3(e,G') \leq a_i' + \delta n.
\end{equation}
Let $w_j \in W_j$ be arbitrary with $j \in [k-1]\setminus\lbrace i \rbrace$. Recall from Lemma~\ref{symmetrise}(ii) that all vertices in $W_i$ except at most one special vertex have $G_4$-degree in $A_i'$ exactly $|A_i'|-\xi n/5$. Let $W'\subseteq W$ be the set of these special vertices from each $W_i$. Then $|W'|\le k-1$. Further define $E_{W\setminus W'}:=E(G_5[W\setminus W'])$ to be the set of $G_5$-edges in $W\setminus W'$ and $E_{W'}:=E(G_5[W])-E_{W\setminus W'}$ to be the set of $G_5$-edges in $W$ with at least one endpoint in $W'$. Note that 
\begin{equation}\label{eq-Ewp}
	|E_{W'}|\le |W'|\cdot |W|<k|Z|\stackrel{(\ref{Zsize})}{\le} \frac{2km}{\xi n}.
\end{equation}
By \PZk($G_4$) and the definition of $W'$, we see that 
\begin{equation}\label{eq-W1}
P_3(w_iw_j,G_4;\overline{A_k'}) = \sum_{i=1}^{k-1}|A_i'|-2\xi n/5\quad\text{for all}\quad w_iw_j \in E_{W\setminus W'};
\end{equation}
while for any $w_iw_j\in E_{W'}$,~\eqref{wiwj} holds.
By Lemma~\ref{G5}(ii), for every $w \in W'$, we have $N_{G_5}(w,\overline{A_k'})=N_{G_4}(w,\overline{A_k'})$, which in turn implies that the bounds in~\eqref{wiwj} and~\eqref{eq-W1} hold also for $P_3(w_iw_j,G_5)$,~i.e.
\begin{align}\label{repeat}
P_3(w_iw_j,G_5) \geq a_i'-|A_j'| \stackrel{(\ref{eq:ai})}{=} a_j'-|A_i'|\quad\text{and}\\
\nonumber P_3(w_iw_j,G_5;\overline{A_k'}) = \sum_{i=1}^{k-1}|A_i'| - 2\xi n/5\quad\text{for all}\quad w_iw_j \in E_{W\setminus W'}. 
\end{align}

Let $x_k \in R_k$ and $y_i \in R_i$.
By \Pcomplete($G_5$) (i.e.~Lemma~\ref{G5}(i)), $G_5[y_i,A_\ell']$ is complete for all $\ell \in [k-1]\setminus \lbrace i \rbrace$.
Moreover, Lemma~\ref{G5}(ii) implies that $d_{\overline{G_5}}(x_k,\overline{A_k'}) = d_{\overline{G_4}}(x_k,\overline{A_k'})$ which is at most $3\xi n$ by \Pmissing($G_4$).
Thus $P_3(x_ky_i,G_5) \geq a_i'-3\xi n$, and so
\begin{equation}\label{eq-Di}
\min_{e \in D_i}P_3(e,G_5) \geq a_i' - 3\xi n.
\end{equation}
Let $u_i \in U_i$ and $u_j \in U_j$ for $j \in [k-1]\setminus \lbrace i \rbrace$.
Then $d_{G'}(u_i,A_i'),d_{G'}(u_j,A_j') = 0$ by (3) and $d_{G'}(u_i,A_k'),d_{G'}(u_j,A_k') \leq |Z| \leq \delta n$ by (1). So
\begin{equation}\label{eq-Uij}
P_3(u_iu_j,G') \leq a_i' - |A_j'| + \delta n \stackrel{\Ppartition(G_5)}{\leq} a_i' - cn+3\beta n.
\end{equation}
Since for all $i \in [k-1]$ the graph $D_i \subseteq G_5[R_k,R_i]$ is bipartite and the $D_i$ are pairwise vertex-disjoint, any triangle in $G_5$ which contains at least two edges in $\bigcup_{i \in [k-1]}D_i$ also contains an edge in $G_5[R_i]$ or $G_5[R_k]$ for some $i$.
So there are no such triangles.
Since $\bigcup_{i \in [k-1]}D_i \cap W = \emptyset$, the only possible triangles containing at least two edges from $E(G_5)\setminus E(G')$ lie in $W$, and there are at most $|Z|^3$ such triangles.
Thus we can bound $K_3(G')-K_3(G_5)$ as follows.
\begin{eqnarray}
\nonumber &&K_3(G')-K_3(G_5)\le\sum_{i \in [k-1]}\left(\sum_{e \in C_i}P_3(e,G')-\sum_{f \in E(G_5[W_i,\bigcup_{\ell > i}W_\ell])} P_3(f,G_5) \right)\\
&&\quad+ \sum_{i \in [k-1]}\left(\sum_{f \in E(\overline{G_5}[U_i,\bigcup_{\ell>i}U_\ell])}P_3(f,G') - \sum_{e \in D_i}P_3(e,G_5) \right)+2|Z|^3.\label{eq-K3GG'}
\end{eqnarray}
Denote by $\Delta_W$ and $\Delta_U$ the first and second term on the right-hand side of~\eqref{eq-K3GG'} respectively.
If there is at most one non-empty $U_i$ then $\Delta_U=0$.
Otherwise, using~\eqref{eq-Di} and~\eqref{eq-Uij}, we have 
$$\Delta_U\le \sum_{i\in[k-1]}|D_i|\cdot (-cn+3\beta n+3\xi n)<0.$$

\par We claim that $\Delta_W\le \delta^{1/3}m^2/n$. To see this, note that if there is at most one non-empty $W_i$ then $\Delta_W=0$, so assume not.
Suppose first that $e(G_5[W])=\sum_{i\in[k-1]}|C_i|\le \delta^{1/3}m^2/n^2$, where the equality follows from~(\ref{eW}) and the fact that $G_5[W_i,W_j]=\emptyset$. Then by~\eqref{wiwj} and~\eqref{eq-Ci}, 
$$
\Delta_W\stackrel{(\ref{eq-Ci}),(\ref{repeat})}{\le} \sum_{i\in[k-1]}|C_i|\cdot (a_i'+\delta n-a_i'+\max_{j\neq i,k}|A_j'|)\stackrel{\Ppartition(G_5)}{\leq} \sum_{i\in[k-1]}|C_i|\cdot (cn+3\beta n)\le\frac{\delta^{1/3}m^2}{n^2}\cdot 2cn\le \frac{\delta^{1/3}m^2}{n}.
$$
We may then assume
$$
e(G_5[W])\ge \frac{\delta^{1/3}m^2}{n^2}\ge \delta^{1/3}\cdot C\cdot \frac{m}{n}\stackrel{(\ref{Cvalue})}{=}\frac{m}{\delta^{1/6}n}.
$$
In this case, we need to estimate $\Delta_W$ more carefully making use of~\eqref{repeat}:
\begin{eqnarray*}
	\Delta_W&\le&|E_{W\setminus W'}|\cdot \left(\max_{j\neq k}a_j'+\delta n-\sum_{i=1}^{k-1}|A_i'|+\frac{2\xi n}{5}\right)+|E_{W'}|\cdot(\delta n+\max_{j\neq k}|A_j'|)\\
	&\stackrel{\Ppartition(G_5)}{\leq}&|E_{W\setminus W'}|\cdot \left(-\frac{cn}{2}\right)+|E_{W'}|\cdot 2cn=\frac{cn}{2}\cdot (4|E_{W'}|-|E_{W\setminus W'}|)\\
	&=&\frac{cn}{2}\cdot (5|E_{W'}|-e(G_5[W]))\stackrel{(\ref{eq-Ewp})}{\le} \frac{cn}{2}\cdot \left(5\cdot \frac{2km}{\xi n}-\frac{m}{\delta^{1/6}n}\right)<0.
\end{eqnarray*}
Therefore, we have
\begin{eqnarray*}
	K_3(G')-K_3(G_5)&\leq& \Delta_W+\Delta_U+2|Z|^3\le \frac{\delta^{1/3}m^2}{n}+2|Z|^3.
\end{eqnarray*}
Now, letting $G_0 := G$ and $G_6 := G'$ and using Lemmas~\ref{Ziedges}(iii),~\ref{Yiedges}(iv),~\ref{Xiedges}(iii),~\ref{symmetrise}(iv),~\ref{G5}(iv) and the previous inequalities,
\begin{eqnarray*}
	&&K_3(G')-K_3(G) = \sum_{i \in [6]}(K_3(G_{i})-K_3(G_{i-1}))\\ 
	&\leq& \left( \delta^{7/8} + \frac{\delta^{1/4}}{3} + \sqrt{\delta} + 0 + \delta^{1/3} \right)\frac{m^2}{n}+k^2\delta n+4|Z|^3
	\stackrel{(\ref{Zsize})}{\leq} \frac{\delta^{1/4}m^2}{2n},
\end{eqnarray*}
where we use the fact that $m > Cn$ to bound $k^2\delta n \leq k^2\delta m^2/(C^2n) = k^2\delta^2m^2/n$.
This completes the proof of Lemma~\ref{maintrans}.
\hfill$\square$

\section{The intermediate case: finishing the proof}\label{int3}

\subsection{The intermediate case when $m$ is large}\label{sec:superlinear}

In this section we finish the proof of the intermediate case when
\begin{equation}\label{C}
m \geq Cn.
\end{equation}

\subsubsection{Properties of $G$ via $G'$}

We will now use Lemma~\ref{maintrans} to obtain some additional structural information about $G$, which will in turn enable us to redo the transformations in Section~\ref{sec:trans} more carefully.
This will eventually imply that most exceptional sets $X_i,Y_i$ are in fact empty.
After this, one final `global' transformation yields the result.

Apply Lemma~\ref{maintrans} to $G$ to obtain a $k$-partite graph $G'$ with vertex partition $A_1'',\ldots,A_k''$ and missing vector $\underline{m'} = (m_1',\ldots,m_{k-1}')$ satisfying Lemma~\ref{maintrans}(i)--(iii).
Let $m' := \sum_{i \in [k-1]}m_i'$.

The first step is to use Lemma~\ref{CompletekPartite} to show that, in $G'$, the parts $A_1'',\ldots,A_{k-2}''$ all have size within $o(m/n)$ of $cn$, the `expected' size; and that the number of missing edges between these parts and $A_k''$ is $o(m)$.
Roughly speaking, this means that $G'$ has edit distance $o(m)$ from a graph in $\mathcal{H}_1(n,e)$.
Since $m_i' = \Theta(m_i)+o(m)$ for all $i \in [k-1]$, this information about missing edges in $G'$ translates to $G$.
Lemma~\ref{maintrans}(ii) clearly implies that
\begin{equation}\label{eq:m'}
\frac{\alpha^2}{2} \leq \alpha^2-2k\sqrt{\delta} \leq \frac{m'}{m} \leq 2+2k\sqrt{\delta} \leq 3.
\end{equation}

The next proposition shows that the smallest part $A_k''$ of $G'$ has to be noticeably larger than $(1-(k-1)c)n$ since the number of missing edges $m'$ is large.

\begin{proposition}\label{smallpart}
	$|A_k''| \geq (1-(k-1)c)n + \frac{m'}{(kc-1)n}$.
\end{proposition}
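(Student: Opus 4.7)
The plan is a direct calculation using that $G'$ has exactly $e$ edges together with the identity $e=e(K^k_{cn,\ldots,cn,(1-(k-1)c)n})$ from~\eqref{eq:c}, exploiting that the ``ideal'' part-size vector $(cn,\ldots,cn,(1-(k-1)c)n)$ is the unique reference point against which deviations can be measured.

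Write $s_i:=|A_i''|$ for $i\in[k]$, and let $p_i:=cn$ for $i\in[k-1]$ and $p_k:=(1-(k-1)c)n$, so that $\sum_i s_i=\sum_i p_i=n$. Set $d_i:=s_i-p_i$, so $\sum_i d_i=0$. Since $G'$ is $k$-partite with partition $A_1'',\ldots,A_k''$ and missing vector $\underline{m}'$, it has
\[
e(G')=\sum_{ij\in\binom{[k]}{2}}s_is_j-m'=e,
\]
while by~\eqref{eq:c} the reference graph $K^k_{p_1,\ldots,p_k}$ has $\sum_{ij}p_ip_j=e$. Using $2\sum_{ij}x_ix_j=(\sum_ix_i)^2-\sum_ix_i^2$ applied to both vectors and subtracting, the part-size deviations satisfy
\[
\sum_{i\in[k]}p_id_i+\tfrac12\sum_{i\in[k]}d_i^2=-m'.
\]

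Now I would exploit the structure of $(p_i)$: since $p_i$ takes only two values,
\[
\sum_{i\in[k]}p_id_i=cn\sum_{i\in[k-1]}d_i+(1-(k-1)c)n\,d_k=-(kc-1)n\,d_k,
\]
using $\sum_{i\in[k-1]}d_i=-d_k$. Substituting gives
\[
(kc-1)n\,d_k=m'+\tfrac12\sum_{i\in[k]}d_i^2\ \ge\ m'.
\]
Since $kc-1>0$ by~\eqref{ineq:c}, this yields $d_k\ge m'/((kc-1)n)$, i.e.\
\[
|A_k''|=(1-(k-1)c)n+d_k\ \ge\ (1-(k-1)c)n+\frac{m'}{(kc-1)n},
\]
as required.

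There is essentially no obstacle here: the proposition is a one-line algebraic consequence of ``$G'$ and the ideal partite graph have the same edge count modulo $m'$,'' and the bound is exactly what the Lagrangian of $\sum s_is_j$ under $\sum s_i=n$ predicts. The only thing to verify is that $kc>1$, which is guaranteed in the intermediate case by~\eqref{ineq:c}; the deviation bounds in Lemma~\ref{maintrans}(i) are not even needed for this proposition (they will matter later, when applying Lemma~\ref{CompletekPartite}).
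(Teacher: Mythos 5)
Your proof is correct, and it takes a genuinely different route from the paper's. The paper argues by contradiction: assuming $|A_k''|$ is too small, it bounds $\sum_{ij\in\binom{[k-1]}{2}}|A_i''||A_j''|$ by the Tur\'an number $t_{k-1}(n-|A_k''|)$, invokes Lemma~\ref{lm:a*k} to see that $a\mapsto t_{k-1}(n-a)+a(n-a)$ is increasing for $a\le n/k$ (which requires first checking $|A_k''|<n/k$), and then evaluates the resulting quadratic to reach $e+m'<e+m'$. You instead extract the exact identity $(kc-1)n\,d_k=m'+\tfrac12\sum_i d_i^2$ from the single equation $e(G')=e$ together with~\eqref{eq:c}, and read off the bound from $\sum_i d_i^2\ge 0$ and $kc-1>0$ (valid in the intermediate case by~\eqref{ineq:c}). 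Your version is shorter, avoids the contradiction and the monotonicity lemma entirely, and even shows the inequality is tight exactly when all parts hit the ideal sizes; the only hypotheses it uses are that $G'$ is an $(n,e)$-graph and that \Pcomplete($G'$) holds, so that the total number of missing edges equals $m'$ — both supplied by Lemma~\ref{maintrans}(i). The paper's route, by contrast, mirrors the Tur\'an-type comparisons used elsewhere (e.g.\ in Lemma~\ref{lem-Hlpab}), which is presumably why the authors chose it, but nothing is lost by your more direct calculation.
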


\begin{proof}
	Suppose, for a contradiction, that $|A_k''| < n - (k-1)cn+q$, where $q := \frac{m'}{(kc-1)n}$. Let $x:=(k-1)cn-q$.
	Given $|A_k''|$, we certainly have
	$$
	\sum_{ij \in \binom{[k-1]}{2}}|A_i''||A_j''|+(n-|A_k''|)|A_k''| \leq t_{k-1}(n-|A_k''|) + (n-|A_k''|)|A_k''|.
	$$
	Recall that we assume
	$$
	|A_k''|<n-x\stackrel{(\ref{eq:m'})}{\le}(1-(k-1)c)n+\frac{3m}{(kc-1)n} \stackrel{(\ref{m})}{\leq} (1-(k-1)c+\sqrt{\eta})n\stackrel{(\ref{ineq:c})}{\leq}(c-\sqrt{\alpha})n. 
	$$
	As $(1-(k-1)c+\sqrt{\eta})+(k-1)(c-\sqrt{\alpha})<1$, we get from the above inequalities that $|A_k''| <n-x< n/k$.
	We know by Lemma~\ref{lm:a*k} that $t_{k-1}(n-|A_k''|)+(n-|A_k''|)|A_k''|$ is an increasing function of $|A_k''|$ whenever $|A_k''|\le n/k$. Thus we have $t_{k-1}(n-|A_k''|)+(n-|A_k''|)|A_k''| \leq t_{k-1}(x) + x(n-x)$.
	Therefore, since $G'$ has no bad edges,
	\begin{eqnarray*}
		e+m' &=& \sum_{ij \in \binom{[k-1]}{2}}|A_i''||A_j''| + (n-|A_k''|)|A_k''| < t_{k-1}(x)+x(n-x)
		\\
		&\le& {k-1\choose 2}\left(\frac{x}{k-1}\right)^2+x(n-x)\\
		&=&x\left(n-\frac{k}{2(k-1)}x\right)=(k-1)cn^2-\binom{k}{2}c^2n^2 + (kc-1)qn-\frac{kq^2}{2(k-1)}\\
		&\leq& (k-1)cn^2-\binom{k}{2}c^2n^2 + (kc-1)qn \stackrel{(\ref{eq:c})}{=} e + (kc-1)qn = e+m',
	\end{eqnarray*} 
	a contradiction.
\end{proof}

\begin{lemma}\label{superlinear1}
	For all $j \in [k-2]$, the following hold.
	\begin{itemize}
		\item[(i)] $m_j \leq \delta^{1/6}m$.
		\item[(ii)] $|Z_j \cup Z_k^j| \leq \delta^{1/7}m/(2n)$.
		\item[(iii)] $|\,|A_j''|-cn| \leq 6\delta^{1/9}m/n$ and $|A_{k-1}''| \leq cn- \alpha^2 m/(4cn)$.
	\end{itemize}
\end{lemma}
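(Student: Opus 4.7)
The plan is to apply Lemma~\ref{CompletekPartite} with $G'$ (produced by Lemma~\ref{maintrans}) playing the role of $G$ and a suitable $F \in \mathcal{H}_1(n,e)$ with $k-1$ equal-sized parts playing the role of $F$. I will take $\ell := \lfloor cn \rfloor$, $\ell_k := n - (k-1)\ell$, and $F$ obtained from $K_{\ell,\dots,\ell,\ell_k}$ by removing $d := e(K_{\ell,\dots,\ell,\ell_k}) - e$ edges between the last two parts; using~\eqref{eq:c} one checks $0 \leq d = O(n)$. Set $d_i := |A_i''| - \ell$ for $i \in [k-1]$ and $d_k := |A_k''| - \ell_k$. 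Lemma~\ref{maintrans}(i) gives $|d_i| = O(\beta n)$ for all $i$, while Proposition~\ref{smallpart} combined with~\eqref{eq:m'} gives $d_k \geq \alpha^2 m/(2(kc-1)n) - O(1) > 0$ (since $m \geq Cn$), so the hypotheses of Lemma~\ref{CompletekPartite} are satisfied with room to spare.

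Because $G$ is a worst counterexample and $F$ is an $(n,e)$-graph, $K_3(G) \leq K_3(F)$. Combined with Lemma~\ref{maintrans}(iii), this gives $K_3(G') - K_3(F) \leq \delta^{1/4} m^2/(2n)$. Moreover $m \geq Cn = n/\sqrt{\delta}$, so the error term $12 d^2/(\ell-\ell_k) = O(n)$ is dominated by $\delta^{1/4}m^2/n$. Lemma~\ref{CompletekPartite} will therefore yield
\begin{equation*}
\sum_{t \in [k-1]} \frac{m_t'}{m'}\left((d_t + d_k)^2 + \sum_{i \in [k-1]\setminus\{t\}} d_i^2\right) \;\leq\; O\!\left(\frac{\delta^{1/4} m^2}{n^2}\right). \tag{$\star$}
\end{equation*}

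Our convention $m_{k-1} = \max_t m_t$ together with Lemma~\ref{maintrans}(ii) gives $m_{k-1}'/m' \geq \alpha^2/(6(k-1))$; taking the $t=k-1$ term of~$(\star)$ then forces $\sum_{j \in [k-2]} d_j^2 \leq O(\delta^{1/4} m^2/n^2)$, so $|d_j| \leq O(\delta^{1/8} m/n)$ for each $j \in [k-2]$, yielding the first half of~(iii) after noting $|\ell - cn| \leq 1$. The identity $\sum_{i \in [k]} d_i = 0$ then forces $d_{k-1} \leq -d_k + O(\delta^{1/8} m/n)$, and using that $1/(2(kc-1)) - 1/(4c) = (2-(k-2)c)/(4c(kc-1))$ is bounded away from $0$ (since $(k-2)c \leq (k-2)/(k-1) < 1$), a short computation will give $|A_{k-1}''| \leq cn - \alpha^2 m/(4cn)$, establishing the second half of~(iii).

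For~(i), the previous paragraph also gives $d_{k-1}^2 \geq d_k^2/2 = \Omega(m^2/n^2)$. Applying~$(\star)$ at $t = j$ for any $j \in [k-2]$, so that $d_{k-1}^2$ now appears in the inner sum, will force $m_j'/m' \leq O(\delta^{1/4})$; inverting Lemma~\ref{maintrans}(ii) then gives $m_j \leq O(\delta^{1/4}/\alpha^2)\,m \leq \delta^{1/6} m$. Statement~(ii) is immediate from~(i) together with $|Z_j \cup Z_k^j| \leq 2 m_j/(\xi n)$ from~\eqref{Zsize} and the hierarchy $\delta \ll \xi$. The main obstacle will be the logical ordering of these deductions: (iii) must be established before (i), because the bound on $m_j$ extracted from~$(\star)$ at $t = j$ relies on the lower bound $d_{k-1}^2 = \Omega(m^2/n^2)$, which in turn requires first controlling $d_j$ for $j \in [k-2]$ via~$(\star)$ at $t = k-1$.
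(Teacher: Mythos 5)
Your proposal is correct and follows essentially the same route as the paper: apply Lemma~\ref{CompletekPartite} to $G'$ against a near-balanced $F\in\mathcal{H}_1(n,e)$ with $\ell=\lfloor cn\rfloor$, use $K_3(G)\le K_3(F)$ plus Lemma~\ref{maintrans}(iii) and Proposition~\ref{smallpart} to control the quadratic form in the $d_i$, and then read off (iii), (i) and (ii) in that order. The only cosmetic differences are that the paper works with $r:=\arg\max_j m_j'$ and identifies $r=k-1$ at the end (rather than invoking the convention $m_{k-1}=\max_i m_i$ up front) and proves (i) by contradiction rather than by substituting $d_{k-1}^2=\Omega(m^2/n^2)$ directly; your minor constant slips (e.g.\ the numerator $1-(k-2)c$ rather than $2-(k-2)c$) do not affect the argument given the hierarchy~\eqref{hierarchy}.
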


\begin{proof}
	Let $H := K^k_{\lfloor cn \rfloor,\ldots,\lfloor cn\rfloor,n-(k-1)\lfloor cn \rfloor}$ and let $B_1,\ldots,B_k$ be the parts of $H$, where $|B_i|=\lfloor cn\rfloor$ for all $i \in [k-1]$.
	We claim that there is an $(n,e)$-graph $F$ which one can obtain from $H$ by removing at most $(k-1)^2cn$ edges from $H[B_{k-1},B_k]$.
	Inequality~(\ref{ineq:c}) implies rather roughly that $|B_{k-1}|\,|B_k| > (k-1)^2cn$, so it suffices to show that $e \leq E(H) \leq e+(k-1)^2cn$.
	Indeed, by~(\ref{ineq:c}), we have that $\lfloor cn \rfloor > n-(k-1)\lfloor cn \rfloor+k$, so
\begin{align*}
	e&=e(K^k_{cn,\ldots,cn,n-(k-1)cn}) \leq e(K^k_{\lfloor cn \rfloor,\ldots,\lfloor cn \rfloor,n-(k-1)\lfloor cn \rfloor}) = e(H)\\
	&= \binom{k-1}{2}\lfloor cn \rfloor^2 + (k-1)\lfloor cn \rfloor(n-(k-1)\lfloor cn \rfloor) \leq e + (k-1)^2cn,
	\end{align*}
    as required.
	
	We will apply Lemma~\ref{CompletekPartite} with $G',\lbrace A_i''\rbrace_{i\in [k]},F,\lfloor cn \rfloor,(k-1)^2cn$ playing respectively the roles of $G,\lbrace A_i\rbrace_{i\in [k]},F,\ell,d$.
	Let $d_i := |A_i''|-\lfloor cn\rfloor$ for all $i \in [k-1]$ and $d_k := |A_k''| - n+(k-1)\lfloor cn \rfloor$.
	By Proposition~\ref{smallpart}, we have
	\begin{equation}\label{dk}
	d_k \geq \frac{m'}{(kc-1)n}-k \stackrel{(\ref{ineq:c})}{\geq} \frac{m'}{(c-(k-1)\alpha)n}-k \geq \frac{m'}{cn}.
	\end{equation}
	Moreover, for all $i \in [k]$, Lemma~\ref{maintrans}(i) implies that
	$$
	|d_i| \leq 4\beta n < \frac{\sqrt{2\alpha}n}{20k^3} \stackrel{(\ref{ineq:c})}{\leq} \frac{(kc-1)n}{20k^3} \leq \frac{\lfloor cn \rfloor-(n-(k-1)\lfloor cn \rfloor)}{12k^3}. 
	$$
	Then Lemma~\ref{CompletekPartite} can be applied with the parameters above to imply that
	\begin{align*}
	K_3(G) + \frac{\delta^{1/4}m^2}{2n} &\geq K_3(G')\\
	&\geq K_3(F) + \sum_{t \in [k-1]}\frac{m_t'}{m'}\cdot \frac{k\lfloor cn \rfloor-n}{4} \left( (d_t+d_k)^2 + \sum_{i \in [k-1]\setminus \lbrace t \rbrace}d_i^2 \right) - \frac{12(k-1)^4c^2n^2}{k\lfloor cn\rfloor-n}.
	\end{align*}
	Observe that each summand over $t \in [k-1]$ is non-negative by~(\ref{ineq:c}).
	Bounding the last term, we have
	$$
	0 \stackrel{(\ref{ineq:c})}{\leq} \frac{12(k-1)^4c^2n^2}{k\lfloor cn\rfloor-n} \leq \frac{14(k-1)^4c^2n}{kc-1} \stackrel{(\ref{ineq:c}),(\ref{C})}{\leq} \frac{14k^4c^2m^2}{\sqrt{2\alpha}C^2 n} \stackrel{(\ref{Cvalue})}{=} \frac{14k^4c^2\delta m^2}{\sqrt{2\alpha}n} \leq \frac{\delta^{7/8}m^2}{2n}.
	$$
	Furthermore,
	$$
	\frac{k\lfloor cn\rfloor-n}{4} \stackrel{(\ref{ineq:c})}{\geq} \frac{\sqrt{2\alpha}n-k}{4} > \frac{\sqrt{\alpha}n}{4}.
	$$
	Thus, for each $j \in [k-1]$, using the fact that $\delta^{7/8}/2 + \delta^{1/4}/2 \leq \delta^{1/4}$,
	$$
	\frac{m'_j}{m'} \left( (d_j+d_k)^2 + \sum_{i \in [k-1]\setminus \lbrace j \rbrace}d_i^2 \right) \leq \frac{K_3(G) - K_3(F) +\frac{\delta^{1/4} m^2}{n}}{\frac{\sqrt{\alpha}n}{4}} \leq  \frac{4\delta^{1/4}m^2}{\sqrt{\alpha}n^2} \stackrel{(\ref{eq:m'})}{\leq} \frac{16\delta^{1/4}m'^2}{\alpha^{9/2} n^2} \leq \frac{\delta^{2/9}m'^2}{(k-1)n^2}.
	$$
	So for all $ij \in \binom{[k-1]}{2}$, we have that
	\begin{equation}\label{tki}
	|d_j+d_k|,|d_i| \leq \frac{\delta^{1/9} m'}{n} \cdot \sqrt{\frac{m'}{(k-1)m_j'}}.
	\end{equation}
	Suppose that $r \in [k-1]$ is such that $m_r' = \max_{j \in [k-1]}m'_j$.
	Then $m_r' \geq m'/(k-1)$.
	We have
	\begin{equation*}
	|d_r+d_k|\stackrel{(\ref{tki})}{\le} \frac{\delta^{1/9}m'}{n}\quad\text{and}\quad|d_i| \leq \frac{\delta^{1/9}m'}{n}.
	\end{equation*}
	But by~(\ref{dk}), $d_k \geq m'/(cn) > \delta^{1/9}m'/n$. So $d_r < 0$ and in fact 
	$d_r = |A_r''|-\lfloor cn \rfloor \leq \delta^{1/9}m'/n - d_k$. Thus
	\begin{eqnarray}
		|A_r''|  &\stackrel{(\ref{dk})}{<}& \lfloor cn \rfloor -\left(\frac{1}{c}-\delta^{1/9}\right)\frac{m'}{n} \stackrel{(\ref{hierarchy})}{\leq} cn - \frac{m'}{2cn} \stackrel{(\ref{eq:m'})}{\leq} cn - \frac{\alpha^2 m}{4cn}; \quad \text{ and }\label{8.2_1}\\
		\bigg||A_i''|-cn\bigg| &\stackrel{(\ref{tki})}{\leq}& \frac{2\delta^{1/9}m'}{n} \stackrel{(\ref{eq:m'})}{\leq} \frac{6\delta^{1/9}m}{n}\label{8.2_2} 
	\end{eqnarray}
	for all $i \in [k-1]\setminus \lbrace r \rbrace$.
	Suppose now that $m_s' \geq \delta^{1/5} m'$ for some $s \in [k-1]\setminus \lbrace r \rbrace$.
	Then applying~(\ref{tki}) with $ij=rs$, we have
	$$
	|A_r''| \geq \lfloor cn \rfloor - |d_r| \stackrel{(\ref{tki})}{\geq} \lfloor cn\rfloor -\frac{\delta^{1/9} m'}{\sqrt{(k-1)}\delta^{1/10}n} > cn - \frac{4\delta^{1/90}}{\sqrt{(k-1)}n} > cn - \frac{\alpha^2 m}{4cn},
	$$
	a contradiction to~(\ref{8.2_1}).
	Therefore, for all $s \in [k-1]\setminus \lbrace r \rbrace$, we have by Lemma~\ref{maintrans}(ii) that
	$$
	m_s \leq \frac{1}{\alpha^2}\left(m_s'+2\sqrt{\delta}m\right) < \frac{1}{\alpha^2}\left(\delta^{1/5}m'+2\sqrt{\delta}m\right) \stackrel{(\ref{eq:m'})}{\leq} \frac{1}{\alpha^2}\left(3\delta^{1/5}m+2\sqrt{\delta}m\right) \leq \delta^{1/6}m.
	$$
	But $\max_{i \in [k-1]}m_i=m_{k-1} \geq m/(k-1)$, and so $r=k-1$.
	That is, $m_1,\ldots,m_{k-2} \leq \delta^{1/6}m$, as required for~(i).
	By~(\ref{Zsize}), we have for all $s \in [k-2]$ that
	$$
	|Z_s \cup Z_k^s| \leq \frac{2\delta^{1/6}m}{\xi n} \leq \frac{\delta^{1/7}m}{2n},
	$$
	proving~(ii). Part~(iii) follows from~(\ref{8.2_1}) and~(\ref{8.2_2}).
\end{proof}

Since the exceptional sets $Z_1,\ldots,Z_{k-2}$ and $Z_k^1,\ldots,Z_k^{k-2}$ are all small by the previous lemma, it is now easy to show that $G[R_1,R_k], \ldots, G[R_{k-2},R_k]$ are all complete. 
That is, for all $i \in [k-2]$, every missing edge in $G[A_i,A_k]$ is incident to a vertex of $Z$.

\begin{lemma}\label{lem-R1R3}
	For every $i \in [k-2]$, $G[R_i,R_k]$ is complete.
\end{lemma}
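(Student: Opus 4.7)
The plan is to argue by contradiction: assume there exists $i \in [k-2]$ and a missing edge $xy \in E(\overline{G})$ with $x \in R_i$ and $y \in R_k$, and derive that $m/n$ is bounded by a constant, contradicting $m \geq Cn = n/\sqrt{\delta}$ from~(\ref{C}) and~(\ref{Cvalue}). The key leverage is the deficit $|A_{k-1}''| \leq cn - \alpha^2 m/(4cn)$ established in Lemma~\ref{superlinear1}(iii), which dominates the small error terms governing the other part sizes.

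The first step is to derive a tight upper bound on $P_3(xy,G)$ by exploiting that both endpoints are non-exceptional. Since $x \in R_i = A_i \setminus Z$, property~\Pbadedges($G$) forbids any $G$-edge from $x$ into $A_i$; likewise $y \in R_k$ has no $G$-edge into $A_k$. Hence $N_G(x) \cap N_G(y)$ meets neither $A_i$ nor $A_k$, and the trivial bound on the remaining parts gives
\[
P_3(xy,G) \;\leq\; \sum_{j \in [k-1] \setminus \{i\}} |A_j|.
\]
Now $A_j'' = A_j \cup Y_j \cup U_j$ with $|Y_j| + |U_j| \leq |Z_k^j|$, and Lemma~\ref{superlinear1}(ii) bounds $|Z_k^j| \leq \delta^{1/7}m/(2n)$ for $j \in [k-2]$, so Lemma~\ref{superlinear1}(iii) transfers to the $A_j$ as $|A_j| = cn \pm O(\delta^{1/9}m/n)$ for $j \in [k-2]$ and $|A_{k-1}| \leq |A_{k-1}''| \leq cn - \alpha^2 m/(4cn)$. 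Since $i \in [k-2]$, the index $k-1$ survives in the sum and I obtain
\[
P_3(xy,G) \;\leq\; (k-2)cn - \frac{\alpha^2 m}{4cn} + O\!\left(\frac{\delta^{1/9} m}{n}\right).
\]

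The final step is to combine this with the universal lower bound $P_3(xy,G) \geq (k-2)cn - k$ from~(\ref{2path}), yielding
\[
\frac{\alpha^2 m}{4cn} \;\leq\; k + O\!\left(\frac{\delta^{1/9} m}{n}\right).
\]
Since $\delta^{1/9} \ll \alpha^2$ by the hierarchy~(\ref{hierarchy}), the $O(\delta^{1/9}m/n)$ term is absorbed on the left, forcing $m/n = O(k/\alpha^2)$. This contradicts $m/n \geq 1/\sqrt{\delta}$, and the proof is complete. The argument is short once Lemma~\ref{superlinear1} is in place; the only delicate point is that the hypothesis $i \in [k-2]$ is used essentially, so that the deficient index $k-1$ survives in the summation $\sum_{j \in [k-1] \setminus \{i\}} |A_j|$ and produces the quantitative gain needed to beat the $k$-additive slack in~(\ref{2path}).
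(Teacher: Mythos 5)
Your overall strategy matches the paper's: bound $P_3(xy,G)$ from above using the deficit $|A_{k-1}''|\le cn-\alpha^2m/(4cn)$ from Lemma~\ref{superlinear1}(iii), the near-optimal sizes of the other $A_j''$, and then compare with the lower bound~(\ref{2path}). However, there is a genuine gap in your first step. Property \Pbadedges($G$) does \emph{not} forbid edges from $x\in R_i$ into $A_i$: it only says that every bad edge has at least one endpoint in $Z$, so $x\notin Z$ may still have neighbours in $Z_i\subseteq A_i$; likewise $y\in R_k$ may have neighbours in $A_k$ (by Proposition~\ref{Gprops}(i) these lie in $Y\subseteq Z_k$). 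Consequently the correct starting bound is
\[
P_3(xy,G)\;\le\;\sum_{j\in[k-1]\setminus\{i\}}|A_j|\;+\;|Z_i|\;+\;|Y|,
\]
not the one you wrote. The term $|Z_i|\le\delta^{1/7}m/(2n)$ is harmless, but $|Y|$ is not: the set $Y_{k-1}\subseteq Z_k^{k-1}$ is controlled only by~(\ref{Zsize}), so $|Y_{k-1}|$ can be as large as $2m/(\xi n)$, which dwarfs the gain $\alpha^2 m/(4cn)$ because $\xi\ll\alpha$ in the hierarchy~(\ref{hierarchy}). So the argument as written does not close.

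The repair is exactly the bookkeeping the paper performs: since $A_j\cup Y_j\subseteq A_j''$ for every $j\in[k-1]$, one writes $\sum_{j\in[k-1]\setminus\{i\}}|A_j|+|Y|\le\sum_{j\in[k-1]\setminus\{i\}}|A_j''|+|Y_i|$, so the potentially large set $Y_{k-1}$ is absorbed into $|A_{k-1}''|$ (whose deficit bound already accounts for it), and the leftover $|Z_i|+|Y_i|\le|Z_i\cup Z_k^i|\le\delta^{1/7}m/(2n)$ is controlled by Lemma~\ref{superlinear1}(ii). With that correction the remainder of your computation goes through and coincides with the paper's proof.
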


\begin{proof}
	Let $x\in R_i$ and $y\in R_k$.
	By Proposition~\ref{Gprops}(i), $N_G(y,A_k) \subseteq Y$.
	By \Pbadedges($G$), $N_G(x,A_i) \subseteq Z_i$.
	Since $A_j'' \supseteq A_j \cup Y_j$ for all $j \in [k-1]$, using Lemma~\ref{superlinear1}(ii) and~(iii) and $m\ge Cn$, we have that
	\begin{eqnarray*}
		P_3(xy,G) &\leq& \sum_{j \in [k-1]\setminus \lbrace i \rbrace}|A_j| + |Z_i| + |Y| \leq \sum_{j \in [k-2]\setminus \lbrace i \rbrace}|A_j''| + |A_{k-1}''| + |Z_i \cup Z_k^i|\\
		&\leq& (k-3)\left(cn+\frac{6\delta^{1/9}m}{n}\right) + cn-\frac{\alpha^2 m}{4cn} + \frac{\delta^{1/7}m}{2n} \leq (k-2)cn - \frac{\alpha^2 m}{5cn}\\
		&\stackrel{(\ref{C})}{\leq}& (k-2)cn - \frac{\alpha^2C}{5c} \stackrel{(\ref{Cvalue})}{\leq} (k-1)cn - 2k.
	\end{eqnarray*}
	Therefore $xy \in E(G)$ by~(\ref{2path}).
\end{proof}

The previous two lemmas now imply very precise information about the sizes of the parts $A_1,\ldots,A_k$ in $G$.
Indeed, we can calculate their sizes up to an $o(m/n)$ error term.
Recall from~(\ref{t}) that $t =\frac{m}{(kc-1)n}$.

\begin{lemma}\label{lem-size}	
	The following hold for parts of $G$.
	\begin{align*}
	|A_1|,\ldots,|A_{k-2}|&=cn\pm \frac{\delta^{1/10}m}{n};\\
	|A_{k-1}|&=cn- t \pm \frac{\delta^{1/11}m}{n} \quad\text{and}\\
	|A_k|&=n-(k-1)cn +t \pm \frac{\delta^{1/11}m}{n}.
	\end{align*}
\end{lemma}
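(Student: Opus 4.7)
The plan is to pin down $|A_1|,\ldots,|A_{k-2}|$ directly from the structural input already collected, and then exploit the edge-count equation $e(G)=e$ together with $\sum_{i\in[k]}|A_i|=n$ to determine $|A_k|$, from which $|A_{k-1}|$ follows. For $i\in[k-2]$, Lemma~\ref{maintrans}(i) gives $A_i''=A_i\cup Y_i\cup U_i$ with $Y_i\cup U_i\subseteq Z_k^i$, while Lemma~\ref{superlinear1}(ii),(iii) yield $|Z_k^i|\leq\delta^{1/7}m/(2n)$ and $\bigl||A_i''|-cn\bigr|\leq 6\delta^{1/9}m/n$. Since $1/9,1/7>1/10$, the triangle inequality gives $\bigl||A_i|-cn\bigr|\leq\delta^{1/10}m/n$ for all $i\in[k-2]$, once $\delta$ is taken small enough (which it is, by the hierarchy~(\ref{hierarchy})).

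Set $\pi_i:=cn$ for $i\in[k-1]$, $\pi_k:=(1-(k-1)c)n$, and $d_i:=|A_i|-\pi_i$, so that $\sum_i d_i=0$. Using the identity $\sum_{ij\in\binom{[k]}{2}}\pi_i\pi_j=e$ (which is just~(\ref{eq:c})) and $\sum_{ij\in\binom{[k]}{2}}d_id_j=-\tfrac12\sum_i d_i^2$ (from $(\sum d_i)^2=0$), the equation $e=\sum_{ij\in\binom{[k]}{2}}|A_i||A_j|-m+h$ rearranges into
\[
(kc-1)n\cdot d_k \;=\; m-h+\tfrac12\textstyle\sum_i d_i^2.
\]
Writing $\sigma:=\sum_{i\leq k-2}d_i$ (so $|\sigma|\leq k\delta^{1/10}m/n$) and substituting $d_{k-1}=-d_k-\sigma$ into $\sum_i d_i^2=2d_k^2+2\sigma d_k+\sigma^2+\sum_{i\leq k-2}d_i^2$ shows that $d_k$ is a root of the quadratic
\[
d_k^2 \,-\, \bigl((kc-1)n-\sigma\bigr)d_k \,+\, \bigl(m-h+\tfrac12\sigma^2+\tfrac12\textstyle\sum_{i\leq k-2}d_i^2\bigr) \;=\; 0.
\]
The rough bound $|d_k|\leq\beta n$ from \Ppartition($G$), together with $(kc-1)n\geq\sqrt{2\alpha}n\gg\beta n$ and $t\leq\eta n/\sqrt{2\alpha}\ll\beta n$, forces $d_k$ to be the smaller root. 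Applying $\tfrac12(P-\sqrt{P^2-4Q})=Q/P+O(Q^2/P^3)$ with $P=(kc-1)n-\sigma$ and $Q=m-h+O(\delta^{1/5}m^2/n^2)$, and then substituting $m=(kc-1)nt$ to simplify $Q/P-t$, yields
\[
|d_k-t| \;\leq\; \frac{|\sigma|\,t+h}{(kc-1)n} \,+\, O\!\Bigl(\frac{m^2}{(kc-1)^3 n^3}\Bigr) \,+\, O\!\Bigl(\frac{\delta^{1/5}m^2}{(kc-1)n^3}\Bigr).
\]
A routine check using $h\leq\delta m$, $m\leq\eta n^2$, $(kc-1)\geq\sqrt{2\alpha}$, and the hierarchy $\eta\ll\delta\ll\alpha$ (which gives in particular $\eta\ll\alpha^{3/2}\delta^{1/11}$) shows each term is at most $\tfrac13\delta^{1/11}m/n$, so $|d_k-t|\leq\delta^{1/11}m/n$ (with room to spare).

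For $|A_{k-1}|$, the identity $d_{k-1}+t=-(d_k-t)-\sigma$ from $\sum_i d_i=0$ gives $|d_{k-1}+t|\leq|d_k-t|+|\sigma|\leq\delta^{1/11}m/n$, where the hierarchy absorbs the gap between $\delta^{1/10}$ and $\delta^{1/11}$ (specifically $k\delta^{1/10}\ll\delta^{1/11}$). The main obstacle is the quadratic step: the defining equation for $d_k$ is intrinsically nonlinear, since the naive linearisation $(kc-1)nd_k\approx m$ misses a correction of order $d_k^2\approx t^2$, and the resulting dominant error $t^2/((kc-1)n)\sim m^2/((kc-1)^3n^3)$ only becomes negligible against the target $\delta^{1/11}m/n$ once both the lower bound $(kc-1)\geq\sqrt{2\alpha}$ and the sparsity $m\leq\eta n^2$ with $\eta\ll\alpha^{3/2}\delta^{1/11}$ are invoked—precisely the regime carved out by~(\ref{hierarchy}).
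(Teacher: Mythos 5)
Your proposal is correct, and for the two nontrivial displays it takes a genuinely different route from the paper. Both proofs obtain the first display identically (via Lemma~\ref{maintrans}(i) and Lemma~\ref{superlinear1}(ii),(iii)), but for $|A_{k-1}|$ and $|A_k|$ the paper proceeds by introducing a parameter $\tau$ with $|A_{k-1}|=cn-\tau m/n\pm k\delta^{1/10}m/n$, lower-bounding $\tau$ via Proposition~\ref{smallpart}, upper-bounding it ($\tau\le 1/\delta$) by contradiction against the value of $e(G[A_{k-1},A_k])$ extracted from the global edge count, and finally solving the approximately linear relation $m_{k-1}=(kc-1)\tau m\pm O(\delta^{1/10}m)$ — a step that crucially needs Lemma~\ref{superlinear1}(i) to know $m_{k-1}=(1\pm k\delta^{1/6})m$. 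You instead work with the single global identity $(kc-1)n\,d_k=m-h+\tfrac12\sum_i d_i^2$ (which is exact, uses only the total $m$ and $h$, and so needs neither Proposition~\ref{smallpart} nor Lemma~\ref{superlinear1}(i)), turn it into a quadratic in $d_k$, and select the smaller root using the a priori bound $|d_k|\le\beta n$ from \Ppartition($G$) against $(kc-1)n\ge\sqrt{2\alpha}\,n$. Your error accounting is sound: the dominant nonlinear correction $Q^2/P^3\sim m^2/((kc-1)^3n^3)$ and the terms $h/P$, $|\sigma|t/P$ are all absorbed by $\delta^{1/11}m/n$ using $h\le\delta m$, $m\le\eta n^2$ and the hierarchy, and the passage to $d_{k-1}=-d_k-\sigma$ correctly uses the gap between $\delta^{1/10}$ and $\delta^{1/11}$. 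What your route buys is brevity and fewer dependencies; what the paper's buys is an explicit handle on $e(G[A_{k-1},A_k])$ and $m_{k-1}$ as intermediate quantities, though these are not needed for the statement itself.
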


\begin{proof}
	For the first equation, recall that for all $i \in [k-1]$ Lemma~\ref{maintrans}(i) implies that $A_i \subseteq A_i'' \subseteq A_i \cup Z_k^i$.
	If $j \in [k-2]$, then Lemma~\ref{superlinear1}(iii) implies that $|A_j| \leq |A_j''| \leq cn + 6\delta^{1/9}m/n$.
	Using Lemma~\ref{superlinear1}(ii) in addition, we see that also
	$$
	|A_j| \geq |A_j''|-|Z_k^j| \geq cn - \frac{\delta^{1/10}m}{n},
	$$
	as required.
	Therefore there is some $\tau \in \mathbb{R}$ such that
	\begin{align}
	|A_{k-1}| &= cn - \frac{\tau m}{n} \pm \frac{k\delta^{1/10}m}{n}\quad\text{and }\label{lem-size1}\\
	|A_k| &= (1-(k-1)c)n + \frac{\tau m}{n} \pm \frac{k\delta^{1/10}m}{n}\label{lem-size2}.
	\end{align}
	By Proposition~\ref{smallpart}, we have $|A_k|\ge |A_k''|\ge (1-(k-1)c)n+\frac{m'}{(kc-1)n}$. So~(\ref{eq:m'}) implies that $\tau\ge \frac{\alpha^2}{2(kc-1)}$. Let $\tilde{\delta} := k\delta^{1/10}m/n$.
	Then
	\begin{align*}
	e - e(G[A_{k-1},A_k]) &= \sum_{ij \in \binom{[k-2]}{2}}|A_i||A_j| + (|A_{k-1}|+|A_k|)\sum_{i \in [k-2]}|A_i| + \sum_{i \in [k]}e(G[A_i]) - \sum_{i \in [k-2]}m_i\\
	&\stackrel{(\ref{h})}{=} \binom{k-2}{2}(cn\pm\tilde{\delta})^2 + (n-(k-2)cn \pm 2\tilde{\delta})((k-2)cn\pm \tilde{\delta})\\
	&\quad\quad\pm (\delta m + k\delta^{1/6}m)\\
	&= \binom{k-2}{2}c^2n^2 + (n-(k-2)cn)(k-2)cn \pm 3k^2\tilde{\delta}n\\
	&\stackrel{(\ref{eq:c})}{=} e - cn(n-(k-1)cn) \pm 3k^3\delta^{1/10}m.
	\end{align*}
	Here we used Lemma~\ref{superlinear1}(i) to bound $m_i$ for $i \in [k-2]$. We then have
	\begin{equation}\label{eq-tau}
	e(G[A_{k-1},A_k])=cn(n-(k-1)cn) \pm 3k^3\delta^{1/10}m.
	\end{equation}
	
	We claim that $\tau\le 1/\delta$. So suppose for a contradiction that $\tau>1/\delta$.
	Then $|A_{k-1}|+|A_k| = n - \sum_{i \in [k-2]}|A_i| = (1-(k-2)c)n \pm \tilde{\delta}n$. Further,
	 \begin{align*}
	 |A_{k-1}|&\le cn-\frac{\tau m}{n}-\tilde{\delta}\le cn-\frac{m}{\delta n}-\tilde{\delta}\quad\text{and}\\
	 |A_{k}|&\ge (1-(k-1)c)n+ \frac{\tau m}{n}-\tilde{\delta}\ge (1-(k-1)c)n+\frac{m}{\delta n}-\tilde{\delta}.
	 \end{align*}
	 By~(\ref{ineq:c}), we have $|A_{k-1}|> |A_k|$.
	 So the product $|A_{k-1}|\,|A_k|$ is minimised when $|A_k|$ attains the upper bound above. So
	\begin{eqnarray*}
		|A_{k-1}||A_k| &\ge& \left(cn-\frac{m}{\delta n}-\tilde{\delta}\right)\left((1-(k-1)c)n+\frac{m}{\delta n}-\tilde{\delta}\right)\\
		&\ge& cn(n-(k-1)cn) + (kc-1)n\cdot \frac{m}{\delta n}- \frac{m^2}{\delta^2n^2}-\tilde{\delta}n\\
		&\stackrel{(\ref{ineq:c}),(\ref{m})}{\ge} & cn(n-(k-1)cn) + \sqrt{2\alpha}\cdot \frac{m}{\delta}- \frac{\eta m}{\delta^2}-k\delta^{1/10}m\ge cn(n-(k-1)cn)+\frac{m}{\sqrt{\delta}}.
	\end{eqnarray*}
	But then, this implies that
	$$
L
	$$
	contradicting~\eqref{eq-tau}.
	So $\tau \leq 1/\delta$, as claimed.
	
	We now estimate $|A_{k-1}|\,|A_k|$ again more carefully using that $\frac{\alpha^2}{2(kc-1)}\le \tau\le 1/\delta$. We have
	$$
	|A_{k-1}||A_k| = cn(n-(k-1)cn) + (kc-1)\tau m +\left(- \frac{\tau^2m^2}{n^2} \pm  2k\delta^{1/10}m + \frac{2\tau k\delta^{1/10} m^2}{n^2} + \frac{k^2\delta^{1/5}m^2}{n^2} \right).
	$$
	But $m^2/n^2 \leq \eta m$ by~(\ref{m}) and $\tau\le 1/\delta$, so the expression in the final parentheses is at most
	$3k\delta^{1/10}m$. So
	\begin{equation}\label{k-1k}
	|A_{k-1}||A_k| = cn(n-(k-1)cn) + (kc-1)\tau m \pm 3k\delta^{1/10}m.
	\end{equation}
	As $m_{k-1} = (1\pm k\delta^{1/6})m$ due to Lemma~\ref{superlinear1}(i), we have
	$$
	(1\pm k\delta^{1/6})m=m_{k-1}=|A_{k-1}||A_k|-e(G[A_{k-1},A_k])\stackrel{(\ref{eq-tau}),(\ref{k-1k})}{=}(kc-1)\tau m \pm 4k^3\delta^{1/10}m.
	$$
	Solving this for $\tau$, we get
	$$
	\frac{\tau m}{n} = \frac{m}{(kc-1)n} \pm \frac{\delta^{1/11}m}{2n} \stackrel{(\ref{t})}{=} t \pm \frac{\delta^{1/11}m}{2n}.
	$$
	Combined with~(\ref{lem-size1}) and~(\ref{lem-size2}), this completes
	the proof of the lemma.
\end{proof}

The usefulness of $G'$ is now exhausted, and we work only with $G$ for the rest of the proof.
The previous lemma implies that
\begin{equation}\label{aik-2}
a_i = \sum_{j \in [k-1]\setminus \lbrace i \rbrace}|A_j| = (k-2)cn-t \pm \frac{(k-2)\delta^{1/11}m}{n}\quad\text{for all}\quad i \in [k-2].
\end{equation}

Armed with Lemmas~\ref{lem-R1R3} and~\ref{lem-size}, we can now `redo' Transformations~1 and ~2 of Section~\ref{sec:trans}, in a slightly more careful fashion, to imply that $Z_i=Y_i=\emptyset$ for all $i \in [k-2]$.

\begin{proposition}\label{zRinbr}
	Let $i \in [k-2]$ and $z \in Z_i \cup Z_k^i$. Then $d_G(z,R_i) \geq t-\delta^{1/12}m/n>0$.
\end{proposition}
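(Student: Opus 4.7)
The plan is to apply the near-optimality inequality~\eqref{2path} to a non-edge $zu$ with $u$ chosen from $R_k'$, and to bound $P_3(zu,G)$ from above by decomposing common neighbours across the parts. First I will check that such a $u$ exists in both sub-cases. If $z \in Z_i \subseteq A_i$, then \Pcomplete($G$) forces every missing edge at $z$ to lie in $A_k$, so
$$
d_{\overline{G}}(z,R_k') \ge d^m_G(z) - |A_k\setminus R_k'| \ge \xi n - (|Z_k| + \xi n/2) \ge \xi n/3
$$
by \Pmissing($G$), \Pbadedges($G$), and the definition of $R_k'$. If $z \in Z_k^i \subseteq A_k$, then $d_G(z,R_k') \le d_G(z,A_k) \le \Delta(G[A_k]) \le \delta n$ by \Pbadedges($G$), so $d_{\overline{G}}(z,R_k') \ge |R_k'| - \delta n$ is certainly positive. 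Fix any such $u$.

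Next I estimate $P_3(zu,G) = |N_G(z) \cap N_G(u)|$ by summing common-neighbour contributions from each part. In either sub-case one has $A_{j'} \subseteq N_G(z)$ for every $j' \in [k-1]\setminus \{i\}$ — by \Pcomplete($G$) when $z \in A_i$ and by \PZk($G$) when $z \in A_k$ — so those parts contribute in total at most $\sum_{j'\in[k-1]\setminus\{i\}}|A_{j'}| = a_i$. Inside $A_i$, Lemma~\ref{lem-R1R3} gives $R_i \subseteq N_G(u)$, so the common neighbours in $A_i$ split as those in $R_i$, contributing exactly $d_G(z,R_i)$, plus a residual inside $Z_i$ of size at most $|Z_i|$. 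Inside $A_k$, the choice $u \in R_k'$ ensures $d_G(u,A_k) \le \Delta \le \delta^{1/3}m/n$ by~\eqref{Rk'}, so that part contributes at most $\delta^{1/3}m/n$. Altogether, since $zu \in E(\overline{G})$, equation~\eqref{2path} yields
$$
(k-2)cn - k \;\le\; P_3(zu,G) \;\le\; a_i + d_G(z,R_i) + |Z_i| + \delta^{1/3}m/n.
$$

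Rearranging and substituting $a_i = (k-2)cn - t \pm (k-2)\delta^{1/11}m/n$ from~\eqref{aik-2} together with $|Z_i| \le \delta^{1/7}m/(2n)$ from Lemma~\ref{superlinear1}(ii) gives
$$
d_G(z,R_i) \;\ge\; t - k - (k-2)\delta^{1/11}m/n - \delta^{1/7}m/(2n) - \delta^{1/3}m/n.
$$
Using $m \ge Cn = n/\sqrt{\delta}$ one has $\delta^{1/12}m/n \ge \delta^{-5/12}$, which dwarfs each of the four error terms by the hierarchy~\eqref{hierarchy}, yielding $d_G(z,R_i) \ge t - \delta^{1/12}m/n$. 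Strict positivity then follows from $t \ge (k-1)m/n$ via~\eqref{t}, since $k-1 \gg \delta^{1/12}$. The only genuine care needed is verifying existence of $u$ in the $z \in Z_i$ sub-case, where one has to juggle $d^m_G(z) \ge \xi n$ against $|A_k \setminus R_k'| \le \xi n/2 + \delta n$; the remaining steps are routine accounting with the part sizes from Lemma~\ref{lem-size}.
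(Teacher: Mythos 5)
Your proof is correct and follows essentially the same route as the paper's: choose a non-neighbour of $z$ in $R_k'$, apply the lower bound~\eqref{2path} to that missing edge, and upper-bound $P_3$ by the part-wise decomposition $a_i + d_G(z,R_i) + |Z_i| + \Delta$, finishing with Lemma~\ref{superlinear1}(ii) and~\eqref{aik-2}. The only cosmetic difference is that you verify the existence of the non-neighbour in $R_k'$ by splitting into the sub-cases $z\in Z_i$ and $z\in Z_k^i$, whereas the paper handles both at once by noting that every such $z$ has at least $\xi n$ non-neighbours in $A_k$.
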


\begin{proof}
	By \Pcomplete($G$), \Pbadedges($G$) and \Pmissing($G$), every such $z$ has at least $\xi n$ non-neighbours in $A_k$.
	Recall the definitions of $R_k'$ and $\Delta$ in Section~\ref{smalldeg}.
	We have
	$$|R_k'\setminus N_G(z)|\ge d_{\overline{G}}(z,A_k)-|R_k\setminus R_k'|-|Z_k| \stackrel{(\ref{Zsize})}{\geq} \xi n/2 - \sqrt{\eta}n \geq \xi n/3.$$
	Thus we can choose $w \in R'_k\setminus N_G(z)$.
	Then $wz \in E(\overline{G})$ and so, by~(\ref{2path}) and \Pcomplete($G$),
	\begin{align*}
	(k-2)cn-\error &\leq P_3(zw,G) \stackrel{(\ref{Delta})}{\leq} a_i + d_G(z,A_i) + \Delta \stackrel{(\ref{Rk'})}{\leq} a_i + |Z_i| + d_G(z,R_i) + \frac{\delta^{1/3} m}{n}\\
	&\leq (k-2)cn - t + d_G(z,R_i) + \frac{k\delta^{1/11}m}{n},
	\end{align*}
	where the last inequality follows from Lemma~\ref{superlinear1}(ii) and~(\ref{aik-2}).
	Hence $d_G(z,R_i) \geq t-\delta^{1/12}m/n$,
	which is positive by~(\ref{t}).
\end{proof}

\begin{lemma}\label{empty}
	$Z_i = Y_i = \emptyset$ for all $i \in [k-2]$.
\end{lemma}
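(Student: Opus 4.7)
The plan is to derive contradictions to the minimality criterion~\ref{worst-C1} by performing single-step applications of Transformations~1 and~2 from Section~\ref{sec:trans} at a hypothetical bad vertex in $Z_i$ or $Y_i$ with $i \in [k-2]$. The refined structural estimates of this section (Lemmas~\ref{superlinear1}, \ref{lem-R1R3}, \ref{lem-size} and Proposition~\ref{zRinbr}) sharpen the generally loose step bounds J(3,1) of Lemma~\ref{subZiedges} and K(3,1) of Lemma~\ref{subYiedges} into genuine strict decreases of $K_3(G)$, producing $(n,e)$-graphs with fewer triangles than $G$.

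First I would establish $Z_i = \emptyset$. Suppose $z \in Z_i$ for some $i \in [k-2]$, and apply Lemma~\ref{subZiedges} with $p=1$ at $z$ to obtain $G^1$. Split the sum on the right-hand side of J(3,1),
$$K_3(G^1)-K_3(G)\leq \sum_{y\in N_G(z,A_i)}\bigl(\Delta - |Z_k\setminus Z_k^i|-P_3(yz,G;R_k)\bigr),$$
according to whether $y\in R_i$ or $y\in Z_i$. For $y\in R_i$, Lemma~\ref{lem-R1R3} makes $y$ complete to $R_k$, so $P_3(yz,G;R_k)=d_G(z,R_k)$; combining the max-cut inequality $d_G(z,A_k)\geq d_G(z,A_i)\geq d_G(z,R_i)$ with Proposition~\ref{zRinbr} and the bound $\sum_{j\in[k-2]}|Z_k^j|\leq\delta^{1/7}m/n$ from Lemma~\ref{superlinear1}(ii) gives $d_G(z,R_k)+|Z_k\setminus Z_k^i|\geq t - O(\delta^{1/12}m/n)$. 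Using $\Delta\leq\delta^{1/3}m/n$ from~\eqref{Rk'}, each such summand is at most $-t+O(\delta^{1/12}m/n)$, and there are at least $d_G(z,R_i)\geq t-O(\delta^{1/12}m/n)$ of them, for a total of $-t^2+o(t^2)$. The $y\in Z_i$-summands contribute at most $|Z_i|\Delta$, which by Lemma~\ref{superlinear1}(ii) and $t\geq 2m/n$ from~\eqref{t} is negligible compared to $t^2$. Hence $K_3(G^1)<K_3(G)$, contradicting~\ref{worst-C1}.

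Next I would establish $Y_i = \emptyset$. Given the previous step, $Z_1=\dots=Z_{k-2}=\emptyset$ and Lemma~\ref{Ziedges}(iv) ensures $G_1:=G_1^{k-2}$ agrees with $G$ at every vertex of $Z_k$. Suppose $y\in Y_i$ for some $i\in[k-2]$ and apply one step of Lemma~\ref{subYiedges} at $y$. Since $N_{G_1}(y,A_i)\subseteq R_i$ and Lemma~\ref{lem-R1R3} makes each such $y'$ complete to $R_k$, we have $P_3(y'y,G_1;R_k)=d_G(y,R_k)$, and reading off the unsimplified form of the bound in the proof of Lemma~\ref{subYiedges} (before invoking~\eqref{abp}) gives
$$K_3(G^1)-K_3(G)\leq d_G(y,A_i)\bigl(\Delta - d_G(y,A_k\setminus Z_k^i)\bigr)-|T(y)|\cdot\xi n/3,$$
where $T(y):=N_{\overline G}(y,Z_k\setminus Z_k^i)$. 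Proposition~\ref{zRinbr} gives $d_G(y,A_i)\geq t-\delta^{1/12}m/n$, while max-cut yields $d_G(y,A_k)\leq d_G(y,A_i)\leq \gamma n$. The hard part will be showing strict negativity of the right-hand side in the regime when $d_G(y,A_k\setminus Z_k^i)$ is small: here one exploits that Lemma~\ref{superlinear1}(i) forces $m_{k-1}\geq m/2$, which via the identity that each missing $[A_{k-1},A_k]$-edge is incident to $Z_{k-1}\cup Z_k^{k-1}$ implies $|Z_{k-1}|+|Z_k^{k-1}|\geq m/(2n)$, and a case analysis on which of these two is dominant (together with a suitable choice of $T(y)$) renders either $|T(y)|\cdot\xi n/3$ or $d_G(y,A_i)\cdot d_G(y,A_k\setminus Z_k^i)$ larger than $d_G(y,A_i)\Delta$, again contradicting~\ref{worst-C1}.
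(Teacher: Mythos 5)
Your argument for $Z_i=\emptyset$ is essentially the paper's: one step of Transformation~1 at $z$, split the J(3,1) sum over $y\in R_i$ versus $y\in Z_i$, and use Lemma~\ref{lem-R1R3} to replace $P_3(yz,G;R_k)$ by $d_G(z,R_k)$. Your route to the lower bound on $d_G(z,R_k)+|Z_k\setminus Z_k^i|$ via the max-cut inequality $d_G(z,A_k)\ge d_G(z,A_i)$ is a legitimate (and slightly slicker) substitute for the paper's argument, which instead takes a non-neighbour $u\in R_i$ of $z$ and applies the lower bound~\eqref{2path} on $P_3(uz,G)$; both yield $d_G(z,R_k)\ge t-O(\delta^{1/12}m/n)$ after first forcing $|Z_k\setminus Z_k^i|\le\Delta$ from non-negativity, and the rest of your computation is sound.

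The $Y_i=\emptyset$ half, however, has a genuine gap. Your ``hard part'' rests on the claim that every missing $[A_{k-1},A_k]$-edge is incident to $Z_{k-1}\cup Z_k^{k-1}$, and hence that $|Z_{k-1}|+|Z_k^{k-1}|\ge m/(2n)$. This confuses bad edges with missing edges: property~\Pbadedges($G$) guarantees that every \emph{bad} edge meets $Z$, but a missing edge between $R_{k-1}$ and $R_k$ is entirely consistent with the partition, since a vertex joins $Z$ only when its missing degree reaches $\xi n$. Indeed $m_{k-1}\approx m$ missing edges can be spread over $[R_{k-1},R_k]$ with every vertex having missing degree $\ll\xi n$, in which case $Z_{k-1}=Z_k^{k-1}=\emptyset$ and your case analysis has nothing to work with. (Two further problems: $T(y)$ is \emph{defined} in Lemma~\ref{subYiedges} as $N_{\overline{G}}(y,Z_k\setminus Z_k^i)$, not chosen; and $Z_{k-1}\subseteq A_{k-1}$ cannot contribute to $T(y)\subseteq Z_k$ in any case.) The missing ingredient is a lower bound on $d_G(y,R_k)$ itself: since $y\in Y_i$ has $d_G(y,A_i)\le\gamma n<|R_i|$, it has a non-neighbour $u\in R_i$, and~\eqref{2path} gives $(k-2)cn-k\le P_3(uy,G)\le a_i+d_G(y,R_k)+|Z_k|$; combined with~\eqref{aik-2} and the bound $|Z_k|=O(\delta^{1/7}m/n)$ (which follows from non-negativity of the transformation estimate together with Lemma~\ref{superlinear1}(ii)), this forces $d_G(y,R_k)\ge t-\delta^{1/12}m/n$. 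Then $d_G(y,A_k\setminus Z_k^i)\ge d_G(y,R_k)\gg\Delta$, so the regime you call hard never occurs and the first term alone is at most $-(t-o(t))^2$, giving the contradiction without any appeal to $|T(y)|$.
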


\begin{proof}
	Suppose that there exists $z \in Z_i$ for some $i \in [k-2]$.
	Let $z_1,\ldots,z_p$ be an arbitrary ordering of $Z\setminus Z_k$ such that $z := z_1$. Note that $N_{G}(z,A_{i})\neq\emptyset$ due to Proposition~\ref{zRinbr}.
	Now apply Lemma~\ref{subZiedges} to $G$ and let $F$ be the obtained $(n,e)$-graph $G^1$ which satisfies $J(1,1)$--$J(3,1)$. By $J(3,1)$ we have that
	\begin{eqnarray}
	\label{ineq:ZY} 0 &\leq& K_3(F)-K_3(G) \leq \sum_{y \in N_{G}(z,A_{i})}\left(\Delta
	- |Z_k\setminus Z_k^{i}| - P_3(yz,G;R_k)\right)\\
	\label{bettercount} &\stackrel{(\ref{Rk'})}{\leq}& \sum_{y \in N_G(z,Z_i)} \frac{\delta^{1/3}m}{n} + \sum_{y \in N_{G}(z,R_{i})}\left(\frac{\delta^{1/3}m}{n}
	- |Z_k\setminus Z_k^{i}| - d_G(z,R_k)\right).
	\end{eqnarray}
	Here, for all $y \in R_i$, since Lemma~\ref{lem-R1R3} implies that $R_k \subseteq N_G(y)$, we have $P_3(yz,G;R_k)=d_G(z,R_k)$.
	We must have $|Z_k\setminus Z_k^i| \leq \Delta \leq \delta^{1/3}m/n$, as otherwise the right hand side of~\eqref{ineq:ZY} is negative.
	So Lemma~\ref{superlinear1}(ii) implies that
	\begin{equation}\label{ZiZk}
	|Z_i \cup Z_k| = |Z_k\setminus Z_k^i| + |Z_i\cup Z_k^i| \leq  \frac{\delta^{1/3}m}{n}+\frac{\delta^{1/7}m}{2n}\leq \frac{\delta^{1/7}m}{n}.
	\end{equation}
	We will now bound $d_G(z,R_k)$.
	By \Pbadedges($G$), $z$ has a non-neighbour $u$ in $R_i$.
	Since $u \in R_i$, we have that $N_G(u,A_i) \subseteq Z_i$.
	Thus~(\ref{2path}) then implies that
	$$
	(k-2)cn - \error \leq P_3(uz,G) \leq a_i + d_G(z,R_k) + |Z_i \cup Z_k|.
	$$
	Thus
	\begin{equation}\label{eq-zRk}
	d_G(z,R_k) \geq (k-2)cn-a_i-\frac{2\delta^{1/7}m}{n} \stackrel{(\ref{aik-2})}{\geq} t - \frac{\delta^{1/12}m}{n}.
	\end{equation}
	Using Proposition~\ref{zRinbr},~(\ref{ZiZk}) and~\eqref{eq-zRk}, the final upper bound in~(\ref{bettercount}) is at most
	\begin{equation}\label{421eq}
	\frac{\delta^{1/7}m}{n} \cdot \frac{\delta^{1/3}m}{n} + \left(\frac{\delta^{1/3}m}{n} - 0 -\left( t - \frac{\delta^{1/12}m}{n}\right)  \right)\left(t-\frac{\delta^{1/12}m}{n}\right) \stackrel{(\ref{t})}{\leq} -\frac{t^2}{2},
	\end{equation}
	a contradiction.
	We have proved that $Z_i = \emptyset$, so $A_i=R_i$, for all $i \in [k-2]$.

	\medskip
	Suppose now that there exists $y \in Y_i$ for some $i \in [k-2]$.
	Let $y_1,\ldots,y_q$ be an arbitrary ordering of $Y = \bigcup_{i \in [k-1]}Y_i$ (as in~(\ref{XY})) such that $y := y_1$.
	Observe that, since $Z_1 = \ldots = Z_{k-2} = \emptyset$, the graph $G$ satisfies the conclusions of Lemma~\ref{Ziedges} when $\ell = k-2$.
	Therefore we can apply Lemma~\ref{subYiedges} with $k-2,G$ playing the roles of $\ell,G^\ell_1$. Let $F'$ be the obtained $(n,e)$-graph $G^1$ which satisfies $K(1,1)$--$K(3,1)$.
	Then $K(3,1)$,~\eqref{Rk'} and Lemma~\ref{lem-R1R3} imply that
	\begin{align*}
	0&\le K_3(F')-K_3(G) \leq \sum_{x \in N_G(y,R_i)}\left(\Delta - \frac{\xi}{6\gamma}|Z_k\setminus Z_k^i| - P_3(xy,G;R_k)\right)\\
	&\leq \sum_{x \in N_G(y,R_i)}\left(\frac{\delta^{1/3}m}{n}-\frac{\xi}{6\gamma}\cdot |Z_k\setminus Z_k^i| - d_G(y,R_k) \right).
	\end{align*}
    Again by Proposition~\ref{zRinbr}, $N_G(y,R_i)\neq\emptyset$. Therefore, as in~(\ref{ZiZk}), by Lemma~\ref{superlinear1} we have
	\begin{equation}\label{eq-Zik}
	|Z_i\cup Z_k| = |Z_k| \leq |Z_k^{k-1}| + \frac{(k-2)\delta^{1/7}m}{2n} \leq \frac{6\gamma\delta^{1/3}m}{\xi n} + \frac{(k-2)\delta^{1/7}m}{2n} \leq \frac{k\delta^{1/7}m}{n}.
	\end{equation}
	We will now bound $d_G(y,R_k)$.
	By the definition of $Y$, $y$ has a non-neighbour $u$ in $R_i$.
	 Then~(\ref{2path}) implies that
	$$
	(k-2)cn - \error \leq P_3(uy,G) \leq a_i + d_G(y,R_k) + |Z_k|.
	$$
	Thus
	$$
	d_G(y,R_k) \stackrel{(\ref{eq-Zik})}{\geq} (k-2)cn-k-a_i-\frac{k\delta^{1/7}m}{n} \stackrel{(\ref{aik-2})}{\geq} t - \frac{\delta^{1/12}m}{n}.
	$$
	But then, using Proposition~\ref{zRinbr} to bound $d_G(y,R_i)$, by a similar calculation to~(\ref{421eq}), we have
	$$
	K_3(F')-K_3(G) \leq -t^2/2,
	$$
	a contradiction.
	Thus $Y_i = \emptyset$ for all $i \in [k-2]$.
\end{proof}

We can now use the lemmas in this section to prove the following penultimate ingredient that we require.
Let
\begin{equation}\label{ABX}
A := \bigcup_{i \in [k-2]}R_i;\quad B := A_{k-1} \cup R_k \cup Z_k^{k-1}\quad\text{ and }\quad X' := \bigcup_{i \in [k-2]}X_i.
\end{equation}
Lemma~\ref{superlinear1}(ii) implies that
\begin{equation}\label{X'}
|X'| \leq \frac{\delta^{1/8}m}{n}.
\end{equation}

\begin{lemma}\label{finalGprops}
	The following properties hold for $G$.
	\begin{itemize}
		\item[(i)] $G$ has vertex partition $A \cup B \cup X'$; $G[A]$ is a complete $(k-2)$-partite graph with parts $R_1,\ldots,R_{k-2}$; and $G[A,B]$ is complete.
		\item[(ii)] There exist $b_1\leq b_2 \in \mathbb{N}$ such that $b_1+b_2=|B|$ and $(b_1-1)(b_2+1)< e(G[B]) \leq b_1b_2$. Moreover, for all $x \in X_i$ with $i\in[k-2]$, we have
		\begin{align*}
		K_3(x,G;\overline{X'}) \geq e(G[L_i]) + |L_i|b_1 + d_G(x,R_i)(|L_i|+b_1) + \alpha m,
		\end{align*}
		where $L_i := A\setminus R_i$.
		\item[(iii)] For all $x \in X'$ we have
		$$
		d_G(x,Z_k^{k-1}) = t \pm \frac{2\delta^{1/12}m}{n},\quad\text{and further}\quad b_1 = cn \pm \frac{\delta^{1/13}m}{n}.
		$$
	\end{itemize}
\end{lemma}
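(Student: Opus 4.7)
The plan is to prove the parts in order: (i), existence of $b_1,b_2$ in (ii), the two estimates in (iii), and finally the main inequality in (ii), which is the heart of the statement.

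Part (i) is bookkeeping: by Lemma~\ref{empty}, $Z_i=Y_i=\emptyset$ for $i\in[k-2]$, so $A_i=R_i$ is independent (by \Pbadedges) and $A_k=R_k\cup X'\cup Z_k^{k-1}$. The complete bipartite pairs $G[R_i,R_j]$, $G[R_i,A_{k-1}]$, $G[R_i,R_k]$, $G[R_i,Z_k^{k-1}]$ are immediate from \Pcomplete, Lemma~\ref{lem-R1R3}, and \PZk. For the existence of $b_1,b_2$ in (ii), I would show $e(G[B])\le t_2(|B|)$: by \PZk, $G[A_{k-1},X_i]$ is complete for $i\in[k-2]$, so $e(G[A_{k-1},R_k\cup Z_k^{k-1}])=|A_{k-1}|(|A_k|-|X'|)-m_{k-1}$, while all remaining edges in $G[B]$ are bad edges bounded by $h\le \delta m$ via~(\ref{h}). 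Since $m_{k-1}\ge m(1-k\delta^{1/6})\gg\delta m$ by Lemma~\ref{superlinear1}(i), this yields $e(G[B])<|A_{k-1}|(|A_k|-|X'|)\le\lfloor|B|^2/4\rfloor$, and $b_1,b_2$ exist.

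The estimates in (iii) come from the basic $P_3$-inequality~(\ref{2path}). For the lower bound $d_G(x,Z_k^{k-1})\ge t-2\delta^{1/12}m/n$, I would pick a non-neighbour $u\in R_i$ of $x$ (existing by $d_{\overline G}(x,A_i)\ge\xi n\gg|Z_i|+|X_i|$ from Lemma~\ref{superlinear1}(ii)) and use $N_G(u)\supseteq L_i\cup A_{k-1}\cup R_k\cup Z_k^{k-1}\cup(X'\setminus X_i)$ together with $N_G(x)\cap R_k=\emptyset$ to compute exactly
\[
P_3(xu,G)=|L_i|+|A_{k-1}|+|N_G(x)\cap N_G(u)\cap X'|+d_G(x,Z_k^{k-1});
\]
combined with $P_3(xu,G)\ge(k-2)cn-k$ and the size estimates from Lemma~\ref{lem-size} and~(\ref{X'}) this gives the lower bound. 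The matching upper bound is symmetric: pick an edge $xz$ with $z\in R_i\cap N_G(x)$ (existing by Proposition~\ref{zRinbr}), use the analogous adjacency structure at $z$ (via \Pcomplete, \PZk, and Lemma~\ref{lem-R1R3}) to get $P_3(xz,G)\ge|L_i|+|A_{k-1}|+d_G(x,Z_k^{k-1})$, and apply $P_3(xz,G)\le(k-2)cn+k$. For $b_1=cn\pm\delta^{1/13}m/n$, substitute $e(G[B])=|A_{k-1}|(|A_k|-|X'|)-m_{k-1}+h_B$ into $b_1(|B|-b_1)=e(G[B])+O(b_2-b_1)$ and solve perturbatively around the natural bipartition $(|A_{k-1}|,|A_k|-|X'|)$ of $B$, using Lemma~\ref{lem-size} and~(\ref{ineq:c}) to bound $|A_{k-1}|-(|A_k|-|X'|)=(kc-1)n\pm O(\text{small})$, and Lemma~\ref{superlinear1}(i) for $m_{k-1}=m(1\pm k\delta^{1/6})$.

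The main inequality in (ii) is the heart. First, I would derive the explicit formula
\[
K_3(x,G;\overline{X'})=e(G[L_i])+|L_i|\,d_G(x,B)+d_G(x,R_i)(|L_i|+d_G(x,B))+e(G[N_G(x,B)])
\]
by partitioning triangles through $x$ according to which parts contain their other vertices, using that $x$ is complete to $L_i\cup A_{k-1}$ by \PZk\ and the complete bipartite structure from (i). The desired inequality then reduces to
\[
(|L_i|+d_G(x,R_i))(d_G(x,B)-b_1)+e(G[N_G(x,B)])\ge\alpha m.
\]
Suppose for contradiction this fails at some $x\in X_i$. I would construct an $(n,e)$-graph $G^*\in\mathcal{H}_1(n,e)$ by replacing $G[B]$ with a bipartite graph on parts of sizes $b_1,b_2$ with $e(G[B])$ edges (possible by the first part of (ii)), and by reassigning each $y\in X'$ into its home class $R_{i(y)}\cup\{y\}$ with adjacencies to $B$ determined by the bipartition. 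A direct triangle count using the formula above shows that the failure of the inequality by $\alpha m$ at $x$ translates into $K_3(G^*)<K_3(G)$, contradicting~\ref{worst-C1} since $G^*\in\mathcal{H}(n,e)$. The main obstacle, and most technical part of the proof, is executing this global construction over all of $X'$ simultaneously while keeping exact control of the edge count and a tight triangle accounting.
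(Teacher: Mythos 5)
Parts (i) and (iii), and the existence of $b_1,b_2$ in (ii), are correct and essentially follow the paper's route: your derivation of $e(G[B])\le \lfloor |B|^2/4\rfloor$ from $e(G[B])=|A_{k-1}|(|A_k|-|X'|)-m_{k-1}+O(\delta m)$ is a slightly slicker version of the paper's computation, and your two-sided $P_3$ argument (a neighbour and a non-neighbour of $x$ in $R_i$) for $d_G(x,Z_k^{k-1})=t\pm 2\delta^{1/12}m/n$ is exactly what the paper does. Your exact formula for $K_3(x,G;\overline{X'})$ and the resulting reduction of the main inequality to $(|L_i|+d_G(x,R_i))(d_G(x,B)-b_1)+e(G[N_G(x,B)])\ge\alpha m$ are also correct.

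The gap is in how you finish (ii). Your contradiction argument runs in the wrong logical direction: the claim is a \emph{lower} bound on $K_3(x,G;\overline{X'})$, so its failure at $x$ means $x$ lies in \emph{few} triangles, and replacing $x$'s neighbourhood by the canonical one (which carries roughly $e(G[L_i])+|L_i|b_1+d_G(x,R_i)(|L_i|+b_1)$ triangles through $x$) then tends to \emph{increase} the count at $x$; minimality of $G$ yields no contradiction. In fact, performing your global transformation $G\to G^*$ and invoking only $K_3(G^*)\ge K_3(G)$ produces (after the $K_3(G[B])$ accounting of Claim~\ref{proofclaim}) an \emph{upper} bound on $\sum_{x\in X'}K_3(x,G;\overline{X'})$ — this is precisely the paper's concluding step, which \emph{uses} Lemma~\ref{finalGprops}(ii) as input to force $X'=\emptyset$, and so cannot also establish it. The correct finish is direct from your own reduction: by (iii) and~(\ref{b1}), both $d_G(x,Z_k^{k-1})$ and $b_1-|A_{k-1}|$ equal $t\pm O(\delta^{1/13}m/n)$, so $(|L_i|+d_G(x,R_i))(d_G(x,B)-b_1)=O(\delta^{1/13}m)$, while
$$
e(G[N_G(x,B)])\ \ge\ |A_{k-1}|\,d_G(x,Z_k^{k-1})-m_{k-1}\ \ge\ cn\cdot t-m-o(m)\ =\ \left(\frac{1-(k-1)c}{kc-1}\right)m-o(m)\ \ge\ \alpha m+O(\delta^{1/13}m)
$$
by~(\ref{t}) and~(\ref{ineq:c}). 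This comparison of $t\cdot|A_{k-1}|$ against $m_{k-1}$ is the quantitative heart of the lemma and is exactly the paper's computation; your proposal omits it.
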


\begin{proof}
	The previous lemma implies that $A_i = R_i$ and $Y_i = \emptyset$ for all $i \in [k-2]$.
	So $A \cup B \cup X'$ is a partition of $V(G)$.
	Property~\Pbadedges($G$) implies that $R_i$ is an independent set in $G$ for all $i \in [k-2]$, which, together with~\Pcomplete($G$), implies that $G[A]$ is a complete $(k-2)$-partite graph with parts $R_1,\ldots,R_{k-2}$. 
	Properties~\Pcomplete($G$),~\PZk($G$) and Lemma~\ref{lem-R1R3} imply that $G[A,B]$ is complete.
	This completes the proof of (i).
	
	For (ii) and (iii), let $x \in X_i \subseteq X'$ for some $i \in [k-2]$.
	Proposition~\ref{Gprops}(i) implies that $E(G[X',R_k])=\emptyset$.
	We need to determine $d_G(x,Z_k^{k-1})$ quite precisely.
	For this, let $u \in R_i$ be arbitrary.
	Then
	\begin{equation}\label{Pux}
	P_3(ux,G) = a_i + d_G(x,Z_k^{k-1}) \pm |X'| \stackrel{(\ref{aik-2}),(\ref{X'})}{=} (k-2)cn - t + d_G(x,Z_k^{k-1}) \pm \frac{\delta^{1/12}m}{n}.
	\end{equation}
	Since $x \in X_i$, we have $d_{\overline{G}}(x,R_i)>0$ by definition.
	Also, since $R_i = A_i$, we have $d_G(x,R_i) \geq \gamma n > 0$.
	That is, $N_G(x,R_i),N_{\overline{G}}(x,R_i) \neq \emptyset$.
	So~(\ref{2path}) implies that the right-hand side of~(\ref{Pux}) lies in $[(k-2)cn-k,(k-2)cn+k]$. Thus
	\begin{equation}\label{Zkk-1}
	d_G(x,Z_k^{k-1}) = t \pm \frac{2\delta^{1/12}m}{n}.
	\end{equation}
	Recall that, by \PZk$(G)$, $G[A_{k-1},X']$ is complete. Thus, all of the $m_{k-1} = (1 \pm k\delta^{1/6})m$ missing edges between $A_{k-1}$ and $A_k$ lie in $B$.
	Then Lemma~\ref{superlinear1}(i) and Lemma~\ref{lem-size} imply that
	\begin{eqnarray}
	\nonumber e(G[B]) &=& |A_{k-1}|(|A_k|-|X'|) -m_{k-1} + \left(e(G[A_{k-1}]) + e(G[A_k])\right)\\
	\nonumber &\stackrel{(\ref{h}),(\ref{X'})}{=}& \left(cn-t\pm\frac{\delta^{1/11}m}{n}\right)\left(n-(k-1)cn+t\pm\frac{2\delta^{1/11}m}{n}\right)- m \pm k\delta^{1/6}m \pm \sqrt{\delta}m
	\\
	\label{eGB}&\stackrel{(\ref{t})}{=}& (c-(k-1)c^2)n^2 \pm \delta^{1/12}m.
	\end{eqnarray}
	Also
	\begin{equation}\label{eq-B}
	|B| = |A_{k-1}|+|A_k|-|X'| = n-(k-2)cn \pm \frac{\delta^{1/12}m}{n}.
	\end{equation}
	A simple calculation using~(\ref{ineq:c}),~(\ref{m}) and~(\ref{eGB}) shows that
	$$
	e(G[B]) \leq \frac{1}{4}\left((1-(k-2)c)n - \frac{\delta^{1/12}m}{n}\right)^2 \leq \frac{|B|^2}{4}.
	$$
	Thus there exist $b_1,b_2 \in \mathbb{N}$ such that $b_1 \leq b_2$ and
	$$
	b_1 + b_2 = |B|
	$$
	and
	$$
	(b_1-1)(b_2+1) < e(G[B]) \leq b_1b_2.
	$$
	Suppose, for a contradiction, that $b_1 > cn + q$, where $q := \delta^{1/13}m/n$.
	Since the product $b_1b_2$ is maximised when $b_1,b_2$ are as balanced as possible, while~(\ref{ineq:c}) and~(\ref{eq-B}) imply that $2(cn+q)>|B|$, we have that
	\begin{eqnarray*}
		b_1b_2 &<& (cn+q)(|B|-cn-q) \stackrel{(\ref{eq-B})}{\leq} (cn+q)(n-(k-1)cn-q)+\delta^{1/12}m\\
		&\leq& cn(n-(k-1)cn) - qn(kc-1) + \delta^{1/12}m \stackrel{(\ref{ineq:c}),(\ref{eGB})}{\leq} e(G[B]) - (\sqrt{2\alpha}\delta^{1/13}-3\delta^{1/12})m\\
		&\stackrel{(\ref{C})}{\leq}& e(G[B])-\sqrt{\alpha}\delta^{1/13}Cn \stackrel{(\ref{hierarchy}),(\ref{Cvalue})}{<} e(G[B])-2n,
	\end{eqnarray*}
	a contradiction.
	Similarly, if $b_1<cn-q$, then $b_1b_2 > e(G[B])+2n$, consequently $(b_1-1)(b_2+1)>e(G[B])$, a contradiction.
	Therefore
	\begin{equation}\label{b1b2}
	b_1 = cn \pm \frac{\delta^{1/13}m}{n} \quad\text{ and so }\quad b_2 = n-(k-1)cn \pm \frac{2\delta^{1/13}m}{n}.
	\end{equation}
	So
	\begin{equation}\label{b1}
	b_1-|A_{k-1}| = t \pm \frac{2\delta^{1/13}m}{n}.
	\end{equation}
	Recall from the statement of the lemma that $L_i = A\setminus R_i$.
	Now, $G[x,L_i \cup A_{k-1}]$ is complete by \PZk($G$).
	Also $G[L_i,A_{k-1},R_i]$ is a complete tripartite graph by \PZk($G$).
	Finally, $e(\overline{G}[A_{k-1},Z_k^{k-1}]) \leq e(\overline{G}[A_{k-1},A_k])=m_{k-1}$ by definition.
	Write $\tilde{\delta} := 2\delta^{1/13}m/n$.
	Thus
	\begin{eqnarray*}
		K_3(x,G;\overline{X'}) &\geq& K_3\left(x,G;L_i \cup A_{k-1}\right) + K_3\left(x,G;R_i \cup Z_k^{k-1},L_i \cup A_{k-1}\right) + K_3(x,G;R_i,Z_k^{k-1})\\
		&\geq& e(G[L_i \cup A_{k-1}]) + (d_G(x,R_i)+d_G(x,Z_k^{k-1}))(|L_i|+|A_{k-1}|)\\
		& &\hspace{2.37cm}+ d_G(x,R_i)d_G(x,Z_k^{k-1})-m_{k-1}\\
		&\stackrel{(\ref{Zkk-1}),(\ref{b1})}{\geq}& e(G[L_i]) + |L_i|(b_1-t -\tilde{\delta})+ (d_G(x,R_i)+t-\tilde{\delta})(|L_i|+b_1-t-\tilde{\delta})\\
		& &\hspace{2.37cm}+ d_G(x,R_i)(t-\tilde{\delta})-m_{k-1}\\
		&\stackrel{(\ref{t}),(\ref{b1b2})}{\geq}& e(G[L_i]) + |L_i|b_1 + d_G(x,R_i)(|L_i|+b_1) - m + \frac{cm}{kc-1} -5\tilde{\delta}n-2\sqrt{\eta}m\\
		&\stackrel{(\ref{ineq:c})}{\geq}& e(G[L_i]) + |L_i|b_1 + d_G(x,R_i)(|L_i|+b_1) + \left(\frac{(k-1)\alpha}{c-(k-1)\alpha}-12\delta^{1/13}\right)m\\
		&\geq& e(G[L_i]) + |L_i|b_1 + d_G(x,R_i)(|L_i|+b_1) + \alpha m,
	\end{eqnarray*}
	as required for (ii).
	Part~(iii) follows immediately from~(\ref{Zkk-1}) and~(\ref{b1b2}).
\end{proof}

To complete the proof, we
first observe that if $X'=\emptyset$, then we are done.
Indeed, in this case, Lemma~\ref{finalGprops}(i) and~(ii) imply that $G$ has partition $A,B$ where $G[A]$ is complete $(k-2)$-partite, $G[A,B]$ is complete, and $e(G[B])\leq t_2(|B|)$.
Thus $K_3(G[B])=g_3(|B|,e(B))=0$ and so $G \in \mathcal{H}_1(n,e)$, a contradiction.
So we may assume that $X' \neq \emptyset$.
Now we will
perform a final global transformation on $G$ to obtain an $(n,e)$-graph $H$ which has fewer triangles.

\begin{center}
\begin{figure}
\includegraphics[scale=1.2]{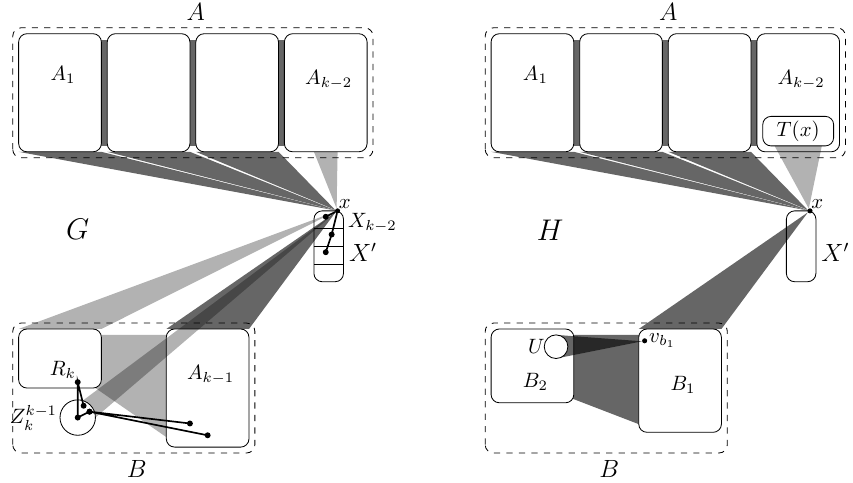}
\caption{$G \rightarrow H$, from the perspective of a single $x \in X_{k-2} \subseteq X'$.}
\label{globaltrans}
\end{figure}
\end{center}

\medskip
\noindent
\emph{Proof of Theorem~\ref{strong} in the intermediate case and when $m \geq Cn$.} We may assume, as observed above, that $X'\neq\emptyset$.
Choose $b_1,b_2$ as in Lemma~\ref{finalGprops}(ii). Let $B_1,B_2$ be an arbitrary partition of $B$ such that $|B_i| = b_i$ for $i \in [2]$.
Let $v_1,\ldots,v_{b_1}$ be an ordering of $B_1$.
Let $U \subseteq B_2$ have size $e(G[B])-(b_1-1)b_2$.
So $0 < |U| \leq b_2$.
Let $x_1,\ldots,x_\ell$ be an arbitrary ordering of $X'$.
For each $g \in [\ell]$, let $s(g) \in [k-2]$ be such that $x_g \in X_{s(g)}$.
Choose an arbitrary set $T(x_g) \subseteq R_{s(g)}$ of size
\begin{eqnarray}
\nonumber |T(x_g)|&=& d_G(x_g,B) + d_G(x_g,\lbrace x_{g+1},\ldots,x_\ell \rbrace)+d_G(x_g,R_{s(g)})-|B_1|\\
\nonumber &=& |A_{k-1}| + d_G(x_g,Z_k^{k-1}) + d_G(x_g,R_{s(g)})-b_1 \pm |X'|\\
\label{dGRi}& \stackrel{(\ref{X'}),(\ref{b1})}{=}& d_G(x_g,R_{s(g)}) \pm \frac{3\delta^{1/13}m}{n}.
\end{eqnarray}
Here we used the facts that $A_{k-1}\subseteq N_G(x_g)$ by \PZk($G$); $R_k \cap N_G(x_g) = \emptyset$ by Proposition~\ref{Gprops}(i); and also Lemma~\ref{finalGprops}(iii).
But by the definition of $X_{s(g)}$, since $R_{s(g)}=A_{s(g)}$ by Lemma~\ref{empty} and using $m \leq \eta n^2$ from~(\ref{m}), the right hand side of~\eqref{dGRi} is at least $\gamma n - 3\delta^{1/13}\eta n \geq \gamma n/2$, and at most $|R_{s(g)}|-\xi n + 3\delta^{1/13}\eta n \leq |R_{s(g)}|-\xi n/2$.
So $T(x_g)$ exists.
Now define a new graph $H$ by setting
\begin{align*}
E(H) &:= \left(E(G) \cup \lbrace v_iy: i \in [b_1-1], y \in B_2 \rbrace \cup \lbrace v_{b_1}y : y \in U \rbrace \cup \bigcup_{x \in X'}\lbrace xy : y \in B_1 \cup T(x) \rbrace\right) \\
&\setminus \left( E(G[B\cup X']) \cup \bigcup_{i \in [k-2]}E(G[X_i,R_i]) \right).
\end{align*}
Thus, informally, $H$ is obtained from $G$ by rearranging the edges in $G[B]$ to form a maximally unbalanced bipartition, and then for each $i \in [k-2]$ replacing the neighbours of $x \in X_i$ which lie in $X' \cup B \cup R_i$ with vertices in $B_1$, and then $R_i$.
See Figure~\ref{globaltrans} for an illustration of $G$ and $H$.

The following claim states some properties of $H$.
\begin{claim}\label{proofclaim}
	\begin{itemize}
		\item[(i)] $H$ is an $(n,e)$-graph such that $H[A,B]$ is complete; $H[A]=G[A]$ and $H[B]$ is bipartite with bipartition $B_1,B_2$, and $X' \neq \emptyset$. Also $E(H[X',B_2])=E(H[X']) = \emptyset$.
		\item[(ii)] Let $T(G)$ be the set of triangles in $G$ containing at least one vertex from $X'$ and define $T(H)$ analogously. Then $|T(H)|\geq|T(G)|$.
	\end{itemize}
\end{claim}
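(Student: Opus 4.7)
For (i), the plan is a direct verification from the construction. We have $V(H) = V(G)$, and $e(H) = e$ follows from a bookkeeping check: the added edges are $\bigl((b_1 - 1)b_2 + |U|\bigr) + \sum_{x \in X'}(b_1 + |T(x)|) = e(G[B]) + |X'|b_1 + \sum_{x}|T(x)|$ (using $|U| = e(G[B]) - (b_1-1)b_2$), and this equals the count of removed edges $e(G[B \cup X']) + \sum_{i \in [k-2]} e(G[X_i, R_i])$ exactly when $\sum_x |T(x)| = e(G[X']) + e(G[X', B]) + \sum_i e(G[X_i, R_i]) - |X'|b_1$, which is precisely the summed form of the defining formula for $|T(x_g)|$. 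Since no edge inside $A$ or between $A$ and $B$ is touched, $H[A] = G[A]$ is the complete $(k-2)$-partite graph provided by Lemma~\ref{finalGprops}(i) and $H[A, B]$ remains complete. The rearranged $H[B]$ is bipartite with bipartition $B_1, B_2$ by construction: $v_1, \ldots, v_{b_1 - 1}$ become complete to $B_2$ while $v_{b_1}$ is adjacent exactly to $U \subseteq B_2$. The only edges added incident to $X'$ go into $B_1 \cup \bigcup_i R_i$, which is disjoint from both $X'$ and $B_2$; combined with the removals of $E(G[B \cup X']) \supseteq E(G[X']) \cup E(G[X', B_2])$, this delivers $E(H[X']) = E(H[X', B_2]) = \emptyset$. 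Finally, $X' \neq \emptyset$ is the current working assumption.

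For (ii), since $E(H[X']) = \emptyset$, every triangle in $T(H)$ contains exactly one $X'$-vertex, so $|T(H)| = \sum_{x \in X'} K_3(x, H)$ without any overcounting. For each $x \in X_i \subseteq X'$, part~(i) tells us that $N_H(x) = L_i \sqcup B_1 \sqcup T(x)$ is a disjoint union (since $B_1 \subseteq B$, $L_i, T(x) \subseteq A$, and $A \cap B = \emptyset$), and the induced subgraph $H[N_H(x)]$ is the complete $(k-3)$-partite graph on $L_i$ together with the complete bipartite pairs $[L_i, B_1]$, $[L_i, T(x)]$ and $[B_1, T(x)]$, so
\[
K_3(x, H) \;=\; e(G[L_i]) + |L_i| b_1 + (|L_i| + b_1)|T(x)|.
\]
Substituting $|T(x)| = d_G(x, R_i) \pm 3\delta^{1/13}m/n$ from (\ref{dGRi}) and expanding, the right-hand side agrees with $e(G[L_i]) + |L_i| b_1 + d_G(x, R_i)(|L_i| + b_1)$ up to an additive error $(|L_i|+b_1) \cdot O(\delta^{1/13}m/n) = O(\delta^{1/13}m)$, which is swamped by the $\alpha m$ slack afforded by Lemma~\ref{finalGprops}(ii). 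That lemma then yields a per-vertex comparison of the form $K_3(x, G; \overline{X'}) \geq K_3(x, H) + \alpha m/2$ for every $x \in X'$. Summing over $X'$ and using that $\sum_x K_3(x, G; \overline{X'})$ is the count $|T^1(G)|$ of $X'$-triangles meeting $X'$ in a single vertex, which is at most $|T(G)|$, produces the set-level inequality between $|T(H)|$ and $|T(G)|$ together with a strict surplus of order $|X'|\alpha m/2$; this surplus is what subsequently feeds, via the identity $K_3(G) - K_3(H) = K_3(G[B]) + (|T(G)| - |T(H)|)$ and $K_3(H[B]) = 0$, into the strict inequality $K_3(H) < K_3(G)$ used to contradict extremality of $G$.

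The main technical obstacle is tracking the error terms tightly enough that the $\alpha m$ gap in Lemma~\ref{finalGprops}(ii) strictly dominates: the slack $\pm 3\delta^{1/13}m/n$ in $|T(x)|$ from (\ref{dGRi}) is multiplied by the degree factor $|L_i| + b_1 = (k-2)cn + o(n)$, producing an error of order $\delta^{1/13}m$, which is absorbed because of the hierarchy $\delta \ll \alpha$ in (\ref{hierarchy}). The verification of~(i), though routine, also relies on the disjointness of $A$ and $B$ (given by Lemma~\ref{finalGprops}(i)) and on $T(x) \subseteq R_i \subseteq A$, which together make the decomposition $N_H(x) = L_i \sqcup B_1 \sqcup T(x)$ genuinely a partition so that the clean formula for $K_3(x, H)$ above is exact.
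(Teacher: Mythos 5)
Your part (i) is essentially fine and matches the paper's treatment: the paper disposes of it with ``follows from Lemma~\ref{finalGprops} and the construction of $H$'', and your edge bookkeeping and the verification that $H[A]=G[A]$, $H[A,B]$ is complete, $H[B]$ is bipartite and $E(H[X'])=E(H[X',B_2])=\emptyset$ is exactly the routine check being left implicit there.

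Part (ii), however, has a genuine gap: you prove the wrong inequality. The claim asserts $|T(H)|\geq |T(G)|$, and its proof must use the extremality of $G$. Since (i) shows $H$ is an $(n,e)$-graph and $G$ satisfies~\ref{worst-C1} (minimum number of triangles among all $(n,e)$-graphs), one has $K_3(H)\geq K_3(G)$. Because $G[A,B]$ and $H[A,B]$ are both complete, each of $K_3(G)$ and $K_3(H)$ decomposes as $K_3(\cdot[A])+K_3(\cdot[B])+|A|e(\cdot[B])+|B|e(\cdot[A])+|T(\cdot)|$; using $H[A]=G[A]$, $e(H[B])=e(G[B])$ and $K_3(H[B])=0$ this gives $0\leq K_3(H)-K_3(G)=|T(H)|-|T(G)|-K_3(G[B])\leq |T(H)|-|T(G)|$, which is the claim. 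Your argument instead combines the exact formula $K_3(x,H)=e(G[L_i])+|L_i|b_1+(|L_i|+b_1)|T(x)|$ with Lemma~\ref{finalGprops}(ii) to get $K_3(x,G;\overline{X'})\geq K_3(x,H)+\alpha m/2$ for each $x\in X'$, and summing yields $|T(G)|\geq |T(H)|+|X'|\alpha m/2$, i.e.\ the reverse, strict inequality. That computation is precisely the step the paper carries out \emph{after} the claim, in order to contradict (ii) and hence the existence of the counterexample $G$; it cannot serve as a proof of (ii), and if it stood alone it would refute the claim rather than establish it. The missing idea is the appeal to the minimality of $K_3(G)$ together with the triangle-count decomposition; nowhere in your proposal is the extremality of $G$ used in the direction the claim requires.
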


\begin{proof}[Proof of Claim.]
	The first part of (i) follows from Lemma~\ref{finalGprops} and by the construction of $H$.	Since $G[A,B]=H[A,B]$ are both complete, we have that
	\begin{align*}
	K_3(G) &= K_3(G[A]) + K_3(G[B]) + |A|e(G[B]) + |B|e(G[A]) + |T(G)|; \text{ and }\\
	K_3(H) &= K_3(H[A]) + K_3(H[B]) + |A|e(H[B]) + |B|e(H[A]) + |T(H)|.
	\end{align*}
	But $G[A] = H[A]$ and we also have $e(G[B]) = e(H[B])$ from the construction of $H$.
	Moreover $H[B]$ is bipartite so $K_3(H[B])=0$.
	Thus $0\le K_3(H)-K_3(G) = |T(H)|-|T(G)|-K_3(G[B])$.
	Then $|T(H)|\geq |T(G)|$, proving (ii). This completes the proof.
\end{proof}

\medskip
\noindent
In light of the claim, we will obtain a contradiction by showing that in fact $|T(H)|<|T(G)|$. Recall from Claim~\ref{proofclaim}(i) that $X'$ is an independent set in $H$, so there is no triangle in $H$ involving more than one vertex in $X'$, i.e.~$|T(H)|=\sum_{x\in X'}K_3(x,H;\overline{X'})$. By the inclusion-exclusion principle, we have
\begin{equation}\label{THTG}
|T(H)|-|T(G)| \leq \sum_{x \in X'}(K_3(x,H;\overline{X'})-K_3(x,G;\overline{X'})) + |X'|^2\cdot n.
\end{equation}

Let $x \in X'$ and let $i \in [k-2]$ be such that $x \in X_i$.
Let us count the change in triangles involving $x$ and two vertices in $\overline{X'}=A\cup B$.

Define $L_i := A\setminus R_i$ as in the proof of Lemma~\ref{finalGprops}.
By construction, we have that
\begin{itemize}
	\item[(H1)] $R_i \cup L_i \cup B_1 \cup B_2 \cup X'$ is a partition of $V(H)$, and $B_1,B_2,R_i,X'$ are independent sets of $H$.
	\item[(H2)] $L_i \cup B_1 \subseteq N_H(x)$ and $(B_2 \cup X')\cap N_H(x) = \emptyset$ and  $H[R_i,L_i]$, $H[B,L_i]$ are complete bipartite graphs.
\end{itemize}
Thus
\begin{eqnarray*}
	K_3(x,H;\overline{X'}) &\stackrel{(H1),(H2)}{=}& K_3\left(x,H;L_i\right) + K_3\left(x,H;L_i,R_i\right)+K_3(x,H;L_i,B_1)+K_3(x,H;R_i,B_1)\\
	&\stackrel{(H2)}{=}& e(H[L_i]) + |L_i|b_1 + |T(x)|(|L_i|+b_1)\\
	&\stackrel{(\ref{dGRi})}{\leq}& e(G[L_i]) + |L_i|b_1 + d_G(x,R_i)(|L_i|+b_1) + \delta^{1/14}m\\
	&\leq& K_3(x,G;\overline{X'}) - (\alpha -\delta^{1/14})m < K_3(x,G;\overline{X'})-\alpha m/2,
\end{eqnarray*}
where we used Lemma~\ref{finalGprops}(ii) for the penultimate inequality. This together with~(\ref{THTG}) implies that
$$
|T(H)|-|T(G)| \leq -|X'|(\alpha m/2-|X'|n) \stackrel{(\ref{X'})}{\leq} -|X'|\cdot (\alpha/4) \cdot m \leq - \alpha m/4,
$$
a contradiction to Claim~\ref{proofclaim}(ii).
Thus $G$ is not a counterexample to Theorem~\ref{strong}, and we have proved Theorem~\ref{strong} in this case.
\hfill$\square$

\subsection{The intermediate case when $m$ is small}\label{sec:linear}

In this section, we will similarly obtain a contradiction to our assumption that $G$ is a worst counterexample to Theorem~\ref{strong} in the case when
\begin{equation}\label{smallC}
m < Cn.
\end{equation}
This case has a slightly different flavour from the rest of the proof.
Indeed, in all other cases, we are eventually able to obtain from $G$ an $(n,e)$-graph $H$ with strictly fewer triangles than $G$, a contradiction.
However, in the case when $m < Cn$, we can only guarantee an $(n,e)$-graph $H$ with \emph{at most} as many triangles as $G$ but which lies in $\mathcal{H}(n,e)$.
This is enough to prove that $g_3(n,e) = h(n,e)$, but not enough to prove that every extremal graph lies in $\mathcal{H}(n,e)$.
This is not surprising, as when $m < Cn$, our graph $G$ is very close indeed to a graph in $\mathcal{H}(n,e)$.
Recall from the very beginning of the proof in Section~\ref{beginning} that our choice of extremal graph $G$ was not arbitrary: we chose $G$ according to the three criteria~\ref{worst-C1}--\ref{worst-C3}, which ensure that $G$ minimises/maximises certain graph parameters.
Note that~\ref{worst-C3} has not affected the proof until now.
In this part of the proof, we are required to analyse the transformations $G \rightarrow G^1 \rightarrow \ldots \rightarrow G^r \rightarrow H$ that take us from $G$ to $H$. Using $K_3(G) = K_3(G^1) = \ldots = K_3(G^r) = K_3(H)$, we will show that for each $i$ the graph $G^i$ contradicts the choice of $G$ according to~\ref{worst-C1}--\ref{worst-C3}, or $G^i \in \mathcal{H}(n,e)$.
Then some additional work is required to show that this latter consequence implies that actually $G$ itself lies in $\mathcal{H}(n,e)$, also a contradiction.

\medskip
We follow all arguments until the end of Section~\ref{sec:trans}.
In particular, all definitions from Section~\ref{applying} apply.
Now~(\ref{Zsize}) and~(\ref{smallC}) imply that $Z$ has constant size, namely
\begin{equation}\label{smallZ}
|Z| \leq \frac{2C}{\xi}.
\end{equation}
Recall the definition of $R_k'$ in Section~\ref{smalldeg}.
The number of $x \in A_k$ which have at least one neighbour in $Z_k$ is at most $$
\sum_{z \in Z_k}d_G(z,A_k) \stackrel{\Pbadedges(G)}{\leq} |Z_k|\delta n \leq \frac{2C\delta n}{\xi} = \frac{2\sqrt{\delta}n}{\xi} < \frac{\xi n}{2} = |R_k|-|R_k'|,
$$
and so for all $x \in R_k'$ we have $d_G(x,Z_k)=0$. Recalling the definition of $\Delta$ in~(\ref{Delta}), we have
\begin{equation}\label{Delta0}
\Delta = 0.
\end{equation}
This will imply that Transformations~1--3 now do not increase the number of triangles.
Thus by applying Lemmas~\ref{subZiedges},~\ref{subYiedges} and~\ref{Xiedges}, we can easily obtain a graph $G'$ with $K_3(G') = K_3(G)$ in which, for all $i \in [k-1]$, we have $E(G'[A_i]) = \emptyset$ (Lemma~\ref{Aiempty}); $Y_i = \emptyset$ (Lemma~\ref{Yiempty}); and $E(G'[X_i]) = \emptyset$ (Lemma~\ref{GfromG3}).
The final step is to further transform $G'$ to another graph $G'' \in \mathcal{H}(n,e)$ with the same number of triangles.
This proves that $g_3(n,e) = h(n,e)$. 
However, as mentioned above, we must prove that $G' \in \mathcal{H}(n,e)$.
The next subsection contains some auxiliary results which we will need to achieve this.

\subsubsection{Lemmas for characterising extremal graphs}\label{charext}

To compare $G$ to some $H \in \mathcal{H}(n,e)$ which differs slightly from $G$, we need to compare our usual max-cut partition $A_1,\ldots,A_k$ of $G$ with a canonical partition of $H$, which is $A_1^*,\ldots,A_{k-2}^*,B$ when $H \in \mathcal{H}_1(n,e)$, and $A_1^*,\ldots,A_k^*$ when $H \in \mathcal{H}_2(n,e)$.
Recall that, given $U \subseteq V(G)=V(H)$, we say that $G$ and $H$ \emph{only differ at $U$} if $E(G)\bigtriangleup E(H) \subseteq \binom{U}{2}$.
The first lemma will be used in the case when $G' \in \mathcal{H}_1(n,e)$ (this is the easier case).

\begin{lemma}\label{lem-H1}
	Let $H\in\mathcal{H}_1(n,e)$ with $K_3(H)=K_3(G)$, $\Delta(H[A_i])\le 2\gamma n$ for every $i\in[k]$ and $e(H[A_i,A_j])>0$ for every $ij\in{[k]\choose 2}$. Then the following properties hold.
	\begin{itemize}
		\item[(i)] If $A_1^*,\ldots,A_{k-2}^*,B$ is a canonical partition of $H$, then $B=A_p\cup A_q$ for some $pq\in{[k]\choose 2}$
		and there is a permutation $\sigma$ of $[k]$ such that $A_i = A^*_{\sigma(i)}$ for all $i \in [k]\setminus \lbrace p,q\rbrace$.
		Furthermore, $e(\overline{H}[A_s,A_t])>0$ for some $st \in \binom{[k]}{2}$ only if $\lbrace p,q\rbrace = \lbrace s,t \rbrace$.
		\item[(ii)] If $H$ and $G$ only differ at $A_{s'}\cup A_{t'}$, then $H[A_{s'},A_{t'}]$ is complete.
	\end{itemize}
\end{lemma}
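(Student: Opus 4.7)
The key structural observation is that in $H \in \mathcal{H}_1(n,e)$ with canonical partition $A_1^*,\ldots,A_{k-2}^*,B$, every vertex $v$ in any part $X \in \{A_1^*,\ldots,A_{k-2}^*,B\}$ satisfies $N_H(v) \supseteq V(H) \setminus X$. Hence whenever $v \in A_j \cap X$, we have $|A_j \setminus X| \le d_{H[A_j]}(v) \le 2\gamma n$. From \Ppartition($G$) and~(\ref{ineq:c}) we have $|A_j| \ge \alpha n/2$, so if $A_j$ met two distinct parts $X_1,X_2$, the two lower bounds $|A_j \cap X_\ell| \ge |A_j|-2\gamma n$ would contradict the disjointness of $X_1,X_2$. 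Therefore each $A_j$ is entirely \emph{contained} in a unique canonical part; write $\sigma(j)$ for the index of this part, so $A_j \subseteq A_{\sigma(j)}^*$ when $\sigma(j) \in [k-2]$ and $A_j \subseteq B$ when $\sigma(j) = 0$.

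\textbf{Pinning down $\sigma$.} I would first argue $H \in \mathcal{H}_1^*(n,e)$: the chain $K_3(G) = g_3(n,e) \le h(n,e) \le K_3(H) = K_3(G)$, whose second inequality uses $H \in \mathcal{H}_1(n,e) \subseteq \mathcal{H}(n,e)$ together with $h = h^*$ from Proposition~\ref{pr:compute}, forces equality throughout, so $H \in \mathcal{H}_1^{\mathrm{min}}(n,e) = \mathcal{H}_1^*(n,e)$. In particular $|A_i^*| \ge cn - 2 > 0$ for every $i \in [k-2]$ by Lemma~\ref{lem-slope-c}, so every vertex of $A_i^*$ sits in some $A_j$, which by containment satisfies $\sigma(j) = i$; hence $|\sigma^{-1}(i)| \ge 1$. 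The hypothesis $e(H[A_{j_1}, A_{j_2}]) > 0$ rules out two distinct $A_{j_1}, A_{j_2}$ both sitting inside the independent set $A_i^*$, so $|\sigma^{-1}(i)| \le 1$. Combining, $|\sigma^{-1}(i)| = 1$ for every $i \in [k-2]$, leaving $|\sigma^{-1}(0)| = 2$; let $\{p,q\} := \sigma^{-1}(0)$. Containment now upgrades to equality for free: whenever $\sigma^{-1}(i) = \{j\}$ we have $A_i^* = \bigcup_{j' \in \sigma^{-1}(i)} A_{j'} = A_j$, giving $A_j = A_{\sigma(j)}^*$ for $j \notin \{p,q\}$ and $A_p \cup A_q = B$. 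The ``moreover'' clause of (i) is then immediate, since every non-edge of $H$ lies inside a single $A_i^* = A_{\sigma^{-1}(i)}$ or inside $B = A_p \cup A_q$, so it straddles two distinct $A_s, A_t$ only when $\{s,t\} = \{p,q\}$.

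\textbf{Plan for (ii), and the main obstacle.} Suppose for contradiction that $H[A_{s'}, A_{t'}]$ is not complete. If $\{s',t'\} \ne \{p,q\}$, the ``moreover'' clause of (i) is immediately contradicted, so $\{s',t'\} = \{p,q\}$. Since $G$ and $H$ agree outside $B = A_p \cup A_q$, (i) forces $G[A_i] = H[A_i] = \emptyset$ for every $i \notin \{p,q\}$ and $G[A_s, A_t]$ complete for every $\{s,t\} \ne \{p,q\}$. Partitioning triangles by the number of their vertices inside $B$, and using $e(G[B]) = e(H[B])$ (forced by $e(G) = e(H)$ together with agreement outside $B$), every term matches between $G$ and $H$ except the count of triangles entirely in $B$, giving $K_3(G) - K_3(H) = K_3(G[B]) - K_3(H[B]) = K_3(G[B])$. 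Combined with $K_3(G) = K_3(H)$ and $H[B]$ triangle-free, this yields $K_3(G[B]) = 0$, so $G \in \mathcal{H}_1(n,e) \subseteq \mathcal{H}(n,e)$, contradicting $G \notin \mathcal{H}(n,e)$. The main obstacle I anticipate is upgrading approximate containment to \emph{exact} equality in (i); my plan sidesteps error propagation by first invoking extremality to place $H$ in $\mathcal{H}_1^*(n,e)$ (guaranteeing non-empty $A_i^*$) and then exploiting the edge-existence hypothesis to force $|\sigma^{-1}(i)| = 1$, after which containment tightens to equality automatically.
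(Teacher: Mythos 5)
Your proposal follows essentially the same route as the paper's proof for both parts: show each $A_j$ sits entirely inside one canonical part (via the $\Delta(H[A_j]) \le 2\gamma n$ hypothesis and the completeness of $H[S,\overline{S}]$), use $e(H[A_i,A_j])>0$ to force each $A_p^*$ with $p\le k-2$ to absorb at most one $A_j$, conclude $B=A_p\cup A_q$; then for (ii), reduce to $\{s',t'\}=\{p,q\}$ via the ``moreover'' clause, observe that $G$ and $H$ agree on $\overline{B}$ and on $[B,\overline{B}]$ (both of which are complete), and push the triangle count to $K_3(G[B])=0$, giving $G\in\mathcal{H}_1(n,e)$, the desired contradiction.

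The one place you diverge is in upgrading ``at most one $A_j$ per $A_p^*$'' to ``exactly one''. You route this through extremality: the chain $g_3(n,e)\le h(n,e)\le K_3(H)=K_3(G)=g_3(n,e)$ forces $H\in\mathcal{H}_1^{\min}(n,e)=\mathcal{H}_1^*(n,e)$, and you then cite Lemma~\ref{lem-slope-c} to get $|A_i^*|\ge cn-2>0$. This last step is not quite watertight as written: Lemma~\ref{lem-slope-c} bounds the components of the specific vector $\bm{a}^*(n,e)$, i.e.\ the part sizes in the defining $\mathcal{H}_1^*$-description of $H$, whereas your $A_i^*$ come from an \emph{arbitrary} canonical partition, and canonical partitions of graphs in $\mathcal{H}_1^*(n,e)$ are not automatically unique (for instance when $H$ happens to be complete $k$-partite, any two classes may be merged to form $B$). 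What you actually need is just that no $A_p^*$ is empty, and this is cheaper to get directly: if some $A_p^*=\emptyset$, then $H$ is a complete $\le(k-3)$-partite graph on $\overline{B}$ joined completely to a triangle-free graph on $B$, so $e(H)\le t_{k-1}(n)<e$, a contradiction. The paper asserts ``must contain exactly one'' without spelling this out, so your instinct to justify the step was good --- just replace the detour through Lemma~\ref{lem-slope-c} with the edge-count argument, after which your proof is complete and matches the paper's in essence.
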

\begin{proof}	
	For (i), let $S\in\{A_1^*,\ldots,A_{k-2}^*,B\}$. Suppose for some $i\in[k]$, we have $A_i\cap S,A_i\cap \overline{S}\neq\emptyset$.
	Then, as $H[S,\overline{S}]$ is complete, there exists $v\in A_i$ with 
	$$
	d_H(v,A_i)\ge \frac{|A_i|}{2}\stackrel{\Ppartition(G)}{\geq} \frac{(c-\beta)n}{2} > 2\gamma n\ge\Delta(H[A_i]),
	$$
	a contradiction. So either $A_i\subseteq S$ or $A_i\subseteq\overline{S}$. Since $e(H[A_i,A_j])>0$ for every $ij\in{[k]\choose 2}$, and every $A_p^*$ with $p\in[k-2]$ is an independent set in $H$, $A_p^*$ must contain exactly one $A_i$. This proves the first part of~(i). Suppose now $e(\overline{H}[A_{s},A_t])>0$ for some $st \in \binom{[k]}{2}$. Then the fact that $H[A_1^*,\ldots,A_{k-2}^*,B]$ is complete multipartite implies that $B=A_{s}\cup A_t$.
	
	For (ii), suppose that $H$ and $G$ only differ at $A_{s'}\cup A_{t'}$ and $e(\overline{H}[A_{s'},A_{t'}])>0$. Then by~(i), we have $B=A_{s'}\cup A_{t'}$.
	So $H[\overline{B}]=G[\overline{B}]$ is complete $(k-2)$-partite and $H[B,\overline{B}]=G[B,\overline{B}]$ is complete.
	Since $K_3(H)=K_3(G)$ we have $K_3(G[B])=K_3(H[B])=0$, so $G[B]$ is triangle-free.
	Thus $G\in\mathcal{H}_1(n,e)$ with canonical partition $A_1^*,\ldots,A_{k-2}^*,B$, contradicting the choice of $G$.
\end{proof}

The next lemma analyses a graph $H \in \mathcal{H}_2(n,e)$ obtained by making some small changes to $G$. 

\begin{lemma}\label{lem-H2}
	Let $H\in\mathcal{H}^{\mathrm{min}}_2(n,e)\setminus \mathcal{H}_1(n,e)$ be such that $|E(G)\bigtriangleup E(H)|\le 
	\delta n^2$ and $H[A_i,A_j]$ is complete for every $ij\in{[k-1]\choose 2}$.
	Suppose that
\begin{equation}\label{deq1}
	d := \max_{i \in [k]; v \in V(G)}|d_G(v,A_i)- d_H(v,A_i)| \leq \gamma n.
\end{equation}
	Let $A_1^*,\ldots,A_k^*$ be a canonical partition of $H$.
	Then $R_k\subseteq A_k^*$ and there exists a permutation $\sigma$ of $[k]$ such that $|A_i \bigtriangleup A_{\sigma(i)}^*| \leq k\beta n$ for all $i \in [k]$, and the following properties hold:
	\begin{itemize}
		\item[(i)] If there exists $p \in [k-1]$ for which $Z_k=Z_k^{p}$, then $Z_k \subseteq A_{\sigma(p)}^* \cup A_k^*$. Moreover, there is $j \in [k-1]$ such that $A_{\sigma(i)}^*=A_i$ for all $i \in [k-1]\setminus \lbrace j,p\rbrace$, and if $j \neq p$ then $A_{\sigma(j)}^* \subseteq A_j \subseteq A_{\sigma(j)}^* \cup A_k^*$.
		
		\item[(ii)] If $d \le \delta n$ and $Y=\emptyset$, then $A_k \subseteq A_k^*$, and there is $j \in [k-1]$ such that $A_{\sigma(i)}^*=A_i$ for all $i \in [k-1]\setminus \lbrace j\rbrace$, and $A_{\sigma(j)}^* \subseteq A_j \subseteq A_{\sigma(j)}^* \cup A_k^*$. 	\end{itemize}
\end{lemma}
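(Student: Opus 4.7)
The plan is to construct the permutation $\sigma$ from the structural constraints that the canonical partition $A_1^*,\ldots,A_k^*$ of $H$ places on $G$'s partition via the hypothesis that $H[A_i,A_j]$ is complete for every $ij\in\binom{[k-1]}{2}$, then use~(\ref{deq1}) to show $R_k\subseteq A_k^*$, and finally pin down the refined structure of~(i) and~(ii) via degree comparisons at the vertices of $Z_k$.

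By Proposition~\ref{pr:compute} and Lemma~\ref{lem-slope-c}, $|A_i^*|=cn\pm O(1)$ for $i\in[k-1]$ and $|A_k^*|=(1-(k-1)c)n\pm O(1)$. The key observation is that each $A_\ell^*$ is independent in $H$, so if distinct $u\in A_\ell^*\cap A_i$ and $v\in A_\ell^*\cap A_j$ existed with $i\ne j\in[k-1]$, then $uv$ would be forced into $E(H)$ by the hypothesis and out of $E(H)$ by independence---a contradiction. Hence $A_\ell^*\cap\overline{A_k}$ is contained in a single $A_i$. For $\ell\in[k-1]$, $|A_\ell^*\cap\overline{A_k}|\geq|A_\ell^*|-|A_k|\geq(kc-1-\beta)n-O(1)>0$ by~(\ref{ineq:c}), so this $i$ is well-defined; setting $\sigma^{-1}(\ell):=i$ produces a map $[k-1]\to[k-1]$ which is injective (if $\sigma^{-1}(\ell_1)=\sigma^{-1}(\ell_2)=i$, then $|A_{\ell_1}^*|+|A_{\ell_2}^*|\leq|A_i|+|A_k|$, contradicting $kc>1+\sqrt{2\alpha}$) and hence bijective. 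Extending by $\sigma^{-1}(k):=k$ yields a permutation $\sigma$ of $[k]$ with $\sigma(k)=k$ and $A_{\sigma(i)}^*\subseteq A_i\cup A_k$ for every $i\in[k-1]$. Applied with $\ell=k$, the observation furnishes the index $j$ of the lemma: either $A_k^*\subseteq A_k$ (in which case $j$ may be chosen arbitrarily), or there is a unique $j\in[k-1]$ with $A_k^*\cap\overline{A_k}\subseteq A_j$.

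To show $R_k\subseteq A_k^*$, suppose $x\in R_k\cap A_{j'}^*$ for some $j'\in[k-1]$. The containment above also yields $A_{\sigma^{-1}(j')}\cap A_\ell^*=\emptyset$ for $\ell\in[k-1]\setminus\{j'\}$, so $A_{\sigma^{-1}(j')}\setminus A_{j'}^*\subseteq A_k^*$. Since $A_{j'}^*$ is independent in $H$, this gives $d_H(x,A_{\sigma^{-1}(j')})\leq|A_{\sigma^{-1}(j')}\cap A_k^*|\leq|A_k^*|\leq(1-(k-1)c+O(1))n$. Combined with~(\ref{deq1}) and the bound $d_G(x,A_{\sigma^{-1}(j')})\geq(c-\beta-\xi)n$ coming from $x\in R_k$, this forces $(kc-1)n\leq O(\beta n)+\gamma n$, contradicting~(\ref{ineq:c}) as $\beta,\gamma\ll\alpha$. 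So $R_k\subseteq A_k^*$; in particular $A_k\setminus A_k^*\subseteq Z_k$ and $|A_{\sigma(i)}^*\cap A_k|\leq|Z_k|\leq\delta n$, from which a routine size computation produces $|A_i\triangle A_{\sigma(i)}^*|\leq 3\beta n\leq k\beta n$ for all $i\in[k]$.

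For part~(i), the hypothesis $Z_k=Z_k^p$ together with \PZk($G$) makes every $z\in Z_k$ complete in $G$ to $A_i$ for each $i\in[k-1]\setminus\{p\}$. Repeating the $R_k$ argument with $z$ in place of $x$ precludes $z\in A_{j'}^*$ whenever $\sigma^{-1}(j')\in[k-1]\setminus\{p\}$, so $Z_k\subseteq A_{\sigma(p)}^*\cup A_k^*$. Hence for $i\in[k-1]\setminus\{p\}$, $A_{\sigma(i)}^*\cap A_k\subseteq Z_k\cap A_{\sigma(i)}^*=\emptyset$ (since $\sigma(i)\ne\sigma(p),k$), giving $A_{\sigma(i)}^*\subseteq A_i$; together with $A_i\cap A_k^*=\emptyset$ for $i\in[k-1]\setminus\{j\}$ from Step~1, this yields $A_i=A_{\sigma(i)}^*$ for $i\in[k-1]\setminus\{j,p\}$ and the inclusion $A_{\sigma(j)}^*\subseteq A_j\subseteq A_{\sigma(j)}^*\cup A_k^*$ whenever $j\ne p$. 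For part~(ii), the strengthened $d\leq\delta n$ with $Y=\emptyset$ rules out any $z\in Z_k$ lying in $A_{j'}^*$ with $j'\in[k-1]$: the same argument forces $j'=\sigma(q)$ where $z\in Z_k^q$, but $z\notin Y_q$ gives $d_G(z,A_q)>\gamma n$ while $d_H(z,A_q)\leq k\beta n$ (from $d_H(z,A_{\sigma(q)}^*)=0$ and the triangle bound), contradicting $|d_G-d_H|\leq d\leq\delta n$. So $A_k\subseteq A_k^*$, and the conclusion follows as in~(i). The principal obstacle is Step~1: extracting the containment $A_\ell^*\cap\overline{A_k}\subseteq A_{\sigma^{-1}(\ell)}$ from the completeness of $H[A_i,A_j]$; once this is in place the remaining steps reduce to degree inequalities against~(\ref{deq1}).
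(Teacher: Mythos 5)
Your proof is correct, and its overall architecture matches the paper's: first pin down a correspondence $\sigma$ between the $A_i$ and the $A_i^*$ together with the containments $A_{\sigma(i)}^*\setminus A_i\subseteq A_k$ and $A_i\setminus A_{\sigma(i)}^*\subseteq A_k^*$ (with at most one exceptional index $j$), then prove $R_k\subseteq A_k^*$ and parts (i), (ii) by showing that the relevant vertices of $A_k$ cannot sit in a part $A_{j'}^*$ with $j'\in[k-1]$, using~(\ref{deq1}) to play $d_G$ against $d_H$. Where you genuinely diverge is in how the correspondence is established. The paper proves its Claim~\ref{811claim}(1) from the edit-distance hypothesis $|E(G)\bigtriangleup E(H)|\le\delta n^2$: it shows each $|A_i\cap A_{j}^*|$ is either at most $\beta n$ or at least $|A_i|-\beta n$ (else $H[A_i\cap A_j^*,A_i\setminus A_j^*]$ contributes too many edges to the symmetric difference, after discounting the at most $n$ missing edges of $H$ via Corollary~\ref{H2new}(iii)), and only then deduces (2)--(4). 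You instead extract $A_\ell^*\cap\overline{A_k}\subseteq A_{\sigma^{-1}(\ell)}$ directly from the independence of $A_\ell^*$ in $H$ together with the hypothesis that $H[A_i,A_j]$ is complete for $ij\in\binom{[k-1]}{2}$, and get the part sizes of $H$ from Proposition~\ref{pr:compute} and Lemma~\ref{lem-slope-c} rather than from proximity to $G$; consequently you never invoke the edit-distance hypothesis at all, which is a mild economy. The one point to be careful about is that Proposition~\ref{pr:compute} as stated identifies $\mathcal{H}_2^{\mathrm{min}}(n,e)$ with $\mathcal{H}_2^*(n,e)$ as families of graphs, and you need the size bounds for the \emph{given} canonical partition $A_1^*,\ldots,A_k^*$; this does follow from the proof of that proposition (which works with an arbitrary canonical partition), but it is worth saying explicitly — and if one prefers not to lean on it, the paper's edit-distance computation recovers the same size bounds. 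Otherwise the degree computations in your steps for $R_k$, for part (i), and for part (ii) (where you correctly use the strengthened bound $d\le\delta n$ against $d_G(z,A_q)>\gamma n$) all check out.
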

\begin{proof}
	We require a claim:
	
	\begin{claim}\label{811claim}
		There exists a permutation $\sigma$ of $[k]$ with $\sigma(k)=k$ such that the following hold: 
		\begin{itemize}
			\item[(1)] for all $i \in [k]$ we have $|A_i\bigtriangleup A_{\sigma(i)}^*| \leq k\beta n$;
			\item[(2)] $R_k \subseteq A_k^*$;
			\item[(3)] for all $i \in [k-1]$ we have $A_{\sigma(i)}^*\setminus A_i \subseteq A_k$ and $A_i\setminus A_{\sigma(i)}^* \subseteq A_k^*$;
			\item[(4)] $A_j \subseteq A_{\sigma(j)}^*$ for all but at most one $j \in [k-1]$.
		\end{itemize}
	\end{claim}
	
	\begin{proof}[Proof of Claim.]
    We start with (1).
	Corollary~\ref{H2new}(iii) implies that
	\begin{equation}\label{mH}
		\sum_{ij \in \binom{[k]}{2}}e(\overline{H}[A_i^*,A_j^*]) \leq n.
		\end{equation}
	Further,
	$$
	e(G[A_i]) \stackrel{(\ref{h})}{\leq} \delta m \stackrel{(\ref{smallC})}{\leq} \delta C n \stackrel{(\ref{Cvalue})}{=} \sqrt{\delta}n.
	$$
	Suppose that there exist $i,j \in [k]$ such that $\beta n \leq |A_i \cap A_j^*| \leq |A_i|-\beta n$. 
	Then
	\begin{align*}
	|E(G)\bigtriangleup E(H)| \geq e(H[A_i \cap A_j^*,A_i\setminus A_j^*]) - e(G[A_i]) \stackrel{(\ref{mH})}{\geq} |A_i \cap A_j^*||A_i\setminus A_j^*| - n - \sqrt{\delta}n \geq \frac{\beta^2n^2}{2},
	\end{align*}
	a contradiction.
	Thus, for all $i,j \in [k]$, either $|A_i \cap A_j^*| \leq \beta n$, or $|A_i \cap A_j^*| \geq |A_i|-\beta n$.
	Since for all $i \in [k]$ we have
	$$
	|A_i| \stackrel{\Ppartition(G)}{\geq} n - (k-1)cn-\beta n \stackrel{(\ref{ineq:c})}{\geq} (k-1)\alpha n - \beta n > k\beta n,
	$$
	the first alternative cannot hold for every $j \in [k]$.
	Thus there is exactly one $j \in [k]$ for which $|A_i \cap A_j^*| \geq |A_i|-\beta n$.
	Suppose that there is $j \in [k]$ and $1 \leq i_1 < i_2 \leq k$ such that $|A_{i_p} \cap A_j^*| \geq |A_{i_p}|-\beta n$ for $p \in [2]$.
	Then
	$$
	e(G[A_j^*]) \geq |A_{i_1}\cap A_j^*||A_{i_2} \cap A_j^*|-m \geq (|A_{i_1}|-\beta n)(|A_{i_2}|-\beta n)-\eta n^2 \stackrel{\Ppartition(G)}{>} 2\delta n^2,
	$$
	and so $e(H[A_j^*])>0$, a contradiction.
	That is, there is a permutation $\sigma$ of $[k]$ for which
	\begin{equation}\label{isigmai}
	|A_i \bigtriangleup A_{\sigma(i)}^*| = |A_i \setminus A_{\sigma(i)}^*| + |A_{\sigma(i)}^*\setminus A_i| \leq \beta n + \sum_{j \in [k]\setminus \lbrace \sigma(i)\rbrace} |A_j \cap A^*_{\sigma(i)}| \leq k\beta n.
	\end{equation}
	Since
	$$
	|A_k| \stackrel{\Ppartition(G)}{\leq} n-(k-1)cn+\beta n \stackrel{(\ref{ineq:c})}{\leq} cn - \sqrt{2\alpha}n + \beta n \stackrel{\Ppartition(G)}{<} |A_i| - \sqrt{\alpha} n
	$$
	for all $i \in [k-1]$, we have $|A_{\sigma(k)}^*| \leq |A_{\sigma(i)}^*|-\sqrt{\alpha}n/2$ for all $i \in [k-1]$. Since $|A_1^*| \geq \ldots \geq |A_k^*|$, this implies that $\sigma(k)=k$. This proves (1).
	
	Now, for all $i \in [k]$ and $v \in V(G)$, we have
	\begin{equation}
	\label{deq}
	|d_G(v,A_i)-d_H(v,A_{\sigma(i)}^*)| \stackrel{(\ref{deq1})}{\leq} d + |A_i\bigtriangleup A_{\sigma(i)}^*| \stackrel{(\ref{isigmai})}{\leq} \gamma n + k\beta n \leq 2\gamma n.
	\end{equation}
	For (2), let $x \in R_k$ and $i \in [k-1]$. We have
	$$
	d_H(x,A^*_{\sigma(i)}) \stackrel{(\ref{deq})}{\geq} d_G(x,A_i) - 2\gamma n \stackrel{\Pmissing(G)}{\geq} |A_i| - \xi n - 2\gamma n \stackrel{\Ppartition(G)}{\geq} (c-\beta-\xi-2\gamma)n > 0.
	$$
	Since $A^*_{\sigma(i)}$ is an independent set in $H$, we have that $v \notin A_{\sigma(i)}^*$.
	But $i \in [k-1]$ was arbitrary, so $v \in A_{\sigma(k)}^* = A_k^*$, proving (2).
	
	For (3), suppose that $i \in [k-1]$ and there is some $v \in A_{\sigma(i)}^*\setminus A_i$.
	Then, since $A_{\sigma(i)}^*$ is independent in $H$, we have that
	$$
	d_H(v,A_i) \stackrel{(\ref{deq1})}{\leq} d_G(v,A_i) + d \stackrel{(\ref{deq})}{\leq} d_H(v,A_{\sigma(i)}^*) + 2\gamma n + d \leq 3\gamma n < (c-\beta)n \stackrel{\Ppartition(G)}{<} |A_i|.
	$$
	But $H[A_i,A_j]$ is complete for all $ij \in \binom{[k-1]}{2}$, so $v \notin \bigcup_{j \in [k-1]\setminus \lbrace i \rbrace}A_j$. Thus $v \in A_k$, proving the first part of (3).
	For the second part, suppose that $v \in A_i\setminus A_{\sigma(i)}^*$ and let $j \in [k-1]\setminus \lbrace i \rbrace$.
	Then
	$$
	d_H(v,A_{\sigma(j)}^*) \stackrel{(\ref{deq})}{\geq} d_G(v,A_j) - 2\gamma n \stackrel{\Pcomplete(G)}{=} |A_j| - 2\gamma n \stackrel{\Ppartition(G)}{>} (c-\beta-2\gamma)n > 0.
	$$
	So $u \notin A^*_{\sigma(j)}$ and so $u \in A_k^*$, completing the proof of (3).
	
	\par Finally, for (4), suppose that there is $ij \in \binom{[k-1]}{2}$ for which there exist $v_i \in A_i\setminus A_{\sigma(i)}^*$ and $v_j \in A_j\setminus A_{\sigma(j)}^*$. Since $H[A_i,A_j]$ is complete we have $v_iv_j \in E(H)$. But~(3) implies that $v_i,v_j \in A_k^*$, a contradiction.
	This proves~(4) and completes the proof of the claim.
\end{proof}

\medskip
\noindent
We will now prove Item~(i) of the lemma. So suppose there is $p \in [k-1]$ for which $Z_k = Z_k^p$.
Let $i \in [k-1]\setminus \lbrace p\rbrace$ and $y \in Z_k^p$. Then
	$$
	d_H(y,A^*_{\sigma(i)}) \stackrel{(\ref{deq})}{\geq} d_G(y,A_i) - 2\gamma n \stackrel{\PZk(G)}{=} |A_i| - 2\gamma n \stackrel{\Ppartition(G)}{\geq} (c-\beta-2\gamma)n > 0.
	$$
	Thus $y \notin A_{\sigma(i)}^*$ and so $y \in A_{\sigma(p)}^* \cup A_k^*$.
	Therefore, using Claim~\ref{811claim}(2), $A_k = R_k \cup Z_k^p \subseteq A_{\sigma(p)}^* \cup A_k^*$.
	But, by Claim~\ref{811claim}(3), for all $i \in [k-1]$ we have $A_{\sigma(i)}^* \setminus A_i \subseteq A_k \subseteq A_{\sigma(p)}^* \cup A_k^*$.
	Thus $A_{\sigma(i)}^* \subseteq A_i$ for all $i \in [k-1]\setminus \lbrace p \rbrace$.
	By Claim~\ref{811claim}(4), this implies that there is $j \in [k-1]$ such that $A_i = A_{\sigma(i)}^*$ for all $i \in [k-1]\setminus \lbrace j,p\rbrace$.
	If $j \neq p$, then by Claim~\ref{811claim}(3), $A_{\sigma(j)}^* \subseteq A_j \subseteq A_{\sigma(j)}^* \cup A_k^*$, completing the proof of (i).
	
For (ii), we may now assume that $d \leq \delta n$ and $Y = \emptyset$. Inequality~(\ref{deq}) is replaced by the stronger statement
	\begin{equation}
	\label{deq2}
	|d_G(v,A_i)-d_H(v,A_{\sigma(i)}^*)| \leq d + |A_i\bigtriangleup A_{\sigma(i)}^*| \leq \delta n + k\beta n \leq \sqrt{\beta}n\quad\text{for all }v \in V(G).
	\end{equation}
Let $i \in [k-1]$ and $z \in Z_k = X$. Then, using the definition of $X$,
$$
d_H(z,A^*_{\sigma(i)}) \stackrel{(\ref{deq2})}{\geq} d_G(z,A_i) - \sqrt{\beta} n \geq \gamma n - \sqrt{\beta}n > 0.
$$
Thus $z \notin A_{\sigma(i)}^*$ and so $z \in A_k^*$.
Combining this with Claim~\ref{811claim}(2), we see that again $A_k = R_k \cup X \subseteq A_k^*$.
Then Claim~\ref{811claim}(3) implies that for all $i \in [k-1]$ we have $A_{\sigma(i)}^* \setminus A_i \subseteq A_k \subseteq A_k^*$ and so $A_{\sigma(i)}^* \subseteq A_i$.
By Claim~\ref{811claim}(4), there is $j \in [k-1]$ such that $A_{\sigma(i)}^*=A_i$ for all $i \in [k-1]\setminus \lbrace j \rbrace$; and $A_{\sigma(j)}^* \subseteq A_j \subseteq A_{\sigma(j)}^* \cup A_k^*$. This completes the proof of (ii).
\end{proof}

The final lemma in this subsection will be used to prove that, for all $i \in [k-1]$, we have $E(G[A_i])=\emptyset$ (Lemma~\ref{Aiempty}) and $Y_i=\emptyset$ (Lemma~\ref{Yiempty}).
Its proof uses part~(i) of the previous lemma.

\begin{lemma}\label{moreH2pain}
	Let $p \in [k-1]$ and $z \in A_{p} \cup A_k$ be such that $T := N_G(z,A_{p})$ satisfies $1 \leq |T| \leq \gamma n$; and let $S \subseteq N_{\overline{G}}(z,R_k)$ satisfy $|S|=|T|$.
	Suppose further that $Z_k = Z_k^{p}$ and $P_3(yz,G;R_k)=0$ for all $y \in T$,
	and $G[S,\bigcup_{i \in [k-1]\setminus \lbrace p \rbrace}A_i]$ is complete.
	Obtain $H$ from $G$ by replacing $zy$ for all $y \in T$ with $zx$ for all $x \in S$ and suppose that $K_3(H)=K_3(G)$.
	Then $H$ is an $(n,e)$-graph which does not lie in $\mathcal{H}_2(n,e)\setminus\mathcal{H}_1(n,e)$. 
\end{lemma}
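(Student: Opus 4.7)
The plan is to suppose for contradiction that $H\in\mathcal{H}_2(n,e)\setminus\mathcal{H}_1(n,e)$, apply Lemma~\ref{lem-H2}(i) to $H$, and conclude that $G\in\mathcal{H}_2(n,e)$, contradicting $G\notin\mathcal{H}(n,e)$. First, $H$ is an $(n,e)$-graph because $T\subseteq A_p$ and $S\subseteq R_k\subseteq A_k$ are disjoint of equal size. Under the contradiction hypothesis, $K_3(H)=K_3(G)=g_3(n,e)$ puts $H\in\mathcal{H}_2^{\min}(n,e)\setminus\mathcal{H}_1(n,e)$, and Proposition~\ref{pr:compute} then gives $H\in\mathcal{H}_2^*(n,e)$.

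Next, I would verify the hypotheses of Lemma~\ref{lem-H2}(i) for $H$ with the max-cut partition $A_1,\dots,A_k$ of $G$: the edit distance is $2|T|\le 2\gamma n\ll\delta n^2$; the bipartite graphs $H[A_i,A_j]$ with $ij\in\binom{[k-1]}{2}$ remain complete since every modified edge either is internal to $A_p$ or has an endpoint in $A_k$, and so never lies in $[A_i,A_j]$ for $i,j\in[k-1]$ distinct; and the degree-into-part parameter $d$ is at most $|T|\le\gamma n$, attained only at $z$ (other vertices see degree changes of at most one). Since $Z_k=Z_k^p$, Lemma~\ref{lem-H2}(i) produces a canonical partition $A_1^*,\dots,A_k^*$ of $H$, a permutation $\sigma$ of $[k]$ with $\sigma(k)=k$, and an index $j\in[k-1]$ with the stated relations; Claim~\ref{811claim} inside its proof additionally gives $R_k\subseteq A_k^*$ and $A_i\subseteq A_{\sigma(i)}^*\cup A_k^*$ for every $i\in[k-1]$.

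I would then locate $z$ in this canonical partition. In $H$, the vertex $z$ is complete to $A_{\sigma(i)}^*$ for each $i\in[k-1]\setminus\{p\}$: if $z\in A_p$ this follows from \Pcomplete($G$) and $A_{\sigma(i)}^*\subseteq A_i$, while if $z\in A_k$ it follows from \PZk($G$) applied to $z\in Z_k=Z_k^p$. Moreover $z$ is adjacent in $H$ to $S\subseteq A_k^*$, so $z\notin A_k^*$ either. Hence $z\in A_{\sigma(p)}^*$. I would then split $T=T_1\cup T_2$ with $T_1:=T\cap A_{\sigma(p)}^*$ and $T_2:=T\cap A_k^*$, using $T\subseteq A_p\subseteq A_{\sigma(p)}^*\cup A_k^*$.

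In the sub-case $T_1=\emptyset$, I would verify that $G$ satisfies the defining conditions of $\mathcal{H}_2(n,e)$ with respect to $A_1^*,\dots,A_k^*$, yielding the contradiction $G\in\mathcal{H}(n,e)$. The only potential new bad edges of $G$ relative to $H$ are $\{zy:y\in T\}$, which here lie entirely in $[A_{\sigma(p)}^*,A_k^*]$, so each $A_i^*$ remains independent in $G$; each $G[A_i^*,A_j^*]$ with $ij\in\binom{[k-1]}{2}$ inherits completeness from $H$; the size ordering is inherited; and the $\mathcal{H}_2$ condition on $A_k^*$ is preserved, because for every $x\in S$ that has newly acquired $z$ as a non-neighbour in $G$, the hypothesis that $G[S,\bigcup_{i\in[k-1]\setminus\{p\}}A_i]$ is complete confines $x$'s non-neighbourhood in $\bigcup_{i\in[k-1]}A_i^*$ to $A_{\sigma(p)}^*$ alone. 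The main obstacle is ruling out the sub-case $T_1\ne\emptyset$: here some $y\in T_1$ produces a bad edge $zy$ inside $A_{\sigma(p)}^*$ in $G$, and one must derive a contradiction via a careful triangle-count analysis of $K_3(H)-K_3(G)=0$. The hypothesis $P_3(yz,G;R_k)=0$ is crucial, since it forbids $z,y$ from sharing common neighbours in $R_k$, while for $x\in S\subseteq R_k$ the completeness of $G[S,\bigcup_{i\ne p}A_i]$ forces many common neighbours of $z,x$ through $\bigcup_{i\in[k-1]\setminus\{p\}}A_i$; an edge-by-edge swap argument (accounting for triangles with two vertices in $T$ or two in $S$) should then produce a strict discrepancy whenever $T_1\ne\emptyset$, contradicting $K_3(H)=K_3(G)$.
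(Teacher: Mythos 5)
Your setup is right and matches the paper's: assume $H\in\mathcal{H}_2^{\min}(n,e)\setminus\mathcal{H}_1(n,e)$, apply Lemma~\ref{lem-H2}(i), locate $z\in A_{\sigma(p)}^*$, and split on whether $T\cap A_{\sigma(p)}^*$ is empty. Your handling of $T_1=\emptyset$ is sound (the paper essentially embeds it in its Claim~(ii), which shows that if $T\cap A_p^*=\emptyset$ then $G$ is $k$-partite with partition $A_1^*,\dots,A_k^*$, hence in $\mathcal{H}_2$ by Corollary~\ref{H2new}(i)).

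The gap is the $T_1\ne\emptyset$ case, which is the crux of the lemma. You say an ``edge-by-edge swap argument \dots should then produce a strict discrepancy'' in $K_3(H)-K_3(G)$, but you never carry it out, and it is not clear it can be made to work: the per-edge comparison of $P_3(yz,G)$ against $P_3(xz,H)$ is polluted by common neighbours of $y,z$ in $Z_k$ (which $P_3(yz,G;R_k)=0$ does not control), by triangles with two vertices of $T$ (there may be bad edges inside $A_p$ since Lemma~\ref{Aiempty} has not yet been established at this point), and by the asymmetry between a single vertex $z$ with $|T|$ swapped edges and the rest of the graph. The paper avoids all of this. It uses the hypothesis $K_3(H)=K_3(G)$ exactly once, in a structural way: if $G[A_i,R_k]$ were complete for every $i\in[k-1]\setminus\{p\}$, then $G[B,\overline B]$ with $B:=A_p\cup A_k$ would be complete and $K_3(G[B])=K_3(H[B])=0$ would force $G\in\mathcal{H}_1(n,e)$. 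So there exists $j\neq p$ and a non-edge $uv$ with $u\in A_j=A_j^*$, $v\in R_k$. Since $G[S,\bigcup_{i\neq p}A_i]$ is complete, $v\notin S$, so $N_G(v)=N_H(v)$. The $\mathcal{H}_2$ condition on $H$ then forces $v$ to be complete to $A_{\sigma(p)}^*$, which contains both $z$ and the vertex $y\in T_1$; hence $v$ is a common neighbour of $y$ and $z$ in $R_k$, contradicting $P_3(yz,G;R_k)=0$. This structural use of the $\mathcal{H}_2$ degree condition — a vertex of $A_k^*$ has at most one ``bad'' part — is the key idea your outline is missing, and a pure triangle count is unlikely to replace it cleanly.
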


\begin{proof}
	Suppose that the lemma does not hold. Then by definition $H$ is an $(n,e)$-graph and so $H \in \mathcal{H}_2^{\mathrm{min}}(n,e)\setminus\mathcal{H}_1(n,e)$.
	Let $A_1^*,\ldots,A_k^*$ be a canonical partition of $H$.
	Clearly,
	$|E(G)\bigtriangleup E(H)|=|S|+|T| \leq 2\gamma n$ for all $ij \in \binom{[k-1]}{2}$ we have $H[A_i,A_j]=G[A_i,A_j]$ is complete by~\PZk($G$); note also that~(\ref{deq1}) holds.
	So $H$ satisfies the conditions of Lemma~\ref{lem-H2}(i).
	Suppose without loss of generality that the permutation $\sigma$ guaranteed by Lemma~\ref{lem-H2} is the identity permutation.
	By definition, $H$ and $G$ only differ at $A_{p}\cup A_k$.
	We will obtain a contradiction via the next claim.
	
	\begin{claim}
		We have the following properties:
		\begin{itemize}
			\item[(i)] $A_i^*=A_i$ for all $i \in [k-1]\setminus \{p\}$ and $R_k \subseteq A_k^*$;
			\item[(ii)] $z \in A_{p}^*$ and $N_G(z,A_{p})\cap A_{p}^* \neq \emptyset$;
			\item[(iii)] there exists $j \in [k-1]\setminus\{p\}$ for which $G[A_j,R_k]$ is not complete.
		\end{itemize}
	\end{claim}
	
	\begin{proof}[Proof of Claim.]
		We first prove (i). By Lemma~\ref{lem-H2}, $R_k\subseteq A_k^*$ and by Lemma~\ref{lem-H2}(i) there exists $j\in[k-1]$ such that $A_i^* = A_i$ for all $i \in [k-1]\setminus\{j,p\}$ and $A_j^*\subseteq A_j$. We may assume that $j\neq p$ and there is some $v \in A_j\setminus A_j^*\subseteq A_k^*$, for otherwise we are done.
		Further, we have that $S \subseteq R_k \subseteq A_k^*$.
		Thus, recalling that $A_k^*$ is an independent set in $H$ and that $G$ and $H$ only differ at $A_{p}\cup A_k$,
		$$
		0 = d_{H}(v,A_k^*) \geq d_{H}(v,S) = d_G(v,S),
		$$
		a contradiction to the fact that $G[S,\bigcup_{i \in [k-1]\setminus \{p\}}A_i]$ is complete.
		This proves (i).
		
		Thus $A_{p}^* \cup A_k^* = A_{p} \cup A_k =: B$.
		In particular, $H[B]$ is bipartite with bipartition $A_{p}^*,A_k^*$.
		Now, $d_{H}(z,A_k^*) \geq d_{H}(z,R_k) \geq |S| >0$.
		Since $A_k^*$ is an independent set in $H$, we have that $z \in A_{p}^*$.
		Suppose that $T \cap A_{p}^* = \emptyset$.
		Let $G'$ be obtained from $G$ by removing the edges $xz$ for all $x \in T$.
		Then $G' \subseteq H$, and so $G'[B]$ is bipartite with bipartition $A_{p}^*,A_k^*$. Using that $T\cap A_p^*=\emptyset$ and $T\subseteq B$, we see that $T\subseteq A_k^*$. This together with $z \in A_{p}^*$ implies that $G[B]$ is bipartite (with bipartition $A_{p}^*,A_k^*$).
		But the fact that $G$ and $H$ only differ at $A_{p} \cup A_k$ and the definition of $\mathcal{H}_2(n,e)$ imply that $G[\overline{B}]$ is $(k-2)$-partite.
		Thus $G$ is $k$-partite with partition $A_1^*,\ldots, A_k^*$.
		Then Corollary~\ref{H2new}(i) implies that $G \in \mathcal{H}_2(n,e)$, a contradiction.
		Thus $T \cap A_{p}^* \neq \emptyset$.
		This proves~(ii).
		
		For (iii), suppose that $G[A_i,R_k]$ is complete for every $i \in [k-1]\setminus\{p\}$.
		Then, by~\PZk($G$), we have that $A_k = R_k \cup Z_k^{p}$ is complete in $G$ to $\bigcup_{j \in [k-1]\setminus\{p\}}A_j$. Further, \Pcomplete($G$) implies that $G[A_i,A_j]$ is complete for all $ij \in \binom{[k-1]}{2}$.
		Thus $G[B,\overline{B}]$ is complete.
		Now, the facts that $G$ and $H$ only differ at $A_{p} \cup A_k$ and $K_3(H)=K_3(G)$ imply that $K_3(G[B]) = K_3(H[B]) = 0$ since $H[B]$ is bipartite.
		That is, $G[B]$ is triangle-free, so $G \in \mathcal{H}_1(n,e)$, a contradiction to~\ref{worst-C1}.
		This completes the proof of the claim.
	\end{proof}
	
	\medskip\noindent	
	By part~(iii) of the claim, we can choose $j \in [k-1]\setminus\{p\}$; $u \in A_j=A_j^*$ and $v \in R_k \subseteq A_k^*$ such that $uv \notin E(G)$.
	As $G[S,\bigcup_{i \in [k-1]\setminus\{p\}}A_i]$ is complete, we have $v \in R_k\setminus S$ and so $N_G(v)=N_{H}(v)$.
	By part~(ii) of the claim, pick some $y \in T \cap A_{p}^*$.
	Since $H[\lbrace v\rbrace,A_j^*]$ is not complete, we have that $H[\lbrace v\rbrace,A_{p}^*]$ is complete by the definition of $\mathcal{H}_2(n,e)$.
	Thus $\lbrace y,z\rbrace \subseteq N_H(v) = N_G(v)$.
	But then $P_3(yz,G;R_k) \geq 1$, a contradiction.
\end{proof}

\subsubsection{Refining the structure of $G$ via Transformations~1--3}
We now return to our extremal graph $G$ and analyse the effects of Transformations~1--3 on the number of triangles to obtain additional structural information.
To do this, we will apply each `local' transformation once, changing edges at a single vertex to obtain a new graph $G^1$.
This is the part of the proof at which we require the full strength of Lemmas~\ref{subZiedges},~\ref{subYiedges} and~\ref{Xiedges} to carefully analyse $K_3(G^1)-K_3(G)$.
As we mentioned earlier, this turns out to now equal zero, and we show that $G^1 \in \mathcal{H}(n,e)$.

The first step is to apply Transformation~1 (Lemma~\ref{subZiedges}) to show that the only bad edges in $G$ lie in $A_k$.

\begin{lemma}\label{Aiempty}
	$E(G[A_i])=\emptyset$ for all $i \in [k-1]$.
\end{lemma}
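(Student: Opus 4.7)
Suppose for contradiction that $E(G[A_i])\neq\emptyset$ for some $i\in[k-1]$. By \Pbadedges$(G)$ there is a vertex $z\in Z_i$ with $T:=N_G(z,A_i)\neq\emptyset$. The plan is to apply a single local step of Lemma~\ref{subZiedges} at $z$, producing the $(n,e)$-graph $H:=G^1$ that differs from $G$ only by replacing $\{yz:y\in T\}$ with $\{xz:x\in S\}$ for some $S\subseteq R_k'$ of size $|S|=|T|$. Because $\Delta=0$ by~\eqref{Delta0}, property J$(3,1)$ reads
\[
K_3(H)-K_3(G)\ \le\ -|T|\cdot|Z_k\setminus Z_k^i|-\sum_{y\in T}P_3(yz,G;R_k)\ \le\ 0;
\]
extremality of $G$ forces equality, which yields $Z_k=Z_k^i$, $P_3(yz,G;R_k)=0$ for every $y\in T$, and (from the equality clause of J$(3,1)$) $G\bigl[S,\bigcup_{j\in[k-1]\setminus\{i\}}A_j\bigr]$ complete.

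These are exactly the hypotheses of Lemma~\ref{moreH2pain} with $p=i$ (using $1\le|T|\le\Delta(G[A_i])\le\gamma n$ from Lemma~\ref{baddegclaim}(iv)), so $H\notin\mathcal{H}_2(n,e)\setminus\mathcal{H}_1(n,e)$. Moreover $H$ has $|T|\ge 1$ more cross-edges than $G$ in the max-cut partition $A_1,\dots,A_k$, so its max-cut strictly exceeds that of $G$; were $H\notin\mathcal{H}(n,e)$ this would contradict criterion~\ref{worst-C2} in the choice of $G$. Thus $H\in\mathcal{H}_1(n,e)$. Now Lemma~\ref{lem-H1} applies to $H$ (its hypotheses $\Delta(H[A_j])\le 2\gamma n$ and $e(H[A_j,A_{j'}])>0$ follow from Lemma~\ref{baddegclaim}(iv) and the fact that $m\le\eta n^2$ is negligible relative to any $|A_j||A_{j'}|$). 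Since $H$ and $G$ differ only at $A_i\cup A_k$, part~(ii) forces $H[A_i,A_k]$ to be complete, the alternative being $G\in\mathcal{H}_1(n,e)$ which contradicts the choice of $G$. Consequently $m_i^H=0$, so $|T|=m_i$; combining with part~(i), $A_i$ is one of the independent parts in $H$'s canonical partition, so $H[A_i]=\emptyset$ and hence $G[A_i]=\{yz:y\in T\}$ is a star centred at~$z$.

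The contradiction will come from a direct triangle count. In the subcase where $H$ is complete $k$-partite (i.e.\ $m=m_i$), the identity $K_3(G)=K_3(H)$ evaluates via \Pcomplete$(G)$ to $m_i(|A_k|-m_i)=0$, which is impossible because $|A_k|\ge(1-(k-1)c)n-\beta n\ge(k-1)\alpha n/2$ by~\eqref{ineq:c} is vastly larger than $m_i\le\gamma n$. In the complementary subcase the canonical bipartite block of $H$ is $A_s\cup A_k$ for some $s\in[k-1]\setminus\{i\}$, so that $G[A_s\cup A_k]$ is triangle-free; a parallel inclusion--exclusion calculation — now using the equality condition $P_3(yz,G;R_k)=0$ to sharpen $d_G(z,A_k)\le O(\xi n)$, the bound $e(G[A_k])\le\delta m$ from~\eqref{h}, and the constancy $|Z|=O(1)$ from~\eqref{smallZ} — likewise produces a positive imbalance, contradicting $K_3(H)=K_3(G)$. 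The main obstacle is this second subcase: the leading $m_i(n-|A_i|-|A_k|)$ contributions cancel, so the contradiction has to be extracted from the residual second-order terms $\sum_{x\in S}P_3(xz,G;A_k)$ and $e(G[S])$, which forces every equality condition from the first step to be fully exploited.
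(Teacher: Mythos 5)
Your argument tracks the paper's proof faithfully up to and including the application of Lemma~\ref{lem-H1}(ii): the single step of Lemma~\ref{subZiedges}, the use of $\Delta=0$ to force equality in J$(3,1)$, the extraction of $Z_k=Z_k^i$, $P_3(yz,G;R_k)=0$ and the completeness of $G[S,\cup_{j\neq i}A_j]$, the appeal to criterion~\ref{worst-C2} to place $G^1\in\mathcal{H}(n,e)$, and Lemma~\ref{moreH2pain} to upgrade this to $G^1\in\mathcal{H}_1(n,e)$ are all exactly the paper's steps. The problem is the endgame. Once Lemma~\ref{lem-H1}(ii) tells you that $H[A_i,A_k]$ is complete, you are already done: the vertex $z\in Z_i$ has $d_{\overline{G}}(z,A_k)\ge d^m_G(z)\ge\xi n$ by \Pmissing$(G)$, and the transformation only adds $|T|=d_G(z,A_i)\le\Delta(G[A_i])\le\delta n$ edges at $z$, so $e(\overline{H}[A_i,A_k])\ge d_{\overline{H}}(z,R_k)\ge \xi n-|Z|-\delta n\ge\xi n/2>0$, an immediate contradiction. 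This is precisely how the paper finishes. Your intermediate deduction ``$m_i^H=0$, so $|T|=m_i$'' is in fact already absurd for the same reason ($|T|\le\delta n$ while $m_i\ge\xi n$), so you had a one-line contradiction in hand and walked past it.

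Instead, you launch a two-subcase ``direct triangle count,'' and in the second subcase you explicitly concede that the leading terms cancel and that ``the contradiction has to be extracted from the residual second-order terms \dots which forces every equality condition from the first step to be fully exploited.'' That is a description of work not done, not a proof; as written, the second subcase is a genuine gap. Replace the entire final paragraph with the missing-degree observation above and the proof closes.
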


\begin{proof}
	Suppose to the contrary that $\bigcup_{i \in [k-1]}E(G[A_i])\neq\emptyset$. Without loss of generality, assume $e(G[A_{k-1}])>0$. Then~\Pbadedges($G$) implies that there is some $z \in Z_{k-1}$ with $d_G(z,A_{k-1}) \ge 1$.
	Let $z =: z_1,\ldots,z_p$ be an ordering of $Z\setminus Z_k$.
	Apply Lemma~\ref{subZiedges} to $G$ to obtain an $(n,e)$-graph $G^1$ which satisfies $J(1,1)$--$J(3,1)$. Then $J(3,1)$ implies that
	$$
	K_3(G^1)-K_3(G) \leq \sum_{y \in N_{G}(z,A_{k-1})}\left(\Delta
	- |Z_k\setminus Z_k^{k-1}| - P_3(yz,G;R_k)\right) \stackrel{(\ref{Delta0})}{\leq} 0.
	$$
	As $K_3(G^1)\ge K_3(G)$, we have equality in the above. Then $J(3,1)$ implies that $G[S,\bigcup_{i\in[k-2]}A_i]$ is complete, where
$S:=N_{G^1\setminus G}(z,R_k) \subseteq  N_{\overline{G}}(z,R_k)$.
	Furthermore, $Z_k=Z_k^{k-1}$ and $P_3(yz,G;R_k)=0$ for all $y\in N_{G}(z,A_{k-1})$.
	
	By $J(2,1)$, for all $i \in [k]$ and $v \in V(G)$, we have 
	$$
	|d_G(v,A_i)-d_{G^1}(v,A_i)| \leq d_G(z,A_{k-1}) \stackrel{\Pbadedges(G)}{\leq} \delta n.
	$$
	We also have that $\Delta(G^1[A_i]) \leq \Delta(G[A_i]) \leq \delta n$.
	Note that
	$$
	\sum_{ij \in \binom{[k]}{2}}e(G^1[A_i,A_j]) = \sum_{ij \in\binom{[k]}{2}}e(G[A_i,A_j]) + d_G(z,A_{k-1}).
	$$
	Since $K_3(G^1)=K_3(G)$, the choice of $G$, in particular~\ref{worst-C2}, implies that we must have $G^1\in\mathcal{H}(n,e)$.
	But $G^1$ satisfies the properties of $H$ in Lemma~\ref{moreH2pain} with $p:=k-1$, so $G^1 \in \mathcal{H}_1^{\mathrm{min}}(n,e)$.
	Then $G^1$ clearly satisfies the hypothesis of Lemma~\ref{lem-H1} and $G^1$ and $G$ only differ at $A_{k-1}\cup A_k$. Lemma~\ref{lem-H1}(ii) implies that $G^1[A_{k-1},A_k]$ is complete. But
	\begin{eqnarray*}
		e(\overline{G^1}[A_{k-1},A_k])&\ge& d_{\overline{G^1}}(z,R_k) =d_{\overline{G}}(z,R_k)-d_G(z,A_{k-1})\ge d_{\overline{G}}(z,A_k)-|Z|-\Delta(G[A_{k-1}])\\
		&\stackrel{\Pmissing(G)}{\ge}& \xi n-\delta n-\delta n\ge \xi n/2,
	\end{eqnarray*}
	a contradiction.
	This completes the proof of the lemma.
\end{proof}

The second step is to apply Transformation~2 (Lemma~\ref{subYiedges}) to show that
$Y$ is empty. Then the only bad edges lie in $A_k$ and by Lemma~\ref{Gprops}, they all have both endpoints in $X$. (By~(\ref{smallZ}) this means that there are only constantly many bad edges.)

\begin{lemma}\label{Yiempty}
	$Y_i = \emptyset$ for all $i \in [k-1]$.
\end{lemma}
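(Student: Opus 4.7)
\textbf{Proof plan for Lemma~\ref{Yiempty}.} The plan mimics the strategy of Lemma~\ref{Aiempty}: suppose for contradiction that some $y\in Y_i$ exists; apply a single step of the $Y$-transformation (Lemma~\ref{subYiedges}) to produce an $(n,e)$-graph $G^1$; use $\Delta=0$ (equation~(\ref{Delta0})) to conclude $K_3(G^1)\le K_3(G)$; then leverage the extremality of $G$ under \ref{worst-C1}--\ref{worst-C3} to derive a contradiction. By Lemma~\ref{Aiempty}, $G$ trivially satisfies the conclusions of Lemma~\ref{Ziedges} with $\ell=k-1$ (namely $G^{k-1}_1=G$), so Lemma~\ref{subYiedges} can be applied directly to $G$ with $y=y_1$.

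Before invoking the transformation, I first dispose of the degenerate case $d_G(y,A_i)=0$. Since $y\in Y\subseteq Z_k\setminus X$, Proposition~\ref{Gprops}(i) gives $d_G(y,A_k)=0$, so reassigning $y$ to $A_i$ produces a new max-cut partition of $G$ whose minimum part size is $|A_k|-1$, contradicting~\ref{worst-C3}. Hence $d_G(y,A_i)\ge 1$, and Lemma~\ref{subYiedges} produces $G^1$ satisfying $K(1,1)$--$K(3,1)$. The bound $K(3,1)$, together with $\Delta=0$ and non-negativity of the other summands, yields $K_3(G^1)-K_3(G)\le 0$; extremality of $G$ then forces equality, which in turn forces $Z_k=Z_k^i$, $P_3(xy,G;R_k)=0$ for every $x\in N_G(y,A_i)$, $T(y_1)=Q(y_1)=\emptyset$, and $G[R(y_1),\bigcup_{j\in[k-1]\setminus\{i\}}A_j]$ is complete. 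Consequently $G^1$ is exactly the graph produced from $G$ by the operation described in Lemma~\ref{moreH2pain} with $p=i$, $z=y$, $T=N_G(y,A_i)$, and $S=R(y_1)\subseteq N_{\overline{G}}(y,R_k)$, and $K_3(G^1)=K_3(G)$.

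Now split according to whether $G^1\in\mathcal{H}(n,e)$. If $G^1\in\mathcal{H}_2(n,e)\setminus\mathcal{H}_1(n,e)$, this contradicts Lemma~\ref{moreH2pain}. If $G^1\in\mathcal{H}_1(n,e)$, I verify the hypotheses of Lemma~\ref{lem-H1}: by Lemma~\ref{Aiempty} and the fact that only $y$'s neighbourhood changed, $\Delta(G^1[A_j])\le 2\gamma n$ for every $j$; and $e(G^1[A_s,A_t])>0$ for every $st$ since the number of cross edges is changed by at most $|T|\le\gamma n$ while each $|A_s||A_t|-m_s$ is of order $n^2$. Observing that $G$ and $G^1$ differ only at $A_i\cup A_k$, Lemma~\ref{lem-H1}(ii) forces $G^1[A_i,A_k]$ to be complete, contradicting $e(\overline{G^1}[A_i,A_k])\ge|T|\ge 1$.

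It remains to handle the possibility that $G^1\notin\mathcal{H}(n,e)$, which is the step that requires the most care and where the criteria \ref{worst-C2}--\ref{worst-C3} are used (the rest of the proof having used only \ref{worst-C1}). Writing $E_G$ for the number of cross edges in the max-cut partition of $G$, a direct count shows that the partition $P':=(A_1,\ldots,A_i\cup\{y\},\ldots,A_k\setminus\{y\})$ of $G^1$ has exactly $E_G$ cross edges (we lose the $|T|$ deleted $[y,A_i]$ edges but the $|S|=|T|$ added $[y,R_k]$ edges become cross in $P'$). A straightforward check at every vertex, using the max-cut property of $(A_1,\ldots,A_k)$ in $G$ together with $d_G(y,A_k)=0$ and $Z_k=Z_k^i$, shows that $P'$ is a max-cut partition of $G^1$. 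If the max-cut of $G^1$ strictly exceeds $E_G$, then $G^1$ violates~\ref{worst-C2}; otherwise $P'$ is an actual max-cut of $G^1$ with minimum part size $|A_k|-1<|A_k|=\min_j|A_j|$, contradicting~\ref{worst-C3}. Either way we obtain a contradiction, completing the proof. The main obstacle is the last case: one must correctly identify $P'$ as a max-cut of $G^1$ and invoke the hitherto-unused criteria \ref{worst-C2}--\ref{worst-C3}, rather than trying to achieve a strict decrease in the triangle count, which is not available when $m$ is of order $n$.
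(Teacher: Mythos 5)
Your plan follows the paper's own argument essentially step by step: dispose of the degenerate case $d_G(y,A_i)=0$ via \ref{worst-C3}; apply a single step of Lemma~\ref{subYiedges} and use $\Delta=0$ to force equality in $K(3,1)$; read off $Z_k=Z_k^i$, $P_3(\cdot,G;R_k)=0$, $T(y_1)=Q(y_1)=\emptyset$, and the completeness of $G[S,\bigcup_{j\ne i}A_j]$; then exclude $G^1\in\mathcal H_2\setminus\mathcal H_1$ by Lemma~\ref{moreH2pain}, $G^1\in\mathcal H_1$ by Lemma~\ref{lem-H1}(ii), and $G^1\notin\mathcal H(n,e)$ via \ref{worst-C2}--\ref{worst-C3}. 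This is precisely the paper's route, reorganised into a three-way case split rather than first establishing $G^1\in\mathcal H^{\min}(n,e)$.

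There is, however, a factual error that you repeat twice: the assertion that Proposition~\ref{Gprops}(i) gives $d_G(y,A_k)=0$ for $y\in Y$. That proposition only says that the $A_k$-endpoint opposite an $R_k$-vertex lies in $Y$; it does not forbid a vertex $y\in Y$ from having neighbours in $A_k$ (both in $R_k$ and in $Z_k$), and in general one can only say $d_G(y,A_k)\le\min\{d_G(y,A_i),\delta n\}$. In the degenerate case $d_G(y,A_i)=0$ the equality $d_G(y,A_k)=0$ does hold, but it comes from the max-cut property of $A_1,\ldots,A_k$ (moving $y$ to $A_i$ changes the cut by $d_G(y,A_k)-d_G(y,A_i)\ge 0$, which cannot be positive), not from Proposition~\ref{Gprops}(i). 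In the main case the claim may simply be false, and consequently your count of the cross-edges of $P'$ in $G^1$ should read $E_G+d_G(y,A_k)\ge E_G$ rather than exactly $E_G$: the pre-existing $[y,A_k\setminus\{y\}]$-edges also turn into cross-edges of $P'$. (The paper records this same cross-edge inequality in the last bullet of $K(1,j)$ and even remarks there that strict increase occurs when $y$ has $G[A_k]$-edges.) Fortunately this does not break your argument --- the inequality $\ge E_G$ is all the subsequent case split needs --- but the ``straightforward check at every vertex'' that $P'$ is a max cut is not the correct mechanism: one cannot verify this directly; rather, \ref{worst-C2} caps the max cut of $G^1$ at $E_G$, which combined with the $\ge E_G$ lower bound from $P'$ forces $P'$ to realise the max cut, at which point \ref{worst-C3} applies.
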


\begin{proof}
	Suppose, without loss of generality, that $Y_{k-1}\neq\emptyset$ and fix an arbitrary $y \in Y_{k-1}$. Let $\widehat{A}_i:=A_i$ if $i\in[k-2]$, $\widehat{A}_{k-1}:=A_{k-1}\cup\{y\}$ and $\widehat{A}_k:=A_k\setminus\{y\}$. We may assume that $d_G(y,A_{k-1})\ge 1$, otherwise $\widehat{A}_1,\ldots,\widehat{A}_k$ is a max-cut partition of $G$ which contradicts the choice of $A_1,\ldots,A_k$, in particular~\ref{worst-C3}.
	Let $y=:y_1,y_2,\ldots,y_q$ be an ordering of $Y$.
	Observe that $G$ is a graph which satisfies the conclusions of Lemma~\ref{Ziedges} applied with $\ell := k-1$.
	Thus we can apply Lemma~\ref{subYiedges} to $G$ with $\ell := k-1$
	to obtain a graph $G^1$ satisfying $K(1,1)$--$K(3,1)$.
	By $K(3,1)$,
	$$
	K_3(G^1)-K_3(G) \leq \sum_{x \in N_G(y,A_{k-1})}\left( \Delta - \frac{\xi}{6\gamma}|Z_k\setminus Z_k^{k-1}| - P_3(xy,G;R_k) \right) \leq 0.
	$$
	As $K_3(G^1)\ge K_3(G)$, we have equality in the above. Then $K(3,1)$ implies that $G[S,\cup_{i\in[k-2]}A_i]$ is complete, where $S:=N_{G^1\setminus G}(y,R_k)\subseteq N_{\overline{G}}(y,R_k)$. Furthermore, $Z_k=Z_k^{k-1}=X_{k-1}\cup Y_{k-1}$ and $P_3(xy,G;R_k)=0$ for all $x\in N_{G}(y,A_{k-1})$.
	Since $Z_k=X_{k-1}\cup Y_{k-1}$, by $K(1,1)$, $G^1$ is obtained from $G$ by replacing all edges from $y$ to $A_{k-1}$ with some non-edges from $y$ to $R_k$, i.e.~$T(y)$ and~$R(y)$ are empty. Also by $K(1,1)$, we have that $\sum_{ij\in{[k]\choose 2}}e(G^1[\widehat{A}_i,\widehat{A}_j])\ge \sum_{ij\in{[k]\choose 2}}e(G[A_i,A_j])$.
	Since $K_3(G^1)=K_3(G)$ we must have equality by (C2).
	But for all $i \in [k-1]$ we have $|\widehat{A}_i| \geq |\widehat{A}_{k}|=|A_k|-1$ so (C3) implies that $G^1 \in \mathcal{H}^{\mathrm{min}}(n,e)$.
	Again, $G^1$ satisfies the properties of $H$ in Lemma~\ref{moreH2pain} with $k-1,y,G^1$ playing the roles of $p,z,H$ respectively.
	So we have that $G^1 \in \mathcal{H}_1^{\mathrm{min}}(n,e)$.
	
	Let $A_1^*,\ldots,A_{k-2}^*,B$ be a canonical partition of $G^1$. Note that $G^1$ satisfies the hypothesis of Lemma~\ref{lem-H1}. Indeed,
	$$
	\Delta(G^1[A_k])\le \Delta(G[A_k])+d_G(y,A_{k-1})\stackrel{\Pbadedges(G)}{\le} \delta n+\gamma n\le 2\gamma n.
	$$
	Further, $G^1$ and $G$ only differ on $A_{k-1}\cup A_k$. Thus Lemma~\ref{lem-H1}(ii) implies that $G^1[A_{k-1},A_k]$ is complete. But by construction,
	\begin{eqnarray*}
		e(\overline{G^1}[A_{k-1},A_k])&\ge& d_{\overline{G^1}}(y,A_{k-1})=|A_{k-1}|,
	\end{eqnarray*}
	a contradiction.
	This completes the proof of the lemma.
\end{proof}

\subsection{Obtaining a graph $G_3$.}

We will apply Lemma~\ref{Xiedges} to $G$ to obtain a graph $G_3$ in which $X_i$ is an independent set for all $i \in [k-1]$, but such that $G_3$ may contain constantly many more triangles than $G$.
Then, applying further transformations to $G_3$, we deduce additional information about $G$.

Observe that by Propositions~\ref{Aiempty} and~\ref{Yiempty}, $G$ satisfies all the properties of $G_2$ in Lemma~\ref{Yiedges}, so we can set $G_2 := G$ and, for all $i \in [k-1]$, set $A_i' := A_i$.
Recall from the beginning of Section~\ref{sec7.4} that, for all $i \in [k-1]$ and $x,y \in X_i$, we define
\begin{equation}\label{Dx}
\nonumber D(x) := d_{G}(x,X\setminus X_i) \quad\text{ and }\quad D(x,y) := |N_{G}(x,X\setminus X_i) \cap N_{G}(y,X\setminus X_i)|.
\end{equation}

\begin{lemma}\label{G3prop}
	Let $G_3$ be the $(n,e)$-graph obtained by applying Lemma~\ref{Xiedges} to $G$ playing the role of $G_2$. Then
	\begin{itemize}
		\item[(i)] $G_3$ has an $(A_1,\ldots,A_k;Z,2\beta,\xi/4,2\xi,\delta)$-partition and, for each $i \in [k-1]$, we have $e(\overline{G_3}[A_i,A_k]) \leq m_i$ with equality if and only if $E(G[X_i])=\emptyset$.
		\item[(ii)] For all $i \in [k-1]$, $E(G_3[A_i])=\emptyset$ and $E(G_3[A_k]) = E(G[X_1,\ldots,X_k])$ and $d_{G_3}(x,A_i) \geq \gamma n$ for $x \in X_i$. 
		Further, every pair in $E(G)\setminus E(G_3)$ lies in $X_i$ for some $i \in [k-1]$, and every pair in $E(G_3)\setminus E(G)$ lies in $[X_i,A_i]$ for some $i \in [k-1]$.
		\item[(iii)] For all $i \in [k-1]$ such that $X_i \neq \emptyset$, there exists $D_i \in \mathbb{N}$ such that $D(x)=D_i$ for all $x \in X_i$. Moreover, $P_3(xu,G_3)=a_i+D_i$ for all $x \in X_i$ and $u \in A_i$.
		\item[(iv)] $K_3(G_3) \leq K_3(G) + |Z|^2 \cdot \max_{\stackrel{i \in [k-1]}{x,y \in X_i}}(D_i-D(x,y))$ with equality only if $G[X_i]$ is triangle-free and $N_G(x,A_i) \cap N_G(y,A_i)=\emptyset$ for all $i\in[k-1]$ and $xy \in E(G[X_i])$.
		\item[(v)] Let $G'$ be such that $V(G')=V(G_3)$ and $E(G')\bigtriangleup E(G_3) \subseteq \bigcup_{i \in [k-1]}\lbrace ax: a \in A_i,x \in X_i \rbrace$ and $e(G'[X_i,A_i])=e(G_3[X_i,A_i])$ for all $i \in [k-1]$. Then $K_3(G')=K_3(G_3)$.
	\end{itemize}
\end{lemma}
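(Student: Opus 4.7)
The plan is to set $G_2 := G$ and $A_i' := A_i$ for $i \in [k-1]$, and apply Lemma~\ref{Xiedges} to produce $G_3$. The required hypotheses on $G_2$ hold for $G$: the $(A_1,\ldots,A_k;Z,\beta,\xi,\xi,\delta)$-partition from Lemma~\ref{approx} is in particular a $(A_1',\ldots,A_k';Z,2\beta,\xi/3,2\xi,\delta)$-partition; Lemma~\ref{Aiempty} eliminates bad edges in $A_i$ for $i \in [k-1]$ while Lemma~\ref{Yiempty} combined with Proposition~\ref{Gprops}(i) forces every bad edge in $A_k$ to have both endpoints in $X$; and $d_G(z,A_i) \geq \gamma n$ for $z \in X_i$ by the very definition of $X$. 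Parts~(i), (ii) and (iv) are then direct translations of Lemma~\ref{Xiedges}(i)--(iii) with $m_i^{(2)} = m_i$, together with the observation from Lemma~\ref{subXiedges}(ii) that additions in $E(G_3)\setminus E(G)$ are $[X_i,R_i]$-edges and deletions in $E(G)\setminus E(G_3)$ are intra-$X_i$ edges. For~(iv), the constancy from~(iii) lets us replace $\max(D(x)-D(x,y))$ by $\max(D_i - D(x,y))$.

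The crucial claim is the constancy $D(x) = D_i$ on each $X_i$ in~(iii). Suppose for contradiction that some $X_i$ contains $x,y$ with $D(x) \neq D(y)$. Apply Lemma~\ref{symmetrise} to $G_3$ to produce $G_4$ with $K_3(G_4) \leq K_3(G_3) - \xi n/20$, then Lemma~\ref{G5} to produce an $(n,e)$-graph $G_5$ with $K_3(G_5) \leq K_3(G_4) + k^2\delta n + 2|Z|^3$. Lemma~\ref{Xiedges}(iii) already gives the trivial bound $K_3(G_3) - K_3(G) \leq |Z|^2 \cdot \max(D(x)-D(x,y)) \leq |Z|^3$. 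Chaining these yields
\[ K_3(G_5) \leq K_3(G) + 3|Z|^3 + k^2\delta n - \xi n/20, \]
which is strictly less than $K_3(G)$ for large $n$, since~\eqref{smallZ} gives $|Z|=O(1)$ and the hierarchy~\eqref{hierarchy} gives $\delta \ll \xi$. This contradicts the minimality~\ref{worst-C1} of $G$. To obtain the formula $P_3(xu, G_3) = a_i + D_i$ for $x \in X_i$ and $u \in A_i$, we tally common neighbours of $x$ and $u$ in $G_3$: the parts $A_j$ for $j \in [k-1]\setminus\{i\}$ each contribute $|A_j|$ by \Pcomplete($G_3$); $A_i$ contributes $0$ since it is independent and $u \in A_i$; and in $A_k$, the common neighbours are exactly the vertices of $X \setminus X_i$ adjacent to $x$ in $G_3$, because $u$ is complete to $X \setminus X_i$ in $G_3$ (using \PZk($G$) together with the fact from~(ii) that $G_3[X_j,A_i] = G[X_j,A_i]$ for $j \neq i$), giving $D_i$ in total.

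For~(v), the key observation is that no triangle of $G_3$ or $G'$ can use two edges from $E_0 := \bigcup_{i \in [k-1]}[X_i,A_i]$: such a triangle would contain either two vertices of some $A_i$ (impossible by independence) or two vertices of some $X_i$ (impossible, as $E(G_3[X_i]) = \emptyset$ by~(ii), and $X_i$ remains independent in $G'$ since $E(G_3)\bigtriangleup E(G') \subseteq E_0$). Hence $K_3(G_3)$ splits as (triangles avoiding $E_0$) plus $\sum_{i \in [k-1]}\sum_{xu \in E(G_3)\cap[X_i,A_i]}P_3(xu,G_3)$, and similarly for $K_3(G')$. The first summand depends only on $E(G_3)\setminus E_0 = E(G')\setminus E_0$ and so agrees; in the second, each $[X_i,A_i]$-edge of $G_3$ or $G'$ contributes precisely $a_i + D_i$ by~(iii), a value independent of the particular pair, so $e(G'[X_i,A_i]) = e(G_3[X_i,A_i])$ forces the two sums to be equal. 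The main obstacle throughout is the constancy argument in~(iii), which is not an internal property of $G$ but relies on the extremality~\ref{worst-C1} of $G$ via additional transformations; the delicate point is that the $\xi n/20$ savings from Lemma~\ref{symmetrise}(iv) must comfortably dominate all error terms, which works only because $|Z|$ remains bounded by a constant in the small-$m$ regime via~\eqref{smallZ}.
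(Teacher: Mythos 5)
Your proof is correct and follows essentially the same route as the paper: parts (i), (ii) and the inequality in (iv) are read off from Lemmas~\ref{Xiedges} and~\ref{subXiedges}, the constancy $D(x)=D_i$ in (iii) is forced by Lemma~\ref{symmetrise}(iv) together with the minimality of $G$ (with the same computation $P_3(xu,G_3)=a_i+D(x)$), and (v) follows from the observation that no triangle contains two $[X_i,A_i]$-edges. The only (harmless) difference is that you route the contradiction in (iii) through Lemma~\ref{G5} to reach $G_5$, whereas the paper observes that $G_4$ is already an $(n,e)$-graph and so contradicts~\ref{worst-C1} directly.
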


\begin{proof}
	Parts (i) and (ii) and the fact that
	\begin{equation}\label{LXicons}
	K_3(G_3) \leq K_3(G) + |Z|^2 \cdot \max_{\stackrel{i \in [k-1]}{x,y \in X_i}}(D(x)-D(x,y))
	\end{equation}
	with equality only if $G[X_i]$ is triangle-free and $N_G(x,A_i)\cap N_G(y,A_i)=\emptyset$ for all $i\in[k-1]$ and $xy \in E(G[X_i])$ follow immediately from Lemma~\ref{Xiedges} and Lemma~\ref{subXiedges}~L(2).
	Apply Lemma~\ref{symmetrise} to $G_3$ to obtain an $(n,e)$-graph $G_4$ on the same vertex set satisfying Lemma~\ref{symmetrise}(i)--(v). 
	Then, by Lemma~\ref{symmetrise}(i), for every $xy \in E(G_3)\bigtriangleup E(G_4)$ there exists $i \in [k-1]$ such that $x \in X_i$ and $y \in A_i$.
	Let $i \in [k-1]$, $u \in A_i$ and $x \in X_i$.
	Then by Lemma~\ref{Xiedges}(ii) and~\ref{symmetrise}(i),(iii) we have, for $j\in\{3,4\}$, that $d_{G_j}(u,A_i)=d_{G_j}(x,R_k)=d_{G_j}(x,X_i)=0$ and $X\setminus X_i \subseteq N_{G_j}(u)$. So $P_3(xu,G_j)=a_i+D(x)$.
	Clearly if $G'$ is any graph as in~(v), then these equalities also hold for $G'$, in particular
	\begin{equation}\label{P334}
	P_3(xu,G') = a_i + D(x) = P_3(xu,G_j).
	\end{equation}
	Suppose that there exists $i \in [k-1]$ and $x,y \in X_i$ such that $D(x) \neq D(y)$.
	Then Lemma~\ref{symmetrise}(iv) implies that
	\begin{align}
	\label{K3G4} K_3(G_4) &\leq K_3(G_3) - \frac{\xi n}{20} \stackrel{(\ref{LXicons})}{\leq} K_3(G) + |Z|^2 \cdot \max_{\stackrel{i \in [k-1]}{x,y \in X_i}}(D(x)-D(x,y)) - \frac{\xi n}{20}\\
	\nonumber &\leq K_3(G) + |Z|^3 - \frac{\xi n}{20}
	\stackrel{(\ref{smallZ})}{\leq} K_3(G) + \frac{8C^3}{\xi^3}-\frac{\xi n}{20} < K_3(G)-\frac{\xi n}{30},
	\end{align}
	a contradiction.
	This proves (iii), and together with~(\ref{LXicons}), we also obtain~(iv).
	For (v), observe that there is no triangle in $G_3$ or $G'$ which contains more than one $A_i$-$X_i$ edge, since $A_i$ and $X_i$ are independent sets in both graphs.
	Thus
	$$
	K_3(G')-K_3(G_3) = \sum_{e \in E(G')\setminus E(G_3)}P_3(e,G') - \sum_{e \in E(G_3)\setminus E(G')}P_3(e,G_3) \stackrel{(\ref{P334})}{=} 0,
	$$
	where the last equality follows from the hypotheses on $G'$ in (v) and~(\ref{P334}).
\end{proof}

This allows us to conclude that $G$ and $G_3$ are in fact the same graph.

\begin{lemma}\label{GfromG3}
	The following hold in $G$:
	\begin{itemize}
		\item[(i)] for all $ij \in \binom{[k-1]}{2}$, the graph $G[X_i,X_j]$ is either complete or empty. 
		\item[(ii)] $G=G_3$, so $E(G[X_i])=\emptyset$ for all $i \in [k-1]$.
	\end{itemize}
\end{lemma}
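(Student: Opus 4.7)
The strategy is to leverage Lemma~\ref{G3prop}(iv) together with extremality of $G$ and the criteria~\ref{worst-C1}--\ref{worst-C3}, in combination with the lemmas in Section~\ref{charext}. Note first that $K_3(G_3) \geq g_3(n,e) = K_3(G)$, while Lemma~\ref{G3prop}(iv) provides a matching upper bound $K_3(G_3) \leq K_3(G) + |Z|^2 \max_{i,x,y \in X_i}(D_i - D(x,y))$, a bounded constant by~\eqref{smallZ}. Moreover, by Lemma~\ref{G3prop}(ii), the transformation $G \to G_3$ strictly increases the number of cross-edges relative to $(A_1,\ldots,A_k)$ whenever some $E(G[X_i])$ is non-empty (each removed $X_i$-edge is compensated by an $[X_i,A_i]$-edge, i.e.\ by a cross-edge).

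For part~(ii), it suffices to show that $E(G[X_i]) = \emptyset$ for all $i \in [k-1]$, since then Lemma~\ref{Xiedges} applied to $G$ changes no edges and $G_3 = G$. Suppose for contradiction that $xy \in E(G[X_i])$ for some $i$. If $K_3(G_3) > K_3(G)$, we instead analyse a single step of Transformation~3 starting from $x_1 := x$ using Lemma~\ref{subXiedges}~$L(3,1)$: the resulting graph $G^1$ satisfies $K_3(G^1) \leq K_3(G) + (D_i - D(y,x)) \cdot d_G(x,X_i)$, and since the $[X_i,A_i]$-edges introduced at $x$ make $G^1$ have more cross-edges than $G$ with respect to the same partition, we get a contradiction either to~\ref{worst-C1} (if $G^1$ has strictly fewer triangles) or to~\ref{worst-C2} (if $K_3(G^1) = K_3(G)$ and $G^1 \notin \mathcal{H}(n,e)$), unless $G^1 \in \mathcal{H}(n,e)$. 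Iterating this one-step argument reduces the case $K_3(G_3) > K_3(G)$ to the case $K_3(G_3) = K_3(G)$, in which~\ref{worst-C2} forces $G_3 \in \mathcal{H}(n,e) = \mathcal{H}_1(n,e) \cup \mathcal{H}_2(n,e)$. If $G_3 \in \mathcal{H}_1(n,e)$, apply Lemma~\ref{lem-H1}: since $G$ and $G_3$ differ only inside $A_k$, and all bipartite pairs $[A_i,A_j]$ with $ij \in \binom{[k-1]}{2}$ are complete in both, Lemma~\ref{lem-H1}(ii) implies that $G_3[A_{k-1},A_k]$ (say) is complete, which is incompatible with the non-trivial transformation $G\to G_3$ inside $A_k$. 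If $G_3 \in \mathcal{H}_2(n,e) \setminus \mathcal{H}_1(n,e)$, apply Lemma~\ref{lem-H2}(ii): Lemma~\ref{Yiempty} gives $Y = \emptyset$, and $|E(G) \triangle E(G_3)| \leq 2|Z|^2$ (a constant) gives $d \leq \delta n$. The resulting canonical partition $(A_1^*,\ldots,A_k^*)$ differs from $(A_1,\ldots,A_k)$ only by moving a subset of $A_j$ into $A_k^*$ for at most one $j$; since $G$ and $G_3$ differ only inside $A_k \subseteq A_k^* \cup A_j^*$, the $\mathcal{H}_2$-structure of $G_3$ pulls back to an $\mathcal{H}_2$-structure of $G$, contradicting $G \notin \mathcal{H}(n,e)$.

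For part~(i), having established $G = G_3$ from~(ii), suppose that $G[X_i,X_j]$ is neither complete nor empty for some $ij \in \binom{[k-1]}{2}$. Then there exist $x \in X_i$ and $y,y' \in X_j$ with $xy \in E(G)$ and $xy' \notin E(G)$. Let $\tilde G := G - xy + xy'$, which preserves both $e$ and the number of cross-edges relative to $(A_1,\ldots,A_k)$. Using $P_3(xy,G) \leq (k-2)cn+k$ and $P_3(xy',G) \geq (k-2)cn-k$ from~\eqref{2path} together with Lemma~\ref{G3prop}(iii), we bound $K_3(\tilde G) - K_3(G)$ by a small constant, and since $K_3(\tilde G) \geq g_3(n,e)$, this forces a tight structural condition. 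If $K_3(\tilde G) < K_3(G)$ we contradict~\ref{worst-C1}; if $K_3(\tilde G) = K_3(G)$ with $\tilde G \notin \mathcal{H}(n,e)$ then $\tilde G$ is another counterexample with the same max-cut value, and iterating the swap to move all missing $[X_i,X_j]$-edges to $[X_i,X_\ell]$-edges preserving the value $D_i$ yields a graph whose $D$-values violate Lemma~\ref{G3prop}(iii); and if $\tilde G \in \mathcal{H}(n,e)$ then, by the same argument as in part~(ii) applied to $\tilde G$, we conclude $G \in \mathcal{H}(n,e)$, a contradiction.

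The main obstacle is propagating the $\mathcal{H}_1$- or $\mathcal{H}_2$-structure of $G_3$ back to $G$ in part~(ii): although $G$ and $G_3$ differ by only a bounded number of edges, all concentrated inside $A_k$, the canonical partition of $G_3$ need not exactly match $(A_1,\ldots,A_k)$, and one must use Lemmas~\ref{lem-H1} and~\ref{lem-H2} (which are specifically tailored to this situation) to ensure that the small discrepancies are concentrated in parts whose internal structure is unaffected by the transformation, so that $G$ itself still satisfies the defining conditions of the appropriate family.
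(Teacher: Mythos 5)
Your high-level strategy is right — use the minimality criteria~\ref{worst-C1}--\ref{worst-C3}, the bounded size of $Z$, and the pull-back lemmas from Section~\ref{charext} — but you have reversed the order of (i) and (ii), and this reversal opens a genuine gap.

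The paper proves (i) first precisely because it is needed to establish the key equality $K_3(G_3)=K_3(G)$. Indeed, Lemma~\ref{G3prop}(iii) only gives $D(x)=D_i$ for all $x\in X_i$; it does \emph{not} give $D(x,y)=D_i$. The ``complete or empty'' structure of the pairs $G[X_i,X_j]$ established in (i) is exactly what forces $D(x,y)=\sum_{\ell:G[X_i,X_\ell]\text{ complete}}|X_\ell|=D_i$, at which point Lemma~\ref{G3prop}(iv) collapses the error term to zero. Your attempted workaround for the case $K_3(G_3)>K_3(G)$ — taking a single step $G\to G^1$ of Transformation~3 and invoking~\ref{worst-C1} or~\ref{worst-C2} — does not close this gap. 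The bound $L(3,1)$ gives $K_3(G^1)\le K_3(G)+(D(x_1)-D(y,x_1))\cdot d_G(x_1,X_i)$, where the correction term is a priori \emph{non-negative}. If the step genuinely increases the triangle count (which is consistent with $K_3(G_3)>K_3(G)$ and can happen whenever $D(x_1)>D(y,x_1)$), then $G^1$ is simply not extremal and criteria~\ref{worst-C1}--\ref{worst-C3} say nothing; there is no contradiction, and ``iterating'' does not reduce to the equality case. You would need $D(x_1)-D(y,x_1)=0$, which is what (i) provides and what you have not yet established.

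Your proof of part (i) is also incomplete. Swapping a single pair $xy\mapsto xy'$ with $y,y'\in X_j$ and appealing to~\eqref{2path} only gives $|K_3(\tilde G)-K_3(G)|=O(k)$, which is not tight enough: when this quantity is zero or positive, no contradiction arises. The missing ingredient is the paper's pair of claims (Claims~\ref{edgesum} and~\ref{G''}) that exploit Lemma~\ref{G3prop}(v) to \emph{concentrate} all $[X_i,A_i]$-edges of $G_3$ onto a single vertex of $X_i$ without changing the triangle count, and then read off a sharp bound on $P_3$ for the concentrated configuration via Corollary~\ref{cr:dh}. Your sentence about ``iterating the swap to move all missing $[X_i,X_j]$-edges to $[X_i,X_\ell]$-edges preserving the value $D_i$'' does not describe a well-defined transformation — the swaps change the sets $N_G(x,X\setminus X_i)$ and hence the $D$-values — and it is not clear what contradiction with Lemma~\ref{G3prop}(iii) would result. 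Finally, in your $\mathcal{H}_1$-case analysis for (ii), applying Lemma~\ref{lem-H1}(ii) requires $G$ and $G_3$ to differ only at $A_{s'}\cup A_{t'}$ for a single pair, but \textit{a priori} the differences live in $\bigcup_{i\in[k-1]}[X_i,A_i]\cup\binom{A_k}{2}$, which spans several parts; the paper first argues (via~\eqref{eq-Xi} and Lemma~\ref{lem-H1}(i)) that $X_i=\emptyset$ for all $i\in[k-2]$ before Lemma~\ref{lem-H1}(ii) becomes applicable. You correctly identify that propagating the $\mathcal{H}$-structure from $G_3$ back to $G$ is the delicate part, but these intermediate reductions are essential and are missing from your sketch.
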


\begin{proof}
	First we will show the following claim:
	\begin{claim}\label{edgesum} If $ij \in \binom{[k-1]}{2}$ is such that $E(G[X_i,X_j]) \neq \emptyset$, then
	\begin{equation}
	e(G_3[X_i,A_i])+e(G_3[X_j,A_j]) \leq cn+\sqrt{\beta}n.
	\end{equation}
	\end{claim}
	
	\begin{proof}[Proof of Claim.]
	To prove the claim, let $x \in X_i$ and $y \in X_j$ such that $xy \in E(G)$.
	Then $xy \in E(G_3)$ by Lemma~\ref{G3prop}(ii).
	By Lemma~\ref{G3prop}(v), we can obtain a graph $G'$ from $G_3$ with the stated properties and such that
	\begin{equation}\label{G'deg}
	d_{G'}(x,A_i) = \min\lbrace |A_i|,e(G_3[X_i,A_i])\rbrace \quad \text{ and }\quad d_{G'}(y,A_j) = \min \lbrace |A_j|,e(G_3[X_j,A_j])\rbrace.
	\end{equation}
	That is, we obtain $G'$ by moving as many $X_i$-$A_i$ edges as possible to $x$, and similarly for $y$ and $X_j$-$A_j$ edges.
	By \PZk($G_3$), $x$ is complete to $\bigcup_{\ell \in [k-1]\setminus \lbrace i \rbrace}A_\ell$ in $G_3$ and $y$ is complete to $\bigcup_{\ell \in [k-1]\setminus \lbrace j \rbrace}A_\ell$ in $G_3$.
	Thus the same is true in $G'$.
	Therefore, using Lemma~\ref{G3prop}(iv) and~(v),
	\begin{equation}\label{eq-K3Gp}
		K_3(G')=K_3(G_3)\le K_3(G)+|Z|^3\stackrel{(\ref{smallZ})}{\le} K_3(G)+\frac{8C^3}{\xi^3}\le K_3(G)+\frac{\beta n}{2}.
	\end{equation}
	Corollary~\ref{cr:dh} applied with $p:=\beta n/2$ implies that
	$$
	(k-2)cn + \beta n \geq P_3(xy,G') \geq \sum_{\ell \in [k-1]\setminus \lbrace i,j\rbrace}|A_\ell| + d_{G'}(x,A_i) + d_{G'}(y,A_j)
	$$ and so
	\begin{equation}\label{sumeq1}
	d_{G'}(x,A_i) + d_{G'}(y,A_j) \stackrel{\Ppartition(G)}{\leq} (k-2)cn+\beta n - (k-3)(cn - \beta n) \leq cn + \sqrt{\beta}n.
	\end{equation}
	Now, \Ppartition($G$) implies that $|A_i|+|A_j| \geq 2cn-2\beta n>cn+\sqrt{\beta}n$, so without loss of generality from~(\ref{G'deg}) we may suppose that $d_{G'}(x,A_i)=e(G_3[X_i,A_i])$.
	If $d_{G'}(y,A_j)=|A_j|$, then 
	$$
	e(G_3[X_i,A_i]) \leq cn + \sqrt{\beta}n - |A_j| \stackrel{\Ppartition(G)}{\leq} cn+\sqrt{\beta}n-(cn-\beta n) \leq 2\sqrt{\beta}n.
	$$
	But this is a contradiction because $e(G_3[X_i,A_i]) \geq d_{G_3}(x,A_i) \geq \gamma n$ by Lemma~\ref{G3prop}(ii).
	Thus $d_{G'}(y,A_j)=e(G_3[X_j,A_j])$, and the claim follows from~(\ref{sumeq1}).
	\end{proof}
	
	\medskip
	\noindent
	Suppose that (i) does not hold.
	Then there exist $ij \in \binom{[k-1]}{2}$; $xy \in E(G[X_i,X_j])$ and $x'y' \in E(\overline{G}[X_i,X_j])$ such that $x,x' \in X_i$ and $y,y' \in X_j$.
	These adjacencies are the same in $G_3$.
	Without loss of generality, we may assume that $x \neq x'$ (but it could be the case that $y=y'$).
	In particular, $|X_i| \geq 2$.

	\begin{claim}\label{G''}
		There exists a graph $G''$ which satisfies Lemma~\ref{G3prop}(v) and such that
		$$
		d_{G''}(x',A_i)+d_{G''}(y',A_j) \leq cn-\xi n/5.
		$$ 
	\end{claim}
	
	\begin{proof}[Proof of Claim.]
			Let
			$$
			p_i :=  e(G_3[X_i,A_i])-2\sqrt{\beta}n.
			$$
		We claim that there is some $G''$ such that $E(G'')\bigtriangleup E(G_3) \subseteq \lbrace av:a \in A_i,v \in X_i\rbrace$ and $e(G''[X_i,A_i])=e(G_3[X_i,A_i])$ in which $d_{G''}(x,A_i)=p_i$.
		To show that $G''$ exists, since $p_i < e(G_3[X_i,A_i])$ and $|X_i| \geq 2$, it suffices to show that $p_i \leq |A_i|$, then we can obtain $G''$ by moving all but $2\sqrt{\beta}n$ $X_i$-$A_i$ edges to $x$.
		But this does indeed hold: Claim~\ref{edgesum} implies that
		$$
		p_i \leq cn - \sqrt{\beta}n \stackrel{\Ppartition(G)}{<} |A_i|,
		$$
		as required. We have
$$
		e(G''[X_i,A_i])=e(G_3[X_i,A_i])=p_i+2\sqrt{\beta}n = d_{G''}(x,A_i)+2\sqrt{\beta}n.
		$$
		Thus $d_{G''}(x',A_i) \leq 2\sqrt{\beta}n$.
		Furthermore,
		\begin{eqnarray*}
			d_{G''}(y',A_j) &=&  d_{G_3}(y',A_j) \stackrel{\Pmissing(G_3)}{\leq} |A_j|-\xi n/4 \stackrel{\Ppartition(G)}{\leq} cn+\beta n - \xi n/4.
		\end{eqnarray*}
		Then $d_{G''}(x',A_i)+d_{G''}(y',A_j) \leq cn + \beta n + 2\sqrt{\beta}n - \xi n/4 \leq cn - \xi n/5$, as required.
	\end{proof}
	
	\medskip
	\noindent
	Apply Claim~\ref{G''} to obtain $G''$.
	Proposition~\ref{Gprops}(i) implies that $N_G(x')$ and $N_G(y')$ are disjoint from $R_k'$. This remains true with $G$ replaced by $G''$, i.e. we have that~$N_{G''}(x')\cap R_k'=\emptyset$ and $N_{G''}(y')\cap R_k'=\emptyset$. Indeed, this follows from Lemma~\ref{G3prop}(ii) and that $G''$ and $G_3$ only differ on $[X_i,A_i]$.
	Thus
	\begin{eqnarray}
	\nonumber P_3(x'y',G'') &\leq& \sum_{\ell \in [k-1]\setminus \lbrace i,j\rbrace}|A_\ell| + d_{G''}(x',A_i) + d_{G''}(y',A_j) + |Z|\\
	\label{G''6}&\stackrel{\Ppartition(G),(\ref{smallZ})}{\leq}& (k-3)(c+\beta)n + cn-\frac{\xi n}{5} + \frac{2C}{\xi} \leq (k-2)cn - \frac{\xi n}{6}. 
	\end{eqnarray}
	On the other hand, by Lemma~\ref{G3prop}(v) and the analogue of~\eqref{eq-K3Gp}, $K_3(G'')=K_3(G_3)\le K_3(G)+8C^3/\xi^3$. As $x'y'\not\in E(G'')$, Corollary~\ref{cr:dh} implies that $P_3(x'y',G'')\ge (k-2)cn-k-8C^3/\xi^3$, contradicting~\eqref{G''6}.
	This completes the proof of (i).
	
	We now turn to (ii).
	We claim first that $K_3(G_3)=K_3(G)$.
	Indeed, for all $i \in [k-1]$ and $x,y\in X_i$, we have
	$$
	D(x,y) = \sum_{\substack{\ell \in [k-1]: \\G_3[X_i,X_\ell]\text{ complete}}}|X_\ell| = D(x) = D(y) = D_i.
	$$
	Then Lemma~\ref{G3prop}(iv) implies that $K_3(G_3)=K_3(G)$.
	
	Recall $m^{(3)} = \sum_{ij \in \binom{[k]}{2}}e(\overline{G_3}[A_i,A_j])$ and Lemma~\ref{G3prop}(i) implies that $m^{(3)} \leq m$ with equality if and only if $E(G[X_i])=\emptyset$ for all $i \in [k-1]$.
	Thus if $m^{(3)}=m$, then Lemma~\ref{G3prop}(ii) implies that $G_3=G$ as desired. We may then assume that $m^{(3)}<m$ and, without loss of generality, that $e(G[X_{k-1}])>0$. By Lemma~\ref{G3prop}(ii), this means that $G_3$ has more cross-edges with respect to $A_1,\ldots,A_k$ than $G$. As $K_3(G_3)=K_3(G)$, by the choice of $G$, in particular~\ref{worst-C2}, we must have
	$G_3\in\mathcal{H}(n,e)$.
	
	For all $i\in[k-1]$ such that $X_{i}\neq\emptyset$, we have
	\begin{equation}\label{eq-Xi}
	e(\overline{G_3}[A_{i},A_k])= e(\overline{G}[A_{i},A_k])-e(G[X_{i}])\stackrel{\Pbadedges(G),\Pmissing(G)}{\ge} |X_{i}|(\xi n-
	\delta n)>0.     
	\end{equation}
	
	Suppose first that $G_3\in\mathcal{H}_1(n,e)$ and $A_1^*,\ldots,A_{k-2}^*,B$ is a canonical partition of $G_3$. By construction, $G_3$ satisfies the hypotheses of Lemma~\ref{lem-H1}.
	Recall that $e(G[X_{k-1}])>0$, in particular, $X_{k-1}\neq\emptyset$. Then~\eqref{eq-Xi} and Lemma~\ref{lem-H1}(i) imply that $B=A_{k-1}\cup A_k$, $G_3[A_i,B]$ is complete and $X_i=\emptyset$ for every $i\in[k-2]$. 
	(There can only be one $i \in [k-1]$ such that $e(\overline{G_3}[A_i,A_k])>0$, so~(\ref{eq-Xi}) and the fact that $X_{k-1}\neq \emptyset$ implies that $X_i=\emptyset$ for all $i \in [k-2]$.)
	But then $G_3$ and $G$ only differ at $A_{k-1}\cup A_k$ and Lemma~\ref{lem-H1}(ii) implies that $G_3[A_{k-1},A_k]$ is  complete, contradicting~\eqref{eq-Xi}.
	
	We may now assume that $G_3\in\mathcal{H}^{\min}_2(n,e)\setminus \mathcal{H}_1(n,e)$ and let $A_1^*,\ldots,A_k^*$ be a canonical partition of $G_3$. We claim that $G_3$ satisfies the hypotheses of Lemma~\ref{lem-H2}(ii).
	Indeed, by Lemma~\ref{G3prop}(ii), \Pmissing($G$) and~(\ref{smallZ}), $|E(G)\bigtriangleup E(G_3)| \leq |Z|^2 \leq \delta n^2$ . Also, $G_3[A_i,A_j]$ is complete for all $ij \in \binom{[k-1]}{2}$ by \Pcomplete($G_3$).
	Finally, $d \leq |Z|^2 < \delta n$ and $Y =\emptyset$ by Proposition~\ref{Yiempty}.
	
	Recall that $X_{k-1}\neq\emptyset$.
	By Lemma~\ref{lem-H2}(ii), 
	\begin{equation}\label{eq-Xkm1}
	X_{k-1}\subseteq A_k\subseteq A_k^*
	\end{equation}
	and there is a bikection $\sigma : [k-1]\rightarrow [k-1]$ and at most one $j \in [k-1]$ such that $A_{\sigma(\ell)}^*=A_\ell$ for all $\ell \in [k-1]\setminus \lbrace j \rbrace$, and $A^*_{\sigma(j)} \subseteq A_j \subseteq A_{\sigma(j)}^* \cup A_k^*$.
	Without loss of generality, assume that $\sigma$ is the identity permutation.
	By \PZk($G_3$), we have that $G_3[X_{k-1},A_\ell]$ is complete for every $\ell \leq k-2$.
	But $X_{k-1} \subseteq A_k^*$ so $A_\ell \cap A_k^*=\emptyset$.
	Thus $A_\ell=A_\ell^*$.
	Therefore $j$ can only be $k-1$ if it exists,~i.e.~$A^*_{k-1} \subseteq A_{k-1} \subseteq A^*_{k-1} \cup A_k^*$.
    But $A_{k-1}^* \cup A_k^* = A_{k-1}\cup A_k$ so $A_k \subseteq A_k^*$. So
	\begin{equation}\label{eq-Astar}
	|A_{k-1}^*|-|A_k^*|\le |A_{k-1}|-|A_k|\stackrel{\Ppartition(G)}{\le} (kc-1+2\beta)n\stackrel{(\ref{ineq:c})}{<} (c-(k-1)\alpha)n + 2\beta n < (c-\alpha)n.
	\end{equation}
	Fix an arbitrary edge $xy\in E(G[X_{k-1}])$.  Note that as $X\subseteq A_k \subseteq A_k^*$ is independent in $G_3$, for every $ij \in \binom{[k-1]}{2}$ we have that $[X_i,X_j]$ is empty in $G_3$, and hence also in $G$  as they are identical at $\bigcup_{ij\in{[k-1]\choose 2}}[X_i,X_j]$.  So $D_i=0$ for all $i \in [k-1]$ . Since $K_3(G_3)=K_3(G)$, by Lemma~\ref{G3prop}(iv) we have that $G[X_i]$ is triangle-free for every $i\in[k-1]$, and $N_G(x,A_i)\cap N_G(y,A_i)=\emptyset$. That is, $x$ and $y$ have no common $A_i$-neighbour in $G$.
	So
	$$
     e(\overline{G}[A_{k-1},\{x,y\}]) \ge |A_{k-1}|\stackrel{\Ppartition(G)}{\ge} (c-\beta)n.
	$$
	By~\eqref{eq-Xkm1}, $\{x,y\}\subseteq X_{k-1}\subseteq A_k^*$, and recall that from $G$ to  $G_3$, at most $|Z|^2$ adjacencies are changed in $[A_{k-1},X]$. Lemma~\ref{lem-H2} implies that $|A_{k-1}\setminus A_{k-1}^*|\le |A_{k-1} \bigtriangleup A_{k-1}^*| \leq k\beta n$. So
	\begin{eqnarray*}
	e(\overline{G_3}[A_{k-1}^*,A_k^*])&\ge& e(\overline{G_3}[A_{k-1}^*,\{x,y\}])\geq  e(\overline{G_3}[A_{k-1},\{x,y\}])-2|A_{k-1}\setminus A_{k-1}^*|\\
	&\ge& e(\overline{G}[A_{k-1},\{x,y\}])-|Z|^2-2|A_{k-1}\setminus A_{k-1}^*|\stackrel{(\ref{smallZ})}{\ge} (c-\beta)n-\frac{4C^2}{\xi^2}-2k\beta n  \\
	&>& (c-\alpha/2)n\stackrel{(\ref{eq-Astar})}{>} |A_{k-1}^*|-|A_k^*|+1,
	\end{eqnarray*} 
	contradicting Corollary~\ref{H2new}(iii).
	This completes the proof of the lemma.
\end{proof}

For $ij \in \binom{[k-1]}{2}$, we write $X_i \sim X_j$ if $G[X_i,X_j]$ is complete and $X_i \not\sim X_j$ if $G[X_i,X_j]$ is empty (recall that exactly one of these holds for every pair $ij$ by Lemma~\ref{GfromG3}(i)).
Thus for all $i \in [k-1]$,
$$
D_i = \sum_{\ell \in [k-1]:X_\ell \sim X_i}|X_\ell|.
$$

\begin{proposition}\label{diffij}
	The following hold.
	\begin{itemize}
		\item[(i)] Let $i,j \in [k-1]$ be such that $X_i,X_j \neq \emptyset$.
		Then $a_i + D_i = a_j + D_j$;
		\item[(ii)] if $G'$ is an $(n,e)$-graph with $E(G')\bigtriangleup E(G) \subseteq \bigcup_{i \in [k-1]}K[X_i,A_i]$ then $K_3(G')=K_3(G)$.
	\end{itemize}
\end{proposition}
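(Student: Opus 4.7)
The plan is to derive (i) from a single edge swap that exploits the extremality of $G$, and then to deduce (ii) by a direct triangle count built on the invariant in (i).

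For (i), the key input is the exact identity $P_3(xu,G) = a_i + D_i$, valid for every $x \in X_i$ and $u \in A_i$ \emph{regardless of whether $xu$ is an edge or a non-edge of $G$}; this is Lemma~\ref{G3prop}(iii) applied to $G = G_3$ (Lemma~\ref{GfromG3}(ii)). Assume for contradiction that $a_i + D_i > a_j + D_j$ with $X_i, X_j \neq \emptyset$. Pick $xu \in E(G)$ with $x \in X_i, u \in A_i$, which exists because $d_G(x,A_i) \geq \gamma n$ by the definition of $X_i$. Pick $yv \in E(\overline{G})$ with $y \in X_j, v \in A_j$; this exists because $y \in X_j \subseteq Z_k^j$, so \Pmissing($G$) together with \PZk($G$) forces $d_{\overline{G}}(y,A_j) \geq \xi n$. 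Since $x,u,y,v$ lie in four distinct parts, the pairs $\{x,u\}$ and $\{y,v\}$ are vertex-disjoint, and in particular no triangle can contain both $xu$ and $yv$. The swap $G' := (G \setminus \{xu\}) \cup \{yv\}$ is therefore an $(n,e)$-graph with
\[ K_3(G') - K_3(G) = P_3(yv,G) - P_3(xu,G) = (a_j + D_j) - (a_i + D_i) < 0, \]
contradicting $K_3(G) = g_3(n,e)$.

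For (ii), the independence of every $X_i$ (Lemma~\ref{GfromG3}(ii)) and every $A_i$ (Lemma~\ref{Aiempty}) in both $G$ and $G'$ implies that each triangle of either graph contains at most one edge of $\bigcup_{i \in [k-1]} K[X_i,A_i]$. A short case check on the location of a third vertex $w$ shows that no adjacency changed between $G$ and $G'$ can alter whether $w \in N(x) \cap N(u)$ for $x \in X_i, u \in A_i$: whenever the potentially flipped edge is $uw$ with $w \in X_i$ or $xw$ with $w \in A_i$, the independence of $X_i$ or $A_i$ forbids the other required edge, so $w$ is not a common neighbour in either graph. Hence $P_3(xu,G') = P_3(xu,G) = a_i + D_i$ for every $xu \in E(G') \cap K[X_i,A_i]$. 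Counting the triangles of each graph according to their (at most one) edge in $\bigcup_i K[X_i,A_i]$ then gives
\[ K_3(G') - K_3(G) = \sum_{i \in [k-1]} \bigl(e(G'[X_i,A_i]) - e(G[X_i,A_i])\bigr) (a_i + D_i). \]
By (i), $a_i + D_i$ equals a common constant $C^{\ast}$ whenever $X_i \neq \emptyset$, while the remaining terms vanish identically. Finally, $e(G') = e(G)$ together with $E(G') \bigtriangleup E(G) \subseteq \bigcup_i K[X_i,A_i]$ forces $\sum_i e(G'[X_i,A_i]) = \sum_i e(G[X_i,A_i])$, so the whole sum collapses to zero.

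The only non-trivial point is recognising that the value $P_3(xu,G) = a_i + D_i$ provided by Lemma~\ref{G3prop}(iii) is an equality that holds on $[X_i,A_i]$-edges and $[X_i,A_i]$-non-edges alike; once this is in hand, both parts follow by short accounting, and no further transformation or structural work is required.
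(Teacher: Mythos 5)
Your proof is correct and mirrors the paper's own argument, which likewise pivots on the identity $P_3(xu,G)=a_i+D_i$ from Lemma~\ref{G3prop}(iii) (valid for all $x\in X_i$, $u\in A_i$, whether or not $xu$ is an edge) and then performs edge-count-preserving swaps: the only cosmetic difference is that you swap a single edge for~(i) where the paper symmetrically swaps $\xi n$ edges at $x$ and $x'$, and you do a one-shot accumulated count for~(ii) where the paper iterates single swaps. One small slip in the justification: $x$ and $y$ both lie in $A_k$, so $x,u,y,v$ occupy only three parts rather than four, but the intended conclusion (that $\{x,u\}$ and $\{y,v\}$ are vertex-disjoint) still holds because $X_i$ and $X_j$ are disjoint subsets of $A_k$ for $i\neq j$.
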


\begin{proof}
	Choose arbitrary $i,j \in [k-1]$ and $x \in X_i$ and $x' \in X_j$.
	We obtain (i) by performing a transformation on $G$.
	First observe that, by the definition of $X$ and \Pmissing($G$), we have $\gamma n \leq d(x,A_i) \leq |A_i|-\xi n$.
	So there exist sets $K(x) \subseteq N_{G}(x,A_i)$ and $\overline{K}(x) \subseteq N_{\overline{G}}(x,A_i)$ of size $\xi n$, and equally-sized subsets $K(x') \subseteq N_{G}(x',A_j)$ and $\overline{K}(x') \subseteq N_{\overline{G}}(x',A_j)$.
	Let $J$ be obtained from $G$ by adding $\lbrace xv:v \in \overline{K}(x)\rbrace$ and removing $\lbrace x'u': u' \in K(x')\rbrace$.
	Let $J'$ be obtained from $G$ by adding $\lbrace x'v':v' \in \overline{K}(x')\rbrace$ and removing $\lbrace xu:u \in K(x)\rbrace$.
	For all $a \in A_i$ and $a' \in A_j$ we have by Lemma~\ref{G3prop}(iii), Lemma~\ref{GfromG3}(ii) and the constructions of $J$ and $J'$ that
	\begin{align*}
	P_3(xa,J)&=P_3(xa,J') = P_3(xa,G) = a_i+D_i\quad\text{and}\\
	P_3(x'a',J)&=P_3(x'a',J')=P_3(x'a',G) = a_j+D_j.
	\end{align*}
	
	Since $A_i$, $A_j$ are independent sets in $G$ by Proposition~\ref{Aiempty}, there are no triangles in $J$ containing both edges $xv_1,xv_2$ for distinct $v_1,v_2 \in \overline{K}(x)$; and no triangles in $J$ containing both edges $x'v_1',x'v_2'$ for distinct $v_1',v_2' \in K(x')$.
	Thus
	$$
	K_3(J) - K_3(G) = \sum_{v \in \overline{K}(x)}P_3(xv,J)-\sum_{u \in K(x')}P_3(x'u',G) = \xi n\left( a_i+D_i-a_j-D_j\right)
	$$
	and similarly $K_3(J')-K_3(G)=\xi n(a_j+D_j-a_i-D_i) = -(K_3(J)-K_3(G))$.
	If $a_i+D_i \neq a_j+D_j$, then either $J$ or $J'$ has at least $\xi n$ fewer triangles than $G$, a contradiction.
	Thus $a_i+D_i=a_j+D_j$ for all $i,j \in [k-1]$ for which $X_i,X_j \neq \emptyset$.
	This proves (i).
	
	For (ii), it suffices to show that, for any $i,j \in [k-1]$, if $G'$ is obtained from $G$ by replacing one $X_i$-$A_i$ edge $e_i$ with one $X_j$-$A_j$ edge $e_j$, then $K_3(G)=K_3(G')$. Then this can be iterated to obtain any required $G'$.
	But this follows from (i) since
	$$
	K_3(G')-K_3(G) = P_3(e_j,G')-P_3(e_i,G)=P_3(e_j,G)-P_3(e_i,G)=a_j+D_j - a_i-D_i = 0.
	$$
\end{proof}

It is now easy to complete the proof of Theorem~\ref{strong} in the case under consideration.

\medskip
\noindent
\emph{Proof of Theorem~\ref{strong} in the intermediate case and when $m < Cn$.}
Propositions~\ref{Aiempty} and~\ref{Yiempty} imply that $A_1,\ldots,A_{k-1}$ are independent sets in $G$ and $Y=\emptyset$.
By Proposition~\ref{Gprops}(i), every edge in $G[A_k]$ has both endpoints in $X$.
Now Lemma~\ref{GfromG3} implies that $xy \in E(G[A_k])$ only if there are $ij \in \binom{[k-1]}{2}$ such that $x \in X_i$ and $y \in X_j$.

If $E(G[X])=\emptyset$, then $G$ is $k$-partite.
But then we obtain a contradiction via Corollary~\ref{H2new}(i).
Thus we may
choose $xy \in E(G[X])$ with $x\in X_i$ and $y\in X_j$ for some $ij\in{[k-1]\choose 2}$.
Note that $d_G(x,A_i)>0$ by the definition~(\ref{XY}) of $X_i$.
Let $G'$ be an $(n,e)$-graph obtained from $G$ by successively replacing arbitrary $x$-$A_i$ edges with arbitrary $y$-$A_j$ non-edges until
\begin{itemize}	
	\item[(S1)] $d_{G'}(x,A_i)=1$; or
	
	\item[(S2)] $d_{G'}(y,A_j)=|A_j|$ and $d_{G'}(x,A_i)\ge 1$.
\end{itemize}

We claim that in both cases $d_{G'}(x,A_i)\le \sqrt{\beta}n$. This is clearly true if (S1) holds. If (S2) holds, note that \begin{eqnarray*}
	(k-2)cn+k\stackrel{(\ref{2path})}{\ge} P_3(xy,G)&\ge& \sum_{\ell\in[k-1]\setminus\{i,j\}}|A_{\ell}|+d_{G}(x,A_i)+d_{G}(y,A_j)\\
	&\stackrel{\Ppartition(G)}{\ge}& (k-3)(c-\beta)n+d_{G}(x,A_i)+d_{G}(y,A_j).
\end{eqnarray*}
Thus
$$d_{G'}(x,A_i)=d_{G}(x,A_i)+d_{G}(y,A_j)-d_{G'}(y,A_j)\stackrel{(S2)}{\le} cn+k\beta n-|A_j|\stackrel{\Ppartition(G)}{\le}
\sqrt{\beta} n,$$
as required.
Note that $E(G') \bigtriangleup E(G) \subseteq K[X_i,A_i]\cup K[X_j,A_j]$. So by Proposition~\ref{diffij}(ii), we have $K_3(G')=K_3(G)$. Recall that, by Proposition~\ref{Gprops}(i), in $G$ and also in $G'$, there is no edge between $X$ and $R_k$. Then we can replace all $x$-$A_i$ edges in $G'$ with $x$-$R_k$ non-edges to obtain a new graph $G''$. This is possible as $$
|R_k| \stackrel{\Ppartition(G),\Pbadedges(G)}{\geq} (1-(k-1)c-\beta)n-|Z| \stackrel{(\ref{ineq:c}),(\ref{Zsize})}{\geq} \sqrt{\alpha}n \geq \sqrt{\beta} n\ge d_{G'}(x,A_i).
$$
Fix arbitrary $u\in A_i$ and $u'\in R_k$. 
Note that $\bigcup_{\ell \in [k-1]\setminus \lbrace i \rbrace}A_{\ell} \subseteq N_G(x) \cap N_G(u)$ by \Pcomplete($G$) and \PZk($G$).
Further, $y \in N_G(u) \cap N_G(x)$ by the definition of $X_j \ni y$.
Both of these statements also hold for $G'$.
Thus $P_3(xu,G') \geq a_i+1$.
But $P_3(xu',G'') = a_i$ since $d_{G''}(x,A_i)=0$ and every $X$-$R_k$ edge is incident to $x$ in $G''$.
Thus
$$K_3(G'')-K_3(G)=K_3(G'')-K_3(G')\le -1\cdot d_{G'}(x,A_i)\stackrel{(S1),(S2)}{\le} -1,$$
a contradiction.

This completes the proof of Theorem~\ref{strong} in the intermediate case when $m < Cn$.
\hfill$\square$

\section{The boundary case}\label{bound}
We have shown that no worst counterexample to Theorem~\ref{strong} can satisfy~(\ref{worst}) and~(\ref{eq:alpha}).
That is, we can assume that
\begin{equation}\label{eq:alpha2}
t_k(n) -\alpha n^2 \leq e \leq t_k(n)-1,
\end{equation}
which we refer to as the \emph{boundary case}.
Let
\begin{equation}\label{eq:r}
r := t_k(n)-e \leq \alpha n^2.
\end{equation}
So $r \geq 1$.
Now, Lemmas~\ref{lm:Turan41fact} and~\ref{lm:(k-1)c<1} and~(\ref{eq:c2}) imply that $k(n,e)=k(2e/n^2)$ and
\begin{equation}\label{eq-cn-bT}
\frac{n}{k}+\sqrt{\frac{r}{{k\choose 2}}} \leq cn \leq \frac{n}{k}+\sqrt{\frac{r+k/8}{{k\choose 2}}}\quad\text{and so}\quad\frac{\sqrt{r}}{k} \leq cn-\frac{n}{k} \leq \sqrt{r}.
\end{equation}
Therefore
\begin{equation}\label{ineq:c2}
\frac{n}{k} < cn \leq \frac{n}{k} + \sqrt{\alpha}n.
\end{equation}
A useful consequence of this is that
\begin{equation}\label{betterdiff}
1-(k-1)c \geq \frac{1}{k}-(k-1)\sqrt{\alpha} > \frac{1}{2k}.
\end{equation}

\subsection{The boundary case: approximate structure}

The first step is to obtain an analogue of Lemma~\ref{approx}.
Let
\begin{equation}\label{eq:D}
D := 169k^{k+9}.
\end{equation}

\begin{lemma}[Approximate structure]\label{approx2}
	Suppose that~(\ref{eq:alpha2}) holds.
	Let $G$ be a worst counterexample as defined in Section~\ref{beginningproof} and let $A_1,\ldots,A_k$ be a max-cut partition of $V(G)$. Let $m := \sum_{ij \in \binom{[k]}{2}}e(\overline{G}[A_i,A_j])$ and $h := \sum_{i \in [k]}e(G[A_i])$.
	Then there exists $Z \subseteq V(G)$ such that $G$ has a weak $(A_1,\ldots,A_k;Z,\sqrt{Dr}/n,\xi',\xi',\delta')$-partition in which, for all $i \in [k]$,
	\begin{equation}\label{P1G}
	\bigg| |A_i| - \frac{n}{k} \bigg|, \bigg| |A_i| - cn \bigg| \leq \sqrt{Dr}, \quad m\leq Dr\quad\text{and}\quad h \leq \delta' m.
	\end{equation}
\end{lemma}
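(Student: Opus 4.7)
The plan is to reduce Lemma~\ref{approx2} to an application of Lemma~\ref{baddegclaim}, but with the input edit-distance parameter $d$ of order $r$ rather than the crude $\rho_0 n^2$ coming from the Pikhurko-Razborov stability used in the intermediate case. Since $r$ may be as small as $1$, stability is far too weak, and a sharper averaging argument is needed instead.

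First, I would invoke the auxiliary averaging lemma, Lemma~\ref{almostcompleteagain} (modelled on \cite[Theorem~2]{LovaszSimonovits83}), to produce a partition $V_1,\ldots,V_k$ of $V(G)$ with $||V_i|-cn|=O(\sqrt{r})$ and
\[
|E(G)\bigtriangleup E(K[V_1,\ldots,V_k])| \le d_0\,r
\]
for some constant $d_0=d_0(k)$. The heuristic is that, because $e(G)=t_k(n)-r$ sits so close to the Tur\'an bound, essentially all the defect from $k$-partiteness can be concentrated into only $O(r)$ pairs.

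Next, apply Lemma~\ref{baddegclaim} with this partition, $d:=d_0 r$, $p:=\sqrt{d}$, and $\rho$ fixed once and for all as a small constant with $\rho \le \min\{(\delta'/k)^{30},\,1/(12k^3)^2\}$ and $d_0\alpha \le \rho$. These choices give $p^2\le d\le\rho n^2$, and $2\rho^{1/6}\le 1-(k-1)c$ follows from~(\ref{betterdiff}). For the max-cut partition $A_1,\ldots,A_k$ (after relabelling so $A_k$ is smallest), Lemma~\ref{baddegclaim}(i) yields $||A_i|-cn|\le 2k^2\sqrt{d_0 r}$; combining with $|cn-n/k|\le\sqrt{r}$ from~(\ref{eq-cn-bT}) and choosing $D=169k^{k+9}$ large enough to dominate $(2k^2\sqrt{d_0}+1)^2$ gives $||A_i|-cn|,\,||A_i|-n/k|\le\sqrt{Dr}$, hence~\Ppartition$(G)$ with parameter $\sqrt{Dr}/n$. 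Lemma~\ref{baddegclaim}(ii) then gives
\[
m\ \le\ 2k^2\sqrt{d_0 r}\cdot(kc-1)n + d_0 r\ \le\ (2k^3\sqrt{d_0}+d_0)\,r\ \le\ Dr,
\]
using $(kc-1)n\le k\sqrt{r}$ from~(\ref{eq-cn-bT}); and parts~(iv),(v) deliver $\Delta(G[A_i])\le\rho^{1/5}n\le\delta' n$ and $h\le k\rho^{1/30}m\le\delta' m$.

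To complete the weak $(A_1,\ldots,A_k;Z,\sqrt{Dr}/n,\xi',\xi',\delta')$-partition, take $Z:=\{v\in V(G):d^m_G(v)\ge\xi' n\}$; then~\Pmissing$(G)$ with $\gamma_1=\gamma_2=\xi'$ is immediate. Averaging gives $|Z|\le 2m/(\xi' n)\le 2D\alpha n/\xi'\le\delta' n$ since $\alpha\ll\delta'\xi'/D$ in the hierarchy~(\ref{hierarchy}). Every bad edge has an endpoint in $Z$ by Lemma~\ref{baddegclaim}(iii) combined with $(1-(k-1)c)n\ge n/(2k)$ from~(\ref{betterdiff}), which forces that endpoint to have missing degree at least $n/(8k)\ge\xi' n$; together with the $\Delta$-bound this establishes~\Pbadedges$(G)$, and the inequalities in~(\ref{P1G}) have already been verified. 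The main obstacle in this plan is the averaging Lemma~\ref{almostcompleteagain}: obtaining an $O(r)$ edit-distance bound---rather than the $o(n^2)$ delivered by stability---requires a tight quantitative iteration, for instance removing a vertex of minimum degree and inducting on $n$, or successively expunging vertices of atypical neighbourhoods, while scrupulously tracking constants so that $r$ does not blow up along the iteration.
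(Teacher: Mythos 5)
Your proposal is correct and follows essentially the same route as the paper: invoke the averaging Lemma~\ref{almostcompleteagain} to obtain a partition with edit distance $O(r)$ from a complete $k$-partite graph, then feed that into Lemma~\ref{baddegclaim} with $d$ of order $r$ to convert it into the conclusions about the max-cut partition, and finally define $Z$ by the $\xi'n$ missing-degree threshold to read off the weak partition properties. The paper sets $p := 6k^{(k+3)/2}\sqrt{r}$, $d := 40k^{k+4}r$ and $\rho := 40k^{k+4}\alpha$ (so $\rho$ scales with $\alpha$ rather than being a fixed constant as in your plan, but both choices satisfy the hypotheses of Lemma~\ref{baddegclaim}). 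One small constant slip: your stated bound $\rho\le 1/(12k^3)^2=1/(144k^6)$ does not by itself guarantee $2\rho^{1/6}\le 1/(2k)$ (that would need roughly $\rho\le 1/(4096 k^6)$); the $1/(12k^3)$ figure belongs to Lemma~\ref{CompletekPartite} and plays no role here. This is cosmetic and easily repaired by just insisting $\rho$ be small enough. Note also that Lemma~\ref{almostcompleteagain} is already proved in the paper, so the ``main obstacle'' you flag at the end is not actually an obstacle in the logic of the present lemma.
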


Recall from Section~\ref{partitions} that a weak partition requires that \Ppartition, \Pbadedges~and~\Pmissing~all hold with the appropriate parameters.
Note that the partition in Lemma~\ref{approx2} is in terms of primed constants $\xi',\delta'$ which are both large compared to $\alpha$, unlike $\xi,\delta$ in the intermediate case which are small compared to $\alpha$.

We will need the following analogue of Lemma~\ref{lem-Almost-k-Partite}, which is essentially the same as Theorem~2 in~\cite{LovaszSimonovits83}. Since this theorem is not phrased in a way applicable to our situation, we reprove it here.
In fact this lemma applies for all, say, $r \leq \frac{n^2}{2k^2}$, but is only meaningful when $r=o(n^2)$.

\begin{lemma}\label{almostcompleteagain}
There exist integers $n_1,\ldots,n_k$ summing to $n$ with $|n_i-n/k|,|n_i-cn| \leq 6k^{\frac{k+3}{2}}\sqrt{r}$ for all $i \in [k]$ such that
	$|E(G)\bigtriangleup E(K_{n_1,\ldots,n_k})| < 40k^{k+4}r$.
\end{lemma}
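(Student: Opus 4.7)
The plan is to adapt the averaging/triangle-counting argument of Theorem~2 in~\cite{LovaszSimonovits83} to our setting. Let $(A_1,\ldots,A_k)$ be a max-cut partition of $V(G)$ and set $a_i = |A_i|$, $y_i = a_i - n/k$, $h = \sum_i e(G[A_i])$, $m = \sum_{i<j} e(\overline{G}[A_i,A_j])$. The lemma will follow by taking $n_i := a_i$ once we establish $m + h \le 40 k^{k+4}r$ and $\sum_i y_i^2 \le 36 k^{k+3} r$, since by~(\ref{eq-cn-bT}) a bound $|a_i - n/k| \le 6k^{(k+3)/2}\sqrt{r} - \sqrt{r}$ implies $|a_i - cn| \le 6k^{(k+3)/2}\sqrt{r}$.

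First, from the edge count $e = \sum_{i<j} a_ia_j - m + h$ and the trivial bound $\sum_{i<j} a_i a_j \le t_k(n) = e+r$, I obtain $m \le h + r$. Writing $n = kq + s$ with $0 \le s < k$, the direct identity
$$t_k(n) - \sum_{i<j}a_ia_j = \tfrac{1}{2}\sum_i y_i^2 - \tfrac{s(k-s)}{2k}$$
gives $\sum_i y_i^2 \le 2(r + h) + k/2$. So both desired bounds reduce to proving $h \le C_k r$ for a suitable $C_k$ depending only on $k$.

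To bound $h$, I will compare two estimates of $K_3(G)$. For the upper bound, $K_3(G) = g_3(n,e) \le h^*(n,e)$, and Lemma~\ref{lem-slope-c} applied iteratively on the interval $[e, t_k(n)-1]$, each step contributing $(k-2)c(n,f)n + O(k) = (k-2)(n/k) + O_k(\sqrt{r})$, yields
$$K_3(G) \le K_3(T_k(n)) - r\cdot (k-2)cn + O_k(r\sqrt{r}).$$
For the matching lower bound, I count triangles relative to the ideal $K[A_1,\ldots,A_k]$: applying (\ref{2path}) to each missing cross-edge $uv \in \overline{G}[A_i,A_j]$ bounds the triangles lost by at most $P_3(uv,G) \le (k-2)cn + k$, while each bad edge $uv \in E(G[A_i])$ lies in at least its codegree in $V\setminus A_i$ many triangles, which is bounded below using $(k-1)cn - O_k(\sqrt{r})$ via (\ref{2path}) for the non-edge $u v$ viewed in the opposite role (since $u$ and $v$ share most of $\overline{A_i}$ as common neighbors). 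Together with $K_3(K^k_{a_1,\ldots,a_k}) = K_3(T_k(n)) - O_k(n\sum y_i^2)$ and controlling cross-terms (triangles using two or more bad/missing edges) by $O((m+h)^2/n)$, this gives
$$K_3(G) \ge K_3(T_k(n)) - m(k-2)cn + h(k-1)cn - O_k\!\left(n\sum y_i^2 + (m+h)^2/n\right).$$
Combining the two estimates and using $m \le h + r$, the $(k-2)cn$ terms cancel, leaving
$$h \cdot cn \le O_k\!\left(n\sum y_i^2 + (m+h)^2/n + r\sqrt{r}\right) \le O_k\!\left(n(r+h) + (h+r)^2/n + r\sqrt{r}\right).$$
Since $cn \ge n/k$, dividing by $n$ and using $r \le \alpha n^2$ (so that the quadratic term $(h+r)^2/n^2$ is at most $\alpha(h+r)$, which can be absorbed into the linear terms) yields $h \le C_k r$.

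With $h \le C_k r$ in hand, the preliminary inequalities give $m \le (C_k+1)r$ and $\sum y_i^2 \le 2(C_k+1)r + k/2$; tracking the constants through the argument gives bounds of the required shape $36 k^{k+3} r$ and $40 k^{k+4} r$. The main obstacle is step 2: making the lower bound on $K_3(G)$ tight enough with the correct leading coefficients on the $m$ and $h$ terms so that cancellation is exact up to $O_k(r\sqrt{r})$ error — a naive application of (\ref{2path}) to $3K_3(G)=\sum_{uv\in E(G)}P_3(uv,G)$ does not separate the cross-edge contribution from the bad-edge contribution. This is precisely the nontrivial bookkeeping performed in~\cite{LovaszSimonovits83} and is responsible for the $k^{k+4}$-type constants in the final bound.
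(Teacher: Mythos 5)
Your plan is genuinely different from the paper's: the paper never uses a max-cut partition at this stage, but instead works with the global Lov\'asz--Simonovits identity
\[
K_3(G)=\binom{s}{3}\Bigl(\tfrac ns\Bigr)^3+\tfrac13\Bigl(\sum_x q_G(x)^2+N_1(G)\Bigr),
\]
where $s$ is defined by $e=(1-1/s)n^2/2$, $q_G(x)=2e/n-d_G(x)$, and $N_1(G)$ counts induced $3$-vertex graphs with one edge. Comparing this with the simple upper bound obtained by deleting $r$ edges from $T_k(n)$ gives $\sum_x q_G(x)^2\le 3rkn$ and $N_1(G)\le 3rkn$ \emph{without ever separating bad from missing edges}. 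The partition is then produced by an averaging argument over copies of $K_k$ in $G$: a typical such $K_k$ has very few vertices attached to $\le k-2$ of its members, and this yields the sets $A_i$ together with the claimed bounds. Your approach instead tries to obtain both $h$ (bad edges) and $m$ (missing edges) directly from a max-cut partition.

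Your argument has a concrete gap, and it is precisely at the step you flag as ``the main obstacle'': the lower bound on $K_3(G)$. You claim that a bad edge $uv\in E(G[A_i])$ lies in at least $(k-1)cn-O_k(\sqrt r)$ triangles, citing~(\ref{2path}) ``for the non-edge $uv$ viewed in the opposite role''. But $uv$ is an \emph{edge}, and~(\ref{2path}) applied to an edge gives the \emph{upper} bound $P_3(uv,G)\le (k-2)cn+k$, which is strictly smaller than $(k-1)cn-O_k(\sqrt r)$. The deficit of $\approx cn$ in the codegree is not negligible; it is forced by~(\ref{2path}), and it corresponds to the endpoints of every bad edge having total missing cross-degree at least $cn-O_k(\sqrt r)$. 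In other words, $\sum_{uv\ \mathrm{bad}}(d^m(u)+d^m(v))\ge h(cn-O_k(\sqrt r))$ is a \emph{lower} bound on an interaction term of the same order as your putative main term $h\cdot cn$. Feeding the correct codegree bound back into your decomposition reduces the coefficient of $h$ in the lower bound from $(k-1)cn$ to at most $(k-2)cn+k$; the cancellation against $-m(k-2)cn$ and the upper bound's $-r(k-2)cn$ then leaves $(m-h-r)(k-2)cn\le O_k(\cdots)$, which is vacuous since $m\le h+r$. So the argument gives no control of $h$ at all. A secondary issue is that even with a correct leading term, your error $(m+h)^2/n$ cannot be absorbed without an a priori bound on $m+h$ (you invoke $r\le\alpha n^2$, but that controls $r$, not $h$), and there is no such bound available at this stage of the proof; the global identity in the paper is exactly what avoids needing one.
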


\begin{proof}
Define $s \in \mathbb{R}$ by setting
\begin{equation}\label{tnew}
e = \left(1-\frac{1}{s}\right)\frac{n^2}{2}\quad\text{and so}\quad\frac{2r}{n^2} \leq \frac{1}{s}-\frac{1}{k} \leq \frac{2(r+k/8)}{n^2} \stackrel{(\ref{eq:r})}{\leq} 3\alpha.
\end{equation} 
(Here we used Lemma~\ref{lm:Turan41fact}.)
For $0 \leq i \leq 3$, write $N_i$ for the (unique) $3$-vertex graph with exactly $i$ edges, and write $N_i(G)$ for the number of induced copies of $N_i$ in $G$.
So, for example, $N_3(G)=K_3(G)$.
We claim that 
\begin{equation}\label{lseq}
K_3(G) = \binom{s}{3}\left(\frac{n}{s}\right)^3 + \frac{1}{3}\left(\sum_{x \in V(G)}q_G(x)^2 + N_1(G)\right),
\end{equation}
where $q_G(x) := 2e/n-d_G(x)$.
This is a special case of inequality~(14) in~\cite{LovaszSimonovits83}, but we repeat the simple proof of this case here for the reader's convenience.

For each edge $f$ of $G$ and $1 \leq i \leq 3$, let $n_{i,f}$ denote the number of vertices adjacent to exactly $i-1$ vertices of $f$.
Then for all $f \in E(G)$ we have $n_{1,f}+n_{2,f}+n_{3,f}=n-2$, and
$
\sum_{f \in E(G)}n_{i,f} = iN_i(G)
$.
So
\begin{equation}\label{LSproof1}
e(n-2)=3N_3(G) + 2N_2(G)+N_1(G).
\end{equation}
Additionally,
$$
2(N_2(G) + 3N_3(G)) = 2\sum_{v \in V(G)}\binom{d_G(v)}{2} = \sum_{v \in V(G)}2\binom{2e/n-q_G(v)}{2} = \frac{4e^2}{n} + \sum_{v \in V(G)}q_G(v)^2 - 2e,
$$
where we used the fact that $\sum_{v \in V(G)}q_G(v)=0$.
Thus
\begin{eqnarray*}
en &\stackrel{(\ref{LSproof1})}{=}& -3N_3(G) + \frac{4e^2}{n^2} + \sum_{v \in V(G)}q_G(v)^2 +N_1(G).
\end{eqnarray*}
So
\begin{align*}
K_3(G) &= N_3(G) = \frac{1}{3}\left( e\cdot\left(\frac{4e}{n}-n\right) + \sum_{v \in V(G)}q_G(v)^2 + N_1(G)\right)\\
&= \frac{1}{3}\left( \binom{s}{2}\left(\frac{n}{s}\right)^2\left(\frac{s-1}{2}\cdot\frac{4n}{s}-n\right) + \sum_{v \in V(G)}q_G(v)^2 + N_1(G)\right)\\
&= \binom{s}{3}\left(\frac{n}{s}\right)^3 + \frac{1}{3}\left(\sum_{v \in V(G)}q_G(v)^2 + N_1(G)\right),
\end{align*}
as required. 

\medskip
We now consider $G$.
Certainly $G$ has at most as many triangles as the $(n,e)$-graph obtained by deleting $r$ edges between the two smallest classes of $T_k(n)$. By convexity, $K_3(T_k(n))\leq \binom{k}{3}(n/k)^3$, so
\begin{align*}
K_3(G) &\leq K_3(T_k(n))-r\left(n-2\left\lfloor\frac{n}{k}\right\rfloor\right)
\stackrel{(\ref{tnew})}{\leq} \binom{s}{3}\left(\frac{n}{s}\right)^3 + rn+\frac{kn}{8}\le \binom{s}{3}\left(\frac{n}{s}\right)^3 + rkn.
\end{align*}
Thus~(\ref{lseq}) implies that
\begin{equation}\label{props}
\sum_{x \in V(G)}q_G(x)^2 \leq 3rkn\quad\text{and}\quad N_1(G) \leq 3rkn.
\end{equation}
Let $W$ be an arbitrary copy of $K_k$ in $G$.
Let $A_W$ denote the set of vertices adjacent in $G$ to at most $k-2$ vertices in $W$.
Each vertex in $A_W$ lies in at least one copy of $N_1$ (together with any pair of its missing neighbours in $W$).
On the other hand, for every copy of $N_1$, its single edge lies in at most $n^{k-2}$ copies of $K_k$. Thus
$$
\sum_{W \subseteq G: W \cong K_k}|A_W| \leq N_1(G) \cdot n^{k-2} \leq 3rkn^{k-1}.
$$
Denote by $B_W$ the set of $xy \in E(\overline{G})$ such that $d_G(x,V(W))=k-1$ and either (i) $d_G(y,V(W))=k-1$ but $N_G(x,V(W)) \neq N_G(y,V(W))$; or (ii) $d_G(y,V(W))=k$.
Then for every $xy \in B_W$, there is $z \in V(W)$ such that $x,y,z$ span a copy of $N_1$ in $G$, where $x$ plays the role of the isolated vertex.
On the other hand, there are at most $n^{k-1}$ copies of $K_k$ which contain $z$.
Thus
$$
\sum_{W \subseteq G: W \cong K_k}|B_W| \leq N_1(G)\cdot n^{k-1} \leq 3rkn^k.
$$
Let $q_W := \sum_{x \in V(W)}q_G(x)^2$.
Any $x \in V(G)$ lies in at most $n^{k-1}$ copies of $K_k$, so
$$
\sum_{W \subseteq G: W \cong K_k}q_W \leq 3rkn^k.
$$
Thus
$$
\sum_{W \subseteq G: W \cong K_k} \left( n|A_W|+|B_W|+q_W\right) \leq 9rkn^k.
$$
Now, $G$ certainly contains many copies of $K_k$. For example, Theorem~1 in~\cite{LovaszSimonovits83} implies that 
$$
K_k(G) \geq g_k(n,e) \geq \binom{s}{k}\left(\frac{n}{s}\right)^k \stackrel{(\ref{tnew})}{\geq} \frac{1}{2}\left(\frac{n}{k}\right)^k.
$$
Thus, by averaging, there exists a copy $W$ of $K_k$ in $G$ for which
\begin{equation}\label{Wprops}
|A_W| \leq \frac{18rk^{k+1}}{n};\quad |B_W| \leq 18rk^{k+1};\quad\text{and}\quad |q_G(x)| \leq 3\sqrt{2rk^{k+1}}\quad\text{for all }x \in V(W).
\end{equation}
We will use this $W$ to construct a partition of $V(G)$.
Let $w_1,\ldots,w_k$ be the vertices of $W$.
For all $i \in [k]$, let $C_i := \lbrace x \in V(G): N_{\overline{G}}(x,V(W)) = \lbrace w_i \rbrace\rbrace$.
Let also
$$
C_0 := \lbrace x \in V(G): d_G(x,V(W))=k \rbrace\quad\text{and}\quad C_{k+1} :=A_W.
$$
So $C_0,\ldots,C_{k+1}$ is a partition of $V(G)$.

We will now estimate the sizes of each of these sets.
We have that
\begin{equation}\label{Ck+1}
|C_{k+1}|=|A_W| \leq \frac{18rk^{k+1}}{n} \stackrel{(\ref{eq:r})}{\leq} 18k^{k+1}\sqrt{\alpha}\sqrt{r}.
\end{equation}

Now,~\eqref{Wprops} implies that, for all $i \in [k]$,
$$
\bigg| d_G(w_i) - \left(1-\frac{1}{s}\right)n\bigg| = |q_G(w_i)| \leq 3\sqrt{2rk^{k+1}}.
$$
But
$$
d_G(w_i) = |C_0| + \sum_{j \in [k]\setminus \lbrace i \rbrace}|C_j| + d_G(w_i,C_{k+1}) = n - |C_i| \pm |C_{k+1}|,
$$
so
\begin{eqnarray*}
	|C_i| &=& \frac{n}{s} \pm \left(3\sqrt{2rk^{k+1}} + |C_{k+1}|\right) \stackrel{(\ref{tnew}),(\ref{Ck+1})}{=} \frac{n}{k} \pm \left( \frac{2(r+k/8)}{n} + 3\sqrt{2rk^{k+1}} + 18k^{k+1}\sqrt{\alpha}\sqrt{r} \right)\\
	&\stackrel{(\ref{eq:r})}{=}& \frac{n}{k} \pm \left( 3\sqrt{\alpha} + 3\sqrt{2k^{k+1}} + 18k^{k+1}\sqrt{\alpha}\right)\sqrt{r} = \frac{n}{k} \pm 5k^{\frac{k+1}{2}}\sqrt{r}.
\end{eqnarray*}
Thus $|C_0| \leq 5k^{\frac{k+3}{2}}\sqrt{r}$.

For each $i \in \lbrace 2,\ldots,k\rbrace$, let $A_i := C_i$ and let $A_1 := C_0 \cup C_1 \cup C_{k+1}$.
So, for all $i \in [k]$,
\begin{eqnarray}
\nonumber \bigg| |A_i|-cn\bigg|,\bigg| |A_i|-\frac{n}{k}\bigg| &\leq& \bigg||C_i|-\frac{n}{k}\bigg| + |C_0| + |C_{k+1}| + \bigg| \frac{n}{k}-cn\bigg|\\
\nonumber &\stackrel{(\ref{eq-cn-bT})}{\leq}& 5k^{\frac{k+1}{2}}\sqrt{r} + 5k^{\frac{k+3}{2}}\sqrt{r} + 18k^{k+1}\sqrt{\alpha}\sqrt{r} + \sqrt{r}\\
\label{Aidiff} &\leq& 6k^{\frac{k+3}{2}}\sqrt{r}.
\end{eqnarray}
Let $ij \in \binom{[k]}{2}$ and $\overline{e} \in E(\overline{G}[A_i,A_j])$.
Then, by definition, either $\overline{e} \in B_W$ or $\lbrace x,y \rbrace \cap A_W \neq\emptyset$ (note that any such $\overline{e}$ intersecting $C_0$ lies in $B_W$).
Thus by~\eqref{Wprops} and~\eqref{Ck+1}, we have
\begin{equation}\label{missing2}
\sum_{ij \in \binom{[k]}{2}}e(\overline{G}[A_i,A_j]) \leq |B_W|+|C_{k+1}|n \leq 36k^{k+1}r. 
\end{equation}
Let $d_i := n/k-|A_i|$ for all $i \in [k]$.
Now, $\sum_{i \in [k]}d_i=0$ and 
\begin{align*}
\sum_{ij \in \binom{[k]}{2}}|A_i||A_j| = \frac{1}{2}\left(n^2 - \sum_{i \in [k]}\left(\left(\frac{n}{k}\right)^2 - \frac{2d_in}{k} + d_i^2\right)\right)\geq t_k(n) - k \cdot \max_{i \in [k]}\left\lbrace d_i^2\right\rbrace \stackrel{(\ref{Aidiff})}{\geq} e-36k^{k+4}r.
\end{align*}
Thus
$$
\sum_{i \in [k]}e(G[A_i]) = e - \sum_{ij \in \binom{[k]}{2}}(|A_i||A_j|-e(\overline{G}[A_i,A_j])) \stackrel{(\ref{missing2})}{\leq} 36k^{k+4}r + 36k^{k+1}r \leq 38k^{k+4}r
$$
and so, letting $n_i := |A_i|$ for all $i \in [k]$, we have
$$
|E(G)\bigtriangleup E(K_{n_1,\ldots,n_k})| \le 36k^{k+1}r + 38k^{k+4}r < 40k^{k+4}r,
$$
as required.
\end{proof}

The previous lemma together with Lemma~\ref{baddegclaim} combine to prove Lemma~\ref{approx2}.

\medskip
\noindent
\emph{Proof of Lemma~\ref{approx2}.}
Choose a max-cut $k$-partition $V(G)=A_1 \cup \ldots \cup A_k$.
Let 
\begin{equation}\label{Zdefagain}
Z := \bigcup_{i \in [k]}\lbrace z \in A_i:d_G(z,\overline{A_i}) \geq \xi' n\rbrace.
\end{equation}
(In fact there can be no other choice for $Z$.)
We need to show that 
\Ppartition($G$) holds with parameter $\sqrt{Dr}/n$, \Pbadedges($G$) holds with parameter $\delta'$, and \Pmissing($G$) holds with parameter $\xi'$.

Let $p := 6k^{\frac{k+3}{2}}\sqrt{r}$, $d := 40k^{k+4}r$ and $\rho := 40k^{k+4}\alpha$.
Then
$$
p^2 = 36k^{k+3}r < d \leq \rho n^2\quad\text{and}\quad 2\rho^{1/6} \leq 4k^{k/6+4/6}\alpha^{1/6} < \alpha^{1/7} < \frac{1}{2k} \stackrel{(\ref{betterdiff})}{<} 1 - (k-1)c.
$$
Thus, by Lemma~\ref{almostcompleteagain}, we can apply Lemma~\ref{baddegclaim} with parameters $p,d,\rho$ to imply that $A_1,\ldots,A_k$ satisfy conclusions~(i)--(v) of Lemma~\ref{baddegclaim}.

Thus, by~(i), \Ppartition($G$) holds with parameter $2k^2\sqrt{d}/n = 2\sqrt{40}k^{k/2+4}\sqrt{r}/n$. This together with~\eqref{eq-cn-bT} and~\eqref{eq:D} implies the required bound on $\left|\,|A_i|-\frac{n}{k}\right|$ and thus \Ppartition($G$) holds with parameter $\sqrt{Dr}/n$.
Lemma~\ref{baddegclaim}(ii) implies that
\begin{equation}\label{mcalc}
m:=\sum_{ij \in \binom{[k]}{2}}e(\overline{G}[A_i,A_j]) \leq 2k^2\sqrt{d}(kc-1)n + d \stackrel{(\ref{eq-cn-bT})}{\leq} (6\sqrt{40}k^{k/2+5} + 40k^{k+4})r \stackrel{(\ref{eq:D})}{<} Dr.
\end{equation}
For \Pbadedges($G$), as in the intermediate case, every missing edge is incident to at most two vertices in $Z$, so
\begin{equation}\label{Z1}
|Z| \leq \frac{2m}{\xi' n} \leq \frac{2Dr}{\xi'n} < \frac{2D\alpha n}{\xi'} \stackrel{(\ref{eq:D})}{<} \delta'n.
\end{equation}
Furthermore, Lemma~\ref{baddegclaim}(iii) implies that for every $i \in [k]$ and $e \in E(G[A_i])$, there is at least one endpoint $x$ of $e$ with
$$
d_{\overline{G}}(x,\overline{A_i}) \geq \frac{1}{2}\left((1-(k-1)c)n - 3k^2\sqrt{\rho}n\right) \stackrel{(\ref{betterdiff})}{\geq} \frac{1}{2}\left(\frac{1}{2k}-3\sqrt{40}k^{k/2+4}\sqrt{\alpha} \right)n > \frac{n}{5k} > \xi' n.
$$
Thus $x \in Z$.
The final part of \Pbadedges($G$) follows from Lemma~\ref{baddegclaim}(iv) and the fact that $\alpha \ll \delta'$.
Finally, \Pmissing($G$) holds immediately from the definition of $Z$.
The assertion about $m$ was proved in~(\ref{mcalc}) and the assertion about $h$ is an immediate consequence of Lemma~\ref{baddegclaim}(v) and the fact that $\alpha \ll \delta'$.
\hfill$\square$

\subsection{The boundary case: the remainder of the proof}
Apply Lemma~\ref{approx2} to the worst counterexample $G$ as defined in Section~\ref{beginningproof} (so $G$ satisfies~\ref{worst-C1}--\ref{worst-C3}). Now fix a weak $(A_1,\ldots,A_k;Z,\sqrt{Dr}/n,\xi',\xi',\delta')$-partition of $G$ with $Z$ (uniquely) defined as in~\eqref{Zdefagain} and define $m$ as in the statement. For all $i \in [k]$, let
$$
R_i := A_i \setminus Z.
$$
As before, \Pbadedges($G$) implies that $R_i$ is an independent set for all $i \in [k]$.
Suppose first that $Z=\emptyset$. Then $G$ is a $k$-partite graph.
So Corollary~\ref{H2new}(i) implies that $G \in \mathcal{H}_2(n,e)$, a contradiction.
Thus, exactly as in~(\ref{Z1}),
\begin{equation}\label{mbound}
1 \leq |Z| \leq \frac{2m}{\xi'n}\quad\text{and}\quad\xi' \leq \frac{2m}{n}. 
\end{equation}

Given disjoint subsets $A,B \subseteq V(G)$, write $A \sim B$ if $G[A,B]$ is complete.
For any $I \subseteq [k]$, write
$$
R_I := \bigcup_{i \in I}R_i.
$$
We would like to measure quite accurately the difference between $|R_I|/|I|$ and its `expected' size $cn$ for $I \neq \emptyset$ (recalling that $cn$, $n-(k-1)cn$ and $n/k$ are all very close in the boundary case).
Thus we define
\begin{equation}\label{diffeq}
\nonumber \mathrm{diff}(I) := \left(\frac{|R_I|}{|I|}-cn\right)\frac{n}{m},\quad\text{i.e.}\quad|R_I| = \left(cn+\mathrm{diff}(I)\cdot\frac{m}{n}\right)|I|.
\end{equation}
We will write $\mathrm{diff}(i)$ as shorthand for $\mathrm{diff}(\lbrace i \rbrace)$.
A trivial but useful observation is that, for pairwise disjoint $I_1,\ldots,I_p \subseteq [k]$, we have
\begin{equation}\label{diffav}
\min_{i \in [p]}\lbrace \mathrm{diff}(I_i)\rbrace \leq \mathrm{diff}(I_1\cup\ldots \cup I_p) \leq \max_{i \in [p]}\lbrace \mathrm{diff}(I_i)\rbrace.
\end{equation}
Note also that
\begin{equation}\label{large}
\left(cn-\frac{m}{\alpha^{1/3}n}\right)k \stackrel{(\ref{eq-cn-bT}),(\ref{P1G})}{\geq} n + \sqrt{r} - \frac{kDr}{\alpha^{1/3}n} \stackrel{(\ref{eq:r})}{\geq} n+ \sqrt{r}\left(1 - kD\alpha^{1/6}\right) \stackrel{(\ref{eq:D})}{>} n,
\end{equation}
so we have the following:
\begin{itemize}
	\item[($*$)]
	If $I \subseteq [k]$ satisfies $\mathrm{diff}(I) \geq -1/\alpha^{1/3}$, then $|R_I|>|I|n/k$.
\end{itemize}

We cannot guarantee that \Pcomplete($G$) and \PZk($G$) hold in this setting since there is no part which is significantly smaller than the other parts. However, the next lemma shows that an analogue of these properties holds.

\begin{lemma}\label{lem-partition-Z}
	There exists a partition $Z=\bigcup_{I\in {[k]\choose k-2}} Z_I$ of $Z$ such that, for all $ij \in \binom{[k]}{2}$, the following properties hold. We have $Z_{[k]\setminus \lbrace i,j\rbrace}\sim R_{[k]\setminus \lbrace i,j\rbrace}$, $Z_{[k]\setminus \lbrace i,j\rbrace} \subseteq A_i \cup A_j$ and for every $z \in Z_{[k]\setminus \lbrace i,j\rbrace} \cap A_i$ we have that $d_G(z,R_i) \leq \delta'n$ and $d_{\overline{G}}(z,R_j) \geq \xi'n/2$.
\end{lemma}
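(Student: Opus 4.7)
The plan is to, for each $z \in Z$, identify a unique index pair $\{i,j\} \subseteq [k]$ that will determine its membership in the partition. Writing $i(z)$ for the (unique) index with $z \in A_{i(z)}$, the goal is to find a unique $j(z) \in [k] \setminus \{i(z)\}$ such that $d_{\overline{G}}(z, R_{j(z)}) > 0$; then $z$ is placed into $Z_{[k] \setminus \{i(z), j(z)\}}$. Existence of $j(z)$ is immediate: setting $i = i(z)$, \Pmissing($G$) together with the bound $|Z| \leq \delta' n$ from~\eqref{Z1} gives $\sum_{\ell \neq i} d_{\overline{G}}(z, R_\ell) \geq d_{\overline{G}}(z, \overline{A_i}) - |Z| \geq \xi' n - \delta' n > 0$, so some $j$ works. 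The substantive part is uniqueness, which rests on a key sublemma.

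The sublemma asserts: if $z \in Z \cap A_i$ and $y \in R_j$ with $zy \in E(\overline{G})$ and $j \neq i$, then $\sum_{\ell \neq i, j} d_{\overline{G}}(z, A_\ell) \leq 3\delta' n$. I will prove it by decomposing $P_3(zy, G) = |N_G(z) \cap N_G(y)|$ according to which part each common neighbour belongs to. The $A_i$-contribution is at most $d_G(z, A_i) \leq \delta' n$ by \Pbadedges($G$), and the $A_j$-contribution is at most $d_G(y, A_j) \leq \delta' n$ similarly since $y \in R_j \subseteq A_j$; for $\ell \notin \{i, j\}$ the contribution is at most $|A_\ell| - d_{\overline{G}}(z, A_\ell)$. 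Combining with the lower bound $P_3(zy, G) \geq (k-2)cn - k$ from~\eqref{2path} yields $\sum_{\ell \neq i, j} d_{\overline{G}}(z, A_\ell) \leq 2\delta' n + k + (n - |A_i| - |A_j|) - (k-2)cn$. Using $|A_i|, |A_j| \geq cn - \sqrt{Dr}$ from~\eqref{P1G}, the final bracket is at most $n - kcn + 2\sqrt{Dr} \leq 2\sqrt{Dr}$, where the key inequality $n \leq kcn$ is exactly the boundary-case bound $cn \geq n/k$ from~\eqref{ineq:c2}. Since $\sqrt{Dr} \leq \sqrt{D\alpha}\,n \ll \delta' n$ by the hierarchy~\eqref{hierarchy}, the right-hand side is at most $3\delta' n$, as claimed.

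Uniqueness of $j(z)$ then follows by contradiction. Suppose $z \in Z \cap A_i$ had non-neighbours in both $R_\ell$ and $R_{\ell'}$ for distinct $\ell, \ell' \in [k] \setminus \{i\}$. Applying the sublemma once with $j = \ell$ and once with $j = \ell'$ and adding yields $d_{\overline{G}}(z, A_\ell) + d_{\overline{G}}(z, A_{\ell'}) + 2 \sum_{s \notin \{i, \ell, \ell'\}} d_{\overline{G}}(z, A_s) \leq 6\delta' n$, hence $d_{\overline{G}}(z, \overline{A_i}) \leq 6\delta' n$, contradicting $d_{\overline{G}}(z, \overline{A_i}) \geq \xi' n$ from \Pmissing($G$) because $\delta' \ll \xi'$. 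Thus $j(z)$ is unique and all non-neighbours of $z$ in $\bigcup_{\ell \neq i} R_\ell$ lie in $R_{j(z)}$, giving $d_{\overline{G}}(z, R_{j(z)}) \geq \xi' n / 2$. Setting $Z_{[k] \setminus \{i, j\}} := \{z \in Z : \{i(z), j(z)\} = \{i, j\}\}$ then produces the required partition: $Z_{[k] \setminus \{i, j\}} \subseteq A_i \cup A_j$ by construction; the completeness $Z_{[k] \setminus \{i, j\}} \sim R_{[k] \setminus \{i, j\}}$ holds because each $z \in Z_{[k] \setminus \{i, j\}}$ has $d_{\overline{G}}(z, R_\ell) = 0$ for $\ell \notin \{i, j\} \supseteq \{i(z), j(z)\}$; and $d_G(z, R_i) \leq \delta' n$ for $z \in A_i$ is immediate from \Pbadedges($G$). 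The hard part is the sublemma: ensuring that the slack $\sum_{\ell \neq i, j} |A_\ell| - (k-2)cn$ is genuinely small is exactly where the boundary-case hypothesis $cn \geq n/k$ is essential, and it is this tight $P_3$-decomposition that drives the whole argument.
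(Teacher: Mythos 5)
Your proof is correct. Both your argument and the paper's rest on exactly the same two ingredients: the two-path inequality~\eqref{2path} for the worst counterexample, and the boundary-case facts $|A_\ell| = n/k \pm \sqrt{Dr}$ and $kcn \geq n$, which make the slack $\sum_{\ell \neq i,j}|A_\ell| - (k-2)cn$ nonpositive up to $O(\sqrt{Dr})$. The difference is organizational and amounts to applying~\eqref{2path} to dual objects. The paper fixes, by averaging, an index $j$ with $d_{\overline{G}}(z,A_j) \geq \xi'n/k$ and then upper-bounds $P_3(zx,G)$ for an arbitrary $x \in R_h$, $h \notin \{i,j\}$: the term $d_G(z,A_j) \leq |A_j| - \xi'n/k$ supplies a $-\xi'n/k$ slack that pushes $P_3(zx,G)$ strictly below $(k-2)cn - k$, so $zx$ must be an edge and completeness to $R_{[k]\setminus\{i,j\}}$ follows in one shot. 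You instead apply the lower bound of~\eqref{2path} to an actual non-edge $zy$ with $y \in R_j$, extract the concentration statement $\sum_{\ell\neq i,j}d_{\overline{G}}(z,A_\ell) \leq 3\delta'n$, and then need an extra uniqueness step (two applications summed against $d_{\overline{G}}(z,\overline{A_i}) \geq \xi'n$) to rule out a second bad index. Your route makes the well-definedness of the assignment $z \mapsto \{i(z),j(z)\}$ fully explicit, at the cost of one more layer; the paper's is shorter but leaves the (harmless) possibility of several valid choices of $j$ implicit. All the quantitative steps in your write-up check out, including the absorption of $2\sqrt{Dr} \leq 2\sqrt{D\alpha}\,n$ into the $\delta'n$ error via the hierarchy~\eqref{hierarchy}, which the paper itself uses in the same way in Lemma~\ref{lem-Ri-Z}.
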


\begin{proof}
	Let $z \in Z$ be arbitrary, and let $i \in [k]$ be such that $z \in A_i$.
	By the definition of $Z$, there is some $j \in [k]\setminus \lbrace i \rbrace$ such that $d_{\overline{G}}(z,A_j) \geq \xi'n/k$.
	Let $I := [k]\setminus \lbrace i,j\rbrace$ and $x \in R_I$ be arbitrary, and let $h \in I$ be such that $x \in R_h$.
	Then
	\begin{eqnarray*}
		P_3(zx,G) &\leq& d_G(z,A_i)+d_G(z,A_j)+d_G(x,A_h)+(n-|A_i|-|A_j|-|A_h|)\\
		&\stackrel{\Pbadedges(G),(\ref{P1G})}{\leq}& 2\delta'n + n - 2\left(\frac{n}{k}-\sqrt{Dr}\right) - \frac{\xi'n}{k} \stackrel{(\ref{eq:r})}{<} (k-2)\cdot\frac{n}{k}-\frac{\xi'n}{2k} \stackrel{(\ref{ineq:c2})}{<} (k-2)cn - \frac{\xi'n}{3k}.
	\end{eqnarray*}
	Thus~(\ref{2path}) implies that $xz \in E(G)$.
	Since $x$ was arbitrary, we have shown that we can assign $z$ to $Z_{[k]\setminus\lbrace i,j\rbrace}$.
	The second statement follows from \Pbadedges($G$), which says, since $z \in A_i$, that $d_G(z,R_i) \leq d_G(z,A_i) \leq \delta'n$ and \Pmissing($G$), which together with the first statement says that $d_{\overline{G}}(z,R_j) \geq \xi'n-|Z| \geq (\xi'-\delta')n \geq \xi'n/2$.
\end{proof}

The next lemma shows that $\mathrm{diff}(I)$ can only be large when $|I| \leq k-2$.

\begin{lemma}\label{lem-no-large-set}
	If $I \subseteq [k]$ has $\mathrm{diff}(I) \geq -1/\alpha^{1/3}$, then $|I|\le k-2$.
\end{lemma}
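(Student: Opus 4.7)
The plan is to prove the contrapositive: for $|I|\geq k-1$, we have $\mathrm{diff}(I)<-1/\alpha^{1/3}$. I would split into the two cases $|I|=k$ and $|I|=k-1$.

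The case $|I|=k$ is immediate from $(*)$: since $R_I=V(G)\setminus Z$, we have $|R_I|=n-|Z|\leq n$, while the hypothesis would force $|R_I|>n$. Quantitatively, using~\eqref{eq-cn-bT} in the sharp form $cn\geq n/k+\sqrt{2r/(k(k-1))}$, together with $m\leq Dr$ from~\eqref{P1G}, $\sqrt{r}\leq\sqrt{\alpha}n$ from~\eqref{eq:r}, and the hierarchy $\alpha\ll(kD)^{-6}$, one computes
$\mathrm{diff}([k])\leq-\sqrt{2/(k(k-1))}\cdot n/(D\sqrt{r})\leq -1/(D\sqrt{k(k-1)\alpha/2})<-1/\alpha^{1/3}$, a contradiction.

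For the case $|I|=k-1$, I would assume WLOG $I=[k-1]$ and argue by contradiction from $\mathrm{diff}(I)\geq -1/\alpha^{1/3}$. Combined with the sharp form $(k-1)cn\geq(k-1)n/k+\sqrt{2(k-1)r/k}$ of~\eqref{eq-cn-bT}, and the fact that $(k-1)m/(n\alpha^{1/3})\leq (k-1)D\alpha^{1/6}\sqrt{r}=o(\sqrt{r})$, this hypothesis yields $|R_k|+|Z|\leq n/k-\sqrt{2(k-1)r/k}(1-o(1))$. In parallel, the max-cut property of $A_1,\ldots,A_k$, together with $e(G)=t_k(n)-r$, $h\leq\delta' m$, and the identity $\sum_{i<j}|A_i||A_j|=t_k(n)-\tfrac12\sum_i(|A_i|-n/k)^2$, upper bounds $\sum_i(|A_i|-n/k)^2\leq 2r(1+o(1))$. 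Applying Cauchy--Schwarz to $|A_k|-n/k=-\sum_{i\in I}(|A_i|-n/k)$ then gives the matching lower bound $|A_k|\geq n/k-\sqrt{2(k-1)r/k}(1+o(1))$, and hence $|R_k|+|Z|\geq n/k-\sqrt{2(k-1)r/k}(1+o(1))$.

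The hard part will be that these two estimates only just overlap and by themselves do not yield a strict contradiction. The plan is to close the gap by a local triangle-decreasing transformation on $G$, exploiting its selection as a worst counterexample via criteria~\ref{worst-C1}--\ref{worst-C3} and the sharp $P_3$-bounds of Corollary~\ref{cr:dh}. Concretely, once $|R_k|$ is forced to its hypothesised upper bound, the rigid structure of $Z$ given by Lemma~\ref{lem-partition-Z} should enable locating a non-edge $\overline{f}$ between some $R_i$ and $R_k$ (with $P_3(\overline{f},G)\geq (k-2)cn-k$) alongside an edge $f$ (with $P_3(f,G)\leq (k-2)cn+k$) whose swap produces an $(n,e)$-graph with strictly fewer triangles than $G$, contradicting $K_3(G)=g_3(n,e)$.
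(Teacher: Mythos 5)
Your reduction to the cases $|I|=k$ and $|I|=k-1$, and the treatment of $|I|=k$ via~($*$), match the paper. The problem is the case $|I|=k-1$, where your argument has a genuine gap that you yourself flag: the upper bound on $|R_k|+|Z|$ from the hypothesis and the lower bound on $|A_k|$ from max-cut plus Cauchy--Schwarz agree to leading order $n/k-\sqrt{2(k-1)r/k}$, and they are in fact \emph{compatible} — there are integer part sizes satisfying both simultaneously (indeed a near-$H^*$ configuration essentially does). So no refinement of the part-size analysis alone can produce a contradiction, and the proposed finish — ``locate a non-edge and an edge whose swap strictly decreases the triangle count'' — is a placeholder rather than an argument: you do not identify the pair, and the slack in Corollary~\ref{cr:dh} and Lemma~\ref{lem-partition-Z} (of order $k$ and $\delta'n$ respectively) is far too coarse to resolve a discrepancy living at scale $o(\sqrt{r})$ in part sizes.

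The paper's proof extracts the contradiction from a different quantity, which your step~2 touches and then discards. Writing $q:=m/(\alpha^{1/3}n)$, the hypothesis gives $|R_{[k-1]}|\ge(k-1)(cn-q)$, hence $|A_k|\le n-(k-1)(cn-q)\le n/k$, and therefore the total cross-pair count satisfies
$$\textstyle e+m-h=\sum_{ij}|A_i||A_j|\le e\bigl(K^k_{cn-q,\ldots,cn-q,\,n-(k-1)(cn-q)}\bigr)=e-\binom{k}{2}q^2+(k-1)q(kc-1)n\le e+3k\alpha^{1/6}m,$$
using $(kc-1)n=O(\sqrt{r})$ and $r\le\alpha n^2$. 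This forces $h\ge(1-3k\alpha^{1/6})m>\sqrt{\delta'}m$, contradicting $h\le\delta'm$ from Lemma~\ref{approx2} (note $m\ge\xi'n/2>0$ by~\eqref{mbound}). In other words, the contradiction is not about where the vertices sit but about the relation between missing edges $m$ and bad edges $h$: the hypothesis forces essentially all of the missing cross-edges to be converted into bad edges. To repair your proof you would need to retain the identity $\sum_{ij}|A_i||A_j|=e+m-h$ as the object being bounded, rather than only its part-size consequence.
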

\begin{proof}
	Note first that, by~($*$), we have $\mathrm{diff}([k]) < -1/\alpha^{1/3}$.
	Suppose that there exists a set $I \in \binom{[k]}{k-1}$ such that $\mathrm{diff}(I) \geq -1/\alpha^{1/3}$. Without loss of generality, suppose that $I = [k-1]$. Let $q := \frac{m}{\alpha^{1/3}n}$. Then~($*$) implies that $|R_I| \geq (k-1)n/k$.
	Since $\sum_{ij \in \binom{k}{2}}|A_i|\,|A_j|$ is maximised when the parts $A_i$ are as balanced as possible and $cn-q\ge n/k$ due to~\eqref{eq-cn-bT} and~\eqref{P1G}, we have
	\begin{align*}
	e+m-\sum_{i \in [k]}e(G[A_i]) &= \sum_{ij \in \binom{[k]}{2}}|A_i||A_j| \leq e(K^k_{cn-q,\ldots,cn-q,n-(k-1)(cn-q)})\\
	&= e - \binom{k}{2}q^2 + (k-1)q(kc-1)n \stackrel{(\ref{eq-cn-bT})}{\le} e+\frac{(k-1)m}{\alpha^{1/3}n}\cdot k\sqrt{\frac{r+k/8}{\binom{k}{2}}}\\
	&\leq e+\frac{2km\sqrt{r+k}}{\alpha^{1/3}n} \stackrel{(\ref{eq:r})}{\leq} e + 3k\alpha^{1/6}m.
	\end{align*}
	But then
	$$
	\sum_{i \in [k]}e(G[A_i]) \geq (1-3k\alpha^{1/6})m > \sqrt{\delta'}m,
	$$
	a contradiction to Lemma~\ref{approx2}.
\end{proof}

We now show that if there is a missing edge between some $R_i$ and $R_j$, where $i \neq j$, then the union of the other sets $R_\ell$ must be large.

\begin{lemma}\label{lem-large-set-from-missing-edge}
	For all $ij \in \binom{[k]}{2}$, if $R_i\not\sim R_j$, then $\mathrm{diff}([k]\setminus\{i,j\}) \geq -1/(2\alpha^{1/3})$.
\end{lemma}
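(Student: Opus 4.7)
The plan is to exploit the non-edge $xy$ between $R_i$ and $R_j$ guaranteed by the assumption, and bound $P_3(xy,G)$ in two ways. First, since $G$ is a worst counterexample with $xy \in E(\overline{G})$, inequality~(\ref{2path}) gives the lower bound
\[
P_3(xy, G) \geq (k-2)cn - k.
\]
Then I would upper-bound $P_3(xy,G)$ structurally. Any common neighbour $v$ lies in some $A_\ell$. If $\ell \in \{i,j\}$, then since $R_i$ and $R_j$ are independent sets in $G$ (by \Pbadedges($G$)) and $x\in R_i$, $y\in R_j$, no vertex of $(R_i\cup R_j)\setminus\{x,y\}$ can be a common neighbour, so $v$ must lie in $Z\cap(A_i\cup A_j)$; otherwise $v\in R_{[k]\setminus\{i,j\}}\cup (Z\setminus(A_i\cup A_j))$. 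Hence
\[
P_3(xy,G) \leq |R_{[k]\setminus\{i,j\}}| + |Z|.
\]

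Combining the two bounds and substituting the definition of $\mathrm{diff}$ yields
\[
(k-2)\bigl(cn+\mathrm{diff}([k]\setminus\{i,j\})\cdot m/n\bigr) = |R_{[k]\setminus\{i,j\}}| \geq (k-2)cn-k-|Z|,
\]
which rearranges to
\[
\mathrm{diff}([k]\setminus\{i,j\}) \geq -\frac{(k+|Z|)\,n}{(k-2)\,m}.
\]
It then suffices to verify that the right-hand side is at least $-1/(2\alpha^{1/3})$. From~(\ref{mbound}) I have $|Z|\leq 2m/(\xi'n)$ and $m/n\geq \xi'/2$, so $(k+|Z|)/(k-2)$ is bounded by a constant multiple of $m/(\xi'n)$ plus $O(1)$; the hierarchy~(\ref{hierarchy}) gives $\alpha^{1/3}\ll \xi'$, which makes this strictly smaller than $m/(2\alpha^{1/3}n)$.

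There is no real obstacle here: the lemma is essentially a direct application of~(\ref{2path}) together with the fact that $R_i$ is independent. The only technical point is ensuring that the error term $(k+|Z|)n/((k-2)m)$ is absorbed by $1/(2\alpha^{1/3})$, and this follows from the lower bound $m/n\geq \xi'/2$ in~(\ref{mbound}) and the constant ordering in~(\ref{hierarchy}).
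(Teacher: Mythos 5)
Your proposal is correct and follows essentially the same route as the paper: pick the non-edge $xy$ between $R_i$ and $R_j$, lower-bound $P_3(xy,G)$ by $(k-2)cn-k$ via~(\ref{2path}), upper-bound it by $|R_{[k]\setminus\{i,j\}}|+|Z|$ using the independence of $R_i$ and $R_j$, and absorb the error terms using~(\ref{mbound}) and the hierarchy~(\ref{hierarchy}). The arithmetic at the end matches the paper's (which rearranges to $|R_I|\geq (cn-\tfrac{m}{2\alpha^{1/3}n})|I|$ rather than isolating $\mathrm{diff}(I)$, but this is the same calculation).
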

\begin{proof}
	Set $I:=[k]\setminus\{i,j\}$.
	Since $R_i \not\sim R_j$, there exists $x \in R_i$ and $y \in R_j$ such that $xy \notin E(G)$.
	Then, since $R_i$ and $R_j$ are both independent sets in $G$,
	$$
	(k-2)cn -k \stackrel{(\ref{2path})}{\leq} P_3(xy,G) \leq |Z| + |R_I| \stackrel{(\ref{mbound})}{\leq} \frac{2m}{\xi'n} + |R_I|
	$$
	and so
	$$
	|R_I| \geq (k-2)cn - k - \frac{2m}{\xi'n} \stackrel{(\ref{mbound})}{\geq} (k-2)cn -  \frac{2km}{\xi'n} \geq \left(cn-\frac{m}{2\alpha^{1/3}n}\right)|I|,
	$$
	as required.
\end{proof}

Our next goal is to show that $R_i$ is in fact small for \emph{every} $i \in [k]$, which will in turn imply that $G[R_1,\ldots,R_k]$ is complete $k$-partite. To do this, we need the following lemma.

\begin{lemma}\label{lem-Ri-Z}
	For all $i \in [k]$, if $\mathrm{diff}(i) \geq -1/(2\alpha^{1/3})$, then there exists $j\in [k]\setminus \lbrace i \rbrace$ such that $R_i\not\sim R_j$.
\end{lemma}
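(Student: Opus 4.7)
The plan is to argue by contradiction: assume $R_i \sim R_j$ for every $j \in [k]\setminus\{i\}$ and simultaneously $\mathrm{diff}(i) \geq -1/(2\alpha^{1/3})$. Since~\eqref{mbound} guarantees $|Z| \geq 1$, I fix any $z \in Z$ and invoke Lemma~\ref{lem-partition-Z} to place $z \in Z_I$ for some $I \in \binom{[k]}{k-2}$, with $z \in A_p$ where $\{p,q\} = [k]\setminus I$, $d_G(z,R_p) \leq \delta'n$, and $d_{\overline{G}}(z,R_q) \geq \xi'n/2$. The whole argument reduces to a careful $P_3$-computation at a single cleverly chosen pair of vertices, exploiting the fact that vertices of $R_i$ are complete to every other $R_j$ (by hypothesis) while $z$ misses a substantial portion of $R_q$.

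The key case is $i \in I$, so $p,q \neq i$. For an arbitrary $x \in R_i$, we have $xz \in E(G)$ because $z \sim R_I \supseteq R_i$, and Corollary~\ref{cr:dh}(ii) bounds $P_3(xz,G) \leq (k-2)cn + k$. The plan is to upper-bound $P_3(xz,G) = |N_G(x) \cap N_G(z)|$ set by set: the intersection is disjoint from $R_i$ (as $x \in R_i$ and $R_i$ is independent), bounded by $\delta'n$ on $R_p$ (since $d_G(z,R_p) \leq \delta'n$ and $x$ is complete to $R_p$), bounded by $|R_q|-\xi'n/2$ on $R_q$ (using the large missing degree of $z$ to $R_q$ and $x$'s completeness to $R_q$), bounded by $|Z|$ on $Z$, and exactly $|R_\ell|$ on $R_\ell$ for $\ell \notin\{i,p,q\}$ (both $x$ and $z$ are complete there). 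This gives
\begin{equation*}
P_3(xz,G) \leq n - |R_i| - |R_p| - \xi'n/2 + \delta'n + |Z|.
\end{equation*}
Combining with the corollary's bound and using $|R_i| \leq cn + \sqrt{Dr} \leq n/k + 2\sqrt{D\alpha}n$ (from \Ppartition$(G)$ and~\eqref{ineq:c2}), $|Z| \leq 2m/(\xi'n) \leq \delta'n$, and $1-(k-2)c \geq 2/k - (k-2)\sqrt{\alpha}$ (from~\eqref{ineq:c2}), one deduces $|R_p| \geq n/k + \xi'n/3$. This contradicts $|R_p| \leq |A_p| \leq n/k + 2\sqrt{D\alpha}n$, since the hierarchy ensures $\xi' > 6\sqrt{D\alpha}$ (by choosing $\alpha \ll \xi'^2/D$).

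The complementary case $i \notin I$ (so $i \in \{p,q\}$) follows the same template but with the roles of $x$ and $z$ adjusted. If $z \in A_i$ (i.e.\ $p = i$), I replace $x$ by a vertex $y \in R_\ell$ for some $\ell \in I$ (which exists since $|I| = k-2 \geq 1$) and use the edge $zy \in E(G)$; the $\xi'n/2$ saving now comes from $d_{\overline{G}}(z,R_q) \geq \xi'n/2$ combined with the assumption $y \in R_\ell$ being complete to $R_q$ via $R_\ell \sim R_q$ (no wait — we only have $R_i \sim R_q$, so this requires extra care). If $z \in A_q$ with $q \neq i$, we pick $y \in R_\ell$ for $\ell \in I$; now $y$ is complete to $R_i$ (via $R_i \sim R_\ell$) while $z$ has large missing degree to $R_i$, and the analogous computation of $P_3(zy,G)$ again forces some $|R_\ell| \geq n/k + \xi'n/3$, contradicting \Ppartition.

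The main obstacle is the second case: one must choose the comparison vertex $y$ so that the common-neighbor count genuinely drops by $\xi'n/2$ below its generic value, which requires using the single guaranteed completeness $R_i \sim R_\ell$ (rather than any other pairwise relation, which we do not have). A short sub-case analysis based on whether $z \in A_i$ or $z \in A_q$ with $q \neq i$ selects the correct edge to analyse.
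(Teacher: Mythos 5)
Your approach has a fundamental logical gap in the ``key case $i \in I$''. You compute an upper bound $P_3(xz,G) \leq n - |R_i| - |R_p| - \xi'n/2 + \delta'n + |Z|$ from the set-by-set decomposition, and cite Corollary~\ref{cr:dh}(ii) for the further upper bound $P_3(xz,G) \leq (k-2)cn+k$. Since $xz$ is an \emph{edge}, only the upper-bound direction of Corollary~\ref{cr:dh} is available, so you have two upper bounds and no lower bound; combining them cannot force $|R_p|$ to be large, let alone produce the claimed inequality $|R_p| \geq n/k + \xi'n/3$. Even if one replaces the set-by-set upper bound with the matching lower bound $P_3(xz,G) \geq \sum_{\ell \in I\setminus\{i\}}|R_\ell|$ and compares with $(k-2)cn+k$, the slack is of order $\sqrt{D\alpha}n + \delta'n$ and the ``saving'' $\xi'n/2$ from $z$'s missing degree into $R_q$ does not appear on the side where it is needed; the resulting inequality is vacuous. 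Choosing a \emph{non}-edge $zu$ with $u\in N_{\overline{G}}(z,R_q)$ and invoking the lower bound $P_3(zu,G)\geq(k-2)cn-k$ also fails: $u$ is not known to be complete to the other $R_\ell$ with $\ell\in I$, so the upper count for $P_3(zu,G)$ loses the completeness you need. In short, no single $P_3$-computation of the form you attempt yields a contradiction.

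The deeper issue is that your proposal tries to avoid the paper's global step. The paper first proves a Claim reducing the task to showing $Z_I = \emptyset$ for all $I \in \binom{[k]\setminus\{i\}}{k-2}$; that reduction (the case $Z \sim R_i$ and $R_i \sim \overline{R_i}$) requires considering the induced subgraph $G[\overline{R_i}]$ on $n' = n-|R_i|$ vertices, observing that it is itself a triangle-minimiser, and invoking the minimality of $k$ together with Theorem~\ref{LS83} to locate it in $\mathcal{H}(n',e')$ and then lift a contradiction back to $G$. This inductive/global ingredient has no analogue in your argument; without it the case ``$R_i$ is complete to everything'' is not excluded by $P_3$-counting alone, and the lemma does not follow. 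To repair your proposal you would need to reinstate that reduction (or find another global mechanism), and only then handle the remaining case $Z_I \neq \emptyset$ for some $I\not\ni i$ — which the paper does by explicitly moving edges at $z$ to form a new $(n,e)$-graph $G'$ and comparing $K_3(G')$ with $K_3(G)$, a transformation argument rather than a single $P_3$-estimate. You have correctly flagged (in your own parenthetical) that the lack of pairwise completeness among the $R_\ell$ obstructs the complementary subcase; the same issue, in more severe form, is what blocks the main case.
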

\begin{proof}
	Let $i \in [k]$ such that $\mathrm{diff}(i) \geq -1/(2\alpha^{-1/3})$ be arbitrary.
	We begin by proving the following claim:
	
	\begin{claim}
		It suffices to show that $Z_I = \emptyset$ for all $I \in \binom{[k]\setminus \lbrace i \rbrace}{k-2}$.
	\end{claim}
	
	\begin{proof}[Proof of Claim.]
		Suppose that $Z_I = \emptyset$ for all $I \in \binom{[k]\setminus \lbrace i \rbrace}{k-2}$.
		Lemma~\ref{lem-partition-Z} implies that $Z \sim R_i$.
		Suppose now that $R_i \sim R_j$ for all $j \in [k]\setminus \lbrace i \rbrace$.
		Thus $R_i \sim \overline{R_i}$, and $R_i$ is an independent set.
		Let $n' := n-|R_i|$ and $e' := e(G[\overline{R_i}]) = e-n'(n-n')$.
		Note that $J := G[\overline{R_i}]$ satisfies $K_3(J) = g_3(n',e')$ (since otherwise we could replace it in $G$ with an $(n',e')$-graph with fewer triangles to obtain an $(n,e)$-graph with fewer triangles than $G$, contradicting~\ref{worst-C1}).
		Using~\eqref{eq:r},~\eqref{P1G} and~\eqref{mbound}, we have
		\begin{equation}\label{eq-Ris}
		|R_i|=|A_i|\pm|Z|=\frac{n}{k}\pm \sqrt{Dr}\pm \frac{2m}{\xi'n}=\frac{n}{k}\pm \alpha^{1/3}n.
		\end{equation} 
		By~\eqref{eq-Ris}, we have
		\begin{eqnarray*}
		n'(n-n')\ge \left(\frac{n}{k}-\alpha^{1/3}n\right)\left(\frac{k-1}{k}n+\alpha^{1/3}n\right)\ge \frac{k-1}{k^2}n^2-\alpha^{1/3}\frac{k-1}{k}n^2.
		\end{eqnarray*}
Recall from the very beginning of Section~\ref{beginningproof} that $\alpha_{\ref{LS83}}$ is the constant obtained by applying Theorem~\ref{LS83} with parameters $k$ and $r:=3$.
Together with $e<t_k(n)\le (k-1)n^2/(2k)$, we have that		
      \begin{eqnarray}\label{eq-eprime}
		e'&=& e-n'(n-n')\le \frac{k-1}{k}\cdot\frac{n^2}{2}-\left(\frac{k-1}{k^2}n^2-\alpha^{1/3}\frac{k-1}{k}n^2\right)=\frac{k-1}{k}\cdot\frac{n^2}{2}\left(1-\frac{2}{k}+2\alpha^{1/3}\right)\nonumber\\
		&\stackrel{(\ref{eq-Ris})}{\le}&\frac{k-1}{2k}\left(\frac{k-2}{k}+2\alpha^{1/3}\right)\left(\frac{k}{k-1}n'+\alpha^{1/4}n'\right)^2\le t_{k-1}(n')+\alpha^{1/5}(n')^2\nonumber\\
		&\stackrel{(\ref{hierarchy})}{\le}& t_{k-1}(n')+\alpha_{\ref{LS83}}(n')^2
		\end{eqnarray}
		and similarly $e' \geq t_{k-2}(n') + \alpha_{\ref{LS83}}(n')^2$.
		So $k(n',e') \in \lbrace k-1,k\rbrace$. Further,
		$$
n' = n-|R_i| \stackrel{(\ref{eq-Ris})}{\geq} \left(1-\frac{1}{k}\right)n - \alpha^{1/3}n \stackrel{(\ref{ineq:c})}{\geq} n/2 \geq n_0/2 \stackrel{(\ref{n0})}{\geq} \max\lbrace n_0(k-1,\alpha/3), n_{\ref{LS83}}(k)\rbrace.
$$
		Suppose first that $k(n',e')=k-1$.
		Then the minimality of $k$ and the fact that $t_{k-2}(n')+\alpha(n')^2/3 \leq t_{k-2}(n') + \alpha_{\ref{LS83}}(n')^2 \leq e' < t_{k-1}(n')$ implies that Theorem~\ref{strong} holds for $(n',e')$, i.e.~$g_3(n',e')=h(n',e')$, and every extremal graph lies in $\mathcal{H}(n',e')$. So $J \in \mathcal{H}(n',e')$.
		If $J \in \mathcal{H}_1(n',e')$, then since $G$ is obtained by adding an independent set $R_i$ of vertices to $J$ and adding every edge between $R_i$ and $V(J)$, we have that $G \in \mathcal{H}_1(n,e)$, a contradiction to~\ref{worst-C1}.
		Otherwise, $J \in \mathcal{H}_2(n',e')$, and in particular, $J$ is $(k-1)$-partite.
		So $G$ is $k$-partite, and Corollary~\ref{H2new}(i) implies that $G \in \mathcal{H}_2(n,e)$, again contradicting~\ref{worst-C1}.

		Thus we may assume that $k(n',e')=k$.
		Theorem~\ref{LS83} implies that we can obtain a graph $F'\in\mathcal{H}_1(n',e')$ with canonical partition $A_1^{F'},\ldots,A_{k-2}^{F'},B^{F'}$ and $K_3(F')=K_3(G[\overline{R_i}])$.
Let $F$ be the graph obtained from $G$ by replacing $G[\overline{R_i}]$ with $F'$, so $K_3(F)=K_3(G)$. By Corollary~\ref{cr:dh}, for every $xy\in E(F)$,
\begin{equation}\label{eq-FeP3}
P_3(xy,F)\le (k-2)cn+k\stackrel{(\ref{ineq:c2})}{\le} (k-2)\frac{n}{k}+\alpha^{1/3}n.
\end{equation}
For each $j\in[k-2]$ for which $A_j^{F'}$ is non-empty, fix an arbitrary edge $x_jy_j\in F[A_j^{F'},R_i]$, then
$$P_3(x_jy_j,F)\ge n-|A_j^{F'}|-|R_i|,$$
which together with~\eqref{eq-Ris} and~\eqref{eq-FeP3} implies that $|A_j^{F'}|\ge n/k-2\alpha^{1/3}n$. Similarly, for an edge $x_By_B$ in $F[B^{F'}]$ (there must exist one such edge as otherwise $k(n,e)<k$), we have $P_3(x_By_B,F)\ge n-|B^{F'}|$. 
Hence, $|B^{F'}|\ge 2n/k-\alpha^{1/3}n$. But then 
$$n=|R_i|+\sum_{j \in [k-2]}|A_j^{F'}|+|B^{F'}|\ge \frac{k+1}{k}n-\alpha^{1/4}n>n,$$
a contradiction. 
This completes the proof of the claim.
	\end{proof}
	
	\medskip
	\noindent
	Suppose now that there is some $I \in \binom{[k]\setminus \lbrace i \rbrace}{k-2}$ such that $Z_I \neq \emptyset$.
	Let $j\in [k]\setminus \lbrace i \rbrace$ be such that $[k]\setminus \lbrace i,j\rbrace= I$. 
	Let $z \in Z_I$ and let $n_{\ell} := d_G(z,R_{\ell})$ for all $\ell \in [k]$.
	Lemma~\ref{lem-partition-Z} implies that, for some $i',j' \in [k]$ with $\lbrace i',j'\rbrace = \lbrace i,j\rbrace$, we have $d_G(z,R_{i'}) \leq \delta'n$, $d_{\overline{G}}(z,R_{j'}) \geq \xi'n/2$ and, for all $\ell \in I$, we have $n_{\ell}=|R_{\ell}|$.
	Thus
	\begin{eqnarray}\label{eq-Ri-Z}
	\nonumber |R_{\ell}|-n_i-n_j &=& |R_\ell| - n_{i'}-n_{j'} \geq |R_{\ell}|-\delta'n-\left(|R_j|-\frac{\xi'n}{2}\right)\\
	\nonumber &\geq& \left(\frac{\xi'}{2}-\delta'\right)n- \bigg||A_{\ell}|-\frac{n}{k}\bigg| - \bigg||A_j|-\frac{n}{k}\bigg|-|Z|\\
	\nonumber&\stackrel{\Pbadedges(G),(\ref{P1G})}{>}& \left(\frac{\xi'}{2}-2\delta'\right)n - 2\sqrt{Dr} \stackrel{(\ref{eq:r})}{\geq} \left(\frac{\xi'}{2}-2\delta'-2\sqrt{D\alpha}\right)n\\
	&\geq& \frac{\xi'n}{3}.
	\end{eqnarray}
	Lemma~\ref{lem-no-large-set} implies that $\mathrm{diff}([k]\setminus\lbrace j \rbrace) < -1/\alpha^{1/3}$.
	So, using~(\ref{diffav}) and the fact that $\mathrm{diff}(i) \ge -1/(2\alpha^{1/3})$, there exists $\ell \in [k]\setminus \lbrace i,j\rbrace$ such that $\mathrm{diff}(\ell) < -1/\alpha^{1/3}$, so
	\begin{equation}\label{RlRi}
	|R_\ell| < cn-\frac{m}{\alpha^{1/3}n} \leq |R_i|-\frac{m}{2\alpha^{1/3}n}.
	\end{equation}
	
	Let $I' := [k]\setminus\lbrace i,j,\ell\rbrace$ and $W := R_i \cup R_j \cup R_\ell \cup Z$.
	Then
	\begin{equation}\label{dGzW}
	d_G(z,W) = n_i + n_j + |R_\ell| + d_G(z,Z),
	\end{equation}
	$R_{I'} =\overline{W}$ and $\lbrace z \rbrace \sim R_{I'}$.
	Recalling that $n_\ell=|R_\ell|$ for all $\ell\in I$, we have that
	\begin{equation}\label{K3ZG}
	K_3(z,G) \geq e(G[R_{I'}]) + |R_{I'}|(n_i+n_j+|R_\ell|) + |R_\ell|(n_i+n_j)-m.
	\end{equation}
	We have
	\begin{eqnarray*}
		d_G(z,W) &\stackrel{(\ref{dGzW})}{=}& n_i+n_j+n_\ell+d_G(z,Z) \stackrel{(\ref{eq-Ri-Z})}{\leq} 2|R_\ell|-\frac{\xi'n}{3} + |Z|\stackrel{\Pbadedges(G)}{\leq} 2|R_\ell|-\frac{\xi'n}{4}\\
		&\stackrel{(\ref{P1G})}{\leq}& |R_i|+|R_j| + 2D\sqrt{r} + 2|Z| - \frac{\xi'n}{4} \stackrel{\Pbadedges(G),(\ref{eq:r})}{\leq} |R_i|+|R_j| + 2D\sqrt{\alpha}n + 2\delta'n - \frac{\xi'n}{4}\\
		&\leq& |R_i| + |R_j| - \frac{\xi'n}{5}.
	\end{eqnarray*}
	Let $k_i := \min\lbrace d_G(z,W),|R_i|\rbrace$ and $k_j := \max\lbrace d_G(z,W)-k_i,0\rbrace$.
	The previous equation implies that
	\begin{equation}\label{kikj}
	k_i+k_j=d_G(z,W)\quad\text{and}\quad k_ik_j = \begin{cases} 0, &\mbox{if } d_G(z,W)\leq|R_i|, \\ 
	|R_i|(d_G(z,W)-|R_i|) & \mbox{otherwise. } \end{cases}
	\end{equation}
	Obtain a new graph $G'$ from $G$ as follows.
	Let $K_i \subseteq R_i$ with $|K_i|=k_i$ and $K_j \subseteq R_j$ with $|K_j|=k_j$ be arbitrary.
	Note that this is possible as $k_i\leq |R_i|$ and if $k_j>0$, then $k_j \leq d_G(z,W)-|R_i| \leq |R_j|-\xi'n/5$.
	Let $V(G'):=V(G)$ and
	$$
	E(G') := \left(E(G) \cup \lbrace zx: x \in K_i \cup K_j \rbrace\right) \setminus \lbrace zy: y \in N_G(z,W)\rbrace.
	$$
	That is, we obtain $G'$ by changing the $W$-neighbourhood of $z$ to a new neighbourhood of the same size, by adding as many edges as possible to $R_i$, and (if necessary) additional edges to~$R_j$.
	Note that $N_{G'}(z,R_{\ell}\cup Z)=\emptyset$ and $G'$ is an $(n,e)$-graph.
	We have
	\begin{equation}\label{G'G}
	K_3(z,G') \leq e(G[R_{I'}]) + |R_{I'}|d_{G'}(z,W) + k_ik_j.
	\end{equation}
	
	Suppose first that $d_G(z,W)>|R_i|$. Then by~\eqref{kikj}, we have
	\begin{align*}
	K_3(z,G') &\leq e(G[R_{I'}]) + |R_{I'}|d_G(z,W) + |R_i|(d_G(z,W)-|R_i|)
	\end{align*}
	and so
	\begin{eqnarray*}
		&&K_3(G')-K_3(G) = K_3(z,G')-K_3(z,G)\\
		&\stackrel{(\ref{K3ZG})}{\le}& |R_{I'}|(d_G(z,W)-(n_i+n_j+|R_{\ell}|))+|R_i|(d_G(z,W)-|R_i|)-|R_{\ell}|(n_i+n_j)+m\\
		&\stackrel{(\ref{dGzW})}{\le}& |R_{I'}||Z|+|R_i|(n_i+n_j+|R_{\ell}|+|Z|-|R_i|)-|R_{\ell}|(n_i+n_j)+m\\
		&=&|R_{I'}||Z|+ (|R_i|-|R_\ell|)(n_i+n_j-|R_i|)+|Z||R_i| + m\\
		&\stackrel{(\ref{eq-Ri-Z}),(\ref{RlRi})}{\leq}& - (|R_i|-|R_\ell|)\left(|R_i|-|R_\ell|+\frac{\xi'n}{3}\right) + |Z|n+m\\
		&\stackrel{(\ref{mbound}),(\ref{RlRi})}{\leq}& -\frac{m\xi'}{7\alpha^{1/3}} + \frac{2m}{\xi'} + m < -\frac{2m}{\xi'} \stackrel{(\ref{mbound})}{\le} -n ,
	\end{eqnarray*}
	a contradiction.
	
	Therefore we may assume that $d_G(z,W)\leq |R_i|$.
	We need the following claim that $n_j$ is large.
	
	\begin{claim}\label{cl-nj}
		$n_j \geq \frac{km}{4\alpha^{1/3}n}$.
	\end{claim}
	
	\begin{proof}[Proof of Claim.]
		If $\mathrm{diff}(I) \geq -1/\alpha^{1/3}$, then since $\mathrm{diff}(i) \geq -1/(2\alpha^{1/3})$, we also have that $\mathrm{diff}({I\cup\lbrace i \rbrace}) \geq -1/\alpha^{1/3}$, a contradiction to Lemma~\ref{lem-no-large-set}. So $\mathrm{diff}(I) < -1/\alpha^{1/3}$.
		The second part of Lemma~\ref{lem-partition-Z} implies that there is some $u \in N_{\overline{G}}(z,R_i)$.
		Since $R_i$ is an independent set in $G$, we have that
		$$
		(k-2)cn-k \stackrel{(\ref{2path})}{\leq} P_3(zu,G) \leq |Z| + n_j + |R_I|
		$$
		and so, using the fact that $\mathrm{diff}(I) < -1/\alpha^{1/3}$,
		\begin{align*}
		n_j &\geq (k-2)cn-k-|Z|-|R_I| \stackrel{(\ref{mbound})}{\geq} (k-2)cn-k-\frac{2m}{\xi'n} - (k-2)\left(cn-\frac{m}{\alpha^{1/3}n}\right)\\
		&\geq \left(\frac{k-2}{\alpha^{1/3}}-\frac{3k}{\xi'}\right)\frac{m}{n} \geq \frac{km}{4\alpha^{1/3}n},
		\end{align*}
		completing the proof of the claim.
	\end{proof}
	
	\medskip
	\noindent
	Now~(\ref{K3ZG}),~(\ref{kikj}),~(\ref{G'G}) and Claim~\ref{cl-nj} imply that
	\begin{eqnarray*}
	K_3(z,G')-K_3(z,G) &\stackrel{(\ref{dGzW})}{\leq}& |R_{I'}||Z|+m-|R_\ell|(n_i+n_j)\\
	&\stackrel{(\ref{P1G})}{\leq}& n|Z|+m - \left(\frac{n}{k}-\sqrt{Dr}-|Z|\right) \cdot \frac{km}{4\alpha^{1/3}n}\\ 
	&\leq& \frac{2m}{\xi'}+m-\frac{n}{2k}\cdot\frac{km}{4\alpha^{1/3}n}
	\leq -\frac{m}{9\alpha^{1/3}} \stackrel{(\ref{mbound})}{<} 0,
	\end{eqnarray*}
	another contradiction.
	Thus there is no $z \in Z_I$, as required.
\end{proof}

The final ingredient is the following lemma which states that every $R_i$ is small; $G$ induced on the union of the $R_i$ is complete partite; and every $z \in Z$ has large degree into every $R_i$.

\begin{lemma}\label{final}
	The following hold in $G$:
	\begin{itemize}
		\item[(i)] For all $i \in [k]$, we have $\mathrm{diff}(i) < -1/(2\alpha^{1/3})$;
		\item[(ii)] $G[R_1 \cup \ldots \cup R_k]$ is a complete $k$-partite graph (with partition $R_1,\ldots,R_k$);
		\item[(iii)] For all $i \in [k]$ and $z \in Z$, we have $d_G(z,R_i) \geq km/(9\alpha^{1/3}n)$.
	\end{itemize}
\end{lemma}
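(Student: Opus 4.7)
The plan is to prove (i), (ii), (iii) in sequence, each step relying on the $\mathrm{diff}$-inequalities already developed and, in the last step, the $P_3$-count bound~(\ref{2path}).

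For (i), I would argue by contradiction. If $\mathrm{diff}(i)\ge -1/(2\alpha^{1/3})$ for some $i\in[k]$, then Lemma~\ref{lem-Ri-Z} produces some $j\ne i$ with $R_i\not\sim R_j$, and Lemma~\ref{lem-large-set-from-missing-edge} then gives $\mathrm{diff}([k]\setminus\{i,j\})\ge -1/(2\alpha^{1/3})$. Combining these two bounds via the averaging inequality~(\ref{diffav}) (applied to the pairwise-disjoint sets $\{i\}$ and $[k]\setminus\{i,j\}$) yields $\mathrm{diff}([k]\setminus\{j\})\ge -1/(2\alpha^{1/3})>-1/\alpha^{1/3}$, contradicting Lemma~\ref{lem-no-large-set} since $|[k]\setminus\{j\}|=k-1$.

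For (ii), independence of each $R_i$ is immediate from $R_i=A_i\setminus Z$ together with \Pbadedges($G$), so it suffices to show $R_i\sim R_j$ for every $ij\in\binom{[k]}{2}$. If some pair satisfies $R_i\not\sim R_j$, Lemma~\ref{lem-large-set-from-missing-edge} yields $\mathrm{diff}([k]\setminus\{i,j\})\ge -1/(2\alpha^{1/3})$, and then~(\ref{diffav}) gives some $\ell\in[k]\setminus\{i,j\}$ with $\mathrm{diff}(\ell)\ge -1/(2\alpha^{1/3})$, contradicting~(i).

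For (iii), fix $z\in Z$ and $i\in[k]$. By Lemma~\ref{lem-partition-Z} there is a unique $I\in\binom{[k]}{k-2}$ with $z\in Z_I$; write $\{i_1,i_2\}=[k]\setminus I$ and assume without loss of generality that $z\in A_{i_1}$. If $i\in I$, then $d_G(z,R_i)=|R_i|$ by Lemma~\ref{lem-partition-Z}, far exceeding the required bound. For $i=i_1$, the lemma also gives $d_{\overline G}(z,R_{i_2})\ge \xi'n/2$, so I can pick a non-neighbour $u\in R_{i_2}\setminus N_G(z)$. Since $R_{i_2}$ is independent and part~(ii) asserts that $u$ is adjacent in $G$ to every vertex of $\bigcup_{\ell\ne i_2}R_\ell$, the common neighbourhood of $u$ and $z$ outside $Z$ is exactly $\bigcup_{\ell\ne i_2}\big(R_\ell\cap N_G(z)\big)$, of total size $d_G(z,R_{i_1})+|R_I|$ (using $d_G(z,R_\ell)=|R_\ell|$ for $\ell\in I$). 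Hence
$$
P_3(uz,G)\le d_G(z,R_{i_1})+|R_I|+|Z|.
$$
Applying~(i) and~(\ref{diffav}) gives $\mathrm{diff}(I)<-1/(2\alpha^{1/3})$, so $|R_I|\le (k-2)(cn-m/(2\alpha^{1/3}n))$; combining this with the lower bound $P_3(uz,G)\ge (k-2)cn-k$ from~(\ref{2path}) yields
$$
d_G(z,R_{i_1})\ \ge\ \frac{(k-2)m}{2\alpha^{1/3}n}-k-|Z|.
$$
Since $z\in Z$ forces $m\ge d^m_G(z)\ge \xi'n$, and~(\ref{mbound}) gives $|Z|\le 2m/(\xi'n)$, the hierarchy $\alpha^{1/3}\ll\xi'$ makes the right-hand side at least $km/(9\alpha^{1/3}n)$ by a short calculation. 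The case $i=i_2$ is symmetric: one picks a non-neighbour $u\in R_{i_1}\setminus N_G(z)$, which exists because $|R_{i_1}|\ge n/k-\sqrt{Dr}$ dwarfs the upper bound $d_G(z,R_{i_1})\le \delta'n$ from Lemma~\ref{lem-partition-Z}, and runs the identical argument.

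The only delicate point throughout is the $P_3$-bookkeeping in (iii): one must use part~(ii) to ensure that $u$'s neighbourhood covers the whole of $\bigcup_{\ell\ne i_2}R_\ell$, and the independence of each $R_\ell$ to rule out contributions from common neighbours in the part containing $u$. Once this is in place, the numerical inequality is routine given $\alpha\ll \delta'\ll\xi'$.
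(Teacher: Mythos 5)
Your proposal is correct and follows essentially the same route as the paper's proof: (i) via Lemmas~\ref{lem-Ri-Z}, \ref{lem-large-set-from-missing-edge}, \ref{lem-no-large-set} and the averaging inequality~(\ref{diffav}); (ii) via Lemma~\ref{lem-large-set-from-missing-edge} and (i); and (iii) by taking a non-neighbour of $z$ in each of $R_{i_1}$ and $R_{i_2}$ (guaranteed by Lemma~\ref{lem-partition-Z}) and bounding $P_3$ from both sides using~(\ref{2path}) and part~(i). The only cosmetic difference is that you invoke part~(ii) to describe the common neighbourhood exactly, whereas the paper only needs the independence of the $R_\ell$ for the upper bound; the numerical conclusion is identical.
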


\begin{proof}
	For (i), suppose that there is some $i \in [k]$ for which $\mathrm{diff}(i) \geq -1/(2\alpha^{-1/3})$.
	Apply Lemma~\ref{lem-Ri-Z} to obtain $j \in [k]\setminus \lbrace i \rbrace$ such that $R_i \not\sim R_j$.
	But Lemma~\ref{lem-large-set-from-missing-edge} implies that $\mathrm{diff}([k]\setminus \lbrace i,j\rbrace) \geq -1/(2\alpha^{1/3})$.
	Thus $\mathrm{diff}([k]\setminus \lbrace j \rbrace) \geq -1/(2\alpha^{1/3})$, a contradiction to Lemma~\ref{lem-no-large-set}.
	
	We now turn to (ii).
	Since $R_i$ is an independent set in $G$ for all $i \in [k]$, it suffices to show that $R_i \sim R_j$ for all $ij \in \binom{[k]}{2}$.
	If there is some $ij \in \binom{[k]}{2}$ for which this does not hold, then Lemma~\ref{lem-large-set-from-missing-edge} implies that $\mathrm{diff}([k]\setminus \lbrace i,j\rbrace) \geq -1/(2\alpha^{1/3})$. Then, by averaging (i.e.~(\ref{diffav})), there is some $\ell \in [k]\setminus \lbrace i,j\rbrace$ for which $\mathrm{diff}(\ell) \geq -1/(2\alpha^{-1/3})$, contradicting (i).
	
	For (iii), let $z \in Z$ be arbitrary.
	Lemma~\ref{lem-partition-Z} implies that there is $I \in \binom{[k]}{k-2}$ such that $z \in Z_I$ (and so $z \sim R_I$).
	Let $ij \in \binom{[k]}{2}$ be such that $I = [k]\setminus \lbrace i,j\rbrace$
	and for all $\ell  \in [k]$ write $n_{\ell} := d_G(z,R_{\ell})$.
	We only need to show that $n_i,n_j \geq (km)/(9\alpha^{1/3}n)$ since for all $\ell  \in I$ we have $$
	n_{\ell} = |R_{\ell}| \stackrel{(\ref{P1G})}{\geq} \frac{n}{k}-\sqrt{Dr} -|Z|> \frac{n}{2k} \stackrel{(\ref{eq:D})}{>} \frac{kD\alpha^{2/3}n}{4} \stackrel{(\ref{eq:r})}{\geq} \frac{kDr}{4\alpha^{1/3}n} \geq \frac{km}{9\alpha^{1/3}n}.
	$$
	The second part of Lemma~\ref{lem-partition-Z} implies that there exist $u_i \in N_{\overline{G}}(z,R_i)$ and $u_j \in N_{\overline{G}}(z,R_j)$.
	Then
	$$
	(k-2)cn-k \stackrel{(\ref{2path})}{\leq} P_3(zu_i,G) \leq |Z| + n_j + |R_I|
	$$
	and so
	$$
	n_j \stackrel{(\ref{mbound})}{\geq} (k-2)cn-k-\frac{2m}{\xi'n}-\sum_{\ell  \in I}|R_{\ell}| \stackrel{(i)}{\geq} (k-2)cn - \frac{2km}{\xi'n} - (k-2)\left(cn-\frac{m}{2\alpha^{1/3}n}\right) \geq \frac{km}{9\alpha^{1/3}n},
	$$
	where we used that fact that $k \geq 3$. An identical proof works for $n_i$.
\end{proof}

\noindent
\emph{Proof of Theorem~\ref{strong} in the boundary case.}
We will show that $Z=\emptyset$, contradicting~(\ref{mbound}).
Suppose not, and let $z \in Z$.
Then Lemma~\ref{lem-partition-Z} implies that there is $I \in \binom{[k]}{k-2}$ for which $z \in Z_I$.
So $z \sim R_I$.
Write $I = [k]\setminus \lbrace i,j\rbrace$ and suppose without loss of generality that $z \in A_i$.
Let $n_{\ell} := d_G(z,R_{\ell})$ for all $\ell  \in [k]$.
Let $F_{Z,j} := G[N_G(z,Z),N_G(z,R_j)]$ and $F_{Z,I} := G[N_G(z,Z),R_I]$.
Then Lemma~\ref{final}(ii) implies that
$$
K_3(z,G) \geq e(G[R_I]) + |R_I|(n_i+n_j)+ n_in_j+e(F_{Z,j}) + e(F_{Z,I}).
$$
We have
$$
N_{\overline{G}}(z,R_j) \stackrel{\Pmissing(G)}{\geq} \xi'n-|Z| \stackrel{\Pbadedges(G)}{>} \delta'n \geq d_G(z,R_i)
$$
and hence we can choose a set $K_j \subseteq N_{\overline{G}}(z,R_j)$ with $|K_j|=d_G(z,R_i)$.
Obtain a graph $G'$ from $G$ as follows.
let $V(G') := V(G)$ and $E(G') := (E(G)\cup\lbrace zx:x \in K_j\rbrace)\setminus \lbrace zy: y \in N_G(z,R_i) \rbrace$.
Clearly $G'$ is an $(n,e)$-graph in which $z$ has no neighbours in $R_i$, so
\begin{align*}
K_3(z,G') &\leq e(G'[R_I]) + |R_I|d_{G'}(z,R_j) + e(G'[N_{G'}(z,Z),R_I]) + e(G'[N_{G'}(z,Z),N_{G'}(z,R_j)]) + |Z|^2\\
&\leq e(G[R_I]) + |R_I|(n_i+n_j)+e(F_{Z,I}) + e(F_{Z,j}) + n_i|Z|+|Z|^2.
\end{align*}
Therefore, using Lemma~\ref{final}(iii), we have
\begin{eqnarray*}
	K_3(G')-K_3(G) &\leq& n_i(|Z|-n_j) + |Z|^2 \stackrel{(\ref{mbound})}{\leq} n_i\left(\frac{2m}{\xi'n}-\frac{km}{9\alpha^{1/3}n}\right) + \frac{4m^2}{(\xi')^2n^2}\\
	&\leq& -\frac{n_im}{10\alpha^{1/3}n} +\frac{4m^2}{(\xi')^2n^2} \leq \left(\frac{4}{(\xi')^2} - \frac{k}{90\alpha^{2/3}}\right)\frac{m^2}{n^2} \stackrel{(\ref{hierarchy})}{<} 0,
\end{eqnarray*}
a contradiction.
Thus $Z = \emptyset$, contradicting~(\ref{mbound}) as required.
\hfill$\square$

\medskip\noindent
This completes the proof of Theorem~\ref{strong}.


\section{Concluding remarks}\label{ConcludingRemarks}

\subsection{Related work}

The more general \emph{supersaturation problem} of determining $g_F(n,e)$, the minimum number of copies of $F$ in an $(n,e)$-edge graph, is also an active area of research.
The range of $e$ for which $g_F(n,e)=0$ is well understood.
Indeed, given a fixed graph $F$, let $\mathrm{ex}(n,F)$ denote the maximum number of edges in an $F$-free $n$-vertex graph,~i.e.~the maximum $e$ for which $g_F(n,e)=0$.
Erd\H{o}s and Stone~\cite{ErdosStone46} proved that $\mathrm{ex}(n,F)=t_{\chi(F)-1}(n)+o(n^2)$, where $\chi(F)$ is the chromatic number of $F$.
The supersaturation phenomenon observed by Erd\H{o}s and Simonovits~\cite{ErdosSimonovits83} asserts that every $(n,e)$-graph $G$ with $e \geq \mathrm{ex}(n,F)+\Omega(n^2)$ contains not just one copy of $F$, but in fact a positive proportion of all $|V(F)|$-sized vertex subsets in $V(G)$ span a copy of $F$. (This also extends to hypergraphs.)

We say that $F$ is \emph{critical} when there is an edge in $F$ whose removal reduces the chromatic number.
Observe that cliques are critical.
Simonovits~\cite{Simonovits68} showed that, for such $F$ and large $n$, we have $\mathrm{ex}(n,F)=t_{\chi(F)-1}(n)$ and $T_{\chi(F)-1}(n)$ is the unique extremal graph.
That is, $g_F(n,e)=0$ if and only if $e \leq t_{\chi(F)-1}(n)$.
Mubayi~\cite{Mubayi10am} showed that there is $c > 0$ such that, for large $n$, and $1 \leq \ell \leq cn$, we have
$$
g_F(n,t_{\chi(F)-1}(n)+\ell) = (1+o(1))\,\ell \cdot \mathrm{copy}(n,F),
$$
where $\mathrm{copy}(n,F)$ is the minimum number of copies of $F$ obtained by adding a single edge to $T_{\chi(F)-1}(n)$.
(This can generally be computed easily for any fixed $F$.)
Notice that this result generalises Erd\H{o}s's result~\cite{Erdos62} from triangles (which are critical) to arbitrary critical $F$.
Further, the error term can be removed in some cases, for example when $F$ is an odd cycle.
Pikhurko and Yilma~\cite{PikhurkoYilma17} generalised Mubayi's result by raising the upper bound $cn$ on $\ell$ to~$o(n^2)$.

The supersaturation problem for non-critical $F$ with $\chi(F)\geq 3$ seems hard; e.g.~even the `simplest' case when $F$ consists of two triangles sharing a vertex poses considerable difficulties, see~\cite{KangMakaiPikhurko17arxiv}.

A famous conjecture of Sidoren\-ko~\cite{Sidorenko93} and Erd\H{o}s-Simonovits~\cite{ErdosSimonovits83} asserts, roughly speaking, that the minimal number of $F$-subgraphs is asymptotically attained by a random graph (we do not give a precise statement of the conjecture here).
The conjecture is known to be true for trees, cycles, complete bipartite graphs, `strongly tree-decomposable graphs' and others, see~\cite{ConlonFoxSudakov10,ConlonKimLeeLee15arxiv,Hatami10,KimLeeLee16,LiSzegedy11arxiv,Szegedy14arxiv:v3}.

\medskip
\noindent
A yet more general problem is the following.
Let $\mathcal{F} := (F_1,\ldots,F_\ell)$ be a tuple of graphs with $v_1,\ldots,v_\ell$ vertices respectively.
Let $F_i(G)$ denote the number of \emph{induced} copies of $F_i$ in a graph $G$, for all $i \in [\ell]$.
To an $n$-vertex graph $G$, associate a vector $f_{\mathcal{F}}(G) := (F_1(G)/\binom{n}{v_1},\ldots,F_\ell(G)/\binom{n}{v_\ell})$ of densities.
What is the set $T(\mathcal{F}) \subseteq \mathbb{R}^\ell$ consisting of the accumulation points of $f_{\mathcal{F}}(G)$?
When $\mathcal{F}=(K_2,K_r)$, it turns out that $T(\mathcal{F})$ has an upper and lower bounding curve. 
The lower bounding curve of $T(\mathcal{F})$ is by definition $y=g_r(x)$, which by Reiher's
clique density theorem~\cite{Reiher16} is a countable union of algebraic curves.
The upper bounding curve is $y=x^{r/2}$, this being a consequence of the Kruskal-Katona theorem~\cite{Katona64,Kruskal63}.
This corresponds to the \emph{maximum} $r$-clique density in a graph with given edge density.
The shaded region in Figure~\ref{plot} is $T(\mathcal{F})$ for $\mathcal{F}=(K_2,K_3)$.

The case $(F_1,F_2)=(K_3,\overline{K_3})$ was solved by Huang, Linial, Naves, Peled and Sudakov~\cite{HuangLinialNavesPeledSudakov14jgt} (here the lower bounding curve is $x+y=1/4$, due to Goodman~\cite{Goodman59}).
Glebov, Grzesik, Hu, Hubai, Kr\'al' and Volec~\cite{GlebovGrzesikHuHubaiKralVolec} study the problem for every remaining pair $(F_1,F_2)$ of three-vertex graphs. For larger graphs the problem becomes extremely challenging.
Some general results on the hardness of determining $T(\mathcal{F})$ were obtained by Hatami and Norine in~\cite{HatamiNorine11,HatamiNorin16arxiv}.%

\subsection{The range $\binom{n}{2}-\eps n^2 < e \leq \binom{n}{2}$.}\label{bigrange}
Our main result, Theorem~\ref{main}, determines $g_3(n,e)$ whenever $2e/n^2$ is bounded away from $1$. There are a few obstacles to extending it to the remaining range $e=\binom{n}{2}-o(n^2)$. One is that Theorem~\ref{PikhurkoRazborov17approx} does not tell us anything meaningful in this range, as the error in its approximation is too large. 

While it is trivial to  determine $g_3(n,e)$ when $e \geq \binom{n}{2}-\lfloor n/2\rfloor$ (with each extremal graph $G$ being the complement of a matching)
and this can extended a bit further with some work, the problem seems to become very difficult in this regime quite quickly. In fact, the following observation shows that, under the assumption that $g_3\equiv h^*$, pushing ${n\choose 2}-e$ beyond $O(n)$ is as difficult as determining $g_3(n,e)$ for all pairs~$(n,e)$.  

\begin{lemma}\label{lm:AlmostComplete} Suppose that for every $C>0$ there is $n_0>0$ such that $g_3(n,e)=h^*(n,e)$ for all $n\ge n_0$ and $e\ge {n\choose 2}-Cn$. Then $g_3(n,e)=h^*(n,e)$ for \textbf{all} $n,e\in \I N$ with $e\le {n\choose 2}$.\end{lemma}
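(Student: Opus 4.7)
The strategy is to reduce an arbitrary pair $(n,e)$ to a denser pair $(N,E)$ inside the hypothesis regime, via the join with an independent set. Given any $(n,e)$-graph $G$, I would form $G' := G * \overline{K_m}$ for a parameter $m \in \mathbb{N}$, where the $m$ fresh vertices are mutually non-adjacent and each made complete to $V(G)$. Then $G'$ is an $(N,E)$-graph with $N = n+m$ and $E = e + nm$, and
\[
K_3(G') \;=\; K_3(G) + em,
\]
since the new vertices contribute no triangles amongst themselves. Writing $t := \binom{n}{2}-e$, the number of non-edges of $G'$ is $t + \binom{m}{2}$.

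The key structural input, verified directly from Definition~\ref{astardef}, is the identity
\[
H^*(n,e) * \overline{K_m} \;=\; H^*(N,E) \qquad \text{whenever } m \geq a_1^*(n,e).
\]
Indeed, both sides are obtained from the complete $(k+1)$-partite graph on parts of sizes $m, a_1^*, \ldots, a_k^*$ by removing the $m^*(n,e)$ edges between $A_{k-1}^*$ and $A_k^*$ inherited from $H^*(n,e)$; the hypothesis $m \geq a_1^*(n,e)$ ensures that after sorting parts non-increasingly, $m$ appears first as the largest part, so $A_{k-1}^*$ and $A_k^*$ remain the last two parts and the missing edges sit in the required position. One then checks $k(N,E) = k(n,e)+1$, $\bm{a}^*(N,E) = (m, a_1^*, \ldots, a_k^*)$ and $m^*(N,E) = m^*(n,e)$, and consequently
\[
h^*(N,E) \;=\; K_3\!\left(H^*(n,e) * \overline{K_m}\right) \;=\; h^*(n,e) + em.
\]

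The proof concludes by choosing $m$ large enough that (i) $m \geq a_1^*(n,e)$, securing the structural identity above, and (ii) the hypothesis applies to $(N,E)$---that is, $N \geq n_0(C)$ and $t + \binom{m}{2} \leq CN$ for some $C > 0$. By the hypothesis, $g_3(N,E) = h^*(N,E)$, and hence for any $(n,e)$-graph $G$,
\[
K_3(G) + em \;=\; K_3(G') \;\geq\; g_3(N,E) \;=\; h^*(n,e) + em,
\]
so $K_3(G) \geq h^*(n,e)$. Combined with the trivial upper bound $g_3(n,e) \leq h^*(n,e)$, this yields $g_3(n,e) = h^*(n,e)$, as desired.

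The main obstacle is simultaneously arranging conditions (i) and (ii) on $m$. The density condition $t + \binom{m}{2} \leq CN$ forces $C \gtrsim m/2$ for large $m$, so one must invoke the hypothesis with $C$ growing with $m$; at the same time, $N = n+m$ must exceed $n_0(C)$ for that same $C$. Since the hypothesis only guarantees the existence of $n_0(C)$ for each $C > 0$ (with no quantitative growth bound), the choice of $m$ as a function of $n,e$, $a_1^*(n,e)$ and the implicit function $n_0(\cdot)$ requires careful parameter bookkeeping; verifying that one can always make this choice is the essential technical point.
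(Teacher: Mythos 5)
There is a genuine gap, and in fact two distinct problems, both stemming from your decision to join $G$ with a \emph{single} independent set of size $m$ and then let $m$ grow. First, the structural identity $H^*(n,e)*\overline{K_m}=H^*(N,E)$ does not hold for general $m\ge a_1^*(n,e)$: Definition~\ref{astardef} requires the first $k-1$ entries of $\bm a^*$ to be balanced (they form a Tur\'an graph $T_{k-1}(n-a_k^*)$), so once $m\ge a_1^*+2$ the part sizes $(m,a_1^*,\dots,a_k^*)$ are not of the form $\bm a^*(N,E)$, and $h^*(N,E)$ is computed from a \emph{different}, rebalanced graph. The identity is only safe for $m=a_1^*$ (duplicating the largest part), which pins $m$ down and removes your freedom to take it large. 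Second, even granting the identity, the parameter circularity you flag at the end is not a bookkeeping nuisance but a fatal obstruction: the non-edge count of $G'$ is $t+\binom{m}{2}$, so the density condition forces $C\ge (1-o(1))\,m/2$, while you simultaneously need $N=n+m\ge n_0(C)$. The hypothesis gives no upper bound whatsoever on the growth of $n_0(\cdot)$ (if, say, $n_0(C)\ge C^2$, no admissible $m$ exists), so the required $m$ cannot in general be chosen. A correct proof cannot let $C$ depend on the blow-up size.

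The repair is to add \emph{many small} independent sets rather than one large one: take $\ell$ new parts, each of size exactly $a_1^*(n,e)$, all complete to each other and to $V(G)$. Then every vertex of the resulting $n'$-vertex graph ($n'=\ell a_1^*+n$) has fewer than $n$ non-neighbours, so the number of non-edges is at most $\tfrac{n}{2}n'$, i.e.\ the constant $C=n/2$ depends only on $n$ and not on $\ell$; one then simply takes $\ell$ large enough that $n'\ge n_0(n/2)$. The structural identity is recovered by iterating your $m=a_1^*$ case: duplicating a largest part of an $H^*$-graph yields another $H^*$-graph, and an induction on $\ell$ shows the $(n',e')$-graph built from $H^*(n,e)$ is exactly $H^*(n',e')$. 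Comparing it with the graph built the same way from a putative counterexample $G$ (both constructions add $e(K_{n'})-e(K_n)-\ell\binom{a_1^*}{2}$ triangles through the new complete joins) transfers $K_3(G)<h^*(n,e)$ to $(n',e')$ and gives the contradiction. This is the route the paper takes.
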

 \begin{proof}
 Suppose on the contrary that some $(n,e)$-graph $G$ satisfies $K_3(G)< h^*(n,e)$. Let $\bm{a}^*=\bm{a}^*(n,e)$. Our assumption for $C:=n/2$ returns some $n_0$. Take $\ell$ such that $n':=\ell a_1^*+n$ is at least~$n_0$. Let $H$ be the complete partite graph with $n'$ vertices, $\ell$ parts of size $a_1^*$ and the last part, call it $A$, of size $n$. Let $G'$ (resp.\ $H'$) be obtained from $H$ by adding a copy of $G$ (resp.\ $H^*(n,e)$) into $A$. Each of these graphs has 
 $
 e':={n'\choose 2}-\ell {a_1^*\choose 2} - {n\choose 2}+e
 $
 edges, which is at least ${n'\choose 2} - \frac{n}2\, n'$ because the maximum degree of the graph complement is at most $n$. Also, $H'$ is isomorphic to $H^*(n',e')$: this follows by induction on $\ell\in\I N$ using the easy claim that if we duplicate a largest part of any $H^*$-graph then we get another $H^*$-graph. However, since $A$ is complete to the rest of $H$, we have $$
 K_3(G')-h^*(n',e')=K_3(G')-K_3(H')=K_3(G)-K_3(H^*(n,e))<0,
 $$ a contradiction to the choice of $n_0$.\end{proof}

An interesting corollary of Proposition~\ref{pr:compute} and Lemma~\ref{lm:AlmostComplete} is that the validity of Conjecture~\ref{cj:LS} will  not be affected if we drop the assumption~$n\ge n_0$.

\subsection{Extensions}

It would be very interesting to extend Theorem~\ref{strong} to the $g_r(n,e)$-problem, as many parts of our proof extend when we minimise the number of $r$-cliques. A structure result for $r$-cliques with $r\ge 4$ (an analogue of Theorem~\ref{PikhurkoRazborov17approx}) was recently proved by Kim, Liu, Pikhurko and Sharifzadeh~\cite{KimLiuPikhurkoSharifzadeh}.

A problem which may be more directly amenable to our method is as follows. Recall that $N_i(G)$ is the number of $3$-subsets of $V(G)$ that induce exactly $i$ edges, $0\le i\le 3$. The question is
to maximise $N_2(G)$ (the number of so-called \emph{cherries}) in an $(n,e)$-graph for $n\ge n_0$. This problem was considered by Harangi~\cite{Harangi13} who obtained some partial results that were enough for his intended application. Note that for every $(n,e)$-graph $G$, we have (see~(\ref{LSproof1}))
 $$
 e(n-2)=3N_3(G)+2N_2(G)+N_1(G).
 $$
 Also, $N_1(H^*(n,e))\le m^*n=o(n^3)$. Since $H^*(n,e)$ asymptotically minimises $N_3$ over $(n,e)$-graphs, it also asymptotically maximises $N_2$. Furthermore, a stronger version of stability (that every almost $N_2$-extremal $(n,e)$-graph is $o(n^2)$-close to $H^*(n,e)$) can be easily derived from Theorem~\ref{PikhurkoRazborov17approx}.

We hope that the method used here will be useful for further instances, where one has to convert an asymptotic result into an exact one.

\section*{Acknowledgements}

Hong Liu was supported by EPSRC grant~EP/K012045/1, ERC
	grant~306493 and the Leverhulme Trust
	Early Career Fellowship~ECF-2016-523. Oleg Pikhur\-ko was
	supported by EPSRC grant EP/K012045/1,  ERC grant 306493
and Leverhulme Research Project Grant RPG-2018-424.
Katherine Staden was supported by ERC grant~306493.

The authors are enormously grateful to the referee of this paper for their careful reading
and helpful suggestions. Oleg Pikhurko would like to thank Alexander Razborov for very useful discussions. 

\bibliography{Triangle_2017_Aug18}

\vspace{8mm}

\nomenclature[EF]{$n$}{Number of vertices in $G$}{}{\pageref{hierarchy}}
\nomenclature[EF]{$e$}{Number of edges in $G$}{}{\pageref{hierarchy}}
\nomenclature[EF]{$(n,e)$-graph}{A graph with $n$ vertices and $e$ edges}{}{\pageref{eq:k}}
\nomenclature[EF]{$G$}{`Worst counterexample' graph with $n$ vertices and $e$ edges satisfying (C1)--(C3)}{}{\pageref{worst}}
\nomenclature[EF]{$K_r(H)$}{Number of $r$-cliques in a graph $H$}{}{\pageref{eq:k}}
\nomenclature[EF]{$g_r(n,e)$}{Minimum number of $r$-cliques in an $(n,e)$-graph}{}{\pageref{eq:k}}
\nomenclature[EF]{$T_s(n)$, $t_s(n)$}{$n$-vertex $s$-partite Tur\'an graph and the number of edges it contains}{}{\pageref{eq:k}}
\nomenclature[EF]{$k$}{Minimum $\ell \in \mathbb{N}$ such that $e \leq t_\ell(n)$}{}{\pageref{eq:k}}
\nomenclature[EF]{$H^*(n,e)$}{A conjectured extremal $(n,e)$-graph}{}{\pageref{astardef}}
\nomenclature[EF]{$\bm{a}^*$}{length-$k$ vector whose $i$th entry is the size of the $i$th part in $H^*(n,e)$}{}{\pageref{astardef}}
\nomenclature[EF]{$m^*$}{Number of missing edges in $H^*(n,e)$}{}{\pageref{astardef}}
\nomenclature[EF]{$h^*(n,e)$}{$K_3(H^*(n,e))$}{}{\pageref{astardef}}
\nomenclature[EF]{$\mathcal{H}^*(n,e)$}{Family of $(n,e)$-graphs generated from $H^*(n,e)$}{}{\pageref{razthm}}
\nomenclature[EF]{$\mathcal{H}_0$, $\mathcal{H}_1$, $\mathcal{H}_2$, $\mathcal{H}$}{Auxiliary extremal families of $(n,e)$-graphs. $\mathcal{H}(n,e) = \mathcal{H}_1(n,e) \cup \mathcal{H}_2(n,e)$}{}{\pageref{df:H}}
\nomenclature[EF]{$h(n,e)$}{Minimum number of triangles in graphs in $\mathcal{H}(n,e)$}{}{\pageref{eq:h}}
\nomenclature[EF]{$c$}{Part ratio}{}{\pageref{eq:c}}
\nomenclature[EF]{$e(K^\ell_{\alpha_1,\ldots,\alpha_\ell})$}{Continuous edge count}{}{\pageref{eq:c2}}
\nomenclature[EF]{$K_3(K^\ell_{\alpha_1,\ldots,\alpha_\ell})$}{Continuous triangle count}{}{\pageref{eq:c2}}
\nomenclature[EF]{$K_3(x,G)$}{Number of triangles in $G$ containing vertex $x$}{}{\pageref{notation}}
\nomenclature[EF]{$K_3(x,G;A)$}{Number of triangles in $G$ containing vertex $x$ and at least one other vertex in $A$}{}{\pageref{notation}}
\nomenclature[EF]{$K_3(x,G;A,A)$}{Number of triangles in $G$ containing vertex $x$ and both other vertices in $A$}{}{\pageref{notation}}
\nomenclature[EF]{$P_3(xy,G)$}{Number of common neighbours of $x,y$ in $G$}{}{\pageref{notation}}
\nomenclature[EF]{$P_3(xy,G;A)$}{Number of common neighbours of $x,y$ in $G$ which lie in $A$}{}{\pageref{notation}}
\nomenclature[EF]{$\pm$}{$x = a \pm b$ if $a-b \leq x \leq a+b$, where $b \geq 0$}{}{\pageref{notation}}
\nomenclature[EF]{$\mathcal{G}^{\mathrm{min}}(n,e)$}{Subfamily of a family $\mathcal{G}(n,e)$ of $(n,e)$-graphs which contain the fewest triangles}{}{\pageref{notation}}
\nomenclature[EF]{$n_0$}{Sufficiently large, we require $n \geq n_0$}{}{\pageref{hierarchy}}
\nomenclature[EC]{$\rho_4,\ldots,\rho_0$}{Small constants used exclusively in the proof of Lemma~\ref{approx}}{}{\pageref{hierarchy},\pageref{approx}}
\nomenclature[EC]{$\eta$}{The number of missing edges $m$ in $G$ satisfies $m \leq \eta n^2$}{}{\pageref{hierarchy},\pageref{m}}
\nomenclature[EC]{$\delta$}{(IC) $\vert Z\vert \leq \delta n$; $h \leq \delta m$, $\Delta(G[A_i]) \leq \delta n$ for all $i \in [k]$}{}{\pageref{hierarchy}}
\nomenclature[EC]{$\beta$}{Deviation of $A_1,\ldots,A_{k-1}$ from $cn$ is $\beta n$.}{}{\pageref{hierarchy}}
\nomenclature[EC]{$\xi$}{(IC) $z \in Z$ if and only if it has missing degree at least $\xi n$}{}{\pageref{hierarchy},\pageref{Zdef}}
\nomenclature[EC]{$\gamma$}{$x \in Z_k$ is in $Y$ if and only if it has degree less than $\gamma n$ into its corresponding part}{}{\pageref{hierarchy},\pageref{XY}}
\nomenclature[EC]{$\alpha$}{(IC) $e \leq t_{k}(n)-\alpha n^2$; (BC) $e > t_k(n)-\alpha n^2$}{}{\pageref{hierarchy},\pageref{eq:alpha},\pageref{eq:alpha2}}
\nomenclature[EC]{$\alpha_{1.6}$}{The minimum constant $\alpha(k)$ returned from Theorem~\ref{LS83} applied with $r=3$ and $3 \leq k \leq 1/\eps$}{}{\pageref{hierarchy},\pageref{LS83}}
\nomenclature[EC]{$\delta'$}{(BC) $\vert Z\vert \leq \delta' n$; $h \leq \delta' m$, $\Delta(G[A_i]) \leq \delta' n$ for all $i \in [k]$}{}{\pageref{hierarchy},\pageref{approx2}}
\nomenclature[EC]{$\xi'$}{(BC) $z \in Z$ if and only if it has missing degree at least $\xi' n$}{}{\pageref{hierarchy},\pageref{Zdefagain}}
\nomenclature[EC]{$\eps$}{$e \leq \binom{n}{2} - \eps n^2$}{}{\pageref{hierarchy}}
\nomenclature[EF]{(C1)--(C3)}{Worst counterexample properties}{}{\pageref{worst-C1}}
\nomenclature[EA]{Intermediate case (IC)}{$t_{k-1}(n)+\alpha n^2 \leq e \leq t_k(n)-\alpha n^2$}{}{\pageref{eq:alpha},\pageref{eq:alpha2}}
\nomenclature[EA]{Boundary case (BC)}{$t_k(n)-\alpha n^2 < e \leq t_k(n)-1$}{}{\pageref{eq:alpha2}}
\nomenclature[EF]{$f$}{$f(x)=(d_G(x)-(k-2)cn)(k-2)cn+{k-2\choose 2}c^2n^2-K_3(x,G)$ for $x \in V(G)$}{}{\pageref{eq-f}}
\nomenclature[EF]{$A_1,\ldots,A_k$}{Parts of $G$}{}{\pageref{approx},\pageref{m}}
\nomenclature[EF]{$Z$}{(IC) set of vertices $z$ with $d^m_G(z) \geq \xi n$. Boundary case: $d^m_G(z) \geq \xi'n$}{}{\pageref{Zdef},\pageref{Zdefagain}}
\nomenclature[EF]{$R_1,\ldots,R_k$}{$R_i := A_i\setminus Z$. }{}{\pageref{Z},\pageref{mbound}}
\nomenclature[EF]{$Z_1,\ldots,Z_k$}{(IC) $Z_i := A_i \cap Z$}{}{\pageref{Z}}
\nomenclature[EF]{$Z_k^1,\ldots,Z_k^{k-1}$}{(IC) union is $Z_k$, $G[Z_k^i,A_j]$ complete when $j \in [k-1]\setminus \lbrace i \rbrace$}{}{\pageref{Zsize}}
\nomenclature[EF]{$Z_I$, $I \in \binom{[k]}{k-2}$}{(BC) $G[Z_I,R_I]$ complete}{}{\pageref{lem-partition-Z}}
\nomenclature[EF]{$R_I$, $I \subseteq [k]$}{$\bigcup_{i \in I}R_i$}{}{\pageref{mbound}}
\nomenclature[EF]{$Y_1,\ldots,Y_{k-1}$}{(IC) $Y_i \subseteq Z_k^i$ contains elements with at most $\gamma n$ neighbours in $A_i$}{}{\pageref{XY}}
\nomenclature[EF]{$X_1,\ldots,X_{k-1}$}{(IC) $X_i := Z_k^i\setminus Y_i$}{}{\pageref{XY}}
\nomenclature[EF]{$t$}{(IC) $\frac{m}{(kc-1)n}$}{}{\pageref{t}}
\nomenclature[EF]{$Q_1,\ldots,Q_{k-1}$}{(IC) $Q_i \subseteq E(G[R_i,R_k])$ carefully chosen}{}{\pageref{Qi}}
\nomenclature[EF]{$R_k'$}{(IC) A large subset of $R_k$}{}{\pageref{Delta}}
\nomenclature[EC]{$\Delta$}{(IC) Maximum degree of $x \in R_k'$ into $Z_k$}{}{\pageref{Delta}}
\nomenclature[EB]{$(V_1,\ldots,V_k;U,\beta)$-partition}{(IC) P1($G$), P2($G$)}{}{\pageref{partitions}}
\nomenclature[EB]{$(V_1,\ldots,V_k;U,\beta,\delta)$-partition}{(IC) P1($G$)--P4($G$)}{}{\pageref{partitions}}
\nomenclature[EB]{$(V_1,\ldots,V_k;U,\beta,\gamma_1,\gamma_2,\delta)$-partition}{(IC) P1($G$)--P5($G$)}{}{\pageref{partitions}}
\nomenclature[EB]{weak $(V_1,\ldots,V_k;U,\beta,\gamma_1,\gamma_2,\delta)$-partition}{(IC) P1($G$), P3($G$), P5($G$)}{}{\pageref{partitions}}
\nomenclature[EF]{P1($G$)--P5($G$)}{(IC) Partition properties}{}{\pageref{partitions}}
\nomenclature[EF]{$m$}{Number of missing edges $m = \sum_{i \in [k-1]}m_i$}{}{\pageref{partitions},\pageref{approx},\pageref{approx2}}
\nomenclature[EF]{$\underline{m} = (m_1,\ldots,m_{k-1})$}{Missing vector, $m_i = e(\overline{G}[A_i,A_k])$}{}{\pageref{partitions},\pageref{approx},\pageref{approx2}}
\nomenclature[EF]{$d_H^m(y)$}{(IC) Missing degree of $y$ into corresponding part}{}{\pageref{partitions}}
\nomenclature[EF]{$h$}{Number of bad edges, $\sum_{i \in [k]}e(G[A_i])$}{}{\pageref{heq},\pageref{approx2}}
\nomenclature[EF]{$G_i$, $G_i^\ell$}{(IC) Graph obtained from $G_{i-1}$ after Transformation $i$ (applied with $\ell$)}{}{\pageref{Ziedges},\pageref{Yiedges},\pageref{Xiedges}}
\nomenclature[EF]{$\underline{m}^{(i)}$ (resp. $\underline{m}^{(i,\ell)}$)}{(IC) Missing vector of $G_i$ (resp. of $G_i^\ell$)}{}{\pageref{Ziedges},\pageref{Yiedges},\pageref{Xiedges}}
\nomenclature[EF]{$A_1',\ldots,A_k'$}{(IC) Partition of $G_2$}{}{\pageref{Yiedges}}
\nomenclature[EF]{$A_1'',\ldots,A_k''$}{(IC) Partition of $G'$}{}{\pageref{maintrans},\pageref{eW}}
\nomenclature[EF]{$G''$}{(IC) Graph obtained in Lemma~\ref{maintrans}}{}{\pageref{maintrans}}
\nomenclature[EF]{$\underline{m}'$}{(IC) Missing vector of $G'$}{}{\pageref{maintrans},\pageref{eq:m'}}
\nomenclature[EF]{$D(x), D(x,y)$}{(IC) External $X$-degree of $x \in X$, common external $X$-degree of $x,y \in X$.}{}{\pageref{subXiedges},\pageref{Dx}}
\nomenclature[EF]{$D_i$}{(IC) $D(x)=D_i$ for all $x \in X_i$, proved in Lemma~\ref{G3prop}.}{}{\pageref{G3prop}}
\nomenclature[EF]{$G''$}{(IC) Graph obtained in Lemma~\ref{maintrans}}{}{\pageref{maintrans}}
\nomenclature[EF]{$U_i,W_i$}{(IC) See Lemma~\ref{maintrans}}{}{\pageref{maintrans},\pageref{UW}}
\nomenclature[EF]{$r$}{(BC) $e = t_k(n)-r$}{}{\pageref{eq:r}}
\nomenclature[EF]{$D$}{(BC) $169k^{k+9}$}{}{\pageref{eq:D}}
\nomenclature[EF]{$s$}{(BC) $e = (1-\frac{1}{s})\binom{n}{2}$}{}{\pageref{tnew}}
\nomenclature[EF]{$N_i$}{(BC) $3$-vertex graph with $i$ edges}{}{\pageref{lseq}}
\nomenclature[EF]{$q_G(x)$}{(BC) $\frac{2e}{n}-d_G(x)$}{}{\pageref{lseq}}
\nomenclature[EF]{$\mathrm{diff}(I)$}{$\vert R_I\vert = (cn+\mathrm{diff}(I)\cdot \frac{m}{n})|I|$}{}{\pageref{diffeq}}
\nomenclature[EF]{$a_i$, $i \in [k-1]$}{$a_i = \sum_{j \in [k-1]\setminus \lbrace i \rbrace}\vert A_j\vert$}{}{\pageref{aidef}}

\scriptsize
\printnomenclature

\end{document}